\pdfoutput=1
\documentclass[11pt,reqno]{amsart}
\usepackage[margin=1.15in]{geometry}
\usepackage[T1]{fontenc}
\usepackage{lmodern,microtype}
\usepackage{amsmath,amssymb,amsthm,mathtools}
\usepackage{booktabs,array,longtable,enumitem,graphicx,xcolor,needspace}
\providecommand{\needspace}[1]{}
\usepackage[colorlinks=true,linkcolor=blue!60!black,citecolor=blue!60!black,urlcolor=blue!60!black]{hyperref}
\usepackage[capitalize,nameinlink]{cleveref}

\newcommand{\E}{\mathbb E}
\newcommand{\Prob}{\mathbb P}
\newcommand{\R}{\mathbb R}
\newcommand{\C}{\mathbb C}
\newcommand{\N}{\mathbb N}
\newcommand{\Z}{\mathbb Z}
\newcommand{\dd}{\,\mathrm d}
\newcommand{\ind}{\mathbf 1}
\DeclareMathOperator{\Var}{Var}
\DeclareMathOperator{\Cov}{Cov}
\DeclareMathOperator{\sgn}{sgn}
\DeclareMathOperator{\dist}{dist}
\renewcommand{\Re}{\operatorname{Re}}
\renewcommand{\Im}{\operatorname{Im}}
\newcommand{\nint}[1]{\|#1\|_{\R/\Z}}

\theoremstyle{plain}
\newtheorem{theorem}{Theorem}[section]
\newtheorem{lemma}[theorem]{Lemma}
\newtheorem{proposition}[theorem]{Proposition}
\newtheorem{corollary}[theorem]{Corollary}
\theoremstyle{definition}
\newtheorem{definition}[theorem]{Definition}
\theoremstyle{remark}
\newtheorem{remark}[theorem]{Remark}
\numberwithin{equation}{section}
\setlist{itemsep=2pt,topsep=4pt}

\makeatletter
\def\@secnumfont{\bfseries}
\renewcommand\section{\@startsection{section}{1}{\z@}
  {-3.2ex plus -1ex minus -.2ex}{1.6ex plus .2ex}{\normalfont\large\bfseries}}
\renewcommand\subsection{\@startsection{subsection}{2}{\z@}
  {-2.4ex plus -.8ex minus -.2ex}{1.0ex plus .2ex}{\normalfont\normalsize\bfseries}}
\renewcommand\subsubsection{\@startsection{subsubsection}{3}{\z@}
  {-1.8ex plus -.5ex minus -.2ex}{-.6em}{\normalfont\normalsize\itshape}}
\makeatother
\makeatletter
\def\LT@makecaption#1#2#3{%
  \LT@mcol\LT@cols c{\hbox to\z@{\hss\parbox[t]\textwidth{\centering
    \normalfont\normalsize\@captionheadfont#1{#2.}\@captionfont\upshape\enspace#3%
    \endgraf\vskip\belowcaptionskip}\hss}}}
\makeatother
\usepackage{etoolbox}
\makeatletter
\patchcmd{\@starttoc}{\linespacing\@plus\linespacing}{\z@}{}{}
\patchcmd{\@maketitle}{\dimen@34\p@}{\dimen@1.5\baselineskip}{}{}
\patchcmd{\@maketitle}{\global\topskip42\p@}{\global\topskip12\p@}{}{}
\makeatother
\makeatletter
\patchcmd{\@cflb}{\vskip \textfloatsep}{\vskip \textfloatsep \vskip \z@ \@plus 1fil}{}{\PackageWarningNoLine{preamble}{bottom-float patch failed}}
\makeatother
\title[Universal radial laws and non-universal fluctuations]{Universal Radial Laws for Random Polynomials: Erd\H{o}s Problem \#522 and Non-Universal Fluctuations}
\author{Sebastien Kawada}
\address{Massachusetts Institute of Technology, Cambridge, MA, USA}
\email{kawada@mit.edu}
\date{October 8, 2026}
\hypersetup{
 pdftitle={Universal Radial Laws for Random Polynomials: Erd\H{o}s Problem \#522 and Non-Universal Fluctuations},
 pdfauthor={Sebastien Kawada},
 pdfsubject={Mathematics Subject Classification 2020: Primary 30C15, 60F15; Secondary 30B20, 60F05, 60G15, 11C08, 68V20},
 pdfkeywords={Random polynomials, Kac polynomials, Littlewood polynomials, Erd\H{o}s problems, zeros near the unit circle, universality, law of the iterated logarithm, fourth cumulant, formal verification, Lean 4},
 pdflang={en-US},
 pdfdisplaydoctitle=true,
 bookmarksdepth=2
}
\subjclass[2020]{Primary 30C15, 60F15; Secondary 30B20, 60F05, 60G15, 11C08, 68V20}
\keywords{Random polynomials, Kac polynomials, Littlewood polynomials, Erd\H{o}s problems, zeros near the unit circle, universality, law of the iterated logarithm, fourth cumulant, formal verification, Lean 4}

\begin{document}
\begin{abstract}
We resolve Erd\H{o}s Problem \#522, posed in 1961, by proving that for
random Littlewood polynomials the proportion of zeros in the closed unit
disk converges almost surely to $1/2$. More generally, for partial sums
of random power series with i.i.d.\ coefficients that are bounded,
nondegenerate, and centrally symmetric, real Gaussian, or circular
complex Gaussian, we prove that the proportion of zeros in the disk of
radius $1+x/n$ converges almost surely to $\frac12(1+\coth x-1/x)$,
uniformly in $x\in\R$. Thus almost all zeros lie at distance $O(1/n)$
from the unit circle. For random Littlewood polynomials, even with
arbitrary dependence between degrees, the convergence at the unit circle
holds at the almost-sure rate $O(n^{-1/4}\sqrt{\log n})$. We establish a
law-of-the-iterated-logarithm criterion for dependent sequences and, for
real Gaussian coefficients, obtain the sharp law of the iterated
logarithm over all integer degrees, variance and cross-degree covariance
asymptotics, quantitative central limit theorems, joint Gaussian limits,
and increment bounds. For symmetric coefficient laws with bounded density
and all moments finite, the number of zeros in the closed unit disk has
variance $(c_G+\kappa_4/12)n+O_\xi(n^{399/400})$, and for random signs
the constant is $c_G-1/6$. Thus the radial law is universal, while the
fluctuations depend on the coefficient law through its fourth cumulant
$\kappa_4$. All our main results are formally verified in Lean~4, and
our proof of Erd\H{o}s Problem \#522 has been accepted as a solution to
its formal statement in Google DeepMind's Formal Conjectures project.
\end{abstract}
\maketitle
\setcounter{tocdepth}{1}
\tableofcontents
\clearpage

\section{Introduction}\label{sec:intro}

Let $\xi_0,\xi_1,\xi_2,\dots$ be independent, identically distributed
random variables, and let
\[
 f_n(z)=\sum_{j=0}^{n}\xi_jz^j\qquad(n\ge1)
\]
be the partial sums of the random power series $\sum_{j\ge0}\xi_jz^j$.
All degrees use the same coefficients, and $f_{n+1}$ is obtained from
$f_n$ by appending one term. We write $\nu_n(r)$ for the number of zeros of
$f_n$ in the closed disk $\{|z|\le r\}$, counted with multiplicity.

When the coefficients are random signs,
$\Prob(\xi_j=1)=\Prob(\xi_j=-1)=\frac12$, the polynomials $f_n$ are
random Littlewood polynomials. In 1961 Erd\H{o}s asked whether
$\nu_n(1)/n\to1/2$ almost surely \cite[p.~252]{Erdos1961}, that is,
whether asymptotically half of the zeros lie in the closed unit disk.
This is Problem \#522 in the Erd\H{o}s problems list
\cite{Erdos522Problem}. Yakir \cite{Yakir2022} proved the weaker
statement that $\nu_n(1)/n\to1/2$ in probability, posed as Problem~4.15
in Hayman's collection \cite{Hayman1967,HaymanLingham2018}.

We study the zeros of $f_n$ near the unit circle at two scales. To first
order, we prove an almost-sure radial law at distance of order $1/n$
from the circle. It holds for all degrees simultaneously and contains
the answer to Erd\H{o}s's question. To second order, we study the
discrepancy
\[
 D_n=\nu_n(1)-\frac n2 ,
\]
the excess of zeros in the closed unit disk over half the degree. We
describe its variance, its normal approximation, its dependence across
degrees and its almost-sure size.

\subsection{The radial law and Erd\H{o}s Problem \#522}
The natural variable for zeros near the unit circle is $x=n(|z|-1)$.
Our first theorem gives, almost surely and for all large $n$ at once,
the proportion of zeros of $f_n$ in each disk $\{|z|\le1+x/n\}$.
\begin{theorem}[Radial law]\label{thm:R}
Let $(\xi_j)_{j\ge0}$ be iid with one of the following laws:
\begin{enumerate}[label=\textup{(\roman*)}]
 \item a bounded law on $\R$ or $\C$ that is centrally symmetric ($\xi_0\overset{d}{=}-\xi_0$) and not concentrated at $0$, possibly with an atom at $0$, such as random signs ($\Prob(\xi_0=\pm1)=\frac12$) or the Steinhaus law (uniform on the unit circle);
 \item the real standard Gaussian law, or the standard circular complex Gaussian law.
\end{enumerate}
Let $f_n(z)=\sum_{j=0}^n\xi_jz^j$, with $\nu_n\equiv0$ when $f_n$ is the zero polynomial, and put
\[
 \Phi_{\rm rad}(x)=\frac{\int_0^1te^{2xt}\dd t}{\int_0^1e^{2xt}\dd t}=\frac12\Big(1+\coth x-\frac1x\Big).
\]
There are constants $C_A$, depending only on $A$ and on the law of $\xi_0$, and an event of probability one on which, for every $A\ge1$,
\[
 \sup_{|x|\le A}\left|\frac{\nu_n(1+x/n)}{n}-\Phi_{\rm rad}(x)\right|\le C_A\,n^{-1/34}(\log n)^{7/2}\qquad\text{for all sufficiently large }n.
\]
\end{theorem}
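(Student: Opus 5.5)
The plan is to use Jensen's formula, which turns zero counting into an integral of $\log|f_n|$ over a circle:
\[
 \int_0^{R}\frac{\nu_n(s)}{s}\dd s=\frac1{2\pi}\int_0^{2\pi}\log|f_n(Re^{i\theta})|\dd\theta-\log|\xi_0|,
\]
with the usual modification when $\xi_0=0$ (divide out the lowest nonvanishing power). Set $R=1+x/n$ and $L_n(x)=\frac1{2\pi}\int\log|f_n((1+x/n)e^{i\theta})|\dd\theta$.

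\textbf{Step 1: almost-sure asymptotics for $L_n(x)$.} The target is
\[
 L_n(x)=\tfrac12\log\sigma_n^2(x)+O(n^{-\delta}(\log n)^{c}),\qquad \sigma_n^2(x)=\sum_{j\le n}\E|\xi_0|^2(1+x/n)^{2j}\approx n\,\E|\xi_0|^2\int_0^1e^{2xt}\dd t,
\]
uniformly for $|x|\le A+1$, almost surely for all large $n$. The upper bound is routine: Jensen's inequality gives $L_n\le\frac12\log\frac1{2\pi}\int|f_n|^2=\frac12\log\sum|\xi_j|^2R^{2j}$, and the strong law, with Hoeffding for bounded coefficients, controls the right side. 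The lower bound requires $\log|f_n|$ not to be very negative on much of the circle. For the Gaussian case the plan is as follows.
\begin{enumerate}[label=\textup{(\alph*)}]
 \item Discretize $\theta$ at spacing $n^{-2}$. At each grid point, $f_n/\sigma_n$ is standard Gaussian, so $\Prob(|f_n|<\epsilon\sigma_n)\lesssim\epsilon$.
 \item Combine this with a Bernstein derivative bound to get $\int\log^-(|f_n|/\sigma_n)\le n^{-\delta}$ outside an event of probability $n^{-2}$.
 \item Apply Borel--Cantelli.
\end{enumerate}
For bounded symmetric coefficients, replace the Gaussian small-ball estimate by a local CLT or anti-concentration bound for $\sum\xi_jR^je^{ij\theta}$. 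This holds for $\theta$ away from rational points with small denominators, by Littlewood--Offord/Halász-type estimates. Near the exceptional arcs, use the crude bound $\log|f_n|\ge -Cn$ on a set of small measure, via Remez/Turán-type lemmas: a polynomial with integer-like coefficients cannot be tiny on a large arc. Symmetry makes the distribution of $f_n(Re^{i\theta})$ well-behaved enough for the CLT, including cases with an atom at $0$. I expect the almost-sure exponent $1/34$ to come from balancing the arc sizes, the Berry--Esseen error, and the union bound over $n$.

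\textbf{Step 2: from potentials to counts.} Differentiating in $\log R$ recovers $\nu_n$, since $\frac{\dd}{\dd\log R}L=\nu_n(R)$. The limit $\frac{\dd}{\dd x}\frac12\log\int_0^1 e^{2xt}\dd t=\Phi_{\rm rad}(x)$, because the factor $n$ from $\dd\log R=\dd x/n$ cancels the normalization. Since $\nu_n$ is monotone in $R$, use the standard convexity sandwich: for $h>0$,
\[
 \frac{L_n(x)-L_n(x-h)}{h}\le\frac{\nu_n(1+x/n)}{n}\lesssim\frac{L_n(x+h)-L_n(x)}{h},
\]
up to $O(h+1/n)$ from $\log(1+x/n)$ versus $x/n$. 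With an $L_n$ error $\eta$, the total error is $O(\eta/h+h)$. Choosing $h=\sqrt\eta$ gives a square root loss, consistent with the small exponent. Uniformity in $|x|\le A$ follows from a grid of mesh $h$, monotonicity of $\nu_n$, and continuity of $\Phi_{\rm rad}$. A single probability-one event is obtained by intersecting over $A\in\N$.

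\textbf{Main obstacle.} The hardest part is the lower bound on $L_n$ for discrete coefficients such as random signs. At $\theta=0$ or near roots of unity, anti-concentration fails, and the union bound must be uniform in both $n$ and $x$ with only polynomial losses. I would first try to handle the rational arcs by a deterministic bound: for Littlewood polynomials, $\int_I\log|f_n|\ge -C|I|\log n$ for short arcs $I$, using Mahler measure and integrality of the coefficients. Then I would apply the Gaussian-comparison small-ball estimate to the remaining arcs.
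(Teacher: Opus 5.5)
\textbf{Gap 1: Step 1 aims at the wrong target, and your two bounds cannot meet.}

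At most angles $f_n(re^{i\theta})/\sigma_n$ is close in law to a standard complex Gaussian $G$. So $L_n(x)$ is $\log\sigma_n(x)+\E\log|G|+o(1)=\log\sigma_n(x)-\gamma/2+o(1)$, not $\frac12\log\sigma_n^2(x)+o(1)$. Two consequences follow.
\begin{itemize}
\item Your Jensen-inequality upper bound $L_n\le\frac12\log\sum_j|\xi_j|^2R^{2j}$ is off by the fixed amount $\gamma/2$.
\item The claim in (b) that $\int\log^{-}(|f_n|/\sigma_n)\,\mathrm d\theta/2\pi\le n^{-\delta}$ with high probability is false. This integral is close to $\E\log^{-}|G|>0$. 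Only the contribution of the set where $|f_n|<n^{-t}\sigma_n$ is small, and that is exactly what logarithmic integrability gives.
\end{itemize}
The secant step divides the error in $L_n$ by $h\to0$. A common additive constant would be harmless, but an $O(1)$ gap between your upper and lower bounds is fatal.

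What the plan lacks is a law of large numbers over angles. You must show that $\int\log|f_n/\sigma_n|\,\mathrm d\theta/2\pi$ concentrates at one constant, the same for every radius. The paper does this as follows.
\begin{itemize}
\item It writes the truncated integral as $T-\int_{-T}^{T}A_{e^s}\,\mathrm ds$, with occupation fractions $A_s=\mu\{|X_r|\le s\}$.
\item It proves $\Var A_s=O(n^{-1/2})$ by a Berry--Esseen comparison of the values at two angles in general position with two independent copies of $G$.
\end{itemize}
Your plan uses only one-point small-ball estimates, and these cannot give a variance bound.

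This also removes your ``main obstacle''. Only scales $s\ge n^{-t}$ are needed. Angles near $0,\pi$ and pairs near the diagonals are discarded at a cost of $O(n^{-1/2})$ in measure. Values below $n^{-t}$ are handled by the Nazarov--Nishry--Sodin bound, applied conditionally on the amplitudes in $\xi_k=\epsilon_ka_k$, and this works for every symmetric law. No Diophantine analysis or integrality argument is needed. Integrality would not be available for Steinhaus or other non-lattice laws anyway.

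\textbf{Gap 2: Borel--Cantelli over all degrees does not work.}

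Single-degree estimates of this type fail with probability that is only polynomially small with a small exponent. In the paper it is $n^{-1/2+4t}$ with $t=1/17$, coming from Chebyshev on the occupation variance, before the union bound over the net of radii. This is not summable in $n$. The same objection applies to any union bound over $n$ built on Berry--Esseen errors.

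The paper applies Borel--Cantelli only along $N_j=\lfloor j^{17/4}\rfloor$. It then controls every $m$ with $N_j\le m<N_{j+1}$ on one further event.
\begin{itemize}
\item The appended polynomial $f_m-f_{N_j}$ is $O(\sqrt{M\log N_j})$ uniformly on the circle, for all such $m$ at once. This is much smaller than $\sigma_{N_j}$.
\item Hence $\log|f_m|-\log|f_{N_j}|$ is small except where $|f_{N_j}|$ is small.
\item That exceptional set is controlled by the occupation bound together with the $L^2$ norms of $\log|f_m|$ and $\log|f_{N_j}|$.
\end{itemize}
With $M=N_{j+1}-N_j\le5N_j^{13/17}$, the block error is $O(N^{-1/17})$ up to logarithms, and choosing $h=N^{-1/34}$ gives the stated rate. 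Your square-root heuristic for the exponent is right. Without such a block comparison, however, you have no statement valid for all $n$.
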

\begin{figure}[htb]
\centering
\includegraphics[width=\textwidth,alt={Three panels on the zeros of random Littlewood polynomials. (a) The zeros of one polynomial of degree ten thousand, plotted by angle and by scaled distance from the unit circle, with their histogram against the limiting density. (b) The empirical radial distribution functions at degrees one hundred, ten thousand and one million against the limiting profile. (c) The radial density of the zeros of polynomials of degree one million against the limiting density.}]{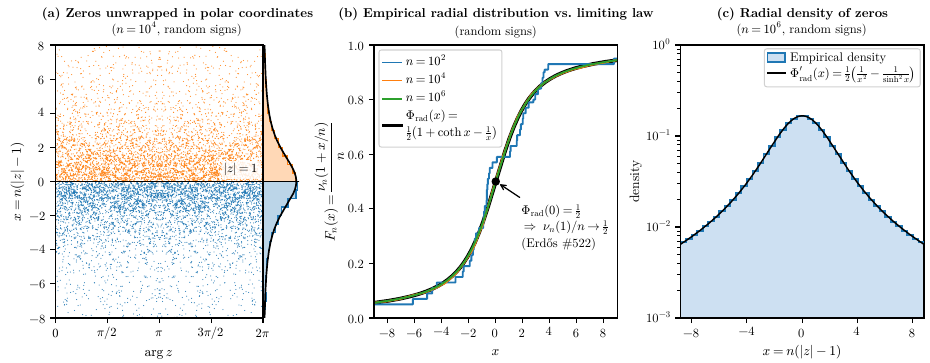}
\caption{\textbf{The radial law for random Littlewood polynomials.}
(a) The zeros of one polynomial of degree $n=10^4$ at angle $\arg z$ and
height $x=n(|z|-1)$, inside the unit disk in blue and outside in orange.
They lie in an $O(1)$ window of $x$, a band of width $O(1/n)$ around the
unit circle. At the right, the histogram of $x$ over these zeros
follows the limiting density $\Phi_{\rm rad}'$, which puts half of the zeros
on each side of the circle. (b) The profiles
$F_n(x)=\nu_n(1+x/n)/n$ for $n=10^2,10^4,10^6$ against $\Phi_{\rm rad}$.
The value $\Phi_{\rm rad}(0)=\frac12$ gives $\nu_n(1)/n\to\frac12$
(Erd\H{o}s Problem~\#522). (c) The density of $x$ over the zeros of $2048$
polynomials of degree $10^6$ against $\Phi_{\rm rad}'$.}
\label{fig:radial-law}
\end{figure}
The profile $\Phi_{\rm rad}$ increases from $0$ to $1$, so almost all
zeros lie at distance $O(1/n)$ from the unit circle, and in the variable
$x$ they have the limiting density $\Phi_{\rm rad}'$.
Figure~\ref{fig:radial-law} compares this density with the zeros of
sign polynomials. Since $\Phi_{\rm rad}(0)=1/2$, the case $x=0$ answers
Erd\H{o}s's question for the partial sums. The tail bound of
Theorem~\ref{thm:C}(i) below answers it with a rate, one degree at a
time, so it applies however the polynomials of different degrees depend
on one another.
\begin{corollary}[Erd\H{o}s Problem \#522]\label{thm:E}
For each $n\ge1$ let $f_n(z)=\sum_{j=0}^n\epsilon_{n,j}z^j$ be a random Littlewood polynomial of degree $n$, with independent random signs $\epsilon_{n,0},\dots,\epsilon_{n,n}$, all defined on one probability space and with arbitrary dependence between different degrees. For example, $f_n$ may be the partial sums of one sign sequence, or independent across $n$.
Then, almost surely, $D_n=O\bigl(n^{3/4}\sqrt{\log n}\bigr)$, and in particular
\[
 \lim_{n\to\infty}\frac{\nu_n(1)}{n}=\frac12 .
\]
\end{corollary}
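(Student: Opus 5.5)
The plan is to get a single-degree tail bound and sum it over $n$. Borel--Cantelli then needs no independence across degrees, which is why the dependence between different $n$ can be arbitrary. Concretely, I aim for a tail bound of the shape
\[
 \Prob\bigl(|D_n|>C n^{3/4}\sqrt{\log n}\bigr)\le C n^{-2}
\]
for a single random Littlewood polynomial of degree $n$, with $C$ absolute. This is what Theorem~\ref{thm:C}(i) is said to provide. Since $\sum_n n^{-2}<\infty$, Borel--Cantelli gives $|D_n|\le Cn^{3/4}\sqrt{\log n}$ for all large $n$ almost surely. Dividing by $n$ gives $\nu_n(1)/n\to1/2$.

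To prove the tail bound for fixed $n$, I would count zeros in $|z|\le1$ with the argument principle. I would compare with the reciprocal polynomial $f_n^*(z)=z^nf_n(1/z)$, which has coefficients $\epsilon_{n,n-j}$ and the same law as $f_n$. Zeros of $f_n$ on the circle $|z|=1$ occur with probability at most exponentially small in $n$ (a real root forces $f_n(\pm1)=0$; non-real unimodular roots I would handle by a small perturbation of the radius). Off that event, $\nu_n(1)=n-\nu_n^*(1)$ up to unimodular zeros, where $\nu_n^*$ counts zeros of $f_n^*$. So $D_n$ is an antisymmetric quantity, and it can be written as
\[
 D_n=\frac1{2\pi}\oint_{|z|=1}d\arg f_n-\frac n2=\frac1{4\pi}\int_0^{2\pi}\partial_\theta\arg\frac{f_n(e^{i\theta})}{f_n^*(e^{i\theta})}\,d\theta .
\]
To get a quantity that concentrates, I would instead use Jensen-type smoothing over the annulus $1-\delta\le|z|\le1+\delta$ with $\delta=n^{-1/4}$. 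This expresses $\nu_n(1)$, up to an error of the number of zeros in the annulus, through averages of $\log|f_n|$ on the circles $|z|=1\pm\delta$. For the error I would use Theorem~\ref{thm:R}-type counting, or directly the known bound that the annulus carries at most $O(n\delta)$ zeros except with small probability.

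On $|z|=1\pm\delta$ the polynomial is far from the unit circle relative to the scale $1/n$, so $\log|f_n|$ is dominated by the largest coefficient block. There the circle averages of $\log|f_n|$ are Lipschitz functions of the signs (after truncating small values of $|f_n|$). I would apply McDiarmid's or Talagrand's inequality, controlling the dangerous event that $|f_n|$ is small on the circle by anti-concentration (e.g.\ Halász-type bounds), with failure probability polynomially small. Balancing the annulus error $n\delta$ against the concentration fluctuation, roughly $\sqrt{n\log n}/\delta$, at $\delta=n^{-1/4}$ gives the $n^{3/4}\sqrt{\log n}$ rate.

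The main obstacle is the concentration step, since $\log|f_n|$ is not Lipschitz near zeros of $f_n$. I would first try to bound the probability that $\min_{|z|=1\pm\delta}|f_n(z)|<e^{-n^{c}}$ via a net argument with small-ball estimates. If that fails, I would fall back on the second-moment/Chebyshev route with sixth-moment bounds on the smoothed linear statistic, which still produces a summable tail.
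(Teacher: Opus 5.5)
Your outer reduction is the paper's: prove a tail bound at each fixed degree that is summable in $n$, then apply the first Borel--Cantelli lemma, which ignores how the degrees are coupled. In the paper this tail bound is Theorem~\ref{thm:C}(i), $\Prob\{|D_n|>An^{3/4}\sqrt{\log(n+2)}\}\le Cn^{-3}$. The gap is in how you prove that bound.

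\emph{The annulus is mis-scaled.} Jensen secants over $[1-\delta,1]$, $[1,1+\delta]$ or $[1-\delta,1+\delta]$ determine $\nu_n(1)$ only up to the number of zeros in the annulus $1-\delta<|z|<1+\delta$. For $\delta=n^{-1/4}$ that number is not $O(n\delta)$. The zeros lie at distance $O(1/n)$ from the circle, so by the radial law (Corollary~\ref{rm:whole-line} at $x=\pm n^{3/4}$) all but $o(n)$ of them lie in this annulus. Heuristically all but $O(n^{1/4})$ do, since $\Phi_{\rm rad}'(x)\sim1/(2x^2)$. Concretely, with $J_n\approx\log\sigma_n-\gamma/2$ the one-sided secants are about $\log(2n\delta)/(2\delta)$ and $n-\log(2n\delta)/(2\delta)$. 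So you learn only that $\nu_n(1)$ lies between roughly $n^{1/4}\log n$ and $n-n^{1/4}\log n$, and Theorem~\ref{thm:R}-type counting confirms this rather than rescuing it. A count of order $n\delta$ holds only for an annulus of radial width of order $\delta/n$, i.e.\ for $\delta=n^{-1/4}$ in the scaled variable $x=n(|z|-1)$. Those circles lie within $O(1/n)$ of the unit circle, where $\log|f_n|$ is not dominated by any block of coefficients.

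\emph{The missing concentration.} After this correction, the count error is the deviation of $J_n$ divided by $\log(s/r)\approx n^{-5/4}$. You would therefore need $J_n$ at these radii to accuracy about $n^{-1/2}\sqrt{\log n}$, its natural fluctuation scale, outside an event of summable probability. That is the whole difficulty, and your tools do not supply it.
\begin{itemize}
\item The truncated average $\int\log\max(|f_n|,m)\dd\mu$ is not convex in the coefficients: along a ray $t\mapsto ta$ it is $\log t$ plus a constant once $t|f_a|>m$ on the circle. So Talagrand's convex inequality is unavailable.
\item One sign flip moves this average by up to $2/m$, and the truncation level must satisfy $m\lesssim\sqrt n$. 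McDiarmid therefore only bounds fluctuations at scale $\sqrt n/m\gtrsim1$, not $n^{-1/2}$.
\item The paper's own concentration of the angular logarithm (Theorem~\ref{rc:concentration}) reaches only deviation $n^{-t}(\log n)^7$, with failure probability about $n^{-1/2+4t}$. That is why its Jensen route gives the weaker rate of Theorem~\ref{thm:R} and needs the nested partial sums to pass between degrees; it says nothing for arbitrary couplings.
\item The sixth-moment fallback has no supporting estimate, and $\E D_n^2\le Cn^{3/2}\sqrt{\log n}$ alone gives a nonsummable Chebyshev tail.
\item Unimodular zeros are not exponentially rare either: for odd $n$, $\Prob\{f_n(1)=0\}\asymp n^{-1/2}$.
\end{itemize}

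\emph{What $\delta$ is in the paper.} The balance of $n\delta$ against $\sqrt{n\log n}/\delta$ that you quote is the paper's, but there $\delta$ is a smoothing width in the value of the real carrier coordinate, not a radius. The argument runs as follows.
\begin{itemize}
\item The argument principle writes $D_n$ as the signed crossing sum of Proposition~\ref{pre:crossing}.
\item Pairing $\epsilon_j$ with $\epsilon_{n-j}$ (Lemma~\ref{sg:activity}) fixes $V_n$ and leaves $U_n$ a linear form in independent free signs.
\item The smoothed signs are differences of ReLUs of these forms. They are therefore combinations of convex functions, with Lipschitz constant of order $\sqrt{n\log n}/\delta$ by the occupancy and frame bounds.
\item The row small-ball bound makes the smoothing cost $O(n\delta+\sqrt n)$.
\item Talagrand's inequality with $\delta=n^{-1/4}\sqrt{\log n}$ then gives the $n^{-3}$ tail (Lemma~\ref{sg:master}).
\end{itemize}
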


The main difficulty is to pass from a sparse sequence of degrees, where
Borel--Cantelli arguments apply, to all degrees at once. Jensen's
formula expresses the radial counts of $f_N$ through the angular average
$J_N(r)$ of $\log|f_N(re^{i\theta})|$. This average concentrates, and
Borel--Cantelli gives the radial counts along the sparse degrees
$N=\lfloor j^{17/4}\rfloor$. We then show that $J_m(r)$ hardly moves as
$m$ runs through the block $N\le m<\lfloor(j+1)^{17/4}\rfloor$. Appending
the terms of degree above $N$ changes $f_N$ by a polynomial that is
uniformly small on the circle, so
$\log|f_m|-\log|f_N|$ is small except where $|f_N|$ itself is small.
That set has small measure, and logarithmic integrability controls its
contribution. The rate in
Theorem~\ref{thm:R} comes from this comparison. For signs at $x=0$,
Corollary~\ref{thm:E} has the much sharper rate
$O(n^{-1/4}\sqrt{\log n})$, which Theorem~\ref{thm:C} gives by a
different method.

\subsection{A law of the iterated logarithm for dependent sequences}
For the fluctuations we use the winding representation of $D_n$ from
Section~\ref{sec:prelim}. Up to a small error, $D_n$ is a signed sum
over the crossings of the real axis by the curve
$t\mapsto e^{-int/2}f_n(e^{it})$. There are of order $n$ crossings, and
cancellation among their signs suggests fluctuations of order $\sqrt n$.
The counts at different degrees are strongly dependent. The next theorem
isolates what a sharp almost-sure statement over all integer degrees
requires. Its four inputs are stated in Section~\ref{sec:framework}.
Input (I1) is a moderate-deviation estimate along geometric sequences of
degrees, (I2) and (I3) bound the increments over coarse and fine gaps,
and (I4) compares the sequence with independent proxies on geometric
blocks.
\begin{theorem}[Iterated-logarithm framework]\label{thm:A}
Let $c>0$ and let $(Z_n)_{n\ge1}$ be centered, square-integrable real
random variables on one probability space satisfying
Inputs~\ref{fw:I1}--\ref{fw:I4} of Section~\ref{sec:framework} with
constant $c$. Then, almost surely,
\[
 \limsup_{n\to\infty}\frac{Z_n}{\sqrt{2cn\log\log n}}=1,
 \qquad
 \liminf_{n\to\infty}\frac{Z_n}{\sqrt{2cn\log\log n}}=-1.
\]
\end{theorem}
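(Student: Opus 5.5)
We prove the two halves, the upper bound $\limsup\le1$ and the lower bound $\limsup\ge1$, separately; the $\liminf$ statement follows by applying the same argument to $-Z_n$. We assume that Inputs~\ref{fw:I1}--\ref{fw:I4} are symmetric under $Z_n\mapsto-Z_n$, which is the natural reading of a two-sided moderate-deviation estimate and two-sided increment bounds. Throughout, fix $\theta>1$ close to $1$ and work along the geometric degrees $n_k=\lfloor\theta^k\rfloor$. Put $b_n=\sqrt{2cn\log\log n}$.

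\emph{Upper bound.} By the moderate-deviation estimate (I1), for fixed $\varepsilon>0$,
\[
\Prob\bigl(Z_{n_k}>(1+\varepsilon)b_{n_k}\bigr)\le\exp\bigl(-(1+\varepsilon)^{2-o(1)}\log\log n_k\bigr)=O\bigl(k^{-(1+\varepsilon)^{2-o(1)}}\bigr).
\]
This is summable in $k$, so by Borel--Cantelli $\limsup_k Z_{n_k}/b_{n_k}\le1+\varepsilon$. To pass to all $n$, we control
\[
M_k=\max_{n_k\le n<n_{k+1}}|Z_n-Z_{n_k}| .
\]
\begin{enumerate}
\item Subdivide $[n_k,n_{k+1})$ by a coarse grid of mesh $\delta n_k$. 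Input (I2) bounds the increments between grid points with enough tail decay to union-bound over the $O(1/\delta)$ grid points and sum over $k$. This gives $|Z_g-Z_{n_k}|\le\eta(\theta,\delta)\,b_{n_k}$ at all grid points eventually, with $\eta\to0$ as $\theta\downarrow1$.
\item Input (I3) bounds the fine increments inside each coarse cell, either by a chaining or dyadic maximal inequality or directly as a maximal bound. Again Borel--Cantelli applies, giving $o(b_{n_k})$.
\end{enumerate}
Since $b_n/b_{n_k}\to1$ uniformly on the block as $\theta\downarrow1$, we get $\limsup_n Z_n/b_n\le1+\varepsilon+\eta$. Letting $\varepsilon\to0$ and $\theta\downarrow1$ along countable sequences gives $\limsup\le1$.

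\emph{Lower bound.} Here we take $\theta$ large, use the subsequence $m_k=\lfloor\theta^k\rfloor$, and write
\[
Z_{m_k}=(Z_{m_k}-Z_{m_{k-1}})+Z_{m_{k-1}} .
\]
The upper bound already shows that $|Z_{m_{k-1}}|\le(1+\varepsilon)b_{m_{k-1}}\approx\theta^{-1/2}b_{m_k}$, which is negligible as $\theta\to\infty$. Input (I4) supplies independent proxies $Y_k$ with $Z_{m_k}-Z_{m_{k-1}}-Y_k$ small, say $o(b_{m_k})$ almost surely or with summable tails. The increments have variance $\approx c(m_k-m_{k-1})=c m_k(1-\theta^{-1})$. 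The lower-tail half of (I1), applied to the increments or to the proxies, gives
\[
\Prob\bigl(Y_k>(1-\varepsilon)\sqrt{1-\theta^{-1}}\,b_{m_k}\bigr)\ge k^{-(1-\varepsilon)^{2}+o(1)},
\]
which is not summable. The second Borel--Cantelli lemma for the independent events $\{Y_k>\cdots\}$ then gives infinitely many such $k$. Combining the three pieces yields
\[
\limsup_n\frac{Z_n}{b_n}\ge(1-\varepsilon)\sqrt{1-\theta^{-1}}-(1+\varepsilon)\theta^{-1/2}-o(1),
\]
and sending $\theta\to\infty$ and $\varepsilon\to0$ gives $\limsup\ge1$.

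\emph{Main obstacle.} The delicate step is the maximal control of $M_k$ in the upper bound. The (I1) estimate concerns only the geometric degrees, so the all-integer statement rests entirely on (I2)--(I3). The grid mesh and the tail exponents must be chosen so that the union bound over roughly $\delta^{-1}$ coarse and $n_k^{\alpha}$ fine points remains summable while the total increment stays at most $\eta b_{n_k}$. I would first try a dyadic chaining within each block: apply (I3) at each dyadic level with a threshold proportional to $2^{-\ell/4}b_{n_k}$, so that the level thresholds sum to $o(b_{n_k})$ and the level probabilities sum over $\ell$ and $k$. If (I3) gives only second-moment bounds, I would fall back on a Rademacher--Menshov-type maximal inequality, absorbing the resulting $\log^2 n$ loss into the fine scale.
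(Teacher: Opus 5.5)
Your grid upper bound via (I1) and the first Borel--Cantelli lemma, and your use of (I2) from $n_k$ to coarse checkpoints, match the paper. There are, however, two genuine gaps.

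\emph{Lower bound.} You have misread (I4). It supplies no proxies for the increments $Z_{m_k}-Z_{m_{k-1}}$, and it asserts nothing pathwise. It gives independent $\widetilde Z_k$ with $\mathcal W_2(\widetilde Z_k,Z_{L_k})\le CL_k^{(1-\beta)/2}$, where $L_k=n_k-n_{k-1}-1$. It controls the error $Z_{n_k}-\widetilde Z_k$ only in law, through $-(Z_{n_k}-Z_{L_k})$. The decomposition to use is therefore $Z_{n_k}=\widetilde Z_k+(Z_{n_k}-\widetilde Z_k)$. The term $Z_{m_{k-1}}$ that you bound by the upper half is not the one that needs controlling.

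The proxy half of your argument does go through. Transfer the (I1) tail of $Z_{L_k}$, a geometric sequence of ratio $\theta$, to $\widetilde Z_k$ by Chebyshev's inequality on a near-optimal coupling, at a summable cost $O(L_k^{-\beta}/\log k)$, and then apply the second Borel--Cantelli lemma.

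What is missing is the bound $|Z_{n_k}-\widetilde Z_k|\le\eta\sqrt{n_k\log k}$ for all large $k$. This is only $\eta$-small, not $o(b_{n_k})$. Proving it requires (I2) applied to $Z_{n_k}-Z_{L_k}$ with $n=L_k$ and $h=n_{k-1}+1$, where $h/L_k\to(\theta-1)^{-1}$. Its exponent at $t=\tfrac{\eta}{2}\sqrt{n_k\log k}$ is at least $2\log k$ once $(\theta-1)^{-1}<\min\{\delta_0,(\eta^2/(8C_1))^2\}$; this is where a large $\theta$ is really used. The same coupling comparison and the first Borel--Cantelli lemma then finish the step.

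Your lower bound never invokes (I2), and (I1) says nothing about increments. The heuristic $\Var(Z_{m_k}-Z_{m_{k-1}})\approx c(m_k-m_{k-1})$ is not an input, and it fails for root counts: for Gaussian coefficients $\Var(D_M-D_N)\approx c_G(M+N)-2Na(M/N)$, not $c_G(M-N)$.

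\emph{Maximal step.} With coarse cells of length $\delta n_k$, the fine remainder cannot be made $o(b_{n_k})$. Input (I3) does not let you choose thresholds. On a gap $h=\delta n_k$ it controls the increment only at the level $C_p\varphi(n_k,\delta n_k)\asymp\delta^{1/4}\sqrt{n_k}\,\log^{5/4}n_k$. This exceeds $b_{n_k}\asymp\sqrt{n_k\log k}$ by a factor of order $k^{5/4}/\sqrt{\log k}$. A dyadic chain run through (I3) fails at its top levels for the same reason, so the thresholds $2^{-\ell/4}b_{n_k}$ are not available.

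The missing idea is to make the checkpoints polylogarithmically dense, with spacing $m\approx n_k/k^5$, that is, about $n_k\log^{-5}n_k$. At this spacing $\varphi(2n_k,m)=O(\sqrt{n_k})=o(\sqrt{n_k\log k})$. Input (I2) is stated down to $h/n\ge\log^{-6}(n+2)$ precisely so that it covers the increments from $n_k$ to all of the at most $k^5$ checkpoints. Choosing $q-1$ small in terms of $\varepsilon$ and $C_1$ makes its exponent at $t=\tfrac{\varepsilon}{4}\sqrt{n_k\log k}$ at least $10\log k$. The union bound then costs $O(k^{-5}+k^5n_k^{-b})$, which is summable. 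Finally, (I3) with $p=6$ handles the at most $2n_k$ remainders.
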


\subsection{Gaussian coefficients}
For real Gaussian coefficients all four inputs of Theorem~\ref{thm:A}
hold, and this gives the sharp law of the iterated logarithm. The
variance of $D_n$ grows like $c_Gn$, where $c_G$ is given by an explicit
integral.
\begin{theorem}[Gaussian fluctuations]\label{thm:B}
Let $(\xi_j)_{j\ge0}$ be independent standard real Gaussian variables, $f_n(z)=\sum_{j=0}^n\xi_jz^j$ and $D_n=\nu_n(1)-n/2$. Put $r(u)=\sin(u/2)/(u/2)$, $r(0)=1$, and for $q\ge1$
\[
 a(q)=\frac1\pi\int_{\R}\frac{r'(u)^2}{q-r(u)^2}\dd u,\qquad c_G=a(1).
\]
Then $0<c_G<\infty$, and the following hold.
\begin{enumerate}[label=\textup{(\roman*)}]
 \item \emph{Moments.} $\E D_n=0$ for every $n$, and $\Var D_n=c_Gn+O(n^{99/100})$.
 \item \emph{Covariance across degrees.} If $N\to\infty$ and $M/N\to q>1$, then $N^{-1}\Cov(D_N,D_M)\to a(q)$. For all large $n$ and all $n<m\le2n$ with $\delta=(m-n)/n\ge n^{-1/1000}$,
 \[
 \bigl|n^{-1}\Cov(D_n,D_m)-a(1+\delta)\bigr|\le C\delta^{-100}n^{-1/100},
 \qquad c_G-a(1+\delta)=\sqrt{\delta/12}+O(\delta)\quad(\delta\downarrow0).
 \]
 \item \emph{Normal approximation.} $\sigma_n^2=\Var D_n>0$ for $n\ge1$, and $d_K(D_n/\sigma_n,\mathcal N(0,1))\le Cn^{-1/60}$ for all $n\ge1$.
 \item \emph{Joint limits.} For fixed $0<q_1<\dots<q_d$, the vector $n^{-1/2}(D_{\lfloor q_1n\rfloor},\dots,D_{\lfloor q_dn\rfloor})$ converges in distribution to the centered Gaussian vector with covariance $\Sigma_{ii}=c_Gq_i$ and $\Sigma_{ij}=q_ia(q_j/q_i)$ for $i<j$. If $l/m\to\alpha\in(0,1)$, then for large $m$ the standardized increment $(D_m-D_l)/\Var(D_m-D_l)^{1/2}$ is within Kolmogorov distance $C_\alpha m^{-1/60}$ of $\mathcal N(0,1)$.
 \item \emph{Increments.} Uniformly for $n\ge2$ and $1\le h\le n$, $\Var(D_{n+h}-D_n)\le C\sqrt{nh}\log(n+2)$, and for every $p>0$ there is $C_p$ with
 \[
 \Prob\Bigl(|D_{n+h}-D_n|>C_p\bigl[(nh)^{1/4}\log^{5/4}(n+2)+\log^2(n+2)\bigr]\Bigr)\le C_pn^{-p}.
 \]
 For each fixed $B>0$, $\Var(D_{n+h}-D_n)\asymp\sqrt{nh}$ uniformly for $n/(\log n)^B\le h\le n$ and large $n$. There is $\delta_0>0$ such that, uniformly for $\log^{-6}(n+2)\le h/n\le\delta_0$ and large $n$, the standardized increment $(D_{n+h}-D_n)/\Var(D_{n+h}-D_n)^{1/2}$ is within Kolmogorov distance $Cn^{-1/120}$ of $\mathcal N(0,1)$.
 \item \emph{Iterated logarithm.} Inputs \textup{(I1)--(I4)} of Theorem~\ref{thm:A} hold for $Z_n=D_n$ with $c=c_G$. Consequently, almost surely,
 \[
 \limsup_{n\to\infty}\frac{D_n}{\sqrt{2c_Gn\log\log n}}=1,\qquad
 \liminf_{n\to\infty}\frac{D_n}{\sqrt{2c_Gn\log\log n}}=-1,
 \]
 over all integers $n$.
\end{enumerate}
\end{theorem}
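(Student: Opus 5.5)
We will derive all parts of Theorem~\ref{thm:B} from one representation: $D_n$ as a nonlinear functional of a stationary Gaussian process on the circle. Write $g_n(t)=e^{-int/2}f_n(e^{it})$. For real Gaussian coefficients, the real and imaginary parts of $g_n(t)/\sqrt{n+1}$ form a Gaussian field. Its covariance, in the rescaled variable $u=nt$ and away from $t\in\{0,\pi\}$, converges to that of a stationary planar process $(X(u),Y(u))$. The relevant correlation is $r(u)=\sin(u/2)/(u/2)$, coming from $\frac1{n}\sum_j e^{i(j-n/2)(s-t)}$.

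By the argument principle, as in Section~\ref{sec:prelim}, $D_n$ equals a signed count of real-axis crossings of $g_n$ up to an $O(1)$ error. Equivalently, $D_n=\frac1{2\pi}\int_0^{2\pi}\frac{d}{dt}\arg g_n(t)\,dt$ after subtracting the mean $n/2$. The symmetry $\xi\mapsto(\xi_{n-j})$ maps $f_n$ to its reciprocal polynomial and sends $D_n\mapsto-D_n$ modulo zeros on the circle, which have probability zero. This gives $\E D_n=0$.

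\textbf{Variance and covariances.} We will compute $\Var D_n$ and $\Cov(D_n,D_m)$ with a Kac--Rice formula for the second moment of the signed winding. Equivalently, we use $\Cov$ of $\operatorname{Im}(g'/g)$ at two angles. For jointly Gaussian complex variables this has a closed form in terms of the correlation $\rho$ and its derivatives. For the two-degree case with $m/n\to q$, the cross-correlation of the normalized fields becomes $r(u)/\sqrt q$ in the appropriate variable. Integrating over the angular separation $u$ gives $\frac1\pi\int r'^2/(q-r^2)$, hence $a(q)$ and $c_G=a(1)$.

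The quantitative versions, with rates $n^{99/100}$ and $\delta^{-100}n^{-1/100}$, come from splitting the angular separation into three regions:
\begin{itemize}
\item a near-diagonal region $|s-t|\le K/n$, handled with the exact finite-$n$ kernel;
\item a bulk region, where the Dirichlet kernel is compared with its limit $r$ with polynomial error;
\item neighbourhoods of $t=0,\pi$ of size $n^{-\epsilon}$, which are bounded crudely.
\end{itemize}
The asymptotic $c_G-a(1+\delta)\sim\sqrt{\delta/12}$ follows because $q-r^2\approx\delta+u^2/12$ near $u=0$.

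\textbf{CLT, joint limits and increments.} For the CLT we will use the Wiener chaos expansion of the winding functional in the spirit of Breuer--Major and Nourdin--Peccati. We truncate at a finite chaos level. The fourth-moment theorem controls each projection, with Kolmogorov rate via Malliavin--Stein. The tail of the chaos series is controlled by the variance estimates together with $m$-dependent approximation: replace $r$ by a compactly supported correlation. The power $n^{-1/60}$ results from balancing the truncation level, the dependence range and the bulk error. Joint limits follow by the same method applied to linear combinations, together with the covariance limits. For increments $D_{n+h}-D_n$, the covariance computation with $\delta=h/n$ gives variance $\asymp n\sqrt{\delta}=\sqrt{nh}$. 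The tail bound follows from hypercontractivity on finite chaoses plus a Bernstein-type estimate for the $m$-dependent approximants.

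\textbf{Iterated logarithm, and the main obstacle.} For the law of the iterated logarithm we verify the inputs of Theorem~\ref{thm:A}:
\begin{itemize}
\item (I1) from the moderate-deviation form of the CLT along geometric degrees, via cumulant bounds for the chaos approximations;
\item (I2) and (I3) from the increment variance and tail bounds in (v);
\item (I4) by building, on geometric blocks, proxies that depend only on coefficients from disjoint index ranges. Their correlation with the true $D_n$ decays like $a(q)\to0$ as $q\to\infty$.
\end{itemize}
The main obstacle is (I1): we need moderate deviations at scale $\sqrt{n\log\log n}$ for a non-polynomial functional. The plan is to bound all cumulants of the truncated chaos by diagram formulas with constants growing like $k!^{1+\gamma}$, then apply a Statulevi\v{c}ius-type lemma.
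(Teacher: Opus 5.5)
\textbf{The gap: increments for small $h$.} The gap is in part (v), and therefore in Input \textup{(I3)}.

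For the increment variance you rely on the two-degree covariance computation. But $\Var(D_{n+h}-D_n)=\Var D_n+\Var D_{n+h}-2\Cov(D_n,D_{n+h})$ cancels three quantities of size $n$ down to $\sqrt{nh}$. You would need expansions with error $O(\sqrt{nh})$; for $h=1$ that is $O(\sqrt n)$ precision in a variance of order $n$. A kernel-comparison argument of the kind you describe gives errors that are a power of $n$ short of that, so it reaches only $h\ge n^{1-c}$, not $1\le h\le n$.

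The tail bound fails for a related reason. \textup{(I3)} asks for probability $C_pn^{-p}$, for every $p$, at only about $\log^{5/4}(n+2)$ standard deviations of the increment.
\begin{itemize}
\item Hypercontractivity on chaoses of order at most $Q$ gives $\|F\|_r\le(r-1)^{Q/2}\|F\|_2$. Reaching probability $n^{-p}$ therefore costs about $(\log n)^{Q/2}$ standard deviations, which fits only for $Q\le2$.
\item The chaos remainder, like the error of an $m$-dependent approximation, is controlled only in $L^2$. That gives one fixed polynomial probability, not $n^{-p}$ for every $p$.
\end{itemize}

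\textbf{The missing idea.} The curves of degrees $n$ and $n+h$ have nearly the same crossings. Written on the carrier of degree $n$, they are $X+iY$ and $(X+X^h)+i(Y+Y^h)$, where the tail fields have size $\sqrt{h/n}$. Pair the zeros of $Y$ and $Y+Y^h$ along the components of $\{Y(Y+Y^h)<0\}$, whose total length is about $\sqrt{nh}$. This turns the difference of crossing sums into a sum of differences of $\sgn X$ at nearby points. Given $Y$ and the added coefficients, $X$ is an independent Gaussian field. A Gaussian block H\"older (frame) inequality then makes this sum sub-Gaussian, with variance proxy of order (pairing cost)$\times$(local zero count). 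This gives both $\Var(D_{n+h}-D_n)\le C\sqrt{nh}\log(n+2)$ and $n^{-p}$ tails at scale $(nh)^{1/4}\log^{5/4}(n+2)$. The conditional mean given $Y$ and the added coefficients fluctuates only at scale about $\sqrt{h}\le(nh)^{1/4}$.

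\textbf{Input (I4).} Your verification of \textup{(I4)} misses what it asks for. \textup{(I4)} requires independent proxies $\widetilde Z_k$ with $\mathcal W_2(\widetilde Z_k,Z_{L_k})$ and $\mathcal W_2(Z_{n_k}-\widetilde Z_k,-(Z_{n_k}-Z_{L_k}))$ of lower order than $\sqrt{n_k}$. A proxy whose correlation with $D_{n_k}$ decays is the opposite of what is needed, and decorrelation alone does not license the second Borel--Cantelli lemma. The right proxy is the discrepancy of the polynomial $P_k$ built from the last block $\xi_{n_{k-1}+1},\dots,\xi_{n_k}$. It works because of a joint form of the reversal you already use for $\E D_n=0$. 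The law-preserving reversal of $(\xi_0,\dots,\xi_{n_k})$ sends the pair $(f_{n_k},f_{L_k})$ to $(f_{n_k}^*,P_k^*)$. Hence $(D_{n_k},D(P_k))\overset{d}{=}(-D_{n_k},-D_{L_k})$, and both distances in \textup{(I4)} are zero.

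\textbf{Input (I1) is easier than you think.} Along $n_k\sim cq^k$ the threshold in \textup{(I1)} is only $x_k\approx u\sqrt{2\log k}$ standard deviations. So the target probability $k^{-u^2+o(1)}$ is polynomially small in $k$, while the Kolmogorov error $Cn_k^{-1/60}$ from (iii) is exponentially small in $k$. Together with $\Var D_n=c_Gn+O(n^{99/100})$, this already gives \textup{(I1)}; no cumulant or Statulevi\v{c}ius bounds are needed.

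\textbf{On part (iii).} An explicit rate from truncated chaos needs a quantitative chaos-tail estimate, and your sketch does not supply one. One is available: the smoothed crossing sum equals $P_tQ_n$ for the Ornstein--Uhlenbeck semigroup at time $t\approx\varepsilon^2/2$, and the arcsine (Sheppard) identity gives $\|D_n-P_tQ_n\|_2^2\le C(n\varepsilon+\log n)$. The paper then applies Chatterjee's second-order Poincar\'e inequality to this smoothed functional instead of truncating.
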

Numerically $c_G=0.3058977\ldots$. Parts (i) and (iii) give
Input~\ref{fw:I1}, part (v) gives Inputs~\ref{fw:I2} and~\ref{fw:I3},
and the block reversal of Corollary~\ref{pre:reversal-law} gives
Input~\ref{fw:I4}. Part (vi) then follows from Theorem~\ref{thm:A}. The
square-root cusp in (ii) means that an increment over a gap $h$ at
degree $n$ has standard deviation of order $(nh)^{1/4}$, and
Input~\ref{fw:I3} is formulated at this scale.

For Gaussian coefficients the mean radial counts are explicit at every
degree, and their profile is that of Theorem~\ref{thm:R} up to a bounded
error.
\begin{theorem}[Gaussian radial profile]\label{thm:G}
For $n\ge1$ and $r>0$ put $K_n(r)=\sum_{k=0}^nr^{2k}$, $m_n(r)=K_n(r)^{-1}\sum_{k=0}^nkr^{2k}$ and $p_n(r,\theta)=K_n(r)^{-1}\sum_{k=0}^nr^{2k}e^{2ik\theta}$. Let $(\xi_j)_{j\ge0}$ be independent coefficients, either all standard real Gaussian or all standard circular complex Gaussian ($\xi_j=(g_j+ig_j')/\sqrt2$ with $g_j,g_j'$ independent standard real Gaussians), and $f_n(z)=\sum_{j=0}^n\xi_jz^j$.
\begin{enumerate}[label=\textup{(\roman*)}]
 \item \emph{Exact radial means.} In the circular complex model $\E\nu_n(r)=m_n(r)$. In the real model
 \[
 \E\nu_n(r)=m_n(r)+\frac1{4\pi}\int_0^{2\pi}r\partial_r\log\Bigl(1+\sqrt{1-|p_n(r,\theta)|^2}\Bigr)\dd\theta,
 \]
 with an absolutely convergent integral.
 \item \emph{Mean profile.} In either model, for every $A<\infty$ there is $C_A$ such that $\sup_{|x|\le A}|\E\nu_n(1+x/n)-n\Phi_{\rm rad}(x)|\le C_A$ for all $n\ge\max(1,2A)$, where $\Phi_{\rm rad}$ is the radial profile of Theorem~\ref{thm:R}.
\end{enumerate}
\end{theorem}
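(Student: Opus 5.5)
For part (i) I would use Jensen's formula: for $r>0$ with no zeros of $f_n$ on $|z|=r$, $r\partial_r J_n(r)=\nu_n(r)$, where $J_n(r)=\frac1{2\pi}\int_0^{2\pi}\log|f_n(re^{i\theta})|\dd\theta$. So $\E\nu_n(r)=r\partial_r\E J_n(r)$, once differentiation under the expectation is justified. The derivative is taken in the distributional sense, $\E J_n$ is smooth in $r$, and $\nu_n(r)$ is a.s.\ continuous at fixed $r$. Fix $z=re^{i\theta}$ and write $f_n(z)=X+iY$.

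In the circular complex model, $f_n(z)$ is circular complex Gaussian with variance $K_n(r)$. Hence $\E\log|f_n(z)|=\frac12\log K_n(r)+c_0$ for an absolute constant $c_0$. Then $r\partial_r\frac12\log K_n=K_n^{-1}\sum_k kr^{2k}=m_n(r)$.

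In the real model, $(X,Y)$ is a centered real Gaussian vector. Its complex pseudo-variance is $\E f_n(z)^2=\sum_k z^{2k}=K_np_n$, and its variance is $\E|f_n|^2=K_n$. The covariance matrix has eigenvalues $\lambda_\pm=\frac{K_n}{2}(1\pm|p_n|)$. I would use the standard identity for a planar Gaussian with eigenvalues $\lambda_\pm$:
\[
\E\log|X+iY|=\log\frac{\sqrt{\lambda_+}+\sqrt{\lambda_-}}{2}+c_1,
\]
with an absolute constant $c_1$. This is checked by reducing to $\E\log(a^2g_1^2+b^2g_2^2)$ and using the known formula $\E\log(a^2g_1^2+b^2g_2^2)=2\log\frac{a+b}{2}+\mathrm{const}$. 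Substituting gives $\sqrt{\lambda_+}+\sqrt{\lambda_-}=\sqrt{K_n/2}\,\bigl(\sqrt{1+|p|}+\sqrt{1-|p|}\bigr)$. Squaring, $(\sqrt{1+|p|}+\sqrt{1-|p|})^2=2\bigl(1+\sqrt{1-|p|^2}\bigr)$. So
\[
\E\log|f_n(z)|=\tfrac12\log K_n+\tfrac14\log\bigl(1+\sqrt{1-|p_n|^2}\bigr)+\mathrm{const}.
\]
Averaging over $\theta$ and applying $r\partial_r$ gives the formula in (i).

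The main technical point is the interchange of $\partial_r$ with $\int\dd\theta$, and absolute convergence of the integral. Near $\theta\in\{0,\pi\}$ we have $|p_n|\to1$ and the square root is not smooth. The singularity is mild: $1-|p_n|^2\asymp\theta^2$ near $\theta=0$, so $\partial_r$ of $\log(1+\sqrt{1-|p|^2})$ is $O(1)$ times $\partial_r|p|^2/\sqrt{1-|p|^2}$. Since $\partial_r|p|^2=O(\theta^2)$ as well, this is bounded, or at worst $O(1/|\theta|)\cdot\theta^2$, which is integrable. I would do this by a Taylor expansion of $p_n$ in $\theta$, which is exact because $|p_n|=1$ only at $\theta\in\{0,\pi\}$. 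I would also note that $\nu_n(r)$ equals the Jensen derivative for a.e.\ $r$, with no zeros on the circle almost surely.

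For (ii), put $r=1+x/n$, so $r^{2k}=e^{2k\log r}$ with $2n\log r=2x+O(A^2/n)$. Riemann sums give $m_n(r)=n\Phi_{\rm rad}(x)+O_A(1)$, with error from the Euler--Maclaurin endpoint terms. The real-model correction must then be shown to be $O_A(1)$ uniformly. Write $q=e^{2i\theta}r^2$. Then $p_n=K_n^{-1}(1-q^{n+1})/(1-q)$, which is of size $\min(1,1/(n|\theta|))$ up to constants depending on $A$. Its $r$-derivative satisfies $|r\partial_r p_n|\lesssim_A n\min(1,1/(n|\theta|))$ for $|\theta|$ away from $0,\pi$, with a similar bound near $\pi$.

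The integrand $r\partial_r\log(1+\sqrt{1-|p|^2})$ is bounded by $|\partial_r|p|^2|/\bigl(\sqrt{1-|p|^2}(1+\sqrt{1-|p|^2})\bigr)$. On $|\theta|\ge C/n$ this is $\lesssim_A n\cdot(n\theta)^{-2}$, whose integral is $O(1)$. On $|\theta|\le C/n$, rescale $\theta=s/n$. In the scaling limit $p_n\to P_x(s)$, the integrand becomes $n\,G_x(s)$ with $G_x$ integrable near $s=0$, by the quadratic-vanishing argument above. The integral over this region is then $\frac1n\int n\,G_x\dd s=O_A(1)$. The main obstacle is making this last step uniform in $|x|\le A$ and $n\ge 2A$. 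My plan is to bound $1-|p_n|^2$ below by $c_A\min(1,(n\theta)^2)$ and $|\partial_r|p_n|^2|$ above by $C_An\min((n\theta)^2,(n\theta)^{-1})$ directly from the geometric-sum formula.
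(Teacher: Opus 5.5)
Your proposal is sound. For (ii) it follows the paper's proof: Riemann sums for $m_n$, summation-by-parts bounds $|p_n|\lesssim_A(1+n\,\dist(\theta,\pi\Z))^{-1}$ and $|r\partial_rp_n|\lesssim_A n(1+n\,\dist(\theta,\pi\Z))^{-1}$, and a three-range estimate of the real correction.

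For (i) your route is genuinely different. The paper starts from the argument principle, $\nu_n(r)=\frac1{2\pi}\int_0^{2\pi}\Re(\dot Z/Z)\dd\theta$. It identifies $\E\Re(\dot Z/Z)$ at each angle, by Gaussian regression in the circular model and by the radial exchange Lemma~\ref{ga:exchange} in the real model. It then has to prove $\E|\dot Z/Z|\le C_{n,r}(1+|\log|\sin\theta||)$ to justify Fubini near $\theta\in\pi\Z$.

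You differentiate $\E J_n$ instead, which avoids all conditional-moment estimates. By \eqref{pre:secants}, the quotients $(J_n(s)-J_n(r))/\log(s/r)$ with $s\downarrow r$ lie in $[\nu_n(r),n]$. Dominated convergence therefore gives $\E\nu_n(r)$ as the right derivative of $u\mapsto\E J_n(e^u)$ at $u=\log r$, for every $r$; this is cleaner than the distributional argument you sketch. What remains is the deterministic differentiation of $\int_0^{2\pi}\log(1+\sqrt{1-|p_n|^2})\dd\theta$ under the integral. That step rests on the same local facts near $\pi\Z$, namely $1-|p_n|^2\gtrsim\dist(\theta,\pi\Z)^2$ and $r\partial_r|p_n|^2=O(\dist(\theta,\pi\Z)^2)$, whose $n$-uniform versions drive (ii). The paper's route yields more, the expected radial current at each angle, and it reuses the machinery of the cross-degree covariances.

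Two slips need fixing.

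First, the coefficient. We have $\log(\sqrt{1+|p|}+\sqrt{1-|p|})=\frac12\log2+\frac12\log(1+\sqrt{1-|p|^2})$, so $\E\log|f_n(z)|=\frac12\log K_n+\frac12\log(1+\sqrt{1-|p_n|^2})+\mathrm{const}$, not $\frac14$. With your $\frac14$, averaging over $\theta$ would put $\frac1{8\pi}$ rather than $\frac1{4\pi}$ in front of the integral. The case $n=1$ confirms $\frac12$. There $\sqrt{1-|p_1|^2}=2r|\sin\theta|/(1+r^2)$, so a prefactor $\kappa$ gives a correction $\sim8\kappa r$ as $r\to0$. Meanwhile $\E\nu_1(r)=\frac2\pi\arctan r\sim2r/\pi$, which forces $\kappa=\frac1{4\pi}$.

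Second, the bound in your last sentence. Using $|\partial_r|p_n|^2|\le C_An\min((n\theta)^2,(n\theta)^{-1})$ only gives $O(\log n)$ on the range $C/n\le|\theta|\le\pi/2$. You need the decay $C_An(n\theta)^{-2}$ there. You already derived it earlier from $|\partial_r|p_n|^2|=2|p_n||\partial_rp_n|$, and it integrates to $O_A(1)$.
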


\subsection{Littlewood polynomials}
Random sign coefficients differ from Gaussian ones in two ways. Their
concentration is weaker, and $f_n$ can vanish on the unit circle. We prove an almost-sure
bound for $D_n$ and control the number of zeros on the circle.
\begin{theorem}[Sign fluctuations and circle roots]\label{thm:C}
Let $(\epsilon_j)_{j\ge0}$ be independent random signs, $\Prob(\epsilon_j=1)=\Prob(\epsilon_j=-1)=\frac12$, and $f_n(z)=\sum_{j=0}^n\epsilon_jz^j$.
Let $B_n$ be the number of roots of $f_n$ on the unit circle, counted with multiplicity.
\begin{enumerate}[label=\textup{(\roman*)}]
 \item There are absolute constants $A,C$ such that, for every $n\ge1$,
 \[
  \Prob\bigl\{|D_n|>An^{3/4}\sqrt{\log(n+2)}\bigr\}\le Cn^{-3},
  \qquad
  \E D_n^2\le Cn^{3/2}\sqrt{\log(n+2)} .
 \]
 Almost surely, $D_n=O\bigl(n^{3/4}\sqrt{\log n}\bigr)$.
 \item The boundary count satisfies $\E B_n=O(1)$ and $\Var B_n=O(\sqrt n)$. Moreover, for each $\eta>0$, almost surely $B_n=O\bigl(\sqrt n\,(\log n)^{1+\eta}\bigr)$.
 \item The centered counts $Z_n=D_n-\E D_n$ satisfy Input~\ref{fw:I4}.
\end{enumerate}
\end{theorem}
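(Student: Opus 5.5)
For part (i) we will use the winding representation of $D_n$ from Section~\ref{sec:prelim}. Write $g_n(t)=e^{-int/2}f_n(e^{it})$. By the self-reciprocal structure this is real-analytic in $t$, and the argument principle gives
\[
 D_n=\nu_n(1)-\tfrac n2=\tfrac12 B_n+\tfrac1{2\pi}\int_0^{2\pi}\partial_t\arg\bigl(e^{-int/2}f_n(e^{it})\bigr)\dd t-\text{(half-degree term)}.
\]
Up to the boundary contribution, this is a signed count of the real-axis crossings of $g_n$. Rather than control crossings directly for signs, which lack anti-concentration, we plan to go through Jensen's formula. The quantity $\nu_n(1)$ is sandwiched by the slope of $J_n(r)=\frac1{2\pi}\int\log|f_n(re^{i\theta})|\dd\theta$ between $r=1-\rho$ and $r=1+\rho$:
\[
 \frac{J_n(1)-J_n(1-\rho)}{-\log(1-\rho)}\le\nu_n(1)\le\frac{J_n(1+\rho)-J_n(1)}{\log(1+\rho)}.
\]
The reflection $f_n^*(z)=z^nf_n(1/z)$ has the same law as $f_n$, and $J_n(1+\rho)=n\log(1+\rho)+J^*_n(1/(1+\rho))$. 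So both bounds reduce to the comparison of $J_n$ and $J^*_n$ at radius $1$ and at radius $1-\rho$.

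The key estimate is concentration of $J_n(r)$ about its mean at scale $\sqrt{n}$. We will use a bounded-differences (McDiarmid) argument applied to a truncated functional $\frac1{2\pi}\int\max(\log|f_n|,-L)\dd\theta$. Changing one sign changes $f_n$ by at most $2$ in sup norm, so the truncated log changes by $O(e^{L})$ on the set where $|f_n|\gtrsim e^{-L}$. We will handle the truncation error $\int(\log|f_n|)_-\ind\{\log|f_n|<-L\}$ by a small-ball estimate for $f_n(e^{i\theta})$ at fixed $\theta$, using Halász/Littlewood--Offord type bounds valid for generic $\theta$, plus Remez-type inequalities for polynomials of degree $n$ on the exceptional angle set. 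This should give deviations of $J_n(1)$ and $J_n(1-\rho)$ of size $O(\sqrt{n\log n})$ with probability $1-O(n^{-3})$. In law, $\E J_n(1-\rho)=\E J^*_n(1-\rho)$, so the means cancel. The sandwich then gives
\[
 |D_n|\lesssim\frac{\sqrt{n\log n}}{\rho}+n\rho.
\]
Optimizing with $\rho=n^{-1/4}\sqrt{\log n}$ does not directly give the claimed rate. We therefore expect to refine the estimate by using the exact mean asymptotics: the contribution of zeros in the annulus of width $\rho$ is $O(n\rho\cdot(\rho\text{-profile}))$. The optimization will presumably be $\rho\sim n^{-1/4}$ against a concentration of $J$ at scale $\sqrt n\log$ with an additional factor $\rho^{1/2}$, since the fluctuation of $J_n(1-\rho)-J_n(1)$ is smaller than that of either term. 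This yields $An^{3/4}\sqrt{\log n}$. The second-moment bound follows by integrating the tail, using $|D_n|\le n$. Borel--Cantelli with $\sum n^{-3}<\infty$ gives the almost-sure statement; this applies to each degree separately, which is why arbitrary dependence between degrees is allowed.

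For part (ii), $B_n$ is bounded by the number of zeros of the real trigonometric polynomial $g_n$ counted with multiplicity on $[0,2\pi)$. The plan for $\E B_n=O(1)$ is to use the structure of unimodular roots of $\pm1$ polynomials: such roots force $f_n$ to share a factor with its reciprocal $f_n^*$. We then need a probabilistic bound on $\gcd(f_n,f_n^*)$, showing the expected degree of the common factor, namely roots of unity and cyclotomic factors, is $O(1)$. Roots at $\pm1$ occur with probability $O(n^{-1/2})$ each, and roots at primitive $k$th roots of unity require vanishing of $k$ lattice-walk sums, with probability decaying quickly in $k$. The variance bound $O(\sqrt n)$ should follow from bounding $\E B_n^2$ by the same lattice-walk computation. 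Chebyshev along the sparse degrees together with monotone interpolation, using $B_m\le B_N+(\text{increment})$, then gives the almost-sure $\sqrt n(\log n)^{1+\eta}$ bound. For part (iii) we will invoke the block reversal of Corollary~\ref{pre:reversal-law} exactly as in the Gaussian case, since it only uses the symmetry and independence of the coefficients.

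The main obstacle is the small-ball control of $\log|f_n|$ on the circle for signs near rational angles, where Littlewood--Offord bounds degenerate. This is also where the nonoptimal exponent $3/4$ originates.
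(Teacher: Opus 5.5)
\textbf{Part (i).} The Jensen sandwich does not localize $\nu_n(1)$ at the window you choose. Almost all zeros lie within $O(1/n)$ of the circle (Theorem~\ref{thm:R}). So for $\rho\gg1/n$ the secant of $J_n$ over $[1,1+\rho]$ averages $\nu_n$ over radii where it is already close to $n$, and the inner secant is close to $0$. For $\rho=n^{-1/4}\sqrt{\log n}$, even with $J_n$ replaced by its mean, the two secants are within $O(\rho^{-1}\log n)$ of $n$ and of $0$, so your sandwich is essentially the trivial bound $|D_n|\le\frac n2$.

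The reflection makes the sandwich symmetric in law but does not narrow it. Its mean width is $n-2\E S$, with $S$ the inner secant, and this is of order $n\min(1,n\rho)$ because $\Phi_{\rm rad}'(0)=\frac16$. So the bias term is $n^2\rho$, not $n\rho$, and reaching $n^{3/4}$ forces $\rho\lesssim n^{-5/4}$. The secant error $\eta/\rho$ then requires $J_n$ at three radii to within $\eta\approx n^{-1/2}$, with failure probability $O(n^{-3})$. McDiarmid with worst-case differences $O(e^L)$ gives deviations of order $e^L\sqrt{n\log n}$ for a quantity of size $\frac12\log n$, which is vacuous. The ``extra factor $\rho^{1/2}$'' is a guess, not an estimate. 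Even Theorem~\ref{rc:concentration}, built on Nazarov--Nishry--Sodin, reaches only precision $n^{-t}(\log n)^7$ with $t<\frac18$ and failure probability $n^{-1/2+4t}$.

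The paper does not use Jensen here. It realizes the signs through pair types (Lemma~\ref{sg:activity}), so that given $(\chi,\tau)$ the sine coordinate $V_n$ and its zeros are fixed and $U_n$ is a linear form in the free signs. By Proposition~\ref{pre:crossing}, $D_n$ is then the signed row sum $\sum_io_i\sgn\Theta_i$, up to $O(\log n)$ end zeros and boundary roots, which are charged to rows with $\Theta_i=0$. Replacing $\sgn$ by $\sgn_\delta$ writes this sum, and the count of rows within $2\delta$ of $0$, through convex functions that are $\sqrt{64\pi n\bar\ell_n}/\delta$-Lipschitz by the frame bound from the occupancy lemma. Talagrand's convex concentration (Lemma~\ref{sg:convex}) controls these, and the row small-ball bound (Lemma~\ref{sg:frame}(c)) gives $\E_{\mathcal A}K_\delta=O(n\delta+\sqrt n)$. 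The exponent $3/4$ comes from balancing the smoothing cost $n\delta$ against the concentration scale $\sqrt n\,\ell_n/\delta$ at $\delta=n^{-1/4}\sqrt{\ell_n}$. It does not come from rational angles.

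\textbf{Parts (ii) and (iii).} Unimodular roots of Littlewood polynomials need not be roots of unity. For example, $1-z-z^2-z^3+z^4$ has two, with $z+1/z=(1-\sqrt{13})/2$, whose conjugate $(1+\sqrt{13})/2$ lies outside $[-2,2]$. So bounding cyclotomic factors of $\gcd(f_n,f_n^*)$ does not bound $B_n$. The paper handles these roots in the pair-type realization. A non-cyclotomic unimodular root $\zeta$ is a root of $f_n-f_n^*$, which is a nonzero function of $(\chi,\tau)$ once some pair is inactive. It must also solve $\sum_{\chi_j=1}\sigma_j(\zeta^j+\zeta^{n-j})=-\varsigma\zeta^{n/2}\ind_{n\ \rm even}$, and the weights here are distinct because $\zeta$ is not a root of unity. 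Sárközy--Szemerédi (Lemma~\ref{sg:ss}) and at most $n$ candidate roots give $\E B_n^{\rm nc}=O(n^{-1/2})$, with $\sigma$ and $\tau$ swapped when fewer than half the pairs are active. For the cyclotomic part, your lattice-walk bounds would have to be uniform in $n$ and handle large $q$ and multiplicities. Divisibility gives $\Prob\{\Phi_q^r\mid f_n\}\le2^{-r\varphi(q)}$ directly.

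Your almost-sure step also fails. $B_n$ is not monotone in $n$, and there is no increment bound $B_m\le B_N+\cdots$, since appending one coefficient can create or destroy unimodular roots. Chebyshev with $\Var B_n=O(\sqrt n)$ is not summable over all $n$. What works is the first moment $\E B_n^{\rm nc}=O(n^{-1/2})$: it makes Markov's inequality summable at every degree against $\sqrt n\,\ell_n^{1+\eta}$, and together with $B_n^{\rm nc}\le n$ it gives $\Var B_n=O(\sqrt n)$.

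Finally, (iii) is not literally the Gaussian argument. There $B_n=0$, whereas for signs $Z_n=D_n^\circ+\frac12(B_n-\E B_n)$. Lemma~\ref{fw:block-proxies} needs $\Var B_n\le C(n+1)^{1/2}$ from (ii) to bound the coupling errors, and then gives Input~\ref{fw:I4} with $\beta=\frac12$.
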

\begin{remark}[A conditional law of the iterated logarithm]\label{thm:Cp}
By Theorem~\ref{thm:C}(iii) the sharp law for signs reduces to the first
three inputs. Let $c>0$, and suppose that $Z_n=D_n-\E D_n$ satisfies
Input~\ref{fw:I1} with constant $c$, together with Inputs~\ref{fw:I2}
and~\ref{fw:I3}. Since $\E D_n=\frac12\E B_n=O(1)$ by
\eqref{pre:centering} and Theorem~\ref{thm:C}(ii), Theorem~\ref{thm:A}
and Lemma~\ref{fw:grid-lower} then give, almost surely,
\[
 \limsup_{n\to\infty}\frac{D_n}{\sqrt{2cn\log\log n}}=1,
 \qquad
 \liminf_{n\to\infty}\frac{D_n}{\sqrt{2cn\log\log n}}=-1 ,
\]
over all integers and along every grid $\lfloor q^k\rfloor$ with $q>1$.
\end{remark}

\subsection{Non-universality}
For symmetric coefficient laws with a bounded density and all moments
finite, the variance of $D_n$ still grows linearly in $n$, but the
constant becomes $c_G+\kappa_4/12$, where $\kappa_4=\E\xi_0^4-3$ is the
fourth cumulant of a coefficient with unit variance. The radial profile
of Theorem~\ref{thm:R} is the same for every law it covers, whereas the
variance constant depends on the law.
\begin{theorem}[Fourth-cumulant variance formula]\label{thm:D}
Let $(\xi_j)_{j\ge0}$ be iid real coefficients whose common law is symmetric, has a bounded Lebesgue density, variance one and finite moments of every order, and put $\kappa_4=\E\xi_0^4-3$.
Let $f_n(z)=\sum_{j=0}^n\xi_jz^j$ and $D_n=\nu_n(1)-n/2$.
Then $\E D_n=0$ for every $n$, and
\[
 \Var D_n=\Big(c_G+\frac{\kappa_4}{12}\Big)n+O_\xi\big(n^{399/400}\big),
\]
where the implied constant and the starting degree depend on the law.
For example, $\kappa_4=-6/5$ for the uniform law on $[-\sqrt3,\sqrt3]$ and $\kappa_4=3$ for the Laplace law with density $2^{-1/2}e^{-\sqrt2|x|}$.
Moreover, with the positional quadratic statistic
\[
 H_n=\sum_{j=0}^n\frac{j-n/2}{n+1}\,(\xi_j^2-1),
\]
one has $\Cov(D_n-H_n,H_n)/n\to0$ and $\Var(D_n-H_n)/n\to c_G-1/6$.
\end{theorem}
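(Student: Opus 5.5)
The proof starts from the winding representation of Section~\ref{sec:prelim}. Since the law has a density, almost surely $f_n$ has no zeros on the unit circle, and $D_n$ equals $\frac1{2\pi}$ times the change of argument of $g_n(t)=e^{-int/2}f_n(e^{it})$ over $[0,2\pi]$, up to the centering. By symmetry of the coefficients, the reversal $z^nf_n(1/z)$ together with $\xi\mapsto-\xi$ (Corollary~\ref{pre:reversal-law}) maps $D_n$ to $-D_n$ in law, so $\E D_n=0$.

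For the variance I would write $D_n$ as a Kac--Rice type integral against a smoothed indicator. Concretely, $D_n=\frac1{2\pi}\int_0^{2\pi}\partial_t\arg g_n(t)\dd t$, and I would regularize the integrand as $\Im(g_n'\bar g_n)/(|g_n|^2+\eta)$ with $\eta=n^{-c}$. The error from the regularization is controlled by small-ball estimates, which is where the bounded density is used: $\Prob(|g_n(t)|<s)\lesssim s^2$ uniformly in $t$ away from $0$ and $\pi$. Then $\Var D_n$ is a double integral over $(s,t)$ of covariances of local functionals of $(g_n,g_n')$ at the two angles. At separation $|s-t|\gg n^{-1+\epsilon}$, with the rescaling $u=n(s-t)$, the joint law of $(g_n(s),g_n'(s),g_n(t),g_n'(t))/\sqrt n$ is a normalized sum of $n$ independent vectors. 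An Edgeworth expansion to second order then gives the covariance as the Gaussian value plus a correction proportional to $\kappa_4/n$. Symmetry kills the third-order term, and the finite moments and bounded density make the expansion valid with polynomial error.

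The Gaussian part integrates to $c_Gn$ by Theorem~\ref{thm:B}(i). The $\kappa_4$ correction comes from the diagonal fourth-moment terms. Summed over the $n$ coefficients, it produces a contribution of the form $\sum_j w_j^2\kappa_4$, where $w_j$ is the linear response of $D_n$ to $\xi_j^2$. That response is exactly the positional weight $(j-n/2)/(n+1)$, since the mean zero count shifts by the "center of mass" of $|\xi_j|^2$ through $m_n$ in Theorem~\ref{thm:G}. This gives $\kappa_4\sum_j((j-n/2)/(n+1))^2\sim\kappa_4n/12$.

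This suggests organizing the proof through $H_n$. First, $\Var H_n=(\E\xi^4-1)\sum_jw_j^2=(\kappa_4+2)n/12+O(1)$. Second, by a Hoeffding/Hermite-type projection, the projection of $D_n$ onto the span of $\{\xi_j^2-1\}$ is $H_n+o(\sqrt n)$. I would compute $\Cov(D_n,\xi_j^2)$ by conditioning on $\xi_j$ and using the exact mean formula of Theorem~\ref{thm:G} perturbed in one coefficient variance, with local universality for the remaining coefficients. This yields $\Cov(D_n-H_n,H_n)=o(n)$. Third, $D_n-H_n$ has no quadratic diagonal component, and its variance should then be universal. Comparing with the Gaussian case, where $\kappa_4=0$ and $\Var H_n=n/6$, gives $\Var(D_n-H_n)=c_Gn-n/6+o(n)$; I would establish this through a Lindeberg swapping argument applied to the smoothed statistic $D_n-H_n$. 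Combining the three steps, $\Var D_n=(c_G-1/6)n+(\kappa_4+2)n/12+o(n)=(c_G+\kappa_4/12)n+o(n)$. Tracking the polynomial errors from the smoothing scale, the Edgeworth remainder and the near-diagonal region $|s-t|\le n^{-1+\epsilon}$ then gives $O(n^{399/400})$.

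The main obstacle is the Lindeberg comparison for $D_n-H_n$. Swapping one coefficient changes $D_n$ by a non-smooth amount, so the regularization at scale $\eta$ must be kept throughout. Third-derivative bounds for the regularized functional cost powers of $\eta^{-1}$, and these must be beaten by the $n^{-3/2}$ gain per swap; the fourth-order term is precisely what $H_n$ removes. I would first try $\eta=n^{-1/100}$ together with the anti-concentration bounds available from the bounded density, and accept a tiny power saving. This is consistent with the exponent $399/400$.
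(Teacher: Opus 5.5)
You have the constant right, and your heuristic matches what the paper computes. The paper applies Lindeberg replacement to $J^2$ for a smoothed current $J=J_{n,\delta,M}$, shows $\E\,\partial_j^2J(G)\approx2h_j$ with $h_j=(j-n/2)/(n+1)$, and uses Poincar\'e bounds on the jets to get a fourth insertion $\approx\frac14\sum_j(\E\,\partial_j^2J(G))^2\approx n/12$. Your bookkeeping through $H_n$ is equivalent, and your argument for $\E D_n=0$ is fine.

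\textbf{The gap.} You treat as routine the step that carries the difficulty: showing $\|D_n-J\|_2=O(n^{1/2-c})$ for a smoothing whose width $\delta$ is coarse enough that the Lindeberg errors are $o(n)$. Those errors grow like a power of $\delta^{-1}$ ($M^2\delta^{-10}$ in the paper), so $\delta$ can only be a small negative power of $n$ in the normalized fields. One-point small-ball bounds from the bounded density cannot give this.

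Cut the rescaled half circle into $\asymp n$ unit cells. At width $\delta$ the cell error $\Delta_i$ is $O(\log n)$ and is negligible outside an event of probability $n^{-c}$. So one-point bounds give $\E\Delta_i^2\le n^{-c}$, which handles the $O(n)$ nearby pairs of cells. But $\E(D_n-J)^2$ also contains $\asymp n^2$ cross terms $\E\Delta_i\Delta_{i'}$ for distant cells. Without decorrelation the only bound is $(\sum_i\|\Delta_i\|_2)^2\approx n^{2-c}\gg n$, and your proposal has no mechanism for this decorrelation. The same missing estimate also underlies your step~2, the $O(1)$-accurate covariances $\Cov(D_n,\xi_j^2)$ you attribute to ``local universality''. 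It underlies your Gaussian reference as well, since in the paper both are transferred from $J$ to $D_n$ by this unsmoothing bound.

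\textbf{What the paper does} (Proposition~\ref{cu:unsmooth}). It introduces a fine width $\delta'=n^{-103/100}$ and splits $\Delta_i=\Delta_i^{\rm f}+\Delta_i^{\rm c}$.
\begin{itemize}
\item The fine errors satisfy $\E|\Delta_i^{\rm f}|\le Cn^{-126/125}$, so they can be summed crudely over all $n^2$ pairs. This needs small balls at scale $n^{-51/50}$, far below the $n^{-1/2}$ that one-point density arguments give; it comes from decay of characteristic functions via Esseen's inequality.
\item The coarse--fine differences are $\int_{I_i}\psi(Z_n(x))\dd x$ with $\psi=\Psi_{\delta',M}-\Psi_{\delta,M}$. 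For $|s-t|\ge R_0$, $\E\,\psi(Z_n(s))\psi(Z_n(t))$ is controlled by an Edgeworth expansion of the six-dimensional two-point law against test functions with Gaussian Fourier decay at scale $1/\delta'$.
\item That expansion requires $|\widehat\mu_W|\le n^{-20}$ for one- and two-point sums at all frequencies up to $n^{26/25}$. This is the real role of the bounded density (Lemma~\ref{cu:bd-decay}).
\item It also uses that $\psi$ is odd in its first coordinate. This makes the Gaussian two-point integral $O(\varepsilon_{\delta,M}(1+|s-t|)^{-2})$ and the $\kappa_4$ term $O(\varepsilon_{\delta,M}/n)$ per pair.
\end{itemize}

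\textbf{A related flaw in your direct route.} You list the region $|s-t|\le n^{-1+\epsilon}$ among the error terms. In $u=n(s-t)$ that region carries all but $O(n^{-\epsilon})$ of $\int r'^2/(1-r^2)$, i.e.\ essentially all of $c_Gn$. So what must be shown there is that its value under the law of $\xi$ differs from the Gaussian value by $o(n)$, not that it is small. For $|u|$ below a constant the two-point vector degenerates, and your Edgeworth expansion does not apply there.
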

The variance formula also holds for random signs.
\begin{theorem}[Variance for random signs]\label{thm:S}
Let $(\epsilon_j)_{j\ge0}$ be independent random signs, $f_n(z)=\sum_{j=0}^n\epsilon_jz^j$ and $D_n=\nu_n(1)-n/2$, and let $B_n$ be the number of roots of $f_n$ on the unit circle, counted with multiplicity. Then $\E D_n=\frac12\E B_n=O(1)$ and
\[
 \Var D_n=\Bigl(c_G-\frac16\Bigr)n+O\bigl(n^{799/800}\bigr).
\]
\end{theorem}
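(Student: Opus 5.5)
The plan is to follow the route of Theorem~\ref{thm:D}, taking the signed-crossing (winding) representation of $D_n$ from Section~\ref{sec:prelim} as the starting point, and to substitute for the bounded-density hypothesis (which is used there only to control near-degeneracies of $f_n$ on the circle) the root-count and concentration information for signs from Theorem~\ref{thm:C}. First, the mean. By the reciprocal symmetry $z\mapsto 1/\bar z$ of the sign model, $f_n^*(z)=z^n\overline{f_n(1/\bar z)}$ has the same law as $f_n$. Hence the numbers of zeros strictly inside and strictly outside the disk are equidistributed, and $\nu_n(1)=(\#\text{inside})+B_n$. Taking expectations, as in \eqref{pre:centering}, gives $\E D_n=\frac12\E B_n$, which is $O(1)$ by Theorem~\ref{thm:C}(ii). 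For the variance, I write $D_n=\widetilde D_n+\frac12(B_n-\E B_n)+O(1)$, where $\widetilde D_n$ is the symmetrized count (inside minus outside, halved). Since $\Var B_n=O(\sqrt n)$, Cauchy--Schwarz shows that the cross term contributes at most $O(n^{3/4})$, so it suffices to prove $\Var\widetilde D_n=(c_G-\frac16)n+O(n^{799/800})$.

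Second, the decomposition. I use the positional quadratic statistic $H_n$ of Theorem~\ref{thm:D}. For signs $\epsilon_j^2-1\equiv0$, so $H_n\equiv0$ and $\kappa_4=-2$, which gives $c_G+\kappa_4/12=c_G-\frac16$. The formula is therefore the statement $\Var(D_n-H_n)/n\to c_G-\frac16$ of Theorem~\ref{thm:D}, made quantitative and extended to a lattice law. I would reproduce that part of the proof of Theorem~\ref{thm:D}. The idea is to smooth the crossing count: replace the indicator of a sign change of $\Im(e^{-int/2}f_n(e^{it}))$ by a mollified version at angular scale $n^{-1-\gamma}$, and approximate the resulting local functional by a sum over angular windows of length $n^{-1+\beta}$. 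The variance then becomes a double sum of covariances of functionals of the local process $t\mapsto n^{-1/2}e^{-int/2}f_n(e^{i(\theta+t/n)})$. Within a window I would apply a quantitative multivariate CLT (Lindeberg replacement or Edgeworth with a fourth-order term) to compare with the Gaussian model. Gaussian covariances give $c_G n$ via Theorem~\ref{thm:B}(i). The fourth-cumulant correction enters only through the diagonal $\E\epsilon_j^4-3$ terms, and for signs it is exactly the deterministic term that $H_n$ accounts for in Theorem~\ref{thm:D}, with the value $-\frac16 n$.

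The main obstacle is the lattice nature of signs. The local CLT for the window vector fails near arithmetic angles $\theta$ close to rationals with small denominator, where $f_n(e^{i\theta})$ has atoms and can vanish; these are exactly the places where Theorem~\ref{thm:D} used the bounded density. I would handle this by removing a neighborhood of major arcs, namely $\theta$ within $n^{-1+\beta}$ of $2\pi a/b$ with $b\le n^{\beta'}$. On the remaining minor arcs, the characteristic function $\prod_j\cos(\langle u,v_j(\theta)\rangle)$ decays enough to give smooth-test-function CLT errors of size $n^{-c}$. The contribution of the major arcs to $\Var\widetilde D_n$ I would bound using the moment bound $\E D_n^2\le Cn^{3/2}\sqrt{\log n}$ localized to arcs, together with the $B_n$ variance bound. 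Since the major arcs have total measure $n^{-1+\beta+2\beta'}$, the loss is a power saving. Balancing the mollification scale, the window length and the major-arc cutoff should give a saving exponent around $1/800$, worse than the $1/400$ of Theorem~\ref{thm:D} because of this extra arithmetic cutoff.
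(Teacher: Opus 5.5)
Your treatment of the mean is fine, and so is the reduction of the variance to the balanced count via $\Var B_n=O(\sqrt n)$. The variance argument itself has a genuine gap, starting with where the constant $-\frac16$ comes from.

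The fourth-cumulant correction is not local or diagonal in the windows. In a two-window Edgeworth expansion it is the term $\frac{\kappa_4}{24}\sum_j(u_j\cdot\nabla)^4\varphi_S$. Against the current, this contributes about $n^{-1}$ for \emph{every} pair of unit cells, distant ones included, and $\kappa_4n/12$ appears only after summing over all $\asymp n^2$ pairs. So comparing window covariances with Gaussian ones up to errors $n^{-c}$ loses the whole effect, unless the per-pair error is $o(n^{-1})$ and the $n^{-1}$ term is kept and summed explicitly.

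Appealing to $H_n$ does not supply the constant either. $H_n\equiv0$ for signs, and the $H_n$ statement of Theorem~\ref{thm:D} is derived from the (BD) variance formula, so it cannot be transferred to signs. The paper obtains $-\frac16n$ by a global computation. Lindeberg replacement applied to the smooth current $J=J_{n,\delta,M}$, as a function of all coefficients, gives $\Var J(\xi)-\Var J(G)=\kappa_4S_n+o(n)$. The fourth insertion then gives $S_n=\frac n{12}+o(n)$ from $\E\partial_j^2J(G)\approx2h_j$ (Propositions~\ref{cu:insertion} and~\ref{cu:transfer}). This uses only moments, so no arithmetic enters the smooth part. Even a pairwise Edgeworth treatment of the coarsely smoothed current would only see $|\theta|\ll\sqrt n$, where Lemma~\ref{cu:charfun}(a) gives Gaussian decay for every well-conditioned sign sum.

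The arithmetic obstruction lives in the unsmoothing step $\E(D_n-J)^2=o(n)$, and there your single-angle major arcs are not enough. Showing that the errors on two distant cells are nearly uncorrelated, to accuracy $o(n^{-1})$ per pair, requires pushing below scale $1/n$ (the fine width $\delta'=n^{-103/100}$). That needs small balls below scale $1/n$, hence decay of the \emph{joint} characteristic function of $(Z_n(s),Z_n(t))$ up to frequency $n^{26/25}$.

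The paper proves this decay only under the joint Diophantine condition (A2) on $(\gamma_s,\gamma_t)$. It does so by a nontrivial inverse argument (Proposition~\ref{sv:decay}), which your proposal simply asserts. The pairs of cells violating (A2) number up to $Cn^{1+1/400}$ (Lemma~\ref{sv:counting}), far more than $n$, even when both cells lie on minor arcs. They cannot be discarded, because a priori the covariance of the local crossing sums on such a pair is only $O(\log^2n)$.

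The paper can afford these pairs only because what it correlates is the per-cell error $\Delta_i=K_{I_i}-J_{I_i}^\delta$. This error satisfies $\E[\Delta_i^2;\mathcal L_i]\le Cn^{-1/200}$ from the coarse Tao--Vu small balls alone, so the bad pairs cost $b_nn^{-1/200}=O(n^{399/400})$ (Proposition~\ref{cu:unsmooth}). That is also where the exponent $799/800$ comes from: $e_n\le Cn^{399/800}$ enters Proposition~\ref{cu:transfer}(b) through the cross term $\sqrt n\,e_n$, not through a balance of major-arc measure.

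Finally, Theorem~\ref{thm:C} provides no localized form of $\E D_n^2\le Cn^{3/2}\sqrt{\log n}$. The single-angle major arcs are harmless, but only because zero counts bound their contribution trivially.
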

Theorem~\ref{thm:S} is the formula of Theorem~\ref{thm:D} with
$\kappa_4=\E\epsilon_0^4-3=-2$, for a law with no density. Since
$\epsilon_j^2=1$, the statistic $H_n$ vanishes for signs, so
Theorem~\ref{thm:S} contains the limit $\Var(D_n-H_n)/n\to c_G-\frac16$
of Theorem~\ref{thm:D}. It also gives $\E D_n^2\le Cn$, which sharpens
the second-moment bound of Theorem~\ref{thm:C}(i).

\subsection{Ideas of the proofs}
Erd\H{o}s Problem \#522 is resolved as a consequence of a universal
microscopic radial law, while a separate fluctuation theory reveals
precisely how the coefficient distribution influences the second-order
behavior.
The radial law rests on Jensen's formula, which turns zero counts into
angular averages of $\log|f_n|$, as explained after
Corollary~\ref{thm:E}. For the fluctuations of $D_n$ we use the argument
principle instead. It counts $D_n$ as the signed number of crossings of
the real axis by the curve $t\mapsto e^{-int/2}f_n(e^{it})$. Pairing the
coefficients $\xi_j$ and $\xi_{n-j}$ splits this curve into a cosine and
a sine coordinate. Once the sine coordinate is fixed, so are the
crossings, and $D_n$ becomes a sum of signs of the cosine coordinate. For
Gaussian coefficients the two coordinates are independent. For random
signs we condition on which pairs are equal and which are opposite, which
has the same effect, and Talagrand's convex concentration
inequality~\cite{Talagrand} controls the sum that remains. To pass from
these estimates to a law of the iterated logarithm,
Section~\ref{sec:framework} runs the classical proof with independence
replaced by Inputs~\ref{fw:I1}--\ref{fw:I4}.

To find the variance for other coefficient laws we compare with the
Gaussian case. We approximate $D_n$ in $L^2$ by a smooth function of the
coefficients, and Lindeberg replacement shows that the variance of this
function differs from its Gaussian value by $\kappa_4n/12$ up to lower
order. The approximation rests on small-ball bounds for sums of
coefficients and on the decorrelation of distant stretches of the curve.
The decorrelation comes from an Edgeworth expansion against test
functions with Gaussian Fourier decay, which holds wherever the
characteristic functions of these sums decay. For laws with a bounded
density they decay at every point, and this also gives the small-ball
bounds. For random signs they decay at points satisfying a Diophantine
condition, which carry almost all of the curve, and the small-ball bounds
of Section~\ref{sec:signs} complete the argument.

\subsection{Related work}
Yakir \cite{Yakir2022} proved that $|\nu_n(1)-n/2|<n^{9/10}$ with
probability tending to one, using concentration of the angular integral
of $\log|f_n|$ and Jensen's formula. Our proof of Theorem~\ref{thm:R}
starts from the same two tools.

In April 2026, Chojecki~\cite{Chojecki} and Kwon and Zou~\cite{KwonZou}
each posted an informal argument for the almost-sure statement for
random signs.
Our proof and its conceptual approach are original. Ours is the first
formally verified proof of Erd\H{o}s Problem \#522. Its Lean
formalization was completed on September 22, 2026, and every main result
of this paper is formally verified as well
(Subsection~\ref{sec:intro-formal}).

Do \cite{Do2024} proved an almost-sure law for the real zeros of nested
Kac polynomials, passing from sparse to all degrees by maximal
inequalities. Shepp and Vanderbei \cite{SheppVanderbei} computed the
expected zero distribution of real Gaussian polynomials. Buckley and
Feldheim \cite{BuckleyFeldheim} computed the winding variance of
stationary complex Gaussian processes, which yields an integral of the
form of $c_G$. Kiro and Nishry \cite{KiroNishry} studied
fluctuations of disk counts for Gaussian Taylor series, and Angst and
Poly \cite{AngstPoly} developed small-ball and Edgeworth methods for
real zeros of random trigonometric polynomials.

Do, Nguyen and Nguyen \cite{DoNguyenNguyen2022} proved that the variance
of the number of real zeros of a random trigonometric polynomial with iid
coefficients grows linearly in the degree, with a constant that depends on
the coefficient law only through its kurtosis. Their class of laws
includes discrete ones. This is the closest precedent for Theorems~\ref{thm:D}
and~\ref{thm:S}, and Section~\ref{sec:sign-variance} follows their inverse
argument for characteristic functions.

\subsection{Formal verification}\label{sec:intro-formal}
Every theorem in this introduction, and also Remark~\ref{thm:Cp}, is
proved in Lean~4 with Mathlib.
Appendix~\ref{sec:formal} pairs each result with its formal statement,
and every formal statement depends only on the axioms
\texttt{propext}, \texttt{Classical.choice} and \texttt{Quot.sound}.
Our Lean~4 proof of Corollary~\ref{thm:E} for the partial sums has been
accepted as a solution to the formal statement \texttt{Erdos522.erdos\_522} of
Problem~\#522 in Google DeepMind's Formal Conjectures
project~\cite{FormalConjectures}.

\subsection{Organization}
Section~\ref{sec:prelim} collects conventions, the winding
representation and Jensen's formula. Sections~\ref{sec:rc} and
\ref{sec:rm} prove Theorem~\ref{thm:R} and its uniform form on the line,
Corollary~\ref{rm:whole-line}. Section~\ref{sec:framework} proves
Theorem~\ref{thm:A}, Section~\ref{sec:gaussian} proves
Theorems~\ref{thm:B} and~\ref{thm:G}, and Section~\ref{sec:signs} proves
Theorem~\ref{thm:C} and Corollary~\ref{thm:E}. Section~\ref{sec:cumulant}
proves Theorem~\ref{thm:D}. It approximates $D_n$ by a smooth function of
the coefficients under hypotheses on small balls and on the decay of
characteristic functions, which hold for laws with a bounded density.
Section~\ref{sec:sign-variance} verifies these hypotheses for random signs
and proves Theorem~\ref{thm:S}. Appendix~\ref{sec:numerics} compares the
results with computation, and Appendix~\ref{sec:formal} lists the formal
statements.

Throughout, zeros are counted with multiplicity, the zero polynomial has
no zeros, and all almost-sure statements refer to a single event of
probability one for the coefficient sequence $(\xi_j)$. The letter $C$
denotes a constant that may change from line to line, and subscripts
indicate its dependence.
\section{Preliminaries}\label{sec:prelim}

Two identities turn zeros into analytic quantities. Jensen's formula
expresses the number of zeros in a disk through the mean of $\log|f|$ on
its boundary, and it drives the radial law. The argument principle
expresses $D_n$ through the winding of the curve
$t\mapsto e^{-int/2}f_n(e^{it})$. Pairing the coefficients $\xi_j$ and
$\xi_{n-j}$ splits this curve into a cosine and a sine coordinate, and
the winding becomes a signed count of the crossings of the real axis
(Proposition~\ref{pre:crossing}), which drives the fluctuation results.
Reversal of the coefficient vector makes the discrepancy antisymmetric,
and this gives the centering of $D_n$ and the block proxies of
Section~\ref{sec:framework}.

\subsection{Coefficients and counts}

\begin{definition}[Coefficient models]\label{pre:models}
Throughout, $(\xi_j)_{j\ge0}$ is one sequence of independent, identically
distributed real or complex random variables on a probability space
$(\Omega,\mathcal F,\Prob)$, and $f_n(z)=\sum_{j=0}^n\xi_jz^j$ for every
$n\ge0$. All degrees use this sequence, so that
$f_{n+1}=f_n+\xi_{n+1}z^{n+1}$. The laws of $\xi_0$ are the following.
\begin{enumerate}[label=\textup{(\alph*)}]
 \item \emph{Random signs}, $\Prob(\xi_0=\pm1)=\frac12$, in which case we
 write $\epsilon_j$ for $\xi_j$.
 \item \emph{Steinhaus}, uniform on the unit circle.
 \item \emph{Real Gaussian}, standard normal.
 \item \emph{Complex Gaussian}, with density $\pi^{-1}e^{-|z|^2}$ on $\C$.
 \item \emph{Bounded symmetric}, a bounded law on $\R$ or $\C$ with
 $\xi_0\overset d=-\xi_0$ and $\Prob(\xi_0=0)<1$.
 \item \emph{Absolutely continuous}, a law with a density with respect to
 Lebesgue measure on $\R$ or on $\C\cong\R^2$, such as (c) and (d).
\end{enumerate}
Multiplying all coefficients by one constant leaves the zeros in place,
so we normalize finite second moments to one.
\end{definition}

\begin{definition}[Counts]\label{pre:counts}
A \emph{polynomial of nominal degree $n$} is a coefficient vector
$(a_0,\ldots,a_n)\in\C^{n+1}$, with $f(z)=\sum_{j=0}^na_jz^j$. Its actual
degree may be smaller, and $f_n$ has nominal degree $n$. For $f\ne0$
and $r\ge0$ let $\nu_f(r)$ and $\nu_f^\circ(r)$ be the numbers of zeros
of $f$ in $\{|z|\le r\}$ and in $\{|z|<r\}$, and
$B_f(r)=\nu_f(r)-\nu_f^\circ(r)$ the number on $\{|z|=r\}$, always with
multiplicity and including zeros at the origin. For the zero polynomial,
and for $r<0$, all three counts are zero. Put $I(f)=\nu_f^\circ(1)$,
$B(f)=B_f(1)$ and, with the nominal degree $n$,
\[
 D(f)=\nu_f(1)-\frac n2,\qquad
 D^\circ(f)=D(f)-\frac{B(f)}2=I(f)+\frac{B(f)}2-\frac n2 .
\]
The \emph{balanced discrepancy} $D^\circ(f)$ gives each zero on the unit
circle weight one half. We write $\nu_n(r)$, $\nu_n^\circ(r)$, $B_n$,
$D_n$, $D_n^\circ$ for these quantities at $f_n$, with nominal degree
$n$. For $f\ne0$, $0\le\nu_f(r)\le\deg f\le n$, so $|D(f)|$ and
$|D^\circ(f)|$ are at most $n/2$.
\end{definition}

\begin{remark}[Measurability]\label{pre:as-convention}
For fixed $n$ and $r$, the counts $\nu_n(r)$, $\nu_n^\circ(r)$, $B_n$ are
Borel functions of $(\xi_0,\ldots,\xi_n)$. By \eqref{pre:secants} below,
$\nu_n(r)=\lim_kk\{J_{f_n}(re^{1/k})-J_{f_n}(r)\}$ on $\{f_n\ne0\}$ for
$r>0$, with $J_{f_n}(r)$ Borel by Tonelli's theorem, while
$\nu_n(0)=\lim_{s\downarrow0}\nu_n(s)$ and
$\nu_n^\circ(r)=\lim_{s\uparrow r}\nu_n(s)$.
\end{remark}

\subsection{Jensen's formula}

For a polynomial $f$ and $r>0$ put
\[
 J_f(r)=\int_0^{2\pi}\log|f(re^{i\theta})|\,\frac{\dd\theta}{2\pi}.
\]

\begin{lemma}[Jensen's formula]\label{pre:jensen}
Let $f$ be a nonzero polynomial with leading coefficient $c$ and zeros
$\alpha_1,\ldots,\alpha_d$, listed with multiplicity. For every $r>0$,
including radii carrying zeros,
\[
 J_f(r)=\log|c|+\sum_{j=1}^d\log\max(r,|\alpha_j|).
\]
Hence $s\mapsto J_f(e^s)$ is convex and piecewise linear, with right
derivative $\nu_f(e^s)$ and left derivative $\nu_f^\circ(e^s)$. In
particular, for $0<r<s$,
\begin{equation}\label{pre:secants}
 \nu_f(r)\le\frac{J_f(s)-J_f(r)}{\log(s/r)}\le\nu_f^\circ(s)\le\nu_f(s).
\end{equation}
\end{lemma}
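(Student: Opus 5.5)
The plan is to reduce the identity to the elementary one-factor integral
\[
 \int_0^{2\pi}\log|re^{i\theta}-\alpha|\,\frac{\dd\theta}{2\pi}=\log\max(r,|\alpha|)\qquad(r>0,\ \alpha\in\C),
\]
and then read off the convexity and derivative statements from the resulting explicit formula. Write $f(z)=c\prod_{j=1}^d(z-\alpha_j)$. Then $\log|f(re^{i\theta})|=\log|c|+\sum_j\log|re^{i\theta}-\alpha_j|$. Each summand is integrable in $\theta$, even when $|\alpha_j|=r$, because near $\theta_0$ it behaves like $\log|\theta-\theta_0|$. Hence the integral splits term by term.

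For the one-factor identity I would separate three cases.
\begin{itemize}
 \item If $|\alpha|<r$, write $|re^{i\theta}-\alpha|=r|1-(\alpha/r)e^{-i\theta}|$. The function $w\mapsto\log|1-aw|$ with $|a|<1$ is harmonic on a neighbourhood of the closed unit disk, so by the mean value property its average over the circle is its value $0$ at $w=0$. This gives $\log r$.
 \item If $|\alpha|>r$, write $|re^{i\theta}-\alpha|=|\alpha|\,|1-(r/|\alpha|)e^{i(\theta-\arg\alpha)}|$. The same argument gives $\log|\alpha|$.
 \item If $|\alpha|=r$, after rotation and scaling it suffices to show $\int_0^{2\pi}\log|1-e^{i\theta}|\dd\theta=0$. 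One route is to let $\alpha'\to\alpha$ radially from inside and use dominated convergence; a uniform integrable majorant of the form $C+|\log|\theta-\theta_0||$ exists. The other route is the classical evaluation $\int_0^\pi\log\sin t\,\dd t=-\pi\log2$, using $|1-e^{i\theta}|=2|\sin(\theta/2)|$.
\end{itemize}
I expect this boundary case to be the only step needing care. The rest is bookkeeping.

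For the consequences, set $\phi(s)=J_f(e^s)=\log|c|+\sum_j\max(s,\log|\alpha_j|)$, where the term is just $s$ when $\alpha_j=0$. Each summand is convex and piecewise linear in $s$. Its right derivative at $s$ is $1$ if $\log|\alpha_j|\le s$, that is $|\alpha_j|\le e^s$, and $0$ otherwise. Its left derivative is $1$ exactly when $|\alpha_j|<e^s$. Summing gives right derivative $\nu_f(e^s)$ and left derivative $\nu_f^\circ(e^s)$; zeros at the origin are counted in both, consistent with Definition~\ref{pre:counts}.

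Finally, \eqref{pre:secants} is the standard secant inequality for convex functions. For $a=\log r<b=\log s$,
\[
 \phi'_+(a)\le\frac{\phi(b)-\phi(a)}{b-a}\le\phi'_-(b)\le\phi'_+(b).
\]
This is exactly $\nu_f(r)\le\frac{J_f(s)-J_f(r)}{\log(s/r)}\le\nu_f^\circ(s)\le\nu_f(s)$. It can also be checked directly, term by term, from the explicit piecewise-linear form of $\phi$.
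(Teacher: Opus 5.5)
Your proposal is correct and follows essentially the paper's route: factor $f=c\prod_j(z-\alpha_j)$, evaluate the one-factor integral $\int_0^{2\pi}\log|re^{i\theta}-\alpha|\,\frac{\dd\theta}{2\pi}=\log\max(r,|\alpha|)$, sum over the factors, then read off the one-sided derivatives of $\max(s,\log|\alpha_j|)$ and apply the convex secant inequality. The one difference is that you prove the one-factor identity yourself, using the mean value property and, for $|\alpha|=r$, either the evaluation $\int_0^\pi\log\sin t\,\dd t=-\pi\log2$ or dominated convergence, whereas the paper cites Jensen's formula for the closed disk \cite[Theorem~15.18]{Rudin1987}.
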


\begin{proof}
Write $f(z)=c\prod_j(z-\alpha_j)$. For $\alpha\ne0$, Jensen's formula
\cite[Theorem~15.18]{Rudin1987}, which counts the zeros in the closed
disk, applied to $z-\alpha$ gives
$\int_0^{2\pi}\log|re^{i\theta}-\alpha|\,\dd\theta/2\pi=\log\max(r,|\alpha|)$,
and for $\alpha=0$ this is clear. Summing over the factors gives the
formula. Each term $\log\max(e^s,|\alpha_j|)$ is convex in $s$, with
right derivative $\ind\{|\alpha_j|\le e^s\}$ and left derivative
$\ind\{|\alpha_j|<e^s\}$. The secant of a convex function over
$[\log r,\log s]$ lies between its right derivative at $\log r$ and its
left derivative at $\log s$, which is \eqref{pre:secants}.
\end{proof}

\begin{lemma}[Local zero counts]\label{pre:local-jensen}
Let $F$ be analytic on a neighborhood of $\{|z-w|\le R\}$ with
$F(w)\ne0$, and let $0<\rho<R$. Then
\[
 \#\{z:F(z)=0,\ |z-w|\le\rho\}
 \le\frac{\log\max_{|z-w|=R}|F(z)|-\log|F(w)|}{\log(R/\rho)} .
\]
\end{lemma}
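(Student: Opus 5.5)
The plan is to apply Jensen's formula to the disk centred at $w$ and bound the zero count through the counting function. Translate and set $G(\zeta)=F(w+\zeta)$, which is analytic on a neighbourhood of $\{|\zeta|\le R\}$ with $G(0)=F(w)\ne0$. Let $n(t)$ be the number of zeros of $G$ in $\{|\zeta|\le t\}$; it is finite for $t\le R$ because $G\not\equiv0$ and the disk is compact. Jensen's formula for analytic functions on a closed disk with $G(0)\ne0$ \cite[Theorem~15.18]{Rudin1987} gives
\[
 \int_0^{2\pi}\log|G(Re^{i\theta})|\,\frac{\dd\theta}{2\pi}-\log|G(0)|=\sum_{|\alpha|\le R}\log\frac{R}{|\alpha|}=\int_0^R\frac{n(t)}{t}\dd t ,
\]
with zeros $\alpha$ listed with multiplicity. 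Zeros on the boundary circle contribute $\log 1=0$, and the formula still holds there, as in the proof of Lemma~\ref{pre:jensen}. The second equality is the layer-cake identity $\log(R/|\alpha|)=\int_{|\alpha|}^R\dd t/t$, summed over the zeros.

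Next I would bound both sides. On the left, $\log|G(Re^{i\theta})|\le\log\max_{|z-w|=R}|F(z)|$ for every $\theta$. The angular mean is therefore at most this maximum, and it is well defined in $[-\infty,\infty)$ because $\log|G|$ is bounded above on the circle. On the right, $n$ is nondecreasing and nonnegative, so
\[
 \int_0^R\frac{n(t)}{t}\dd t\ge\int_\rho^R\frac{n(t)}{t}\dd t\ge n(\rho)\log\frac R\rho .
\]
Since $n(\rho)$ is exactly the number of zeros with $|z-w|\le\rho$, dividing by $\log(R/\rho)>0$ gives the claim.

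The only delicate point is the validity of Jensen's formula when zeros lie on $|\zeta|=R$. Rudin's statement allows this, and alternatively one can apply the formula at radii $R'\uparrow R$ avoiding zeros and pass to the limit. Along such radii the left side is bounded by $\log\max_{|z-w|=R'}|F|\to\log\max_{|z-w|=R}|F|$ by continuity, and the right side bound $n(\rho)\log(R'/\rho)$ tends to $n(\rho)\log(R/\rho)$, so the inequality survives the limit. No other obstacle is expected.
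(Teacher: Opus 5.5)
Your proof is correct and is essentially the paper's argument: translate to $w$, apply Jensen's formula (Rudin, Theorem~15.18, which allows zeros on the circle $|z-w|=R$ because $F$ is holomorphic on a slightly larger disk), bound the angular mean by $\log\max_{|z-w|=R}|F|$, and use that every term $\log(R/|z_k-w|)$ is nonnegative and is at least $\log(R/\rho)$ for the zeros with $|z_k-w|\le\rho$. Your counting-function identity is only a repackaging of that last step, and the limit $R'\uparrow R$ is a valid fallback that the paper does not need.
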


\begin{proof}
The function $z\mapsto F(w+z)$ is holomorphic on a disk $|z|<R'$ with
$R'>R$, and Jensen's formula \cite[Theorem~15.18]{Rudin1987} gives
\[
 \log|F(w)|+\sum_k\log\frac R{|z_k-w|}
 =\int_0^{2\pi}\log|F(w+Re^{i\theta})|\,\frac{\dd\theta}{2\pi},
\]
the sum running over the zeros $z_k$ in $|z-w|\le R$ with multiplicity.
Every term of the sum is nonnegative, each zero with $|z_k-w|\le\rho$
contributes at least $\log(R/\rho)$, and the right side is at most
$\log\max_{|z-w|=R}|F|$.
\end{proof}

\subsection{Reversal and zeros on the circle}

Reversing the coefficient vector maps each nonzero zero $\alpha$ to
$1/\alpha$ and exchanges the inside and the outside of the unit circle.
For a polynomial $f(z)=\sum_{j=0}^na_jz^j$ of nominal degree $n$, its
reversal is $f^*(z)=z^nf(1/z)=\sum_{j=0}^na_{n-j}z^j$, again of nominal
degree $n$, and $f^{**}=f$.

\begin{lemma}[Reversal]\label{pre:reversal}
Let $f\ne0$ have nominal degree $n$. For every $r>0$,
\begin{equation}\label{pre:reversal-radial}
 \nu_{f^*}(r)+\nu_f^\circ(1/r)=n,\qquad B_{f^*}(r)=B_f(1/r).
\end{equation}
In particular $B(f^*)=B(f)$, $\nu_f(1)+\nu_{f^*}(1)=n+B(f)$,
$D(f)+D(f^*)=B(f)$ and $D^\circ(f^*)=-D^\circ(f)$.
\end{lemma}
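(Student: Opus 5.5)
The plan is to factor $f$ and track how reversal acts on each linear factor. Write $f(z)=c\,z^{k}\prod_{j=1}^{d'}(z-\alpha_j)$, where $c\ne0$ is the leading coefficient and the $\alpha_j$ are the nonzero zeros with multiplicity. The actual degree is $d=k+d'\le n$, and the nominal degree exceeds it by $n-d$. Then
\[
 f^*(z)=z^nf(1/z)=c\,z^{n-d}\prod_{j=1}^{d'}(1-\alpha_jz)=c\prod_{j}(-\alpha_j)\,z^{n-d}\prod_{j=1}^{d'}\bigl(z-1/\alpha_j\bigr).
\]
So the zeros of $f^*$ are a zero of multiplicity $n-d$ at the origin together with the points $1/\alpha_j$. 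The $k$ zeros of $f$ at the origin disappear; they correspond to the drop of the actual degree of $f^*$ below $n$. Since $f\ne0$, $f^*\ne0$ as well, and the counts are the ordinary ones.

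Now fix $r>0$ and count. The origin contributes $n-d$ to $\nu_{f^*}(r)$. The point $1/\alpha_j$ lies in $\{|z|\le r\}$ exactly when $|\alpha_j|\ge1/r$. Hence
\[
 \nu_{f^*}(r)=n-d+\#\{j:|\alpha_j|\ge1/r\}.
\]
On the other side, $\nu_f^\circ(1/r)=k+\#\{j:|\alpha_j|<1/r\}$, since the origin lies in the open disk of radius $1/r>0$. Adding the two gives $n-d+k+d'=n$. For the circle counts, $1/\alpha_j$ lies on $|z|=r$ iff $|\alpha_j|=1/r$, and the origin lies on neither circle, so $B_{f^*}(r)=B_f(1/r)$. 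This proves \eqref{pre:reversal-radial}.

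The consequences follow by setting $r=1$.
\begin{itemize}
 \item Directly, $B(f^*)=B(f)$.
 \item Using $\nu_f^\circ(1)=\nu_f(1)-B(f)$, the identity becomes $\nu_{f^*}(1)+\nu_f(1)-B(f)=n$. This gives $\nu_f(1)+\nu_{f^*}(1)=n+B(f)$.
 \item Subtracting $n/2$ twice gives $D(f)+D(f^*)=B(f)$.
 \item Finally, $D^\circ(f)+D^\circ(f^*)=D(f)+D(f^*)-\tfrac12\bigl(B(f)+B(f^*)\bigr)=B(f)-B(f)=0$.
\end{itemize}

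There is no real obstacle here. The only point needing care is the bookkeeping of zeros at the origin and the gap between nominal and actual degree. This is why the open count $\nu_f^\circ$, which includes the origin for $1/r>0$, appears on one side and the closed count appears on the other.
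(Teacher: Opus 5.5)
Your proposal is correct and is essentially the paper's own proof. Both factor $f=cz^k\prod_j(z-\alpha_j)$, read off $f^*=c\prod_j(-\alpha_j)\,z^{n-d}\prod_j(z-1/\alpha_j)$, count the zeros of $f^*$ in the closed disk of radius $r$ against the zeros of $f$ in the open disk of radius $1/r$, and specialize to $r=1$ to get the identities for $B$, $D$ and $D^\circ$.
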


\begin{proof}
Let $d$ be the actual degree and $k$ the multiplicity of the zero at
the origin, and write $f(z)=cz^k\prod_{i=1}^{d-k}(z-\beta_i)$ with
$\beta_i\ne0$. Then
$f^*(z)=c\prod_i(-\beta_i)\,z^{n-d}\prod_i(z-1/\beta_i)$. Hence
$\nu_{f^*}(r)=(n-d)+\#\{i:|\beta_i|\ge1/r\}$ and
$\nu_f^\circ(1/r)=k+\#\{i:|\beta_i|<1/r\}$, whose sum is $n$, and
$|1/\beta_i|=r$ exactly when $|\beta_i|=1/r$. At $r=1$,
$\nu_f^\circ(1)=\nu_f(1)-B(f)$. The identities for $D$ and $D^\circ$
follow by subtracting $n/2$ twice.
\end{proof}

In the scaling $r=1+x/n$, \eqref{pre:reversal-radial} is the count-level
form of the identity $\Phi_{\rm rad}(x)+\Phi_{\rm rad}(-x)=1$.

\begin{corollary}[Reversal in law]\label{pre:reversal-law}
Let the coefficients be iid, real or complex, with
$\Prob(\xi_0=0)=0$, and fix $n\ge0$.
\begin{enumerate}[label=\textup{(\alph*)}]
 \item $(D_n^\circ,B_n)\overset d=(-D_n^\circ,B_n)$. Consequently
 \begin{equation}\label{pre:centering}
  \E D_n^\circ=0,\qquad \E D_n=\tfrac12\E B_n,\qquad
  D_n-\E D_n=D_n^\circ+\tfrac12(B_n-\E B_n).
 \end{equation}
 \item Let $0\le p<n$, $L=n-p-1$ and
 $P(z)=\sum_{i=0}^L\xi_{p+1+i}z^i$, of nominal degree $L$. Then $P$ is
 independent of $(\xi_0,\ldots,\xi_p)$, has the law of $f_L$, and
 \begin{equation}\label{pre:block-reversal}
  \bigl(D_n^\circ,\ B_n,\ D^\circ(P),\ B(P)\bigr)
  \overset d=\bigl(-D_n^\circ,\ B_n,\ -D_L^\circ,\ B_L\bigr).
 \end{equation}
\end{enumerate}
\end{corollary}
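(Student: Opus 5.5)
The whole statement should follow from Lemma~\ref{pre:reversal} once we observe that reversing the coefficient vector preserves the iid law. For (a), the reversal map $(\xi_0,\dots,\xi_n)\mapsto(\xi_n,\dots,\xi_0)$ preserves the joint law of iid coefficients, so $f_n^*\overset d=f_n$ as random coefficient vectors of nominal degree $n$. Since $\Prob(\xi_0=0)=0$, we have $f_n\ne0$ almost surely, and Lemma~\ref{pre:reversal} applies. It gives $D^\circ(f_n^*)=-D^\circ(f_n)$ and $B(f_n^*)=B(f_n)$. The pair $(D^\circ,B)$ is a Borel function $\Psi$ of the coefficient vector by Remark~\ref{pre:as-convention}. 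Therefore
\[
(D_n^\circ,B_n)=\Psi(f_n)\overset d=\Psi(f_n^*)=(-D_n^\circ,B_n).
\]
Since $|D_n^\circ|\le n/2$ is integrable, symmetry gives $\E D_n^\circ=0$. The definition $D_n=D_n^\circ+B_n/2$ then yields $\E D_n=\tfrac12\E B_n$, and subtracting expectations gives the last identity in \eqref{pre:centering}.

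For (b), independence of $P$ from $(\xi_0,\dots,\xi_p)$ and $P\overset d=f_L$ are immediate, since $P$ uses only the coefficients $\xi_{p+1},\dots,\xi_n$, and these are iid. For the joint statement, consider the partial reversal $\sigma$ that fixes the block of $P$ and reverses the first block. More precisely, define new coefficients
\[
\eta_j=\xi_{n-j}\ \text{for } 0\le j\le n,
\]
so that $\eta$ has the same law as $\xi$, and then form $f_n$ and $P$ from $\eta$. Note that $\eta_{p+1+i}=\xi_{L-i}$, so $P(\eta)$ is the polynomial built from $\xi_L,\dots,\xi_0$. This does not directly give $D^\circ(P)^*$. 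The cleaner choice is the permutation $\tau$ that reverses the block $\{0,\dots,p\}$ and separately reverses the block $\{p+1,\dots,n\}$. This does not reverse $f_n$, however.

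The step requiring care is therefore to find a single law-preserving permutation that simultaneously reverses $f_n$ and $P$. Since $P$ sits at the top block, reversing all of $f_n$ sends that block to positions $0,\dots,L$. So I will compose two steps.

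First, apply the full reversal $\rho$ to $f_n$. This gives $f_n^*$, whose coefficient vector is $\xi_n,\dots,\xi_0$, and whose lowest $L+1$ coefficients are $\xi_n,\dots,\xi_{p+1}$, that is, $P^*$ written in increasing order.

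Second, apply the permutation $\pi$ that reverses the top block $\{p+1,\dots,n\}$ of the original vector while fixing $\{0,\dots,p\}$. The vector $\xi\circ\pi$ is again iid with the same law. Under $\pi$, the block polynomial becomes $P^*$, so $(D^\circ(P),B(P))$ becomes $(-D^\circ(P),B(P))$ by Lemma~\ref{pre:reversal}. But $f_n$ changes to some polynomial $\tilde f$, not to $f_n^*$, so this alone does not flip $D_n^\circ$.

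The way out is the independence just established. Since $P$ is independent of $(\xi_0,\dots,\xi_p)$, I would instead use $\rho\circ\pi$ composed appropriately, and check directly on coefficient vectors that the combined map sends $(f_n,P)$ to $(f_n^*,P^*)$ up to a relabelling that preserves the iid law. If that bookkeeping fails, I would fall back on conditioning. Conditionally on the unordered data, one can exploit exchangeability within the top block.

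This is the main obstacle: identifying the correct law-preserving coefficient permutation that reverses both $f_n$ and $P$ at once. Once it is found, \eqref{pre:block-reversal} follows from Lemma~\ref{pre:reversal} applied to $f_n$ and to $P$, using $P\overset d=f_L$ for the last two coordinates.
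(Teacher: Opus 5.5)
Your part (a) is correct and is the paper's argument. The paper reads (a) off the first two coordinates of \eqref{pre:block-reversal}, while you prove it directly.

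Part (b), however, is not proved. You stop at what you call the main obstacle, and that obstacle comes from misreading the target. The right side of \eqref{pre:block-reversal} contains $D_L^\circ$ and $B_L$, which are computed from the \emph{prefix} $f_L$ of the same sequence, not from $P$. So you never need a map sending $(f_n,P)$ to $(f_n^*,P^*)$. The full reversal $v=(\xi_0,\dots,\xi_n)\mapsto v^R=(\xi_n,\dots,\xi_0)$ suffices, and you had already computed what it does before discarding it. With $\eta=v^R$, the polynomial of nominal degree $n$ built from $\eta$ is $f_n^*$, and the block $P$ built from $\eta$ has coefficients $\xi_L,\dots,\xi_0$, i.e.\ it is $f_L^*$. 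Let $\Psi(v)=\bigl(D^\circ(f_n),B(f_n),D^\circ(P),B(P)\bigr)$, all computed from $v$; this map is Borel. Since $v^R\overset d=v$ and $f_n$, $f_L$ are nonzero almost surely, Lemma~\ref{pre:reversal} at nominal degrees $n$ and $L$ gives
\[
 \bigl(D_n^\circ,B_n,D^\circ(P),B(P)\bigr)=\Psi(v)\overset d=\Psi(v^R)=\bigl(-D_n^\circ,B_n,-D_L^\circ,B_L\bigr),
\]
which is exactly \eqref{pre:block-reversal}. The paper says the same thing from the other side: the prefix of $v^R$ of nominal degree $L$ is $P^*$.

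The route you sketched would not have closed the gap. For a nondegenerate law, the only coefficient permutation that turns $f_n$ into $f_n^*$ pathwise is the full reversal. That permutation turns $P$ into $f_L^*$, not $P^*$, so the simultaneous reversal you were looking for does not exist. Moreover, suppose you had an identity $\bigl(D_n^\circ,B_n,D^\circ(P),B(P)\bigr)\overset d=\bigl(-D_n^\circ,B_n,-D^\circ(P),B(P)\bigr)$. Combining it with the marginal fact $P\overset d=f_L$ would still not give \eqref{pre:block-reversal}. Equality of the laws of the last two coordinates does not transfer to the joint vector, because $(f_n,P)$ and $(f_n,f_L)$ in general have different joint laws. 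The conditioning and exchangeability fallback is not an argument as stated, since $D_n^\circ$ depends on the order of the coefficients within the top block.
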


\begin{proof}
The block $P$ uses the coefficients $\xi_{p+1},\ldots,\xi_n$, so it is
independent of $(\xi_0,\ldots,\xi_p)$ and has the law of $f_L$. For the
symmetry, let $v=(\xi_0,\ldots,\xi_n)$ and $v^R=(\xi_n,\ldots,\xi_0)$.
Then $v^R\overset d=v$, and all polynomials below are nonzero almost
surely. The polynomial of $v^R$ is $f_n^*$, and its prefix of nominal degree $L$
has coefficients $(\xi_n,\ldots,\xi_{p+1})$, so it is $P^*$. By
Lemma~\ref{pre:reversal},
$(D_n^\circ,B_n,D^\circ(P),B(P))=(-a,b,-c,e)$, where $(a,b,c,e)$ is the
vector $(D_n^\circ,B_n,D_L^\circ,B_L)$ computed from $v^R$ in place of
$v$. Since $v^R\overset d=v$, this proves \eqref{pre:block-reversal}, and
its first two coordinates give the first assertion of (a). All these
variables are bounded by $n$, so the symmetry gives $\E D_n^\circ=0$,
and expanding
$D_n=D_n^\circ+B_n/2$ gives the rest of \eqref{pre:centering}.
\end{proof}

The symmetry acts on one coefficient vector at a time, since reversed
vectors of different degrees are not prefixes of one sequence.

\begin{lemma}[No zeros on a fixed circle]\label{pre:no-circle-roots}
Let $\xi_0$ have a density on $\R$ or on $\C$, and fix $r>0$. Almost
surely, simultaneously for all $n\ge0$, $\xi_n\ne0$ and $f_n$ has no
zero on $\{|z|=r\}$. In particular $B_n=0$ and $D_n=D_n^\circ$ for all
$n$ almost surely, and for every $n$, $D_n\overset d=-D_n$ and
$\E D_n=0$.
\end{lemma}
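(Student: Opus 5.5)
Since the statement concerns countably many events, I will prove, for each fixed $n$, that $\xi_n\ne0$ almost surely and that $f_n$ has no zero on $\{|z|=r\}$ almost surely, and then intersect over $n\ge0$. The first claim is immediate, because a law with a density gives zero mass to the single point $0$. For the second, I will condition on $(\xi_0,\ldots,\xi_{n-1})$ and use that $\xi_n$ is independent of them and has a density. The case $n=0$ is trivial: $f_0=\xi_0$ is a nonzero constant almost surely.

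For $n\ge1$, fix $(\xi_0,\ldots,\xi_{n-1})=(a_0,\ldots,a_{n-1})$ and put $g(z)=\sum_{j<n}a_jz^j$. A point $z$ with $|z|=r$ is a zero of $f_n$ exactly when $\xi_n=-g(z)/z^n$. So the bad event is $\{\xi_n\in S\}$, where $S=\{-g(re^{i\theta})r^{-n}e^{-in\theta}:\theta\in[0,2\pi]\}$. The set $S$ is the image of a circle under a smooth ($C^1$) map into $\C\cong\R^2$, so it has two-dimensional Lebesgue measure zero. Hence in the complex case $\Prob(\xi_n\in S)=0$.

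In the real case, $S\cap\R$ must be shown to be Lebesgue-null in $\R$, and this is the only step needing care. I will argue as follows. The function $h(\theta)=-g(re^{i\theta})r^{-n}e^{-in\theta}$ is a trigonometric polynomial. Its imaginary part $\Im h$ is either identically zero or has finitely many zeros in $[0,2\pi)$.
\begin{itemize}
\item If $\Im h$ has finitely many zeros, then $S\cap\R$ is finite, hence null.
\item If $\Im h\equiv0$, then $S=h([0,2\pi])\subset\R$ is the image of an interval under a real-analytic function, and it could have positive length. In that case I will instead argue directly with the real coefficients.
\end{itemize}
With $\xi_n$ real, $f_n(re^{i\theta})=0$ forces $\xi_n=h(\theta)$. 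Then $F(\theta,t)=g(re^{i\theta})+t r^ne^{in\theta}$ is a real-analytic map $\R^2\to\C$, and its zero set in $(\theta,t)$ is contained in the graph $\{t=h(\theta)\}$. The real and imaginary parts of $F(\cdot,t)$ are trigonometric polynomials, and both vanish at a common $\theta$ only when their resultant in $e^{i\theta}$ vanishes. The resultant is a polynomial in $t$.

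It therefore suffices to show this resultant is not identically zero. I will do this by checking a single value: for large $|t|$ the term $tr^ne^{in\theta}$ dominates, so $F(\cdot,t)$ has no zero on the circle. The resultant thus vanishes at only finitely many $t$, so $S\cap\R$ is finite and $\Prob(\xi_n\in S)=0$. This argument works in all real cases, so the earlier case split is only heuristic.

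The conditional probability is therefore zero for every fixed $(a_0,\ldots,a_{n-1})$. Fubini's theorem, using the independence of $\xi_n$ from $(\xi_0,\ldots,\xi_{n-1})$, then gives probability zero unconditionally, and intersecting over $n$ gives the simultaneous statement.

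The remaining assertions follow from this. Taking $r=1$ gives $B_n=0$ almost surely, so $D_n=D_n^\circ$ by Definition~\ref{pre:counts}. Corollary~\ref{pre:reversal-law}(a) applies because $\Prob(\xi_0=0)=0$, and it gives $D_n^\circ\overset d=-D_n^\circ$. Combined with $D_n=D_n^\circ$ this yields $D_n\overset d=-D_n$, and boundedness $|D_n|\le n/2$ gives $\E D_n=0$.
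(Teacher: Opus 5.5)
The gap is in the real case, at the step you single out as the one needing care. Put $w=e^{i\theta}$, $p_t(z)=g(rz)+t(rz)^n$ and $p_t^*(z)=z^np_t(1/z)$. After multiplying by $w^n$, the real and imaginary parts of $F(\cdot,t)$ become constant multiples of $w^np_t(w)+p_t^*(w)$ and $w^np_t(w)-p_t^*(w)$. For $t\ne0$ their resultant vanishes exactly when $p_t(w)=p_t(1/w)=0$ for some $w\ne0$, \emph{anywhere} in $\C$. A real pair of roots such as $2,\frac12$ already makes it vanish, whether or not $p_t$ has a zero on the circle.

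So the observation that for large $|t|$ the term $tr^ne^{in\theta}$ dominates, and hence $F(\cdot,t)$ has no zero on the circle, does not show that the resultant is nonzero at that $t$. As written, the conclusion that $S\cap\R$ is finite is unsupported. The repair is short. Domination on $|z|=1$ together with Rouch\'e's theorem puts all $n$ roots of $p_t$ in the open unit disk. Then no root is paired with its reciprocal, and the resultant is a nonzero polynomial in $t$.

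You do not actually need the resultant, because your first bullet already covers every real case. The function $\Im h(\theta)=\sum_{j<n}a_jr^{j-n}\sin((n-j)\theta)$ has the distinct frequencies $1,\dots,n$. So $\Im h\equiv0$ forces $a_0=\dots=a_{n-1}=0$, and then $S=\{0\}$, which $\xi_n$ avoids almost surely. The positive-length scenario you worried about never occurs.

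With either fix your proof is complete, and it takes a genuinely different route from the paper's. The paper takes the Sylvester resultant of $F(z)=f_n(rz)$ and $F^\#(z)=z^n\overline{F(1/\bar z)}$ as a polynomial in the real and imaginary parts of all the coefficients. It checks that this resultant is nonzero at $b=(2,0,\dots,0,1)$, and then uses two facts: a nonzero real polynomial has a null zero set, and $(\xi_jr^j)_{j\le n}$ has a joint density. This handles real and complex laws in one stroke.

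Your conditioning argument needs only that the top coefficient has a density and is independent of the others. It replaces the algebra by elementary null-set facts: a $C^1$ curve is planar-null, and a nonzero trigonometric polynomial has finitely many zeros. The price is separate real and complex cases. Your deductions of $B_n=0$, $D_n\overset d=-D_n$ and $\E D_n=0$ from Corollary~\ref{pre:reversal-law} agree with the paper.
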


\begin{proof}
A density has no atoms, so $\xi_n\ne0$ almost surely. Fix $n\ge1$ and
let $F(z)=f_n(rz)$. Its coefficient vector $b=(\xi_jr^j)_{j\le n}$ again
has a density. Put $F^\#(z)=z^n\overline{F(1/\bar z)}=\sum_j\bar
b_{n-j}z^j$, which is $F^*$ for real coefficients. If $F(\zeta)=0$ with
$|\zeta|=1$, then $1/\bar\zeta=\zeta$ and
$F^\#(\zeta)=\zeta^n\overline{F(\zeta)}=0$. The Sylvester resultant
$R(b)$ of $F$ and $F^\#$ in formal degrees $(n,n)$ is a polynomial in
the real and imaginary parts of the $b_j$. It vanishes whenever $F$ and
$F^\#$ have a common zero, and not when $F$ has nonzero leading
coefficient and no zero in common with $F^\#$. At $b=(2,0,\ldots,0,1)$,
$F=z^n+2$ and $F^\#=2z^n+1$ have no common zero, so $\Re R$ or $\Im R$
is a nonzero real polynomial. A nonzero real polynomial
$\sum_kc_k(x')x_m^k$ on $\R^m$ has a Lebesgue-null zero set, since the
set where all $c_k$ vanish is null by induction on $m$, the other fibers
in $x_m$ are finite, and Fubini's theorem applies. So $f_n$ has a zero on
$|z|=r$ with probability zero. Take the union over $n$, and apply
Corollary~\ref{pre:reversal-law} at $r=1$.
\end{proof}

\subsection{Winding and signed crossings}

For the fluctuations we count zeros by the argument principle, along the
curve obtained by removing the mean rotation $e^{int/2}$ from
$f(e^{it})$. For a nonzero polynomial $f$ of nominal degree $n$ and
$r>0$ define the \emph{carrier curve} and the \emph{argument speed}
\[
 g_{f,r}(t)=e^{-int/2}f(re^{it}),\qquad
 a_{f,r}(t)=\Re\frac{re^{it}f'(re^{it})}{f(re^{it})}-\frac n2,
\]
the latter where $f(re^{it})\ne0$. On that set
$a_{f,r}=\Im(g_{f,r}'/g_{f,r})$ is the derivative of a continuous
argument of $g_{f,r}$. Write $g_f=g_{f,1}$ and $a_f=a_{f,1}$. For
$\gamma:[a,b]\to\C\setminus\{0\}$ continuous, $\Delta_{[a,b]}\arg\gamma$
is the increment of a continuous argument.

\begin{lemma}[Argument principle with half weights]\label{pre:argument}
Let $f\ne0$ have nominal degree $n$ and let $r>0$.
\begin{enumerate}[label=\textup{(\roman*)}]
 \item $a_{f,r}$ extends to a continuous, indeed real-analytic,
 $2\pi$-periodic function, and
 \begin{equation}\label{pre:balanced}
  \frac1{2\pi}\int_0^{2\pi}a_{f,r}(t)\,\dd t
  =\nu_f(r)-\frac{B_f(r)}2-\frac n2;\qquad\hbox{at }r=1:\quad
  \frac1{2\pi}\int_0^{2\pi}a_f(t)\,\dd t=D^\circ(f).
 \end{equation}
 \item If $f$ has no zero on $|z|=r$, then
 $\nu_f(r)-n/2=(2\pi)^{-1}\Delta_{[0,2\pi]}\arg g_{f,r}$. Since
 $g_{f,r}(t+2\pi)=(-1)^ng_{f,r}(t)$, this is half the winding number of
 $g_{f,r}$ over $[0,4\pi]$.
 \item If $f$ has real coefficients, then $a_{f,r}(-t)=a_{f,r}(t)$, and
 the integral in \eqref{pre:balanced} may be replaced by
 $\pi^{-1}\int_0^\pi a_{f,r}(t)\,\dd t$.
\end{enumerate}
\end{lemma}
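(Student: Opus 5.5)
The plan is to factor $f$ and treat each linear factor separately, since everything in the statement is additive over factors.

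\emph{Factorization.} Write $f(z)=cz^k\prod_{i}(z-\beta_i)$ with $\beta_i\ne0$, over the actual degree $d\le n$. With $z=re^{it}$, a zero-free point of the circle,
\[
 \frac{zf'(z)}{f(z)}=k+\sum_i\frac{z}{z-\beta_i}.
\]
Hence $a_{f,r}(t)+\tfrac n2=k+\sum_i\Re\frac{z}{z-\beta_i}$.

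\emph{Continuity.} For a factor with $|\beta_i|\ne r$ the term is real-analytic in $t$. For $|\beta_i|=r$, write $\beta_i=re^{is}$. Then
\[
 \frac{e^{it}}{e^{it}-e^{is}}=\frac{e^{i(t-s)/2}}{2i\sin((t-s)/2)},
\]
and its real part is identically $\tfrac12$. So the apparent singularity is removable and the term is the constant $\tfrac12$. This gives the real-analytic $2\pi$-periodic extension in (i).

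\emph{Averages.} For $|\beta|<r$, the mean over $t$ of $z/(z-\beta)=1/(1-\beta/z)$ is $1$, by the geometric series. For $|\beta|>r$ it is $0$, since $z/(z-\beta)=-(z/\beta)/(1-z/\beta)$ has no constant term. For $|\beta|=r$ it is $\tfrac12$, by the computation above. Summing, the mean of $a_{f,r}+\tfrac n2$ equals
\[
 k+\#\{|\beta_i|<r\}+\tfrac12\#\{|\beta_i|=r\}=\nu_f(r)-\tfrac12 B_f(r).
\]
This is \eqref{pre:balanced}, and $r=1$ gives $D^\circ(f)$ by Definition~\ref{pre:counts}.

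\emph{Part (ii).} If there are no zeros on the circle, $a_{f,r}=\Im(g'/g)$ is the derivative of a continuous argument of $g_{f,r}$. Integrating over $[0,2\pi]$ and using $B_f(r)=0$ in (i) gives $\nu_f(r)-n/2=(2\pi)^{-1}\Delta_{[0,2\pi]}\arg g_{f,r}$. The identity $g(t+2\pi)=e^{-in\pi}g(t)=(-1)^ng(t)$ shows that the argument increment over $[2\pi,4\pi]$ equals that over $[0,2\pi]$. So $g$ is a closed curve on $[0,4\pi]$, and $\nu_f(r)-n/2$ is half its winding number.

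\emph{Part (iii).} For real coefficients, $f(\bar z)=\overline{f(z)}$ and $f'(\bar z)=\overline{f'(z)}$. Replacing $t$ by $-t$ therefore conjugates $zf'/f$ and leaves its real part unchanged, so $a_{f,r}$ is even. The integral over $[0,2\pi]$, equivalently over $[-\pi,\pi]$, is then twice the integral over $[0,\pi]$.

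The only delicate point is the removable singularity at boundary zeros, including multiple ones. It is handled exactly by the identity $\Re\,e^{it}/(e^{it}-e^{is})=\tfrac12$ applied to each factor.
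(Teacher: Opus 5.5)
Your proof is correct and follows essentially the same route as the paper. You split $zf'/f$ over the linear factors, show that each zero on the circle contributes the constant $\tfrac12$ to the real part, compute the means $1$ and $0$ for zeros inside and outside by geometric series, get (ii) from $a_{f,r}=\Im(g_{f,r}'/g_{f,r})$, and get (iii) from conjugation symmetry via $f(\bar z)=\overline{f(z)}$, which is equivalent to the paper's use of the conjugation invariance of the zero set.
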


\begin{proof}
Let $\alpha_1,\ldots,\alpha_d$ be the zeros, $w=re^{it}$. Then
$a_{f,r}(t)=\sum_j\Re\{w/(w-\alpha_j)\}-n/2$. If $|\alpha|=r$ and
$w\ne\alpha$, write $\alpha/w=e^{i\psi}$. Then
$w/(w-\alpha)=(1-e^{i\psi})^{-1}=\frac12+\frac i2\cot(\psi/2)$ has real
part $\frac12$. So each zero on the circle contributes the constant
$\frac12$, and the other terms are real-analytic, proving the extension.
If $|\alpha|<r$ then $w/(w-\alpha)=\sum_{k\ge0}(\alpha/w)^k$, with mean
$1$ over $t$. If $|\alpha|>r$ then $w/(w-\alpha)=-\sum_{k\ge1}(w/\alpha)^k$,
with mean $0$. Summing gives
$\nu_f^\circ(r)+\frac12B_f(r)-\frac n2$, which is \eqref{pre:balanced}.
Part (ii) follows since $a_{f,r}$ is the derivative of the argument of
$g_{f,r}$. For (iii) the zero set is invariant under conjugation,
and $\Re\{\bar w/(\bar w-\alpha)\}=\Re\{w/(w-\bar\alpha)\}$. An even
$2\pi$-periodic function has equal integrals over $[0,\pi]$ and
$[\pi,2\pi]$.
\end{proof}

For real coefficients we write the carrier curve at radius one in
coordinates, $g_f=U_f+iV_f$, so that
\[
 U_f(t)=\sum_{j=0}^na_j\cos\bigl((j-\tfrac n2)t\bigr),\qquad
 V_f(t)=\sum_{j=0}^na_j\sin\bigl((j-\tfrac n2)t\bigr).
\]
We call $U_f,V_f$ the \emph{carrier coordinates} and write
$U_n=U_{f_n}$, $V_n=V_{f_n}$. Pairing the coefficients $a_{n/2\pm\lambda}$
expresses them through the frequency set
$\Lambda_n=\{n/2-j:0\le j<n/2\}$, whose elements are positive integers for
even $n$ and positive half-integers for odd $n$, all at most $n/2$.

\begin{lemma}[Carrier coordinates]\label{pre:carrier}
Let $f$ have real coefficients $a_0,\ldots,a_n$. Then
\begin{equation}\label{pre:pairing}
\begin{aligned}
 U_f(t)&=\ind_{\{n\ \rm even\}}a_{n/2}
       +\sum_{\lambda\in\Lambda_n}(a_{n/2+\lambda}+a_{n/2-\lambda})\cos\lambda t,\\
 V_f(t)&=\sum_{\lambda\in\Lambda_n}(a_{n/2+\lambda}-a_{n/2-\lambda})\sin\lambda t,
\end{aligned}
\end{equation}
and $2U_f(t)=e^{-int/2}(f+f^*)(e^{it})$,
$2iV_f(t)=e^{-int/2}(f-f^*)(e^{it})$. Consequently:
\begin{enumerate}[label=\textup{(\alph*)}]
 \item $U_f$ is even and $V_f$ is odd, and
 $(U_f,V_f)(t+2\pi)=(-1)^n(U_f,V_f)(t)$.
 \item $V_f\equiv0$ if and only if $f=f^*$. Otherwise $V_f$ has at most
 $n$ zeros in every half-open interval of length $2\pi$, counted with
 multiplicity.
 \item If $e^{i\tau}$ is a zero of $f$ of multiplicity $m$, then $\tau$
 is a zero of order at least $m$ of both $U_f$ and $V_f$.
\end{enumerate}
\end{lemma}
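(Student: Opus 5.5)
The pairing formula \eqref{pre:pairing} comes from direct reindexing. Write $U_f(t)=\sum_j a_j\cos((j-\tfrac n2)t)$ and set $\lambda=j-n/2$, so $\lambda$ ranges over $\{-n/2,\dots,n/2\}$ in unit steps. The terms with $\lambda$ and $-\lambda$ pair off. For the cosine, evenness gives the coefficient $a_{n/2+\lambda}+a_{n/2-\lambda}$ of $\cos\lambda t$. For the sine, oddness gives $a_{n/2+\lambda}-a_{n/2-\lambda}$. The unpaired term $\lambda=0$ occurs only for even $n$: it contributes $a_{n/2}$ to $U_f$ and nothing to $V_f$. The positive values of $\lambda$ are exactly $\Lambda_n$, since $j<n/2$ corresponds to $\lambda=n/2-j>0$.

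For the second pair of identities, compute $e^{-int/2}f^*(e^{it})=\sum_j a_{n-j}e^{i(j-n/2)t}=\sum_k a_k e^{-i(k-n/2)t}$, which is $\overline{g_f(t)}$ because the coefficients are real. Hence $e^{-int/2}(f\pm f^*)(e^{it})=g_f\pm\bar g_f$, which equals $2U_f$ or $2iV_f$.

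Part (a) follows from \eqref{pre:pairing}. It expresses $U_f$ as a cosine sum, hence even, and $V_f$ as a sine sum, hence odd. Under $t\mapsto t+2\pi$, each $e^{i\lambda t}$ with $\lambda\in\Lambda_n\cup\{0\}$ picks up the factor $e^{2\pi i\lambda}$. This factor is $1$ when $\lambda$ is an integer (even $n$) and $-1$ when $\lambda$ is a half-integer (odd $n$), so it equals $(-1)^n$ in both cases.

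Part (b) uses the identity for $V_f$. The function $V_f$ vanishes identically iff $(f-f^*)(e^{it})=0$ for all $t$. A polynomial vanishing on the whole circle is zero, so this happens iff $f=f^*$.

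For the zero count in (b), set $h=f-f^*\neq0$, a polynomial of degree at most $n$, and $z=e^{it}$. Then $2iV_f(t)=e^{-int/2}h(e^{it})$, with $e^{-int/2}$ nonvanishing. The map $t\mapsto e^{it}$ is a bijection from any half-open interval of length $2\pi$ onto the circle, and it is a local analytic diffeomorphism. So a zero of $V_f$ at $t_0$ of order $m$ corresponds to a zero of $h$ at $e^{it_0}$ of order $m$. The main point of care is that multiplicity is preserved: near $t_0$, $h(e^{it})=(e^{it}-e^{it_0})^m u(t)$ with $u(t_0)\ne0$, and $e^{it}-e^{it_0}=(t-t_0)\cdot(\text{nonvanishing})$. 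Since $h$ has at most $n$ zeros counted with multiplicity, $V_f$ has at most $n$ in the interval.

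For part (c), the same factorization applied to $f$ shows that $e^{-int/2}f(e^{it})=g_f(t)$ vanishes to order at least $m$ at $\tau$. Because the coefficients are real, $\bar g_f$ vanishes to the same order. Hence $U_f=(g_f+\bar g_f)/2$ and $V_f=(g_f-\bar g_f)/(2i)$ both vanish to order at least $m$: each derivative of order below $m$ is a real or imaginary part of the corresponding derivative of $g_f$, which vanishes at $\tau$.
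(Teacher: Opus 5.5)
Your proof is correct and follows the paper's argument: you pair the indices $n/2\pm\lambda$, use real coefficients to get $e^{-int/2}f^*(e^{it})=\overline{g_f(t)}$, and use that $t\mapsto e^{it}$ is locally biholomorphic, so that orders of vanishing equal multiplicities; this is applied to $f-f^*$ for (b) and to $f$, with $U_f=\Re g_f$ and $V_f=\Im g_f$, for (c). The only blemish is a harmless sign slip: with $\lambda=j-n/2$ it is $j>n/2$, not $j<n/2$, that gives the positive values of $\lambda$, although the resulting set is $\Lambda_n$ either way.
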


\begin{proof}
Pair the indices $n/2\pm\lambda$ in $\sum_ja_je^{i(j-n/2)t}$. Real
coefficients give
$e^{-int/2}f^*(e^{it})=e^{int/2}f(e^{-it})=\overline{g_f(t)}$, which
proves the two identities, and (a) is read off from \eqref{pre:pairing}.
Since $t\mapsto e^{it}$ is locally biholomorphic, the order of vanishing
of $t\mapsto P(e^{it})$ at $\tau$ equals the multiplicity of $e^{i\tau}$
as a zero of $P$. Applied to the nonzero polynomial $P=f-f^*$, of degree
at most $n$, this proves (b). Applied to $P=f$, together with
$U_f=\Re g_f$ and $V_f=\Im g_f$, it proves (c).
\end{proof}

The sign of a crossing is recorded by the crossing index. For a real
function $V$ with finitely many zeros on an interval and an interior
point $t$, let $\sgn V(t\pm)$ be the sign of $V$ on $(t,t+\delta)$,
respectively $(t-\delta,t)$, for small $\delta>0$, and define the
\emph{crossing index}
\[
 \iota_V(t)=\tfrac12\bigl(\sgn V(t+)-\sgn V(t-)\bigr)\in\{-1,0,1\}.
\]
It vanishes unless $V(t)=0$. At a simple zero it equals $\sgn V'(t)$,
and at a zero of even order it is $0$. We use $\sgn0=0$.

\begin{lemma}[Signed half-turns]\label{pre:half-turns}
Let $\gamma=U+iV:[a,b]\to\C\setminus\{0\}$ be continuous with a
continuous argument $\theta$, and suppose $V$ has finitely many zeros in
$[a,b]$. With the sum over the zeros of $V$ in $(a,b)$,
\begin{equation}\label{pre:half-turn-identity}
 \frac{\theta(b)-\theta(a)}\pi
 =\sum_{a<t<b}\sgn U(t)\,\iota_V(t)+e(b)-e(a),
\end{equation}
where $e(a),e(b)\in[0,1]$. If $V(a)\ne0$ then
$e(a)=\theta(a)/\pi-\lfloor\theta(a)/\pi\rfloor$, and if $V(a)=0$ then
$e(a)=\frac12(1-\sgn U(a)\sgn V(a+))$. The same formulas hold at $b$,
with $\sgn V(b-)$ in place of $\sgn V(a+)$. In particular
$|(\theta(b)-\theta(a))/\pi-\sum_{a<t<b}\sgn U(t)\iota_V(t)|\le1$.
\end{lemma}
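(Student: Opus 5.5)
The plan is to cut $[a,b]$ at the zeros of $V$ and track how $\theta$ moves relative to the lattice $\pi\Z$. Let $a<t_1<\dots<t_k<b$ be the zeros of $V$ in $(a,b)$. The basic observation is the following: on any open interval where $V\ne0$, $\gamma$ stays in one open half-plane (upper or lower). Hence $\theta$ stays in a single open interval $(m\pi,(m+1)\pi)$, with $m$ even if $V>0$ and odd if $V<0$. At a zero $t$ of $V$, $\gamma(t)=U(t)\ne0$, so $\theta(t)\in\pi\Z$. Its parity records the sign of $U(t)$: it is even if $U(t)>0$ and odd if $U(t)<0$.

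I will define a lattice reference $\ell(t)\in\Z$, constant on each complementary interval, by $\ell=m$ on the interval where $\theta\in(m\pi,(m+1)\pi)$.

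The first step is to compute the jump of $\ell$ at an interior zero $t_i$. Write $\theta(t_i)=p\pi$. Just to the right, $\theta$ lies in $(p\pi,(p+1)\pi)$ or in $((p-1)\pi,p\pi)$, so $\ell(t_i+)\in\{p,p-1\}$; similarly $\ell(t_i-)\in\{p,p-1\}$. The choice is decided by the parity rule. If $p$ is even ($U(t_i)>0$), then $\ell(t_i+)=p$ exactly when $V(t_i+)>0$, and likewise on the left. A short case check then gives
\[
\ell(t_i+)-\ell(t_i-)=\sgn U(t_i)\,\iota_V(t_i).
\]
Indeed, in the even case $V$ going from negative to positive moves $\ell$ from $p-1$ to $p$, which is $+1$. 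The odd case is the mirror image, with the sign of $U$ flipped. A non-crossing zero, one where $V$ keeps its sign, gives a jump of $0$.

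The second step is to sum these jumps, using telescoping:
\[
\ell(b-)-\ell(a+)=\sum_{a<t<b}\sgn U(t)\,\iota_V(t).
\]
This reduces \eqref{pre:half-turn-identity} to the boundary terms. Setting $e(b)=\theta(b)/\pi-\ell(b-)$ and $e(a)=\theta(a)/\pi-\ell(a+)$ gives
\[
\frac{\theta(b)-\theta(a)}\pi=\sum_{a<t<b}\sgn U(t)\,\iota_V(t)+e(b)-e(a).
\]

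The third step is to identify the endpoint terms.
\begin{itemize}
\item If $V(a)\ne0$, then $\theta(a)$ lies in the same open strip as $\theta$ on $(a,t_1)$, so $\ell(a+)=\lfloor\theta(a)/\pi\rfloor$. This gives $e(a)$ as the fractional part.
\item If $V(a)=0$, then $\theta(a)=p\pi$, and $\ell(a+)$ is $p$ or $p-1$ according to the parity rule. This gives $e(a)=0$ when $\sgn U(a)\sgn V(a+)=1$ and $e(a)=1$ when it equals $-1$, which is $\frac12(1-\sgn U(a)\sgn V(a+))$.
\end{itemize}
The endpoint $b$ is handled identically with $\ell(b-)$ and $\sgn V(b-)$. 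Since $e(a),e(b)\in[0,1]$, the final bound follows.

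The main obstacle is only bookkeeping. I need a careful parity case analysis at each zero, and I must verify that "finitely many zeros" makes every one-sided sign $\sgn V(t\pm)$ well defined. The degenerate case $k=0$ with $V(a)=V(b)=0$ is covered by the same formulas.
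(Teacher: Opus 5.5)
Your proof is correct and follows essentially the same route as the paper: your lattice reference $\ell$ is the paper's $k=\lfloor\theta/\pi\rfloor$ on the complementary intervals, and your parity rule is its formula $k(s)=m-\frac12\bigl(1-\sgn U(t)\sgn V(s)\bigr)$ near a zero. Telescoping the jumps $\sgn U(t)\,\iota_V(t)$ and reading off $e(a)=\theta(a)/\pi-k(a+)$ and $e(b)=\theta(b)/\pi-k(b-)$ is exactly how the paper concludes.
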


\begin{proof}
The function $k(t)=\lfloor\theta(t)/\pi\rfloor$ is constant on every
interval where $V\ne0$, because $\theta/\pi\in\Z$ exactly where $V=0$.
Let $V(t)=0$. Then $U(t)\ne0$ and $\theta(t)=m\pi$ with
$(-1)^m=\sgn U(t)$, and for $s$ near $t$,
$\sgn V(s)=\sgn\sin\theta(s)=\sgn U(t)\,\sgn(\theta(s)-m\pi)$. Hence on
either side of $t$, $k(s)=m-\frac12(1-\sgn U(t)\sgn V(s))$. This gives
the jump $k(t+)-k(t-)=\sgn U(t)\iota_V(t)$ at interior zeros and the
values of $e(a)=\theta(a)/\pi-k(a+)$ and $e(b)=\theta(b)/\pi-k(b-)$ at
endpoint zeros. At endpoints with $V\ne0$, $k$ is continuous. Writing
$\theta(b)/\pi-\theta(a)/\pi=k(b-)-k(a+)+e(b)-e(a)$ proves the identity.
\end{proof}

For real $f$ we write
\[
 S(f)=\sum_{0<t<\pi,\ V_f(t)=0}\sgn U_f(t)\,\iota_{V_f}(t),
\]
with $S(f)=0$ if $V_f\equiv0$. By Lemma~\ref{pre:carrier}(b) the sum has
at most $n$ nonzero terms. Zeros of $f$ on the circle contribute
nothing, since $U_f$ vanishes there.

\begin{figure}[t]
\centering
\includegraphics[width=\textwidth,alt={Three panels for one Littlewood polynomial of degree ten. (a) The sine coordinate of its carrier curve, with three zeros. (b) The carrier curve and its crossings of the real axis. (c) The ten zeros of the polynomial, six of them in the unit disk.}]{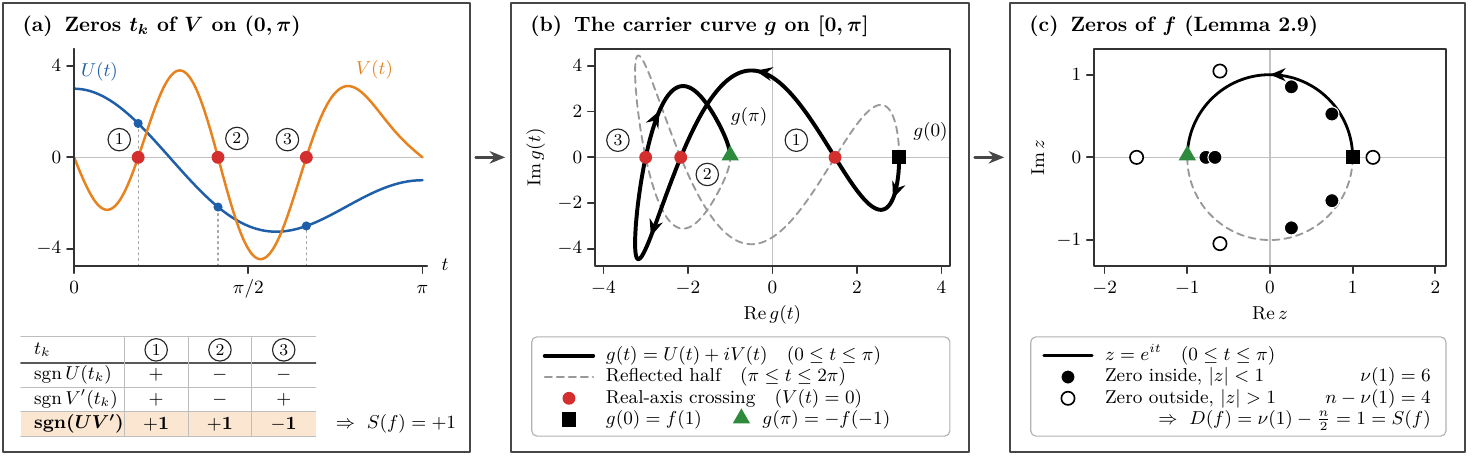}
\caption{Proposition~\ref{pre:crossing} for $f(z)=1+z-z^2+z^3+z^4-z^5+z^6+z^7+z^8-z^9-z^{10}$ ($n=10$),
whose carrier coordinates (Lemma~\ref{pre:carrier}) are $U(t)=-1+2\cos t+2\cos2t$ and $V(t)=2\sin3t-2\sin4t-2\sin5t$.
(a) $V$ has three zeros $t_k$ in $(0,\pi)$, and the signs of $U(t_k)$ and $V'(t_k)$ give $S(f)=+1$.
(b) The carrier curve $g=g_f=U+iV$ crosses the real axis at the points $U(t_k)$. The dashed curve is the reflected half, $\pi\le t\le 2\pi$.
(c) Six of the ten zeros of $f$ lie in the unit disk, so $D(f)=\nu(1)-n/2=1=S(f)$.}
\label{fig:winding}
\end{figure}
\begin{proposition}[$D_n$ as a signed crossing sum]\label{pre:crossing}
Let $f\ne0$ have real coefficients and nominal degree $n$, and write
$U=U_f$, $V=V_f$. Then
\begin{equation}\label{pre:crossing-bound}
 |D^\circ(f)-S(f)|\le1+\tfrac12B(f),\qquad |D(f)-S(f)|\le1+B(f).
\end{equation}
If $V\not\equiv0$, then for every set $A\subset(0,\pi)$,
\begin{equation}\label{pre:bulk-crossing}
 \Bigl|D(f)-\sum_{t\in A,\ V(t)=0}\sgn U(t)\iota_V(t)\Bigr|
 \le1+B(f)+\#\{t\in(0,\pi)\setminus A:V(t)=0\}.
\end{equation}
\end{proposition}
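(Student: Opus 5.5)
The plan is to combine the balanced argument principle, Lemma~\ref{pre:argument}, with the half-turn count of Lemma~\ref{pre:half-turns} on $[0,\pi]$. First treat the case $V\equiv0$. Then $f=f^*$ by Lemma~\ref{pre:carrier}(b), so Lemma~\ref{pre:reversal} gives $D^\circ(f)=-D^\circ(f)=0$, while $S(f)=0$ by definition. Both bounds in \eqref{pre:crossing-bound} are then immediate, using $D=D^\circ+B/2$. From now on assume $V\not\equiv0$. Then $V$ has finitely many zeros in $[0,\pi]$, and $g=g_f=U+iV$ vanishes only at the finitely many $\tau\in[0,\pi]$ with $f(e^{i\tau})=0$.

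Suppose first that $f$ has no zero on the unit circle. By Lemma~\ref{pre:argument}(i),(iii), $D^\circ(f)=\pi^{-1}\int_0^\pi a_f\,\dd t=(\theta(\pi)-\theta(0))/\pi$, where $\theta$ is a continuous argument of $g$ on $[0,\pi]$. Lemma~\ref{pre:half-turns} then gives $|D^\circ(f)-S(f)|\le1$ directly.

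If $f$ has zeros on the circle, split $[0,\pi]$ at the finitely many points $\tau_1<\dots<\tau_k$ where $g$ vanishes. On each closed subinterval avoiding them, $\int a_f$ equals the argument increment, so Lemma~\ref{pre:half-turns} applies with an error term $e(b)-e(a)$. The main obstacle is that a naive application costs an error of $1$ per subinterval. Instead, exploit that $a_f$ is continuous and real-analytic through each $\tau_i$ (Lemma~\ref{pre:argument}(i)), and that $U$ and $V$ both vanish there (Lemma~\ref{pre:carrier}(c)), so these points contribute nothing to $S(f)$. Near a zero of multiplicity $m$ at $\tau\in(0,\pi)$, write $g(t)=(t-\tau)^m h(t)$ with $h(\tau)\ne0$. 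A continuous argument of $g$ jumps by $m\pi$ across $\tau$ (up to sign), while the contribution of $\int a_f$ across $\tau$ is continuous. Comparing the half-turn count for $g$ with the one for $h$ locally shows that each such zero perturbs the identity by at most $m/2$ per side. Zeros at $t=0$ or $t=\pi$ correspond to real roots $\pm1$. They are counted once in $B(f)$ but appear at an endpoint of $[0,\pi]$, where only a one-sided perturbation occurs. Zeros $e^{\pm i\tau}$ with $\tau\in(0,\pi)$ come in conjugate pairs, so each contributes $2m$ to $B(f)$ but appears once in $(0,\pi)$. Bookkeeping these cases should give a total error of at most $1+\frac12B(f)$, which is the first bound. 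The second then follows from $D=D^\circ+\frac12B$. I expect this careful local accounting, making sure the endpoint terms and the interior jumps never exceed $B(f)/2$ combined, to be the delicate step. An alternative that avoids it is a perturbation argument: replace $f$ by $f_r=f(r\,\cdot)$ with $r\uparrow1$ and use the continuity of $S$ away from the circle roots, but that requires its own control of the crossings.

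For \eqref{pre:bulk-crossing}, note that each term of $S(f)$ has absolute value at most $1$. Deleting the terms at zeros of $V$ outside $A$ therefore changes the sum by at most $\#\{t\in(0,\pi)\setminus A:V(t)=0\}$. Combining this with the second bound in \eqref{pre:crossing-bound} finishes the proof.
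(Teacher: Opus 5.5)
Your treatment of the case $V\equiv0$, of the case where $f$ has no zero on the unit circle, and of the passage from \eqref{pre:crossing-bound} to \eqref{pre:bulk-crossing} is correct and matches the paper. The gap is the case $B(f)>0$, which is the only nontrivial part of the statement. There you give no proof. You assert that each circle root of multiplicity $m$ ``perturbs the identity by at most $m/2$ per side'' and that a bookkeeping ``should give'' $1+\frac12B(f)$. It is not clear what quantity is bounded by $m/2$ on each side, since $g$ has no continuous argument through $\tau$. No argument is offered for it, so as written the case with roots on the circle is not proved.

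The missing observation is that the step you call the main obstacle is not one: an error of one per subinterval is exactly what the bound allows. Split $[0,\pi]$ only at the points $0<\tau_1<\dots<\tau_p<\pi$ with $f(e^{i\tau_l})=0$. Roots at $t=0$ or $t=\pi$ are already endpoints and create no new piece. Real coefficients make $e^{\pm i\tau_l}$ distinct roots, so $2p\le B(f)$, and $p+1$ pieces with error at most one each give $|D^\circ(f)-S(f)|\le p+1\le1+\frac12B(f)$.

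To apply Lemma~\ref{pre:half-turns} on a closed piece whose endpoints are zeros of $g_f$ of orders $m,m'$, use the factorization you already wrote. The curve $\gamma(t)=g_f(t)(t-\tau_l)^{-m}(\tau_{l+1}-t)^{-m'}$ is real-analytic and zero-free on $[\tau_l,\tau_{l+1}]$, and on the open piece it differs from $g_f$ by a positive factor. So $\Re\gamma$ has the sign of $U$ there, and $\Im\gamma$ has the zeros and crossing indices of $V$. A real factor has real logarithmic derivative, so $a_f=\Im(\gamma'/\gamma)$ on the whole closed piece. Hence $\pi^{-1}\int_{\tau_l}^{\tau_{l+1}}a_f\dd t$ equals the crossing sum over $(\tau_l,\tau_{l+1})$ plus an error of modulus at most one. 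Summing over $l$, and using $U(\tau_l)=0$ so the split points contribute nothing to $S(f)$, gives the first bound. This is the paper's proof.

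Alternatively, apply the lemma to $g_f$ itself on pieces shrunk by $\varepsilon$ at each circle root and let $\varepsilon\to0$. The removed integrals of the continuous $a_f$ vanish in the limit, and $V$ has no other zeros there for small $\varepsilon$. Either way, no accounting by multiplicity is needed.
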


\begin{proof}
If $V\equiv0$ then $f=f^*$ by Lemma~\ref{pre:carrier}(b), so
$D^\circ(f)=0$ by Lemma~\ref{pre:reversal}, and
\eqref{pre:crossing-bound} holds. Let $V\not\equiv0$. We integrate the
argument speed $a_f$ between consecutive zeros of $f$ on the upper half
of the unit circle, and count half-turns on each piece. Since $V$ is
real-analytic, it vanishes identically on no interval. Let
$0=\tau_0<\tau_1<\cdots<\tau_{p+1}=\pi$, where $\tau_1,\ldots,\tau_p$ are
the points of $(0,\pi)$ at which $f(e^{it})=0$. Real coefficients make
$e^{\pm i\tau_l}$ distinct zeros, so $2p\le B(f)$. Fix $l\le p$ and let
$m,m'$ be the orders of vanishing of $g_f$ at $\tau_l,\tau_{l+1}$, zero
if $g_f$ does not vanish there. Then
$\gamma(t)=g_f(t)(t-\tau_l)^{-m}(\tau_{l+1}-t)^{-m'}$ is real-analytic and
zero-free on $[\tau_l,\tau_{l+1}]$. On the open interval it differs from
$g_f$ by a positive factor, so $\Re\gamma$ has the sign of $U$, $\Im\gamma$
has the zeros and crossing indices of $V$, and $a_f$ is the derivative
of a continuous argument of $\gamma$. Since $a_f$ is continuous on the
closed interval, Lemma~\ref{pre:half-turns} gives
\[
 \frac1\pi\int_{\tau_l}^{\tau_{l+1}}a_f(t)\,\dd t
 =\sum_{\tau_l<t<\tau_{l+1}}\sgn U(t)\,\iota_V(t)+\rho_l,
 \qquad|\rho_l|\le1.
\]
Summing over $l$, and using Lemma~\ref{pre:argument}(i),(iii) and
$U(\tau_l)=0$, gives $D^\circ(f)=S(f)+\sum_l\rho_l$, so
$|D^\circ(f)-S(f)|\le p+1\le1+B(f)/2$. Adding $D-D^\circ=B/2$ proves
\eqref{pre:crossing-bound}. Finally, \eqref{pre:bulk-crossing}
follows from \eqref{pre:crossing-bound} because each omitted term has
absolute value at most one.
\end{proof}

In applications $t$ is rescaled to $x=nt\in[0,\pi n]$, the set $A$ is a
bulk interval, and the last term of \eqref{pre:bulk-crossing} counts the
zeros of $V$ in the two endpoint windows.

\begin{figure}[t]
\centering
\includegraphics[width=\textwidth,alt={Eight panels, each showing the carrier curve of a Littlewood polynomial of degree between ten and forty with its signed crossings of the real axis, above a strip that shows the signs of its coefficients.}]{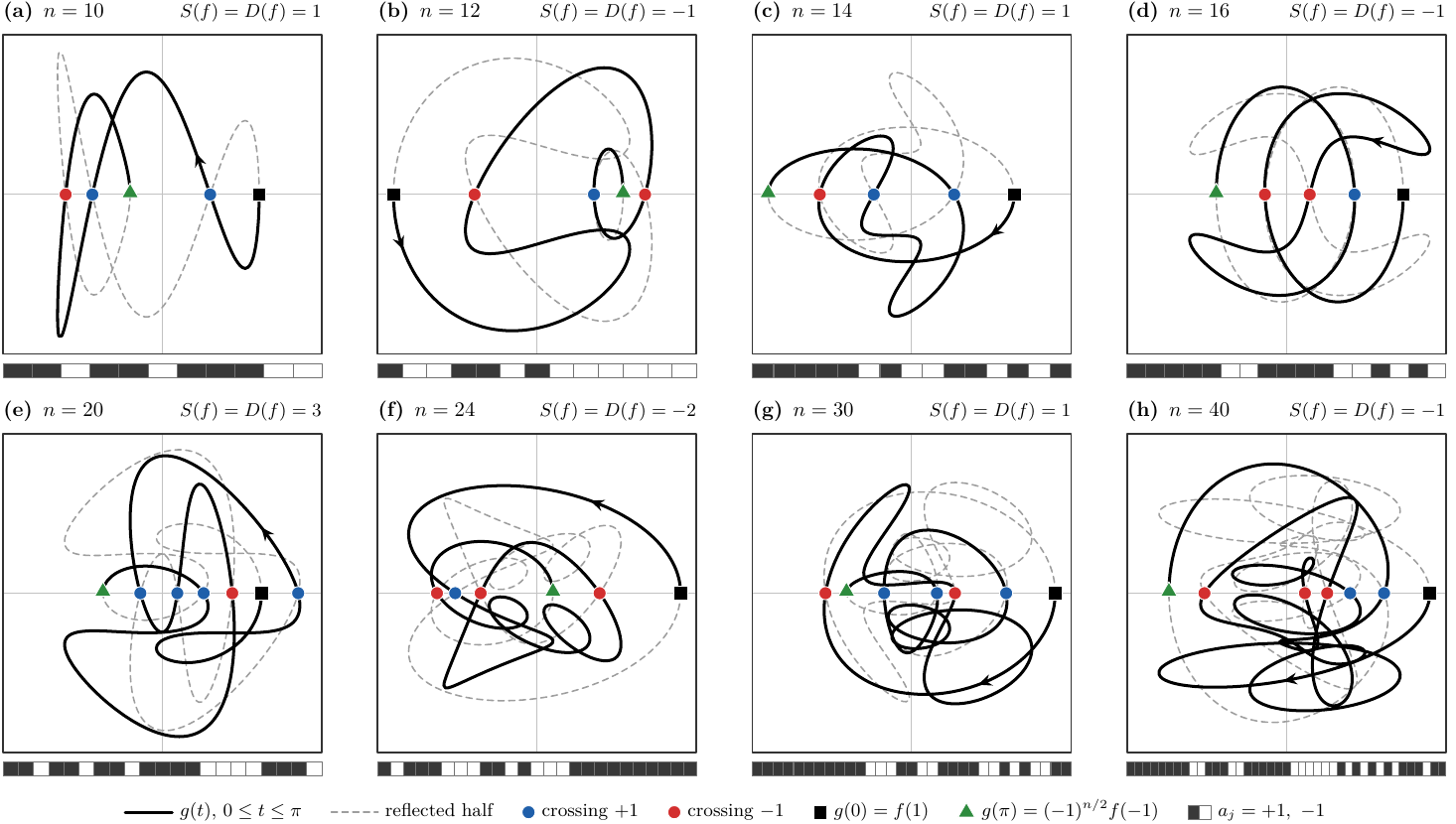}
\caption{Proposition~\ref{pre:crossing} for eight sign polynomials of degrees
$10\le n\le40$, each with $S(f)=D(f)$, where (a) is the polynomial of
Figure~\ref{fig:winding}. Each panel shows the carrier curve $g_f$ on
$[0,\pi]$, its reflected half dashed, and its crossings of the real axis,
counted $+1$ in blue and $-1$ in red. The strip below a panel shows the
coefficients $a_0,\dots,a_n$, dark for $+1$ and light for $-1$.}
\label{fig:gallery}
\end{figure}

\subsection{Trigonometric fields and Gaussian vectors}

Several sections sample the carrier coordinates in the scaled variable
$x=nt\in[0,\pi n]$. For $n\ge1$ and $b\in\R^{n+1}$ let
$\omega_j=(j-n/2)/n\in[-\frac12,\frac12]$ and
\[
 C_b(x)=n^{-1/2}\sum_{j=0}^nb_j\cos(\omega_jx),\qquad
 S_b(x)=n^{-1/2}\sum_{j=0}^nb_j\sin(\omega_jx).
\]
Thus $U_f(x/n)=n^{1/2}C_a(x)$ and $V_f(x/n)=n^{1/2}S_a(x)$, and
$C_b'=-S_{\omega b}$, $S_b'=C_{\omega b}$ with
$(\omega b)_j=\omega_jb_j$ and $|\omega b|\le|b|/2$.

\begin{lemma}[Energy and sampling]\label{pre:sampling}
Let $F\in\{C_b,S_b\}$.
\begin{enumerate}[label=\textup{(\roman*)}]
 \item $\int_0^{\pi n}(F^2+F'^2)\,\dd x\le2\pi|b|^2$.
 \item For every interval $I$ of length one and every $C^1$ function
 $F$, $\sup_IF^2\le2\int_I(F^2+F'^2)$.
 \item If $x_1,\ldots,x_q\in[0,\pi n]$, repetitions allowed, and every
 interval $[k,k+1)$, $k\in\Z$, contains at most $H$ of them, then
 \begin{equation}\label{pre:sampling-bound}
  \sum_{i=1}^qF(x_i)^2\le8\pi H|b|^2 .
 \end{equation}
\end{enumerate}
\end{lemma}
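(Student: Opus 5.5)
The three parts build on each other: (i) is a Parseval-type computation, (ii) is a one-dimensional Sobolev inequality, and (iii) follows by summing (ii) over unit intervals and inserting (i).

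\emph{Part (i).} Since $F'\in\{-S_{\omega b},C_{\omega b}\}$ and $|\omega b|\le|b|/2$, it suffices to show $\int_0^{\pi n}F^2\,\dd x\le\pi|b|^2$ for $F\in\{C_b,S_b\}$. Indeed, applying this bound to $F'$ then gives $\pi|\omega b|^2\le\pi|b|^2/4$, and the sum is at most $2\pi|b|^2$.

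To prove $\int_0^{\pi n}F^2\le\pi|b|^2$, substitute $x=nt$:
\[
 \int_0^{\pi n}C_b(x)^2\,\dd x=\int_0^\pi\Bigl(\sum_jb_j\cos((j-\tfrac n2)t)\Bigr)^2\dd t .
\]
The frequencies $k_j=j-n/2$ lie in $\frac12\Z$ and are distinct. Over $[0,2\pi]$ the functions $e^{ik_jt}$ need not be orthogonal when the $k_j$ are half-integers. Over $[0,4\pi]$, however, they are orthogonal. So extend the integral to $[0,4\pi]$, which covers the range at most four times by the periodicity and parity of the squared sum. This gives
\[
 \int_0^\pi\Bigl|\sum_jb_je^{ik_jt}\Bigr|^2\dd t\le\int_0^{4\pi}\Bigl|\sum_jb_je^{ik_jt}\Bigr|^2\dd t=4\pi|b|^2 ,
\]
which is too weak by a factor of four. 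The cleaner route is to pair the indices $j$ and $n-j$ as in Lemma~\ref{pre:carrier}. Then $C_b$ is a cosine series in the frequencies $\lambda\in\Lambda_n\cup\{0\}$, with coefficients $b_{n/2+\lambda}+b_{n/2-\lambda}$, and $S_b$ is a sine series with coefficients $b_{n/2+\lambda}-b_{n/2-\lambda}$.

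The functions $\cos\lambda t$ for $\lambda\in\Lambda_n$ are orthogonal on $[0,\pi]$ when the $\lambda$ are all integers or all half-integers, since their pairwise differences and sums are integers. The same holds for $\sin\lambda t$. Each has norm squared $\pi/2$, except the constant term, whose norm squared is $\pi$. Therefore
\[
 \int_0^{\pi}U^2\,\dd t\le\pi b_{n/2}^2+\tfrac\pi2\sum_\lambda(b_{n/2+\lambda}+b_{n/2-\lambda})^2\le\pi|b|^2,
\]
using $(u+v)^2\le2(u^2+v^2)$. The sine series is handled the same way.

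Now account for the scaling. We have $\int_0^{\pi n}C_b(x)^2\,\dd x=n^{-1}\cdot n\int_0^\pi(\cdots)^2\,\dd t$. The factor $n^{-1/2}$ in $C_b$ squares to $n^{-1}$, and the substitution $\dd x=n\,\dd t$ contributes $n$, so the two cancel. This gives exactly the bound $\pi|b|^2$.

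The one point to check carefully is orthogonality on $[0,\pi]$ for half-integer frequencies. For distinct $\lambda\ne\mu$ in $\Lambda_n$, both $\lambda\pm\mu$ are nonzero integers. Hence $\int_0^\pi\cos\lambda t\cos\mu t\,\dd t$ is a combination of $\sin((\lambda\pm\mu)\pi)/(\lambda\pm\mu)$, and each of these vanishes.

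\emph{Part (ii).} This is the standard one-dimensional Sobolev bound. Choose $y\in I$ with $F(y)^2\le\int_IF^2$, which exists by the mean value property on an interval of length one. For any $s\in I$,
\[
 F(s)^2=F(y)^2+\int_y^s2FF'\le\int_IF^2+\int_I(F^2+F'^2),
\]
using $2|FF'|\le F^2+F'^2$. The right side is at most $2\int_I(F^2+F'^2)$.

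\emph{Part (iii).} Group the points $x_i$ by the unit interval $[k,k+1)$ containing them. Each group has at most $H$ points, so by (ii) its contribution is at most $H\sup_{[k,k+1]}F^2\le2H\int_{[k,k+1]}(F^2+F'^2)$. Here $F$ is defined on all of $\R$, so we may integrate over the full intervals. Summing over $k$ then gives
\[
 \sum_{i=1}^qF(x_i)^2\le2H\int_{-1}^{\pi n+1}(F^2+F'^2)\,\dd x .
\]

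This integral slightly overshoots $[0,\pi n]$. There are two ways to handle the overshoot. One is to note that $F^2$ and $F'^2$ are even and have period $2\pi n$ in $x$ (up to sign), so the integral over any interval of length $\pi n+2\le2\pi n$ is at most twice the integral over $[0,\pi n]$. The other is to choose the intervals in (ii) inside $[0,\pi n]$ by shifting them at the ends. Either way, (i) bounds the integral by $2\cdot2\pi|b|^2$, which gives the constant $8\pi H|b|^2$.

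The main obstacle is the bookkeeping of constants: the orthogonality with half-integer frequencies in (i), and the endpoint overshoot in (iii).
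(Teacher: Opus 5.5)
Your proof is correct and follows the paper's argument. Part (i) uses the same pairing and orthogonality on $[0,\pi]$, part (ii) is an equivalent one-line Sobolev bound, and part (iii) sums over unit cells, where bounding the overshoot by evenness and $2\pi n$-periodicity of $F^2+F'^2$ validly replaces the paper's reassignment of the last points to $[\pi n-1,\pi n]$ with two-fold overlap (both give the factor $2$). One harmless slip is in the aside you discard: the $e^{ik_jt}$ are in fact orthogonal on $[0,2\pi]$, since the differences $k_j-k_l=j-l$ are integers even for odd $n$.
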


\begin{proof}
As in \eqref{pre:pairing},
\[
 C_b(x)=n^{-1/2}\Bigl\{c_0+\sum_{\lambda\in\Lambda_n}c_\lambda\cos\frac{\lambda x}n\Bigr\},\qquad
 S_b(x)=n^{-1/2}\sum_{\lambda\in\Lambda_n}s_\lambda\sin\frac{\lambda x}n,
\]
where $c_\lambda,s_\lambda=b_{n/2+\lambda}\pm b_{n/2-\lambda}$ and
$c_0=b_{n/2}$ for even $n$, $c_0=0$ for odd $n$. For distinct
$\lambda,\mu$ in $\Lambda_n$, or in $\Lambda_n\cup\{0\}$ when $n$ is
even, both $\lambda-\mu$ and $\lambda+\mu$ are nonzero integers, so
the functions $\cos\lambda t$ are orthogonal on
$[0,\pi]$, and so are the $\sin\lambda t$, with squared norms $\pi/2$
for $\lambda>0$ and $\pi$ for $\lambda=0$. Hence
\[
 \int_0^{\pi n}C_b^2=\pi c_0^2+\frac\pi2\sum_\lambda c_\lambda^2\le\pi|b|^2,
 \qquad
 \int_0^{\pi n}S_b^2=\frac\pi2\sum_\lambda s_\lambda^2\le\pi|b|^2.
\]
Applying
these to $\omega b$ bounds the derivative terms by $\pi|b|^2/4$, which
proves (i). For (ii), $F(x)^2\le2F(y)^2+2(\int_I|F'|)^2$ for $x,y\in I$.
Average over $y$ and use Cauchy--Schwarz. For (iii), assign $x_i$ to
$[\lfloor x_i\rfloor,\lfloor x_i\rfloor+1]$ if
$x_i<\lfloor\pi n\rfloor$, and to $[\pi n-1,\pi n]$ otherwise. Each of
these intervals receives at most $H$ points, and every point of
$[0,\pi n]$ lies in at most two of them. By (ii) and (i),
$\sum_iF(x_i)^2\le2H\cdot2\int_0^{\pi n}(F^2+F'^2)\le8\pi H|b|^2$.
\end{proof}

\begin{lemma}[Gaussian Poincar\'e inequality]\label{pre:gaussian}
Let $G$ be a standard Gaussian vector in $\R^m$, and let
$F\in C^1(\R^m)$ with $F$ and $\nabla F$ bounded by $C(1+|x|)^C$. Then
$\Var F(G)\le\E|\nabla F(G)|^2$ \cite[Theorem~3.20]{BLM2013}.
\end{lemma}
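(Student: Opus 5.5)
The inequality is the standard Gaussian Poincaré inequality; the paper cites \cite[Theorem~3.20]{BLM2013}. To make it self-contained I would use the Ornstein--Uhlenbeck semigroup, or alternatively the Efron--Stein reduction to one dimension. I take the Efron--Stein route. Tensorization (Efron--Stein) gives
\[
 \Var F(G)\le\sum_{i=1}^m\E\,\Var_i F(G),
\]
where $\Var_i$ is the variance in the coordinate $G_i$ with the other coordinates held fixed. It therefore suffices to prove the one-dimensional inequality $\Var h(g)\le\E h'(g)^2$ for a standard Gaussian $g$ and $h\in C^1(\R)$ of polynomial growth together with its derivative. Applying this to $h=F(\dots,x_i,\dots)$ and summing over $i$ gives $\E|\nabla F(G)|^2$. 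The polynomial growth bounds ensure that every moment appearing is finite, and that Fubini applies in the conditioning.

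For the one-dimensional inequality I would use an interpolation argument. Let $g'$ be an independent copy of $g$ and put $g_\theta=g\sin\theta+g'\cos\theta$ for $\theta\in[0,\pi/2]$. Then $h(g)-h(g')=\int_0^{\pi/2}h'(g_\theta)\,\frac{d}{d\theta}g_\theta\,\dd\theta$, where $\frac{d}{d\theta}g_\theta=g\cos\theta-g'\sin\theta$. For each $\theta$ the pair $(g_\theta,\frac{d}{d\theta}g_\theta)$ is a pair of independent standard Gaussians. Squaring and applying Cauchy--Schwarz in $\theta$ gives
\[
 2\Var h=\E(h(g)-h(g'))^2\le\frac{\pi}{2}\int_0^{\pi/2}\E h'(g_\theta)^2\,\dd\theta=\frac{\pi^2}{4}\E h'(g)^2 .
\]
This only yields the constant $\pi^2/8$, not $1$.

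To reach the sharp constant I would instead use the semigroup $P_sh(x)=\E h(e^{-s}x+\sqrt{1-e^{-2s}}g)$. Writing $\Var h=\int_0^\infty\E\,(P_sh)'(g)^2\,\dd s$ and using $(P_sh)'=e^{-s}P_s(h')$, Jensen gives $(P_sh')^2\le P_s(h'^2)$, and invariance of the Gaussian law under $P_s$ then gives $\Var h\le\int_0^\infty e^{-2s}\dd s\,\E h'^2\cdot2=\E h'^2$. The identity for $\Var h$ follows from $\frac{d}{ds}\E(P_sh)^2=-2\E((P_sh)')^2$, which comes from Gaussian integration by parts, and from the limits $P_0h=h$ and $P_\infty h=\E h$. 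The growth hypotheses justify the differentiation under the integral sign. This semigroup argument works directly in $\R^m$, so the Efron--Stein step is then unnecessary.

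The main obstacle is only the technical justification of the limit interchanges under polynomial growth. Dominated convergence handles this, since all Gaussian moments are finite. In the paper itself I would simply cite the reference.
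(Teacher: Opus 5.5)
Your proposal is correct. The paper gives no argument of its own here; it only cites Theorem~3.20 of Boucheron--Lugosi--Massart, so the comparison is with that reference.

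\textbf{Comparison with the cited proof.} As far as I recall, the cited proof follows the route you start with and then abandon. Efron--Stein tensorization reduces the statement to $m=1$. The one-dimensional case is then obtained, for sufficiently smooth $h$, by applying Efron--Stein to $h(n^{-1/2}\sum_{j\le n}\epsilon_j)$ with Rademacher signs $\epsilon_j$, expanding by Taylor's formula, and passing to the limit with the central limit theorem.

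Your Ornstein--Uhlenbeck argument replaces the tensorization, the smoothing/approximation step and the central limit theorem by a single computation in $\R^m$. It combines $\Var F=2\int_0^\infty\E|\nabla P_sF(G)|^2\,\dd s$ with $\nabla P_sF=e^{-s}P_s\nabla F$ and $|P_s\nabla F|^2\le P_s|\nabla F|^2$. The polynomial growth hypothesis in the lemma is exactly what makes the differentiation under $\E$, and the limits $s\downarrow0$ and $s\to\infty$, routine by dominated convergence. The reference's route, in exchange, needs only the discrete Efron--Stein inequality and no semigroup calculus.

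\textbf{A correction.} Your displayed identity should read $\Var h=2\int_0^\infty\E\,(P_sh)'(g)^2\,\dd s$. This is what $\frac{d}{ds}\E(P_sh)^2=-2\E((P_sh)')^2$ gives, and your final line does include the factor $2$, so the conclusion $\Var h\le\E h'^2$ stands.

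\textbf{A simplification.} The interpolation detour, which yields only the Maurey--Pisier constant $\pi^2/8$, can be dropped together with the Efron--Stein step. The semigroup argument makes both unnecessary.
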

\section{Logarithmic concentration and radial counts}\label{sec:rc}

To count the zeros of $f_n$ at distance $O(1/n)$ from the unit circle we
use Jensen's formula. The angular logarithm $J_n(r)$, the mean of
$\log|f_n|$ over the circle of radius $r$, is a convex function of
$\log r$ whose slope is the number of zeros in the disk of radius $r$
(Lemma~\ref{pre:jensen}). Secants of $J_n$ therefore bound radial counts,
and it suffices to know $J_n(r)$ to within $o(1)$ at the radii
$r=1+O(1/n)$. This route goes back to Yakir~\cite{Yakir2022}. At a single
degree $n$ and with high probability, $J_n(r)$ lies within $o(1)$ of
$\log\sigma_n(r)-\gamma/2$, where $\sigma_n(r)^2=\sum_{k\le n}r^{2k}$ is
the variance of $f_n(re^{i\theta})$ and $\gamma$ is Euler's constant
(Theorem~\ref{rc:concentration}). Lemma~\ref{rc:secant} turns this into
radial counts.

This is the prediction for Gaussian values. If
$f_n(re^{i\theta})/\sigma_n(r)$ were a standard complex Gaussian variable
$G$ at every angle, $J_n(r)-\log\sigma_n(r)$ would have mean
$\E\log|G|=-\gamma/2$. Three facts make the prediction accurate. At two angles in general
position, the normalized values $f_n(re^{i\theta})/\sigma_n(r)$ are close
in law to two independent copies of $G$, so the fraction of the circle on
which $|f_n|\le s\sigma_n(r)$ concentrates at $\Prob(|G|\le s)$. The
logarithmic integrability of random Fourier series, due to Nazarov, Nishry
and Sodin~\cite{NNS2013}, controls the angles where $|f_n|$ is very small,
and Parseval's identity controls those where it is very large.
Section~\ref{sec:rm} combines these estimates across degrees.

\subsection{Admissible coefficients}

\begin{definition}[Admissible laws]\label{rc:class}
A real or complex random variable $\xi$ is \emph{admissible} if
$\xi\overset d=-\xi$, $\E|\xi|^2=1$, and either
\begin{enumerate}[label=\textup{(\alph*)}]
 \item $|\xi|\le B$ almost surely for a constant $B$, or
 \item $\xi$ is real standard Gaussian, or standard circular complex
 Gaussian $(g+ig')/\sqrt2$ with $g,g'$ independent real standard Gaussians.
\end{enumerate}
Put $B_\xi=B$ in case (a), so that $B_\xi\ge1$, and $B_\xi=2$ in case (b).
Put $c_\xi=1/(16B_\xi^4)$.
\end{definition}

With the normalization $\E|\xi_0|^2=1$ of Definition~\ref{pre:models},
every law of Theorem~\ref{thm:R} is admissible, and random signs and
Steinhaus coefficients have $B_\xi=1$. In this section and the next,
$\xi_0,\xi_1,\dots$ are iid admissible and
$f_n(z)=\sum_{k=0}^n\xi_kz^k$. The third absolute moment
$m_3=\E|\xi|^3$ and $\zeta=\E\xi^2$ satisfy
\begin{equation}\label{rc:moments}
 1\le m_3\le B_\xi,\qquad |\zeta|\le1,
\end{equation}
Here $|\zeta|=|\E\xi^2|\le\E|\xi|^2=1$, and the bounds on $m_3$ follow
from H\"older's inequality, from $|\xi|\le B$ in case (a), and from the
values $m_3=2\sqrt{2/\pi}$ and $m_3=3\sqrt\pi/4$ in case (b).

Let $\mu$ be the normalized arc measure $\dd\theta/2\pi$ on
$\mathbb T=\R/2\pi\Z$, and $d(t)=\dist(t,2\pi\Z)$. For $r>0$ put
\[
 \sigma_n(r)^2=\sum_{k=0}^nr^{2k},\qquad
 X_r(\theta)=\frac{f_n(re^{i\theta})}{\sigma_n(r)},\qquad
 J_n(r)=J_{f_n}(r)=\int_{\mathbb T}\log|f_n(re^{i\theta})|\dd\mu(\theta),
\]
with $J_f$ as in Jensen's formula (Lemma~\ref{pre:jensen}). Thus
$\E|X_r(\theta)|^2=1$ at every angle, and
$J_n(r)-\log\sigma_n(r)=\int\log|X_r|\dd\mu$. Both sides equal $-\infty$
exactly when $f_n\equiv0$. Let $G$ be a standard complex Gaussian
variable, with density $\pi^{-1}e^{-|z|^2}$, and
$F_G(s)=\Prob(|G|\le s)=1-e^{-s^2}$. Since $|G|^2$ is exponential with
mean one,
\[
 \E\log|G|=\frac12\int_0^\infty e^{-y}\log y\dd y=-\frac\gamma2 .
\]

The radii of interest form the interval $I_n(K)=[1-K/n,1+K/n]$, $K\ge1$.
For $n\ge2K$, $r,s\in I_n(K)$ and $0\le k\le n$ we have
$e^{-2K}\le r^k\le e^K$, since $\log(1-y)\ge-2y$ for $0\le y\le\frac12$.
Hence the weights $w_k(r)=r^{2k}/\sigma_n(r)^2$ satisfy
\begin{equation}\label{rc:weights}
 (n+1)e^{-4K}\le\sigma_n(r)^2\le(n+1)e^{2K},\qquad
 (rs)^k\le e^{2K},\qquad
 \sum_{k=0}^nw_k(r)=1,\qquad
 w_k(r)\le\frac{e^{6K}}{n+1}.
\end{equation}

Each estimate of this section concerns the law of $(\xi_0,\dots,\xi_n)$
for one $n$. Since $\xi\overset d=-\xi$, this is the law of
$(\epsilon_ka_k)_{k\le n}$, where the $\epsilon_k$ are independent random
signs and the $a_k$ are iid copies of $\xi$ independent of the signs. We
may therefore assume
\begin{equation}\label{rc:sym-rep}
 \xi_k=\epsilon_ka_k,\qquad 0\le k\le n.
\end{equation}

\subsection{Occupation of small disks}
For $s\ge0$ let
\[
 A_s=\mu\{\theta\in\mathbb T:|X_r(\theta)|\le s\}
\]
be the fraction of the circle of radius $r$ on which
$|f_n|\le s\sigma_n(r)$. If the values of $X_r$ at distinct angles were
independent copies of $G$, then $A_s$ would be $F_G(s)$ up to a small
random error. We prove this for admissible coefficients by a Gaussian
approximation of the values at one angle and at two angles.

Covariances of $f_n$ at two angles are oscillatory sums, small away from
the diagonals. The partial sums of $e^{ikt}$ have modulus at most
$1/|\sin(t/2)|\le\pi/d(t)$, so summation by parts gives, for complex
$b_0,\dots,b_N$ and $t\notin2\pi\Z$,
\begin{equation}\label{rc:abel}
 \Big|\sum_{k=0}^Nb_ke^{ikt}\Big|
 \le\pi\,\frac{|b_N|+\sum_{k<N}|b_{k+1}-b_k|}{d(t)} .
\end{equation}
If the $b_k$ are nonnegative and unimodal, that is, nondecreasing up to
some index and nonincreasing after it, with maximum $b_*$, the numerator
is $2b_*-b_0\le2b_*$.

A complex linear form $\sum_k\xi_kc_k$ is a vector in $\R^2$, and its
covariance involves both $\E|\xi|^2=1$ and $\zeta$. For $w\in\C$ let
$\mathcal M[w]$ and $\mathcal R[w]$ be the real matrices of the maps
$v\mapsto wv$ and $v\mapsto w\bar v$ on $\C=\R^2$,
\[
 \mathcal M[w]=\begin{pmatrix}\Re w&-\Im w\\ \Im w&\Re w\end{pmatrix},\qquad
 \mathcal R[w]=\begin{pmatrix}\Re w&\Im w\\ \Im w&-\Re w\end{pmatrix},
\]
both of operator norm $|w|$, and write $\vec y=(\Re y,\Im y)^T$ for
$y\in\C$. For $y=a+ib$ and $w=c+id$, both $\vec y\,\vec w^{\,T}$ and
$\frac12(\mathcal M[y\bar w]+\mathcal R[yw])$ equal
$\bigl(\begin{smallmatrix}ac&ad\\bc&bd\end{smallmatrix}\bigr)$. Taking
expectations with $\E\xi_k\bar\xi_l=\delta_{kl}$ and
$\E\xi_k\xi_l=\zeta\delta_{kl}$ gives, for $Y=\sum_k\xi_kc_k$ and
$W=\sum_k\xi_kd_k$ with deterministic $c_k,d_k\in\C$,
\begin{equation}\label{rc:real-cov}
 \E\big[\vec Y\vec W^{\,T}\big]
 =\frac12\Big(\mathcal M\Big[\sum_kc_k\bar d_k\Big]
   +\mathcal R\Big[\zeta\sum_kc_kd_k\Big]\Big).
\end{equation}

The Gaussian approximation itself is a multivariate Berry--Esseen bound.

\begin{theorem}[Rai\v{c}~\cite{Raic2019}, Theorem 1.1]\label{rc:raic}
Let $Y_0,\dots,Y_n$ be independent centered random vectors in $\R^d$ with
$\sum_k\Cov(Y_k)=I_d$, and let $Z$ be standard Gaussian in $\R^d$.
For every measurable convex set $D\subset\R^d$,
\[
 \Big|\Prob\Big(\sum_kY_k\in D\Big)-\Prob(Z\in D)\Big|
 \le(42d^{1/4}+16)\sum_k\E\|Y_k\|^3.
\]
\end{theorem}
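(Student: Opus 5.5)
This is Raič's multivariate Berry--Esseen theorem, and in the paper it is imported as stated from \cite{Raic2019} rather than reproved. If I had to reconstruct it, I would follow Raič's route, which is a Lindeberg--Stein induction. The convex sets enter only through their Gaussian boundary measure, and the explicit constant $42d^{1/4}+16$ comes from solving the recursion that the induction produces.

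\emph{Setup.} Write $W=\sum_kY_k$ and $\beta=\sum_k\E\|Y_k\|^3$, and let $\delta$ be the supremum over all families satisfying the hypotheses, with the same $d$ and the same $\beta$, of $\sup_D|\Prob(W\in D)-\Prob(Z\in D)|$. Two reductions come first.

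\textbf{Step 1 (trivial and smoothing inequalities).} If $\beta$ is large the bound is trivial, since the left side is at most $1$; so we may assume $\beta\le\beta_0$ for a small $\beta_0$.

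For convex $D$ and $\varepsilon>0$, let $D^{\varepsilon}$ be the outer and $D^{-\varepsilon}$ the inner parallel set. Both are convex. Nazarov's bound on the Gaussian surface area of convex sets, sharpening Ball's $4d^{1/4}$, gives
\[
\Prob(Z\in D^{\varepsilon}\setminus D^{-\varepsilon})\le c\,d^{1/4}\varepsilon,\qquad c<1 .
\]
This is the only place the dimension enters, and it produces the factor $d^{1/4}$.

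Next I replace $\ind_D$ by a smooth function $h_{D,\varepsilon}$ that equals $1$ on $D^{-\varepsilon}$ and $0$ off $D^{\varepsilon}$. I take it to be a function of $\dist(\cdot,D)$, or the Gaussian-smoothed indicator of Raič. Its Stein solution $f=f_{h}$ of $\Delta f-x\cdot\nabla f=h-\E h(Z)$ has second derivatives controlled in terms of $\varepsilon^{-1}$. More precisely, the third derivatives of $f$ integrate against Gaussian mixtures to quantities of order $\varepsilon^{-1}$ times the Gaussian measure of thin shells. The sandwich $\ind_{D^{-\varepsilon}}\le h\le\ind_{D^{\varepsilon}}$ then gives
\[
|\Prob(W\in D)-\Prob(Z\in D)|\le\sup_{h}|\E h(W)-\E h(Z)|+c\,d^{1/4}\varepsilon .
\]

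\textbf{Step 2 (Stein step with leave-one-out).} Write $W=W_k+Y_k$ with $W_k=W-Y_k$. Taylor-expanding $\E[Y_k\cdot\nabla f(W)]$ around $W_k$ to third order turns $\E h(W)-\E h(Z)$ into a sum of terms
\[
\E\,\langle\nabla^3f(W_k+\theta Y_k),\,Y_k^{\otimes3}\rangle .
\]
These cannot be bounded by $\sup\|\nabla^3f\|\sim\varepsilon^{-2}$, because that loses too much. Instead I write $\nabla^3f$ via the Ornstein--Uhlenbeck semigroup as an integral over $s$ of Gaussian-smoothed derivatives of $h$. I then compare the law of $W_k$ with the law of $Z$ on the thin shells $D^{\varepsilon'}\setminus D^{-\varepsilon'}$. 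This is done by invoking the induction hypothesis $\delta$ for $W_k$, which is the same kind of sum after rescaling by $(1-\Cov Y_k)^{-1/2}$, with a third-moment sum at most $\beta$ times a constant. The Gaussian part contributes $c\,d^{1/4}\varepsilon'$. The result is a self-bounding inequality of the form
\[
\delta\le c\,d^{1/4}\varepsilon+A\beta\Bigl(1+\log\frac1\varepsilon\Bigr)+B\frac{\beta}{\varepsilon}\bigl(\delta+c\,d^{1/4}\varepsilon\bigr).
\]
The logarithm comes from the $s$-integral near $0$ and is absorbed as in Raič.

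\textbf{Step 3 (closing the recursion).} Choose $\varepsilon=\kappa\beta$ with $\kappa$ large enough that $B\beta/\varepsilon=B/\kappa\le\frac12$. The $\delta$ term is then absorbed into the left side, and optimizing $\kappa$ yields $\delta\le(a\,d^{1/4}+b)\beta$. The constants $42$ and $16$ arise from the explicit values of $A$ and $B$ and of the surface-area constant, together with the choice of $\kappa$ and of the trivial-range cutoff $\beta_0$.

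The main obstacle is Step 2. One must get the third-order remainder to cost only $\beta/\varepsilon$ times (shell probability), rather than $\beta/\varepsilon^2$, while keeping every constant numerical so that the recursion closes with exactly $42d^{1/4}+16$. For that I would follow Raič's explicit smoothing kernel and his bookkeeping rather than attempt an independent optimization. For the purposes of this paper, which only uses $d\le4$ with a fixed constant, citing \cite[Theorem~1.1]{Raic2019} is what is actually done.
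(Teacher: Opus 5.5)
The paper does not prove this statement. It quotes \cite[Theorem~1.1]{Raic2019} and uses it only for $d\in\{2,4\}$, so your bottom line, citing Rai\v{c}, is exactly what the paper does and suffices. Your reconstruction, however, has a genuine gap: the recursion of Step~2 does not close. The smoothing cost $c\,d^{1/4}\varepsilon$ forces $\varepsilon\lesssim\beta$. The term $A\beta\log(1/\varepsilon)$ is therefore at least of order $\beta\log(1/\beta)$ for every choice of $\kappa$, and Step~3 only gives $\delta=O\bigl(\beta(d^{1/4}+\log(1/\beta))\bigr)$. This is the familiar logarithmic loss of the direct Stein argument. Avoiding it is the crux of the theorem you are reconstructing, not bookkeeping, so ``absorbed as in Rai\v{c}'' is precisely the missing step.

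The logarithm comes from putting the norm inside the $W_k$-expectation for all $s$, that is, from bounding $\E\|\nabla^3P_sh(W_k+\cdot)\|\lesssim\sigma^{-3}\Prob(\text{shell of width }\sigma)$ with $\sigma=\sqrt{1-e^{-2s}}$.

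To avoid it, condition on $Y_k=y$, use the integral form of the Taylor remainder with a deterministic parameter $t$, and bound $\|\E\nabla^3f(W_k+ty)\|$.
\begin{itemize}
\item \emph{Range $\sigma\ge\varepsilon$.} Write $\nabla^3P_sh(w)[a,a,a]=e^{-3s}\sigma^{-3}\E[h(e^{-s}w+\sigma Z)H_3(\langle Z,a\rangle)]$ with $H_3(u)=u^3-3u$ and $|a|=1$. With $W_k$ replaced by $N(0,I-\Cov Y_k)$, this is a third derivative of a Gaussian smoothing at unit scale, hence $O(e^{-3s})$ and integrable at $s=0$. Since $h$ is a mixture of indicators of convex sets, the replacement costs at most $C\sigma^{-3}\delta_k$. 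Here $\delta_k$ is the convex-set distance of $W_k$ from $N(0,I-\Cov Y_k)$. This cost integrates to $C\delta_k/\varepsilon$.
\item \emph{Range $\sigma<\varepsilon$.} Only here do you take norms inside and use the shell bound, at cost $C(d^{1/4}+\delta_k/\varepsilon)$.
\end{itemize}
The result is $\delta\le c\,d^{1/4}\varepsilon+A(1+d^{1/4})\beta+B\beta\max_k\delta_k/\varepsilon$, which does close with $\varepsilon=\kappa\beta$.

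Your setup also needs repair. $\delta$ cannot be a supremum at fixed $\beta$, because $W_k$ rescaled by $(I-\Cov Y_k)^{-1/2}$ has a different third-moment sum, and $\sup\delta/\beta$ over all families is not a priori finite. Recurse instead on $K_n=\sup\delta/\beta$ over families with at most $n$ summands. Then $K_1\le1$, since a single summand has $\beta\ge d^{3/2}$. In the nontrivial range $\|\Cov Y_k\|\le\beta^{2/3}$ is small, so the rescaling inflates $\beta$ only by a bounded factor.
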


\begin{lemma}[Occupation of small disks]\label{rc:occupation}
Let $K\ge1$, $n\ge1600e^{12K}$ and $r\in I_n(K)$, and put
\[
 H_K=C_0B_\xi e^{6K},\qquad C_0=6000.
\]
\begin{enumerate}[label=\textup{(\roman*)}]
 \item For every $s\ge0$,
 \[
  |\E A_s-F_G(s)|\le H_Kn^{-1/2},\qquad\Var(A_s)\le H_Kn^{-1/2}.
 \]
 \item If moreover $n\ge H_K^4$ and $0<t<1/8$, then
 $A_{n^{-t}}\le3n^{-2t}$ outside an event of probability at most
 $H_Kn^{-1/2+4t}$.
\end{enumerate}
\end{lemma}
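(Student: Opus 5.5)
The plan is to write $A_s=\int_{\mathbb T}\ind\{|X_r(\theta)|\le s\}\dd\mu(\theta)$ and use Fubini throughout. For the mean this gives $\E A_s=\int\Prob(|X_r(\theta)|\le s)\dd\mu(\theta)$. For the variance it gives $\Var A_s=\iint\bigl[\Prob(|X_r(\theta)|\le s,|X_r(\phi)|\le s)-\Prob(|X_r(\theta)|\le s)\Prob(|X_r(\phi)|\le s)\bigr]\dd\mu\dd\mu$. Both reduce to one- and two-angle Gaussian approximation via Theorem~\ref{rc:raic}. The disk $\{|v|\le s\}$ in $\R^2$, and the product of two disks in $\R^4$, are convex. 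The angles near $0$ and $\pi$ (and near the diagonal $\theta=\phi$ for the two-point estimate) are where the covariance degenerates, and these will be discarded at cost of their measure.

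\emph{One angle.} Put $Y_k=\xi_kr^ke^{ik\theta}/\sigma_n(r)$ viewed in $\R^2$. By \eqref{rc:real-cov} the covariance of $X_r(\theta)$ is
\[
\Sigma_\theta=\tfrac12\bigl(I+\mathcal R[\zeta p(\theta)]\bigr),\qquad p(\theta)=\sum_kw_k(r)e^{2ik\theta}.
\]
The weights $w_k$ are monotone in $k$, and \eqref{rc:abel} together with \eqref{rc:weights} gives $|p(\theta)|\le 2\pi e^{6K}/((n+1)d(2\theta))$.

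Normalize by $\Sigma_\theta^{-1/2}$. The third-moment sum is $\sum_k\E\|\Sigma_\theta^{-1/2}Y_k\|^3\lesssim\|\Sigma_\theta^{-1}\|^{3/2}m_3\max_k w_k^{1/2}\lesssim B_\xi e^{3K}n^{-1/2}$, using \eqref{rc:moments}, provided $\|\Sigma_\theta-\tfrac12I\|\le\tfrac14$. The normalized disk is an ellipse, which is still convex, so Theorem~\ref{rc:raic} applies.

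It remains to compare the Gaussian with covariance $\Sigma_\theta$ to $G$, which has covariance $\tfrac12I$. This costs $O(|p(\theta)|)$ in total variation. Integrating over $\theta$, the angles with $d(2\theta)\le n^{-1/2}$ contribute measure $O(n^{-1/2})$. Elsewhere $|p|\lesssim e^{6K}n^{-1/2}$ pointwise, so that term also contributes $O(e^{6K}n^{-1/2})$. Together these give the mean bound in (i).

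\emph{Two angles.} Proceed the same way in $\R^4$ with the pair $(X_r(\theta),X_r(\phi))$. By \eqref{rc:real-cov}, the off-diagonal blocks involve $\sum_kw_ke^{ik(\theta-\phi)}$ and $\zeta\sum_kw_ke^{ik(\theta+\phi)}$. By \eqref{rc:abel} these are $O(e^{6K}/(n\,d))$, where $d$ is the smaller of $d(\theta-\phi)$ and $d(\theta+\phi)$.

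Off the set where $d(\theta\pm\phi)$, $d(2\theta)$ or $d(2\phi)$ is at most $n^{-1/2}$, which has $\mu\otimes\mu$ measure $O(n^{-1/2})$, the covariance is within $O(e^{6K}n^{-1/2})$ of $\tfrac12I_4$. There Theorem~\ref{rc:raic} with $d=4$, followed by a Gaussian covariance comparison, gives
\[
\Prob(|X_r(\theta)|\le s,|X_r(\phi)|\le s)=F_G(s)^2+O(H_Kn^{-1/2}),
\]
and likewise each single probability is $F_G(s)+O(H_Kn^{-1/2})$. The difference of the joint probability and the product of the single probabilities is then $O(H_Kn^{-1/2})$, and integrating gives the variance bound in (i).

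\emph{Explicit constants.} The main bookkeeping obstacle is making the constants explicit enough to reach $C_0=6000$. This requires a concrete total-variation bound between Gaussians, for instance via Pinsker and the Kullback--Leibler formula, in the form $\lesssim\|\Sigma-\tfrac12I\|_{HS}$, applied with the thresholds above. The hypothesis $n\ge1600e^{12K}$ is exactly what guarantees $e^{6K}n^{-1/2}\le 1/40$, so that the covariance perturbations stay below $\tfrac14$ on the good set.

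\emph{Part (ii).} Apply Chebyshev with $s=n^{-t}$. Since $F_G(s)\le s^2=n^{-2t}$, part (i) gives $\E A_s\le n^{-2t}+H_Kn^{-1/2}\le 2n^{-2t}$. The last inequality holds because $n\ge H_K^4$ and $t<1/8$ give $H_Kn^{-1/2}\le n^{-1/4}\le n^{-2t}$. Then
\[
\Prob(A_s>3n^{-2t})\le\Prob(A_s-\E A_s>n^{-2t})\le\Var(A_s)\,n^{4t}\le H_Kn^{-1/2+4t}.
\]
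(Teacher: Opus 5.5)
Your proposal is correct and follows the paper's proof step for step: Tonelli for the first and second moments of $A_s$, discarding the near-diagonal angle sets of measure $O(n^{-1/2})$, Rai\v{c}'s bound on the linearly transformed disk and product of disks, a Pinsker comparison of $N(0,\Sigma)$ with $N(0,\frac12 I)$, and Chebyshev for (ii). The only difference is that you integrate pointwise covariances instead of bounding $|\E A_s^2-F_G(s)^2|+|F_G(s)^2-(\E A_s)^2|$. With the paper's $\delta=1820B_\xi e^{6K}n^{-1/2}$ this gives $3\delta+2n^{-1/2}$ in place of $3\delta+4n^{-1/2}$, so the constant bookkeeping you flagged closes within $C_0=6000$.
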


\begin{proof}
(i) We approximate the law of $X_r$ at one angle and at two angles by
independent copies of $G$, after discarding the angles near the diagonals
where the covariances are not small. Let
$\Theta_1=\{\theta:d(2\theta)\ge n^{-1/2}\}$, and let $\Theta_2$ be the
set of pairs $(\theta,\phi)$ with
$d(2\theta),d(2\phi),d(\theta-\phi),d(\theta+\phi)\ge n^{-1/2}$. Each of
the four conditions removes a set of $\mu\otimes\mu$-measure
$n^{-1/2}/\pi$, so
\[
 \mu(\mathbb T\setminus\Theta_1)\le n^{-1/2},\qquad
 (\mu\otimes\mu)(\mathbb T^2\setminus\Theta_2)\le2n^{-1/2}.
\]

Fix $(\theta,\phi)\in\Theta_2$ and let $V\in\R^4$ be the vector
$(\vec X_r(\theta),\vec X_r(\phi))$. Put $\Sigma=\Cov(V)$ and
$S(u)=\sum_kw_k(r)e^{iku}$. The covariance identity \eqref{rc:real-cov}
gives
\[
 \Sigma=\tfrac12I_4+E,\qquad
 E=\frac12\begin{pmatrix}
 \mathcal R[\zeta S(2\theta)]&P\\ P^T&\mathcal R[\zeta S(2\phi)]
 \end{pmatrix},\qquad
 P=\mathcal M[S(\theta-\phi)]+\mathcal R[\zeta S(\theta+\phi)].
\]
The weights $w_k(r)$ are monotone in $k$, so \eqref{rc:abel} and
\eqref{rc:weights} give
$|S(u)|\le2\pi e^{6K}/((n+1)d(u))\le2\pi e^{6K}n^{-1/2}$ at the four
angles $u$ that occur. Bounding the diagonal and off-diagonal blocks
separately and using $|\zeta|\le1$,
\[
 \|E\|\le3\pi e^{6K}n^{-1/2}\le10e^{6K}n^{-1/2}\le\tfrac14 .
\]
So $\Sigma$ has its eigenvalues in $[1/4,3/4]$.

Write $\xi_k=\alpha_k+i\beta_k$. Then $V=\sum_kL_k(\alpha_k,\beta_k)^T$
with the $4\times2$ matrices
\[
 L_k=\frac{r^k}{\sigma_n(r)}\begin{pmatrix}\mathcal M[e^{ik\theta}]\\ \mathcal M[e^{ik\phi}]\end{pmatrix},
 \qquad\|L_k\|=\sqrt{2w_k(r)}.
\]
The vectors $Y_k=\Sigma^{-1/2}L_k(\alpha_k,\beta_k)^T$ are independent and
centered, $\sum_k\Cov(Y_k)=I_4$, and $\|\Sigma^{-1/2}\|\le2$. By
\eqref{rc:weights},
\[
 \sum_k\E\|Y_k\|^3\le8m_3\sum_k(2w_k)^{3/2}
 \le23m_3\Big(\max_kw_k\Big)^{1/2}\sum_kw_k\le23m_3e^{3K}n^{-1/2}.
\]
Let $D$ be a product of two closed disks centered at the origin. Then
$\Sigma^{-1/2}D$ is convex and $42\cdot4^{1/4}+16<76$, so
Theorem~\ref{rc:raic} gives
$|\Prob(V\in D)-N(0,\Sigma)(D)|\le1800m_3e^{3K}n^{-1/2}$.

It remains to replace $N(0,\Sigma)$ by $N(0,\frac12I_4)$, the law of two
independent copies of $G$. The eigenvalues of $2\Sigma$ are $1+u_j$ with
$|u_j|\le2\|E\|\le\frac12$, and $u-\log(1+u)\le u^2$ for
$|u|\le\frac12$. Hence the relative entropy satisfies
\[
 D\big(N(0,\Sigma)\,\|\,N(0,\tfrac12I_4)\big)
 =\frac12\sum_{j=1}^4\big(u_j-\log(1+u_j)\big)\le8\|E\|^2,
\]
and Pinsker's inequality gives the total variation bound
$\sup_D|N(0,\Sigma)(D)-N(0,\frac12I_4)(D)|\le2\|E\|\le20e^{6K}n^{-1/2}$.
Since $m_3\le B_\xi$ by \eqref{rc:moments}, we conclude that for all $s,s'\ge0$,
\[
 \big|\Prob(|X_r(\theta)|\le s,\ |X_r(\phi)|\le s')-F_G(s)F_G(s')\big|
 \le\delta:=1820B_\xi e^{6K}n^{-1/2}.
\]
The same argument in $\R^2$ gives
$|\Prob(|X_r(\theta)|\le s)-F_G(s)|\le\delta$ for $\theta\in\Theta_1$.

By Tonelli's theorem,
$\E A_s=\int\Prob(|X_r(\theta)|\le s)\dd\mu$ and
$\E A_s^2=\iint\Prob(|X_r(\theta)|\le s,|X_r(\phi)|\le s)\dd\mu\dd\mu$.
Bounding the integrands by one on the discarded sets,
\[
 |\E A_s-F_G(s)|\le\delta+n^{-1/2},\qquad
 |\E A_s^2-F_G(s)^2|\le\delta+2n^{-1/2}.
\]
Since $\E A_s$ and $F_G(s)$ lie in $[0,1]$,
\[
 \Var(A_s)\le|\E A_s^2-F_G(s)^2|+|F_G(s)^2-(\E A_s)^2|\le3\delta+4n^{-1/2},
\]
and both bounds are at most $C_0B_\xi e^{6K}n^{-1/2}$.

(ii) Put $\delta=H_Kn^{-1/2}$. Since $n^{1/4}\ge H_K$ and $t<1/8$, we have
$\delta\le n^{-1/4}\le n^{-2t}$. As $F_G(s)\le s^2$, part (i) gives
$\E A_{n^{-t}}\le n^{-2t}+\delta\le2n^{-2t}$ and $\Var(A_{n^{-t}})\le\delta$,
and Chebyshev's inequality gives
$\Prob(A_{n^{-t}}>3n^{-2t})\le\delta n^{4t}=H_Kn^{-1/2+4t}$.
\end{proof}

\subsection{Logarithmic moments}
The occupation fractions control $\log|X_r|$ only where $|X_r|$ is
neither very small nor very large. Small values are controlled by the
following theorem, and large values by the angular energy.

\begin{theorem}[Nazarov--Nishry--Sodin~\cite{NNS2013}, Corollary 1.2]\label{rc:nns}
There is an absolute constant $C_{\rm NNS}\ge1$ such that for all complex
$b_0,\dots,b_n$ with $\sum_k|b_k|^2=1$ and every real $p\ge1$,
\[
 \E\int_{\mathbb T}\Big|\log\Big|\sum_{k=0}^n\epsilon_kb_ke^{ik\theta}\Big|\Big|^p
 \dd\mu(\theta)\le(C_{\rm NNS}\,p)^{6p}.
\]
\end{theorem}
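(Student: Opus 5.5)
The statement is quoted from Nazarov, Nishry and Sodin, and in the paper it is used as a black box. If I had to prove it, I would do so through tail bounds for the level sets of $F(\theta)=\sum_k\epsilon_kb_ke^{ik\theta}$. Write $|\log|F||^p\le(\log^+|F|)^p+(\log^-|F|)^p$.

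\emph{Large values.} The part $\log^+$ is harmless. From $(\log^+y)^p\le C_p'\,y^2$ with $C_p'\le(p/2e)^{p}\le p^{p}$, Tonelli's theorem and $\E|F(\theta)|^2=\sum_k|b_k|^2=1$ at every angle give a bound of $p^{p}\le(C p)^{6p}$.

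\emph{Small values.} Everything then rests on a stretched-exponential bound for the small set. I would aim to show, for all $s\ge1$,
\[
 \E\,\mu\{\theta:|F(\theta)|\le e^{-s}\}\le C\exp(-cs^{1/6}).
\]
The layer-cake formula would then give $\E\int(\log^-|F|)^p\dd\mu=\int_0^\infty ps^{p-1}\,\E\mu\{|F|\le e^{-s}\}\dd s\le C\,p\,\Gamma(6p)c^{-6p}\le(Cp)^{6p}$. The exponent $1/6$ is precisely what produces the power $6p$.

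\emph{Proving the tail bound.} This is where the work lies, and I would combine two ingredients.

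First, a pointwise anti-concentration estimate of Littlewood--Offord/Hal\'asz type for the Rademacher sum $F(\theta)$. It holds at all angles $\theta$ outside an exceptional set whose measure is controlled by the arithmetic structure of the frequencies relative to the weights $|b_k|$. To obtain it I would split the coefficients into a few large ones and a spread-out remainder, and then apply a Kolmogorov--Rogozin or Esseen inequality to the remainder.

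Second, a deterministic Remez/Tur\'an-type inequality for exponential sums, of the kind in Nazarov's lemma: if $|F|\le e^{-s}$ on a set $E$, then $\sup|F|$ is at most $e^{-s}(C/\mu(E))^{N}$. Here $N$ counts the frequencies carrying most of the $\ell^2$ mass. Applied to the truncated polynomial with $N\approx s^{a}$ terms, this forces $\sup|F|$ to be small, which by Parseval contradicts unit norm unless the tail coefficients are also small. That case is in turn controlled by the anti-concentration of step one.

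Optimizing the truncation level $N$ against $s$ and against the measure of the bad set is what gives the loss $s^{1/6}$ rather than $s$.

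\emph{Main obstacle.} I expect the difficulty to be the non-Gaussian, arithmetic nature of the signs. Because the coefficient distribution is discrete, anti-concentration fails at individual angles where the $b_ke^{ik\theta}$ conspire. One must therefore show that such conspiracies occupy little measure in $\theta$ uniformly over all unit vectors $b$, and this is precisely the heart of the Nazarov--Nishry--Sodin argument. In the paper this step can be taken from~\cite{NNS2013} rather than reproduced.
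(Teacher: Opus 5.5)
Your proposal agrees with the paper, which gives no proof of this statement and imports it from \cite{NNS2013}. Your reduction is correct: the large values contribute at most $(p/2e)^p$, since $(\log^+y)^p\le(p/2e)^py^2$ and $\E|F(\theta)|^2=1$, and a bound $\E\,\mu\{|F|\le e^{-s}\}\le Ce^{-cs^{1/6}}$ integrates to $6pC\,c^{-6p}\Gamma(6p)\le(C'p)^{6p}$, which is how a moment bound of this form follows from a level-set estimate of the kind proved in \cite{NNS2013}. Be aware, though, that this level-set bound is equivalent to the statement (Markov's inequality with $p\asymp s^{1/6}$ gives the converse), so your outline of its proof is only heuristic and, exactly as in the paper, everything rests on the citation.
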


\begin{lemma}[Logarithmic moments and angular energy]\label{rc:log-moments}
Let $K\ge1$, $n\ge2K$ and $r\in I_n(K)$, and put
$\Lambda_\xi=2(C_{\rm NNS}+\log B_\xi+2)$ and
$\Lambda_\xi'=(4e\Lambda_\xi)^{12}$. There is an event
$\mathcal H=\mathcal H_n(r)$, with $\mathcal H=\Omega$ when $|\xi|=1$
almost surely, such that
\begin{enumerate}[label=\textup{(\roman*)}]
 \item $\Prob(\mathcal H^c)\le2\exp\big(-c_\xi(n+1)e^{-6K}\big)$;
 \item on $\mathcal H$, $f_n\not\equiv0$ and
 $\frac12\le\int|X_r|^2\dd\mu\le B_\xi^2$;
 \item for every real $p\ge1$,
 \[
  \E\Bigl[\ind_{\mathcal H}\int\bigl|\log|X_r|\bigr|^p\dd\mu\Bigr]\le(\Lambda_\xi p)^{6p};
 \]
 \item if $n\ge3$, the logarithmic norm
 $\mathcal L_n(r)=\int|\log|X_r||^2\dd\mu$ satisfies
 \[
  \Prob\big(\mathcal H\cap\{\mathcal L_n(r)\le\Lambda_\xi'(\log n)^{14}\}\big)
  \ge1-n^{-12}-2\exp\big(-c_\xi(n+1)e^{-6K}\big).
 \]
\end{enumerate}
\end{lemma}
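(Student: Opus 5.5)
Via the symmetric representation \eqref{rc:sym-rep}, I would condition on the moduli vector $a=(a_0,\dots,a_n)$ and apply Theorem~\ref{rc:nns} to the Rademacher sum $\sum_k\epsilon_k(a_kr^k)e^{ik\theta}$. Put $Q=Q(a)=\sum_k|a_k|^2r^{2k}/\sigma_n(r)^2=\int|X_r|^2\dd\mu$, where the second equality is Parseval. We have $\E Q=1$. Also $Q\le B_\xi^2$ in the bounded case, and $Q=1$ identically when $|\xi|=1$. I would take
\[
 \mathcal H=\{\tfrac12\le Q\le B_\xi^2\},
\]
an event measurable with respect to $a$ alone. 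In the Gaussian case I would add the upper constraint only if it is needed; otherwise I would replace $B_\xi^2=4$ by the same threshold and control its tail in the same way. Part (ii) is then immediate: $Q\ge\frac12$ forces $f_n\not\equiv0$, and the two bounds on $\int|X_r|^2$ are just the definition of $\mathcal H$. When $|\xi|=1$ we have $Q=1$, so $\mathcal H=\Omega$.

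For (i), $Q=\sum_kw_k(r)|a_k|^2$ is a weighted sum of iid variables with mean one, and the weights satisfy $w_k\le e^{6K}/(n+1)$ by \eqref{rc:weights}. In case (a) the summands $|a_k|^2\in[0,B_\xi^2]$ are bounded, so Hoeffding's inequality gives
\[
 \Prob\bigl(Q<\tfrac12\bigr)\le\exp\Bigl(-\frac{2\cdot\frac14}{B_\xi^4\sum_kw_k^2}\Bigr),\qquad\sum_kw_k^2\le\frac{e^{6K}}{n+1},
\]
and this is at most $\exp(-c_\xi(n+1)e^{-6K})$, with room to spare in the constant $1/16$. In case (b), $|a_k|^2$ is $\chi^2$-type, and I would use Chernoff bounds for weighted chi-squares on both tails (the Laurent--Massart inequality). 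These give the same form, with the factor $2$ accounting for the two tails. This is the step where the constant $c_\xi=1/(16B_\xi^4)$ has to be checked, so I expect it to be the fiddly part rather than a deep one.

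For (iii), on $\mathcal H$ I would normalize $b_k=a_kr^k/(\sigma_n\sqrt Q)$. Then $\log|X_r|=\log|\sum\epsilon_kb_ke^{ik\theta}|+\frac12\log Q$, and the added term satisfies $|\frac12\log Q|\le\log B_\xi+1$. Minkowski's inequality in $L^p(\Prob_\epsilon\otimes\mu)$, applied conditionally on $a$, gives
\[
 \Bigl(\E_\epsilon\int|\log|X_r||^p\dd\mu\Bigr)^{1/p}\le(C_{\rm NNS}p)^6+\log B_\xi+1\le(\Lambda_\xi p)^6 ,
\]
using $p\ge1$ and $\Lambda_\xi\ge2$. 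Taking the expectation over $a$ on $\mathcal H$ then gives (iii). For (iv), I would apply Markov's inequality to $\ind_{\mathcal H}\mathcal L_n^{p/2}$ with $p$ of order $\log n$, say $p=12\log n$. Hölder's inequality gives $\mathcal L_n^{p/2}\le\int|\log|X_r||^p$, so by (iii)
\[
 \Prob\bigl(\mathcal H\cap\{\mathcal L_n>T\}\bigr)\le(\Lambda_\xi p)^{6p}T^{-p/2}.
\]
With $T=\Lambda_\xi'(\log n)^{14}$, the ratio $(\Lambda_\xi p)^{12}/T$ is at most $e^{-2}(\log n)^{-2}$, which is small. Hence the right side is at most $e^{-p}\le n^{-12}$; the requirement $n\ge3$ ensures $\log n\ge1$. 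Combining this with (i) gives (iv).
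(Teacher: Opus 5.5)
\paragraph{Parts (i)--(iii).} These follow the paper's proof. You use the same event $\mathcal H=\{\frac12\le Q\le B_\xi^2\}$, Hoeffding in case (a), Chernoff-type bounds in case (b), and Theorem~\ref{rc:nns} conditionally on the amplitudes. Minkowski replaces the paper's $(x+y)^p\le2^{p-1}(x^p+y^p)$, which is harmless.

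Laurent--Massart does give (i). With $w_*=\max_kw_k$ it yields $\Prob(Q<\frac12)\le e^{-1/(16w_*)}$ and $\Prob(Q>4)\le e^{-1/(2w_*)}$, well within $c_\xi=1/256$.

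The upper constraint $Q\le B_\xi^2$ is not optional in the Gaussian case. Part (ii) asserts it, and (iii) needs it to bound $|\frac12\log Q|$.

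\paragraph{The gap in (iv).} The failure is in the calibration of (iv). With $p=12\log n$ you get
\[
 \frac{(\Lambda_\xi p)^{12}}{\Lambda_\xi'(\log n)^{14}}
 =\Bigl(\frac{12}{4e}\Bigr)^{12}(\log n)^{-2}
 =\Bigl(\frac3e\Bigr)^{12}(\log n)^{-2}\approx3.27\,(\log n)^{-2}.
\]
This is not at most $e^{-2}(\log n)^{-2}$, because $3/e>1$. Your Markov bound is therefore $\bigl((3/e)^{12}(\log n)^{-2}\bigr)^{6\log n}$. At $n=3$ this exceeds $1$ (it is about $700$). It drops below $n^{-12}$ only when $(\log n)^2\ge3^{12}e^{-10}\approx24.1$, that is, for $n\ge136$. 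So with the stated constant $\Lambda_\xi'=(4e\Lambda_\xi)^{12}$, (iv) is not established for $3\le n<136$.

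\paragraph{The fix.} Use a smaller exponent, as the paper does.
\begin{itemize}
 \item Take $p=2m$ with $m=\lceil\log n\rceil$. Then $p\ge2$, so Jensen on $(\mathbb T,\mu)$ gives $\mathcal L_n(r)^m\le\int|\log|X_r||^{2m}\dd\mu$.
 \item Since $\log n\ge1$ for $n\ge3$, you have $m\le2\log n$. This is where the hypothesis $n\ge3$ actually enters.
 \item Part (iii) then gives $\E[\ind_{\mathcal H}\mathcal L_n(r)^m]\le(2\Lambda_\xi m)^{12m}\le(4\Lambda_\xi\log n)^{12m}$.
 \item The ratio becomes $(4\Lambda_\xi\log n)^{12}/\bigl((4e\Lambda_\xi)^{12}(\log n)^{14}\bigr)=e^{-12}(\log n)^{-2}\le e^{-12}$, and Markov gives $e^{-12m}\le n^{-12}$.
\end{itemize}
Exponents $p=2\alpha\log n$ with $1\le\alpha\le4$ would also work for all $n\ge3$. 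Your choice $\alpha=6$ is too large for this $\Lambda_\xi'$.
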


\begin{proof}
Use \eqref{rc:sym-rep}. By Parseval's identity,
$V:=\int|X_r|^2\dd\mu=\sum_kw_k|a_k|^2$ with $w_k=w_k(r)$. Let
$\mathcal H=\{1/2\le V\le B_\xi^2\}$, an event of the amplitudes alone.
If $|\xi|=1$ then $V=1$ and $\mathcal H=\Omega$. The restriction to
$\mathcal H$ matters for laws with an atom at zero, for which
$f_n\equiv0$ with positive probability.

(i) In case (a), $V\le B^2$, and Hoeffding's inequality for
$V-1=\sum_kw_k(|a_k|^2-1)$, whose summands range over intervals of length
$w_kB^2$, gives
$\Prob(V<\frac12)\le\exp(-1/(2B^4\sum_kw_k^2))$.
In case (b), $Y_k=|a_k|^2$ satisfies $\E e^{\lambda Y_k}\le(1-2\lambda)^{-1/2}$
for $0\le\lambda<1/2$ and $\E e^{-\lambda Y_k}\le(1+2\lambda)^{-1/2}$ for
$\lambda\ge0$. These are equalities for real Gaussians, and for complex
Gaussians, where $Y_k$ is exponential with mean one, they follow from
$(1\mp\lambda)^2\ge1\mp2\lambda$. Let $w_*=\max_kw_k$ and
$\lambda=1/(4w_*)$. Using $-\log(1-y)\le y+y^2$ for $0\le y\le\frac12$ and
$\log(1+y)\ge y-y^2/2$ for $y\ge0$, the exponential Markov inequality
gives
\[
 \Prob(V>B_\xi^2)\le\Prob(V>2)\le e^{-2\lambda}\prod_k(1-2\lambda w_k)^{-1/2}
 \le e^{-\lambda+2\lambda^2w_*}=e^{-1/(8w_*)},
\]
\[
 \Prob(V<\tfrac12)\le e^{\lambda/2}\prod_k(1+2\lambda w_k)^{-1/2}
 \le e^{-\lambda/2+\lambda^2w_*}=e^{-1/(16w_*)}.
\]
In both cases $\sum_kw_k^2\le w_*\le e^{6K}/(n+1)$ by \eqref{rc:weights}.
Since $c_\xi\le1/(2B^4)$ in case (a) and $c_\xi\le1/16$ in case (b), this
proves (i).

(ii) On $\mathcal H$ some $a_k$ is nonzero, so $f_n\not\equiv0$, and the
energy bounds hold by definition.

(iii) Fix the amplitudes in $\mathcal H$ and put
$b_k=r^ka_k/(\sigma_n(r)\sqrt V)$. Then $\sum_k|b_k|^2=1$ and
$X_r=\sqrt V\sum_k\epsilon_kb_ke^{ik\theta}$. Since
$|\log\sqrt V|\le D:=\max(\frac12\log2,\log B_\xi)\le\log B_\xi+1$,
Theorem~\ref{rc:nns}, applied to the signs, which are independent of the
amplitudes, and $(x+y)^p\le2^{p-1}(x^p+y^p)$ give
\[
 \E\Big[\ind_{\mathcal H}\int|\log|X_r||^p\dd\mu\Big]
 \le2^{p-1}\big((C_{\rm NNS}p)^{6p}+D^p\big)
 \le\big(2(C_{\rm NNS}+D+1)p\big)^{6p}\le(\Lambda_\xi p)^{6p}.
\]

(iv) Let $m=\lceil\log n\rceil\le2\log n$. By Jensen's inequality on
$(\mathbb T,\mu)$, $\mathcal L_n(r)^m\le\int|\log|X_r||^{2m}\dd\mu$, so (iii) with
$p=2m$ gives
$\E[\ind_{\mathcal H}\mathcal L_n(r)^m]\le(2\Lambda_\xi m)^{12m}\le(4\Lambda_\xi\log n)^{12m}$.
By Markov's inequality,
\[
 \Prob\big(\mathcal H\cap\{\mathcal L_n(r)>\Lambda_\xi'(\log n)^{14}\}\big)
 \le\Big(\frac{(4\Lambda_\xi\log n)^{12}}{(4e\Lambda_\xi)^{12}(\log n)^{14}}\Big)^m
 \le e^{-12m}\le n^{-12},
\]
and (i) bounds $\Prob(\mathcal H^c)$.
\end{proof}

\subsection{Concentration of the angular logarithm}
Truncating $\log|X_r|$ at the level $t\log n$ combines the three
estimates.

\begin{theorem}[Logarithmic concentration]\label{rc:concentration}
Let $\xi$ be admissible.
There is a constant $C_\xi$, depending only on $B_\xi$ and $C_{\rm NNS}$, such
that the following holds for all $K\ge1$, $0<t<1/8$, all $n$ with
\[
 n\ge H_K^4,\qquad H_K=6000B_\xi e^{6K},
\]
and all $r\in I_n(K)=[1-K/n,1+K/n]$.
Let $e_n(t)=C_\xi n^{-t}(\log n)^7$.
Then
\[
 \Prob\big(|J_n(r)-\log\sigma_n(r)+\gamma/2|>e_n(t)\big)\le p_n(K,t),
\]
where
\begin{equation}\label{rc:pn}
 p_n(K,t)=2H_Kn^{-1/2+4t}+n^{-12}+2\exp\big(-c_\xi(n+1)e^{-6K}\big).
\end{equation}
Outside the exceptional event $f_n\not\equiv0$.
\end{theorem}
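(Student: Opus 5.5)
The plan is to write $J_n(r)-\log\sigma_n(r)=\int\log|X_r|\dd\mu$ and compare it with $\E\log|G|=-\gamma/2$ by splitting the range of $|X_r|$ into three parts. Fix the truncation levels $s_-=n^{-t}$ and $s_+=n^{t}$. Work on the good event
\[
\mathcal G=\mathcal H\cap\{\mathcal L_n(r)\le\Lambda_\xi'(\log n)^{14}\}\cap\{A_{n^{-t}}\le3n^{-2t}\}\cap\mathcal E,
\]
where $\mathcal H$ and $\mathcal L_n$ come from Lemma~\ref{rc:log-moments}, the third event from Lemma~\ref{rc:occupation}(ii), and $\mathcal E$ is a Chebyshev event for a finite family of occupation fractions $A_s$, described below. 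The complement of $\mathcal G$ will be bounded by the terms of $p_n(K,t)$. On $\mathcal G$ we have $f_n\not\equiv0$ by Lemma~\ref{rc:log-moments}(ii).

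The small values are handled by Cauchy--Schwarz on the set $\{|X_r|\le n^{-t}\}$:
\[
\int_{|X_r|\le n^{-t}}|\log|X_r||\dd\mu\le \mathcal L_n(r)^{1/2}A_{n^{-t}}^{1/2}\le C(\log n)^7n^{-t}.
\]
This is the origin of the factor $(\log n)^7$ in $e_n(t)$. The large values are handled by Markov's inequality with the energy bound $\int|X_r|^2\le B_\xi^2$. It gives $\mu\{|X_r|>n^t\}\le B_\xi^2n^{-2t}$, and then Cauchy--Schwarz with $\mathcal L_n$ again bounds that contribution by $C(\log n)^7n^{-t}$. The Gaussian side needs the same truncation. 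Direct computation shows that $\E[\log|G|;|G|\le n^{-t}]$ and $\E[\log|G|;|G|>n^{t}]$ are $O(n^{-2t}\log n)$.

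The middle range is $n^{-t}<|X_r|\le n^t$. There I write the integral as a Stieltjes integral against the occupation function:
\[
\int_{s_-<|X_r|\le s_+}\log|X_r|\dd\mu=\log s_+A_{s_+}-\log s_-A_{s_-}-\int_{s_-}^{s_+}\frac{A_s}{s}\dd s,
\]
and do the same for $G$ with $F_G$. It then suffices to have $|A_s-F_G(s)|$ small for all $s$ in this range simultaneously. Since $A_s$ and $F_G$ are monotone in $s$, it is enough to control them on a grid $s_0<\dots<s_M$ with $F_G$-increments of size about $n^{-t}$, taken finer near $0$ in the logarithmic variable. For each grid point, Lemma~\ref{rc:occupation}(i) and Chebyshev's inequality give
\[
\Prob\bigl(|A_{s_i}-F_G(s_i)|>n^{-t}\bigr)\le 2H_Kn^{-1/2+2t}.
\]
With $M\lesssim n^{t}\log n$ grid points, a union bound costs at most $2H_Kn^{-1/2+4t}$ after adjusting constants. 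The resulting error in the middle range is at most $n^{-t}$ times $\int_{s_-}^{s_+}ds/s+|\log s_\pm|$, which is $O(n^{-t}\log n)$.

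The main obstacle is balancing this middle-range union bound against the exact form of $p_n(K,t)$. The grid size, the Chebyshev threshold, and the truncation level have to be matched so that the total failure probability fits into $2H_Kn^{-1/2+4t}$. If the matching is tight, I would first try lowering the Chebyshev threshold to $n^{-t}(\log n)^{3}$; the error then stays within $(\log n)^7n^{-t}$ while the union bound improves. The remaining failure terms, $n^{-12}$ and $2\exp(-c_\xi(n+1)e^{-6K})$, come directly from Lemma~\ref{rc:log-moments}(iv).
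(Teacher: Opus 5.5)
Your proof is correct in substance, provided you commit to the enlarged threshold you list as a fallback. It differs from the paper's in the middle range. The paper never controls $A_s$ uniformly. With $T=t\log n$ it writes the truncated integral as one linear functional of the occupation fractions,
\[
L_T=\int\ell_T(X_r)\dd\mu=T-\int_{-T}^TA_{e^s}\dd s .
\]
Cauchy--Schwarz applied to $\Cov(A_{e^s},A_{e^{s'}})$ gives $\Var L_T\le4T^2\delta$. A single Chebyshev inequality at level $n^{-t}$ then costs $H_Kn^{-1/2+4t}(2t\log n)^2n^{-2t}\le H_Kn^{-1/2+4t}$. Together with Lemma~\ref{rc:occupation}(ii) this gives exactly $2H_Kn^{-1/2+4t}$, with no grid and no union bound. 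The paper also treats large values directly, by $\int(\log|X_r|-T)^+\dd\mu=\int_T^\infty\mu\{|X_r|>e^s\}\dd s\le\frac{B_\xi^2}2n^{-2t}$, without using $\mathcal L_n$.

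Your P\'olya-type grid argument proves more, namely that $\sup_{s_-\le s\le s_+}|A_s-F_G(s)|$ is small, but it pays for the grid size. Here you cannot ``adjust constants'', for three reasons. First, $p_n(K,t)$ and $H_K$ are explicit. Second, $C_\xi$ may not depend on $t$. Third, the union bound must fit into the single $H_Kn^{-1/2+4t}$ left after Lemma~\ref{rc:occupation}(ii), not into $2H_Kn^{-1/2+4t}$. At threshold $n^{-t}$ the grid needs at least about $n^t$ points, so your bound is at least about $2H_Kn^{-1/2+3t}$, and with your count of $n^t\log n$ points it carries an extra $\log n$. This exceeds $H_Kn^{-1/2+4t}$ whenever $n^t<2$ (or $n^t<2\log n$), and arbitrarily small $t$ is allowed.

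The fallback fixes this. It \emph{raises} the threshold rather than lowering it: take $\epsilon=n^{-t}(\log n)^3$.
\begin{itemize}
\item If $\epsilon\ge1$, the grid event holds automatically.
\item Otherwise, $F_G$-increments at most $\epsilon$ need at most $3n^t$ grid points. Since $\delta\le n^{-1/4}\le\epsilon/2$, each point fails with probability at most $4\delta/\epsilon^2$.
\item The union bound is then at most $12H_Kn^{-1/2+3t}(\log n)^{-6}\le H_Kn^{-1/2+4t}$.
\item The sandwich gives $|A_s-F_G(s)|\le2\epsilon$ on $[s_-,s_+]$, so the middle-range error is at most $6\epsilon t\log n\le n^{-t}(\log n)^4$, still inside $e_n(t)$.
\end{itemize}
Keep the separate event $\{A_{n^{-t}}\le3n^{-2t}\}$, as you did. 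The grid event alone gives only $A_{s_-}=O(\epsilon)$, which is too weak for your Cauchy--Schwarz bound on the small values.
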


The deviation $e_n(t)$ is the same for every width $K$, which enters only
through the failure probability and the degree threshold. One may take
$C_\xi=2(4e\Lambda_\xi)^6+B_\xi^2+3$, with $\Lambda_\xi$ as in
Lemma~\ref{rc:log-moments}.

\begin{proof}
Write $X=X_r$. Since $H_K\ge6000$, we have
$n\ge H_K^4\ge\max(1600e^{12K},3)$, so Lemmas~\ref{rc:occupation}
and~\ref{rc:log-moments} apply and $\log n\ge1$. Put
\[
 T=t\log n,\qquad u=n^{-t},\qquad \delta=H_Kn^{-1/2}\le n^{-1/4}\le n^{-2t},
\]
and let $\ell_T(z)=\max(-T,\min(T,\log|z|))$, with $\ell_T(0)=-T$. The truncated
integral $L_T=\int\ell_T(X)\dd\mu$ is a functional of the occupation
fractions. We show that it is close to its mean, that the mean is close to
$\E\ell_T(G)$ and hence to $\E\log|G|=-\gamma/2$, and that truncation
changes $\int\log|X|\dd\mu$ little.

\emph{The truncated integral.} For every $z$,
$\ell_T(z)=T-\int_{-T}^T\ind\{|z|\le e^s\}\dd s$. By Fubini's theorem,
\[
 L_T=T-\int_{-T}^TA_{e^s}\dd s,\qquad
 \E\ell_T(G)=T-\int_{-T}^TF_G(e^s)\dd s .
\]
Lemma~\ref{rc:occupation}(i) gives $|\E L_T-\E\ell_T(G)|\le2T\delta$, and,
since $|\Cov(A_{e^s},A_{e^{s'}})|\le\delta$ by the Cauchy--Schwarz
inequality,
\[
 \Var(L_T)=\int_{-T}^T\!\int_{-T}^T\Cov(A_{e^s},A_{e^{s'}})\dd s\dd s'
 \le4T^2\delta .
\]
By Chebyshev's inequality, $|L_T-\E L_T|\le u$ outside an event of
probability at most
\[
 \frac{4T^2\delta}{u^2}=H_Kn^{-1/2+4t}(2t\log n)^2n^{-2t}\le H_Kn^{-1/2+4t},
\]
because $y^2e^{-y}\le4e^{-2}<1$ for $y=2t\log n$.

\emph{Large and small values.} Suppose that the events of
Lemma~\ref{rc:log-moments}(iv) and Lemma~\ref{rc:occupation}(ii) occur.
Then $f_n\not\equiv0$, so $\log|X|$ is integrable, and
\[
 \int\log|X|\dd\mu-L_T
 =\int(\log|X|-T)^+\dd\mu-\int(-T-\log|X|)^+\dd\mu .
\]
Chebyshev's inequality on $\mathbb T$ and the energy bound
$\int|X|^2\dd\mu\le B_\xi^2$ of Lemma~\ref{rc:log-moments}(ii) give
\[
 \int(\log|X|-T)^+\dd\mu=\int_T^\infty\mu\{|X|>e^s\}\dd s
 \le\int_T^\infty B_\xi^2e^{-2s}\dd s=\frac{B_\xi^2}2n^{-2t}.
\]
Since $e^{-T}=n^{-t}$, the Cauchy--Schwarz inequality and the bounds on
$\mathcal L_n(r)$ and $A_{n^{-t}}$ give
\[
 \int(-T-\log|X|)^+\dd\mu\le\int_{\{|X|<n^{-t}\}}|\log|X||\dd\mu
 \le\sqrt{\mathcal L_n(r)\,A_{n^{-t}}}\le\sqrt3\,(4e\Lambda_\xi)^6(\log n)^7n^{-t}.
\]

\emph{The Gaussian.} Since $\Prob(|G|>e^s)\le e^{-2s}$ and
$\Prob(|G|<e^{-s})\le e^{-2s}$, we have
$|\E\ell_T(G)-\E\log|G||\le e^{-2T}=n^{-2t}$.

\emph{Assembly.} The three exceptional events have total probability at
most $p_n(K,t)$. Outside them,
\begin{align*}
 \Big|\int\log|X|\dd\mu-\E\log|G|\Big|
 \le{}&\Big|\int\log|X|\dd\mu-L_T\Big|+|L_T-\E L_T|\\
 &+|\E L_T-\E\ell_T(G)|+|\E\ell_T(G)-\E\log|G||,
\end{align*}
and the four bounds above, with $\delta\le n^{-2t}\le n^{-t}$ and
$\E\log|G|=-\gamma/2$, give
\[
 \Big|\int\log|X|\dd\mu+\frac\gamma2\Big|
 \le\Big(1+2t\log n+\sqrt3(4e\Lambda_\xi)^6(\log n)^7+\frac{B_\xi^2}2+1\Big)n^{-t}
 \le e_n(t).
\]
Finally $\int\log|X|\dd\mu=J_n(r)-\log\sigma_n(r)$.
\end{proof}

\subsection{Jensen secants and radial counts}
By Jensen's formula, $J_n(e^\tau)$ is a convex function of $\tau$ whose
slope is the number of zeros in the disk of radius $e^\tau$. By
Theorem~\ref{rc:concentration} its secants over radii $1+O(1/n)$ are
close to those of $\log\sigma_n$, which follow the profile
\[
 F_{\rm rad}(x)=\frac12\log\int_0^1e^{2xs}\dd s,\qquad x\in\R .
\]
Differentiation under the integral gives $F_{\rm rad}'=\Phi_{\rm rad}$ and
$\Phi_{\rm rad}'(x)=2\Var(\tau_x)$, where $\tau_x$ has density proportional
to $e^{2xs}$ on $[0,1]$. Hence
\begin{equation}\label{rc:Phi-props}
 0\le\Phi_{\rm rad}\le1,\qquad 0\le\Phi_{\rm rad}'\le\tfrac12,\qquad
 \Phi_{\rm rad}(0)=\tfrac12,\qquad F_{\rm rad}(x)=x+F_{\rm rad}(-x),
\end{equation}
the last by the substitution $s\mapsto1-s$ in
$\int_0^1e^{2xs}\dd s=e^{2x}\int_0^1e^{-2xs}\dd s$.
In particular $\Phi_{\rm rad}\le\frac12$ on $(-\infty,0]$.

\begin{lemma}[The radial profile]\label{rc:profile}
For $K\ge1$, $n\ge4e^{4K}$ and $|x|\le K$,
\[
 \Big|\log\sigma_n(1+x/n)-\tfrac12\log n-F_{\rm rad}(x)\Big|\le\frac{3e^{4K}}n .
\]
\end{lemma}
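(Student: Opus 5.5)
We compare the sum $\sigma_n(r)^2=\sum_{k=0}^nr^{2k}$ at $r=1+x/n$ with a Riemann sum for $\int_0^1e^{2xs}\dd s$. Write $r^{2k}=\exp(2k\log(1+x/n))$. Since $|x/n|\le K/n\le\frac14e^{-4K}\le\frac14$, we have $\log(1+x/n)=x/n-\rho$ with $|\rho|\le x^2/n^2$. Hence $r^{2k}=e^{2xk/n}e^{-2k\rho}$, and for $k\le n$ the correction satisfies $|2k\rho|\le2K^2/n$. So
\[
 \sigma_n(r)^2=\Bigl(\sum_{k=0}^ne^{2xk/n}\Bigr)(1+\eta),\qquad |\eta|\le e^{2K^2/n}-1\le\frac{4K^2}{n},
\]
where the last step uses $n\ge4e^{4K}\gg K^2$.

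Next we compare $\frac1n\sum_{k=0}^ne^{2xk/n}$ with $I(x)=\int_0^1e^{2xs}\dd s$. The integrand $e^{2xs}$ is monotone, so the left-endpoint sum over $k=0,\dots,n-1$ differs from $I(x)$ by at most $\frac1n|e^{2x}-1|\le e^{2K}/n$. The extra term $k=n$ contributes $e^{2x}/n\le e^{2K}/n$. Thus
\[
 \frac1n\sigma_n(r)^2=I(x)(1+\eta)+\epsilon,\qquad |\epsilon|\le\frac{2e^{2K}}n .
\]
We also need a lower bound on $I(x)$. Since $e^{2xs}\ge e^{-2K}$ on $[0,1]$, we have $I(x)\ge e^{-2K}$, and therefore the relative error $|\epsilon|/I(x)$ is at most $2e^{4K}/n\le\frac12$.

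Now take $\frac12\log$. We have $\log\sigma_n(r)-\frac12\log n-F_{\rm rad}(x)=\frac12\log\bigl((1+\eta)+\epsilon/I(x)\bigr)$. We bound this using $|\log(1+y)|\le2|y|$ for $|y|\le\frac12$. The total relative perturbation is at most $4K^2/n+2e^{4K}/n$, up to a product term of the same order. After the factor $\frac12$ and the factor $2$ from the logarithm bound, this gives at most $(2e^{4K}+4K^2+\text{small})/n$.

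The only delicate point is the constant bookkeeping needed to land under $3e^{4K}/n$. This works because $4K^2\le e^{4K}\cdot\frac{4K^2}{e^{4K}}$ and $\sup_K 4K^2e^{-4K}<\frac12$, together with the product term being $O(e^{4K}K^2/n^2)$, which is absorbed by $n\ge4e^{4K}$. If the margin turns out tight, one can sharpen $|\log(1+y)|\le|y|/(1-|y|)$ to recover it. No probabilistic input is needed; the lemma is purely deterministic.
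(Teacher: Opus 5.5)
Your proof is correct and follows essentially the same route as the paper. You replace $r^{2k}$ by $e^{2kx/n}$ at cost $O(K^2/n)$, compare with a monotone Riemann sum to get relative error at most $2e^{4K}/n\le\frac12$ via $I(x)\ge e^{-2K}$, and then take logarithms. The only difference is that the paper absorbs the first error directly into $\log\sigma_n^2$: each term is scaled by a factor in $[e^{-2K^2/n},e^{2K^2/n}]$, so the logarithm moves by at most $2K^2/n$. This avoids your cross term and the question of whether your combined perturbation stays below $\frac12$. Your bound on it can slightly exceed $\frac12$, but $|\log(1+y)|\le2|y|$ holds for $|y|\le0.79$, so your conclusion still stands.
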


\begin{proof}
Since $|\log(1+y)-y|\le y^2$ for $|y|\le\frac12$, replacing
$(1+x/n)^{2k}$ by $e^{2kx/n}$ for $0\le k\le n$ changes
$\log\sigma_n(1+x/n)^2$ by at most $2K^2/n$. The function
$s\mapsto e^{2xs}$ is monotone on $[0,1]$ with values in
$[e^{-2K},e^{2K}]$, so the Riemann sum over $1\le k\le n$ and the term
$k=0$ give
\[
 \Big|\frac1n\sum_{k=0}^ne^{2kx/n}-\int_0^1e^{2xs}\dd s\Big|
 \le\frac{2e^{2K}}n .
\]
The integral is at least $e^{-2K}$, so the relative error is at most
$2e^{4K}/n\le\frac12$, and $|\log(1+v)|\le2|v|$ for $|v|\le\frac12$ gives
$|\log\sigma_n(1+x/n)^2-\log n-2F_{\rm rad}(x)|\le(2K^2+4e^{4K})/n
\le6e^{4K}/n$. Halving proves the lemma.
\end{proof}

The passage from secants to counts is deterministic and works for any
smooth profile $F$. It uses only the secant inequality
\eqref{pre:secants}. In the application $F=F_{\rm rad}$ and
$c=\frac12\log n-\frac\gamma2$.

\begin{lemma}[Jensen secants]\label{rc:secant}
Let $K>0$, $n\ge4K$, $h>0$ and $|x|+h\le K$.
Let $F\in C^1[-K,K]$ satisfy $|F'|\le B_F$ and
$|F'(y)-F'(y')|\le M_F|y-y'|$, and let $c\in\R$ and $\eta\ge0$.
If a polynomial $f$ satisfies $|J_f(1+y/n)-c-F(y)|\le\eta$ for
$y\in\{x-h,x,x+h\}$, then
\[
 \Big|\frac{\nu_f(1+x/n)}n-F'(x)\Big|
 \le\frac{M_Fh}2+\frac{B_FK}n+\frac{3\eta}h .
\]
\end{lemma}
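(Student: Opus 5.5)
The plan is to apply the secant inequality \eqref{pre:secants} of Lemma~\ref{pre:jensen} with radii on both sides of $r_0=1+x/n$, and then to compare the two resulting secants of $J_f$ with secants of $F$. Put $r_\pm=1+(x\pm h)/n$ and $\tau_\pm=\log r_\pm$, $\tau_0=\log r_0$. Convexity of $s\mapsto J_f(e^s)$ gives
\[
 \frac{J_f(r_0)-J_f(r_-)}{\tau_0-\tau_-}\le\nu_f^\circ(r_0)\le\nu_f(r_0)\le\frac{J_f(r_+)-J_f(r_0)}{\tau_+-\tau_0}.
\]
So it suffices to show that both secants, divided by $n$, lie within the claimed error of $F'(x)$.

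For the upper secant, the hypothesis with $c$ cancelling gives $J_f(r_+)-J_f(r_0)=F(x+h)-F(x)+\theta$ with $|\theta|\le2\eta$. By the Lipschitz bound on $F'$, $F(x+h)-F(x)=hF'(x)+\rho$ with $|\rho|\le M_Fh^2/2$.

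The next step is to control the denominator $\tau_+-\tau_0=\log\frac{1+(x+h)/n}{1+x/n}$. Write it as $\frac hn(1+\epsilon)$. Since $|x|,|x+h|\le K\le n/4$, the mean value theorem gives $\tau_+-\tau_0=\frac{h/n}{1+\zeta/n}$ for some $|\zeta|\le K$. Hence $n(\tau_+-\tau_0)/h$ lies between $(1+K/n)^{-1}$ and $(1-K/n)^{-1}$, so that $|\epsilon|\le 4K/(3n)$, and $\ge 4/5$ from below.

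Then
\[
 \frac{J_f(r_+)-J_f(r_0)}{n(\tau_+-\tau_0)}=\frac{hF'(x)+\rho+\theta}{h}\cdot\frac{h}{n(\tau_+-\tau_0)}
 =\bigl(F'(x)+\rho/h+\theta/h\bigr)\Bigl(1+\frac{\zeta}{n}\Bigr).
\]
The correction term involving $F'(x)\zeta/n$ is at most $B_FK/n$, and $\rho/h$ contributes $M_Fh/2$. The factor $1+\zeta/n\le5/4$ multiplies $|\theta|/h\le2\eta/h$, giving at most $\frac52\eta/h\le3\eta/h$. However, it also multiplies $\rho/h$, which would spoil the constant $M_Fh/2$.

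The main obstacle is exactly this bookkeeping of constants. To get $M_Fh/2+B_FK/n+3\eta/h$ cleanly, I would multiply out the factor $1+\zeta/n$ rather than bound it crudely. Alternatively, one can note that the slack between $\tfrac52$ and $3$ in the $\eta$ term absorbs a small cross term, and use the bound $(1-K/n)^{-1}-1\le 4K/(3n)$ only against $|F'(x)|\le B_F$. If the clean constant fails, the fallback is to use that $\nu_f/n\in[0,1]$ trivially when the right side exceeds that range. Otherwise, I would accept a slightly larger constant; the lemma is used only qualitatively later.

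The lower secant is handled symmetrically, using the points $x-h$ and $x$. Combining the two one-sided bounds gives the lemma.
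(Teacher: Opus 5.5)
Your skeleton is the paper's. You use the secant inequality \eqref{pre:secants} on $[r_-,r_0]$ and $[r_0,r_+]$, and the hypothesis to trade differences of $J_f$ for differences of $F$ at a cost $|\theta|\le2\eta$. You also use the mean value theorem to get $h/\bigl(n(\tau_+-\tau_0)\bigr)=1+\zeta/n$ with $|\zeta|\le K$.

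But the proposal stops short of the stated inequality. You expand $F(x+h)-F(x)=hF'(x)+\rho$ before multiplying by $1+\zeta/n$. This creates the cross term $(\rho/h)(\zeta/n)$, of size up to $M_FhK/(2n)$, and none of your remedies removes it:
\begin{itemize}
\item The slack between $\frac52$ and $3$ cannot absorb it, because $\eta$ may be $0$ while $\rho\zeta\ne0$.
\item Using $(1-K/n)^{-1}-1\le4K/(3n)$ against $|F'(x)|$ gives $4B_FK/(3n)$, which is worse than $B_FK/n$ and still leaves the cross term. Your own identity already gives the exact factor $\zeta/n$ with $|\zeta/n|\le K/n$.
\item The fallback $\nu_f/n\in[0,1]$ is false here, since $f$ is an arbitrary polynomial and $n$ is only a scale. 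In the proof of Theorem~\ref{thm:R} the lemma is applied to $f_m$ with $m\le2N$ at scale $N$. It would not help anyway when the right side is small.
\item Accepting a larger constant proves a different statement.
\end{itemize}

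The missing step is to keep the ordinary secant $Q_+=(F(x+h)-F(x))/h$ intact:
\[
 \frac{F(x+h)-F(x)+\theta}{h}\Bigl(1+\frac\zeta n\Bigr)-F'(x)
 =\bigl(Q_+-F'(x)\bigr)+Q_+\,\frac\zeta n+\frac\theta h\Bigl(1+\frac\zeta n\Bigr).
\]
The three terms are bounded as follows:
\begin{itemize}
\item The first is at most $M_Fh/2$ by the Lipschitz bound on $F'$.
\item Because $Q_+$ is the average of $F'$ over $[x,x+h]$, $|Q_+|\le B_F$, so the middle term is at most $B_FK/n$.
\item Since $0<1+\zeta/n\le\frac54$, the last is at most $\frac52\eta/h\le3\eta/h$.
\end{itemize}
The same argument with $Q_-=(F(x)-F(x-h))/h$ and some $\zeta'\in[x-h,x]$ gives the lower bound. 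This is exactly the paper's argument, where it appears as the observation that replacing $h$ by $\Delta_\pm$ changes $Q_\pm$ by at most $B_F|h/\Delta_\pm-1|\le B_FK/n$. You should also record that the hypothesis makes $J_f$ finite, so $f\not\equiv0$ and Lemma~\ref{pre:jensen} applies.
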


\begin{proof}
The hypothesis makes $J_f$ finite, so $f\not\equiv0$.
Let $\rho_y=1+y/n$ and
$\Delta_\pm=n|\log(\rho_{x\pm h}/\rho_x)|=\int_0^h\dd s/(1+(x\pm s)/n)$.
Since $|x\pm s|\le K$ in the integrand, $h/\Delta_\pm\in[1-K/n,1+K/n]$.
By \eqref{pre:secants} and the hypothesis,
\[
 \frac{F(x)-F(x-h)-2\eta}{\Delta_-}
 \le\frac{\nu_f(\rho_x)}n
 \le\frac{F(x+h)-F(x)+2\eta}{\Delta_+}.
\]
The ordinary secants $Q_\pm=(F(x\pm h)-F(x))/(\pm h)$ satisfy
$|Q_\pm-F'(x)|\le M_Fh/2$ and $|Q_\pm|\le B_F$.
Replacing the denominator $h$ by $\Delta_\pm$ changes $Q_\pm$ by at most
$B_F|h/\Delta_\pm-1|\le B_FK/n$.
Finally $2\eta/\Delta_\pm\le2\eta(1+K/n)/h\le3\eta/h$.
\end{proof}

Figure~\ref{rc:fig-secants} shows these secants for one sequence of random
signs, together with the concentration of Theorem~\ref{rc:concentration}
and a block of degrees from Section~\ref{sec:rm}.

\begin{figure}[htb]
\centering
\includegraphics[width=\textwidth,alt={Three panels. (a) Jensen secants for one sequence of random signs at degree one hundred thousand against the limiting curve. (b) The angular average of the logarithm at the unit circle for thousands of sign sequences at degrees from one thousand to one million, with the sample means and a band of two standard deviations. (c) The same quantity at every degree of one block of degrees for three sequences.}]{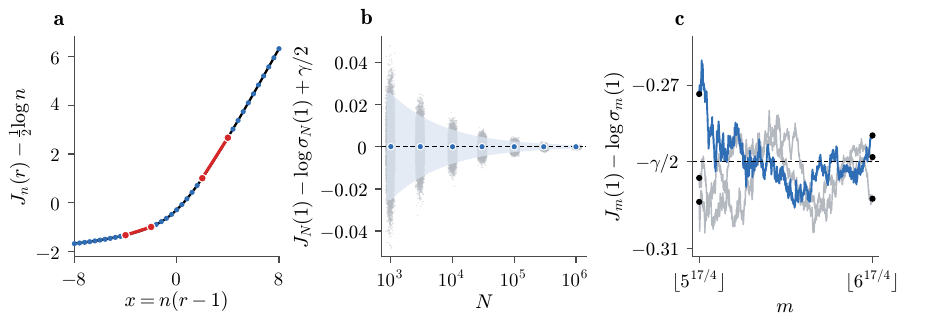}
\caption{(a) Jensen secants for one sequence of random signs at degree
$n=10^5$, with $x=n(r-1)$: the dots are $J_n(r)-\frac12\log n$, the curve
is $F_{\rm rad}(x)-\gamma/2$, and the chords run over $[r_1,r_2]$ and
$[r_3,r_4]$, where $r_1,\dots,r_4=1-4/n,\,1-2/n,\,1+2/n,\,1+4/n$. By
\eqref{pre:secants} the slope of each chord, measured in $n\log r$, lies
between the counts $\nu_n(r)/n$ at its two ends.
(b) The concentration of Theorem~\ref{rc:concentration} at $r=1$, for the
$16384$ sequences of random signs of Subsection~\ref{ap:radial} at
$N=10^3,\,3\cdot10^3,\,10^4,\,3\cdot10^4,\,10^5$ and the first $2048$ of them
at $N=3\cdot10^5,\,10^6$. The dots are the sample means and the band extends
two sample standard deviations to either side. The standard deviation is
$0.40/\sqrt N$ at every degree.
(c) $J_m(1)-\log\sigma_m(1)$ at every degree of the block between the
sparse degrees $\lfloor5^{17/4}\rfloor=934$ and $\lfloor6^{17/4}\rfloor=2028$
of Section~\ref{sec:rm}, for the sequence of (a) in blue and two further
sequences, with the values at the sparse degrees marked. The angular mean
barely moves through a block, which is how the proof passes from sparse
degrees to all degrees (Lemma~\ref{rm:block-increment}).}
\label{rc:fig-secants}
\end{figure}

\section{The radial law at all degrees}\label{sec:rm}\label{rm:sec}

Theorem~\ref{rc:concentration} controls the angular logarithm at one
degree, with a failure probability that is summable along the sparse
degrees $N_j=\lfloor j^{17/4}\rfloor$ but not over all degrees. To reach
every degree $m$ in a block $N_j\le m\le N_{j+1}$ it suffices to show that
$J_m(r)-J_N(r)$ is small, where $N=N_j$, because the secant bound of
Lemma~\ref{rc:secant} is deterministic and applies to $f_m$ at the scale
$N$. Appending the terms of degree above $N$ changes $f_N$ by a polynomial
of size about $\sqrt{M\log N}$ on the circle, where $M=N_{j+1}-N_j$, and
this is much smaller than $\sigma_N\approx\sqrt N$. So
$\log|f_m|-\log|f_N|$ is small except where $|f_N|$ is itself small. That
set has small measure, and the logarithmic norms of
Lemma~\ref{rc:log-moments} control its contribution. The coefficients are
iid admissible, as in Section~\ref{sec:rc}.

\subsection{Appended tails}
The first estimate bounds all the polynomials appended to $f_N$ within a
block at once.

\begin{lemma}[Appended tails]\label{rm:tail}
Let $K_0\ge0$, $N\ge\max(4,K_0)$ and $1\le M\le N$, and put
\[
 \rho=1+\frac{K_0}N,\qquad a=15B_\xi e^{2K_0}\sqrt{M\log N},\qquad
 G_m(z)=f_{N+m}(z)-f_N(z)=\sum_{k=N+1}^{N+m}\xi_kz^k .
\]
Outside an event of probability at most $2N^{-10}$,
\[
 \max_{0\le m\le M}\ \sup_{|z|\le\rho}|G_m(z)|<a .
\]
\end{lemma}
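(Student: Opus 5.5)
By the maximum modulus principle it suffices to bound $G_m$ on the circle $|z|=\rho$. Since $G_m(z)=z^{N+1}\sum_{i=0}^{m-1}\xi_{N+1+i}z^i$ and $|z|^{N+1}\le\rho^{2N}\le e^{2K_0}$ on that circle, it suffices to bound the polynomials $P_m(z)=\sum_{i<m}\xi_{N+1+i}z^i$. I plan a chaining-free argument in three steps: a grid in angle, a Bernstein-type interpolation off the grid, and a maximal inequality in $m$.

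\emph{Grid in angle.} Fix a grid of $L=\lceil 8\pi N\rceil$ equally spaced angles $\theta_\ell$. For a polynomial $P$ of degree below $N+M\le 2N$ restricted to $|z|=\rho$, Bernstein's inequality gives $|\partial_\theta P(\rho e^{i\theta})|\le 2N\sup|P|$. Hence the supremum over the circle is at most $(1-2N\pi/L)^{-1}\le 4/3$ times the maximum over the grid. I will apply this directly to $G_m$, which has degree at most $2N$.

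\emph{Maximal inequality in $m$.} Fix a grid point $z=\rho e^{i\theta_\ell}$. Then $m\mapsto G_m(z)=\sum_{k=N+1}^{N+m}\xi_kz^k$ is a sum of independent, centered, symmetric complex increments. In the bounded case each increment has modulus at most $B_\xi e^{2K_0}$, and the total variance is at most $Me^{4K_0}$. I treat real and imaginary parts separately. L\'evy's maximal inequality for symmetric summands gives $\Prob(\max_m|\Re G_m|>u)\le 2\Prob(|\Re G_M|>u)$, and Hoeffding gives $\Prob(|\Re G_M|>u)\le 2\exp(-u^2/(2MB_\xi^2e^{4K_0}))$. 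In the Gaussian case $B_\xi=2$ and the Gaussian tail gives the same bound. Taking $u=c\,B_\xi e^{2K_0}\sqrt{M\log N}$ makes this at most $N^{-12}$ for a suitable absolute $c$ of about $5$.

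\emph{Union bound.} A union over the two parts and the $L\le 30N$ grid points gives total failure probability at most $2N^{-10}$ for $N\ge4$. On the complement, every grid value satisfies $|G_m|\le\sqrt2\,u$, so the whole circle satisfies $|G_m|\le\frac43\sqrt2\,u<a$ with $a=15B_\xi e^{2K_0}\sqrt{M\log N}$, as required.

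\emph{Main obstacle.} The main difficulty is bookkeeping the constants so that $15$ and $2N^{-10}$ come out exactly. The factors to track are the grid size $L$, the Bernstein interpolation loss, the factor $2$ from L\'evy, and the use of the bound $\rho^{k}\le e^{K_0k/N}\le e^{2K_0}$, which relies on $k\le 2N$. I would first check that $c\sqrt{\log N}$ with $c^2/2\ge 13$ absorbs the factor $\log(120N)$ for $N\ge4$. If it does not, I would enlarge $c$ while staying below $15/(\frac43\sqrt2)\approx 7.9$.
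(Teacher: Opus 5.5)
Your proof is correct. It differs from the paper's in how it passes from finitely many points to the whole circle and to all $m$.

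\textbf{The paper's route.} The paper does not use Bernstein's inequality. It takes a much finer net of $\lceil N^4\rceil\le 2N^4$ points on $|z|=\rho$ and applies the per-event bound $\Prob\{|G_m(z)|\ge 2a/3\}\le 4N^{-25}$. It then takes a plain union over all $2N^5$ pairs (net point, $m$), with no maximal inequality. To interpolate off the net it uses the crude Lipschitz bound $|G_m'|\le 2B_\xi N^2Me^{2K_0}$. That bound holds only on the envelope event $\max_{k\le 2N}|\xi_k|\le B_\xi N$, which for Gaussian coefficients costs an extra $(2N+1)e^{-2N^2}$.

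\textbf{What your route buys and costs.} Bernstein's inequality makes your interpolation deterministic and self-improving: $\sup_{|z|=\rho}|G_m|\le\frac43\max_{\rm net}|G_m|$. So $O(N)$ net points suffice, no envelope event is needed, and bounded and Gaussian laws are handled identically. The cost is invoking Bernstein's inequality for trigonometric polynomials instead of an elementary derivative bound.

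L\'evy's maximal inequality is valid here, since the summands $\Re(\xi_kz^k)$ are independent and symmetric. It is not needed, though. The failure budget is polynomial in $N$, so a plain union over the $M\le N$ values of $m$ works equally well, for instance with $c=7$: then $\frac43\sqrt2\cdot7<15$ and $120N^{2-49/2}\le2N^{-10}$.

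\textbf{Constants.} Your fallback is needed:
\begin{itemize}
\item With $c=5$, the per-point bound $4N^{-c^2/2}$ is below $N^{-12}$ only for $N\ge16$.
\item The union over about $60N$ events needs $N^{c^2/2-11}\ge120$, which at $N=4$ requires roughly $c\ge5.4$.
\item Taking $c=6$ gives total failure probability $240N^{-17}\le2N^{-10}$, and $\frac43\sqrt2\cdot6\approx11.3<15$, so the argument closes.
\end{itemize}
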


\begin{proof}
We bound each $G_m$ on a fine net of the circle $|z|=\rho$ by a
sub-Gaussian tail, and pass from the net to the circle with a derivative
bound. Linear forms in admissible coefficients have sub-Gaussian tails.
For deterministic $c_k\in\C$, finitely many of them nonzero, and $t\ge0$,
\begin{equation}\label{rm:subgauss}
 \Prob\Big\{\Big|\Re\sum_k\xi_kc_k\Big|\ge t\Big\}
 \le2\exp\Big[-\frac{t^2}{2B_\xi^2\sum_k|c_k|^2}\Big],
\end{equation}
and the same holds for the imaginary part. For bounded laws this is
Hoeffding's inequality for the signs in \eqref{rc:sym-rep}, applied
conditionally on the amplitudes. For Gaussian laws the real part is a
centered Gaussian variable with variance at most $\sum_k|c_k|^2$. Since
$|w|\ge t$ forces $|\Re w|\ge t/\sqrt2$ or $|\Im w|\ge t/\sqrt2$,
\begin{equation}\label{rm:subgauss-abs}
 \Prob\Big\{\Big|\sum_k\xi_kc_k\Big|\ge t\Big\}
 \le4\exp\Big[-\frac{t^2}{4B_\xi^2\sum_k|c_k|^2}\Big].
\end{equation}

For $|z|=\rho$ and $1\le m\le M$ we have
$\sum_{k=N+1}^{N+m}\rho^{2k}\le Me^{4K_0}$, since $k\le2N$. By
\eqref{rm:subgauss-abs},
\[
 \Prob\{|G_m(z)|\ge2a/3\}\le4\exp\Big[-\frac{(2a/3)^2}{4B_\xi^2Me^{4K_0}}\Big]
 =4N^{-25}.
\]
We use $\lceil N^4\rceil\le2N^4$ equally spaced points on $|z|=\rho\le2$,
hence at most $2N^5$ pairs of a point and an index $m$. The coefficients
are rarely large. The envelope event
\[
 \mathcal E^{\rm env}_N=\Big\{\max_{k\le2N}|\xi_k|\le B_\xi N\Big\}
 \qquad\text{satisfies}\qquad
 \Prob\big((\mathcal E^{\rm env}_N)^c\big)\le(2N+1)e^{-2N^2}.
\]
It holds surely for bounded laws, and for Gaussian laws
$\Prob(|\xi_0|>2N)\le e^{-2N^2}$. On $\mathcal E^{\rm env}_N$ we have
$|G_m'|\le2B_\xi N^2Me^{2K_0}$ on $|z|\le\rho$, so moving to the nearest
sample point changes $G_m$ by at most $4\pi B_\xi Me^{2K_0}/N^2<a/3$.
Outside an event of probability at most
$(2N+1)e^{-2N^2}+8N^{-20}\le2N^{-10}$, therefore, $|G_m|<a$ on the circle
for all $1\le m\le M$. The maximum principle gives the claim, and $G_0=0$.
\end{proof}

\subsection{The increment of the angular logarithm}
Where $|f_N|$ is much larger than the appended tail $T=f_m-f_N$, the
difference $\log|f_m|-\log|f_N|=\log|1+T/f_N|$ is small. The rest of the
circle, where $|f_N|$ is small, has small measure, and there the $L^2$
norms of $\log|f_N|$ and $\log|f_m|$ control the difference. The
comparison is deterministic.

\begin{lemma}[Increment of the angular logarithm]\label{rm:increment}
Let $f$ and $g$ be nonzero polynomials, $r>0$ and $\sigma_f,\sigma_g\ge1$.
Put $T=g-f$ and
\[
 A^f_u=\mu\{\theta:|f(re^{i\theta})|\le u\sigma_f\},\qquad
 \mathcal L_f=\int\Big|\log\frac{|f(re^{i\theta})|}{\sigma_f}\Big|^2\dd\mu,\qquad
 \mathcal L_g=\int\Big|\log\frac{|g(re^{i\theta})|}{\sigma_g}\Big|^2\dd\mu .
\]
Suppose that $|T|\le\tau$ on $|z|=r$, put $u_0=2\tau/\sigma_f$, and let
$u_1\ge u_0$. Then
\[
 |J_g(r)-J_f(r)|\le\frac{u_0}{u_1}+A^f_{u_1}
 +\sqrt{A^f_{u_1}}\,\Big(\sqrt{\mathcal L_f}+\sqrt{\mathcal L_g}+\log\sigma_f+\log\sigma_g\Big).
\]
\end{lemma}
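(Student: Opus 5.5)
We split the circle $\{|z|=r\}$ according to the size of $|f|$. Put $E=\{\theta:|f(re^{i\theta})|\le u_1\sigma_f\}$, so that $\mu(E)=A^f_{u_1}$, and write
\[
 J_g(r)-J_f(r)=\int_{\mathbb T\setminus E}\log\Big|1+\frac Tf\Big|\dd\mu
 +\int_E\big(\log|g|-\log|f|\big)\dd\mu .
\]
Both integrands are integrable, since $f$ and $g$ are nonzero polynomials.

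\emph{The good set.} On $\mathbb T\setminus E$ we have $|T/f|\le\tau/(u_1\sigma_f)=u_0/(2u_1)\le\frac12$. For $|w|\le\frac12$ we have $\bigl|\log|1+w|\bigr|\le2|w|$, so the first integral is at most $u_0/u_1$ in absolute value. This gives the first term of the bound.

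\emph{The bad set.} Write $\log|g|=\log(|g|/\sigma_g)+\log\sigma_g$, and similarly for $f$. Then
\[
 \Big|\int_E(\log|g|-\log|f|)\dd\mu\Big|
 \le\int_E\Big|\log\frac{|g|}{\sigma_g}\Big|\dd\mu+\int_E\Big|\log\frac{|f|}{\sigma_f}\Big|\dd\mu+\mu(E)\,|\log\sigma_g-\log\sigma_f| .
\]
By Cauchy--Schwarz, the first two integrals are at most $\sqrt{A^f_{u_1}\mathcal L_g}$ and $\sqrt{A^f_{u_1}\mathcal L_f}$. Since $\sigma_f,\sigma_g\ge1$, both logarithms are nonnegative. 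Hence the last term is at most $\mu(E)(\log\sigma_f+\log\sigma_g)$, which is at most $\sqrt{A^f_{u_1}}(\log\sigma_f+\log\sigma_g)$ because $\mu(E)\le1$.

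Summing the good-set and bad-set estimates gives the stated bound. It even gives it without the separate $A^f_{u_1}$ term, so that term is harmless slack. The author may have included it to handle the contribution of the set $\{|f|\le u_0\sigma_f\}$ separately, but the argument above does not need it.

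There is no real obstacle. The only care needed is with integrability, and the integrals of $\log|f|$ and $\log|g|$ are finite because the polynomials are nonzero. The dichotomy $u_1\ge u_0$ is exactly what makes $|T/f|\le\frac12$ off $E$.
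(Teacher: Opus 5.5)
Your proof is correct and uses the same two ingredients as the paper: the estimate $\bigl|\log|1+T/f|\bigr|\le2|T|/|f|$ wherever $|T|\le|f|/2$, and Cauchy--Schwarz against $\mathcal L_f,\mathcal L_g$ on the set where $|f|$ is small. The only difference is bookkeeping, which also corrects your guess about where the extra term comes from. The paper splits the circle along $\{f\ne0,\ |T|\le|f|/2\}$ and uses the crude bound $1$ on the part of that set where $|f|<u_1\sigma_f$; that is the source of its extra $A^f_{u_1}$ term, not the set $\{|f|\le u_0\sigma_f\}$. Your single split at $|f|=u_1\sigma_f$ puts that part under Cauchy--Schwarz instead, so it does give the slightly sharper bound without $A^f_{u_1}$.
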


\begin{proof}
All functions below are evaluated at $re^{i\theta}$. A nonzero polynomial
has finitely many zeros on the circle, so $\log|f|$ and $\log|g|$ are
square integrable, and $J_g(r)-J_f(r)=\int(\log|g|-\log|f|)\dd\mu$. Let
$E_1=\{\theta:f\ne0,\ |T|\le|f|/2\}$ and $E_2=\mathbb T\setminus E_1$.

On $E_1$ we write $g=f(1+T/f)$ with $|T/f|\le\frac12$. Since
$\log(1-y)\ge-2y$ and $\log(1+y)\le y$ for $0\le y\le\frac12$,
\[
 \big|\log|g|-\log|f|\big|=\big|\log|1+T/f|\big|\le2\frac{|T|}{|f|}\le1 .
\]
Where in addition $|f|\ge u_1\sigma_f$, the middle term is at most
$2\tau/(u_1\sigma_f)=u_0/u_1$. The rest of $E_1$ lies in
$\{|f|<u_1\sigma_f\}$, of measure at most $A^f_{u_1}$, and there we use
the bound $1$. Hence
$\int_{E_1}|\log|g|-\log|f||\dd\mu\le u_0/u_1+A^f_{u_1}$.

On $E_2$, either $f=0$ or $|f|<2|T|\le2\tau=u_0\sigma_f$, so
$E_2\subset\{|f|\le u_1\sigma_f\}$ and $\mu(E_2)\le A^f_{u_1}$. Since
$\log|f|=\log(|f|/\sigma_f)+\log\sigma_f$ with $\log\sigma_f\ge0$,
Minkowski's inequality gives
$\|\log|f|\|_{L^2(\mu)}\le\sqrt{\mathcal L_f}+\log\sigma_f$, and likewise for $g$.
The Cauchy--Schwarz inequality gives
\[
 \int_{E_2}\big|\log|g|-\log|f|\big|\dd\mu
 \le\sqrt{\mu(E_2)}\,\big(\|\log|f|\|_{L^2(\mu)}+\|\log|g|\|_{L^2(\mu)}\big).
\]
Adding the two parts proves the lemma.
\end{proof}

Over a block of degrees, Lemma~\ref{rm:tail} controls the appended tails,
Lemma~\ref{rc:occupation}(ii) the small values of $f_N$, and
Lemma~\ref{rc:log-moments}(iv) the logarithmic norms.

\begin{lemma}[Block increment]\label{rm:block-increment}
Let $K\ge1$, $0<t<1/8$, $N\ge H_K^4$, $1\le M\le N$, $|y|\le K$ and
$\rho=1+y/N$. Put
\[
 \tau=15B_\xi e^{2K}\sqrt{M\log N},\qquad u_0=\frac{2\tau}{\sigma_N(\rho)},
\]
and suppose that
\begin{equation}\label{rm:block-condition}
 900B_\xi^2e^{8K}M\log N\le N^{1-2t}.
\end{equation}
There is an event $\mathcal I_N(\rho)$ with
\[
 \Prob(\mathcal I_N(\rho)^c)\le2N^{-10}+H_KN^{-1/2+4t}
 +(M+1)\big(N^{-12}+2\exp(-c_\xi(N+1)e^{-12K})\big)
\]
on which, for every $N\le m\le N+M$,
\[
 |J_m(\rho)-J_N(\rho)|\le\varepsilon_N:=u_0N^t+3N^{-2t}
 +\sqrt3\,N^{-t}\Big(2\sqrt{\Lambda_\xi'}\,(\log2N)^7+\log(2N+1)+3K\Big).
\]
Moreover $u_0N^t\le30B_\xi e^{4K}\sqrt{M\log N/N}\,N^t\le1$.
\end{lemma}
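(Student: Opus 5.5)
The plan is to apply Lemma~\ref{rm:increment} with $f=f_N$, $g=f_m$, $r=\rho$, $\sigma_f=\sigma_N(\rho)$ and $\sigma_g=\sigma_m(\rho)$, and with $u_1=u_0N^t$ whenever $u_0N^t\ge N^{-t}$. In general I will take $u_1=\max(u_0N^t,N^{-t})$; this keeps $u_1\ge u_0$ and makes the small-value set controllable. The event $\mathcal I_N(\rho)$ is the intersection of four events:
\begin{itemize}
\item[] the tail event of Lemma~\ref{rm:tail} with $K_0=K$, giving $|f_m-f_N|<\tau$ on $|z|\le\rho$ for all $N\le m\le N+M$, at cost $2N^{-10}$;
\item[] the occupation event of Lemma~\ref{rc:occupation}(ii) for $f_N$ at radius $\rho$, giving $A^{f_N}_{N^{-t}}\le3N^{-2t}$, at cost $H_KN^{-1/2+4t}$;
\item[] for each of the $M+1$ degrees $m$, the event $\mathcal H_m(\rho)\cap\{\mathcal L_m(\rho)\le\Lambda_\xi'(\log m)^{14}\}$ of Lemma~\ref{rc:log-moments}(iv).
\end{itemize}
The lemmas apply at degree $m\in[N,2N]$ because $|m(\rho-1)|=|y|m/N\le2K$, so $\rho\in I_m(2K)$. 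This width $2K$ explains the exponent $e^{-12K}=e^{-6\cdot 2K}$ in the failure bound. I need $m\ge2\cdot2K$ and $m\ge3$, which follow from $N\ge H_K^4$. The total failure probability is then the stated sum.

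Before bounding terms I check the parameters. By \eqref{rc:weights}, $\sigma_N(\rho)^2\ge(N+1)e^{-4K}$, so
\[
u_0\le 30B_\xi e^{4K}\sqrt{M\log N/N}.
\]
Condition \eqref{rm:block-condition} gives $30B_\xi e^{4K}\sqrt{M\log N}\le N^{1/2-t}$, hence $u_0N^t\le1$; this is the final claim of the lemma. Also $\sigma_f,\sigma_g\ge1$, since $\sigma^2\ge1$ from the $k=0$ term.

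Now I bound the four terms of Lemma~\ref{rm:increment}.
\begin{itemize}
\item[] The first term is $u_0/u_1\le N^{-t}$.
\item[] For the occupation terms, suppose $u_1=N^{-t}$. Then $A^f_{u_1}\le3N^{-2t}$ and $\sqrt{A^f_{u_1}}\le\sqrt3N^{-t}$.
\item[] The logarithmic norms satisfy $\sqrt{\mathcal L_f},\sqrt{\mathcal L_g}\le\sqrt{\Lambda_\xi'}(\log2N)^7$.
\item[] Since $\sigma_m(\rho)^2\le(2N+1)e^{4K}$, I get $\log\sigma_f+\log\sigma_g\le\log(2N+1)+3K$, up to adjusting the constant in front of $K$ to what the statement uses.
\end{itemize}
Summing gives $N^{-t}+3N^{-2t}+\sqrt3N^{-t}(\cdots)$. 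The stated $\varepsilon_N$ has $u_0N^t$ where I have $N^{-t}$. The mismatch arises when $u_0N^t\ge N^{-t}$, so I must handle both cases.

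The main obstacle is that case, $u_1=u_0N^t>N^{-t}$: there I need $A^f_{u_1}$ small, while Lemma~\ref{rc:occupation}(ii) only controls $A_{N^{-t}}$. I would first try to avoid it by choosing $u_1=N^{-t}$ always. This requires $u_0\le N^{-t}$, i.e.\ $u_0N^t\le N^{-t}\cdot N^{t}$... which is not implied by \eqref{rm:block-condition}. Failing that, I would read the occupation event as Chebyshev applied to $A_{u}$ at $u=\max(u_0N^t,N^{-t})$. Lemma~\ref{rc:occupation}(i) gives $\E A_u\le u^2+\delta$ at any level, so the same variance bound works with $u^2$ in place of $N^{-2t}$. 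That route yields the contribution $u_0/u_1=N^{-t}$ together with $A\lesssim u_1^2$. I expect a bound of the displayed form $u_0N^t+3N^{-2t}+\sqrt3N^{-t}(\cdots)$ once the two cases are merged via $\max(a,b)\le a+b$.
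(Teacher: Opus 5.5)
Your setup is the paper's. The event is the same intersection: the tail event of Lemma~\ref{rm:tail} with $K_0=K$, the occupation event of Lemma~\ref{rc:occupation}(ii) at degree $N$, and the log-norm events of Lemma~\ref{rc:log-moments}(iv) at the $M+1$ degrees, with width $2K$ for $m>N$. You then apply Lemma~\ref{rm:increment} with $f=f_N$, $g=f_m$. Your bounds on $\sqrt{\mathcal L_f}$ and $\sqrt{\mathcal L_g}$ also match. The bound $\log\sigma_f+\log\sigma_g\le\log(2N+1)+3K$ needs no adjustment of the constant, since it is $\frac12\log(N+1)+K+\frac12\log(m+1)+2K$.

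The gap is your choice $u_1=\max(u_0N^t,N^{-t})$ and the ``main obstacle'' paragraph. You say that $u_0\le N^{-t}$ is not implied by \eqref{rm:block-condition}. But a few lines earlier you derived $u_0N^t\le1$ from that condition, and this is literally the same inequality. So $u_1=N^{-t}$ is always admissible, since $u_1\ge u_0$. The occupation event then gives $A^{f_N}_{u_1}\le3N^{-2t}$, and the first term of Lemma~\ref{rm:increment} is $u_0/u_1=u_0N^t$. This is exactly why $\varepsilon_N$ begins with $u_0N^t$ rather than $N^{-t}$. With this choice your remaining estimates sum to $\varepsilon_N$, and there is no second case.

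The workaround you sketch for the case $u_1=u_0N^t>N^{-t}$ would not give the stated bound. Since $A_u$ is nondecreasing in $u$, the event $\{A_{N^{-t}}\le3N^{-2t}\}$ says nothing about $A_{u_1}$. Chebyshev at level $u_1$ via Lemma~\ref{rc:occupation}(i), with the same failure probability, gives at best $A_{u_1}\lesssim u_1^2+N^{-2t}$. The term $\sqrt{A_{u_1}}\,\bigl(\sqrt{\mathcal L_f}+\sqrt{\mathcal L_g}+\log\sigma_f+\log\sigma_g\bigr)$ would then be of order $u_0N^t(\log N)^7$, not $\sqrt3\,N^{-t}(\cdots)$. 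Writing $\max(a,b)\le a+b$ does not turn one into the other. As written, the proposal does not reach $\varepsilon_N$; taking $u_1=N^{-t}$ closes the gap.
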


\begin{proof}
By \eqref{rc:weights}, $\sigma_N(\rho)^2\ge(N+1)e^{-4K}\ge1$, so
$u_0\le30B_\xi e^{4K}\sqrt{M\log N/(N+1)}$, and \eqref{rm:block-condition}
gives $u_0\le N^{-t}$. For $N<m\le N+M$ we have $m\le2N$, hence
$|\rho-1|=|y|/N\le2K/m$, that is, $\rho\in I_m(2K)$.

Let $\mathcal I_N(\rho)$ be the intersection of the following events. The
first is the event of Lemma~\ref{rm:tail} with $K_0=K$, which applies
since $N\ge\max(4,K)$. The second and third are the events of
Lemma~\ref{rc:occupation}(ii) and Lemma~\ref{rc:log-moments}(iv) at
degree $N$ with width $K$. The others are the events of
Lemma~\ref{rc:log-moments}(iv) at each degree $N<m\le N+M$ with width
$2K$, which applies since $m\ge N\ge\max(4K,3)$. Each failure probability
of the last kind is at most $N^{-12}+2\exp(-c_\xi(N+1)e^{-12K})$, and all
of them together are at most the stated bound.

On $\mathcal I_N(\rho)$, every $f_m$ with $N\le m\le N+M$ is nonzero.
Apply Lemma~\ref{rm:increment} with $f=f_N$, $g=f_m$, $r=\rho$,
$\sigma_f=\sigma_N(\rho)$, $\sigma_g=\sigma_m(\rho)$ and $u_1=N^{-t}$,
where $\sigma_g\ge1$ by \eqref{rc:weights} at width $2K$.
Lemma~\ref{rm:tail} gives $|f_m-f_N|<\tau$ on $|z|=\rho\le1+K/N$. The
fraction $A^{f_N}_{u_1}$ is the occupation fraction $A_{N^{-t}}$ of $f_N$
on the circle of radius $\rho$, and $\mathcal L_{f_N}$ and $\mathcal L_{f_m}$ are the
logarithmic norms of $f_N$ and $f_m$ there. Hence
$A^{f_N}_{u_1}\le3N^{-2t}$,
$\sqrt{\mathcal L_{f_N}}\le\sqrt{\Lambda_\xi'}(\log N)^7$ and
$\sqrt{\mathcal L_{f_m}}\le\sqrt{\Lambda_\xi'}(\log m)^7\le\sqrt{\Lambda_\xi'}(\log2N)^7$.
By \eqref{rc:weights} at widths $K$ and $2K$,
\[
 \log\sigma_N(\rho)+\log\sigma_m(\rho)
 \le\tfrac12\log(N+1)+K+\tfrac12\log(m+1)+2K\le\log(2N+1)+3K .
\]
Inserting these bounds into Lemma~\ref{rm:increment} gives
$|J_m(\rho)-J_N(\rho)|\le\varepsilon_N$.
\end{proof}

\subsection{The radial law with a rate}
The proof of Theorem~\ref{thm:R} applies the block increment on a net of
radii, uses the secant bound at the net points, and fills in the radii
between them by the monotonicity of $\nu_m$.

\begin{proof}[Proof of Theorem~\ref{thm:R}]
Every law of Theorem~\ref{thm:R} is admissible, so the results of
Section~\ref{sec:rc} apply. Fix an integer $A\ge1$ and put
\[
 K=A+2,\qquad t=\tfrac1{17},\qquad N=N_j=\lfloor j^{17/4}\rfloor,\qquad
 M=N_{j+1}-N_j,\qquad h=N^{-1/34}(\log N)^{7/2}.
\]
By the mean value theorem $M\le\frac{17}4(j+1)^{13/4}+1\le5N^{13/17}$ for
$j\ge60$. Let $Y_j=h\Z\cap[-K,K]$ be the net of spacing $h$. It has
\[
 |Y_j|\le\frac{2K}h+1=2KN^{1/34}(\log N)^{-7/2}+1
\]
points. Write $\rho_y=1+y/N$. We consider $j\ge60$ so large that
\[
 N\ge\max\big(H_K^4,\,4e^{4K}\big),\qquad
 4500B_\xi^2e^{8K}\log N\le N^{2/17},\qquad h\le1.
\]
Since $H_K\ge6000$, we have $N^{4/17}\ge5$, so $M\le N$. The bound
$M\le5N^{13/17}$ and the second condition give \eqref{rm:block-condition},
since $1-2t=15/17$.

\emph{Bad events.} Let $\mathcal B_j$ be the union over $y\in Y_j$ of the
event $\{|J_N(\rho_y)-\log\sigma_N(\rho_y)+\gamma/2|>e_N(t)\}$ and the
event $\mathcal I_N(\rho_y)^c$ of Lemma~\ref{rm:block-increment}. Since
$-\frac12+4t=-\frac9{34}$, Theorem~\ref{rc:concentration} and
Lemma~\ref{rm:block-increment} give
\begin{align*}
 \Prob(\mathcal B_j)\le{}&\big(2KN^{1/34}(\log N)^{-7/2}+1\big)
 \Big[3H_KN^{-9/34}+3N^{-10}\\
 &\qquad+(5N^{13/17}+1)\big(N^{-12}+4e^{-c_\xi(N+1)e^{-12K}}\big)\Big].
\end{align*}
In the expanded product the term $6KH_KN^{-4/17}(\log N)^{-7/2}$
dominates. Since $N\ge j^{17/4}/2$ and $\log N\ge\log j$ for $j\ge2$, it
is $O(j^{-1}(\log j)^{-7/2})$, which is summable. The term
$3H_KN^{-9/34}$ is $O(j^{-9/8})$, and every other term is $O(j^{-2})$. By
the Borel--Cantelli lemma, almost surely $\mathcal B_j$ occurs for only
finitely many $j$.

\emph{Secants at the net points.} Suppose that $\mathcal B_j$ does not
occur, and let $N\le m\le N+M$. For every $y\in Y_j$,
\[
 |J_m(\rho_y)-\log\sigma_N(\rho_y)+\gamma/2|\le e_N(t)+\varepsilon_N=:\eta_N .
\]
With $M\le5N^{13/17}$ we have
$u_0N^t\le68B_\xi e^{4K}N^{-1/17}\sqrt{\log N}$, so
$\eta_N\le C_{A,\xi}N^{-1/17}(\log N)^7$. By the profile estimate of
Lemma~\ref{rc:profile},
$|\log\sigma_N(\rho_y)-\frac12\log N-F_{\rm rad}(y)|\le3e^{4K}/N$ for
$|y|\le K$. Let $y\in Y_j$ with $|y|\le A+1$. Then $y\pm h\in Y_j$ and
$|y|+h\le K$, so the secant bound of Lemma~\ref{rc:secant}, applied to
$f=f_m$ at the scale $N$ with $F=F_{\rm rad}$,
$c=\frac12\log N-\frac\gamma2$, $B_F=1$ and $M_F=\frac12$ from
\eqref{rc:Phi-props}, gives
\begin{equation}\label{rm:net-bound}
 \Big|\frac{\nu_m(\rho_y)}N-\Phi_{\rm rad}(y)\Big|
 \le\frac h4+\frac KN+\frac3h\Big(\eta_N+\frac{3e^{4K}}N\Big)=:\delta_N .
\end{equation}
The spacing $h$ balances the first and last terms, both
$O(N^{-1/34}(\log N)^{7/2})$. The other two terms are smaller, so
$\delta_N\le C'_{A,\xi}N^{-1/34}(\log N)^{7/2}$.

\emph{All coordinates.} Let $|x|\le A$ and put $y=xN/m$, so that
$1+x/m=\rho_y$ and $|y|\le A$. Let $y_-=h\lfloor y/h\rfloor$ and
$y_+=y_-+h$. Both lie in $Y_j$ and satisfy $|y_\pm|\le A+1$. Since
$\nu_m$ is nondecreasing in the radius and $0\le\Phi_{\rm rad}'\le\frac12$,
\eqref{rm:net-bound} gives
\[
 \Phi_{\rm rad}(y)-\frac h2-\delta_N\le\frac{\nu_m(\rho_{y_-})}N
 \le\frac{\nu_m(1+x/m)}N\le\frac{\nu_m(\rho_{y_+})}N
 \le\Phi_{\rm rad}(y)+\frac h2+\delta_N .
\]
Moreover
$|\Phi_{\rm rad}(y)-\Phi_{\rm rad}(x)|\le\frac12|x|(1-N/m)\le3AN^{-4/17}$,
and $0\le\nu_m\le m\le2N$ gives
$|\nu_m/m-\nu_m/N|\le2(m-N)/m\le10N^{-4/17}$. Hence
\[
 \sup_{|x|\le A}\Big|\frac{\nu_m(1+x/m)}m-\Phi_{\rm rad}(x)\Big|
 \le\delta_N+\frac h2+(3A+10)N^{-4/17}\le C_A\,m^{-1/34}(\log m)^{7/2},
\]
using $N\le m\le2N$. Every large $m$ lies in a block whose index $j$ is
beyond the last occurrence of $\mathcal B_j$, which proves the bound for
all large $m$. The intersection of these events over the integers
$A\ge1$ has probability one, and for real $A$ we use $\lceil A\rceil$.
\end{proof}

\begin{corollary}[Uniformity on the line]\label{rm:whole-line}
For every law in Theorem~\ref{thm:R}, almost surely
\[
 \lim_{n\to\infty}\sup_{x\in\R}
 \Big|\frac{\nu_n(1+x/n)}n-\Phi_{\rm rad}(x)\Big|=0 .
\]
\end{corollary}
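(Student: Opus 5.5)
The plan is to upgrade the compact-window bound of Theorem~\ref{thm:R} to the whole line by using monotonicity in $x$ and the limits of $\Phi_{\rm rad}$ at $\pm\infty$. This is the standard Pólya-type argument for distribution functions.

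Fix $\varepsilon>0$. Since $\Phi_{\rm rad}$ is nondecreasing with limits $0$ at $-\infty$ and $1$ at $+\infty$, choose an integer $A\ge1$ with $\Phi_{\rm rad}(-A)\le\varepsilon$ and $\Phi_{\rm rad}(A)\ge1-\varepsilon$. On the probability-one event of Theorem~\ref{thm:R}, for all large $n$ we have
\[
\sup_{|x|\le A}\Big|\frac{\nu_n(1+x/n)}n-\Phi_{\rm rad}(x)\Big|\le\varepsilon .
\]
Only $|x|>A$ remains.

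For $x>A$, monotonicity of $\nu_n$ in the radius gives
\[
\nu_n(1+A/n)\le\nu_n(1+x/n)\le n .
\]
Since $f_n$ is nonzero for large $n$ on this event, $\nu_n\le\deg f_n\le n$. Hence $\nu_n(1+x/n)/n\in[1-2\varepsilon,1]$. Also $\Phi_{\rm rad}(x)\in[1-\varepsilon,1]$, so the difference is at most $2\varepsilon$.

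For $x<-A$, the radius $1+x/n$ may be negative or zero when $x\le-n$. In that case the count is $0$ by the convention of Definition~\ref{pre:counts}, or it counts only the zero at the origin when $x=-n$. Either way, monotonicity still gives
\[
0\le\nu_n(1+x/n)\le\nu_n(1-A/n)\le n(\Phi_{\rm rad}(-A)+\varepsilon)\le2n\varepsilon .
\]
Together with $0\le\Phi_{\rm rad}(x)\le\varepsilon$, the difference is again at most $2\varepsilon$.

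Taking $\varepsilon=1/k$ and intersecting over $k$ (this is still the single event of Theorem~\ref{thm:R}, which covers all integer $A$) gives the almost-sure limit. There is no real obstacle here. The only point needing care is that the count is monotone across the whole range of radii, including $r\le0$, where the convention sets counts to zero, and that the Theorem~\ref{thm:R} event is one fixed event valid simultaneously for all $A$, so no extra null sets arise.
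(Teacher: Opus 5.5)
Your proposal is correct and is essentially the paper's proof: the same P\'olya argument, choosing $A$ from the tails of $\Phi_{\rm rad}$, using the monotonicity of $x\mapsto\nu_n(1+x/n)/n$ together with $0\le\nu_n\le n$ to control $|x|>A$, and applying Theorem~\ref{thm:R} on the window $|x|\le A$ on its single probability-one event. Your remark that $f_n\ne0$ is not needed, since $\nu_n\le n$ holds in all cases by the counting convention.
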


\begin{proof}
We follow P\'olya's argument for distribution functions. The function
$F_n(x)=\nu_n(1+x/n)/n$ is nondecreasing with values in $[0,1]$, and
$\Phi_{\rm rad}$ is nondecreasing with limits $0$ and $1$ at $\mp\infty$.
Given $\varepsilon>0$, choose $A$ with $\Phi_{\rm rad}(-A)<\varepsilon$
and $\Phi_{\rm rad}(A)>1-\varepsilon$. For $x\le-A$ both $F_n(x)$ and
$\Phi_{\rm rad}(x)$ lie in $[0,\max(F_n(-A),\varepsilon)]$, and for
$x\ge A$ both lie in $[\min(F_n(A),1-\varepsilon),1]$. Hence
\[
 \sup_{x\in\R}|F_n(x)-\Phi_{\rm rad}(x)|
 \le\sup_{|x|\le A}|F_n(x)-\Phi_{\rm rad}(x)|+\varepsilon,
\]
and by Theorem~\ref{thm:R} the right side is at most $2\varepsilon$ for
all large $n$, almost surely. Taking $\varepsilon=1/k$ for $k\ge1$ proves
the corollary.
\end{proof}

\section{A probabilistic law of the iterated logarithm}\label{sec:framework}

The classical proof of the law of the iterated logarithm for sums of
independent variables has three steps. Along a geometric grid of indices,
moderate-deviation tails and the first Borel--Cantelli lemma give the
upper bound. Independent increments between grid points and the second
Borel--Cantelli lemma give the lower bound. A maximal inequality carries
the upper bound from the grid to every index. The counts $D_n-\E D_n$ of
Sections~\ref{sec:gaussian} and~\ref{sec:signs} have no independent
increments, since all degrees share one coefficient sequence. We run the
three steps with independence replaced by four inputs.
Input~\ref{fw:I1} supplies the moderate deviations, Input~\ref{fw:I4}
supplies independent proxies for the values on the grid, and
Inputs~\ref{fw:I2} and~\ref{fw:I3}, which bound increments over coarse and
fine gaps, take the place of the maximal inequality.

The inputs are tail bounds and couplings rather than moment bounds,
because second moments do not determine the constant. Multiplying
centered variables $Z_n$ by an independent factor $W>0$ with $\E W^2=1$ preserves all covariances
but multiplies both limits in Theorem~\ref{thm:A} by $W$.

Throughout, $(Z_n)_{n\ge1}$ are real random variables on one probability
space, $c>0$ is fixed, and
\begin{equation}\label{fw:lambda}
 \lambda_n=\sqrt{2cn\log\log n}\qquad(n\ge3),
\end{equation}
which is positive and nondecreasing in $n\ge3$.

\subsection{The inputs}

\begin{definition}\label{fw:geometric}
Let $q>1$. A sequence of nonnegative integers $(n_k)_{k\ge k_*}$ is
\emph{geometric of ratio $q$} if $q^{-k}n_k$ converges to a finite
positive limit.
\end{definition}

For such a sequence $\log n_k=k\log q+O(1)$, and therefore
\begin{equation}\label{fw:geometric-log}
 \frac{\log\log n_k}{\log k}\to1,\qquad
 \frac{\sqrt{2cn_k\log k}}{\lambda_{n_k}}\to1.
\end{equation}
For $\theta>1$ the grid $n_k=\lfloor\theta^k\rfloor$ is geometric of
ratio $\theta$, and so are its block lengths $L_k=n_k-n_{k-1}-1$, taken
from the first $k$ with $L_k\ge0$, since
$\theta^{-k}L_k\to1-\theta^{-1}$. As $n_k-L_k=n_{k-1}+1$,
\begin{equation}\label{fw:block-ratios}
 \frac{L_k}{n_k}\to1-\theta^{-1},\qquad
 \frac{n_k-L_k}{L_k}\to\frac1{\theta-1}.
\end{equation}

For probability laws $\mu,\nu$ on $\R$ with finite second moments, let
\begin{equation}\label{fw:W2}
 \mathcal W_2(\mu,\nu)=\inf_\pi
 \Bigl(\int_{\R^2}(x-y)^2\,\pi(\dd x,\dd y)\Bigr)^{1/2},
\end{equation}
the infimum over probability measures $\pi$ on $\R^2$ with marginals
$\mu$ and $\nu$. For square-integrable random variables $U$ and $V$,
possibly on different probability spaces, $\mathcal W_2(U,V)$ is the
distance between their laws.

The inputs, for the fixed constant $c>0$, are the following.
\begin{enumerate}[label=\textup{(I\arabic*)},ref=\textup{(I\arabic*)},leftmargin=*]
\item\label{fw:I1} \emph{Marginal moderate deviations.} For every $q>1$,
every sequence $(n_k)$ geometric of ratio $q$, every $u>0$ and every
$s\in\{-1,1\}$,
\begin{equation}\label{fw:moderate}
 \lim_{k\to\infty}
 \frac{\log\Prob\{sZ_{n_k}>u\sqrt{2cn_k\log k}\}}{\log k}=-u^2,
\end{equation}
that is, $\Prob\{sZ_{n_k}>u\sqrt{2cn_k\log k}\}=k^{-u^2+o(1)}$, where the
$o(1)$ may depend on $q$, $(n_k)$, $u$ and $s$.
\item\label{fw:I2} \emph{Coarse increment tails.} There are
$C_0,C_1,b\in(0,\infty)$, $\delta_0\in(0,1)$ and $n_0$ such that for all
$n\ge n_0$, all integers $h$ with $\log^{-6}(n+2)\le h/n\le\delta_0$ and
all $t>0$,
\begin{equation}\label{fw:coarse-tail}
 \Prob\{|Z_{n+h}-Z_n|>t\}\le
 C_0\exp\Bigl(-\frac{t^2}{C_1\sqrt{nh}}\Bigr)+C_0n^{-b}.
\end{equation}
\item\label{fw:I3} \emph{Fine increment tails.} For every $p>0$ there are
$C_p\in(0,\infty)$ and $n_p$ such that for all $n\ge n_p$ and all integers
$1\le h\le n$,
\begin{equation}\label{fw:fine-tail}
 \Prob\{|Z_{n+h}-Z_n|>C_p\varphi(n,h)\}\le C_pn^{-p},
 \qquad
 \varphi(n,h)=(nh)^{1/4}\log^{5/4}(n+2)+\log^2(n+2).
\end{equation}
\item\label{fw:I4} \emph{Independent block proxies.} There are
$C,\beta\in(0,\infty)$ and $\theta_0>1$ such that for every
$\theta\ge\theta_0$ the following holds, with $n_k=\lfloor\theta^k\rfloor$
and $L_k=n_k-n_{k-1}-1$. There are $k_0$ and independent square-integrable
real random variables $(\widetilde Z_k)_{k\ge k_0}$, on the probability
space of $(Z_n)$, such that for all $k\ge k_0$, $L_k\ge3$ and
\begin{equation}\label{fw:proxy}
 \mathcal W_2\bigl(\widetilde Z_k,Z_{L_k}\bigr)\le CL_k^{(1-\beta)/2},
 \qquad
 \mathcal W_2\bigl(Z_{n_k}-\widetilde Z_k,\,-(Z_{n_k}-Z_{L_k})\bigr)
 \le Cn_k^{(1-\beta)/2}.
\end{equation}
\end{enumerate}

In \ref{fw:I4} the constants $C$ and $\beta$ are the same for every
$\theta$, while $k_0$ and the proxies may depend on $\theta$. The proxies
are mutually independent and may depend on $(Z_n)$, and \eqref{fw:proxy}
bounds the law of each error $Z_{n_k}-\widetilde Z_k$ separately. For
root counts the proxy is the balanced
discrepancy of the polynomial formed by the last block of coefficients
(Lemma~\ref{fw:block-proxies}).

\subsection{Grid laws}

Input \ref{fw:I1} is used through the following dichotomy. If
$p_k\in(0,1]$ and $\log p_k/\log k\to-a$ with $a>0$, then for each
$\tau\in(0,a)$ we have $k^{-a-\tau}\le p_k\le k^{-a+\tau}$ for all large
$k$. Hence
\begin{equation}\label{fw:dichotomy}
 \sum_kp_k<\infty\ \text{ if } a>1,\qquad
 \sum_kp_k=\infty\ \text{ if } a<1.
\end{equation}

\begin{lemma}[Upper bound on geometric sequences]\label{fw:grid-upper}
Assume \ref{fw:I1}, and let $(n_k)$ be geometric of ratio $q>1$. Then
almost surely $\limsup_k|Z_{n_k}|/\lambda_{n_k}\le1$.
\end{lemma}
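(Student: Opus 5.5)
The plan is the first Borel--Cantelli lemma along the grid, fed by Input~\ref{fw:I1} with a slightly enlarged threshold. Fix $\varepsilon>0$ and put $u=1+\varepsilon$. For each sign $s\in\{-1,1\}$, Input~\ref{fw:I1} applied to the geometric sequence $(n_k)$ gives
\[
 p_k^{(s)}:=\Prob\{sZ_{n_k}>u\sqrt{2cn_k\log k}\}=k^{-u^2+o(1)} .
\]
Since $u^2>1$, the dichotomy \eqref{fw:dichotomy} with $a=u^2$ shows $\sum_kp_k^{(s)}<\infty$. Terms with $p_k^{(s)}=0$ are harmless: they contribute nothing, and the dichotomy only needs the upper bound $p_k\le k^{-a+\tau}$. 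By Borel--Cantelli, almost surely $|Z_{n_k}|\le(1+\varepsilon)\sqrt{2cn_k\log k}$ for all large $k$.

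Next I convert the normalization. By \eqref{fw:geometric-log}, $\sqrt{2cn_k\log k}/\lambda_{n_k}\to1$; this uses $n_k\to\infty$, so $\lambda_{n_k}$ is defined and positive for large $k$. Hence almost surely $\limsup_k|Z_{n_k}|/\lambda_{n_k}\le1+\varepsilon$. Finally I take $\varepsilon=1/j$ and intersect the countably many full-measure events to get the bound $1$.

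The only point needing care is the bookkeeping in Input~\ref{fw:I1}. The limit there is for fixed $u$ and $s$, so I apply it separately for the two signs, and the $o(1)$ causes no trouble once $\tau<u^2-1$ is fixed. I do not expect a genuine obstacle: this is the standard upper half of the LIL on a grid, with the moderate-deviation input already in the needed form.
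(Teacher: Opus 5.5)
Your proposal is correct and follows essentially the same route as the paper: you apply Input~\ref{fw:I1} for each sign with $u>1$, use the dichotomy \eqref{fw:dichotomy} and the first Borel--Cantelli lemma, convert the normalization via \eqref{fw:geometric-log}, and intersect over $u=1+1/j$. Your extra remarks, that vanishing $p_k$ are harmless and that $n_k\to\infty$, are harmless refinements of the same argument.
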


\begin{proof}
Let $u>1$. By \ref{fw:I1} and the dichotomy \eqref{fw:dichotomy} with
$a=u^2$, the probabilities $\Prob\{sZ_{n_k}>u\sqrt{2cn_k\log k}\}$ are
summable for $s=\pm1$. By the first Borel--Cantelli lemma, almost surely
$|Z_{n_k}|\le u\sqrt{2cn_k\log k}$ for all large $k$. Since
$\sqrt{2cn_k\log k}/\lambda_{n_k}\to1$ by \eqref{fw:geometric-log}, it
follows that $\limsup_k|Z_{n_k}|/\lambda_{n_k}\le u$. Intersect over
$u=1+1/j$, $j\in\N$.
\end{proof}

The lower bound comes from the independent proxies. Along a sparse grid
$\lfloor\theta^k\rfloor$, the last block carries a proportion
$1-\theta^{-1}$ of the degree. Its proxy has the marginal tails of
\ref{fw:I1}, and the coarse tail bound \ref{fw:I2} shows that $Z_{n_k}$
differs little from $Z_{L_k}$.

\begin{lemma}[Lower bounds on every grid]\label{fw:grid-lower}
Assume \ref{fw:I1}, \ref{fw:I2} and \ref{fw:I4}, and let $q>1$ and
$N_k=\lfloor q^k\rfloor$. Then almost surely
$\limsup_kZ_{N_k}/\lambda_{N_k}\ge1$ and
$\liminf_kZ_{N_k}/\lambda_{N_k}\le-1$.
\end{lemma}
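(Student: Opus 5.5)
The plan is to run the second Borel--Cantelli lemma on a sparse subgrid where the proxies of \ref{fw:I4} exist, and then transfer the conclusion to the given grid $N_k=\lfloor q^k\rfloor$. By symmetry (replace $Z$ by $-Z$; note \ref{fw:I4} for $-Z$ holds with proxies $-\widetilde Z_k$) it suffices to prove $\limsup_k Z_{N_k}/\lambda_{N_k}\ge1$. Fix $\varepsilon>0$, choose an integer $p$ with $\theta:=q^p\ge\theta_0$ and $\theta$ so large that $1-\theta^{-1}$ is close to $1$, and set $n_k=\lfloor\theta^k\rfloor$. We have $n_k=\lfloor q^{pk}\rfloor=N_{pk}$ exactly, so a limsup bound along $(n_k)$ gives one along $(N_k)$, and no transfer argument is needed. (This also explains why \ref{fw:I2} is needed only to compare $Z_{n_k}$ with nearby values; see below.)

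\emph{Step 1: the proxies exceed the threshold infinitely often.} Let $u=1-\varepsilon$ and consider the events $E_k=\{\widetilde Z_k>u\sqrt{2cL_k\log k}\}$. The block lengths $(L_k)$ are geometric of ratio $\theta$, so \ref{fw:I1} gives $\Prob\{Z_{L_k}>u'\sqrt{2cL_k\log k}\}=k^{-u'^2+o(1)}$ for any $u'$. To pass from $Z_{L_k}$ to $\widetilde Z_k$ I will use the $\mathcal W_2$ bound: take a coupling with $\E(\widetilde Z_k-Z_{L_k})^2\le C^2L_k^{1-\beta}$, so that by Chebyshev $\Prob\{|\widetilde Z_k-Z_{L_k}|>\eta\sqrt{L_k}\}\le C^2\eta^{-2}L_k^{-\beta}$, which is geometrically small in $k$ and hence negligible against $k^{-u^2}$. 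Therefore $\Prob(E_k)\ge k^{-u'^2+o(1)}$ with $u'$ slightly above $u$ and still below $1$, so $\sum\Prob(E_k)=\infty$ by \eqref{fw:dichotomy}. The proxies are independent, so the second Borel--Cantelli lemma shows that $E_k$ occurs infinitely often almost surely.

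\emph{Step 2: the error $Z_{n_k}-\widetilde Z_k$ is small from below.} By \ref{fw:I4}, $Z_{n_k}-\widetilde Z_k$ is within $\mathcal W_2$-distance $Cn_k^{(1-\beta)/2}$ of $-(Z_{n_k}-Z_{L_k})$. Coupling and Chebyshev as before, it suffices to show that almost surely $Z_{n_k}-Z_{L_k}\le\varepsilon'\lambda_{n_k}$ for large $k$, with $\varepsilon'\to0$ as $\theta\to\infty$; the distributional statement for the error then follows with summable exceptional probabilities. We have $Z_{n_k}-Z_{L_k}=Z_{n_k}-Z_{n_k-h}$ with $h=n_{k-1}+1\approx n_k/\theta$. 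This is the main obstacle, because \ref{fw:I2} only covers $h/n\le\delta_0$, whereas $1/\theta$ is fixed.

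If $1/\theta\le\delta_0$, I will apply \ref{fw:I2} directly at base $n=L_k$ with $t=\varepsilon'\sqrt{2cn_k\log k}$. The bound is $C_0\exp(-t^2/(C_1\sqrt{L_kh}))+C_0n^{-b}$, and $\sqrt{L_kh}\approx n_k\theta^{-1/2}$. The exponent is therefore $\approx 2c\varepsilon'^2\theta^{1/2}\log k/C_1$, which exceeds $2\log k$ once $\theta$ is large with $\varepsilon'=\theta^{-1/8}$. The tail probabilities are summable, and first Borel--Cantelli applies. Otherwise I will chain the bound through finitely many steps of relative size $\le\delta_0$.

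\emph{Step 3: conclude.} On $E_k$, and for large $k$, we get $Z_{n_k}\ge u\sqrt{2cL_k\log k}-\varepsilon''\lambda_{n_k}$. Using \eqref{fw:geometric-log} and $L_k/n_k\to1-\theta^{-1}$, this gives $\limsup_k Z_{n_k}/\lambda_{n_k}\ge(1-\varepsilon)\sqrt{1-\theta^{-1}}-\varepsilon''$. Since $n_k=N_{pk}$, the same bound holds for $\limsup Z_{N_k}/\lambda_{N_k}$. Letting $\varepsilon\to0$ and $\theta=q^p\to\infty$ along countably many choices gives the bound $\ge1$.
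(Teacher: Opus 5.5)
Your proposal is correct and follows the paper's proof essentially step for step. You pass to the subgrid $n_k=\lfloor q^{pk}\rfloor=N_{pk}$ with $\theta=q^p$ large, and obtain infinitely many proxy excursions from \ref{fw:I1}, the first $\mathcal W_2$ bound of \ref{fw:I4} and the second Borel--Cantelli lemma. You then control $Z_{n_k}-\widetilde Z_k$ by applying \ref{fw:I2} at base $L_k$ with gap $n_{k-1}+1$, followed by the second $\mathcal W_2$ bound and the first Borel--Cantelli lemma.

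The differences are cosmetic. You tie the error scale to $\theta$ through $\varepsilon'=\theta^{-1/8}$, where the paper fixes $\eta$ and then $d_\eta$. Your chaining fallback is never needed, since $\theta$ can always be chosen with $(\theta-1)^{-1}<\delta_0$. Finally, what passes through the $\mathcal W_2$ coupling is the summable tail bound from \ref{fw:I2}, not an almost-sure statement; your wording suggests the latter, but your argument in fact uses the tail bound.
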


\begin{proof}
Fix $w<1$ and $s\in\{-1,1\}$. We find a sparse subgrid of $(N_k)$ on which
$sZ_n>w\lambda_n$ infinitely often. Let $\delta_0$ and $C_1$ be the
constants of \ref{fw:I2}. Choose $u\in(\max\{w,0\},1)$ and
$\eta=(u-w)\sqrt{2c}/2$, so that $u-\eta/\sqrt{2c}=(u+w)/2>w$, and put
\[
 d_\eta=\min\Bigl\{\delta_0,\Bigl(\frac{\eta^2}{8C_1}\Bigr)^2\Bigr\}.
\]
For $\theta>1$, $n_k=\lfloor\theta^k\rfloor$ and $L_k=n_k-n_{k-1}-1$, the
limits \eqref{fw:geometric-log} and \eqref{fw:block-ratios} give
\begin{equation}\label{fw:lower-normalization}
 \frac{u\sqrt{2cL_k\log k}-\eta\sqrt{n_k\log k}}{\lambda_{n_k}}
 \to u\sqrt{1-\theta^{-1}}-\frac{\eta}{\sqrt{2c}}.
\end{equation}
As $\theta\to\infty$ this limit tends to $(u+w)/2>w$. Choose an integer
$r\ge1$ such that $\theta=q^r$ satisfies $\theta\ge\theta_0$,
$(\theta-1)^{-1}<d_\eta$ and $u\sqrt{1-\theta^{-1}}-\eta/\sqrt{2c}>w$.
Since $n_k=\lfloor q^{rk}\rfloor=N_{rk}$, by
\eqref{fw:lower-normalization} it suffices to show that almost surely
\begin{equation}\label{fw:block-excursion}
 sZ_{n_k}>u\sqrt{2cL_k\log k}-\eta\sqrt{n_k\log k}
 \quad\text{for infinitely many }k.
\end{equation}
Let $(\widetilde Z_k)_{k\ge k_0}$ be the proxies of \ref{fw:I4} for this
$\theta$.

\emph{Tail comparison.} If $U,V$ are square-integrable real random
variables, possibly on different probability spaces, with
$\mathcal W_2(U,V)\le\varrho$, then for all $t\in\R$ and $e>0$,
\begin{equation}\label{fw:coupling-tail-bounds}
 \Prob\{|U|>t+e\}\le\Prob\{|V|>t\}+\frac{\varrho^2}{e^2},
 \qquad
 \Prob\{sU>t\}\ge\Prob\{sV>t+e\}-\frac{\varrho^2}{e^2}.
\end{equation}
For a coupling $(X,Y)$ of the two laws,
$\{|X|>t+e\}\subset\{|Y|>t\}\cup\{|X-Y|>e\}$ and
$\{sY>t+e\}\subset\{sX>t\}\cup\{|X-Y|>e\}$. Both bounds follow by
Chebyshev's inequality for $X-Y$, since the infimum of $\E(X-Y)^2$ over
couplings is at most $\varrho^2$.

\emph{Step 1: excursions of the proxies.} Fix $v\in(u,1)$. The first
bound in \eqref{fw:proxy} and the second bound in
\eqref{fw:coupling-tail-bounds}, with margin
$e=(v-u)\sqrt{2cL_k\log k}$, give for $k\ge\max\{k_0,3\}$
\[
 \Prob\{s\widetilde Z_k>u\sqrt{2cL_k\log k}\}
 \ge\Prob\{sZ_{L_k}>v\sqrt{2cL_k\log k}\}
 -\frac{C^2L_k^{-\beta}}{2c(v-u)^2\log k}.
\]
Since $(L_k)$ is geometric of ratio $\theta$, \ref{fw:I1} and the
dichotomy \eqref{fw:dichotomy} with $a=v^2<1$ show that the first
probabilities on the right have infinite sum. The subtracted terms are
summable, so the left side has infinite sum. The events
$\{s\widetilde Z_k>u\sqrt{2cL_k\log k}\}$, $k\ge k_0$, are independent.
By the second Borel--Cantelli lemma, almost surely
$s\widetilde Z_k>u\sqrt{2cL_k\log k}$ for infinitely many $k$.

\emph{Step 2: the approximation error.} We show that almost surely
\begin{equation}\label{fw:proxy-close}
 |Z_{n_k}-\widetilde Z_k|\le\eta\sqrt{n_k\log k}
 \quad\text{for all large }k.
\end{equation}
Put $h_k=n_k-L_k$. By \eqref{fw:block-ratios},
$h_k/L_k\to(\theta-1)^{-1}\in(0,d_\eta)$. So for large $k$ we have
$L_k\ge n_0$ and $\log^{-6}(L_k+2)\le h_k/L_k\le d_\eta\le\delta_0$, and
\ref{fw:I2} applies with $n=L_k$ and $h=h_k$. Since
$\sqrt{L_kh_k}\le\sqrt{d_\eta}\,L_k\le\eta^2n_k/(8C_1)$, the exponent in
\eqref{fw:coarse-tail} at $t=\frac\eta2\sqrt{n_k\log k}$ is at least
$2\log k$, and
\[
 \Prob\bigl\{|Z_{n_k}-Z_{L_k}|>\tfrac\eta2\sqrt{n_k\log k}\bigr\}
 \le C_0k^{-2}+C_0L_k^{-b}.
\]
The first bound in \eqref{fw:coupling-tail-bounds}, applied to the second
bound in \eqref{fw:proxy} with $t=e=\frac\eta2\sqrt{n_k\log k}$, then
gives
\[
 \Prob\bigl\{|Z_{n_k}-\widetilde Z_k|>\eta\sqrt{n_k\log k}\bigr\}
 \le C_0k^{-2}+C_0L_k^{-b}+\frac{4C^2n_k^{-\beta}}{\eta^2\log k}.
\]
The right side is summable, and the first Borel--Cantelli lemma gives
\eqref{fw:proxy-close}.

\emph{Step 3: conclusion.} On the intersection of the events of Steps~1 and~2, the
bound $sZ_{n_k}\ge s\widetilde Z_k-|Z_{n_k}-\widetilde Z_k|$ gives
\eqref{fw:block-excursion}. Intersect over rational $w<1$ and $s=\pm1$.
\end{proof}

\subsection{Maximal summability}

To control the maximum over a block $[N_k,N_{k+1}]$, we compare $Z_n$
with $Z_{N_k}$ through a sparse set of checkpoints, using \ref{fw:I2}
from $N_k$ to each checkpoint and \ref{fw:I3} from the nearest checkpoint
to $n$. The ratio $q$ is chosen after $\varepsilon$, close enough to one
that the coarse tails beat the number of checkpoints.

\begin{lemma}[Coarse checkpoints and a fine remainder]\label{fw:bridge}
Assume \ref{fw:I2} and \ref{fw:I3}. Then for every $\varepsilon>0$
there is $q>1$ such that, with $N_k=\lfloor q^k\rfloor$,
\begin{equation}\label{fw:maximal}
 \sum_k\Prob\Bigl\{\max_{N_k\le n\le N_{k+1}}|Z_n-Z_{N_k}|
 >\varepsilon\sqrt{q^k\log\log q^k}\Bigr\}<\infty.
\end{equation}
\end{lemma}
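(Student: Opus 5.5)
Fix $\varepsilon>0$ and let $q>1$ be chosen later, with $N_k=\lfloor q^k\rfloor$ and $\Lambda_k=\sqrt{q^k\log\log q^k}$. The gap $N_{k+1}-N_k\approx(q-1)q^k$ is a proportion $q-1$ of the degree. I would split every increment $Z_n-Z_{N_k}$ into a coarse part to a checkpoint, handled by \ref{fw:I2}, and a fine part from the checkpoint to $n$, handled by \ref{fw:I3}.

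\emph{Checkpoints.} Put $n=N_k$ and $g=\lceil n\log^{-6}(n+2)\rceil$, and let the checkpoints be $c_i=N_k+ig$ for $0\le i\le I:=\lceil(N_{k+1}-N_k)/g\rceil$. There are $I=O((q-1)\log^6N_k)$ of them. Every $n\in[N_k,N_{k+1}]$ has a checkpoint $c_i\le n$ with $n-c_i<g$.

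\emph{Coarse part.} For $i\ge1$ take $h=ig$, so that $\log^{-6}(N_k+2)\le h/N_k\le (q-1)+o(1)$. This is at most $\delta_0$ if $q-1<\delta_0/2$. Then \ref{fw:I2} with $t=\frac\varepsilon2\Lambda_k$ gives
\[
 \Prob\{|Z_{c_i}-Z_{N_k}|>\tfrac\varepsilon2\Lambda_k\}\le C_0\exp\Bigl(-\frac{\varepsilon^2q^k\log\log q^k}{4C_1\sqrt{q-1}\,q^k(1+o(1))}\Bigr)+C_0N_k^{-b},
\]
using $\sqrt{N_kh}\le\sqrt{q-1}\,q^k(1+o(1))$. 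The first term is $(k\log q)^{-\varepsilon^2/(4C_1\sqrt{q-1})(1+o(1))}$. Choose $q$ so close to $1$ that $\varepsilon^2/(4C_1\sqrt{q-1})>8$. Then, since $\log^6 N_k\asymp k^6$ and there are $O(k^6)$ checkpoints, the union bound gives $O(k^6\cdot k^{-8})+O(k^6q^{-bk})$, which is summable in $k$. This choice of $q$ after $\varepsilon$, beating the polylogarithmic number of checkpoints, is the main obstacle. It works exactly because the coarse window starts at $h/n=\log^{-6}$, so that the number of checkpoints is only polylogarithmic while the tail decays like an arbitrarily high power of $k$.

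\emph{Fine part.} For each pair $(c_i,n)$ with $1\le n-c_i<g$, apply \ref{fw:I3} at base $c_i$ with $h=n-c_i\le g$ and $p=3$. Then
\[
 \varphi(c_i,h)\le(N_{k+1}g)^{1/4}\log^{5/4}+\log^2=O\bigl(N_k^{1/2}\log^{-1/4}N_k\bigr)=o(\Lambda_k),
\]
because $(ng)^{1/4}\asymp n^{1/2}\log^{-3/2}n$. So for large $k$ the threshold $C_3\varphi$ is below $\frac\varepsilon2\Lambda_k$. The number of pairs is at most $N_{k+1}-N_k+1=O(q^k)$, each with failure probability $C_3c_i^{-3}\le C_3N_k^{-3}$. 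The total is $O(q^{-2k})$, which is summable. Combining the two parts by the triangle inequality yields \eqref{fw:maximal}.
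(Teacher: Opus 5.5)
Your proof is correct and follows the paper's argument. You bound the coarse increments from $N_k$ to a number of checkpoints polynomial in $k$ by \ref{fw:I2}, choosing $q$ after $\varepsilon$ so that the tail $k^{-\varepsilon^2/(4C_1\sqrt{q-1})}$ beats the checkpoint count, and you bound the fine remainders by \ref{fw:I3} with a union bound over the $O(q^k)$ degrees in the block. The differences from the paper are cosmetic: you space the checkpoints at $N_k\log^{-6}(N_k+2)$, giving about $k^6$ of them, where the paper uses $N_k/k^5$; you take $p=3$ instead of $p=6$; and you work directly with the threshold $\varepsilon\sqrt{q^k\log\log q^k}$ rather than $\sqrt{N_k\log k}$.
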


\begin{proof}
Let $C_0,C_1,b,\delta_0,n_0$ be the constants of \ref{fw:I2}, and let
$C_6,n_6$ be those of \ref{fw:I3} with $p=6$. The parameters are
\[
 A=\frac\varepsilon4,\qquad
 d=\min\Bigl\{\delta_0,\frac12,\Bigl(\frac{A^2}{10C_1}\Bigr)^2\Bigr\},\qquad
 q=1+\frac d2,\qquad
 m=\Bigl\lfloor\frac N{k^5}\Bigr\rfloor+1,
\]
where $N=N_k=\lfloor q^k\rfloor$. Put $H=N_{k+1}-N_k$. As $k\to\infty$,
$H/N\to q-1<d$ and $\log(N+2)/k\to\log q$. Hence for all large $k$,
\begin{equation}\label{fw:bridge-ranges}
 H\le dN\le N/2,\qquad
 N\ge\max\{n_0,n_6,k^5\},\qquad
 k^{-5}\ge\log^{-6}(N+2),
\end{equation}
and we consider only such $k$. Then $N/k^5<m\le2N/k^5$. For $0\le h\le
H$ let $j=\lfloor h/m\rfloor$, so that
\begin{equation}\label{fw:checkpoint}
 |Z_{N+h}-Z_N|\le|Z_{N+jm}-Z_N|+|Z_{N+h}-Z_{N+jm}|.
\end{equation}

\emph{Coarse checkpoints.} Here $0\le j\le J:=\lfloor H/m\rfloor\le
Hk^5/N\le k^5$, and the first term in \eqref{fw:checkpoint} vanishes for
$j=0$. For $1\le j\le J$, the ranges \eqref{fw:bridge-ranges} give
\[
 \log^{-6}(N+2)\le k^{-5}<\frac mN\le\frac{jm}N\le\frac HN\le d\le\delta_0,
 \qquad
 \sqrt{N\,jm}\le\sqrt{NH}\le\sqrt d\,N\le\frac{A^2N}{10C_1}.
\]
So \ref{fw:I2} applies with $n=N$ and $h=jm$, and its exponent at
$t=A\sqrt{N\log k}$ is at least $10\log k$:
\[
 \Prob\bigl\{|Z_{N+jm}-Z_N|>A\sqrt{N\log k}\bigr\}
 \le C_0k^{-10}+C_0N^{-b}.
\]

\emph{Fine remainder.} The second term in \eqref{fw:checkpoint} is the
increment from $n'=N+jm\in[N,2N]$ over the gap $g=h-jm\in[0,m)$. It
vanishes for $g=0$. Otherwise $1\le g<m\le n'$, and \ref{fw:I3} with
$p=6$, together with the monotonicity of $\varphi$ in both arguments,
gives
\[
 \Prob\bigl\{|Z_{N+h}-Z_{N+jm}|>C_6\varphi(2N,m)\bigr\}\le C_6N^{-6}.
\]
The checkpoint spacing makes this threshold negligible. Since
$m\le2N/k^5$ and $\log(2N+2)/k\to\log q$,
\[
 \varphi(2N,m)\le\sqrt{2N}\Bigl(\frac{\log(2N+2)}k\Bigr)^{5/4}
 +\log^2(2N+2)=O(\sqrt N),
\]
so $C_6\varphi(2N,m)\le A\sqrt{N\log k}$ for large $k$.

\emph{Union bound.} There are at most $k^5$ checkpoint events and at most
$H+1\le2N$ remainder events, one for each $h$. Outside their union, an
event of probability at most
\[
 C_0k^{-5}+C_0k^5N^{-b}+2C_6N^{-5},
\]
the bound \eqref{fw:checkpoint} gives $|Z_{N+h}-Z_N|\le2A\sqrt{N\log k}
=\frac\varepsilon2\sqrt{N\log k}$ for every $0\le h\le H$. As
$N/q^k\to1$ and $\log\log q^k/\log k\to1$, eventually
$\frac\varepsilon2\sqrt{N\log k}\le\varepsilon\sqrt{q^k\log\log q^k}$.
Thus for large $k$ the $k$-th probability in \eqref{fw:maximal} is at
most the last display, which is summable because $N_k\ge q^k/2$.
\end{proof}

\subsection{All integers}\label{fw:all-integers}

\begin{proof}[Proof of Theorem~\ref{thm:A}]
The upper bound at every integer combines the grid bound of
Lemma~\ref{fw:grid-upper} with the maximal bound of Lemma~\ref{fw:bridge},
and the lower bound is that of Lemma~\ref{fw:grid-lower}. Let
$\varepsilon>0$, and let $q$ and $N_k=\lfloor q^k\rfloor$ be as in
Lemma~\ref{fw:bridge}. Since
$\sqrt{q^k\log\log q^k}/\lambda_{N_k}\to(2c)^{-1/2}$, the first
Borel--Cantelli lemma and \eqref{fw:maximal} show that almost surely, for
all large $k$,
\[
 \max_{N_k\le n\le N_{k+1}}|Z_n-Z_{N_k}|
 \le\varepsilon\sqrt{q^k\log\log q^k}
 \le\varepsilon\sqrt{2/c}\;\lambda_{N_k}.
\]
By Lemma~\ref{fw:grid-upper}, almost surely
$|Z_{N_k}|\le(1+\varepsilon)\lambda_{N_k}$ for all large $k$. For
$n\ge N_1$ let $k(n)=\max\{k:N_k\le n\}$. Then $N_{k(n)}\le
n<N_{k(n)+1}$ and $k(n)\to\infty$. As $\lambda$ is nondecreasing, for
all large $n$,
\[
 |Z_n|\le|Z_{N_{k(n)}}|+|Z_n-Z_{N_{k(n)}}|
 \le\bigl(1+\varepsilon(1+\sqrt{2/c})\bigr)\lambda_{N_{k(n)}}
 \le\bigl(1+\varepsilon(1+\sqrt{2/c})\bigr)\lambda_n.
\]
Taking $\varepsilon=1/j$, $j\in\N$, we obtain on one event of
probability one
\begin{equation}\label{fw:upper-all}
 \limsup_{n\to\infty}\frac{|Z_n|}{\lambda_n}\le1.
\end{equation}
On the event of Lemma~\ref{fw:grid-lower} for $q=2$, the integers
$N_k=2^k$ tend to infinity. So
$\limsup_nZ_n/\lambda_n\ge\limsup_kZ_{N_k}/\lambda_{N_k}\ge1$, and likewise
$\liminf_nZ_n/\lambda_n\le-1$. Together with \eqref{fw:upper-all} this
proves both limits.
\end{proof}

\subsection{Verifying the inputs}

The marginal and coarse inputs follow from quantitative normal
approximation, and \ref{fw:I4} follows for root counts from reversal of
the coefficient vector. We write $\Phi$ for the standard normal
distribution function, $\overline\Phi=1-\Phi$, and
$d_K(X,\mathcal N(0,1))=\sup_{x\in\R}|\Prob\{X\le x\}-\Phi(x)|$. For
$x\ge0$,
\begin{equation}\label{fw:mills}
 \frac{e^{-3/2}}{\sqrt{2\pi}}\,\frac{e^{-x^2/2}}{x+1}
 \le\overline\Phi(x)\le e^{-x^2/2}.
\end{equation}
The upper bound is Chernoff's. The lower bound integrates the normal
density over $[x,x+1/(x+1)]$, where $y^2\le x^2+3$.

\begin{lemma}[Quantitative normal approximation suffices]
\label{fw:normal-inputs}
Let $(Z_n)$ be centered and square integrable.
\begin{enumerate}[label=\textup{(\roman*)}]
\item If, for finite positive $c,\alpha,b_0,C$,
\[
 \Var Z_n=cn+O(n^{1-\alpha}),\qquad
 d_K\bigl(Z_n/\sqrt{\Var Z_n},\mathcal N(0,1)\bigr)\le Cn^{-b_0},
\]
then \ref{fw:I1} holds with constant $c$.
\item Suppose there are $\delta_0\in(0,1)$, $n_0$ and finite positive
$b_1,C$ such that for all $n\ge n_0$ and all integers $h$ with
$\log^{-6}(n+2)\le h/n\le\delta_0$, the variance
$\sigma_{n,h}^2=\Var(Z_{n+h}-Z_n)$ satisfies
$\sigma_{n,h}^2\le C\sqrt{nh}$ and, when $\sigma_{n,h}>0$,
$d_K\bigl((Z_{n+h}-Z_n)/\sigma_{n,h},\mathcal N(0,1)\bigr)\le Cn^{-b_1}$.
Then \ref{fw:I2} holds.
\end{enumerate}
\end{lemma}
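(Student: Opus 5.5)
For part (i), I would work with $\sigma_n^2=\Var Z_n$ and show that the Kolmogorov bound transfers the Gaussian tail to the scale $\sqrt{2cn_k\log k}$. Fix $q>1$, a geometric sequence $(n_k)$, $u>0$ and $s=\pm1$, and put $x_k=u\sqrt{2cn_k\log k}/\sigma_{n_k}$. Since $\sigma_{n_k}^2=cn_k(1+O(n_k^{-\alpha}))$ and $n_k\asymp q^k$, we get $x_k=u\sqrt{2\log k}\,(1+O(q^{-\alpha k}))$. Hence $x_k^2/2=u^2\log k+o(1)$, the correction being $O(q^{-\alpha k}\log k)$. By the Kolmogorov bound,
\[
 \bigl|\Prob\{sZ_{n_k}>u\sqrt{2cn_k\log k}\}-\overline\Phi(x_k)\bigr|\le 2Cn_k^{-b_0}.
\]
Here I use the symmetry of $\mathcal N(0,1)$ for $s=-1$, and the factor $2$ absorbs the difference between strict and non-strict inequalities at the point $-x_k$.

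The Mills bounds \eqref{fw:mills} give $\overline\Phi(x_k)=k^{-u^2+o(1)}$, since the factor $1/(x_k+1)$ is only $(\log k)^{-1/2}$. The error $2Cn_k^{-b_0}$ is $O(q^{-b_0k})$, which is exponentially small in $k$ and so $o(k^{-u^2-1})$. Therefore $\log$ of the probability divided by $\log k$ tends to $-u^2$, which is \eqref{fw:moderate}. The only point needing care is that $\sigma_{n_k}>0$ for large $k$, which follows from $\Var Z_n\sim cn$.

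For part (ii), fix $n\ge n_0$ and $h$ in the stated range. If $\sigma_{n,h}=0$, the increment is almost surely zero, because $Z_{n+h}-Z_n$ is centered. In that case \eqref{fw:coarse-tail} holds trivially for $t>0$. Otherwise, the Kolmogorov bound applied to both tails gives
\[
 \Prob\{|Z_{n+h}-Z_n|>t\}\le 2\overline\Phi(t/\sigma_{n,h})+2Cn^{-b_1}\le 2e^{-t^2/(2\sigma_{n,h}^2)}+2Cn^{-b_1}.
\]
Using $\sigma_{n,h}^2\le C\sqrt{nh}$, the right side is at most $2\exp(-t^2/(2C\sqrt{nh}))+2Cn^{-b_1}$. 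This is \eqref{fw:coarse-tail} with $C_0=2\max(1,C)$, $C_1=2C$ and $b=b_1$, and with the same $\delta_0$ and $n_0$.

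Both parts are thus direct. The only mildly delicate step is in (i): the Kolmogorov error must be negligible relative to $k^{-u^2}$ for \emph{every} $u>0$. This works because $n_k$ grows geometrically, so any polynomial rate $n_k^{-b_0}$ beats every power of $k$. The variance error $O(n^{-\alpha})$ affects the exponent only through a term tending to zero.
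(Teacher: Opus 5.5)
Your proposal is correct and follows the paper's proof essentially step for step: in (i) you compare at $x_k=u\sqrt{2cn_k\log k}/\sigma_{n_k}$ with the Mills bounds and use geometric decay of $n_k^{-b_0}$, and in (ii) you use the same two-sided Kolmogorov bound with the same constants $C_0=2\max\{1,C\}$, $C_1=2C$, $b=b_1$. The only difference is cosmetic: your factor $2$ in (i) is unnecessary, since the Kolmogorov bound also holds for left limits at $-x_k$ and so gives the error $Cn_k^{-b_0}$ directly, but it does no harm.
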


\begin{proof}
(i) Let $(n_k)$ be geometric of ratio $q$, $u>0$ and $s=\pm1$. For large
$k$, $\sigma_k^2=\Var Z_{n_k}>0$. Put $x_k=u\sqrt{2cn_k\log k}/\sigma_k$.
Then $x_k^2=2u^2\log k\,(1+O(n_k^{-\alpha}))$ and
$n_k^{-\alpha}\log k\to0$, so $x_k^2/2=u^2\log k+o(1)$. The Kolmogorov
bound at $x_k$, and its left limit at $-x_k$ together with
$\Phi(-x)=\overline\Phi(x)$, give
\[
 \bigl|\Prob\{sZ_{n_k}>\sigma_kx_k\}-\overline\Phi(x_k)\bigr|
 \le Cn_k^{-b_0}.
\]
By the normal tail bounds \eqref{fw:mills},
$\log\overline\Phi(x_k)=-u^2\log k+O(\log\log k)$. Since $n_k^{-b_0}$
decays geometrically in $k$, it is $o(\overline\Phi(x_k))$, and
\eqref{fw:moderate} follows.

(ii) If $\sigma_{n,h}>0$, the Kolmogorov bound at $\pm t/\sigma_{n,h}$
(a left limit at $-t/\sigma_{n,h}$) and \eqref{fw:mills} give
\[
 \Prob\{|Z_{n+h}-Z_n|>t\}\le2\overline\Phi(t/\sigma_{n,h})+2Cn^{-b_1}
 \le2\exp\Bigl(-\frac{t^2}{2C\sqrt{nh}}\Bigr)+2Cn^{-b_1}.
\]
If $\sigma_{n,h}=0$, the centered increment vanishes almost surely.
Thus \ref{fw:I2} holds with $C_0=2\max\{1,C\}$, $C_1=2C$, $b=b_1$ and the
same $\delta_0$ and $n_0$.
\end{proof}

For root counts, \ref{fw:I4} reduces to a variance bound for the number
$B_n$ of zeros on the unit circle. This number vanishes for laws with a
density (Lemma~\ref{pre:no-circle-roots}), and its variance is
$O(\sqrt n)$ for random signs (Proposition~\ref{sg:boundary}). The proxy
is the balanced discrepancy $D^\circ=D-B/2$ of the last block of
coefficients, and the block reversal identity of
Corollary~\ref{pre:reversal-law} gives its law.

\begin{lemma}[Block proxies for root counts]\label{fw:block-proxies}
Let $(\xi_j)_{j\ge0}$ be iid real random variables with
$\Prob\{\xi_0=0\}=0$, and suppose that $\Var B_n\le C(n+1)^{1-\beta}$ for
all $n$, for some $C$ and $\beta>0$. Then the centered counts
$Z_n=D_n-\E D_n$ satisfy \ref{fw:I4} with this $\beta$ and $\theta_0=2$.
If $B_n=0$ for all $n$ almost surely, the proxies satisfy
$\mathcal L(\widetilde Z_k)=\mathcal L(Z_{L_k})$ and
$\mathcal L(Z_{n_k}-\widetilde Z_k)=\mathcal L(-(Z_{n_k}-Z_{L_k}))$, so
\ref{fw:I4} holds with every $\beta>0$.
\end{lemma}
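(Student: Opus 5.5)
We take as proxy the balanced discrepancy of the last block of coefficients. Fix $\theta\ge2$, put $n_k=\lfloor\theta^k\rfloor$ and $L_k=n_k-n_{k-1}-1$, and let $p=n_{k-1}$. Let $P_k(z)=\sum_{i=0}^{L_k}\xi_{p+1+i}z^i$ be the polynomial of nominal degree $L_k$ built from the coefficients $\xi_{n_{k-1}+1},\dots,\xi_{n_k}$. Define
\[
 \widetilde Z_k=-D^\circ(P_k).
\]
The blocks of indices $(n_{k-1},n_k]$ are disjoint for different $k$, so the $\widetilde Z_k$ are independent functions of disjoint sets of iid coefficients. Since $\theta\ge2$, we have $L_k\to\infty$, and we choose $k_0$ with $L_k\ge3$ for $k\ge k_0$. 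The $\widetilde Z_k$ are bounded, hence square integrable.

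\emph{First bound in \eqref{fw:proxy}.} By Corollary~\ref{pre:reversal-law}(b), $D^\circ(P_k)\overset d=-D^\circ_{L_k}$, so $\widetilde Z_k\overset d=D^\circ_{L_k}$. By \eqref{pre:centering}, $Z_{L_k}=D^\circ_{L_k}+\frac12(B_{L_k}-\E B_{L_k})$. Using the coupling on the same space gives
\[
 \mathcal W_2(\widetilde Z_k,Z_{L_k})\le\tfrac12(\Var B_{L_k})^{1/2}\le\tfrac12\sqrt C\,(L_k+1)^{(1-\beta)/2}\le C'L_k^{(1-\beta)/2}.
\]

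\emph{Second bound in \eqref{fw:proxy}.} We apply \eqref{pre:block-reversal} with $n=n_k$ and $p=n_{k-1}$, which gives $L=n_k-p-1=L_k$. It yields the joint identity in law
\[
 \bigl(D^\circ_{n_k},\,D^\circ(P_k)\bigr)\overset d=\bigl(-D^\circ_{n_k},\,-D^\circ_{L_k}\bigr).
\]
Hence
\[
 D^\circ_{n_k}-\widetilde Z_k=D^\circ_{n_k}+D^\circ(P_k)\overset d=-(D^\circ_{n_k}-D^\circ_{L_k}).
\]
Now $Z_{n_k}-\widetilde Z_k$ differs from $D^\circ_{n_k}-\widetilde Z_k$ by $\frac12(B_{n_k}-\E B_{n_k})$. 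Likewise $-(Z_{n_k}-Z_{L_k})$ differs from $-(D^\circ_{n_k}-D^\circ_{L_k})$ by $-\frac12(B_{n_k}-\E B_{n_k})+\frac12(B_{L_k}-\E B_{L_k})$. We then use the triangle inequality for $\mathcal W_2$, with an exact coupling in the middle and on-space couplings at the ends. This bounds the distance by
\[
 \|B_{n_k}-\E B_{n_k}\|_2+\tfrac12\|B_{L_k}-\E B_{L_k}\|_2\le C''n_k^{(1-\beta)/2},
\]
using $L_k\le n_k$. The constants $C''$ and $\beta$ do not depend on $\theta$, and $\theta_0=2$.

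\emph{The case $B_n\equiv0$ almost surely.} Then $Z_n=D^\circ_n$, since $\E D^\circ_n=0$. The identities above then hold exactly in law, which gives the stated equalities and makes \ref{fw:I4} hold with any $\beta$. The only delicate point is to use the \emph{joint} block reversal \eqref{pre:block-reversal} for the difference, rather than the two marginal identities separately. Keeping track of which $\mathcal W_2$ terms are on-space couplings and which are equalities in law is then routine.
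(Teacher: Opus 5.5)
Your proxy has the wrong sign, and this breaks the second bound in \eqref{fw:proxy}. From the joint identity $(D^\circ_{n_k},D^\circ(P_k))\overset d=(-D^\circ_{n_k},-D^\circ_{L_k})$ you get
\[
 D^\circ_{n_k}-\widetilde Z_k=D^\circ_{n_k}+D^\circ(P_k)\overset d=-\bigl(D^\circ_{n_k}+D^\circ_{L_k}\bigr),
\]
not $-(D^\circ_{n_k}-D^\circ_{L_k})$. This is not a cosmetic slip, because with $\widetilde Z_k=-D^\circ(P_k)$ the bound is actually false.

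Take Gaussian coefficients, so $B_n\equiv0$. Theorem~\ref{thm:B}(ii) gives $\Cov(D_{L_k},D_{n_k})\sim a(q)L_k$ with $q=\theta/(\theta-1)$ and $a(q)>0$. Hence $\|D_{n_k}+D_{L_k}\|_2^2-\|D_{n_k}-D_{L_k}\|_2^2=4\Cov(D_{n_k},D_{L_k})\asymp n_k$, while both norms are $O(\sqrt{n_k})$. Since $\mathcal W_2(U,V)\ge\bigl|\|U\|_2-\|V\|_2\bigr|$, the distance is of order $\sqrt{n_k}$, and \ref{fw:I4} fails for every $\beta>0$. The same error makes your claimed equality in law in the case $B_n\equiv0$ false.

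The marginal law cannot detect the sign, since $D^\circ_{L_k}$ is symmetric. The joint identity, which you rightly flagged as the delicate point, is exactly where the sign is forced. The fix is to take $\widetilde Z_k=D^\circ(P_k)$, as the paper does. This still has the law of $D^\circ_{L_k}$, by the symmetry in Corollary~\ref{pre:reversal-law}(a) or simply because $P_k\overset d=f_{L_k}$, so your first bound is unchanged. Now $D^\circ_{n_k}-\widetilde Z_k\overset d=-D^\circ_{n_k}+D^\circ_{L_k}$. Your triangle-inequality bookkeeping then goes through verbatim and gives $2\|R_{n_k}\|_2+\|R_{L_k}\|_2$ with $R_n=(B_n-\E B_n)/2$.

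A smaller point concerns the last step. The inequality $L_k\le n_k$ only gives $(L_k+1)^{(1-\beta)/2}\le Cn_k^{(1-\beta)/2}$ when $\beta\le1$. For $\beta>1$ you need the reverse comparison $L_k\ge n_k/4$ for large $k$, which holds for $\theta\ge2$ because $n_{k-1}\le(n_k+1)/2$. This comparability is the role of $\theta_0=2$, and it keeps the constant independent of $\theta$. It is not needed for $L_k\to\infty$, which holds for every $\theta>1$.
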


\begin{proof}
Fix $\theta\ge2$, and let $n_k=\lfloor\theta^k\rfloor$ and
$L_k=n_k-n_{k-1}-1$. Let $P_k(z)=\sum_{i=0}^{L_k}\xi_{n_{k-1}+1+i}z^i$ be
the polynomial of nominal degree $L_k$ formed by the last block of
coefficients, and let $\widetilde Z_k=D^\circ(P_k)$ be its balanced
discrepancy. The polynomials $P_k$ use disjoint blocks of coefficients,
so the $\widetilde Z_k$ are independent. They are bounded, since every
count is at most the nominal degree. By the block reversal identity
\eqref{pre:block-reversal}, $\widetilde Z_k$ has the law of
$D_{L_k}^\circ$.

Put $R_n=(B_n-\E B_n)/2$. By \eqref{pre:centering}, $Z_n=D_n^\circ+R_n$,
and $\|R_n\|_2=\frac12(\Var B_n)^{1/2}\le\frac12C^{1/2}(n+1)^{(1-\beta)/2}$.
Since $\widetilde Z_k$ has the law of $D^\circ_{L_k}$, which lives on the
same space as $Z_{L_k}$,
\[
 \mathcal W_2(\widetilde Z_k,Z_{L_k})\le\|D_{L_k}^\circ-Z_{L_k}\|_2
 =\|R_{L_k}\|_2.
\]
For the second distance, \eqref{pre:block-reversal} with $n=n_k$ and
$p=n_{k-1}$ says that $(D_{n_k}^\circ,B_{n_k},\widetilde Z_k,B(P_k))$ has
the law of $(-D_{n_k}^\circ,B_{n_k},-D_{L_k}^\circ,B_{L_k})$. Applying the
map $(x_1,x_2,x_3,x_4)\mapsto x_1+(x_2-\E B_{n_k})/2-x_3$ to both
vectors gives
\[
 Z_{n_k}-\widetilde Z_k\ \overset{\mathrm d}{=}\
 -D_{n_k}^\circ+R_{n_k}+D_{L_k}^\circ
 =-(Z_{n_k}-Z_{L_k})+2R_{n_k}-R_{L_k}.
\]
Hence
$\mathcal W_2(Z_{n_k}-\widetilde Z_k,-(Z_{n_k}-Z_{L_k}))\le
2\|R_{n_k}\|_2+\|R_{L_k}\|_2$.

For $\theta\ge2$ we have $n_{k-1}\le\theta^k/2\le(n_k+1)/2$, so
$n_k/4\le L_k\le n_k$ and $L_k\ge3$ for all large $k$. Then $L_k+1$ and
$n_k+1$ lie between $n_k/4$ and $2n_k$, and the two bounds above are at
most $C'L_k^{(1-\beta)/2}$ and $C'n_k^{(1-\beta)/2}$, with $C'$ depending
only on $C$ and $\beta$. This is \ref{fw:I4}. If $B_n=0$ for all $n$,
then $R_n=0$ and both comparisons are equalities in law.
\end{proof}

\section{Gaussian coefficients}\label{sec:gaussian}

By Proposition~\ref{pre:crossing}, $D_n$ counts with signs the crossings of the real axis by the carrier curve. For Gaussian coefficients the two coordinates of this curve are independent Gaussian fields. Given the imaginary coordinate the crossings are fixed, and $D_n$ becomes a sum of signs of the real coordinate at these points. A Gaussian H\"older inequality makes this sum sub-Gaussian, and a smoothed version of it carries the variance and the normal approximation of Theorem~\ref{thm:B}. The curves of two degrees share coefficients, so their coordinates are correlated, and a covariance of log-moduli takes the place of independence. Increments are controlled by pairing the crossings of two nearby curves. Together these estimates verify the inputs of Theorem~\ref{thm:A}. The last subsection computes the mean radial counts of Theorem~\ref{thm:G}.

Except in Subsection~\ref{ga:radial-sub}, the coefficients $(\xi_j)_{j\ge0}$ are independent standard real Gaussian variables. Constants $C,c>0$ are absolute unless their dependence is indicated. A bound stated for all $n\ge1$ with an unspecified constant is proved for $n\ge n_0$. The finitely many smaller $n$ are absorbed in the constant, because $|D_n|\le n/2$ and $0\le\nu_n\le n$, and probabilities and Kolmogorov distances are at most one.

\subsection{Fields and zeros}\label{ga:fields-sub}
Fix $n\ge1$, and let $\omega_j$ be the frequencies of Lemma~\ref{pre:sampling}, so that $\omega_{n-j}=-\omega_j$. In the scaled angle $x=nt$ let
\begin{equation}\label{ga:fields-def}
 X_n(x)+iY_n(x)=n^{-1/2}e^{-ix/2}f_n(e^{ix/n})=n^{-1/2}\sum_{j=0}^n\xi_je^{i\omega_jx}.
\end{equation}
These are the cosine and sine fields $C_\xi,S_\xi$ of Lemma~\ref{pre:sampling}, and for deterministic $b\in\R^{n+1}$ we write $X[b]=C_b$ and $Y[b]=S_b$. All of them are entire functions of $x$. By Lemma~\ref{pre:no-circle-roots}, almost surely no $f_n$ has a zero on the unit circle, and $\E D_n=0$ for every $n$. On this event the argument principle (Lemma~\ref{pre:argument}) gives, for all $n$ simultaneously,
\begin{equation}\label{ga:winding}
 D_n=\frac1\pi\,\Delta_{[0,\pi n]}\arg\bigl(X_n+iY_n\bigr).
\end{equation}
By the pairing \eqref{pre:pairing}, $X_n$ depends only on the cosine coordinates $\xi_j+\xi_{n-j}$ and $Y_n$ only on the sine coordinates $\xi_j-\xi_{n-j}$. These Gaussian variables are uncorrelated, so the processes $X_n$ and $Y_n$ are independent. With the real kernel $R_n(v)=n^{-1}\sum_{j=0}^ne^{i\omega_jv}$,
\begin{equation}\label{ga:kernel}
 \E X_n(x)X_n(y)=\tfrac12\{R_n(x-y)+R_n(x+y)\},\qquad
 \E Y_n(x)Y_n(y)=\tfrac12\{R_n(x-y)-R_n(x+y)\}.
\end{equation}
Write $v_n^\pm(x)$ for the two variances and $\rho_\pm(x,y)$ for the correlation functions. In this section $U_n=X_n/\sqrt{v_n^+}$ and $V_n=Y_n/\sqrt{v_n^-}$ denote the normalized fields, where the variances are positive. They differ from the carrier coordinates $U_{f_n}(x/n)$ and $V_{f_n}(x/n)$ of Section~\ref{sec:prelim} by positive factors. The reflected terms $R_n(x+y)$ in \eqref{ga:kernel} are small when $x+y$ is far from $0$ and $2\pi n$, so we work in a bulk interval. We fix a large $L\ge3$, put $I_n=[L,\pi n-L]$, and call an interval $[k,k+1)$ with $k\in\Z$ a unit cell. The kernel and its derivatives decay by the summation-by-parts bound \eqref{rc:abel}, in which $d(t)=\dist(t,2\pi\Z)$.

\begin{lemma}[Kernels, correlations and frames]\label{ga:fields}
There are $L,n_0$ and $c,C>0$ such that for $n\ge n_0$ the following hold.
\begin{enumerate}[label=\textup{(\alph*)}]
\item $|R_n^{(k)}(v)|\le C_k/(1+\dist(v,2\pi n\Z))$ for $k\ge0$ and real $v$. On $[L-3,\pi n-L+3]$ we have $\frac14\le v_n^\pm\le2$, and the derivatives of order at most three of $(v_n^\pm)^{\pm1/2}$ are bounded by $C$.
\item For $x,y\in I_n$, $1-\rho_\pm(x,y)^2\ge c\min(|x-y|^2,1)$. For $x\ne y$ in $I_n$,
\begin{equation}\label{ga:arcsine-derivative}
 |\partial_x\arcsin\rho_\pm(x,y)|+|\partial_y\arcsin\rho_\pm(x,y)|\le C\min(1,|x-y|^{-1}),
\end{equation}
and for $0\le\kappa<1$ the same bound holds for $\arcsin(\kappa\rho_\pm)$ on all of $I_n^2$.
\item \emph{(Frame bound.)} Let $b\in\R^{n+1}$, and let $x_1,\dots,x_q\in I_n$, repetitions allowed, with at most $\Theta$ of them in each unit cell. Then
\begin{equation}\label{ga:frame}
 \sum_{i=1}^q\ \sup_{|y-x_i|\le2}\bigl(X[b]^2+X[b]'^2+Y[b]^2+Y[b]'^2\bigr)(y)\le C\Theta\|b\|^2.
\end{equation}
\end{enumerate}
\end{lemma}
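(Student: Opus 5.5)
For (a) the plan is to evaluate $R_n$ in closed form. Since $\omega_j=(j-n/2)/n$, the sum is a Dirichlet kernel,
\[
 R_n(v)=\frac{\sin\bigl((n+1)v/(2n)\bigr)}{n\sin\bigl(v/(2n)\bigr)} .
\]
Near $v=0$ the bound $|R_n|\le1$ is trivial. Otherwise \eqref{rc:abel} with $t=v/n$ gives $|R_n(v)|\le 2\pi/(n\,d(v/n))$, which is $C/\dist(v,2\pi n\Z)$.

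For the derivatives, differentiate term by term. The $k$-th derivative is $n^{-1}\sum_j(i\omega_j)^ke^{i\omega_jv}$. Its coefficients $\omega_j^k$ are bounded by $1$ and have bounded total variation in $j$, so the same Abel summation gives the bound $C_k/(1+\dist(v,2\pi n\Z))$.

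On $[L-3,\pi n-L+3]$ we have $v_n^\pm(x)=\frac12(R_n(0)\pm R_n(2x))$ with $R_n(0)=(n+1)/n$. The argument $2x$ lies in $[2L-6,2\pi n-2L+6]$, so its distance to $2\pi n\Z$ is at least $2L-6$. Hence $|R_n(2x)|\le C/(2L-5)$, and choosing $L$ large gives $\frac14\le v_n^\pm\le2$. The derivative bounds on $(v_n^\pm)^{\pm1/2}$ then follow from the chain rule, since $v_n^\pm$ is bounded away from $0$ and its derivatives are bounded.

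For (b), set $\rho_\pm=\E U_nU_n'$ evaluated at $(x,y)$, that is, $\frac{R_n(x-y)\pm R_n(x+y)}{2\sqrt{v^\pm(x)v^\pm(y)}}$. Split into two regimes according to $|x-y|$.

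\emph{Far apart.} If $|x-y|\ge\delta$ for a small fixed $\delta$, compare with the limit kernel $r(u)=\sin(u/2)/(u/2)$, to which $R_n$ converges locally uniformly with uniform decay. The term $R_n(x+y)$ is $O(1/L)$ in the bulk. Since $|r(u)|\le1-c\delta^2$ for $|u|\ge\delta$, and $|r(u)|\le C/u$ for large $u$, we get $|\rho_\pm|\le1-c'$ once $L$ and $n_0$ are large.

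\emph{Close together.} For $|x-y|\le\delta$ use the Gram-determinant (Taylor) identity
\[
 1-\rho^2\asymp |x-y|^2\,\Var\bigl(U'(x)\mid U(x)\bigr).
\]
The conditional variance is bounded below by a constant, because $R_n$ is close to $r$, for which $-r''(0)-r'(0)^2=1/12>0$. The corrections are controlled by the third-derivative bounds from (a). The main technical point here is to make this uniform in $n$ and in the bulk, and I expect this to be the main obstacle.

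The derivative bound \eqref{ga:arcsine-derivative} then follows in two steps. First, $\partial\arcsin\rho=\partial\rho/\sqrt{1-\rho^2}$. For $|x-y|\ge1$, $\partial\rho=O(1/|x-y|)$ by (a) (the reflected term is handled similarly) and the denominator is bounded below. For $|x-y|<1$, $\partial_x\rho$ vanishes to first order at $x=y$, so $|\partial\rho|\le C|x-y|$, and dividing by $\sqrt{1-\rho^2}\ge c|x-y|$ gives a bound $O(1)$. For $\kappa<1$ the denominator $\sqrt{1-\kappa^2\rho^2}$ is even larger, so the same bound holds on all of $I_n^2$.

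For (c), each point $x_i$ gives one term. Bound the supremum over $|y-x_i|\le2$ by the supremum over the five unit intervals covering $[x_i-2,x_i+2]$, and apply Lemma~\ref{pre:sampling}(ii) on each, for $F$ and for $F'$. The second derivatives $X[b]''=-X[\omega^2b]$ and $Y[b]''=-Y[\omega^2 b]$ are again fields of the same type, with $|\omega^2b|\le|b|/4$. Each unit interval is charged by at most $5\Theta$ points, and each point of $[0,\pi n]$ lies in boundedly many intervals. Summing and using Lemma~\ref{pre:sampling}(i) for the vectors $b$, $\omega b$ and $\omega^2 b$ gives \eqref{ga:frame} with $C$ of the order $40\pi\cdot 5$.
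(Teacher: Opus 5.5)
Your proposal is correct and follows essentially the paper's route: Abel summation for the kernel and its derivatives in (a), and for the correlation gap a Taylor/Gram lower bound near the diagonal combined with comparison to $r$ and kernel decay away from it. The frame bound then comes from Lemma~\ref{pre:sampling}(i),(ii) applied to $F$ and $F'$ on five integer unit intervals. The one small variation is near the diagonal in \eqref{ga:arcsine-derivative}. There you combine $|\partial_x\rho|\le C|x-y|$ with the gap $\sqrt{1-\rho^2}\ge c|x-y|$, whereas the paper writes $\rho=\langle u(x),u(y)\rangle$ with unit rows $u$ and uses $u'\perp u$ and Cauchy--Schwarz to get $|\partial_x\rho|\le\|u'(x)\|\sqrt{1-\rho^2}$ directly, without invoking the gap.
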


\begin{proof}
(a) The weights in $R_n^{(k)}(v)=n^{-1}e^{-iv/2}\sum_j(i\omega_j)^ke^{ijv/n}$ have modulus at most one and total variation at most two, so \eqref{rc:abel} gives $|R_n^{(k)}(v)|\le3\pi/\dist(v,2\pi n\Z)$, and trivially $|R_n^{(k)}|\le2$. Since $v_n^\pm(x)=\frac12\{R_n(0)\pm R_n(2x)\}$ with $R_n(0)=(n+1)/n$, and $\dist(2x,2\pi n\Z)\ge2(L-3)$ on the stated interval, the other bounds hold for large $L$.

(b) Write $X_n(x)=\langle w(x),\xi\rangle$ with $w(x)=n^{-1/2}(\cos\omega_jx)_j$, and $u=w/\|w\|$, so that $\rho_+(x,y)=\langle u(x),u(y)\rangle$. Since $\|w'\|^2=\frac12\{-R_n''(0)+R_n''(2x)\}$, $\langle w,w'\rangle=\frac12R_n'(2x)$ and $-R_n''(0)=(n+1)(n+2)/(12n^2)$, part (a) gives
\[
 \|u'(x)\|^2=\frac{\|w'\|^2\|w\|^2-\langle w,w'\rangle^2}{\|w\|^4}=\frac{n+2}{12n}+O(L^{-1})\in[\tfrac1{24},1],\qquad\|u''(x)\|\le C.
\]
The same holds for the sine field.

For the correlation gap let $\tau=|x-y|$. For $\tau\le\tau_0$, $1-\rho=\frac12\|u(x)-u(y)\|^2\ge\frac12\tau^2(\|u'(x)\|-C\tau)^2\ge\tau^2/200$. For $\tau\ge T_0$, (a) gives $|\rho_\pm|\le\frac12$. In between, Riemann sums give $|\rho_\pm(x,y)-r(x-y)|\le C/L+CT_0/n$ with the function $r$ of Theorem~\ref{thm:B}, and $|r|<1$ off the origin, so $|\rho_\pm|\le1-c$ once $L$ and then $n_0$ are large.

Since $u'\perp u$, $\partial_x\rho=\langle u'(x),u(y)-\rho u(x)\rangle$ and $\|u(y)-\rho u(x)\|^2=1-\rho^2$, so $|\partial_x\rho|\le\sqrt{1-\rho^2}$. For $\tau\ge1$, (a) gives $|\partial_x\rho|\le C/(1+\tau)$ while $1-\rho^2\ge c$. This proves \eqref{ga:arcsine-derivative}. For $\kappa<1$, $|\partial_x\arcsin(\kappa\rho)|\le|\partial_x\rho|(1-\rho^2)^{-1/2}$ off the diagonal, and the derivative vanishes on it as $\langle u'(x),u(x)\rangle=0$. At $\kappa=1$ the derivative jumps across the diagonal, since $\arcsin\rho_\pm(x,x+\tau)=\frac\pi2-\|u'(x)\||\tau|+o(\tau)$.

(c) If $F\in\{X[b],Y[b]\}$, then $F'$ is a field of the other kind with coefficients $\pm\omega_jb_j$. Lemma~\ref{pre:sampling}, applied to $F$ and $F'$, gives $\int_0^{\pi n}(F^2+2F'^2+F''^2)\le3\pi\|b\|^2$ and $\sup_J(F^2+F'^2)\le2\int_J(F^2+2F'^2+F''^2)$ for every unit interval $J\subset[0,\pi n]$. As $L\ge3$, each $[x_i-2,x_i+2]$ lies in five such intervals with integer endpoints, and each of these serves at most $5\Theta$ points.
\end{proof}

\begin{lemma}[Tails of local zero counts]\label{ga:jensen}
Let $F$ be a random entire function, $x_0\in\R$ and $\varrho>0$. Suppose that $\E|F(w)|^2\le K$ for $|w-x_0|\le3\varrho$ and that $F(x_0)$ is centered Gaussian with variance $s^2>0$. Then the number $N$ of zeros of $F$ in $\{|w-x_0|\le\varrho\}$ satisfies $\Prob(N>t)\le(9K+s^{-1})2^{-t/2}$ for $t\ge0$.
\end{lemma}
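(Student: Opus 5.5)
We apply the local Jensen bound of Lemma~\ref{pre:local-jensen} to $F$ on the disk centered at $x_0$. Taking $R=2\varrho$ and the counting radius $\varrho$ gives
\[
 N\le\frac{\log M-\log|F(x_0)|}{\log 2},\qquad M=\max_{|w-x_0|=2\varrho}|F(w)|,
\]
whenever $F(x_0)\ne0$, which holds almost surely since $s>0$. Hence
\[
 \{N>t\}\subset\{M>2^{t/4}\}\cup\{|F(x_0)|<2^{-t/4}\}.
\]
If $M\le 2^{t/4}$ and $|F(x_0)|\ge2^{-t/4}$, then $N\le(t/4+t/4)/\log2\cdot\log 2/\log2$. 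More carefully, $\log M-\log|F(x_0)|\le (t/2)\log2$, so $N\le t/2\le t$. We bound the two events separately.

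\textbf{Small value at the center.} $F(x_0)$ is $\mathcal N(0,s^2)$, with density at most $(2\pi)^{-1/2}s^{-1}$. Therefore
\[
 \Prob(|F(x_0)|<\varepsilon)\le 2\varepsilon(2\pi)^{-1/2}s^{-1}\le s^{-1}\varepsilon .
\]
With $\varepsilon=2^{-t/2}$ this gives $s^{-1}2^{-t/2}$. It is better to split the exponent asymmetrically: use thresholds $2^{t}$ for $M$ and $2^{-t/2}$ for $|F(x_0)|$, adjusting constants.

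\textbf{Large maximum.} To bound $M$ only from second moments, use subharmonicity. For $|w-x_0|=2\varrho$, the disk of radius $\varrho$ about $w$ lies in $\{|z-x_0|\le3\varrho\}$. The mean value inequality for the subharmonic $|F|^2$ gives
\[
 |F(w)|^2\le\frac1{\pi\varrho^2}\int_{|z-w|\le\varrho}|F|^2\dd A\le\frac1{\pi\varrho^2}\int_{|z-x_0|\le3\varrho}|F|^2\dd A .
\]
The right side is independent of $w$, so $M^2$ is bounded by it. Taking expectations with Tonelli yields $\E M^2\le 9K$. Markov's inequality then gives $\Prob(M>\lambda)\le 9K\lambda^{-2}$.

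\textbf{Combining.} We choose the split so that both tails are $2^{-t/2}$. Take $\lambda=2^{t/4}$, so $\Prob(M>\lambda)\le9K2^{-t/2}$, and take $\varepsilon=2^{-t/2}$. Then outside both events,
\[
 N\le\frac{(t/4)\log2+(t/2)\log2}{\log2}=\frac{3t}4\le t .
\]
Summing the two tails gives $\Prob(N>t)\le(9K+s^{-1})2^{-t/2}$.

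The main obstacle is the maximum bound, since only second moments of $F$ are assumed. The subharmonic averaging over the enlarged disk of radius $3\varrho$ is exactly what the hypothesis is designed for, and it makes that step clean.
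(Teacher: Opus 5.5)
Your proof is correct and follows the paper's route: the local Jensen bound with radii $\varrho<2\varrho$, the mean value inequality for $|F|^2$ on disks of radius $\varrho$ giving $\E M^2\le 9K$, then Markov's inequality and the Gaussian density bound at $x_0$. The only difference is the split of the threshold: the paper uses $a=\frac12t\log2$ for both $\log_+M$ and $\log_-|F(x_0)|$, getting $9K2^{-t}+s^{-1}2^{-t/2}$, while you use $2^{t/4}$ and $2^{-t/2}$. You should delete the exploratory lines before ``Combining'', since the first split you tried, with $2^{-t/4}$ at the center, would only give $s^{-1}2^{-t/4}$.
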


\begin{proof}
By the mean value inequality for $|F|^2$ on disks of radius $\varrho$, $S=\sup_{|w-x_0|\le2\varrho}|F|$ has $\E S^2\le9K$. Lemma~\ref{pre:local-jensen} with radii $\varrho<2\varrho$ gives $N\log2\le\log_+S+\log_-|F(x_0)|$. Here $\Prob(\log_+S>a)\le9Ke^{-2a}$ by Markov's inequality and $\Prob(\log_-|F(x_0)|>a)\le s^{-1}e^{-a}$. Take $a=\frac12t\log2$.
\end{proof}

The hypothesis is checked with $|\sin z|,|\cos z|\le e^{|\Im z|}$ and $|\sin z/z|=|\int_0^1\cos(sz)\dd s|\le e^{|\Im z|}$. If $F(w)=n^{-1/2}\sum_kb_k\sin(\lambda_kw)$ with independent standard Gaussian $b_k$, at most $2n+1$ terms and $|\lambda_k|\le\frac32$, then $\E|F(w)|^2$ and $\E|F(w)/w|^2$ are at most $7e^{3|\Im w|}$. The same holds for cosine fields.

\begin{lemma}[Crossing sums and zero tails]\label{ga:zeros}
\textup{(a)} Let $\Lambda$ be a nonempty finite set of positive numbers, all in $\Z$ or all in $\Z+\frac12$, and let $(c_\lambda)_{\lambda\in\Lambda}$ have a density on $\R^\Lambda$. Then almost surely $\sum_\lambda c_\lambda\sin(\lambda t)$ has only simple zeros in $(0,\pi)$.

\textup{(b)} Almost surely, for every $n\ge1$, $Y_n\not\equiv0$, its zeros in $(0,\pi n)$ are simple, $Y_n(L)Y_n(\pi n-L)\ne0$, and $f_n$ has no zero on the unit circle. On this event, for $n\ge n_0$, the crossing sum
\begin{equation}\label{ga:crossing}
 Q_n=\sum_{x\in I_n:\,Y_n(x)=0}\sgn X_n(x)\sgn Y_n'(x)\quad\text{satisfies}\quad|D_n-Q_n|\le1+N_n^{\rm end},
\end{equation}
where $N_n^{\rm end}$ counts the zeros of $Y_n$ in $(0,\pi n)\setminus I_n$. Let $\Theta_n$ be the largest number of zeros of $Y_n$ in a unit cell meeting $I_n$. Then for $t\ge0$ and $p\ge1$,
\begin{equation}\label{ga:occupancy}
 \Prob(N_n^{\rm end}>t)\le Ce^{-ct},\qquad\Prob(\Theta_n>t)\le Cne^{-ct},\qquad\|\Theta_n\|_p\le C(\log(n+2)+p).
\end{equation}
\end{lemma}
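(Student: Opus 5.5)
For part (a) I would argue by conditioning and a resultant or discriminant argument. Put $P(t)=\sum_\lambda c_\lambda\sin(\lambda t)$. Substitute $z=e^{it}$ if all $\lambda\in\Z$, or $z=e^{it/2}$ if all $\lambda\in\Z+\frac12$. Then $2iz^{m}P$ becomes a polynomial $p_c(z)$ whose coefficients are linear in $c$. A double zero of $P$ in $(0,\pi)$ gives a common zero of $p_c$ and $p_c'$ on the circle off $\pm1$. So it forces the discriminant-type resultant $\mathrm{Res}(p_c,p_c')$, or that of $p_c/(z^2-1)$ when $\pm1$ are forced roots, to vanish. That resultant is a real polynomial in $c$.

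I would exhibit one $c$ at which it is nonzero. A natural choice is $c$ supported on the largest frequency $\lambda^*$, perturbed slightly: $\sin(\lambda^*t)$ alone has only simple zeros in $(0,\pi)$, and simplicity is an open condition on the compact set of zeros away from the endpoints. Endpoint zeros need separate care, but we only need $(0,\pi)$. A simpler route to the same end is to fix all $c_\lambda$ except $c_{\lambda^*}$ and use a density on a fiber. Then a nonzero polynomial has a null zero set, by the Fubini argument of Lemma~\ref{pre:no-circle-roots}.

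For (b), the structure of $Y_n$ does the work. By \eqref{pre:pairing}, $Y_n(x)=n^{-1/2}\sum_{\lambda\in\Lambda_n}(\xi_{n/2+\lambda}-\xi_{n/2-\lambda})\sin(\lambda x/n)$, and the coefficients have a Gaussian density. So part (a), applied for each $n$ and combined with a countable union, gives simple zeros. It also gives $Y_n\not\equiv0$ and $Y_n(L)Y_n(\pi n-L)\ne0$, since $Y_n(L)$ is a nondegenerate Gaussian for $n\ge n_0$ by Lemma~\ref{ga:fields}(a). Lemma~\ref{pre:no-circle-roots} gives no circle zeros.

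On this event, \eqref{pre:bulk-crossing} with $B=0$ and $A=I_n/n$ gives $|D_n-Q_n|\le1+N_n^{\rm end}$. This uses two facts. First, at simple zeros $\iota_V=\sgn V'$. Second, $X_n,Y_n$ are positive multiples of the carrier coordinates under $x=nt$, the multiple being $n^{-1/2}$.

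For the tails in \eqref{ga:occupancy} I would apply Lemma~\ref{ga:jensen}, taking care near the origin, where $Y_n(0)=0$. On $[0,L]$, apply it to $F(w)=Y_n(w)/w$, centered at $x_0=L/2$ with $\varrho$ comparable to $L$ so that the disk covers $[0,L]$. The variance of $F(x_0)$ is bounded below by Lemma~\ref{ga:fields}(a) and the kernel computation, and the remark after Lemma~\ref{ga:jensen} bounds the second moment by $7e^{3\cdot 3\varrho}$. This gives an exponential tail, with $L$ fixed. The endpoint $\pi n$ is handled by the symmetry $x\mapsto\pi n-x$ up to sign, so that $Y_n(\pi n-w)$ has the same structure, or by the same argument with $F(w)=Y_n(\pi n-w)/w$ when $\sin(\lambda(\pi n-w)/n)$ vanishes at $w=0$. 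Summing the two contributions gives $\Prob(N^{\rm end}_n>t)\le Ce^{-ct}$.

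For $\Theta_n$, cover each unit cell meeting $I_n$ by a disk of radius $1$ centered at a point of $[L-1,\pi n-L+1]$. There $v_n^-\ge\frac14$, so Lemma~\ref{ga:jensen} applies with $\varrho=1$ and $K=7e^9$. A union bound over at most $\pi n+1$ cells gives $Cne^{-ct}$. The $L^p$ bound follows by integrating the tail $\min(1,Cne^{-ct})$: the main contribution is $t\approx c^{-1}\log n$, and the remaining tail contributes $O(p)$. I expect the main obstacle to be part (a)'s nondegeneracy witness together with the endpoint behavior near $0$ and $\pi n$, where $Y_n$ vanishes identically and must be divided out before Lemma~\ref{ga:jensen} applies.
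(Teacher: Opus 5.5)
Your treatment of (b) is the paper's: simplicity from (a), no zeros on the circle from Lemma~\ref{pre:no-circle-roots}, the bulk bound \eqref{pre:bulk-crossing} with $B=0$ and $A=n^{-1}I_n$, and Lemma~\ref{ga:jensen} with unit disks and a union bound for $\Theta_n$. Centering the endpoint disk at $L/2$ instead of at $0$ also works. The paper takes $x_0=0$, $\varrho=L$ and $s^2=\Var Y_n'(0)\ge\frac1{12}$. For your choice, $|R_n(L)|\le C/(1+L)$ by Lemma~\ref{ga:fields}(a), so $\Var Y_n(L/2)\ge\frac14$ once $L$ is large. At $\pi n$, the reflection gives the law of $-Y_n$ only for even $n$. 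For odd $n$, $\omega_j\pi n\in\pi(\Z+\frac12)$, so $Y_n(\pi n-\cdot)$ has the law of the cosine field $X_n$, which you use without dividing by $w$.

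The step that does not work as written is the nondegeneracy witness in (a). The resultant $\mathrm{Res}(p_c,p_c')$ vanishes whenever $p_c$ has a multiple root anywhere in $\C$. So you need some $c$ at which \emph{every} root of $p_c$ is simple. The facts you cite concern only the roots in $(0,\pi)$: that $\sin(\lambda^*t)$ has simple zeros there, that this survives perturbation, and that ``we only need $(0,\pi)$''. They therefore do not show $\mathrm{Res}\not\equiv0$. The fiber variant has the same problem, since it also needs a nonzero value of the polynomial somewhere.

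Your own witness fixes this at once. At $c=e_{\lambda^*}$ one has $p_c=z^{2\lambda^*}-1$ when $z=e^{it}$, and $p_c=z^{4\lambda^*}-1$ when $z=e^{it/2}$. The roots are distinct roots of unity, so $\mathrm{Res}(p_c,p_c')\ne0$, and no division by $z^2-1$ is needed. Since $\mathrm{Res}$ is homogeneous in $c$, its restriction to every fiber is then a nonzero polynomial in $c_{\lambda^*}$ with leading coefficient $\mathrm{Res}$ at $e_{\lambda^*}$.

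The paper avoids the forced roots $\pm1$ differently. It writes $\sin(\lambda t)=s(t)p_\lambda(\cos t)$, with $s=\sin t$ or $\sin(t/2)$ positive on $(0,\pi)$ and $\deg p_\lambda=\lceil\lambda\rceil-1$. A multiple zero in $(0,\pi)$ is then a multiple root of $P=\sum_\lambda c_\lambda p_\lambda$. The discriminant of $P$ is nonzero at $e_{\lambda^*}$ because $p_{\lambda^*}$ has the $d$ distinct roots $\cos(j\pi/\lambda^*)$. Once your witness is justified correctly, the two routes are equivalent; the Chebyshev form simply absorbs the endpoint factor into $s(t)$.
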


\begin{proof}
(a) For $\lambda\in\Z$, $\sin(\lambda t)=\sin t\,U_{\lambda-1}(\cos t)$ with the Chebyshev polynomial $U_{\lambda-1}$, and for $\lambda=k+\frac12$, $\sin(\lambda t)=\sin(t/2)(1+2\sum_{i\le k}\cos it)$. So $\sin(\lambda t)=s(t)p_\lambda(\cos t)$ with $s=\sin t$ or $\sin(t/2)$, positive on $(0,\pi)$, and a polynomial $p_\lambda$ of degree $d_\lambda=\lceil\lambda\rceil-1$. The sum is $s(t)P(\cos t)$ with $P=\sum_\lambda c_\lambda p_\lambda$, and a multiple zero gives a multiple root of $P$. Let $\lambda^*=\max\Lambda$ and $d=d_{\lambda^*}$. As the $d_\lambda$ are distinct, $P$ has degree $d$ when $c_{\lambda^*}\ne0$, and for $d\ge1$ its discriminant is a polynomial in $(c_\lambda)$, nonzero at $c_\lambda=\ind\{\lambda=\lambda^*\}$, where $P=p_{\lambda^*}$ has $d$ distinct roots $\cos(j\pi/\lambda^*)$, $1\le j\le d$. Off the null zero sets of $c_{\lambda^*}$ and the discriminant, $P$ has only simple roots.

(b) By \eqref{pre:pairing}, $Y_n(nt)=n^{-1/2}\sum_{\lambda}(\xi_{n/2+\lambda}-\xi_{n/2-\lambda})\sin\lambda t$ is a sum as in (a), and $Y_n(L)$, $Y_n(\pi n-L)$ are nondegenerate Gaussian variables. Intersect over $n$ and with the event of Lemma~\ref{pre:no-circle-roots}. The bulk crossing bound \eqref{pre:bulk-crossing} with $A=n^{-1}I_n$ is then \eqref{ga:crossing}, since the crossing index at a simple zero is the sign of the derivative.

For the tails we use Lemma~\ref{ga:jensen}. For $\Theta_n$ take $F=Y_n$, $\varrho=1$ and $x_0\in I_n$, so that $s^2\ge\frac14$, and a union bound over at most $4n$ centers. Near $0$ take $F(w)=Y_n(w)/w$, $x_0=0$ and $\varrho=L$, with $s^2=\Var Y_n'(0)=(n+1)(n+2)/(12n^2)\ge\frac1{12}$. Near $\pi n$, since $\omega_j\pi n$ lies in $\pi\Z$ for even $n$ and in $\pi(\Z+\frac12)$ for odd $n$, $Y_n(\pi n-\cdot)$ has the law of $-Y_n$ or of $X_n$, and $\Var X_n(0)=(n+1)/n$. The moments follow by integrating the tail of $\Theta_n$ above $A\log(n+2)$.
\end{proof}

\subsection{Gaussian tools}\label{ga:tools-sub}
Sums of signs of a Gaussian field are controlled by a H\"older inequality for functions of overlapping blocks of a Gaussian vector. It makes such sums sub-Gaussian, at the cost of the frame constant $H$ only. Throughout, $G$ denotes a standard Gaussian vector of the dimension at hand.

\begin{lemma}[Gaussian block H\"older inequality]\label{ga:holder}
Let $B_i:\R^m\to\R^{d_i}$, $1\le i\le k$, be linear maps with $B_iB_i^*=\mathrm{Id}$ and $\sum_i\|B_ix\|^2\le H\|x\|^2$, where $H\ge1$. Let $f_i:\R^{d_i}\to[a_i,b_i]$, $0<a_i\le b_i$, be Borel functions continuous almost everywhere. If $G$ and $G_i$ are standard Gaussian vectors in $\R^m$ and $\R^{d_i}$, then
\begin{equation}\label{ga:holder-ineq}
 \E\prod_if_i(B_iG)^{1/H}\le\prod_i\bigl(\E f_i(G_i)\bigr)^{1/H}.
\end{equation}
\end{lemma}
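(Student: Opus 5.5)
This is a Brascamp--Lieb type inequality with equal exponents $1/H$, and I would derive it from the Gaussian form of Barthe's or Lieb's theorem, or more concretely by a semigroup/entropy argument. The cleanest route I would take is duality with relative entropy. For a probability density $\nu$ with respect to the Gaussian measure $\gamma_m$, write $\mathrm{Ent}(\nu\|\gamma_m)$ for its relative entropy. The Donsker--Varadhan (Gibbs) variational formula gives $\log\E e^{\phi(G)}=\sup_\nu\{\int\phi\,\dd\nu-\mathrm{Ent}(\nu\|\gamma_m)\}$. Put $\phi=H^{-1}\sum_i\log f_i\circ B_i$, which is bounded since $0<a_i\le f_i\le b_i$; boundedness is why the hypothesis $a_i>0$ is there, and continuity a.e.\ is not needed at this step.

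Fix $\nu$ with finite entropy and let $\nu_i=\nu\circ B_i^{-1}$ be its push-forward to $\R^{d_i}$. Since $B_iB_i^*=\mathrm{Id}$, $B_iG$ is standard Gaussian in $\R^{d_i}$. The same variational formula on $\R^{d_i}$ gives $\int\log f_i\,\dd\nu_i\le\log\E f_i(G_i)+\mathrm{Ent}(\nu_i\|\gamma_{d_i})$. Summing and dividing by $H$, the claim reduces to the entropy subadditivity
\[
 \sum_i\mathrm{Ent}(\nu\circ B_i^{-1}\|\gamma_{d_i})\le H\,\mathrm{Ent}(\nu\|\gamma_m),
\]
because then $\int\phi\,\dd\nu-\mathrm{Ent}(\nu\|\gamma_m)\le H^{-1}\sum_i\log\E f_i(G_i)$ for every $\nu$, and taking the supremum gives \eqref{ga:holder-ineq}.

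The main obstacle is this entropy inequality; it is the Gaussian case of the geometric Brascamp--Lieb duality (Carlen--Cordero-Erausquin). I would prove it directly by the Ornstein--Uhlenbeck semigroup, or equivalently by de Bruijn's identity: $\mathrm{Ent}(\nu\|\gamma)=\int_0^\infty I(P_t\nu\|\gamma)\,\dd t$, with relative Fisher information $I$. The projections commute with the semigroup, $(P_t\nu)\circ B_i^{-1}=P_t(\nu\circ B_i^{-1})$, because $B_iG$ is standard. So it suffices to prove the Fisher information inequality $\sum_iI(\nu\circ B_i^{-1})\le H\,I(\nu)$. For this, write the score of the marginal as a conditional expectation: if $\psi=\nabla\log(\dd\nu/\dd\gamma_m)$, then the marginal score is $\E_\nu[B_i\psi\mid B_ix]$, using $B_iB_i^*=\mathrm{Id}$ to transport gradients. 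Jensen's inequality for conditional expectations gives $I(\nu_i)\le\int\|B_i\psi\|^2\dd\nu$, and the frame hypothesis $\sum_i\|B_iy\|^2\le H\|y\|^2$ yields the sum bound.

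The technical points are the justification of de Bruijn and the score identity for general $\nu$. I would handle them by first restricting the supremum to smooth, compactly perturbed densities; these suffice in the Gibbs formula by lower semicontinuity and density. Alternatively, one can approximate $f_i$ by smooth functions with the same bounds. Here the a.e.\ continuity of $f_i$, together with bounded convergence, lets us pass to the limit in both sides of \eqref{ga:holder-ineq}.
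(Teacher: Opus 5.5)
Your proof is correct, but it takes a genuinely different route from the paper's.

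\textbf{The paper's route.} The paper does not prove anything of Brascamp--Lieb type itself. It notes that $X=(B_1G,\dots,B_kG)$ is centered Gaussian with covariance $T=\mathcal B\mathcal B^*$, where $\mathcal Bx=(B_ix)_i$. The diagonal blocks of $T$ are $B_iB_i^*=\mathrm{Id}$, and the frame bound gives $T\le H\,\mathrm{Id}$. So the hypothesis $T\le\mathrm{diag}(HT_{11},\dots,HT_{kk})$ of the Gaussian H\"older inequality of Chen, Dafnis and Paouris holds with every exponent equal to $H$. That theorem, applied to the functions $f_i^{1/H}$, is the lemma.

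\textbf{Your route.} You reprove this equal-exponent case through the entropy duality of Carlen and Cordero-Erausquin. The Gibbs variational formula reduces the claim to $\sum_i\mathrm{Ent}(\nu\circ B_i^{-1}\|\gamma_{d_i})\le H\,\mathrm{Ent}(\nu\|\gamma_m)$. That inequality follows by integrating the Fisher information bound $\sum_iI(\nu\circ B_i^{-1})\le H\,I(\nu)$ along the Ornstein--Uhlenbeck flow, which commutes with the $B_i$ because each $B_iG$ is standard.

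\textbf{Comparison.} Your argument is self-contained and shows exactly where the frame constant enters, namely in $\sum_i\|B_i\psi\|^2\le H\|\psi\|^2$. It also handles the inequality $\sum_iB_i^*B_i\le H\,\mathrm{Id}$, rather than equality, at no extra cost. The paper's argument is a three-line reduction to a published theorem. Neither needs the a.e.\ continuity of the $f_i$.

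\textbf{A correction to your regularization step.} Lower semicontinuity of entropy points the wrong way for restricting the Gibbs supremum to smooth densities. You can fix this in either of two ways.
\begin{itemize}
\item Use $\mathrm{Ent}(P_s\nu)\le\mathrm{Ent}(\nu)$, which holds by Jensen since $P_s$ is Markov and preserves $\gamma_m$. Combine it with the $L^1(\gamma_m)$ convergence of the density of $P_s\nu$ to that of $\nu$, which gives $\int\phi\dd(P_s\nu)\to\int\phi\dd\nu$ for bounded $\phi$.
\item Alternatively, prove the subadditivity for $P_s\nu$ and let $s\downarrow0$. Apply lower semicontinuity to the marginal entropies on the left and this monotonicity on the right.
\end{itemize}
It is also enough to treat the maximizer $\nu^*\propto e^{\phi}\gamma_m$. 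Its density is bounded above and below, which makes de Bruijn's identity and the score formula routine.
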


\begin{proof}
The vector $X=(B_1G,\dots,B_kG)$ is centered Gaussian with covariance matrix $T=\mathcal B\mathcal B^*$, where $\mathcal Bx=(B_ix)_i$. Its diagonal blocks are $B_iB_i^*=\mathrm{Id}$, and $T\le\|\mathcal B\|^2\mathrm{Id}\le H\,\mathrm{Id}$, since $\|\mathcal Bx\|^2=\sum_i\|B_ix\|^2\le H\|x\|^2$. This is the hypothesis $T\le\mathrm{diag}(HT_{11},\dots,HT_{kk})$ of the Gaussian H\"older inequality of Chen, Dafnis and Paouris \cite[Theorem~1(i)]{ChenDafnisPaouris2015}, with every exponent equal to $H$. That inequality holds for all nonnegative Borel functions. Applied to the functions $f_i^{1/H}$, it gives \eqref{ga:holder-ineq}, because each $B_iG$ is a standard Gaussian vector.
\end{proof}

\begin{corollary}[Block Bernstein bound and signs]\label{ga:bernstein}
\textup{(a)} In the setting of Lemma~\ref{ga:holder}, let $\phi_i:\R^{d_i}\to\R$ be bounded Borel functions continuous almost everywhere, with $m_i=\E\phi_i(G_i)$, $|\phi_i-m_i|\le b$ and $V=\sum_i\Var\phi_i(G_i)$. Then for $s>0$
\begin{equation}\label{ga:block-bernstein}
 \Prob\Bigl(\Bigl|\sum_i(\phi_i(B_iG)-m_i)\Bigr|>s\Bigr)\le2\exp\Bigl(-\frac{s^2}{2H(V+bs/3)}\Bigr).
\end{equation}
\textup{(b)} Let $G_1,\dots,G_k$ be centered jointly Gaussian with unit variances and correlation matrix of operator norm at most $M\ge1$. For real $w_i$, $\alpha_i$ and $\lambda$,
\begin{equation}\label{ga:sign-mgf}
 \E\exp\Bigl(\lambda\sum_iw_i\{\sgn(G_i+\alpha_i)-\E\sgn(G_i+\alpha_i)\}\Bigr)\le\exp\Bigl(\frac{M\lambda^2}2\sum_iw_i^2\Bigr).
\end{equation}
\end{corollary}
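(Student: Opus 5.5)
Part (a) is a Chernoff argument built on the H\"older inequality of Lemma~\ref{ga:holder}. I will bound the moment generating function of $S=\sum_i(\phi_i(B_iG)-m_i)$ and then optimize.

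Fix $\lambda>0$ and put $f_i=\exp(H\lambda(\phi_i-m_i))$. These are bounded Borel functions, continuous almost everywhere, with values in $[e^{-H\lambda b},e^{H\lambda b}]$, so Lemma~\ref{ga:holder} applies. Since $f_i^{1/H}=e^{\lambda(\phi_i-m_i)}$, it gives
\[
 \E e^{\lambda S}\le\prod_i\bigl(\E e^{H\lambda(\phi_i(G_i)-m_i)}\bigr)^{1/H}.
\]
Each factor on the right is now a single-variable object: a centered variable bounded by $b$, with variance $v_i=\Var\phi_i(G_i)$. The classical Bernstein moment bound gives $\E e^{\mu Y}\le\exp\bigl(\mu^2v_i/(2(1-\mu b/3))\bigr)$ for $0<\mu<3/b$. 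Taking $\mu=H\lambda$ and then the power $1/H$ yields
\[
 \E e^{\lambda S}\le\exp\Bigl(\frac{H\lambda^2V}{2(1-H\lambda b/3)}\Bigr).
\]
This is the Bernstein bound with variance proxy $HV$ and range $Hb$. The usual choice $\lambda=s/(H(V+bs/3))$ then gives $\Prob(S>s)\le\exp(-s^2/(2H(V+bs/3)))$. Applying the same argument to $-\phi_i$ handles the lower tail, which supplies the factor $2$ in \eqref{ga:block-bernstein}.

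For (b) the plan is to realize the $G_i$ as $B_iG$ with one-dimensional blocks. Let $\Gamma$ be the correlation matrix and $G$ standard in $\R^k$, and put $B_ix=\langle\Gamma^{1/2}e_i,x\rangle$. Then $B_iB_i^*=\Gamma_{ii}=1$, and $\sum_i|B_ix|^2=\|\Gamma^{1/2}x\|^2\le M\|x\|^2$, so we may take $H=M$. The functions $\phi_i(g)=w_i\sgn(g+\alpha_i)$ are continuous off one point, and $f_i=\exp(M\lambda(\phi_i-m_i))$ is bounded away from $0$ and $\infty$.

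Lemma~\ref{ga:holder} then reduces \eqref{ga:sign-mgf} to a one-variable estimate: $Y=w_i(\sgn(g+\alpha_i)-\E\sgn(g+\alpha_i))$ satisfies $\E e^{M\lambda Y}\le e^{M^2\lambda^2w_i^2/2}$. This is Hoeffding's lemma, because $Y$ is centered and takes values in an interval of length $2|w_i|$, so $\E e^{\mu Y}\le e^{\mu^2(2w_i)^2/8}=e^{\mu^2w_i^2/2}$. Raising to the power $1/M$ and taking the product over $i$ gives $\exp(M\lambda^2\sum_iw_i^2/2)$, which is \eqref{ga:sign-mgf}. The case $\lambda<0$ follows by replacing $w_i$ with $-w_i$.

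The only real obstacle is the bookkeeping of exponents. The H\"older inequality costs a factor $1/H$ on the outside, so the one-variable MGF must be evaluated at $H\lambda$. It is this rescaling that produces exactly one factor of $H$ (respectively $M$) in the final exponent rather than $H^2$. In (b) I must also check that $\Gamma^{1/2}$ gives unit-norm rows, which holds because $\Gamma$ has unit diagonal.
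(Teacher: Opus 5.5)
Your proposal is correct and follows the paper's proof essentially step for step. In (a) you apply Lemma~\ref{ga:holder} to $f_i=\exp(H\lambda(\phi_i-m_i))$, use the one-variable Bernstein moment bound at $H\lambda$, and conclude with Markov's inequality at $\lambda=s/(H(V+bs/3))$ plus the same argument for $-\phi_i$; in (b) you write $G_i=\langle\Gamma^{1/2}e_i,G\rangle$ with frame constant $M$ and use Hoeffding's lemma at $M\lambda$, exactly as the paper does (the one unstated detail, also unstated in the paper, is that $H\lambda b<3$ needs $V>0$, and if $V=0$ then $S=0$ almost surely since each $B_iG$ is standard Gaussian, so the bound is trivial).
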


\begin{proof}
(a) A centered variable $Z$ with $|Z|\le b$ satisfies $\E e^{\lambda Z}\le\exp\{\lambda^2\E Z^2/(2(1-b|\lambda|/3))\}$ for $b|\lambda|<3$, since $|\E Z^j|\le b^{j-2}\E Z^2$ and $j!\ge2\cdot3^{j-2}$. Apply \eqref{ga:holder-ineq} to $f_i=\exp(H\lambda(\phi_i-m_i))$ and Markov's inequality with $\lambda=s/(H(V+bs/3))$, and the same to $-\phi_i$.

(b) Write $G_i=\langle r_i,G\rangle$ with unit rows $r_i$. Then $\sum_i\langle r_i,x\rangle^2\le M\|x\|^2$, because the matrix $R$ with rows $r_i$ satisfies $RR^*\le M$. By Hoeffding's lemma, a centered variable with values in an interval of length two has $\log\E e^{\mu Z}\le\mu^2/2$. Apply \eqref{ga:holder-ineq} with $H=M$ to $f_i(z)=\exp(M\lambda w_i\{\sgn(z+\alpha_i)-\E\sgn(G_i+\alpha_i)\})$.
\end{proof}

\begin{lemma}[Lipschitz concentration]\label{ga:interpolation}
If $F:\R^d\to\R$ is $\Lambda$-Lipschitz, then for real $\lambda$ and $s>0$
\[
 \E e^{\lambda(F(G)-\E F(G))}\le e^{\pi^2\lambda^2\Lambda^2/8},\qquad
 \Prob\bigl(|F(G)-\E F(G)|>s\bigr)\le2e^{-2s^2/(\pi^2\Lambda^2)}.
\]
\end{lemma}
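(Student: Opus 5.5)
This is the Gaussian Lipschitz concentration inequality with the Maurey--Pisier constant $\pi^2/8$, and I would prove it by Pisier's rotation (interpolation) argument. First reduce to smooth $F$. Convolving with a narrow Gaussian mollifier gives $C^\infty$ functions $F_\varepsilon$ that are still $\Lambda$-Lipschitz, so $|\nabla F_\varepsilon|\le\Lambda$ everywhere, and $F_\varepsilon\to F$ uniformly. Since $|F(x)|\le|F(0)|+\Lambda|x|$, all exponential moments of $F(G)$ are finite. Fatou's lemma, or dominated convergence using uniform convergence, then passes both bounds to the limit. Centering is harmless, so we may also assume $\E F(G)=0$.

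For the moment generating function bound, let $G'$ be an independent copy of $G$. Put $G_\theta=G\sin\theta+G'\cos\theta$ and $\dot G_\theta=G\cos\theta-G'\sin\theta$ for $\theta\in[0,\pi/2]$. Then
\[
F(G)-F(G')=\int_0^{\pi/2}\langle\nabla F(G_\theta),\dot G_\theta\rangle\dd\theta .
\]
Since $\E F(G')=0$, Jensen's inequality in $G'$ gives $\E e^{\lambda F(G)}\le\E e^{\lambda(F(G)-F(G'))}$. Write the integral as an average against $\frac2\pi\dd\theta$, that is, as $\frac\pi2\cdot\frac2\pi\int$. Jensen's inequality for the exponential over this probability measure then gives
\[
\E e^{\lambda F(G)}\le\frac2\pi\int_0^{\pi/2}\E\exp\Bigl(\frac{\pi\lambda}{2}\langle\nabla F(G_\theta),\dot G_\theta\rangle\Bigr)\dd\theta .
\]
For each fixed $\theta$, the pair $(G_\theta,\dot G_\theta)$ is a pair of independent standard Gaussian vectors, since it is an orthogonal rotation of $(G,G')$. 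Condition on $G_\theta$. The inner product is then centered Gaussian with variance $|\nabla F(G_\theta)|^2\le\Lambda^2$, so the conditional expectation is at most $e^{\pi^2\lambda^2\Lambda^2/8}$. Averaging over $\theta$ gives the first inequality.

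The tail bound follows from Chernoff's method. By Markov's inequality, $\Prob(F-\E F>s)\le e^{-\lambda s+\pi^2\lambda^2\Lambda^2/8}$. Optimizing at $\lambda=4s/(\pi^2\Lambda^2)$ gives $e^{-2s^2/(\pi^2\Lambda^2)}$. Applying the same bound to $-F$ and adding the two tails produces the factor $2$.

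No step is a real obstacle. The only care needed is in the smoothing step, to justify differentiating along the interpolation path and the limit passage. Alternatively one may cite Pisier's inequality directly, as in \cite{BLM2013}.
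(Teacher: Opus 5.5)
Your proof is correct, but it takes a different route from the paper. The paper proves nothing at this point. It cites the sharp Gaussian concentration inequality of Tsirelson--Ibragimov--Sudakov, \cite[Theorems~5.5 and~5.6]{BLM2013}, which gives $\E e^{\lambda(F(G)-\E F(G))}\le e^{\lambda^2\Lambda^2/2}$ and, applied to $F$ and $-F$, $\Prob(|F(G)-\E F(G)|>s)\le2e^{-s^2/(2\Lambda^2)}$. It then notes that these imply the stated bounds because $\frac12<\frac{\pi^2}8$.

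You instead prove the weaker Maurey--Pisier form directly. Your steps are:
\begin{enumerate}
\item symmetrize by Jensen's inequality in $G'$;
\item interpolate along $G_\theta=G\sin\theta+G'\cos\theta$;
\item apply Jensen's inequality over $\frac2\pi\dd\theta$;
\item compute conditionally, using that $(G_\theta,\dot G_\theta)$ is a rotation of $(G,G')$ and so a pair of independent standard Gaussians.
\end{enumerate}
Each step checks. The Chernoff optimization at $\lambda=4s/(\pi^2\Lambda^2)$ gives exactly $e^{-2s^2/(\pi^2\Lambda^2)}$. Your mollification step is also sound: Gaussian convolution preserves the Lipschitz constant and converges uniformly, so the moment generating functions converge.

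The citation is one line and gives better constants than the paper ever needs. Your argument is self-contained and elementary, with no log-Sobolev or semigroup machinery. It also produces exactly the constant $\pi^2/8$ in the statement, which is presumably why the lemma is stated with that weaker constant at all.

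One small correction to your closing remark: the paper does not cite Pisier's inequality. It cites the stronger Tsirelson--Ibragimov--Sudakov bound.
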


\begin{proof}
The Gaussian concentration inequality \cite[Theorems~5.5 and~5.6]{BLM2013} gives the stronger bounds $e^{\lambda^2\Lambda^2/2}$ and, applied to $F$ and to $-F$, $2e^{-s^2/(2\Lambda^2)}$, since $\frac12<\frac{\pi^2}8$.
\end{proof}

We also use the Poincar\'e inequality $\Var F(G)\le\E\|\nabla F(G)\|^2$ of Lemma~\ref{pre:gaussian}.

\begin{lemma}[Second-order normal approximation]\label{ga:chatterjee}
\textup{(a)} \cite[Theorem~2.2]{Chatterjee} Let $g:\R^d\to\R$ be $C^2$, with $g(G)$, $\|\nabla g(G)\|$ and $\|\nabla^2g(G)\|_{\rm op}$ in $L^4$ and $s^2=\Var g(G)>0$. Then
\begin{equation}\label{ga:chatterjee-bound}
 d_{\rm TV}\bigl(g(G),\mathcal N(\E g(G),s^2)\bigr)\le\frac{2\sqrt5}{s^2}\,\|\|\nabla g(G)\|\|_4\,\|\|\nabla^2g(G)\|_{\rm op}\|_4 .
\end{equation}
\textup{(b)} If $X,Y\in L^2$ and $t>0$, then
\[
 d_K\bigl(X,\mathcal N(0,1)\bigr)\le d_K\bigl(Y,\mathcal N(0,1)\bigr)+\frac t{\sqrt{2\pi}}+\frac{\E(X-Y)^2}{t^2}.
\]
If $X,Y$ are centered with positive variances, then $|\sqrt{\Var X}-\sqrt{\Var Y}|\le\|X-Y\|_2$ and $\|X/\sqrt{\Var X}-Y/\sqrt{\Var Y}\|_2\le2\|X-Y\|_2/\sqrt{\Var X}$.
\end{lemma}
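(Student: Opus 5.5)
Part (a) is quoted directly from Chatterjee's second-order Poincaré inequality \cite[Theorem~2.2]{Chatterjee}, so nothing is to be proved beyond checking that the stated integrability hypotheses match those of the reference. The only substantive content is part (b), which I would prove by elementary coupling and Chebyshev arguments. Both parts will later be combined: (a) applied to a smoothed crossing sum $g(G)$, and (b) to transfer the normal approximation to $D_n$.

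For the Kolmogorov bound, fix $x\in\R$ and $t>0$. The inclusions
\[
\{X\le x\}\subset\{Y\le x+t\}\cup\{|X-Y|>t\},\qquad \{Y\le x-t\}\subset\{X\le x\}\cup\{|X-Y|>t\}
\]
give
\[
\Prob(Y\le x-t)-\Prob(|X-Y|>t)\le\Prob(X\le x)\le\Prob(Y\le x+t)+\Prob(|X-Y|>t).
\]
Chebyshev's inequality bounds $\Prob(|X-Y|>t)\le\E(X-Y)^2/t^2$. Replacing $\Prob(Y\le x\pm t)$ by $\Phi(x\pm t)$ costs at most $d_K(Y,\mathcal N(0,1))$. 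Finally, $|\Phi(x\pm t)-\Phi(x)|\le t/\sqrt{2\pi}$, since the normal density is at most $1/\sqrt{2\pi}$. Taking the supremum over $x$ gives the first display.

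For the variance statements, write $\sqrt{\Var X}=\|X\|_2$ for centered variables. The reverse triangle inequality in $L^2$ then gives $|\|X\|_2-\|Y\|_2|\le\|X-Y\|_2$. For the normalized comparison, split
\[
\frac X{\|X\|_2}-\frac Y{\|Y\|_2}=\frac{X-Y}{\|X\|_2}+Y\Bigl(\frac1{\|X\|_2}-\frac1{\|Y\|_2}\Bigr).
\]
The first term has norm $\|X-Y\|_2/\|X\|_2$. The second has norm
\[
\|Y\|_2\cdot\frac{|\|Y\|_2-\|X\|_2|}{\|X\|_2\|Y\|_2}\le\frac{\|X-Y\|_2}{\|X\|_2}.
\]
Summing the two gives the factor $2$.

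I do not expect any real obstacle. The only care needed is with the strict versus non-strict inequalities, which are harmless since $\Phi$ is continuous, and with checking that the constants in (a) agree with those in the cited theorem.
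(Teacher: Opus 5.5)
Your proposal is correct and follows the paper's argument: part (a) is taken directly from Chatterjee's Theorem~2.2, whose constant $2\sqrt5$ matches. For the Kolmogorov bound, the paper also uses the sandwich $\Prob(Y\le x-t)-\Prob(|X-Y|>t)\le\Prob(X\le x)\le\Prob(Y\le x+t)+\Prob(|X-Y|>t)$ together with Chebyshev's inequality and the bound $1/\sqrt{2\pi}$ on the normal density, and it obtains the variance comparisons from the triangle inequality in $L^2$ as in your split.
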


\begin{proof}
(b) $\Prob(X\le x)\le\Prob(Y\le x+t)+\Prob(|X-Y|>t)\le\Phi(x)+t/\sqrt{2\pi}+d_K(Y,\mathcal N(0,1))+\E(X-Y)^2/t^2$, and the lower bound is symmetric. The rest is the triangle inequality in $L^2$.
\end{proof}

\subsection{The smoothed current and the variance}\label{ga:variance-sub}
The crossing sum $Q_n$ of \eqref{ga:crossing} is a discontinuous function of the coefficients. To compute its variance, and later to apply the second-order Poincar\'e bound, we replace its signs by smoothed signs. For $0<\varepsilon\le1$ put $h_\varepsilon(z)=2\Phi(z/\varepsilon)-1=\E\sgn(z+\varepsilon G)$, with $G$ standard Gaussian, and
\begin{equation}\label{ga:current-def}
 F_{n,\varepsilon}=\frac12\int_{I_n}h_\varepsilon(U_n(x))\,h_\varepsilon'(V_n(x))\,V_n'(x)\dd x .
\end{equation}
Near a simple zero of $V_n$ the factor $h_\varepsilon'(V_n)V_n'$ is close to twice the Dirac mass with orientation $\sgn V_n'$, so $F_{n,\varepsilon}$ is a smoothed form of $Q_n$, and it is a smooth function of $(\xi_0,\dots,\xi_n)$. Let $P_tf(z)=\E f(e^{-t}z+\sqrt{1-e^{-2t}}\,G)$ be the Ornstein--Uhlenbeck semigroup acting on functions of this vector, and write $I_n=[a,b]$.

\begin{lemma}[Smoothed current]\label{ga:current}
Let $n\ge n_0$, $0<\varepsilon\le1$, $\kappa=(1+\varepsilon^2)^{-1}$, $A_\kappa=\arcsin(\kappa\rho_+)$ and $B_\kappa=\arcsin(\kappa\rho_-)$.
\begin{enumerate}[label=\textup{(\alph*)}]
\item $\E F_{n,\varepsilon}=\E Q_n=0$, $|F_{n,\varepsilon}|\le6n+1$, and $F_{n,\varepsilon}\to Q_n$ almost surely and in $L^2$ as $\varepsilon\downarrow0$.
\item The arcsine identity
\begin{equation}\label{ga:arcsine}
 \Var F_{n,\varepsilon}=\frac1{\pi^2}\Bigl\{\int_a^b\bigl[A_\kappa\partial_yB_\kappa\bigr]_{x=a}^{x=b}\dd y-\iint_{I_n^2}\partial_xA_\kappa\,\partial_yB_\kappa\dd x\dd y\Bigr\}
\end{equation}
holds, and it holds for $\Var Q_n$ with $\kappa=1$, the integrands being defined off the diagonal.
\item $F_{n,\varepsilon}=P_tQ_n$ with $t=\frac12\log(1+\varepsilon^2)$. In particular $\|Q_n-F_{n,\varepsilon}\|_2^2\le\Var Q_n-\Var F_{n,\varepsilon}$.
\end{enumerate}
\end{lemma}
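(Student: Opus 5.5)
I would prove the three parts in the order (c), (a), (b). The semigroup identity (c) gives a clean route to the variance formula.

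\emph{Step 1 (the semigroup identity (c)).} Fix $n$ and $x\in I_n$. The field $U_n(x)$ depends only on the cosine coordinates, $V_n(x)$ only on the sine coordinates, and the two families are independent. The Ornstein--Uhlenbeck semigroup acts on linear functionals of $\xi$ by $e^{-t}z+\sqrt{1-e^{-2t}}G'$, with $G'$ an independent copy. Applied to the unit-variance variable $U_n(x)$, this gives
\[
 P_t\sgn(U_n(x))=\E'\sgn\bigl(e^{-t}U_n(x)+\sqrt{1-e^{-2t}}\,G\bigr)=h_{\varepsilon'}(e^{-t}U_n(x)),\qquad \varepsilon'=\sqrt{1-e^{-2t}}/e^{-t}.
\]
With $e^{-2t}=\kappa=(1+\varepsilon^2)^{-1}$ we get $\varepsilon'/e^{-t}\cdot e^{-t}$, and after rescaling this is $h_\varepsilon(U_n(x))$. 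The sum $\sum\sgn V_n'\,\delta(V_n)$ over crossings is handled the same way. It is the current $\frac12\frac{d}{dx}\sgn V_n(x)$ integrated against a test function. I would write
\[
 Q_n=\tfrac12\int_{I_n}\sgn U_n(x)\,\dd_x\sgn V_n(x),
\]
which is valid on the event of Lemma~\ref{ga:zeros}(b). I would then apply $P_t$ to the product, using independence of the two coordinate blocks: $P_t$ factors as $P_t^{\cos}\otimes P_t^{\sin}$. The delicate point is to commute $P_t$ with $\dd_x$ and the integral. I would do this by first smoothing $\sgn V_n$ to $h_\delta(V_n)$, for which everything is smooth with bounded moments (Lemma~\ref{ga:fields}(a) gives $v_n^\pm\ge\frac14$), and then letting $\delta\downarrow0$ by dominated convergence.

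The boundary terms at $a,b$ need care. I expect they are absorbed because the expression is an exact derivative in $x$ only for the $V$ factor, so no boundary terms actually appear. The $L^2$ inequality is then the standard contraction fact. Indeed $P_t$ is self-adjoint with $\|P_tQ\|_2\le\|Q\|_2$, and $\E Q P_tQ=\|P_{t/2}Q\|_2^2\ge\|P_tQ\|_2^2$. Hence $\|Q-P_tQ\|_2^2=\|Q\|^2-2\E QP_tQ+\|P_tQ\|^2\le\|Q\|^2-\|P_tQ\|^2$, and the means vanish.

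\emph{Step 2 (part (a)).} $\E Q_n=0$ because $X_n\mapsto-X_n$ preserves the law and flips $Q_n$ (independence of $X_n$ and $Y_n$). The same argument gives $\E F_{n,\varepsilon}=0$. For the bound $|F_{n,\varepsilon}|\le 6n+1$, note $|h_\varepsilon|\le1$ and $\int h_\varepsilon'(V)V'\,\dd x$ over a monotone piece of $V$ is at most $2$. The number of monotone pieces on $I_n$ is at most one plus the number of zeros of $V_n'$. This is a trigonometric sum of degree $\le n/2$ in $t$ over an interval of length $\pi$, so it has at most about $n$ zeros by Lemma~\ref{pre:carrier}(b). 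This yields the stated constant. Almost sure convergence holds on the event where $V_n$ has simple zeros and $U_n\ne0$ at them, using the monotone-piece decomposition. $L^2$ convergence follows by bounded convergence, since $|Q_n|\le n$.

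\emph{Step 3 (the arcsine identity (b)).} Write
\[
 \Var F_{n,\varepsilon}=\tfrac14\iint\E[h_\varepsilon(U(x))h_\varepsilon(U(y))]\;\E\bigl[\partial_x h_\varepsilon(V(x))\,\partial_y h_\varepsilon(V(y))\bigr]\dd x\dd y,
\]
using independence. The Gaussian arcsine law for smoothed signs gives $\E h_\varepsilon(U(x))h_\varepsilon(U(y))=\frac2\pi A_\kappa$, since $h_\varepsilon(U)=\E'\sgn(U+\varepsilon G)$ with correlation $\kappa\rho_+$. Pulling derivatives out, the second factor is $\partial_x\partial_y\frac2\pi B_\kappa$. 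I would then integrate by parts in $x$, which produces the boundary term and the double integral with the stated signs and the prefactor $\frac{1}{\pi^2}$. This is legitimate since $A_\kappa,B_\kappa$ are smooth for $\kappa<1$, by Lemma~\ref{ga:fields}(b).

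For $\kappa=1$, I would let $\varepsilon\downarrow0$. The left side converges to $\Var Q_n$ by (a). On the right, the integrands converge off the diagonal and are dominated by $C\min(1,|x-y|^{-1})^2$ via \eqref{ga:arcsine-derivative}, which is integrable near the diagonal. The boundary term is dominated similarly.

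\emph{Main obstacle.} I expect the main obstacle to be the rigorous justification in (c): interchanging $P_t$ with the distributional current $\dd\sgn V_n$. I would try first the $h_\delta$ regularization combined with the uniform zero-count tails \eqref{ga:occupancy}.
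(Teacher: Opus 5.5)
Your outline is sound, and several parts match the paper. Part (b) is the paper's argument: Sheppard's formula for both factors, differentiation under the expectation, integration by parts in $x$, and the uniform-in-$\kappa$ domination from \eqref{ga:arcsine-derivative} to reach $\kappa=1$. The contraction inequality closing (c) is also identical.

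Two steps take a different route from the paper.
\begin{itemize}
\item For $\E Q_n=\E F_{n,\varepsilon}=0$ you flip the cosine field, $(X_n,Y_n)\mapsto(-X_n,Y_n)$, which preserves the law by independence. The paper uses coefficient reversal, which flips $Y_n$. Both work.
\item For $F_{n,\varepsilon}=P_tQ_n$, the paper smooths both factors and computes $P_tF_{n,\eta}=F_{n,\alpha}$ with $\alpha^2=e^{2t}(\eta^2+1-e^{-2t})$. It uses that the noises $\langle u(x),\xi'\rangle$, $\langle v(x),\xi'\rangle$, $\langle v'(x),\xi'\rangle$ are independent, because cosine and sine rows are orthogonal and $v'\perp v$. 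You instead tensorize $P_t$ over the cosine and sine blocks and commute it with $\partial_x$. Since $P_t[h_\delta(V_n(x))]=h_\alpha(V_n(x))$ for every $x$, and differentiating under $\E'$ is legitimate, you get $P_t[h_\delta'(V_n)V_n']=h_\alpha'(V_n)V_n'$. This is the fact $v'\perp v$ in disguise. It is valid, and no boundary terms arise because nothing is integrated by parts and $V_n(a)V_n(b)\ne0$.
\end{itemize}
Two points in your (c) need adjusting. The limit $\delta\downarrow0$ needs the $L^2$ convergence of $\frac12\int_{I_n}\sgn U_n\,h_\delta'(V_n)V_n'\,\dd x$ to $Q_n$. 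That is the domination argument of (a), so (a) must precede (c), not follow it. Also correct the algebra: $\E'\sgn(e^{-t}U+\sigma G)=h_\sigma(e^{-t}U)=h_{e^t\sigma}(U)$ with $\sigma=\sqrt{1-e^{-2t}}$, and $e^t\sigma=\varepsilon$.

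The step that is wrong as written is the count behind $|F_{n,\varepsilon}|\le6n+1$. In this section $V_n=Y_n/\sqrt{v_n^-}$ is the normalized field. So $V_n'$ is not a trigonometric sum of degree $n/2$, and Lemma~\ref{pre:carrier}(b) does not apply to it. Since $V_n'=\tfrac12(v_n^-)^{-3/2}\bigl(2v_n^-Y_n'-Y_n(v_n^-)'\bigr)$, the monotone pieces of $V_n$ are delimited by the zeros of $2v_n^-Y_n'-Y_n(v_n^-)'$. Here $v_n^-=\frac12\{R_n(0)-R_n(2x)\}$ carries frequencies up to $n$ in $t=x/n$. So this function is a trigonometric polynomial in $t$ with frequencies in $\frac12\Z$ of size at most $\frac32n$. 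It is almost surely not identically zero, since $V_n(L)\ne V_n(L+1)$, hence it has at most $6n$ zeros in $I_n$. The total variation of $h_\varepsilon\circ V_n$ is then at most $2(6n+1)$, which is exactly where the constant comes from. Your count of ``about $n$'' zeros does not follow, and ``this yields the stated constant'' is unjustified. The repair is easy, and any deterministic bound uniform in $\varepsilon$ suffices for bounded convergence, but it has to be made.
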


\begin{proof}
(a) The reversal $\xi_j\mapsto\xi_{n-j}$, $j\le n$, preserves the law, fixes $X_n$ and changes the sign of $Y_n$, hence of $Q_n$ and, as $h_\varepsilon'$ is even, of $F_{n,\varepsilon}$. On the event of Lemma~\ref{ga:zeros}, $h_\varepsilon'(V_n)\to0$ uniformly away from the zeros of $V_n$, and near a zero $x_k$ the substitution $w=V_n(x)$ turns the integral into $\frac12\sgn V_n'(x_k)\int h_\varepsilon(U_n(x(w)))h_\varepsilon'(w)\dd w\to\sgn U_n(x_k)\sgn V_n'(x_k)$, as $\int h_\varepsilon'=2$. For domination, $V_n'$ has the sign of $2v_n^-Y_n'-Y_n(v_n^-)'$, a trigonometric polynomial in $t=x/n$ with frequencies in $\frac12\Z$ of size at most $\frac32n$, not identically zero since $V_n(L)\ne V_n(L+1)$ almost surely. So $V_n'$ has at most $6n$ zeros in $I_n$, $|F_{n,\varepsilon}|$ is at most half the total variation $2(6n+1)$ of $h_\varepsilon\circ V_n$, and bounded convergence gives the $L^2$ limit.

(b) Let $G'$ be an independent copy of $G$. Since $U_n(x)+\varepsilon G$ and $U_n(y)+\varepsilon G'$ have correlation $\kappa\rho_+(x,y)$, Sheppard's formula gives the first identity below. The second is the same for $V_n$, and differentiation under the expectation gives the third:
\begin{align*}
 \E h_\varepsilon(U_n(x))h_\varepsilon(U_n(y))&=\tfrac2\pi A_\kappa(x,y),\\
 \E h_\varepsilon(V_n(x))h_\varepsilon(V_n(y))&=\tfrac2\pi B_\kappa(x,y),\\
 \E\bigl[(h_\varepsilon'(V_n)V_n')(x)\,(h_\varepsilon'(V_n)V_n')(y)\bigr]&=\tfrac2\pi\,\partial_x\partial_yB_\kappa(x,y).
\end{align*}
By the independence of $X_n$ and $Y_n$, $\E F_{n,\varepsilon}^2=\pi^{-2}\iint A_\kappa\partial_x\partial_yB_\kappa$, and an integration by parts in $x$ gives \eqref{ga:arcsine}. As $\varepsilon\downarrow0$ the left side tends to $\Var Q_n$ by (a). By Lemma~\ref{ga:fields}(b) the boundary integrand is at most $C\min(1,|b-y|^{-1})+C\min(1,|a-y|^{-1})$ and the double integrand at most $C\min(1,|x-y|^{-2})$, uniformly in $\kappa$, so dominated convergence gives the identity at $\kappa=1$.

(c) Let $\xi'$ be an independent copy and $\sigma=\sqrt{1-e^{-2t}}$. At fixed $x$ the noises $\langle u(x),\xi'\rangle$, $\langle v(x),\xi'\rangle$ and $\langle v'(x),\xi'\rangle$, where $u,v$ are the rows of $U_n,V_n$, are independent, because cosine and sine rows are orthogonal and $v'\perp v$. Averaging $h_\eta(U_n)h_\eta'(V_n)V_n'$ over the noise therefore gives $h_\alpha(U_n)h_\alpha'(V_n)V_n'$ with $\alpha^2=e^{2t}(\eta^2+\sigma^2)$, that is, $P_tF_{n,\eta}=F_{n,\alpha}$. As $\eta\downarrow0$, $\alpha\to\varepsilon$, and (a) with the $L^2$ contractivity of $P_t$ gives $P_tQ_n=F_{n,\varepsilon}$. Finally $\langle Q_n,P_tQ_n\rangle=\|P_{t/2}Q_n\|_2^2\ge\|P_tQ_n\|_2^2$.
\end{proof}

\begin{proposition}[Variance]\label{ga:variance}
The constant $c_G$ of Theorem~\ref{thm:B} is finite and positive, and
\begin{equation}\label{ga:var-rate}
 |\Var D_n-c_Gn|\le Cn^{99/100}\qquad(n\ge1).
\end{equation}
\end{proposition}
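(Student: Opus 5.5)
We will work from the arcsine identity of Lemma~\ref{ga:current}(b) at $\kappa=1$, which gives $\Var Q_n$ exactly. We then transfer to $D_n$ using \eqref{ga:crossing} and the tails \eqref{ga:occupancy}. Since $|D_n-Q_n|\le1+N_n^{\rm end}$ and $N_n^{\rm end}$ has exponential tails uniformly in $n$, $\|D_n-Q_n\|_2\le C$. Lemma~\ref{ga:chatterjee}(b) then gives $|\sqrt{\Var D_n}-\sqrt{\Var Q_n}|\le C$, and hence $|\Var D_n-\Var Q_n|\le C\sqrt n$ once $\Var Q_n=O(n)$. So it suffices to show $|\Var Q_n-c_Gn|\le Cn^{99/100}$. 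This also needs $c_G\in(0,\infty)$: finiteness holds because $r'(u)^2=O(u^{-2})$ at infinity and $1-r(u)^2\asymp u^2$ near $0$ while $r'(u)^2\asymp u^2$ there; positivity is clear since the integrand is nonnegative and not identically zero.

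\textbf{Boundary term.} By \eqref{ga:arcsine-derivative}, $|\partial_yB_1|\le C\min(1,|x-y|^{-1})$. At $x=a$ or $x=b$ the integral over $y\in I_n$ is therefore $O(\log n)$, since $|A_1|\le\pi/2$.

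\textbf{Double integral.} This is the main term. I would split $I_n^2$ into the region $|x-y|\le T$ and its complement, with $T=n^{\alpha}$ for some small $\alpha$ to be chosen.

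\emph{Far region.} Here $|\partial_xA_1\partial_yB_1|\le C|x-y|^{-2}$, so this region contributes $O(n/T)$.

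\emph{Near region, away from reflections.} Write $\rho_\pm(x,y)=(R_n(x-y)\pm R_n(x+y))/(2\sqrt{v^\pm_n(x)v^\pm_n(y)})$. Away from the set where $x+y$ or $2\pi n-(x+y)$ is at most $n^{\beta}$, the reflected terms and $v_n^\pm-\frac12$ are $O(n^{-\beta})$, together with their first two derivatives, by Lemma~\ref{ga:fields}(a). Moreover $R_n(v)=r(v)+O(|v|^2/n+1/n)$ uniformly for $|v|\le T$, by the Riemann sum with midpoint weights; since $\omega_j$ runs over $[-\frac12,\frac12]$, the limit is exactly $\sin(v/2)/(v/2)=r(v)$. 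Thus on these pieces $\rho_+$ and $\rho_-$ are both close to $r(x-y)$.

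We then need to compare $\partial_xA_1\partial_yB_1$ with $-\partial_u\arcsin r(u)\cdot\partial_u\arcsin r(u)$ evaluated at $u=x-y$. Off the diagonal we have $\partial_u\arcsin r=r'/\sqrt{1-r^2}$, and the product is $-r'(u)^2/(1-r(u)^2)$. Integrating over $x\in I_n$ at fixed $u$ gives length about $\pi n$. The total is then $-\pi n\int_{|u|\le T}r'^2/(1-r^2)\,\dd u$, and the minus sign in \eqref{ga:arcsine} turns this into $\pi^2 c_G n\cdot\pi^{-2}=c_Gn$, up to the $O(n/T)$ tail of the $u$-integral.

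\textbf{Main obstacle: the diagonal singularity.} Near the diagonal, $1-\rho^2\asymp u^2$, and the integrand perturbation is not uniformly small in relative terms. I would handle this by cutting out $|x-y|\le n^{-\gamma}$. There the integrand is bounded by $C$ from \eqref{ga:arcsine-derivative}, so this strip contributes $O(n^{1-\gamma})$. On $n^{-\gamma}\le|u|\le T$ we use the relative perturbation bound
\[
|\delta\rho|/(1-\rho^2)\le C(n^{-\beta}+T^2/n)\,n^{2\gamma}.
\]
One subtlety is that the error in $\rho$ itself must vanish to second order at the diagonal, which holds because $\rho_\pm(x,x)=1$ exactly. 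I would instead expand $1-\rho_\pm$ as $u^2$ times a smooth function and compare that function with its counterpart for $r$, which removes most of the loss.

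The reflection strips, of width $n^{\beta}$, contribute $O(n^{\beta}\log n)$ by the same crude derivative bounds. Choosing $\alpha,\beta,\gamma$ small yields an error $n^{1-\delta}$, and tuning the exponents should reach $\delta=1/100$. Finally, small $n$ are absorbed into the constant.
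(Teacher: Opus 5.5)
Your proposal is correct and follows the paper's proof closely. Both use the arcsine identity at $\kappa=1$, the bound $\|D_n-Q_n\|_2\le C$ and an $O(\log n)$ boundary term. Both then compare the double integral with $-r'(u)^2/(1-r(u)^2)$ after removing a diagonal strip, the far region and the end strips, using Riemann sums, the decay of the reflected kernel and the gap $1-\rho_\pm^2\ge c\min(|x-y|^2,1)$. The paper uses the cruder Lipschitz bound $O(\varrho^{-4})$ for $pq\{(1-v^2)(1-w^2)\}^{-1/2}$, with cutoffs at $n^{\pm1/100}$ and end strips of width $n^{99/100}$. When you tune, the end-strip exponent cannot be as small as the others: with your relative bound you need $\beta\ge2\gamma+1/100$.
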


\begin{proof}
The integrand $k(u)=r'(u)^2/(1-r(u)^2)$ of $c_G$ is positive and continuous off the origin, $k(u)\to1/12$ as $u\to0$ since $r(u)=1-u^2/24+O(u^4)$, and $k(u)=O(u^{-2})$ at infinity. So $0<c_G<\infty$, and quadrature gives $c_G\simeq0.3058977938$.

In the arcsine identity \eqref{ga:arcsine} at $\kappa=1$ the boundary term is $O(\log n)$ and the double integral is $O(n)$, so $\Var Q_n\le Cn$. By \eqref{ga:crossing} and \eqref{ga:occupancy}, $\|D_n-Q_n\|_2\le C$, so $|\Var D_n-\Var Q_n|\le C\sqrt n$ and $\Var D_n\le Cn$.

For the rate substitute $x=ns$, $y=ns+u$. The double integral becomes $n\int_0^\pi\int_{\R}g_n(s,u)\dd u\dd s$ with $g_n(s,u)=(\partial_xA_1\partial_yB_1)(ns,ns+u)$ when both points lie in $I_n$ and $g_n=0$ otherwise, and $|g_n|\le C\min(1,u^{-2})$. Its limit is $g(u)=-k(u)$, and $-\frac n{\pi^2}\int_0^\pi\int_{\R}g=c_Gn$. Let $\varrho=R^{-1}=n^{-1/100}$. The regions $|u|<\varrho$, $|u|>R$ and $s\notin[2\varrho,\pi-2\varrho]$ contribute at most $C\varrho$ to $\iint|g_n-g|$. On the remaining rectangle, at the point $(ns,ns+u)$,
\[
 |\rho_\pm-r(u)|+|\partial_x\rho_++r'(u)|+|\partial_y\rho_--r'(u)|\le e_n:=C\Bigl(\frac{1+R}n+\frac1{n\varrho}\Bigr).
\]
Indeed, by Lemma~\ref{ga:fields}(a) the reflected terms $R_n^{(k)}(2ns+\cdot)$ are $O(1/(n\varrho))$, and so are the deviations of the variances from $\frac12$ and their derivatives, while Riemann sums give $|R_n^{(k)}(v)-r^{(k)}(v)|\le C(1+|v|)/n$. Now $\partial_xA_1\partial_yB_1=\Pi(\rho_+,\rho_-,\partial_x\rho_+,\partial_y\rho_-)$ and $g=\Pi(r,r,-r',r')$ with $\Pi(v,w,p,q)=pq\{(1-v^2)(1-w^2)\}^{-1/2}$. Between the two arguments $1-v^2,1-w^2\ge c\varrho^2-3e_n\ge c\varrho^2/2$, where the first derivatives of $\Pi$ are $O(\varrho^{-4})$. So $|g_n-g|\le C\varrho^{-4}e_n$ on the rectangle, which contributes $CR\varrho^{-4}e_n\le Cn^{-94/100}$. Together with the boundary term,
\[
 |\Var Q_n-c_Gn|\le C\log(n+2)+Cn\bigl(n^{-1/100}+n^{-94/100}\bigr)\le Cn^{99/100}.
\]
The bound for $\Var D_n$ follows.
\end{proof}

\subsection{Covariance across degrees}\label{ga:covariance-sub}
For $n<m$ the curves of $D_n$ and $D_m$ share coefficients, and the coordinates of one curve are correlated with those of the other. Covariances across degrees are computed from the radial current $\mathcal J_l(s)=\Re\{e^{is}f_l'(e^{is})/f_l(e^{is})\}=a_{f_l}(s)+l/2$, where $a_{f_l}$ is the argument speed of Lemma~\ref{pre:argument}. Since almost surely no $f_l$ vanishes on the unit circle, \eqref{pre:balanced} and Lemma~\ref{pre:argument}(iii) give
\begin{equation}\label{ga:current-winding}
 D_l=\frac1\pi\int_0^\pi\Bigl(\mathcal J_l(s)-\frac l2\Bigr)\dd s .
\end{equation}
The current $\mathcal J_l(s)$ is also the radial derivative of $\log|f_l|$ on the unit circle, so covariances of currents are derivatives of covariances of logarithms.

Identify $\C^2$ with $\R^4$ through $\langle(z,w),(z',w')\rangle=\Re(z\bar z'+w\bar w')$. A centered Gaussian vector $Z=(Z_1,Z_2)$ in $\C^2$ has the real covariance operator $S$ given by $\langle\zeta,S\zeta'\rangle=\E\langle\zeta,Z\rangle\langle\zeta',Z\rangle$, which determines its law. For positive definite $S$ let $\mathcal G(S)=\Cov(\log|Z_1|,\log|Z_2|)$ with $Z$ centered Gaussian of covariance $S$. Write $S=\Delta+\Gamma$ with $\Delta$ block diagonal (the covariances of $Z_1$ and of $Z_2$) and $\Gamma$ the cross part, and let $\mathcal S_\mu=\{S:\mu\,\mathrm{Id}\le S\le2\,\mathrm{Id}\}$ for $0<\mu\le1$.

\begin{lemma}[Log-modulus covariance]\label{ga:logcov}
\begin{enumerate}[label=\textup{(\alph*)}]
\item $\mathcal G$ is smooth, and $\|D^k\mathcal G(S)\|\le A_k\mu^{-2k-2}$ for $S\in\mathcal S_\mu$ and $k\le4$.
\item $\mathcal G(\Delta)=0$ and $\mathcal G(\Delta+\Gamma)=\mathcal G(\Delta-\Gamma)$. If $\Delta+\Gamma\in\mathcal S_\mu$, then $\Delta+a\Gamma\in\mathcal S_\mu$ for $|a|\le1$.
\item Let $S=\Delta+\Gamma\in\mathcal S_\mu$ and $M_k=A_k\mu^{-2k-2}$. For block-diagonal $K,K'$ and cross operators $\Gamma',\Gamma''$,
\[
\begin{gathered}
 |D\mathcal G(S)[\Gamma']|\le M_2\|\Gamma\|\|\Gamma'\|,\qquad |D^2\mathcal G(S)[K,\Gamma']|\le M_3\|\Gamma\|\|K\|\|\Gamma'\|,\\
 |D^2\mathcal G(S)[K,K']|\le\tfrac12M_4\|\Gamma\|^2\|K\|\|K'\|,\qquad |D^2\mathcal G(S)[\Gamma',\Gamma'']|\le M_2\|\Gamma'\|\|\Gamma''\|.
\end{gathered}
\]
\end{enumerate}
\end{lemma}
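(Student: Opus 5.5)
The plan is to write $\mathcal G$ as an explicit integral against a Gaussian density and read off (a), then get (b) from symmetries of that density, and deduce (c) from (a) and (b) by Taylor expansion in the cross part $\Gamma$.

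\emph{Part (a).} For $S\in\mathcal S_\mu$,
\[
 \mathcal G(S)=\int_{\R^4}\ell(z)\,\phi_S(z)\dd z-\int\log|z_1|\phi_S\dd z\int\log|z_2|\phi_S\dd z,
 \qquad \ell(z)=\log|z_1|\log|z_2|,
\]
where $\phi_S(z)=(2\pi)^{-2}(\det S)^{-1/2}e^{-\langle z,S^{-1}z\rangle/2}$. Derivatives in $S$ fall on $\phi_S$. Each derivative produces a polynomial of degree at most two in $z$, with coefficients built from $S^{-1}$, so $\|S^{-1}\|\le\mu^{-1}$ costs a factor $\mu^{-2}$ per derivative, times $\phi_S$. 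Bounding $\phi_S\le C\mu^{-2}e^{-|z|^2/4}$ and using that $|\log|z_i||^p$ is integrable against Gaussians in $\R^2$, we get $\|D^k\mathcal G\|\le A_k\mu^{-2k-2}$; the product terms are handled the same way. The only care needed is uniform domination to justify differentiating under the integral. A cleaner alternative is to substitute $z=S^{1/2}w$ and differentiate $S^{1/2}$, but the direct density route seems simpler.

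\emph{Part (b).} If $\Gamma=0$, then $Z_1$ and $Z_2$ are independent, so $\mathcal G(\Delta)=0$. The map $(z_1,z_2)\mapsto(z_1,-z_2)$ conjugates $S=\Delta+\Gamma$ to $\Delta-\Gamma$ and preserves $|z_2|$, which gives the evenness. Both $\Delta+\Gamma$ and $\Delta-\Gamma$ lie in the convex set $\mathcal S_\mu$. Hence $\Delta+a\Gamma$, a convex combination of them for $|a|\le1$, also lies in $\mathcal S_\mu$.

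\emph{Part (c).} Put $g(a)=\mathcal G(\Delta+a\Gamma)$. This function vanishes at $a=0$ and is even by (b), applied to $\Delta'=\Delta$. Differentiating the identities $\mathcal G(\Delta)=0$ and $\mathcal G(\Delta+\Gamma)=\mathcal G(\Delta-\Gamma)$ along block-diagonal directions shows the following.
\begin{itemize}
 \item[(1)] $D\mathcal G(\Delta)[K]=0$ and $D^2\mathcal G(\Delta)[K,K']=0$ for block-diagonal $K,K'$.
 \item[(2)] $a\mapsto D\mathcal G(\Delta+a\Gamma)[\Gamma']$ is odd, so it vanishes at $a=0$.
 \item[(3)] $a\mapsto D^2\mathcal G(\Delta+a\Gamma)[K,\Gamma']$ is odd.
 \item[(4)] $a\mapsto D^2\mathcal G(\Delta+a\Gamma)[K,K']$ is even and vanishes at $a=0$.
\end{itemize}
For (2) and (3), the reflection acts on cross directions $\Gamma'\mapsto-\Gamma'$ and fixes block-diagonal ones.

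Each bound then follows from the mean value theorem on the segment $a\in[0,1]$, which stays in $\mathcal S_\mu$, using (a) to bound the next derivative.
\begin{itemize}
 \item $|D\mathcal G(S)[\Gamma']|\le\sup\|D^2\mathcal G\|\,\|\Gamma\|\|\Gamma'\|\le M_2\|\Gamma\|\|\Gamma'\|$.
 \item The mixed second derivative is bounded by $M_3\|\Gamma\|\|K\|\|\Gamma'\|$.
 \item For the block-diagonal second derivative, the vanishing first $a$-derivative at $0$ (from evenness) and Taylor to second order give $\tfrac12M_4\|\Gamma\|^2\|K\|\|K'\|$.
 \item The bound on $D^2\mathcal G(S)[\Gamma',\Gamma'']$ is just (a) with $k=2$.
\end{itemize}

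The main obstacle is part (a). We need uniform-in-$S$ integrability to differentiate four times under the integral, with the precise power $\mu^{-2k-2}$, despite the logarithmic singularities of $\ell$. The symmetry bookkeeping in (c) is routine once (b) is in place.
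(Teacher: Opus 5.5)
Your proposal is correct and follows the paper's proof essentially step for step. For (a) you differentiate under the Gaussian density using $\|S^{-1}\|\le\mu^{-1}$ and $p_S\le(2\pi)^{-2}\mu^{-2}e^{-|\zeta|^2/4}$; for (b) you use the reflection $(z,w)\mapsto(z,-w)$ and the convex combination of $S$ and its conjugate; and for (c) your odd/even-in-$a$ bookkeeping is just a repackaging of the paper's vanishing of $D\mathcal G(\Delta)[\Gamma']$, $D^2\mathcal G(\Delta)[K,\Gamma']$, $D^2\mathcal G(\Delta)[K,K']$ and $D^3\mathcal G(\Delta)[K,K',\Gamma]$, followed by Taylor's formula along $a\mapsto\Delta+a\Gamma$.
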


\begin{proof}
(a) Let $\ell(z)=\log|z|$, let $p_S$ be the centered Gaussian density on $\R^4$ with covariance $S$, and let $S_{11},S_{22}$ be the diagonal blocks. Then
\[
 \mathcal G(S)=\int\ell(\zeta_1)\ell(\zeta_2)\,p_S(\zeta)\dd\zeta-\int\ell\,p_{S_{11}}\int\ell\,p_{S_{22}}.
\]
Derivatives of $\log p_S=-\frac12\log\det S-\frac12\langle\zeta,S^{-1}\zeta\rangle+\mathrm{const}$ in $S$ are traces and quadratic forms built from $S^{-1}$ and the directions, so $|D^kp_S[H_1,\dots,H_k](\zeta)|\le C_k\mu^{-2k}(1+|\zeta|^{2k})p_S(\zeta)\prod\|H_i\|$. On $\mathcal S_\mu$, $p_S(\zeta)\le(2\pi)^{-2}\mu^{-2}e^{-|\zeta|^2/4}$, which is integrable against $(1+|\zeta|^{2k})|\ell(\zeta_1)\ell(\zeta_2)|$. This justifies differentiation under the integral and gives the bound, and the marginal integrals are treated in the same way.

(b) If $\Gamma=0$, $Z_1$ and $Z_2$ are independent. The vector $(Z_1,-Z_2)$ has covariance $\Delta-\Gamma$ and the same moduli. With $\mathcal R(z,w)=(z,-w)$, $\Delta+a\Gamma=\frac{1+a}2S+\frac{1-a}2\mathcal RS\mathcal R$ is a convex combination of elements of $\mathcal S_\mu$.

(c) By (b), for block-diagonal $\Delta'$ near $\Delta$ and cross $\Gamma'$, $a\mapsto\mathcal G(\Delta'+a\Gamma')$ is even and $\mathcal G(\Delta')=0$. Hence $D\mathcal G(\Delta)[\Gamma']=0$, $D^2\mathcal G(\Delta)[K,\Gamma']=0$, $D^3\mathcal G(\Delta)[K,K',\Gamma']=0$ and $D^2\mathcal G(\Delta)[K,K']=0$. Taylor's formula along $a\mapsto\Delta+a\Gamma$, $0\le a\le1$, gives the first three bounds, and the fourth is (a).
\end{proof}

\begin{lemma}[Radial exchange]\label{ga:exchange}
Let $P,Q$ be random polynomials whose coefficients are linear functions of one standard Gaussian vector. Fix $z_0,w_0\ne0$ and put $\zeta(x)=P(z_0e^x)$, $\omega(y)=Q(w_0e^y)$. Suppose that for $|x|,|y|\le x_0$ the real covariance of $(\zeta(x),\omega(y))$ is positive definite, and that almost surely $P$ has no zero on $\{z_0e^x:|x|\le x_0\}$ and $Q$ none on $\{w_0e^y:|y|\le x_0\}$. Then $\Phi(x,y)=\Cov(\log|\zeta(x)|,\log|\omega(y)|)$ is $C^2$, the variables $\Re(\zeta'/\zeta)(x)$ and $\Re(\omega'/\omega)(y)$ and their product are integrable uniformly for $|x|,|y|\le x_0$, and
\[
 \partial_x\partial_y\Phi(0,0)=\Cov\bigl(\Re(\zeta'/\zeta)(0),\Re(\omega'/\omega)(0)\bigr),\qquad
 \partial_x\E\log|\zeta(x)|=\E\Re(\zeta'/\zeta)(x).
\]
\end{lemma}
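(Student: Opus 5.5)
The plan is to differentiate $\Phi$ under the expectation, justifying each exchange by uniform integrability coming from the Gaussian structure. Write $\zeta(x)=\langle \alpha(x),G\rangle$ and $\omega(y)=\langle\beta(y),G\rangle$ as complex linear forms in the standard Gaussian vector $G$, with coefficient vectors $\alpha,\beta$ real-analytic in $x,y$. By hypothesis the real covariance $S(x,y)$ of $(\zeta(x),\omega(y))$ is positive definite on the compact square $|x|,|y|\le x_0$. By continuity, $S(x,y)\in\mathcal S_\mu$ after a fixed rescaling, for some $\mu>0$. In particular each of $\zeta(x),\omega(y)$ is a nondegenerate Gaussian vector in $\R^2$, with covariance bounded below uniformly in $x,y$.

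The first step is the integrability claim. Observe that $\zeta'(x)=z_0e^xP'(z_0e^x)$ is again a linear form in $G$. Conditionally on $\zeta(x)$, it is Gaussian with mean linear in $\zeta(x)$ and bounded variance. Hence
\[
 |\zeta'/\zeta|\le C\bigl(1+|\eta|/|\zeta|\bigr),
\]
where $\eta$ is Gaussian and independent of $\zeta$. Since $\zeta$ is a nondegenerate planar Gaussian, $\E|\zeta|^{-p}<\infty$ for every $p<2$, uniformly in $x$. This gives uniform $L^{p}$ bounds on $\Re(\zeta'/\zeta)$ for all $p<2$, and the same holds for $\omega$.

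For the product, $(\zeta(x),\omega(y))$ is a nondegenerate Gaussian vector in $\R^4$. The density of the pair at $(0,0)$ is bounded, so $\E|\zeta|^{-a}|\omega|^{-b}<\infty$ for $a,b<2$. This is uniform on the square because the covariance stays in $\mathcal S_\mu$. Combining this with Hölder's inequality on the Gaussian numerators $\zeta',\omega'$ yields uniform integrability of the product $\Re(\zeta'/\zeta)\Re(\omega'/\omega)$.

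Next, almost surely $\zeta\ne0$ on the segment, so $\log|\zeta(x)|$ is $C^1$ in $x$ there, with derivative $\Re(\zeta'/\zeta)(x)$. Then
\[
 \log|\zeta(x)|-\log|\zeta(0)|=\int_0^x\Re(\zeta'/\zeta)\,\dd u .
\]
Fubini's theorem, justified by the uniform $L^1$ bound, gives $\partial_x\E\log|\zeta(x)|=\E\Re(\zeta'/\zeta)(x)$; continuity of the right side in $x$ follows from uniform integrability plus almost-sure continuity. Applying the same argument to $\log|\zeta(x)|\log|\omega(y)|$ and integrating in both variables gives
\[
 \E\log|\zeta(x)|\log|\omega(y)|-(\text{boundary terms})=\int_0^x\!\!\int_0^y\E\bigl[\Re(\zeta'/\zeta)(u)\,\Re(\omega'/\omega)(v)\bigr]\dd v\,\dd u .
\]
The mixed derivative at $(0,0)$ is then the integrand there, provided it is continuous. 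Subtracting the product of the means yields the covariance formula.

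For the $C^2$ claim I would not differentiate the product of currents further, which would need higher inverse moments. Instead I would use $\Phi(x,y)=\mathcal G(S(x,y))$. Lemma~\ref{ga:logcov}(a) says $\mathcal G$ is smooth on $\mathcal S_\mu$, and $S$ is real-analytic in $(x,y)$, so $\Phi$ is $C^2$ (indeed $C^4$) with bounded derivatives. This also gives the continuity of $\partial_x\partial_y\Phi$ needed above, making the identification at the origin legitimate.

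The main obstacle is the uniform integrability of the product of the two currents. The two singularities $|\zeta|^{-1}$ and $|\omega|^{-1}$ can only fail to be jointly integrable if $\zeta$ and $\omega$ are degenerate together. The uniform positive definiteness of the joint $4\times4$ covariance is exactly what rules this out, via a bounded joint density near the origin.
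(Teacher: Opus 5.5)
Your route is the paper's. You regress the derivatives on the values to get uniform inverse-moment integrability. You integrate $\log|\zeta(x)|-\log|\zeta(0)|=\int_0^x\Re(\zeta'/\zeta)$ and use Fubini to obtain
\[
\Phi(x,y)-\Phi(x,0)-\Phi(0,y)+\Phi(0,0)=\int_0^x\!\!\int_0^y k(a,b)\dd b\dd a,\qquad k(a,b)=\Cov\bigl(\Re(\zeta'/\zeta)(a),\Re(\omega'/\omega)(b)\bigr).
\]
You then get $\Phi\in C^2$ from $\Phi=\mathcal G(S)$ and Lemma~\ref{ga:logcov}(a). For the product you use H\"older against $\E|\zeta|^{-q}|\omega|^{-q}$, $q<2$, where the paper uses conditional moment bounds from the joint regression. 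That is a harmless variant, and it even gives a uniform $L^r$ bound on the product for some $r>1$.

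The gap is at the very end. You say that the continuity of $\partial_x\partial_y\Phi$ makes the identification at the origin legitimate. It does not. With $\Phi\in C^2$, the displayed identity only gives $\int_0^x\int_0^y(k-\partial_a\partial_b\Phi)\dd b\dd a=0$ for all $x,y$. Hence $k=\partial_x\partial_y\Phi$ almost everywhere on the square, which says nothing about the single point $(0,0)$. What is needed is continuity of $k$ itself at the origin, which you correctly flagged one sentence earlier but never establish in two variables.

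You can supply it with your own tools. On the almost-sure event that $\zeta$ and $\omega$ do not vanish on the segments, $\Re(\zeta'/\zeta)(a)\,\Re(\omega'/\omega)(b)$ is jointly continuous in $(a,b)$. Your uniform $L^r$ bound with $r>1$ makes this family uniformly integrable, so Vitali's theorem gives continuity of $k$, exactly as you argued for the one-variable mean. The paper gets the same continuity differently: the conditional expectation of $\Re(\zeta'\bar z)\Re(\omega'\bar w)$ given the values $(z,w)$ is a polynomial in $(z,w)$ with coefficients continuous in $(x,y)$, and dominated convergence applies.

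With $k$ continuous and $\Phi\in C^2$, divide the identity by $xy$ and let $x,y\to0$. This gives $\partial_x\partial_y\Phi(0,0)=k(0,0)$.
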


\begin{proof}
We bound the logarithmic derivatives by Gaussian regression, and integrate them to recover $\Phi$. Gaussian regression writes $(\zeta',\omega')=\mathcal A(\zeta,\omega)+\mathcal R$ with $\mathcal A=\mathcal A_{x,y}$ real linear and $\mathcal R$ centered Gaussian independent of the values, both continuous in $(x,y)$. So there is $K$, depending on the smallest eigenvalue of the value covariance on the square, with
\begin{equation}\label{ga:conditional-moment}
 \E\bigl[|\zeta'||\omega'|\bigm|\zeta=z,\omega=w\bigr]\le K(1+|z|^2+|w|^2),\qquad
 \E\bigl[|\zeta'|\bigm|\zeta=z,\omega=w\bigr]\le K(1+|z|+|w|),
\end{equation}
and with value density at most $Ke^{-(|z|^2+|w|^2)/K}$. As $(1+|z|^2)(1+\log^2|z|)|z|^{-1}e^{-|z|^2/K}$ is integrable on $\R^2$, the expectations of $|\zeta'\omega'/(\zeta\omega)|$, $|(\zeta'/\zeta)\log|\omega||$ and $|\log|\zeta|\log|\omega||$ are bounded on the square. Given the values $(z,w)$, the conditional expectation of $\Re(\zeta'\bar z)\Re(\omega'\bar w)$ is a polynomial in $(z,w)$ with coefficients continuous in $(x,y)$, so $k(x,y)=\Cov(\Re(\zeta'/\zeta)(x),\Re(\omega'/\omega)(y))$ is continuous by dominated convergence. Integrating $\log|\zeta(x)|-\log|\zeta(0)|=\int_0^x\Re(\zeta'/\zeta)$ and using Fubini's theorem,
\[
 \Phi(x,y)-\Phi(x,0)-\Phi(0,y)+\Phi(0,0)=\int_0^x\!\!\int_0^yk(a,b)\dd b\dd a .
\]
Since $\Phi=\mathcal G(S(x,y))$ with $S(x,y)$ the smooth positive definite value covariance, $\Phi$ is $C^2$ by Lemma~\ref{ga:logcov}(a), and dividing by $xy$ gives the first identity. The second is the one-variable case.
\end{proof}

The no-zero hypothesis holds for $P=f_n$ with real or circular complex Gaussian coefficients and $z_0=\varrho_0e^{is}$, where $s\notin\pi\Z$ in the real case. Indeed, given $\xi_1,\dots,\xi_n$, a zero at $z$ forces $\xi_0=-T(z)$ with $T(z)=\sum_{j\ge1}\xi_jz^j$. In the real case $\Im T(\varrho e^{is})$ is a polynomial in $\varrho$ with linear coefficient $\xi_1\sin s\ne0$, so $T$ is real at finitely many points of the segment. In the complex case $-T$ maps the segment onto a $C^1$ curve of planar measure zero.

\begin{lemma}[Proper Gaussian logarithms]\label{ga:dilog}
\textup{(a)} Let $(Z,W)$ be centered circular complex Gaussian with $\E|Z|^2=\E|W|^2=1$, $\E Z\bar W=c$ and $v=|c|^2<1$. Then
\[
 \Cov(\log|Z|,\log|W|)=\tfrac14\operatorname{Li}_2(v)=\frac14\int_0^1\!\!\int_0^1\frac{v\dd a\dd b}{1-vab}.
\]
\textup{(b)} For $q\ge1$ and real $x,y,u$ put $c_q(x,y,u)=I_1(x+y+iu)/\sqrt{I_1(2x)I_q(2y)}$ with $I_a(z)=\int_0^ae^{zv}\dd v$, and let $S_c$ be the covariance operator of the pair in \textup{(a)}. Then $|c_q|^2<1$ if $q>1$ or $u\ne0$, and then, with $\Psi(v)=\int_0^1(1-va)^{-1}\dd a$ and $r=r(u)$,
\begin{equation}\label{ga:local-kernel}
 K_q(u):=\partial_x\partial_y\,\mathcal G\bigl(S_{c_q(x,y,u)}\bigr)\Big|_{x=y=0}=\frac{\Psi(r^2/q)\,(r'^2-rr'')}{2q},\qquad
 \int_{\R}K_q(u)\dd u=\pi a(q).
\end{equation}
Moreover $|K_q(u)|\le C(q-1)^{-1}(1+|u|)^{-2}$ for $q>1$, and $|K_1(u)|\le C(1+|\log|u||)(1+|u|)^{-2}$.
\end{lemma}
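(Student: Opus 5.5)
The plan is to prove (a) by an orthogonal expansion of $\log|Z|^2$, and then to obtain (b) by composing (a) with an explicit derivative computation for $v=|c_q|^2$, followed by one integration by parts.

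\emph{Part (a).} I would expand $\log|Z|^2$ in Laguerre polynomials of $|Z|^2$, which is exponential with mean one. For $k\ge1$ its Laguerre coefficients are $-1/k$. For a circular pair with $\E Z\bar W=c$, the standard Mehler/Hille--Hardy identity for exponential variables gives $\E[L_k(|Z|^2)L_l(|W|^2)]=\delta_{kl}|c|^{2k}$. Hence
\[
 \Cov(\log|Z|^2,\log|W|^2)=\sum_{k\ge1}\frac{v^k}{k^2}=\operatorname{Li}_2(v),
\]
and dividing by $4$ gives the claim. The series converges since $\log|Z|^2\in L^2$ and $v<1$. The double integral is $\operatorname{Li}_2(v)$ itself, obtained by expanding $v/(1-vab)$ and integrating term by term. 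If the Laguerre identity needs justification, I would instead differentiate the Mellin moments $\E|Z|^{2s}|W|^{2t}$ at $s=t=0$.

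\emph{Part (b), the kernel.} Since $\mathcal G(S_c)=\tfrac14\operatorname{Li}_2(|c|^2)$ and $\operatorname{Li}_2'(w)=-\log(1-w)/w=\Psi(w)$, the chain rule gives
\[
 K_q=\tfrac14\bigl[\Psi'(v)v_xv_y+\Psi(v)v_{xy}\bigr],\qquad v=|c_q|^2 .
\]
I would compute in $\log v=2\Re\log I_1(x+y+iu)-\log I_1(2x)-\log I_q(2y)$, using $I_1(iu)=e^{iu/2}r(u)$. Since $\partial_z=-i\partial_u$ on the imaginary axis, $\Re(\log I_1)'(iu)=\tfrac12$ and $(\log I_1)''(iu)=-(\log r)''(u)$. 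Together with $(\log I_a)'(0)=a/2$ this gives
\[
 \partial_x\log v=0,\qquad \partial_y\log v=1-q,\qquad \partial_x\partial_y\log v=2\,\frac{r'^2-rr''}{r^2}.
\]
Consequently $v_x=0$, $v=r^2/q$ and $v_{xy}=2(r'^2-rr'')/q$, which yields the stated formula for $K_q$. The condition $|c_q|^2<1$ follows from $v(0,0,u)=r(u)^2/q$, because $|r|<1$ off the origin.

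\emph{Part (b), the integral and the bounds.} Put $w=r^2/q$ and integrate the term $-\Psi(w)rr''$ by parts, which gives $\int(\Psi(w)r)'r'$. This turns the integral into
\[
 \frac1{2q}\int r'^2\bigl[2\Psi(w)+2w\Psi'(w)\bigr]\dd u .
\]
Since $(w\Psi)'=1/(1-w)$, the integrand is $2r'^2q/(q-r^2)$, and so $\int K_q=\int r'^2/(q-r^2)=\pi a(q)$. Boundary terms vanish at infinity because $r,r'=O(1/u)$. The main obstacle is the case $q=1$, where $\Psi(r^2)\sim2\log(1/|u|)$ at $u=0$. There $r'(0)=0$, so the integrand $\Psi(w)rr'$ of the boundary term vanishes like $u\log|u|$, and I would integrate by parts on $|u|\ge\epsilon$ and let $\epsilon\to0$. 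For the bounds, $\Psi(w)\le(1-w)^{-1}\le q/(q-1)$ when $q>1$, and $\Psi(r^2)\le C(1+|\log|u||)$ when $q=1$. Combined with $|r'^2-rr''|\le C(1+|u|)^{-2}$, these give the stated decay estimates for $K_q$.
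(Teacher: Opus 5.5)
Your proof is correct. Part (b) is essentially the paper's argument: the same chain rule through $\mathcal G(S_c)=\frac14\operatorname{Li}_2(|c|^2)$ with $\operatorname{Li}_2'=\Psi$, and the same values $v=r^2/q$, $v_x=0$, $v_{xy}=2(r'^2-rr'')/q$. The paper reads these off $F=|I_1|^2$ and $\mathcal D=I_1(2x)I_q(2y)$ rather than off $\log v$. It also uses the same integration by parts via $(w\Psi)'=(1-w)^{-1}$, the same cut at $|u|\ge\epsilon$ for $q=1$, and the same bounds on $\Psi$.

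The real difference is in (a). The paper writes $W=\bar cZ+\sqrt{1-v}\,Z'$ and computes the Laplace transform $\E e^{-\sigma|Z|^2-\tau|W|^2}=((1+\sigma)(1+\tau)-v\sigma\tau)^{-1}$. It then represents $\log U$ by truncated Frullani integrals and uses dominated convergence and Fubini. This gives the double integral $\int_0^1\int_0^1 v(1-vab)^{-1}\dd a\dd b$ directly, after $a=\sigma/(1+\sigma)$, $b=\tau/(1+\tau)$, and $\operatorname{Li}_2$ comes from expanding it.

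You instead diagonalize the law of $(|Z|^2,|W|^2)$. The Laguerre polynomials are orthogonal across the pair with $\E L_k(|Z|^2)L_l(|W|^2)=\delta_{kl}v^k$, and $\log$ has Laguerre coefficients $-1/k$. So the covariance is $\sum_k v^k/k^2$ at once, and the only analytic input is $\log|Z|^2\in L^2$: $L^2$ convergence of the truncations in each marginal passes to the cross moment by Cauchy--Schwarz.

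Your route is shorter and more structural, since it handles any pair of $L^2$ functions of the moduli. It does import the Hille--Hardy formula, though that can be checked from the paper's Laplace transform. With $\sigma=s/(1-s)$, $\tau=t/(1-t)$ and $\sum_kL_k(x)s^k=(1-s)^{-1}e^{-xs/(1-s)}$, the generating function of $\E L_k(|Z|^2)L_l(|W|^2)$ is $(1-vst)^{-1}$. The paper's route is self-contained and lands on the integral form without series manipulation.

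Two small repairs are needed.
\begin{enumerate}
\item The lemma asserts $|c_q|^2<1$ for all real $x,y$, but you check it only at $x=y=0$. Use Cauchy--Schwarz, $|I_1(x+y+iu)|^2\le I_1(2x)I_1(2y)\le I_1(2x)I_q(2y)$, which is strict when $u\ne0$, respectively $q>1$.
\item Your computation through $\log v$ is undefined where $r(u)=0$, that is, for $u\in2\pi\Z\setminus\{0\}$. Since $v$ is real-analytic in $(x,y,u)$, the identities for $v_x(0,0,u)$ and $v_{xy}(0,0,u)$ extend there by continuity. Alternatively, differentiate the quotient $F/\mathcal D$ directly.
\end{enumerate}
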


\begin{proof}
(a) Write $W=\bar cZ+\sqrt{1-v}\,Z'$ with $Z'$ an independent copy of $Z$. For $\sigma,\tau\ge0$, conditioning on $Z$ gives
\[
 \E e^{-\sigma|Z|^2-\tau|W|^2}=\frac1{(1+\sigma)(1+\tau)-v\sigma\tau},\qquad
 \frac{\Cov(e^{-\sigma|Z|^2},e^{-\tau|W|^2})}{\sigma\tau}=\frac{v}{\Lambda(\Lambda-v\sigma\tau)}\ge0,
\]
with $\Lambda=(1+\sigma)(1+\tau)$. The Frullani integrals $T_{\epsilon,R}(U)=\int_\epsilon^R(e^{-\sigma}-e^{-\sigma U})\sigma^{-1}\dd\sigma$ tend to $\log U$ as $\epsilon\to0$, $R\to\infty$, and $|T_{\epsilon,R}(U)|\le|\log U|$. Since $\log|Z|$, $\log|W|$ and their product are integrable, dominated convergence and Fubini's theorem give
\[
 \Cov(\log|Z|^2,\log|W|^2)=\int_0^\infty\!\!\int_0^\infty\frac{v\dd\sigma\dd\tau}{\Lambda(\Lambda-v\sigma\tau)}=\int_0^1\!\!\int_0^1\frac{v\dd a\dd b}{1-vab},
\]
with $a=\sigma/(1+\sigma)$, $b=\tau/(1+\tau)$. Expanding the geometric series gives $\operatorname{Li}_2(v)$.

(b) By the Cauchy--Schwarz inequality $|I_1(x+y+iu)|^2\le I_1(2x)I_1(2y)\le I_1(2x)I_q(2y)$, strictly if $u\ne0$, respectively $q>1$. Note $\operatorname{Li}_2'=\Psi$ and $\Psi(v)+v\Psi'(v)=(1-v)^{-1}$. Put $F=|I_1(x+y+iu)|^2$ and $\mathcal D=I_1(2x)I_q(2y)$. From $I_1(iu)=e^{iu/2}r$, $I_1'(iu)=e^{iu/2}(r/2-ir')$ and $I_1''(iu)=e^{iu/2}(r/4-ir'-r'')$, at $x=y=0$,
\[
 F=F_x=F_y=r^2,\quad F_{xy}=r^2+2(r'^2-rr''),\qquad \mathcal D=\mathcal D_x=q,\quad \mathcal D_y=\mathcal D_{xy}=q^2.
\]
So $V=F/\mathcal D$ has $V=r^2/q$, $V_x=0$ and $V_{xy}=2(r'^2-rr'')/q$. By (a), $\mathcal G(S_{c_q})=\frac14\operatorname{Li}_2(V)$, and since $V_x=0$ the chain rule gives \eqref{ga:local-kernel}. With $\mathcal L_q=\Psi(r^2/q)rr'/(2q)$, the identity for $\Psi$ gives $K_q+\mathcal L_q'=r'^2/(q-r^2)$. Since $r^{(k)}(u)=\Re\int_0^1(i(v-\frac12))^ke^{iu(v-1/2)}\dd v$, one integration by parts gives $|r^{(k)}(u)|\le(3+k)/(1+|u|)$, and $|r'(u)|\le|u|/12$. Integrating over $|u|\ge\epsilon$ and letting $\epsilon\to0$ gives $\int K_q=\int r'^2/(q-r^2)=\pi a(q)$. For $q=1$ the boundary terms vanish because $0\le\Psi(r^2)\le1+\log(1-r^2)^{-1}\le C(1+|\log|u||)$, as $1-r^2\ge c\min(u^2,1)$. The same bounds and $\Psi(r^2/q)\le q/(q-1)$ bound $K_q$.
\end{proof}

To compare two degrees, fix $1\le n<m$, write $q=m/n$, and use the radial scale $1/n$ for both. For real $x,y$ and angles $s,t$ put
\[
 W_n(x,s)=\frac{f_n(e^{x/n+is})}{\sqrt{K_n(e^{x/n})}},\qquad W_m(y,t)=\frac{f_m(e^{y/n+it})}{\sqrt{K_m(e^{y/n})}},
\]
where $K_l(r)=\sum_{j=0}^lr^{2j}$ as in Theorem~\ref{thm:G}. Then $\E|W_n|^2=\E|W_m|^2=1$, and the real covariance operator of $(W_n,W_m)$ is
\begin{equation}\label{ga:S}
 S(z,w)=\tfrac12\bigl(z+p_n\bar z+\gamma w+\eta\bar w,\ \bar\gamma z+\eta\bar z+w+p_m\bar w\bigr),
\end{equation}
with $p_n=\E W_n^2$, $p_m=\E W_m^2$, $\gamma=\E W_n\bar W_m$ and $\eta=\E W_nW_m$. Explicitly, $p_n$ and $p_m$ are the values $p_n(e^{x/n},s)$ and $p_m(e^{y/n},t)$ of the functions of Theorem~\ref{thm:G}, and
\[
 \gamma,\eta=\sum_{j=0}^n\frac{e^{j(x+y)/n+ij(s\mp t)}}{\sqrt{K_n(e^{x/n})K_m(e^{y/n})}} .
\]
Deterministic factors do not change covariances of logarithms, so $\mathcal G(S(x,y;s,t))$ is the covariance of $\log|f_n(e^{x/n+is})|$ and $\log|f_m(e^{y/n+it})|$, and $\partial_x\log|f_n(e^{x/n+is})|=\mathcal J_n(s)/n$ at $x=0$.

\begin{lemma}[Cross-degree geometry]\label{ga:cross-geometry}
Let $q=m/n\le q_1$, $\beta=1-\sqrt{(n+1)/(m+1)}$, $T\ge4\pi/\beta$ with $2T<\pi n$, and $I^T=[T/n,\pi-T/n]$, and put
\[
 K_{n,m}(s,t)=n^{-2}\Cov\bigl(\mathcal J_n(s),\mathcal J_m(t)\bigr).
\]
For $s,t\in I^T$ and $x=y=0$ we have $S\in\mathcal S_{\beta/4}$ and $K_{n,m}(s,t)=\partial_x\partial_y\mathcal G(S(x,y;s,t))$. The block-diagonal part satisfies $\Delta_{xy}=0$ and $\|\Delta_x\|+\|\Delta_y\|\le C_{q_1}$, and
\[
 \|\Gamma\|+\|\Gamma_x\|+\|\Gamma_y\|+\|\Gamma_{xy}\|\le\frac{C_{q_1}}{1+n|s-t|},\qquad|K_{n,m}(s,t)|\le\frac{C_{q_1}\beta^{-10}}{(1+n|s-t|)^2}.
\]
\end{lemma}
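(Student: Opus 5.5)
The plan is to write everything at $x=y=0$ in closed form and check the three claims in turn: the spectral bounds on $S$, the identification of $K_{n,m}$ with a mixed derivative of $\mathcal G$, and the decay of the cross part $\Gamma$.

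\emph{Positivity.} The first step is $S\in\mathcal S_{\beta/4}$. The upper bound $S\le2\,\mathrm{Id}$ is cheap: by \eqref{ga:S} each diagonal block is $\frac12(\mathrm{Id}+\mathcal R[p])$ with $|p|\le1$, so it is at most $\mathrm{Id}$, and the whole matrix is a covariance of two unit-variance vectors. For the lower bound I split $f_m=f_n+G$, where $G=\sum_{n<j\le m}\xi_jz^j$ is independent of $f_n$. For $\zeta=(z,w)\in\C^2$,
\[
 \E\langle\zeta,(W_n,W_m)\rangle^2=\E\Bigl(\Re\bigl[\bar z W_n+\bar w\,f_n/\sqrt{K_m}\bigr]\Bigr)^2+\E\bigl(\Re[\bar w\,G/\sqrt{K_m}]\bigr)^2 .
\]
The first term is a quadratic form in the single Gaussian value $f_n$. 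Write $\alpha=\sqrt{K_n/K_m}=1-\beta$ at $r=1$. Its covariance is $\frac12(\mathrm{Id}+\mathcal R[p_n])$, and on $I^T$ the bound \eqref{rc:abel} gives $|p_n|\le 2\pi/(n\,d(2s))\le 2\pi/T\le\beta/2$. Hence the first term is at least $\frac12(1-\beta/2)|z+\alpha w|^2$. The second term is handled the same way: $G$ has normalized variance $1-\alpha^2$ and its pseudo-variance is also small on $I^T$, so this term is at least $\frac12(1-\alpha^2)(1-\beta/2)|w|^2$. Minimizing the sum over $z$ for fixed $w$, and then over $w$, gives the eigenvalue bound $\ge c\beta\ge\beta/4$.

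\emph{Identification of $K_{n,m}$.} Next I apply Lemma~\ref{ga:exchange} with $P=f_n$, $Q=f_m$, $z_0=e^{is}$, $w_0=e^{it}$ and radial variables $x/n$, $y/n$. Its hypotheses are the positive-definite covariance from the first step, which extends to small $|x|,|y|$ by continuity, and the no-zero remark following Lemma~\ref{ga:exchange}, since $s,t\notin\pi\Z$. The normalizing factors $K_l$ are deterministic and do not change covariances of logarithms. The chain rule contributes the factor $n^{-2}$, so $K_{n,m}(s,t)=\partial_x\partial_y\mathcal G(S)$.

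\emph{Geometry of the blocks.} Here I differentiate the explicit formulas. The $n$-block depends only on $x$ and the $m$-block only on $y$, so $\Delta_{xy}=0$. The derivatives $\Delta_x,\Delta_y$ are derivatives of $p_n,p_m$ in $x,y$ at scale $1/n$; each brings down a weight $j/n\le q_1$, giving $\|\Delta_x\|+\|\Delta_y\|\le C_{q_1}$. The cross entries $\gamma,\eta$ are sums $\sum_{j\le n}b_je^{ij(s\mp t)}$ with unimodal weights of size $O(1/n)$, since $K_n$ and $K_m$ are both of order $n$. Their $x$- and $y$-derivatives have weights of bounded variation and size $O(1/n)$. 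Summation by parts \eqref{rc:abel} then gives the bound $C_{q_1}/(1+n\,d(s-t))$ for $\gamma$, and $C_{q_1}/(1+n\,d(s+t))$ for $\eta$. On $I^T$ one has $d(s+t)\ge |s-t|$ up to constants, because $s+t\in[2T/n,2\pi-2T/n]$ and $s,t\in(0,\pi)$. This yields the stated bound on $\|\Gamma\|+\|\Gamma_x\|+\|\Gamma_y\|+\|\Gamma_{xy}\|$.

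\emph{Bound on $K_{n,m}$.} By the chain rule,
\[
 \partial_x\partial_y\mathcal G(S)=D^2\mathcal G[S_x,S_y]+D\mathcal G[S_{xy}].
\]
Since $\Delta_{xy}=0$, we have $S_{xy}=\Gamma_{xy}$. I expand $S_x=\Delta_x+\Gamma_x$ and $S_y=\Delta_y+\Gamma_y$ and apply the four bounds of Lemma~\ref{ga:logcov}(c) with $\mu=\beta/4$. Every resulting term carries either two factors from $\{\Gamma,\Gamma_x,\Gamma_y,\Gamma_{xy}\}$ or $\|\Gamma\|^2$, so each is $O\bigl((1+n|s-t|)^{-2}\bigr)$. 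The worst power of $\mu$ is $M_4=A_4\mu^{-10}$, which gives $\beta^{-10}$.

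\emph{Main obstacle.} I expect the lower eigenvalue bound with the explicit constant $\beta/4$ to be the delicate point. It needs the pseudo-covariance smallness $2\pi/T\le\beta/2$, which is exactly where the assumption $T\ge4\pi/\beta$ enters, and it needs a careful handling of the cross term between $f_n$ and the appended tail $G$.
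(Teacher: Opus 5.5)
Your identification step (Lemma~\ref{ga:exchange} with the chain-rule factor $n^{-2}$) matches the paper's proof. So do the summation-by-parts bounds on $\gamma,\eta$ and their derivatives via $\dist(s\mp t,2\pi\Z)\ge|s-t|$, and the five-term expansion of $D^2\mathcal G(S)[S_x,S_y]+D\mathcal G(S)[\Gamma_{xy}]$ with Lemma~\ref{ga:logcov}(c) at $\mu=\beta/4$.

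The gap is in the lower bound $S\ge(\beta/4)\mathrm{Id}$. The two entries are evaluated at different angles: $W_n$ at $s$ and $W_m$ at $t$. After the split $f_m=f_n+G$, your first term is therefore $\E\bigl(\Re[\bar z f_n(e^{is})/\sqrt{K_n}+\bar w f_n(e^{it})/\sqrt{K_m}]\bigr)^2$, which is a form in two different Gaussian values. It reduces to a form in the single value $f_n$, with the bound $\frac12(1-|p_n|)|z+\alpha w|^2$, only when $s=t$. For $s\ne t$ the argument as written proves nothing. In particular it never controls the cross pseudo-covariance $\eta=\E W_nW_m$. That entry is part of $S$, and it is small only because $\dist(s+t,2\pi\Z)\ge2T/n$. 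A smaller point: the tail's pseudo-variance is at most $\pi/(2T)\le\beta/8$ in absolute size. Relative to its variance $1-\alpha^2$ it is bounded only by a small absolute constant, not by $\beta/2$. This does not hurt the final constant, but the factor $1-\beta/2$ you wrote for that term is not justified.

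The repair is shorter than your decomposition and does not need the tail $G$.
\begin{itemize}
\item Split $S$ into its complex-linear part $\frac12(z+\gamma w,\bar\gamma z+w)$ and its conjugate-linear part $\frac12(p_n\bar z+\eta\bar w,\eta\bar z+p_m\bar w)$.
\item At $x=y=0$, $\gamma$ is a sum of $n+1$ unimodular terms divided by $\sqrt{(n+1)(m+1)}$. Hence $|\gamma|\le\sqrt{(n+1)/(m+1)}=1-\beta$, and the complex-linear part has eigenvalues $\frac12(1\pm|\gamma|)\ge\beta/2$ for every $s,t$.
\item The points $2s$, $2t$ and $s+t$ all lie in $[2T/n,2\pi-2T/n]$. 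So \eqref{rc:abel} bounds each of $|p_n|,|p_m|,|\eta|$ by $\pi/(2T)$, and the conjugate-linear part has norm at most $\pi/T\le\beta/4$. This is where $T\ge4\pi/\beta$ is used.
\end{itemize}
Hence $S\ge(\beta/4)\mathrm{Id}$ on all of $(I^T)^2$, not only on the diagonal. That full range is what the hypotheses of the exchange lemma need, and also the uniform integrability over $(I^T)^2$ in the covariance proposition that follows.
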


\begin{proof}
At $x=y=0$ the Cauchy--Schwarz inequality gives $|\gamma|\le\sqrt{(n+1)/(m+1)}$, so the part $\frac12(z+\gamma w,\bar\gamma z+w)$ of $S$ is at least $\beta/2$. By \eqref{rc:abel}, $|p_n|,|p_m|,|\eta|\le\pi/(2T)$, using $\dist(2s,2\pi\Z)\ge2T/n$ and $(s+t)/2\in I^T$. So the remaining part of $S$ has norm at most $\pi/T\le\beta/4$. Also $S\ge0$ and $\operatorname{tr}S=2$, so $S\le2\,\mathrm{Id}$. The diagonal blocks depend on $x$ and $y$ separately, so $\Delta_{xy}=0$, and $|\partial_xp_n|+|\partial_yp_m|\le8q_1$ from the formulas. Each of $\gamma,\eta$ and their derivatives $\partial_x,\partial_y,\partial_x\partial_y$ at the origin is a sum $\sum_{j=0}^nP(j/n)e^{ij\vartheta}/\sqrt{(n+1)(m+1)}$ with $\vartheta=s\mp t$ and $P$ of degree at most two with coefficients bounded by $C_{q_1}$, for instance $P(v)=(v-\frac12)(v-\frac q2)$ for $\partial_x\partial_y\gamma$. By \eqref{rc:abel} these sums are at most $C_{q_1}\min(1,(n|s-t|)^{-1})$, since $\dist(s\mp t,2\pi\Z)\ge|s-t|$ for $s,t\in[0,\pi]$. The identity for $K_{n,m}$ is Lemma~\ref{ga:exchange}, whose hypotheses hold by the continuity of $S$ and the remark after it. Finally $K_{n,m}=D^2\mathcal G(S)[\Delta_x+\Gamma_x,\Delta_y+\Gamma_y]+D\mathcal G(S)[\Gamma_{xy}]$, and Lemma~\ref{ga:logcov}(c) with $\mu=\beta/4$ bounds the five resulting terms.
\end{proof}

For a cutoff $T$ as in Lemma~\ref{ga:cross-geometry} and $l\in\{n,m\}$ put $A_l=\frac1\pi\int_{I^T}(\mathcal J_l(s)-\frac l2)\dd s$. By \eqref{ga:winding} and Lemma~\ref{pre:half-turns} on the two end intervals, $|D_l-A_l|\le2+N_l$, where $N_l$ counts the zeros of $Y_l$ in $(0,T_l)\cup(\pi l-T_l,\pi l)$, $T_l=Tl/n$. Lemma~\ref{ga:jensen} with $\varrho=T_l$, applied as in the proof of Lemma~\ref{ga:zeros}, gives $\Prob(N_l>t)\le Ce^{9T_l}2^{-t/2}$, so the angular cutoff costs
\begin{equation}\label{ga:cutoff-error}
 \E(D_l-A_l)^2\le C(1+T_l)^2\qquad(1\le T_l\le\pi l/4).
\end{equation}

\begin{proposition}[Covariance across degrees]\label{ga:covariance}
\begin{enumerate}[label=\textup{(\alph*)}]
\item For every $q_1>1$ there is $C_{q_1}$ such that for all $n\ge1$ and $n<m\le q_1n$ with $\delta=(m-n)/n\ge n^{-1/1000}$,
\begin{equation}\label{ga:cov-rate}
 \bigl|n^{-1}\Cov(D_n,D_m)-a(1+\delta)\bigr|\le C_{q_1}\delta^{-100}n^{-1/100}.
\end{equation}
\item The function $a$ is continuous on $[1,\infty)$ with $0\le a\le c_G$, and $c_G-a(1+\delta)=\sqrt{\delta/12}+O(\delta)$ as $\delta\downarrow0$.
\item Put $\Lambda(d)=c_Gd+2(c_G-a(1+d))$. Then $c\sqrt d\le\Lambda(d)\le C\sqrt d$ for $0<d\le1$, $\Lambda(d)/\sqrt d\to1/\sqrt3$ as $d\downarrow0$, and for $1\le h\le n$ with $d=h/n\ge n^{-1/1000}$,
\begin{equation}\label{ga:increment-variance}
 \Bigl|\frac{\Var(D_{n+h}-D_n)}n-\Lambda(d)\Bigr|\le Cd^{-100}n^{-1/100}.
\end{equation}
\end{enumerate}
\end{proposition}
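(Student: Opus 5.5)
We follow the route the paper has prepared: write $D_n$ and $D_m$ as angular integrals of the radial currents, express the covariance through the kernel $K_{n,m}$, and replace that kernel by the local kernel $K_q$ of Lemma~\ref{ga:dilog}.

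\emph{Reduction to currents.} Fix a cutoff $T=n^{\alpha}$ with a small $\alpha$; then $T\ge4\pi/\beta$ once $\delta\ge n^{-1/1000}$, since $\beta\asymp\min(\delta,1)$. By \eqref{ga:cutoff-error} and Cauchy--Schwarz, using $\Var D_l\le Cl$ from Proposition~\ref{ga:variance}, we get $|\Cov(D_n,D_m)-\Cov(A_n,A_m)|\le C_{q_1}T\sqrt n$. By Fubini and the integrability in Lemma~\ref{ga:exchange},
\[
 \Cov(A_n,A_m)=\frac{n^2}{\pi^2}\iint_{(I^T)^2}K_{n,m}(s,t)\dd s\dd t .
\]
We substitute $t=s+u/n$. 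Lemma~\ref{ga:cross-geometry} bounds the integrand by $C\beta^{-10}(1+|u|)^{-2}$. Hence the region $|u|>R$ contributes $O(\beta^{-10}n/R)$, and the strip of width $O(R/n)$ near the ends of $I^T$ contributes $O(\beta^{-10}R)$.

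\emph{Local limit.} On the bulk region, with $|u|\le R$ and $s$ at distance at least $n^{-1}T$ from $\{0,\pi\}$, we compare $S(x,y;s,s+u/n)$ with the operator $S_{c_q(x,y,u)}$ of Lemma~\ref{ga:dilog}(b), viewed as a function of $(x,y)$ near $0$. There are two sources of discrepancy.
\begin{enumerate}[label=\textup{(\roman*)}]
 \item The terms $p_n$, $p_m$, $\eta$ and their $x,y$-derivatives are $O(1/T)$ by \eqref{rc:abel}.
 \item $\gamma$ and its derivatives differ from $c_q$ and its derivatives by Riemann-sum errors $O((1+R)/n)$, the phase $e^{iu/2}$ being irrelevant to moduli.
\end{enumerate}
Both operators lie in $\mathcal S_{\beta/8}$. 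Lemma~\ref{ga:logcov}(a), applied to the second derivative $\partial_x\partial_y\mathcal G$ along the two curves of operators, then gives
\[
 |n^{-2}K_{n,m}-n^{-2}K_q(u)|\le C\beta^{-C}(1/T+R/n).
\]
The factor $n^2$ is absorbed by the Jacobian, leaving one factor $n$. Integrating over $u$ and using \eqref{ga:local-kernel} gives
\[
 n^{-1}\Cov(A_n,A_m)=\frac1\pi\cdot\frac{\pi}{\pi}\int K_q=a(q)
\]
up to the errors, after checking the normalization $\frac{n}{\pi^2}\cdot\pi\cdot\pi a(q)$. Finally $q=1+\delta$ exactly.

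\emph{Choice of parameters.} We take $T=n^{1/10}$ and $R=n^{1/2}$. Then every error is $\le C\delta^{-C'}n^{-1/100}$ with $C'\le100$, because $\beta^{-1}\le C\delta^{-1}$ and the exponents in Lemma~\ref{ga:logcov}(a) are at most $2k+2=10$, compounded a bounded number of times. This proves (a).

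\emph{Part (b).} Continuity of $a$ on $(1,\infty)$ is dominated convergence. At $q=1$ we split the integral at $|u|\asymp\sqrt{\delta}$. The bounds $0\le a\le c_G$ are immediate since $q-r^2\ge1-r^2$. For the cusp,
\[
 c_G-a(q)=\frac1\pi\int\frac{r'^2(q-1)}{(1-r^2)(q-r^2)}\dd u .
\]
Near $0$, $1-r^2\approx u^2/12$ and $r'^2\approx u^2/144$. The integrand thus concentrates on $|u|\lesssim\sqrt{12\delta}$ and equals approximately $\frac{\delta/12}{\delta+u^2/12}$, whose integral is $\frac{\pi}{12}\sqrt{12\delta}\cdot\frac{12}{12}$. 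That is, $\frac1\pi\int=\sqrt{\delta/12}$. The tails produce $O(\delta)$.

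\emph{Part (c).} This follows from
\[
 \Var(D_{n+h}-D_n)=\Var D_{n+h}+\Var D_n-2\Cov(D_n,D_{n+h}),
\]
together with Proposition~\ref{ga:variance} and part (a):
\[
 c_G(n+h)+c_Gn-2na(1+d)=n\Lambda(d).
\]
The asymptotic $\Lambda(d)/\sqrt d\to2/\sqrt{12}=1/\sqrt3$ comes from (b), and positivity on $(0,1]$ comes from continuity and $a(q)<c_G$ for $q>1$.

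\emph{Main obstacle.} The hardest step is the local-limit comparison. It requires that the derivative bounds of Lemma~\ref{ga:logcov} with $\mu\asymp\beta$ give polynomial $\delta$-dependence uniformly in $u$, including near $u=0$, where $S$ is only $\beta$-nondegenerate. To get there, I would first check that $S_{c_q}\in\mathcal S_{\beta/4}$ by the same Cauchy--Schwarz argument used in Lemma~\ref{ga:cross-geometry}.
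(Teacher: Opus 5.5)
\textbf{Where (a) fails.} Your route is the paper's: the cutoff via \eqref{ga:cutoff-error}, the current representation of $\Cov(A_n,A_m)$, tails from Lemma~\ref{ga:cross-geometry}, and comparison of $K_{n,m}$ with $K_q$ through Lemma~\ref{ga:logcov}(a) and \eqref{ga:local-kernel}. Parts (b) and (c) are also essentially the paper's arguments. The problem is the error bookkeeping in (a), which fails for the parameters you chose.

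Your pointwise estimate $|K_{n,m}(s,s+u/n)-K_q(u)|\le C\beta^{-8}(1/T+R/n)$ is uniform in $u$. It must be integrated over $|u|\le R$, so the normalized error is $C\beta^{-8}(R/T+R^2/n)$. With $T=n^{1/10}$ and $R=n^{1/2}$ this is of order $\beta^{-8}n^{2/5}$, which does not even tend to zero. Your claim that every error is $\le C\delta^{-C'}n^{-1/100}$ is therefore false as stated.

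The normalizations are also mixed up. $K_{n,m}$ already contains the factor $n^{-2}$, so the display with $n^{-2}K_{n,m}$ and $n^{-2}K_q$ is off. The correct statement is that $n^{-1}\Cov(A_n,A_m)=\pi^{-2}\iint\ind_{\Omega_n}K_{n,m}\dd u\dd s$ is compared with $\pi^{-2}\int_0^\pi\int_{\R}K_q\dd u\dd s=a(q)$.

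\textbf{Single-cutoff repair.} You need $R\to\infty$, $R\ll T$ and $R^2\ll n$, together with $T\ll\sqrt n$ so that the endpoint error $C_{q_1}T/\sqrt n$ is small. For example, $R=n^{1/50}$ and $T=n^{1/25}$ satisfy $T\ge4\pi/\beta$ for large $n$, since $\delta\ge n^{-1/1000}$. With these choices:
\begin{itemize}
 \item the local comparison costs $C\delta^{-8}(n^{-1/50}+n^{-24/25})$;
 \item the tails $|u|>R$ cost $C\delta^{-10}n^{-1/50}$;
 \item the $s$-strips of width $O((T+R)/n)$ near $0$ and $\pi$ cost $C\delta^{-10}(T+R)/n$.
\end{itemize}
All of these are within $C_{q_1}\delta^{-100}n^{-1/100}$.

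\textbf{The paper's repair.} The paper decouples the two roles of your $T$. It keeps the representation cutoff small, $T=16\pi q_1/\delta$, so the endpoint error is only $C\delta^{-2}n^{-1/2}$. It then compares kernels only on the rectangle $s\in[2\varrho,\pi-2\varrho]$, $|u|\le R$, with $\varrho=R^{-1}=n^{-1/50}$. There the reflected entries $p_n,p_m,\eta$ and their derivatives are $O(1/(n\varrho))$ rather than $O(1/T)$. This gives $e_n=(1+R)/n+1/(n\varrho)$ with $Re_n\ll1$. The complement of the rectangle is discarded at cost $C\delta^{-10}(\varrho+R^{-1})$, using the envelope $C\delta^{-10}(1+|u|)^{-2}$ that both kernels satisfy.

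Either repair closes the gap; as written, the parameter step does not.
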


\begin{proof}
\emph{Representation.} Let $T$ be as in Lemma~\ref{ga:cross-geometry}. The constant in \eqref{ga:conditional-moment} is uniform over $(I^T)^2$, because the value covariance is at least $\beta/4$ there, so $\E\iint_{(I^T)^2}|\mathcal J_n(s)\mathcal J_m(t)|\dd s\dd t<\infty$. Fubini's theorem, Lemma~\ref{ga:cross-geometry} and the substitution $t=s-u/n$ give
\begin{equation}\label{ga:cov-representation}
 \frac{\Cov(A_n,A_m)}n=\frac1{\pi^2}\int_0^\pi\!\!\int_{\R}\ind_{\Omega_n}(s,u)\,K_{n,m}\Bigl(s,s-\frac un\Bigr)\dd u\dd s,
 \qquad\Omega_n=\Bigl\{s,s-\frac un\in I^T\Bigr\}.
\end{equation}
By the cutoff bound \eqref{ga:cutoff-error}, the bound $\Var D_l\le Cl$ from \eqref{ga:var-rate} and the Cauchy--Schwarz inequality,
\begin{equation}\label{ga:cov-endpoint}
\begin{aligned}
 |\Cov(D_n,D_m)-\Cov(A_n,A_m)|&\le\|D_n-A_n\|_2\|D_m\|_2+\|A_n\|_2\|D_m-A_m\|_2\\
 &\le C(1+qT)\bigl(\sqrt{qn}+qT\bigr).
\end{aligned}
\end{equation}

(b) The integrands satisfy $0\le r'^2/(q-r^2)\le r'^2/(1-r^2)$, which gives the bounds and, by dominated convergence, continuity. Put $b=1-r^2$ and $k=r'^2/b$, so that $c_G-a(1+\delta)=\pi^{-1}\int\delta k/(\delta+b)$. Near zero $b=u^2/12+O(u^4)$ and $k=1/12+O(u^2)$, so on $|u|\le u_0$ the integrand differs from $\delta/(12\delta+u^2)$ by at most $C\delta(u^2\delta+u^4)/(\delta+u^2)^2\le C\delta$. On $|u|\ge u_0$, $b$ is bounded below and $k$ is integrable, so that part is $O(\delta)$. Finally
\[
 \frac1\pi\int_{\R}\frac{\delta\dd u}{12\delta+u^2}=\sqrt{\frac\delta{12}},
\]
and the part of this integral over $|u|\ge u_0$ is $O(\delta)$.

(a) Let $q=m/n=1+\delta$. The left side of \eqref{ga:cov-rate} is at most $B_0=C\sqrt{q_1}+c_G$ by Proposition~\ref{ga:variance} and (b). If $\delta\le n^{-1/10000}$, then $\delta^{-100}n^{-1/100}\ge1$, and the bound holds with $C_{q_1}\ge B_0$. Let $\delta\ge n^{-1/10000}$. Then $\beta=(m-n)/(\sqrt{m+1}(\sqrt{m+1}+\sqrt{n+1}))\ge\delta/(4q_1)$. Take $T=16\pi q_1/\delta$, so that $S\in\mathcal S_{\mu}$ on $(I^T)^2$ with $\mu=\delta/(16q_1)$, and $\varrho=R^{-1}=n^{-1/50}$, so that $T/n\le\varrho$ for large $n$. Let $s\in[2\varrho,\pi-2\varrho]$, $|u|\le R$ and $t=s-u/n$, so that $s,t\in[\varrho,\pi-\varrho]\subset I^T$. The entry $\gamma$ of \eqref{ga:S} and its derivatives at the origin are Riemann sums,
\[
 \frac1n\sum_{j\le n}P(j/n)e^{iju/n}=\int_0^1P(v)e^{iuv}\dd v+O_{q_1}\Bigl(\frac{1+|u|}n\Bigr),\qquad\frac n{\sqrt{(n+1)(m+1)}}=q^{-1/2}+O\Bigl(\frac1n\Bigr),
\]
whose limits are the corresponding entries of $S_0=S_{c_q(x,y,u)}$ from Lemma~\ref{ga:dilog}(b). The entries $p_n,p_m,\eta$ have phases $2s,2t,s+t$ at distance at least $2\varrho$ from $2\pi\Z$, so by \eqref{rc:abel} they and their derivatives are $O(1/(n\varrho))$. Hence, with all derivatives taken at the origin,
\[
 \|S-S_0\|+\|S_x-S_{0,x}\|+\|S_y-S_{0,y}\|+\|S_{xy}-S_{0,xy}\|\le C_{q_1}e_n,\qquad e_n=\frac{1+R}n+\frac1{n\varrho}.
\]
Since $|c_q|^2\le q^{-1}$ and $1-q^{-1/2}\ge\delta/(2q_1)$, $S_0\in\mathcal S_{\delta/(4q_1)}$, and the segment from $S$ to $S_0$ lies in $\mathcal S_\mu$. There Lemma~\ref{ga:logcov}(a) bounds $D\mathcal G,D^2\mathcal G,D^3\mathcal G$ by $C_{q_1}\delta^{-8}$, so $|K_{n,m}(s,s-u/n)-K_q(u)|\le C_{q_1}\delta^{-8}e_n$ on the rectangle. Outside it, Lemmas~\ref{ga:cross-geometry} and~\ref{ga:dilog} bound both kernels by $C_{q_1}\delta^{-10}(1+|u|)^{-2}$. As $\int K_q=\pi a(q)$, \eqref{ga:cov-representation} gives
\[
 \Bigl|\frac{\Cov(A_n,A_m)}n-a(q)\Bigr|\le C_{q_1}\delta^{-10}\bigl(\varrho+R^{-1}+Re_n\bigr)\le C_{q_1}\delta^{-10}n^{-1/50}.
\]
By \eqref{ga:cov-endpoint} with $qT\le16\pi q_1^2/\delta$, the endpoint error divided by $n$ is at most $C_{q_1}\delta^{-2}n^{-1/2}$. Altogether the error is at most $C_{q_1}\delta^{-10}n^{-1/50}\le C_{q_1}\delta^{-100}n^{-1/100}$ for large $n$.

(c) By (b) there are $\delta_1\in(0,1]$ and $K$ with $\sqrt{d/48}\le c_G-a(1+d)\le\sqrt{d/12}+Kd$ for $d\le\delta_1$, while $c_G\sqrt{\delta_1d}\le c_Gd\le\Lambda(d)\le3c_G$ for $\delta_1\le d\le1$. This gives the two-sided bound and the limit $2/\sqrt{12}=1/\sqrt3$. Expanding $\Var(D_{n+h}-D_n)$, the variance rate \eqref{ga:var-rate} and \eqref{ga:cov-rate} with $q_1=2$ give \eqref{ga:increment-variance}.
\end{proof}

For a stationary circular complex Gaussian process with covariance $r$, Buckley and Feldheim \cite[Theorems~1 and~3, Remark~1.3]{BuckleyFeldheim} give the asymptotic variance $\int r'^2/(1-r^2)=\pi c_G$ of the argument increment per unit length. The real polynomial has in addition the reflected terms $R_n(x+y)$ in \eqref{ga:kernel}, which the estimates above control.

\subsection{Normal approximation}\label{ga:normal-sub}
The smoothed current is a smooth function of the coefficients, close to $D_n$ in $L^2$, with derivatives of controlled size. This is what Chatterjee's bound \eqref{ga:chatterjee-bound} requires.

\begin{proposition}[Smoothing]\label{ga:smoothing}
For $n\ge n_0$ and $0<\varepsilon\le1$, $\|D_n-F_{n,\varepsilon}\|_2^2\le C(n\varepsilon+\log(n+2))$ and
\[
 \bigl\|\|\nabla F_{n,\varepsilon}\|\bigr\|_4\le C\varepsilon^{-2}\sqrt n,\qquad
 \bigl\|\|\nabla^2F_{n,\varepsilon}\|_{\rm op}\bigr\|_4\le C\varepsilon^{-3}\sqrt{\log(n+2)}.
\]
\end{proposition}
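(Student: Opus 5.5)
The plan has three parts: an $L^2$ distance bound from the semigroup representation, a gradient bound via the frame inequality, and a Hessian bound by the same route with a block factorisation.

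\emph{The $L^2$ distance.} We split $D_n-F_{n,\varepsilon}=(D_n-Q_n)+(Q_n-F_{n,\varepsilon})$. By \eqref{ga:crossing} and \eqref{ga:occupancy}, $\|D_n-Q_n\|_2\le C$. Lemma~\ref{ga:current}(c) gives
\[
\|Q_n-F_{n,\varepsilon}\|_2^2\le\Var Q_n-\Var F_{n,\varepsilon}.
\]
We compare the two arcsine identities \eqref{ga:arcsine}, at $\kappa=1$ and at $\kappa=(1+\varepsilon^2)^{-1}$. Away from the diagonal, $|x-y|\ge\varepsilon$, the integrands differ by $O(\varepsilon^2)$ times $\min(1,|x-y|^{-2})$, since $1-\kappa\asymp\varepsilon^2$ and $1-\rho^2\ge c\min(|x-y|^2,1)$. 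In the strip $|x-y|<\varepsilon$ both integrands are bounded by \eqref{ga:arcsine-derivative}, uniformly in $\kappa$, so the strip contributes $O(n\varepsilon)$. The boundary terms contribute $O(\log n)$. In total the double integrals differ by $O(n\varepsilon+n\varepsilon^2\log(1/\varepsilon))=O(n\varepsilon)$, which gives the first bound.

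\emph{Gradient.} We write $U_n(x)=\langle u(x),\xi\rangle$ and $V_n(x)=\langle v(x),\xi\rangle$. For a direction $b\in\R^{n+1}$ with $\|b\|=1$, differentiating \eqref{ga:current-def} gives
\[
\partial_bF=\tfrac12\int_{I_n}\bigl[h_\varepsilon'(U)h_\varepsilon'(V)V'\,U[b]+h_\varepsilon(U)h_\varepsilon''(V)V'\,V[b]+h_\varepsilon(U)h_\varepsilon'(V)\,V[b]'\bigr]\dd x,
\]
where $U[b]$ and $V[b]$ are the normalized fields $X[b]/\sqrt{v_n^+}$ and $Y[b]/\sqrt{v_n^-}$. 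Since $|h_\varepsilon^{(k)}|\le C\varepsilon^{-k}$, Cauchy--Schwarz and Lemma~\ref{pre:sampling}(i) bound the last term by $C\varepsilon^{-1}\sqrt n\,\|b\|$ deterministically. The other two terms are controlled by $\varepsilon^{-2}\int|V'|(|U[b]|+|V[b]|)$. Applying Cauchy--Schwarz with $\int V'^2\le Cn\cdot(\text{random factor})$, whose fourth moment is $O(n^2)$ by Gaussian moments, gives $\|\nabla F\|\le C\varepsilon^{-2}\sqrt n\,(1+\text{random})$ with bounded $L^4$ norm. This suffices for the gradient bound, although it is cruder than the structure allows.

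\emph{Hessian.} This is the main obstacle. We need $\sqrt{\log n}$ rather than $\sqrt n$, which requires localization. The second derivative $\partial_b\partial_{b'}F$ is again an integral over $I_n$ of terms like $h^{(j)}(U)h^{(k)}(V)\,(V'\text{ or }1)$ multiplied by bilinear products such as $U[b]U[b']$ or $V[b]V[b']'$. The factors $h_\varepsilon^{(k)}(V_n)$ with $k\ge1$ are supported up to Gaussian tails where $|V_n|\lesssim\varepsilon$, that is, near the zeros of $V_n$. Each such near-zero region has length $O(\varepsilon/|V'|)$ around a zero, so the integral reduces to a sum over the zeros $x_i$ of $V_n$ in $I_n$ of $\varepsilon^{-3}\sup_{|y-x_i|\le2}|F_b(y)||F_{b'}(y)|$, where $F_b$ ranges over the fields $X[b],Y[b]$ and their derivatives.

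The frame bound \eqref{ga:frame}, applied with $\Theta=\Theta_n$, then yields
\[
|\partial_b\partial_{b'}F|\le C\varepsilon^{-3}\,\Theta_n\,\|b\|\,\|b'\|.
\]
This step has two delicate points. First, the bound as written gives $\Theta_n$ rather than $\sqrt{\Theta_n}$; to reach $\sqrt{\log n}$ we would instead use $\Theta_n\lesssim\log n$ on a high-probability event and absorb the remaining tails via \eqref{ga:occupancy}, or else factor the sum as a product of two frame sums. Second, the terms where $h_\varepsilon''(V)$ is multiplied by $V'$, and the handling of regions where $V'$ is small, need the derivative bounds on the fields from Lemma~\ref{ga:fields}(c). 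Finally, the $L^4$ norm is controlled by $\|\Theta_n\|_p\le C\log(n+2)$, and we would track the powers carefully.
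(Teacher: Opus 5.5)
Your $L^2$ bound follows the paper's argument, with one correction. On $\varepsilon\le|x-y|\le1$ the two double integrands differ by order $\min(1,\varepsilon^2|x-y|^{-2})$, not by $O(\varepsilon^2)$. This still integrates to $O(n\varepsilon)$ because $\int_\varepsilon^1\varepsilon^2u^{-2}\dd u\le\varepsilon$, and the $n\varepsilon^2\log(1/\varepsilon)$ term has no source. Your gradient bound is also the paper's argument. The Hessian part, however, has a genuine gap.

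\emph{The gap.} Your localization route ends at $|\partial_b\partial_{b'}F|\le C\varepsilon^{-3}\Theta_n\|b\|\|b'\|$, and neither proposed repair removes a power of $\Theta_n$.
\begin{itemize}
\item Restricting to $\{\Theta_n\le C\log(n+2)\}$ gives a factor $\log(n+2)$.
\item Cauchy--Schwarz between the two frame sums gives $(C\Theta_n\|b\|^2)^{1/2}(C\Theta_n\|b'\|^2)^{1/2}=C\Theta_n\|b\|\|b'\|$ again.
\end{itemize}
The only available control on $\Theta_n$ is \eqref{ga:occupancy}, which gives $\|\Theta_n\|_4\le C\log(n+2)$. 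So you reach $C\varepsilon^{-3}\log(n+2)$, not the stated $C\varepsilon^{-3}\sqrt{\log(n+2)}$. Since the statement holds for every $\varepsilon\in(0,1]$, the loss cannot be traded against powers of $\varepsilon$. The localization itself is also incomplete. Terms such as $h_\varepsilon'(U)h_\varepsilon'(V)\,U[b]\,V[b']'$ carry no factor $V'$, so the substitution $w=V(x)$ does not localize them. Monotone pieces of $V$ also need not be short.

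\emph{The missing idea.} No localization is needed. The function $\psi(a,b,w)=\tfrac12h_\varepsilon(a)h_\varepsilon'(b)w$ is linear in $w$, so $\|D^2\psi\|\le C\varepsilon^{-3}(1+|w|)$ pointwise. Put $\ell_\beta=(\langle u,\beta\rangle,\langle v,\beta\rangle,\langle v',\beta\rangle)$. It satisfies the deterministic energy bound $\int_{I_n}|\ell_\beta|^2\le C\|\beta\|^2$, by Lemma~\ref{pre:sampling}(i) and the bounds on $(v_n^\pm)^{-1/2}$ in Lemma~\ref{ga:fields}(a). Then
\[
 |\langle\nabla^2F_{n,\varepsilon}\beta,\beta'\rangle|\le C\varepsilon^{-3}\Bigl(1+\sup_{I_n}|V_n'|\Bigr)\int_{I_n}|\ell_\beta||\ell_{\beta'}|\dd x\le C\varepsilon^{-3}\Bigl(1+\sup_{I_n}|V_n'|\Bigr)\|\beta\|\|\beta'\|.
\]
The only random factor left is the maximum of a Gaussian field over $\pi n$ unit intervals. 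On each such interval $J$ with left endpoint $x_J$, $\sup_J|V_n'|\le|V_n'(x_J)|+\int_J|V_n''|$ has $L^p$ norm at most $C\sqrt p$. Hence $\|\sup_{I_n}|V_n'|\|_p\le(\pi n)^{1/p}C\sqrt p$, and taking $p=\max(4,\log(n+2))$ gives $C\sqrt{\log(n+2)}$. This is the mechanism of your own gradient bound, with $\sup_{I_n}|V_n'|$ in place of $(\int V_n'^2)^{1/2}$. The frame constant $\Theta_n$ never enters.
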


\begin{proof}
\emph{Approximation.} By Lemma~\ref{ga:current}(c) it suffices to bound $\Var Q_n-\Var F_{n,\varepsilon}$ through the arcsine identity \eqref{ga:arcsine}. The boundary terms are $O(\log n)$ uniformly in $\kappa$. Put $\Delta=1-\kappa^2\le2\varepsilon^2$. For $x\ne y$ and $\rho=\rho_\pm(x,y)$, the ratio of $\partial\arcsin(\kappa\rho)$ to $\partial\arcsin\rho$ is $(1+\Delta/(\kappa^2(1-\rho^2)))^{-1/2}\in[0,1]$, and its deficit from one is at most $\min(1,2\Delta/(1-\rho^2))$. With \eqref{ga:arcsine-derivative} and the correlation gap, the double integrands at $\kappa$ and at $1$ differ by at most $C\min(1,\Delta|x-y|^{-2})$ for $|x-y|\le1$ and by $C\Delta|x-y|^{-2}$ for $|x-y|\ge1$. Integrating,
\[
 |\Var Q_n-\Var F_{n,\varepsilon}|\le C\log(n+2)+Cn\Bigl(\int_0^1\min\Bigl(1,\frac\Delta{u^2}\Bigr)\dd u+\int_1^\infty\frac{\Delta\dd u}{u^2}\Bigr)\le C\bigl(\log(n+2)+n\sqrt\Delta\bigr),
\]
and $\sqrt\Delta\le2\varepsilon$. With $\|D_n-Q_n\|_2\le C$ this gives the first bound.

\emph{Derivatives.} Let $u(x),v(x)$ be the rows of $U_n(x),V_n(x)$, so $V_n'(x)=\langle v'(x),\xi\rangle$, and put $\psi(a,b,w)=\frac12h_\varepsilon(a)h_\varepsilon'(b)w$. Since $|h_\varepsilon^{(k)}|\le C_k\varepsilon^{-k}$, the first and second derivatives of $\psi$ are at most $C\varepsilon^{-2}(1+|w|)$ and $C\varepsilon^{-3}(1+|w|)$. For $\beta\in\R^{n+1}$ let $\ell_\beta(x)=(\langle u(x),\beta\rangle,\langle v(x),\beta\rangle,\langle v'(x),\beta\rangle)$. By Lemmas~\ref{ga:fields}(a) and~\ref{pre:sampling}, $\int_{I_n}|\ell_\beta|^2\le C\|\beta\|^2$. Differentiating under the integral,
\[
 |\nabla F_{n,\varepsilon}\cdot\beta|\le C\varepsilon^{-2}\int_{I_n}(1+|V_n'|)|\ell_\beta|,\qquad
 |\langle\nabla^2F_{n,\varepsilon}\beta,\beta'\rangle|\le C\varepsilon^{-3}\Bigl(1+\sup_{I_n}|V_n'|\Bigr)\int_{I_n}|\ell_\beta||\ell_{\beta'}|,
\]
so $\|\nabla F_{n,\varepsilon}\|\le C\varepsilon^{-2}(n+\int_{I_n}V_n'^2)^{1/2}$ and $\|\nabla^2F_{n,\varepsilon}\|_{\rm op}\le C\varepsilon^{-3}(1+\sup_{I_n}|V_n'|)$. Minkowski's inequality and $\E V_n'(x)^4\le C$ give the first moment bound. For a unit interval $J$ with left endpoint $x_J$, $\sup_J|V_n'|\le|V_n'(x_J)|+\int_J|V_n''|$ has $L^p$ norm at most $C\sqrt p$. Covering $I_n$ by $\pi n$ such intervals, $\|\sup_{I_n}|V_n'|\|_p\le(\pi n)^{1/p}C\sqrt p$, and $p=\max(4,\log(n+2))$ gives the second.
\end{proof}

All normal approximations in Theorem~\ref{thm:B} follow from one estimate for linear combinations of discrepancies.

\begin{lemma}[Normal approximation]\label{ga:normal}
For each $d\ge1$ there is $C_d$ such that the following holds. Let $n_0\le n_1,\dots,n_d\le N$, $\lambda\in[-1,1]^d$ and $S=\sum_i\lambda_iD_{n_i}$, and suppose $s^2=\Var S\ge N^{39/40}$. Then
\begin{equation}\label{ga:normal-bound}
 d_K\bigl(S/s,\mathcal N(0,1)\bigr)\le C_d\Bigl(\frac{N^{3/4}\sqrt{\log(N+2)}}{s^2}+\Bigl(\frac{N^{19/20}}{s^2}\Bigr)^{1/3}\Bigr).
\end{equation}
\end{lemma}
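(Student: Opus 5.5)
We replace each $D_{n_i}$ by its smoothed current $F_{n_i,\varepsilon}$ of \eqref{ga:current-def}, apply Chatterjee's second-order Poincar\'e bound (Lemma~\ref{ga:chatterjee}(a)) to the smooth function $g=\sum_i\lambda_iF_{n_i,\varepsilon}$ of the standard Gaussian vector $G=(\xi_0,\dots,\xi_N)$, and then transfer back to $S$ by Lemma~\ref{ga:chatterjee}(b). Each $F_{n_i,\varepsilon}$ depends only on $(\xi_0,\dots,\xi_{n_i})$, so $g$ is a $C^2$ function of $G$. Proposition~\ref{ga:smoothing} and Minkowski's inequality give
\[
 \bigl\|\|\nabla g(G)\|\bigr\|_4\le C_d\varepsilon^{-2}\sqrt N,\qquad
 \bigl\|\|\nabla^2g(G)\|_{\rm op}\bigr\|_4\le C_d\varepsilon^{-3}\sqrt{\log(N+2)},
\]
and $\|S-g\|_2\le C_d(N\varepsilon+\log(N+2))^{1/2}$. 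Finiteness of all moments of $g$ is clear, since $|F_{n,\varepsilon}|\le6n+1$.

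\textbf{Step 1: variances.} Put $s_g^2=\Var g$. By Lemma~\ref{ga:chatterjee}(b), $|s-s_g|\le\|S-g\|_2\le C_d(N\varepsilon+\log N)^{1/2}$. Since $s^2\ge N^{39/40}$, choosing $\varepsilon$ with $N\varepsilon\ll s^2$ gives $s_g\ge s/2$. \textbf{Step 2: Chatterjee.} The bound \eqref{ga:chatterjee-bound} gives
\[
 d_{\rm TV}\bigl(g,\mathcal N(\E g,s_g^2)\bigr)\le C_d\varepsilon^{-5}\frac{\sqrt{N\log(N+2)}}{s^2},
\]
and $\E g=0$ by Lemma~\ref{ga:current}(a). \textbf{Step 3: transfer.} Apply Lemma~\ref{ga:chatterjee}(b) with $X=S/s$ and $Y=g/s_g$. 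The last clause there gives $\E(X-Y)^2\le4\|S-g\|_2^2/s^2\le C_d(N\varepsilon+\log N)/s^2$. Hence
\[
 d_K(S/s,\mathcal N(0,1))\le C_d\Bigl(\varepsilon^{-5}\frac{\sqrt{N\log N}}{s^2}+t+\frac{N\varepsilon+\log N}{t^2s^2}\Bigr).
\]

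\textbf{Step 4: optimization.} This is where the exponents $3/4$ and $19/20$ must come from, and I expect it to be the main bookkeeping obstacle. One choice is $\varepsilon=N^{-1/20}$. Then the Chatterjee term is $N^{1/2+1/4}\sqrt{\log N}/s^2=N^{3/4}\sqrt{\log N}/s^2$, which is the first term in \eqref{ga:normal-bound}. The smoothing error becomes $N\varepsilon=N^{19/20}$, and minimizing $t+N^{19/20}/(t^2s^2)$ at $t=(N^{19/20}/s^2)^{1/3}$ gives the second term; the $\log N$ part is dominated by it. For the condition $s_g\ge s/2$ of Step 1 we need $N^{19/20}\ll s^2$, which holds since $s^2\ge N^{39/40}$ up to constants. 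If $s^2$ is only comparable to $N^{19/20}$, the right side of \eqref{ga:normal-bound} is $\gtrsim1$ and the bound is trivial. Otherwise $\|S-g\|_2\le s/2$ and Step 1 holds.

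The case where some $n_i$ lies below the $n_0$ of Proposition~\ref{ga:smoothing} is excluded by the hypothesis $n_i\ge n_0$.
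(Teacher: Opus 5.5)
Your proposal is correct and follows the paper's proof essentially step for step. The shared steps are the choice $\varepsilon=N^{-1/20}$, Proposition~\ref{ga:smoothing} combined with Minkowski for $g=\sum_i\lambda_iF_{n_i,\varepsilon}$, Chatterjee's bound once $\Var g\ge s^2/4$ is secured, and Lemma~\ref{ga:chatterjee}(b) with $t=(N^{19/20}/s^2)^{1/3}$. The only difference is in the degenerate case: you split according to whether $s^2\ge CN^{19/20}$, and in the complementary case the right side of \eqref{ga:normal-bound} is at least one. The paper instead uses $s^2\ge N^{39/40}$ for $N\ge N_d$ and $s^2\le Cd^2N$ for $N<N_d$, and both ways work.
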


\begin{proof}
Let $\varepsilon=N^{-1/20}$ and $F=\sum_i\lambda_iF_{n_i,\varepsilon}$, a smooth centered function of $(\xi_0,\dots,\xi_N)$. By Proposition~\ref{ga:smoothing}, $\|S-F\|_2^2\le C_dN^{19/20}$, $\|\|\nabla F\|\|_4\le C_d\varepsilon^{-2}\sqrt N$ and $\|\|\nabla^2F\|_{\rm op}\|_4\le C_d\varepsilon^{-3}\sqrt{\log(N+2)}$. For $N\ge N_d$, Lemma~\ref{ga:chatterjee}(b) gives $|\sqrt{\Var F}-s|\le\|S-F\|_2\le s/2$, so $\Var F\ge s^2/4$, and the bound \eqref{ga:chatterjee-bound} gives
\[
 d_{\rm TV}\bigl(F/\sqrt{\Var F},\mathcal N(0,1)\bigr)\le\frac{C_d}{s^2}\varepsilon^{-5}\sqrt{N\log(N+2)}=C_d\frac{N^{3/4}\sqrt{\log(N+2)}}{s^2}.
\]
Lemma~\ref{ga:chatterjee}(b) with $\|S/s-F/\sqrt{\Var F}\|_2^2\le4\|S-F\|_2^2/s^2$ and $t=(N^{19/20}/s^2)^{1/3}$ gives \eqref{ga:normal-bound}. For $N<N_d$ the right side is at least one if $C_d$ is large, because $s^2\le C d^2N$.
\end{proof}

\subsection{Increments: pairing crossings across degrees}\label{ga:increments-sub}
The increment $D_{n+h}-D_n$ has size at most $(nh)^{1/4}$ up to logarithms, much less than the size $\sqrt n$ of each term, because the curves of degrees $n$ and $n+h$ have nearly the same crossings. Write the polynomial of degree $n+h$ on the carrier of degree $n$. Up to boundary terms and the shift $h/2$, $D_n$ and $D_{n+h}$ then count with signs the crossings of the real axis by two curves $X+iY$ and $(X+X^h)+i(Y+Y^h)$, where $X^h$ and $Y^h$ are the fields of the $h$ added coefficients and have size $\sqrt{h/n}$. The two sine coordinates have opposite signs only where $|Y|<|Y^h|$, a set of measure about $\sqrt{nh}$.

Pairing the crossings of the two curves (Lemma~\ref{ga:canonical}) writes the difference of the two crossing sums as a sum of differences of signs of $X$ at nearby points. Given $Y$ and the added coefficients, this sum is sub-Gaussian, and since two nearby signs rarely differ, its variance is about $\sqrt{nh}$ (Lemma~\ref{ga:paired}). Its conditional mean $J$ fluctuates about $\E J$ on the scale $\sqrt h$, which is at most $(nh)^{1/4}$. Given the added coefficients it concentrates around its average, which is a smooth function of them (Lemma~\ref{ga:conditional-mean}). With the difference $Q$ of the crossing sums and the boundary term $E$ of Lemma~\ref{ga:common-carrier}, the increment splits as
\begin{equation}\label{ga:fine-split}
 D_{n+h}-D_n=(Q-J)+(J-\E J)+(E-\E E),
\end{equation}
and Propositions~\ref{ga:uniform-increment} and~\ref{ga:fine} bound the three terms.

Fix $n\ge n_0$ and $1\le h\le n$, and write $X=X_n$, $Y=Y_n$, $I=I_n$ and $\ell=\log(n+2)$. The added terms form the tail fields
\[
 X^h(x)+iY^h(x)=n^{-1/2}e^{-ix/2}(f_{n+h}-f_n)(e^{ix/n})=n^{-1/2}\sum_{k=n+1}^{n+h}\xi_ke^{i\omega_kx},\qquad \widetilde Y=Y+Y^h,
\]
with $\omega_k=k/n-\frac12\in(\frac12,\frac32]$.

Let $T=(\xi_{n+1},\dots,\xi_{n+h})$ and $\theta=(Y,T)$. The field $X$ depends on the cosine coordinates of $(\xi_0,\dots,\xi_n)$, and $\theta$ on the sine coordinates and $T$, so $X$ is independent of $\theta$. We write $\Prob_x$ and $\E_x$ for probability and expectation over the cosine coordinates with $\theta$ fixed.

Each of $X^h(x),Y^h(x),(X^h)'(x),(Y^h)'(x)$ is centered Gaussian with variance at most $\frac94h/n$. In $t=x/n$ the frequencies $k-n/2$ are distinct and all in $\Z$ or all in $\Z+\frac12$, so Parseval's identity over a period $4\pi$ gives the energy bound
\begin{equation}\label{ga:tail-energy}
 \int_I((X^h)^2+(X^h)'^2+(Y^h)^2+(Y^h)'^2)\dd x\le C\|T\|^2.
\end{equation}

\begin{lemma}[Common carrier]\label{ga:common-carrier}
\textup{(a)} Almost surely the zeros of $Y$ and of $\widetilde Y$ in $(0,\pi n)$ are simple, at most $n$ and $2n$ in number, and disjoint, $Y$ and $\widetilde Y$ do not vanish at $L$ and $\pi n-L$, and $f_n$, $f_{n+h}$ have no zero on the unit circle.

\textup{(b)} On this event,
\begin{equation}\label{ga:increment-identity}
 D_{n+h}-D_n+\frac h2=Q+E,\quad
 Q=\sum_{\substack{z\in I\\\widetilde Y(z)=0}}\sgn(X+X^h)(z)\sgn \widetilde Y'(z)-\sum_{\substack{y\in I\\Y(y)=0}}\sgn X(y)\sgn Y'(y),
\end{equation}
with $|E|\le6+N^{\rm end}$, where $N^{\rm end}$ counts the zeros of $Y$ and $\widetilde Y$ in $(0,\pi n)\setminus I$.

\textup{(c)} Let $\Theta$ be the largest number of zeros of $Y$ and $\widetilde Y$ together in a unit cell meeting $I$, and $N_c$ the number of zeros of $\widetilde Y$ in a unit cell $c$. Then $\Prob(N^{\rm end}>t)\le Ce^{-ct}$, $\Prob(\Theta>t)\le Cne^{-ct}$, $\E(\Theta+1)^2\le C\ell^2$ and $\E N_c^2\le C$.
\end{lemma}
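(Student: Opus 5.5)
The plan is to treat the three parts in order, since (b) uses the simplicity and disjointness from (a), and (c) reuses the Jensen tail bound of Lemma~\ref{ga:jensen} exactly as in Lemma~\ref{ga:zeros}.

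\emph{Part (a).} Both $Y$ and $\widetilde Y$ are sine sums with Gaussian coefficients that have a joint density.
\begin{itemize}
\item Write $Y(nt)$ through the pairing \eqref{pre:pairing} in degree $n$.
\item The field $\widetilde Y$ is the sine coordinate of $f_{n+h}$ on the carrier of degree $n$, so $\widetilde Y(nt)=\Im\, e^{-int/2}f_{n+h}(e^{it})$. Since $e^{-int/2}=e^{iht/2}e^{-i(n+h)t/2}$, we get $\widetilde Y(nt)=\Im\bigl(e^{iht/2}(U_{n+h}+iV_{n+h})(t)\bigr)$. This is a trigonometric polynomial in $t/2$ of degree at most $2n+2h\le 4n$.
\item The count of at most $2n$ zeros in $(0,\pi n)$ follows by a degree count in the variable $t/2$, as in Lemma~\ref{pre:carrier}(b). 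The count of at most $n$ zeros of $Y$ is Lemma~\ref{pre:carrier}(b) itself.
\item Simplicity of the zeros of $Y$ is Lemma~\ref{ga:zeros}(a).
\item For $\widetilde Y$, and for disjointness, I would argue by conditioning. Condition on $Y$, which is a function of the sine coordinates of $(\xi_0,\ldots,\xi_n)$. Then $Y^h$ is an independent Gaussian field, a nondegenerate linear form in $T$ at every $x$. The zeros of $Y$ form a finite set, and each is a zero of $\widetilde Y$ only if $Y^h$ vanishes there, which has probability zero. So the zero sets are disjoint.
\item Simplicity of the zeros of $\widetilde Y$ is a resultant/discriminant argument like Lemma~\ref{ga:zeros}(a). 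Multiplying by a suitable power of $e^{it/2}$ turns it into a polynomial in $e^{it/2}$ whose coefficients have a density. A nonzero discriminant at one explicit choice makes the bad set null, as in Lemma~\ref{pre:no-circle-roots}.
\item Nonvanishing at $L$ and $\pi n-L$ holds because the values there are nondegenerate Gaussians.
\item No circle roots is Lemma~\ref{pre:no-circle-roots}.
\end{itemize}

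\emph{Part (b).} Apply \eqref{ga:winding} to both degrees, working in the common variable $x$ on $[0,\pi n]$.
\begin{itemize}
\item On the carrier of degree $n$, $(X+X^h)+i\widetilde Y=n^{-1/2}e^{-ix/2}f_{n+h}(e^{ix/n})$. Its argument increment over $[0,\pi n]$ equals $\pi(D_{n+h}+h/2)$ minus the increment of $\arg e^{ihx/(2n)}$, i.e.\ $\Delta\arg=\pi D_{n+h}+O(1)\cdot\pi$ after accounting for the rotation by $e^{ihx/2n}$ and the rescaling $\sqrt{(n+h)/n}$.
\item Use the definition of $D_{n+h}$ through $\Delta_{[0,\pi(n+h)]}$ in the variable $(n+h)t$, which covers the same $t\in[0,\pi]$.
\item Then $\pi(D_{n+h}-h/2)=\Delta_{[0,\pi]}\arg e^{-i(n+h)t/2}f_{n+h}$. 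Rotating by $e^{iht/2}$ adds $\pi h/2$, giving $\pi D_{n+h}$, consistent with the $+h/2$ in \eqref{ga:increment-identity}.
\item Now apply Lemma~\ref{pre:half-turns} to each curve separately on the three intervals $[0,L]$, $I$ and $[\pi n-L,\pi n]$. The interior sums over $I$ give the two crossing sums in $Q$.
\item The end intervals and the endpoint errors $e(\cdot)$ contribute at most $6$ plus the number of zeros of $Y$ and $\widetilde Y$ in the end windows. The $6$ comes from three endpoint corrections per curve.
\end{itemize}
The main bookkeeping risk is getting the shift $h/2$ and its sign right, and I would check it against the case $h$ even with the explicit factor $e^{ihx/(2n)}$.

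\emph{Part (c).} Use Lemma~\ref{ga:jensen} with radius $\varrho=1$ at centers in $I$, for both $Y$ and $\widetilde Y$.
\begin{itemize}
\item For the second-moment hypothesis, $\widetilde Y$ has frequencies $|\omega|\le 3/2$, so the remark after Lemma~\ref{ga:jensen} gives $\E|\widetilde Y(w)|^2\le 7e^{3|\Im w|}$.
\item For the variance hypothesis, $\Var\widetilde Y(x_0)\ge\Var Y(x_0)\ge 1/4$ on $I$ by independence and Lemma~\ref{ga:fields}(a).
\item A union bound over at most $4n$ unit cells gives $\Prob(\Theta>t)\le Cne^{-ct}$.
\item Integrating this tail above $A\ell$ gives $\E(\Theta+1)^2\le C\ell^2$.
\item The single-cell bound $\E N_c^2\le C$ follows from the tail bound without the union.
\end{itemize}
For $N^{\rm end}$, the $Y$ part is \eqref{ga:occupancy}. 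For $\widetilde Y$:
\begin{itemize}
\item Near $0$, use $F(w)=\widetilde Y(w)/w$ at $x_0=0$ with $\varrho=L$. The value $\Var\widetilde Y'(0)\ge\Var Y'(0)\ge 1/12$ gives the Gaussian nondegeneracy.
\item Near $\pi n$, the point $\pi n$ is not a special point for $\widetilde Y$. There I would use $F=\widetilde Y$ itself centered at $\pi n$, whose variance is $\Var Y(\pi n)+\Var Y^h(\pi n)$.
\item If $\Var Y(\pi n)$ vanishes for even $n$, fall back on $F(w)=\widetilde Y(w)/(w-\pi n)$ when $\Var\widetilde Y'(\pi n)>c$, or on the nonvanishing $X$-type law as in Lemma~\ref{ga:zeros}.
\end{itemize}
I expect this endpoint nondegeneracy near $\pi n$ to be the main obstacle.
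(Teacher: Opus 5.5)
Your plan is the paper's proof in all three parts: you identify $X+X^h+i\widetilde Y$ with the rotated and rescaled carrier of degree $n+h$, apply Lemma~\ref{pre:half-turns} to both curves on $I$ and on the two end windows, and use Lemma~\ref{ga:jensen} for the counts. However, the one computation that carries the content of (b) is wrong as written. By \eqref{ga:winding} at degree $n+h$, $\Delta_{[0,\pi]}\arg e^{-i(n+h)t/2}f_{n+h}(e^{it})=\pi D_{n+h}$, not $\pi(D_{n+h}-h/2)$. The rotation by $e^{iht/2}$ then adds $\pi h/2$, so $X+X^h+i\widetilde Y$ winds by $\pi(D_{n+h}+h/2)$ over $[0,\pi n]$. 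You can also see this directly: $e^{-int/2}f_{n+h}(e^{it})$ winds by $\pi(\nu_{n+h}(1)-n/2)$ over $t\in[0,\pi]$. Your value $\pi D_{n+h}$ is not ``consistent with the $+h/2$''. Subtracting the winding $\pi D_n$ of $X+iY$, it gives $D_{n+h}-D_n=Q+E$, so the $E$ of \eqref{ga:increment-identity} would carry an extra $h/2$ and the bound $|E|\le6+N^{\rm end}$ would fail once $h$ is large. With the correct winding, your count of six endpoint corrections (one per interval per curve) is right.

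There are also smaller points in (a) and (c).
\begin{itemize}
\item \emph{Disjointness in (a).} Your argument assumes $Y^h(x)$ is a nondegenerate Gaussian at every $x$. This fails for $h=1$: then $Y^h=n^{-1/2}\xi_{n+1}\sin(\omega_{n+1}x)$ vanishes identically at the points $x=2\pi nm/(n+2)$, which lie in $(0,\pi n)$. Condition on $T$ instead, as the paper does. Given $T\ne0$, $Y^h$ has finitely many zeros, and $Y(z)$ is nondegenerate at every $z\in(0,\pi n)$ because $R_n(2z)<R_n(0)$ there. Fubini's theorem then gives disjointness.
\item \emph{Simplicity and count in (a).} $\widetilde Y(nt)$ is itself a sine sum with distinct positive frequencies at most $\frac32n$, all in $\Z$ or all in $\Z+\frac12$, with independent Gaussian coefficients. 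So Lemma~\ref{ga:zeros}(a) applies verbatim and gives simplicity. Its representation $s(t)P(\cos t)$ with $\deg P\le\lceil3n/2\rceil-1$ gives the zero count. Your degree count in $t/2$ does not reach $2n$ without an extra reflection-symmetry argument.
\item \emph{The endpoint $\pi n$ in (c).} The point $\pi n$ is special for even $n$: $\widetilde Y(\pi n)=0$ identically, since both $Y$ and $Y^h$ vanish there. So your fallback $\widetilde Y(w)/(w-\pi n)$, using $\Var\widetilde Y'(\pi n)\ge\frac1{12}$, is genuinely needed, and it suffices. The paper handles both parities at once. For even $n$, $\widetilde Y(\pi n-\cdot)\overset{d}{=}-\widetilde Y$. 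For odd $n$, $\widetilde Y(\pi n-\cdot)\overset{d}{=}n^{-1/2}\sum_{k\le n+h}\xi_k\cos(\omega_k\cdot)$.
\end{itemize}
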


\begin{proof}
(a) By \eqref{pre:pairing}, $\widetilde Y(nt)$ is a sine sum with distinct positive frequencies at most $\frac32n$, all in $\Z$ or all in $\Z+\frac12$, and independent nondegenerate Gaussian coefficients. Lemma~\ref{ga:zeros}(a) gives simplicity, and the representation $s(t)P(\cos t)$ in its proof bounds the numbers of zeros. A common zero of $Y$ and $\widetilde Y$ is a common zero of $Y$ and $Y^h$. Given $T$, $Y^h$ has finitely many zeros in $(0,\pi n)$ almost surely, and at each of them the independent variable $Y(z)$ is nondegenerate, so Fubini's theorem gives disjointness. The rest is as in Lemma~\ref{ga:zeros}.

(b) The curve $X+X^h+i\widetilde Y$ differs from the curve of degree $n+h$ only by a rotation and a change of scale:
\[
 X+X^h+i\widetilde Y=e^{ihx/(2n)}\sqrt{\frac{n+h}n}\,\bigl(X_{n+h}+iY_{n+h}\bigr)\Bigl(\frac{(n+h)x}n\Bigr).
\]
The factor $e^{ihx/(2n)}$ turns by $\pi h/2$ over $[0,\pi n]$, so the winding identity \eqref{ga:winding} at degree $n+h$ gives $\pi^{-1}\Delta_{[0,\pi n]}\arg(X+X^h+i\widetilde Y)=D_{n+h}+h/2$. Apply Lemma~\ref{pre:half-turns} to both curves on $I$ and on the two end intervals.

(c) Apply Lemma~\ref{ga:jensen} as in the proof of Lemma~\ref{ga:zeros}. The field $\widetilde Y$ is a sine field with at most $2n+1$ terms and frequencies at most $\frac32$, $\Var \widetilde Y(x_0)\ge\Var Y(x_0)\ge\frac14$ for $x_0\in I$, and $\Var \widetilde Y'(0)\ge\frac1{12}$. At $\pi n$, $\widetilde Y(\pi n-\cdot)$ has the law of $-\widetilde Y$ for even $n$ and of $n^{-1/2}\sum_{k\le n+h}\xi_k\cos(\omega_k\cdot)$ for odd $n$.
\end{proof}

The next lemma pairs the crossings of $Y$ with those of $\widetilde Y$, deterministically. Each pair contributes a difference of two signs of $X$ at nearby points, which is small in mean square.

\begin{lemma}[Canonical pairing]\label{ga:canonical}
Let $a<b$, let $y_0,y_1\in C^1[a,b]$ have finitely many zeros, all simple and in $(a,b)$, with disjoint zero sets, and let $x,u$ be continuous on $[a,b]$. Put $S_0=\sum_{y_0(z)=0}\sgn x(z)\sgn y_0'(z)$, $S_1=\sum_{y_1(z)=0}\sgn(x+u)(z)\sgn y_1'(z)$ and $\mathcal E=\{y_0y_1<0\}$. For $0<d_0\le1$ there are pairs $(s_i,t_i)$ in $[a,b]$ with $0<|s_i-t_i|\le d_0$, signs $\epsilon_i\in\{\pm1\}$ and shifts $\alpha_i\in\{0,u(s_i)\}$, $\beta_i\in\{0,u(t_i)\}$ such that
\[
 S_1-S_0=\sum_i\epsilon_i\bigl\{\sgn(x(s_i)+\alpha_i)-\sgn(x(t_i)+\beta_i)\bigr\}+\varrho,\qquad|\varrho|\le2,
\]
$\sum_i|s_i-t_i|=|\mathcal E|$, $\sum_i(|\alpha_i|+|\beta_i|)=\sum_{y_1(z)=0}|u(z)|$, and every unit cell contains at most $N+d_0^{-1}+2$ of the points $s_i$, where $N$ is the largest number of zeros of $y_0$ and $y_1$ together in a unit cell.
\end{lemma}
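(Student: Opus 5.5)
The plan is a sweep over $[a,b]$ that tracks the sign pattern of $(y_0,y_1)$. The zeros of $y_0$ and $y_1$ together form a finite set $z_1<\dots<z_p$ in $(a,b)$. Since the two zero sets are disjoint and all zeros are simple, $\sgn y_0\cdot\sgn y_1$ flips at every $z_k$. Consequently the open set $\mathcal E=\{y_0y_1<0\}$ is a finite union of open intervals whose endpoints are consecutive points $z_k<z_{k+1}$. Interior endpoints are zeros, one of each function or two of the same function. The only other possibility is an endpoint equal to $a$ or $b$, which happens when $y_0y_1<0$ there.

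\emph{Step 1: pairing inside $\mathcal E$.} Let $(z_k,z_{k+1})$ be a component of $\mathcal E$ with both endpoints zeros. I pair these two zeros. Each zero $z$ contributes $\pm\sgn(x(z)+\alpha)$. Here $\alpha=0$ if $z$ is a zero of $y_0$, and $\alpha=u(z)$ if $z$ is a zero of $y_1$.

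The orientation signs must be checked in two cases.
\begin{itemize}
\item If the endpoints are zeros of different functions, then $y_0$ and $y_1$ both change sign across the interval. On the left of the interval the signs agree, and on the right they agree again. So the zero of $y_0$ and the zero of $y_1$ cross in the same direction, and their $\sgn y'$ agree. Since $S_1-S_0$ carries opposite signs for $S_1$ and $S_0$, the pair contributes $\epsilon\{\sgn(x(s)+\alpha)-\sgn(x(t)+\beta)\}$.
\item If both endpoints are zeros of the same $y_j$, then $y_j$ crosses twice with opposite orientations. This again gives a difference of signs with the same $\epsilon$ and the same-type shift.
\end{itemize}
Thus each bounded component yields one pair $(s_i,t_i)$ with $|s_i-t_i|$ equal to its length. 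The components touching $a$ or $b$ contain at most one zero each, and the two of them give the remainder $|\varrho|\le2$.

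\emph{Step 2: accounting outside $\mathcal E$.} Every zero lies on the boundary of $\mathcal E$, because the product changes sign at it. A zero might be an endpoint of two components of $\mathcal E$, but the pattern alternates: $\mathcal E$ holds on exactly one side of each zero. So each zero bounds exactly one component, and the pairing of Step~1 is a perfect matching apart from the boundary components.

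The sum identity for the lengths holds up to the boundary components, and I absorb these by including their lengths or noting them as bounded. The identity for the shifts holds because each zero of $y_1$ appears exactly once with shift $u(z)$.

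\emph{Step 3: enforcing $|s_i-t_i|\le d_0$.} A component of length $\ell>d_0$ is subdivided at points $s_i=w_0<w_1<\dots<w_r=t_i$ with gaps at most $d_0$. The telescoping differences $\sgn(x(w_l)+\gamma)-\sgn(x(w_{l+1})+\gamma)$ use the same sign $\epsilon$. The shift is $\gamma=0$ at the interior points, and it equals the original shift only at the zero endpoints.

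The main obstacle is making these shifts consistent. An interior grid point carries shift $0$, which is allowed, but then the endpoint that is a $y_1$-zero must have its shift $u$ while its telescoping neighbour has shift $0$. This is permitted because $\alpha_i$ and $\beta_i$ are chosen independently. It does, however, spoil exact telescoping unless the middle terms use matching shifts. I would therefore insert every grid point twice if needed, once with shift $0$ on both sides. Then the only nonzero shifts are at $y_1$-zeros, each counted once, and the shift-sum identity is preserved.

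\emph{Step 4: the cell count.} In a unit cell the pair points $s_i$ fall into two kinds:
\begin{itemize}
\item zero endpoints, at most $N$ of them;
\item grid points of the subdivisions, at most $d_0^{-1}+2$ of them, since the lengths are disjoint pieces of $\mathcal E$ meeting the cell.
\end{itemize}
Together this gives the bound $N+d_0^{-1}+2$.
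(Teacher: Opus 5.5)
You follow the paper's outline: split $\mathcal E$ into components, note that each zero bounds exactly one component, attach a sign $\epsilon_K$ to the two endpoint zeros of each component, and telescope through intermediate points with shift zero. Your orientation check in Step~1 is correct.

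\textbf{The gap: the components $K_a$ and $K_b$ containing $a$ or $b$.} You put them into $\varrho$ unpaired, and then both exact identities of the lemma fail.
\begin{itemize}
\item $\sum_i|s_i-t_i|$ falls short of $|\mathcal E|$ by the lengths of these components. If $\mathcal E=[a,b]$, it falls short by all of $b-a$.
\item A zero of $y_1$ bounding $K_a$ or $K_b$ lies in no pair. So your Step~2 claim that every zero of $y_1$ occurs exactly once with shift $u(z)$ is false for that zero.
\end{itemize}
``Noting them as bounded'' yields only inequalities. Those would suffice later, where only $\Delta'\le\Delta$ is used, but they do not prove the statement.

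\textbf{The fix.} Telescope these components as well.
\begin{itemize}
\item Give the endpoints $a,b$ the values $\sgn x(a)$, $\sgn x(b)$ with shift zero.
\item Treat $K_a=[a,d)$ and $K_b=(c,b]$ like interior components.
\item Set $\varrho=-\epsilon_{K_a}\sgn x(a)+\epsilon_{K_b}\sgn x(b)$, omitting a term if that component is absent. If $\mathcal E=[a,b]$ and there are no zeros, set $\varrho=-\epsilon\{\sgn x(a)-\sgn x(b)\}$.
\end{itemize}
Then $|\varrho|\le2$ and both identities are exact.

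\textbf{Consequence for Step~4.} The point $a$ may now be a first point. So left endpoints contribute up to $N+1$ per cell, and subdivision points may contribute at most $d_0^{-1}+1$. Your Step~3 only asks for ``gaps at most $d_0$''. That gives no per-cell bound at all, since arbitrarily fine subdivisions satisfy it. Your count of $d_0^{-1}+2$ also leaves no room for $a$. Take the subdivision points to be the points of the global lattice $d_0\Z$ lying inside each component, as the paper does. A unit cell holds at most $d_0^{-1}+1$ of them. Spacing the points exactly $d_0$ apart starting from each left endpoint would also work.

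\textbf{Step~3.} The ``obstacle'' there does not exist. With shift zero at every interior point, the telescoping is exact and only the endpoint zeros carry a shift. Inserting a point twice, taken literally, creates a pair with $s_i=t_i$, which the lemma excludes.
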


\begin{proof}
We write $S_1-S_0$ as a sum over the components of $\mathcal E$ and telescope within each component. The function $g=\ind\{y_1>0\}-\ind\{y_0>0\}$ vanishes off $\mathcal E$ and equals a constant $\epsilon_K\in\{\pm1\}$ on each component $K$ of $\mathcal E$. Each zero of $y_0$ or $y_1$ is an endpoint of exactly one component, since it is simple and the other function does not vanish there, and each endpoint of a component in $(a,b)$ is such a zero. For an endpoint $p$ put $\sigma(p)=\sgn(x+u)(p)$ if $y_1(p)=0$ and $\sigma(p)=\sgn x(p)$ otherwise. At a left endpoint $c$ of $K$, $g$ jumps by $\epsilon_K$, so $\epsilon_K=\sgn y_1'(c)$ if $y_1(c)=0$ and $\epsilon_K=-\sgn y_0'(c)$ if $y_0(c)=0$. At a right endpoint the jump is $-\epsilon_K$. In all four cases the endpoint contributes $\epsilon_K\sigma(c)$, respectively $-\epsilon_K\sigma(d)$, to $S_1-S_0$, so
\[
 S_1-S_0=\sum_K\epsilon_K\{\sigma(c_K)-\sigma(d_K)\}+\varrho,
\]
where $\sigma(a)=\sgn x(a)$, $\sigma(b)=\sgn x(b)$, and $\varrho$ collects the terms of the at most two components containing $a$ or $b$. Let $m_1<\dots<m_r$ be the points of $d_0\Z$ in the interior of $K$, put $\sigma(m_l)=\sgn x(m_l)$, and telescope $\sigma(c_K)-\sigma(d_K)$ over $c_K,m_1,\dots,m_r,d_K$. The pairs have positive length at most $d_0$ and total length $|K|$, each zero of $y_1$ occurs in exactly one pair with shift $u$, and all other points carry shift zero. The first points of the pairs are left endpoints, at most $N+1$ per cell, and points of $d_0\Z$, at most $d_0^{-1}+1$ per cell.
\end{proof}

\begin{lemma}[Paired samples]\label{ga:paired}
There are $d_0\in(0,1]$ and $c>0$ such that for $n\ge n_0$ the following holds. Let $(s_i,t_i)$ be deterministic pairs in $I_n$ with $0<|s_i-t_i|\le d_0$ and at most $H\ge1$ of the $s_i$ in each unit cell, and let $|\epsilon_i|\le1$ and $\alpha_i,\beta_i\in\R$. Put $Z_i=\epsilon_i\{\sgn(X_n(s_i)+\alpha_i)-\sgn(X_n(t_i)+\beta_i)\}$ and $\Delta'=\sum_i\min(1,|s_i-t_i|+|\alpha_i|+|\beta_i|)$. Then for $s>0$
\begin{equation}\label{ga:paired-tail}
 \Prob\Bigl(\Bigl|\sum_i(Z_i-\E Z_i)\Bigr|>s\Bigr)\le2\exp\Bigl(-\frac{cs^2}{H(\Delta'+s)}\Bigr),\qquad
 \E\Bigl(\sum_i(Z_i-\E Z_i)\Bigr)^2\le C(H\Delta'+H^2).
\end{equation}
\end{lemma}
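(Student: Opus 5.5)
The plan is to apply the block Bernstein bound of Corollary~\ref{ga:bernstein}(a) to the functions $\phi_i$ of the pair of Gaussian values $(U_n(s_i),U_n(t_i))$, with the frame constant supplied by the frame bound \eqref{ga:frame}. Write $X_n(x)=\langle w(x),\xi\rangle$ as in the proof of Lemma~\ref{ga:fields}(b), where only the cosine coordinates enter. For each $i$ set $B_i:\R^m\to\R^2$ to be the orthonormalization of the two rows $w(s_i),w(t_i)$, with $G$ the standardized cosine vector. Then $X_n(s_i),X_n(t_i)$ are fixed linear functions of $B_iG$, and $Z_i=\phi_i(B_iG)$ with $\phi_i$ bounded by $2$ and continuous almost everywhere. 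The orthonormalization is nondegenerate because $1-\rho_+(s_i,t_i)^2\ge c|s_i-t_i|^2>0$ by Lemma~\ref{ga:fields}(b).

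\emph{Frame constant.} We need $\sum_i\|B_i\beta\|^2\le CH\|\beta\|^2$. The first basis vector is $u(s_i)$, which gives $X[\beta](s_i)^2$ up to the variance factor. The second is $(u(t_i)-\rho u(s_i))/\sqrt{1-\rho^2}$. Since $|u(t_i)-\rho u(s_i)|\lesssim|s_i-t_i|\sup|u'|$ and $\sqrt{1-\rho^2}\gtrsim|s_i-t_i|$, this component is bounded by $C\sup_{|y-s_i|\le d_0}(|X[\beta]|+|X[\beta]'|)(y)$. This follows by Taylor expansion of $\langle u(\cdot),\beta\rangle$ at $s_i$, using Lemma~\ref{ga:fields}(a) to control the normalizing variances and their derivatives. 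Summing and applying \eqref{ga:frame} with $\Theta=H$ gives the frame constant $CH$. Here $d_0$ must be chosen small so that the Taylor estimate $1-\rho^2\asymp|s-t|^2$ holds uniformly.

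\emph{Variance per pair.} The key estimate is $\Var\phi_i(G_i)\le\E Z_i^2\le C\min(1,|s_i-t_i|+|\alpha_i|+|\beta_i|)$. We have $Z_i\ne0$ only if $X(s_i)+\alpha_i$ and $X(t_i)+\beta_i$ have opposite signs. This event is contained in the union of three events:
\begin{itemize}
\item $\{|X(s_i)|\le|\alpha_i|\}$, of probability $\le C|\alpha_i|$ since the variance is at least $1/4$;
\item $\{|X(t_i)|\le|\beta_i|\}$, likewise of probability $\le C|\beta_i|$;
\item a sign change of $X$ between $s_i$ and $t_i$, whose probability is $\frac1\pi\arccos\rho_+\le C|s_i-t_i|$ by Sheppard's formula.
\end{itemize}
Summing gives $V\le C\Delta'$. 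Corollary~\ref{ga:bernstein}(a) with $b=4$ and frame constant $CH$ then yields the tail in \eqref{ga:paired-tail}.

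\emph{Second moment.} Integrating the tail gives $\E(\cdot)^2\le C(H\Delta'+H^2)$. The Gaussian part contributes $H\Delta'$, and the exponential part $e^{-cs/H}$ contributes $H^2$.

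The main obstacle is the frame bound for the second, difference-quotient basis vectors. It requires a uniform lower bound $1-\rho_+^2\ge c|s-t|^2$ together with a matching upper bound on $|u(t)-\rho u(s)|$, which come from Lemma~\ref{ga:fields}(a),(b). A further subtlety is that $B_iB_i^*=\mathrm{Id}$ forces an exact orthonormalization rather than a near-isometry, so the $2\times2$ change of basis has to be absorbed into $\phi_i$.
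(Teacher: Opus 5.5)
Your proposal is correct and follows the paper's proof. The ingredients are the same: the block Bernstein bound of Corollary~\ref{ga:bernstein}(a) with frame constant $CH$ obtained from \eqref{ga:frame}, and the per-pair bound $\Var Z_i\le C\min(1,|s_i-t_i|+|\alpha_i|+|\beta_i|)$ from Sheppard's formula and the bounded densities of the values, followed by integrating the tail. The one cosmetic difference is in how the second basis vector is handled. You orthonormalize $u(s_i),u(t_i)$ and control that vector through the correlation gap of Lemma~\ref{ga:fields}(b). The paper instead applies Gram--Schmidt to $w(s_i)$ and the difference quotient $(w(t_i)-w(s_i))/(t_i-s_i)$ and bounds the eigenvalues of their Gram matrix. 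The two bases agree up to sign.
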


\begin{proof}
Each $Z_i$ is a function of the two-dimensional block $(X_n(s_i),X_n(t_i))$. We show that these blocks satisfy the frame condition of Lemma~\ref{ga:holder} with constant $CH$, and that $\Var Z_i\le C\min(1,|s_i-t_i|+|\alpha_i|+|\beta_i|)$ because two nearby signs rarely differ. The block Bernstein bound then gives the tail.

Let $w(x)$ be the row of $X_n(x)$. As in the proof of Lemma~\ref{ga:fields}(b), the Gram matrix of $w(s),w'(s)$ is within $C/L+C/n$ of ${\rm diag}(\frac12,\frac1{24})$, and $\|w''\|\le\frac12$. The difference quotient $\mathrm q=(w(t)-w(s))/(t-s)$ satisfies $\|\mathrm q-w'(s)\|\le|t-s|/4$. For $L,n_0$ large and $d_0$ small, the Gram matrix of $w(s),\mathrm q$ has eigenvalues in $[\frac1{48},1]$ whenever $|t-s|\le d_0$. Gram--Schmidt applied to $w(s_i),\mathrm q_i$ gives orthonormal rows $e_{i,0},e_{i,1}$ with bounded coefficients, and we put $B_i\beta=(\langle e_{i,0},\beta\rangle,\langle e_{i,1},\beta\rangle)$. Then $X_n(s_i)$ and $X_n(t_i)=X_n(s_i)+(t_i-s_i)\langle\mathrm q_i,\xi\rangle$ are nonzero linear functions of $B_i\xi$, and $Z_i=\phi_i(B_i\xi)$ with $\phi_i$ bounded and continuous off two lines. By the mean value theorem $\|B_i\beta\|^2\le C\sup_{|y-s_i|\le1}(X[\beta]^2+X[\beta]'^2)(y)$, so the frame bound \eqref{ga:frame} gives $\sum_i\|B_i\beta\|^2\le CH\|\beta\|^2$.

If the signs of $X_n(s)+\alpha$ and $X_n(t)+\beta$ differ, then the signs of $X_n(s),X_n(t)$ differ, or $|X_n(s)|\le|\alpha|$, or $|X_n(t)|\le|\beta|$. By Sheppard's formula the first event has probability $\pi^{-1}\arccos\rho_+(s,t)\le\sqrt{(1-\rho_+)/2}\le|s-t|$, as $1-\rho_+=\frac12\|u(s)-u(t)\|^2$ and $\|u'\|\le1$, and the densities of the values are at most one. Hence $\Var Z_i\le C\min(1,|s_i-t_i|+|\alpha_i|+|\beta_i|)$, while $|Z_i-\E Z_i|\le4$. Corollary~\ref{ga:bernstein}(a) gives the tail bound, and integrating it, with $e^{-cs^2/(H(\Delta'+s))}\le e^{-cs^2/(2H\Delta')}+e^{-cs/(2H)}$, gives the second moment.
\end{proof}

We apply Lemma~\ref{ga:canonical} on $I$ with $y_0=Y$, $y_1=\widetilde Y$, $x=X$, $u=X^h$ and the $d_0$ of Lemma~\ref{ga:paired}. Then $S_1-S_0=Q$, the first points are at most $H=\Theta+d_0^{-1}+2$ per cell, and the cost of the pairing is
\begin{equation}\label{ga:cost}
 \Delta=|\{x\in I:Y(x)\widetilde Y(x)<0\}|+\sum_{z\in I:\,\widetilde Y(z)=0}|X^h(z)|.
\end{equation}

The pairing is measurable. Almost surely the zeros of $Y$ and $\widetilde Y$ in $I$ are simple and avoid $\partial I\cup d_0\Z$, so by the implicit function theorem they move continuously with $\theta$, and their number and order relative to $d_0\Z$ are locally constant. Hence the pairs, signs and shifts are Borel in $\theta$ on countably many Borel pieces.

\begin{lemma}[Integrated signs and occupation]\label{ga:mesh-signs}
Let $n\ge n_0$ and let $w,a$ be continuous deterministic functions on $I=I_n$.
\begin{enumerate}[label=\textup{(\alph*)}]
\item For real $\lambda$,
\[
 \E\exp\Bigl(\lambda\int_Iw\,\bigl\{\sgn(Y+a)-\E\sgn(Y+a)\bigr\}\dd x\Bigr)\le\exp\Bigl(C\lambda^2\int_Iw^2\dd x\Bigr).
\]
\item For $\tau>0$ and $u\ge1$, the occupation time $O_\tau=|\{x\in I:|Y(x)|\le\tau\}|$ satisfies $\Prob(O_\tau>C(n\min(\tau,1)+u))\le e^{-u}$.
\end{enumerate}
\end{lemma}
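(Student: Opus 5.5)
For (a), discretize the integral and apply the sign moment bound of Corollary~\ref{ga:bernstein}(b). Fix a mesh $\delta\in(0,1]$ and points $x_i=L+i\delta$ of $I$. Write $\int_Iw\{\sgn(Y+a)-\E\sgn(Y+a)\}\dd x$ as the limit of the Riemann sums $\sum_i\delta w(x_i)\{\sgn(Y(x_i)+a(x_i))-\E\,\cdot\}$. The limit holds almost surely because $Y$ has finitely many simple zeros and $w,a$ are continuous, so the integrand is bounded and continuous off a null set.

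For each Riemann sum, apply \eqref{ga:sign-mgf} with $G_i=V_n(x_i)$, $\alpha_i=a(x_i)/\sqrt{v_n^-(x_i)}$ and weights $w_i=\delta w(x_i)$. The key input is a bound $M\le C\delta^{-1}$ on the operator norm of the correlation matrix $(\rho_-(x_i,x_j))$. To get it, view $\sum_{ij}\beta_i\beta_j\rho_-(x_i,x_j)$ as $\E(\sum_i\beta_iV_n(x_i))^2=\|\sum_i\beta_i v(x_i)\|^2$. Its adjoint form is $\sum_i\langle v(x_i),b\rangle^2\le C\delta^{-1}\|b\|^2$, which follows from Lemma~\ref{pre:sampling}(iii) or the frame bound \eqref{ga:frame} with $\Theta\le\delta^{-1}+1$ points per cell, using that $(v_n^-)^{-1/2}$ is bounded. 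This gives the exponent
\[
\frac{M\lambda^2}2\sum_i\delta^2w(x_i)^2\le C\lambda^2\delta\sum_iw(x_i)^2\to C\lambda^2\int_Iw^2\dd x .
\]
Fatou's lemma then passes the bound to the limit.

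For (b), work with $O_\tau=\int_I\ind\{|Y|\le\tau\}\dd x$. Its mean satisfies $\E O_\tau\le Cn\min(\tau,1)$, since $\Var Y(x)\ge\frac14$ on $I$ gives a density for $Y(x)$ bounded by $C$. For the fluctuation, write $\ind\{|Y|\le\tau\}=\frac12\{\sgn(Y+\tau)-\sgn(Y-\tau)\}$ off a null set. Each of the two terms is an integrated sign as in (a), with $w\equiv\frac12$, $a\equiv\pm\tau$ and $\int_Iw^2\le Cn$.

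This gives only a sub-Gaussian tail of size $\sqrt{nu}$, and the direct bound $\sqrt{nu}\le n\min(\tau,1)+u$ can fail for small $\tau$. That small-$\tau$ regime is the main obstacle. The fix is to apply the block Bernstein bound, Corollary~\ref{ga:bernstein}(a), directly to the discretized indicators. Take $\phi_i=\delta\,\ind\{|z|\le\tau/\sqrt{v_n^-(x_i)}\}$. Then the frame constant is $H\le C\delta^{-1}$, the range is $b\le\delta$, and the variance sum is $V\le\sum_i\delta^2\Prob(|V_n(x_i)|\le C\tau)\le C\delta n\min(\tau,1)$.

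The bound \eqref{ga:block-bernstein} becomes $2\exp(-s^2/(C(n\min(\tau,1)+s)))$, with the factors of $\delta$ cancelling. Taking $s=C'(n\min(\tau,1)+u)$ makes the exponent at least $cs\ge 2u$, so after adjusting constants the probability is at most $e^{-u}$. Finally, let $\delta\downarrow0$, using Fatou's lemma for the indicator Riemann sums. The closed set $\{|Y|\le\tau\}$ is handled by approximating from outside with $\tau'>\tau$ and letting $\tau'\downarrow\tau$.
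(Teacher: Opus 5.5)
Your proposal is correct and follows the paper's proof essentially step for step. It uses a mesh of spacing $\delta$, the frame bound to get correlation norm at most $C/\delta$, Corollary~\ref{ga:bernstein}(b) for (a), and Corollary~\ref{ga:bernstein}(a) with $\phi_i=\delta\ind\{\cdot\}$ for (b); your Fatou and $\tau'\downarrow\tau$ limit arguments are harmless substitutes for the paper's bounded convergence and its use of the almost sure nullity of $\{Y=\pm\tau\}$. One slip to fix: in (a) the discontinuities of $\sgn(Y+a)$ lie on the zero set of $Y+a$, not of $Y$, and that set is almost surely null by Fubini's theorem because each $Y(x)$ has a density, not because $Y$ has finitely many zeros.
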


\begin{proof}
Both parts follow from Corollary~\ref{ga:bernstein} on a mesh. Let $x_j=L+j\eta$, $0\le j<N$, with $\eta=|I|/N\le1$. By the frame bound \eqref{ga:frame}, the correlation matrix of $(V_n(x_j))_j$ has norm at most $C/\eta$.

(a) Corollary~\ref{ga:bernstein}(b) with weights $\eta w(x_j)$ and shifts $a(x_j)/\sqrt{v_n^-(x_j)}$ bounds the moment generating function of the Riemann sum by $\exp(\frac C2\lambda^2\sum_j\eta w(x_j)^2)$. The zero set of $Y+a$ is almost surely null, so the Riemann sums converge, and bounded convergence gives (a).

(b) Here $p_j=\Prob(|Y(x_j)|\le\tau)\le\min(1,2\tau)$. Corollary~\ref{ga:bernstein}(a) with $\phi_j=\eta\ind\{|Y(x_j)|\le\tau\}$, frame constant $C/\eta$, $b=\eta$ and $V\le\eta\sum_j\eta p_j$ gives $\Prob(\sum_j\eta(\ind\{|Y(x_j)|\le\tau\}-p_j)>s)\le\exp(-s^2/(C(n\min(\tau,1)+s)))$. The level sets $\{Y=\pm\tau\}$ are almost surely null, so the Riemann sums converge to $O_\tau-\E O_\tau$, and Fatou's lemma passes the bound to the limit. Take $s=C(n\min(\tau,1)+u)$.
\end{proof}

Averaging the crossing sum $Q$ over the cosine field leaves a functional $J$ of $\theta$ alone. The average $G(T)$ of $J$ given $T$ is smooth in the added coefficients, and given $T$ the functional $J$ concentrates around $G(T)$.

\begin{lemma}[Conditional mean]\label{ga:conditional-mean}
Let $b_+=2\Phi(X^h/\sqrt{v_n^+})-1$ and $b_-=2\Phi(Y^h/\sqrt{v_n^-})-1$ on $I$. Almost surely
\begin{equation}\label{ga:J}
 J:=\E_xQ=\tfrac12\bigl[b_+\sgn \widetilde Y\bigr]_{\partial I}-\tfrac12\int_Ib_+'\sgn \widetilde Y\dd x,\quad
 G(T):=\E(J\mid T)=\tfrac12\bigl[b_+b_-\bigr]_{\partial I}-\tfrac12\int_Ib_+'b_-\dd x.
\end{equation}
Moreover $G(0)=0$, $\|\nabla G(T)\|\le C(1+\|T\|)$, $\Var J\le C(h+1)$, and $\Prob(|J-G(T)|>2+s\mid T)\le2e^{-s^2/(C\|T\|^2)}$ for $s>0$.
\end{lemma}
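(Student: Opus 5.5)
The plan is to compute $\E_xQ$ explicitly by independence and then rewrite the crossing sum as a Stieltjes integral. Fix $\theta=(Y,T)$ in the almost-sure event of Lemma~\ref{ga:common-carrier}(a). The field $X$ is independent of $\theta$, and the zeros of $Y$, $\widetilde Y$ and the signs of $Y'$, $\widetilde Y'$ are functions of $\theta$. So $\E_x$ acts only on the factors $\sgn X(y)$ and $\sgn(X+X^h)(z)$. Since $X(y)$ is centered Gaussian with variance $v_n^+(y)$ and $X^h(z)$ is $\theta$-measurable, we get
\[
 \E_x\sgn X(y)=0,\qquad \E_x\sgn(X+X^h)(z)=2\Phi\bigl(X^h(z)/\sqrt{v_n^+(z)}\bigr)-1=b_+(z).
\]
Hence $J=\sum_{z\in I,\ \widetilde Y(z)=0}b_+(z)\sgn\widetilde Y'(z)$, a finite sum. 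At a simple zero the jump of $\sgn\widetilde Y$ is $2\sgn\widetilde Y'$, so $J=\frac12\int_Ib_+\,\dd(\sgn\widetilde Y)$. Integrating by parts, with $b_+$ being $C^1$ and $\widetilde Y\ne0$ on $\partial I$, gives the first formula in \eqref{ga:J}.

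For $G(T)$ we condition further on $T$ and average over $Y$. The function $Y$ is independent of $T$, and the set $\{\widetilde Y=0\}$ is null, so
\[
 \E\bigl(\sgn(Y+Y^h)(x)\mid T\bigr)=b_-(x).
\]
Fubini applies because, given $T$, the integrand $b_+'\sgn\widetilde Y$ is bounded by the deterministic, integrable function $|b_+'|$. This gives the second formula. If $T=0$ then $X^h\equiv0$, so $b_+\equiv0$ and $G(0)=0$.

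For the gradient we differentiate in a direction $\beta\in\R^h$, writing $X^h_\beta$, $Y^h_\beta$ for the tail fields with coefficients $\beta$. We have $b_+'=2\varphi(X^h/\sqrt{v})\,(X^h/\sqrt{v})'$, with $\varphi$ the normal density and $v=v_n^+$, and $b_-$ is a bounded Lipschitz function of $Y^h$. The directional derivative of $\int_Ib_+'b_-$ is therefore a sum of integrals of products of one $\beta$-field with at most one $T$-field, against bounded factors built from $\varphi,\varphi'$ and the bounded derivatives of $v_n^{\pm1/2}$ from Lemma~\ref{ga:fields}(a). Typical terms are $\int|(X^h_\beta)'|$ and $\int|(X^h)'||Y^h_\beta|$. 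By Cauchy--Schwarz and the energy bound \eqref{ga:tail-energy}, every such integral is at most $C(\|\beta\|+\|T\|\|\beta\|)$. The integral $\int|(X^h_\beta)'|$ costs an extra factor $|I|^{1/2}$. To avoid it I would integrate by parts once more, writing $\int b_+'b_-=[b_+b_-]_{\partial I}-\int b_+b_-'$, or use the bounded factor $\varphi$ together with pairing against $b_-$. I expect this to be the fiddliest point. The boundary terms are handled by the pointwise bound of Lemma~\ref{pre:sampling}(ii). Together these give $\|\nabla G(T)\|\le C(1+\|T\|)$.

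For the concentration, given $T$ we have
\[
 J-G(T)=\tfrac12\bigl[b_+(\sgn\widetilde Y-b_-)\bigr]_{\partial I}-\tfrac12\int_Ib_+'\bigl(\sgn(Y+Y^h)-b_-\bigr).
\]
The boundary term is at most $2$ in absolute value. The integral is exactly the quantity in Lemma~\ref{ga:mesh-signs}(a), with deterministic $w=b_+'$ and $a=Y^h$. That lemma makes it sub-Gaussian with variance proxy $C\int_Ib_+'^2$. Since $|b_+'|\le C(|(X^h)'|+|X^h|)$, \eqref{ga:tail-energy} gives $\int_Ib_+'^2\le C\|T\|^2$, and a Chernoff bound yields the stated tail. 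Finally,
\[
 \Var J=\Var G(T)+\E\Var(J\mid T).
\]
Since $G$ is $C^1$ with polynomially bounded gradient, the Poincar\'e inequality (Lemma~\ref{pre:gaussian}) gives $\Var G(T)\le C\,\E(1+\|T\|)^2\le C(h+1)$. Integrating the conditional tail gives $\E\Var(J\mid T)\le C(1+\E\|T\|^2)\le C(h+1)$. The measurability of the pairing noted above ensures that all conditional expectations are well defined.
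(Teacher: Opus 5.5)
Your proof follows the paper's route step for step. You compute $\E_x$ from the independence of $X$ and $\theta=(Y,T)$, sum by parts at the simple zeros of $\widetilde Y$, and average $\sgn\widetilde Y$ over $Y$ given $T$ to get $G$. For the tail you use Lemma~\ref{ga:mesh-signs}(a) with $w=b_+'$ and $a=Y^h$, and for the variance $\Var J=\Var G(T)+\E\Var(J\mid T)$ with the Poincar\'e inequality. All of this is correct.

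The one estimate you leave open is $\|\nabla G(T)\|\le C(1+\|T\|)$, and your first repair does not settle it. Integrating by parts before differentiating gives $G(T)=\frac12\int_Ib_+b_-'\dd x$, since the boundary terms cancel. Its variation in a direction $\beta$ contains
\[
 \int_Ib_+\cdot2\varphi\bigl(Y^h/\sqrt{v_n^-}\bigr)\bigl(Y^h[\beta]/\sqrt{v_n^-}\bigr)'\dd x .
\]
Here a derivative again falls on a pure $\beta$-field, which costs $|I|^{1/2}\|\beta\|$ if you bound $|b_+|\le1$. You have only moved the loss from $X^h[\beta]$ to $Y^h[\beta]$.

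Either of two things closes the step. The paper differentiates first, writing $\dot b_\pm$ for the variations in the direction $\beta$, and integrates by parts only the term $\int_I\dot b_+'b_-$. This gives
\[
 \dot G=\tfrac12\bigl[b_+\dot b_-\bigr]_{\partial I}+\tfrac12\int_I\bigl(b_-'\dot b_+-b_+'\dot b_-\bigr)\dd x .
\]
Each integrand is now a product of two factors. One is a $T$-factor with at most one derivative, bounded by $C(|X^h|+|(X^h)'|)$ or $C(|Y^h|+|(Y^h)'|)$. The other is an underived $\beta$-factor, bounded by $C|X^h[\beta]|$ or $C|Y^h[\beta]|$. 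Cauchy--Schwarz and \eqref{ga:tail-energy} bound the integral by $C\|T\|\|\beta\|$, and the boundary term is $O(\|\beta\|)$.

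Alternatively, your second repair works once you state the fact it needs. Since $|2\Phi(y)-1|\le\sqrt{2/\pi}\,|y|$ and $v_n^\pm\ge\frac14$ on $I$, you have $|b_-|\le C|Y^h|$ and $|b_+|\le C|X^h|$. So the offending term is at most $C\int_I(|X^h[\beta]|+|(X^h[\beta])'|)\,|Y^h|\dd x\le C\|\beta\|\|T\|$. In your integrated-by-parts form the same bound holds with $X$ and $Y$ exchanged: $C\int_I(|Y^h[\beta]|+|(Y^h[\beta])'|)\,|X^h|\dd x$. With either fix, $\|\nabla G(T)\|\le C(1+\|T\|)$ follows, and the rest of your argument goes through unchanged.
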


\begin{proof}
Since $X(z)$ is centered with variance $v_n^+(z)$ and independent of $\theta$, $\E_x\sgn(X+X^h)(z)=b_+(z)$ and $\E_x\sgn X(y)=0$, so $J=\sum_{\widetilde Y(z)=0}b_+(z)\sgn \widetilde Y'(z)$. At a simple zero $z$ the function $\sgn \widetilde Y$ jumps by $2\sgn \widetilde Y'(z)$, and integration by parts against $b_+$ gives the formula for $J$. Given $T$, $Y(x)$ is centered with variance $v_n^-(x)$, so $\E(\sgn \widetilde Y(x)\mid T)=b_-(x)$, and Fubini's theorem gives $G$. At $T=0$, $X^h=Y^h=0$ and $b_+=0$.

By Lemma~\ref{ga:fields}(a), $|b_+'|\le C(|X^h|+|(X^h)'|)$ and $|b_-'|\le C(|Y^h|+|(Y^h)'|)$. In a direction $\dot T$ the variations satisfy $|\dot b_+|\le C|X^h[\dot T]|$ and $|\dot b_-|\le C|Y^h[\dot T]|$, where $X^h[\dot T],Y^h[\dot T]$ are the tail fields with coefficients $\dot T$. Differentiating $G$ and integrating the term $\int\dot b_+'b_-$ by parts,
\[
 \dot G=\tfrac12\bigl[b_+\dot b_-\bigr]_{\partial I}+\tfrac12\int_I(b_-'\dot b_+-b_+'\dot b_-)\dd x .
\]
The energy bound \eqref{ga:tail-energy} for $T$ and $\dot T$ bounds the integral by $C\|T\|\|\dot T\|$, and $|Y^h[\dot T]|\le\|\dot T\|$. Hence $\|\nabla G(T)\|\le C(1+\|T\|)$, and the Poincar\'e inequality gives $\Var G(T)\le C(1+h)$.

Given $T$,
\[
 J-G=\tfrac12\bigl[b_+(\sgn \widetilde Y-b_-)\bigr]_{\partial I}-\tfrac12\int_Ib_+'\bigl(\sgn(Y+Y^h)-b_-\bigr)\dd x.
\]
The boundary term is at most $2$, and Lemma~\ref{ga:mesh-signs}(a) with $w=b_+'$, $a=Y^h$ and \eqref{ga:tail-energy} bound the conditional moment generating function of the integral by $\exp(C\lambda^2\|T\|^2)$. This gives the conditional tail and $\E((J-G)^2\mid T)\le C(1+\|T\|^2)$, so $\Var J=\E\Var(J\mid T)+\Var G\le C(h+1)$.
\end{proof}

\begin{proposition}[Uniform increment variance]\label{ga:uniform-increment}
For all $n\ge2$ and $1\le h\le n$, $\Var(D_{n+h}-D_n)\le C\sqrt{nh}\log(n+2)$.
\end{proposition}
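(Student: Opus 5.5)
We will use the split \eqref{ga:fine-split}, $D_{n+h}-D_n=(Q-J)+(J-\E J)+(E-\E E)$, which follows from \eqref{ga:increment-identity}: there $\E D_{n+h}=\E D_n=0$, so $\E(Q+E)=h/2$. We then bound the variance of each of the three terms and combine them by $(a+b+c)^2\le3(a^2+b^2+c^2)$.

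\emph{Boundary term.} By Lemma~\ref{ga:common-carrier}(b),(c), $|E|\le6+N^{\rm end}$ and $N^{\rm end}$ has an exponential tail. Hence $\Var E\le C$.

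\emph{Conditional mean.} Lemma~\ref{ga:conditional-mean} gives $\Var J\le C(h+1)$. Since $h\le n$, we have $h+1\le2\sqrt{nh}$.

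\emph{Main term $Q-J$.} We condition on $\theta=(Y,T)$, which is independent of $X$. By the canonical pairing (Lemma~\ref{ga:canonical}), applied with $y_0=Y$, $y_1=\widetilde Y$, $u=X^h$, we can write
\[
Q=\sum_iZ_i+\varrho,\qquad |\varrho|\le2 .
\]
Here the $Z_i$ are paired sign differences of $X$, and the pairs, signs and shifts are $\theta$-measurable. Since $J=\E_xQ$, Lemma~\ref{ga:paired} applied under $\Prob_x$ with $H=\Theta+d_0^{-1}+2$ gives
\[
\E_x(Q-J)^2\le C\bigl(H\Delta'+H^2\bigr)+C,\qquad \Delta'\le\Delta ,
\]
where $\Delta$ is the cost \eqref{ga:cost}. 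Taking expectations over $\theta$, it therefore suffices to show $\E[H\Delta]\le C\sqrt{nh}\,\ell$ and $\E H^2\le C\ell^2$. The second bound is Lemma~\ref{ga:common-carrier}(c).

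\emph{Bounding $\E[H\Delta]$.} We apply Cauchy--Schwarz, $\E[H\Delta]\le\|H\|_2\|\Delta\|_2$, and need $\|\Delta\|_2\le C\sqrt{nh}$. The cost $\Delta$ has two parts, which we bound separately.

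First, $\{Y\widetilde Y<0\}\subset\{|Y|<|Y^h|\}$. For fixed $x$, $Y(x)$ is independent of $Y^h(x)$, has density bounded by one, and $Y^h(x)$ has standard deviation $\le\frac32\sqrt{h/n}$. So the expected measure is $\le Cn\sqrt{h/n}=C\sqrt{nh}$. For the second moment, we split on the event $\sup_I|Y^h|\le\tau:=C\sqrt{h/n}\,\sqrt\ell$, which fails only with probability $n^{-p}$. On it, Lemma~\ref{ga:mesh-signs}(b) gives occupation time $\le C(n\tau+u)$ with exponential tail in $u$. This yields $\|\cdot\|_2\le C\sqrt{nh\,\ell}$ at worst, with a harmless $\sqrt\ell$ loss.

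Second, $\sum_{\widetilde Y(z)=0}|X^h(z)|$. We cover $I$ by unit cells and bound this by $\sum_cN_c\sup_c|X^h|$. Using $\E N_c^2\le C$, Cauchy--Schwarz per cell, and Minkowski,
\[
\Bigl\|\sum_cN_c\sup_c|X^h|\Bigr\|_2\le\sum_c\|N_c\|_4\,\bigl\|\sup_c|X^h|\bigr\|_4\le Cn\sqrt{h/n}=C\sqrt{nh}.
\]
This requires a fourth-moment version of $\E N_c^2\le C$, which Lemma~\ref{ga:jensen} supplies via the exponential tail of $N_c$.

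\emph{Main obstacle.} The delicate step is the cost estimate. Cauchy--Schwarz on $H\Delta$ costs $\|H\|_2\le C\ell$, so we need $\|\Delta\|_2\le C\sqrt{nh}$ with at most one further $\log$ loss, matching the claimed $\sqrt{nh}\log(n+2)$. If the occupation bound only gives $\sqrt{nh\ell}$, we will instead use Hölder with $\|H\|_p\le C(\ell+p)$ (from $\|\Theta_n\|_p$) and the exponential tail of $\Delta/\sqrt{nh}$, which keeps the total at $C\sqrt{nh}\,\ell$.

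Small $n$ are absorbed into the constant.
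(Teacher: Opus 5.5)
Your reduction is sound and matches the paper: the split, $\Var E\le C$, $\Var J\le C(h+1)$, the conditional bound $\E_x(Q-J)^2\le C(H\Delta+H^2)+C$ from Lemmas~\ref{ga:canonical} and~\ref{ga:paired}, $\E H^2\le C\ell^2$, and your first-moment bound $\E\Delta\le C\sqrt{nh}$ are all correct.

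The gap is in the last step, $\E[H\Delta]\le C\sqrt{nh}\,\ell$. Cauchy--Schwarz needs $\|\Delta\|_2\le C\sqrt{nh}$, but your occupation argument only gives $\|\Delta\|_2\le C\sqrt{nh\,\ell}$. You apply Lemma~\ref{ga:mesh-signs}(b) with the constant threshold $\tau\asymp\sqrt{h\ell/n}$, which is the size of $\sup_I|Y^h|$, instead of the pointwise band $|Y^h(x)|$, and that loses a factor $\sqrt\ell$. Combined with $\|H\|_2\le C\ell$ this yields only $C\sqrt{nh}\,\ell^{3/2}$.

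The fallback does not close this as written. Nothing you have established, and no lemma in the paper, gives an exponential tail for $\Delta/\sqrt{nh}$: Lemma~\ref{ga:mesh-signs}(b) controls $\Delta$ only at the scale $n\tau\asymp\sqrt{nh\ell}$. Moreover, if such a tail were available, plain Cauchy--Schwarz would already suffice, so invoking H\"older on top of it gains nothing.

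The missing idea is to decouple $H$ from $\Delta$ so that only the first moment $\E\Delta$ enters. The paper does this by truncation.
\begin{itemize}
\item On $\{\Theta+1\le A\ell\}$ one has $(\Theta+1)\Delta\le A\ell\,\Delta$, which contributes $A\ell\,\E\Delta\le C\sqrt{nh}\,\ell$.
\item The complement has probability at most $n^{-8}$ for large $A$, by the tail $\Prob(\Theta>t)\le Cne^{-ct}$ of Lemma~\ref{ga:common-carrier}(c).
\item There you use the deterministic bounds $\Theta\le3n$ and $\Delta\le\pi n+2\sqrt n\sum_k|T_k|$, whose square has mean $O(n^3)$, so this part is $O(n^{-3/2})$.
\end{itemize}

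Your H\"older idea also works, provided it is played against the first moment rather than against a tail. Take $p=\ell+1$ and $q=p/(p-1)$. Then $\|H\|_p\le C(\ell+p)\le C\ell$, which follows from the tail of $\Theta$ above; note that $\|\Theta_n\|_p$ of Lemma~\ref{ga:zeros} counts only zeros of $Y$, not of $\widetilde Y$. By log-convexity of $L^r$ norms,
\[
\|\Delta\|_q\le\|\Delta\|_1^{1-2/p}\|\Delta\|_2^{2/p}\le C\sqrt{nh}\,\ell^{1/p}\le C\sqrt{nh}.
\]
This uses only the two bounds you already have, $\|\Delta\|_1\le C\sqrt{nh}$ and $\|\Delta\|_2\le C\sqrt{nh\,\ell}$. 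Either repair gives $\E[H\Delta]\le C\sqrt{nh}\,\ell$ and completes the proof.
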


\begin{proof}
We may take $n\ge n_0$. By the increment identity \eqref{ga:increment-identity} and Lemma~\ref{ga:common-carrier}(c), $\Var(D_{n+h}-D_n)\le2\Var Q+C$, and $\Var Q=\E\E_x(Q-J)^2+\Var J$. The pairing writes $Q-J$ as a centered paired sum plus a term of size at most four, with $\Delta'\le\Delta$, so by Lemma~\ref{ga:paired}
\[
 \E_x(Q-J)^2\le C\bigl((\Theta+1)\Delta+(\Theta+1)^2\bigr).
\]

The expected pairing cost is $\E\Delta\le C\sqrt{nh}$. Indeed, $Y(x)\widetilde Y(x)<0$ forces $|Y(x)|<|Y^h(x)|$, and as $Y(x)$ is independent of $Y^h(x)$ with density at most one, this has probability at most $2\E|Y^h(x)|\le3\sqrt{h/n}$. Integrating over $I$, of length at most $\pi n$, bounds the first term of $\E\Delta$:
\[
 \E\bigl|\{x\in I:Y(x)\widetilde Y(x)<0\}\bigr|\le3\pi\sqrt{nh}.
\]
For the second term, $\sum_z|X^h(z)|\le\sum_cN_cS_c$ over the at most $\pi n+1$ unit cells $c$ meeting $I$, with $S_c=\sup_{c\cap I}|X^h|$. Lemma~\ref{pre:sampling} gives $S_c^2\le2\int_{\bar c}((X^h)^2+(X^h)'^2)$, so $\E S_c^2\le7h/n$, and with $\E N_c^2\le C$ the Cauchy--Schwarz inequality gives $\E N_cS_c\le C\sqrt{h/n}$, and we sum over the cells.

Deterministically $\Theta\le3n$ and $\Delta\le\bar\Delta=\pi n+2\sqrt n\sum_k|T_k|$, with $\E\bar\Delta^2\le Cn^3$. For a large constant $A$, $\Prob(\Theta+1>A\ell)\le n^{-8}$, so
\[
 \E(\Theta+1)\Delta\le A\ell\,\E\Delta+4n\,(\E\bar\Delta^2)^{1/2}\Prob(\Theta+1>A\ell)^{1/2}\le C\sqrt{nh}\,\ell.
\]
With $\E(\Theta+1)^2\le C\ell^2$ and $\Var J\le C(h+1)$, $\Var(D_{n+h}-D_n)\le C(\sqrt{nh}\,\ell+\ell^2+h)\le C\sqrt{nh}\,\ell$.
\end{proof}

\begin{proposition}[Fine increments]\label{ga:fine}
Let $\varphi$ be as in \ref{fw:I3}. For every $p>0$ there is $C_p$ such that for all $n\ge2$ and $1\le h\le n$,
\begin{equation}\label{ga:fine-tail}
 \Prob\bigl(|D_{n+h}-D_n|>C_p\varphi(n,h)\bigr)\le C_pn^{-p}.
\end{equation}
\end{proposition}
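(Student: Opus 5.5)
We bound the three terms of the split \eqref{ga:fine-split} separately, each with probability $1-Cn^{-p}$. We may take $n\ge n_0$, since for small $n$ the bound holds trivially with $C_p$ large. By Lemma~\ref{ga:common-carrier}(b), on the almost sure event of part (a),
\[
 D_{n+h}-D_n=(Q-J)+(J-\E J)+(E-\E E),
\]
because $\E(D_{n+h}-D_n)=0$ and this forces $\E Q+\E E=h/2$.

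\emph{The boundary term.} We have $|E|\le6+N^{\rm end}$, and $\Prob(N^{\rm end}>t)\le Ce^{-ct}$ by Lemma~\ref{ga:common-carrier}(c). Taking $t=C_p\log(n+2)$ gives $|E-\E E|\le C_p\log(n+2)$ outside probability $n^{-p}$.

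\emph{The conditional-mean term.} Lemma~\ref{ga:conditional-mean} gives $\Prob(|J-G(T)|>2+s\mid T)\le2e^{-s^2/(C\|T\|^2)}$. Here $\|T\|^2$ is a chi-square variable with $h$ degrees of freedom, so $\|T\|^2\le 2h+C_p\log n$ outside probability $n^{-p}$. On that event, $s=C_p\sqrt{h\log n}+C_p\log n$ is admissible. Next, $G$ has $\|\nabla G\|\le C(1+\|T\|)$ and $G(0)=0$. Restricting $G$ to the ball $\|T\|\le C\sqrt{h+\log n}$ and extending it as a Lipschitz function with constant $C\sqrt{h+\log n}$, Lemma~\ref{ga:interpolation} gives $|G(T)-\E G|\le C_p\sqrt{(h+\log n)\log n}$ with high probability. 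Since $\E J=\E G$ and $\Var J\le C(h+1)$ controls the truncation error, we get $|J-\E J|\le C_p(\sqrt{h}+\sqrt{\log n})\sqrt{\log n}$. Because $h\le n$, this is at most $C_p\varphi(n,h)$, as $\sqrt{h\log n}\le(nh)^{1/4}\log^{1/4}n$.

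\emph{The paired sum, the main obstacle.} Conditionally on $\theta$, the canonical pairing (Lemma~\ref{ga:canonical}) writes $Q-J$ as a centered paired sum plus a term bounded by $4$. The pairs have at most $H=\Theta+d_0^{-1}+2$ first points per cell and cost $\Delta'\le\Delta$ as in \eqref{ga:cost}. Lemma~\ref{ga:paired} is applied under $\Prob_x$, which is legitimate since the pairs are Borel functions of $\theta$ and $X$ is independent of $\theta$. It gives
\[
 \Prob_x\bigl(|Q-J|>4+s\bigr)\le2\exp\bigl(-cs^2/(H(\Delta+s))\bigr).
\]
We need high-probability bounds on $H$ and on $\Delta$ in terms of $\theta$.

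For $H$, Lemma~\ref{ga:common-carrier}(c) gives $\Theta\le C_p\ell$ outside probability $n^{-p}$.

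For $\Delta$, the first part of \eqref{ga:cost} is the measure of $\{Y\widetilde Y<0\}$, which is contained in $\{|Y|<|Y^h|\}$. Let $\tau=C_p\sqrt{h\ell/n}$. The set $\{|Y^h|>\tau\}$ has small measure outside a small-probability event; we control it with the energy bound \eqref{ga:tail-energy} together with chi-square concentration of $\|T\|^2$, or alternatively with a union bound over unit cells via Lemma~\ref{pre:sampling}. On the rest of $I$, we use the occupation bound of Lemma~\ref{ga:mesh-signs}(b) with this $\tau$ and $u=p\ell$. Together these give a measure of at most $C_p(\sqrt{nh\ell}+\ell)$. For the second part of \eqref{ga:cost}, $\sum_z|X^h(z)|\le\Theta\sum_cS_c$, and by Cauchy--Schwarz together with \eqref{ga:tail-energy} this is at most $C_p\ell\sqrt{n}\,\|T\|/\sqrt n\cdot\sqrt{n/n}$, which is $O(\ell\sqrt{nh})$ with high probability. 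This term is harmless because the first term dominates. Hence $\Delta\le C_p\sqrt{nh}\,\ell$.

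Now choose $s=C_p\bigl(\sqrt{H\Delta\ell}+H\ell\bigr)$. This gives
\[
 s\le C_p\bigl((nh)^{1/4}\ell^{3/2}+\ell^2\bigr)
\]
at conditional failure probability $n^{-p}$. This misses $\varphi$ by a factor $\ell^{1/4}$, and recovering that factor is the delicate point.

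To sharpen, we avoid the crude bound on $H$. First, let $\Delta$ carry the $\log^{1/2}$ through occupation without the extra $\ell$, so that $\Delta\le C_p(\sqrt{nh\ell}+\ell)$. Second, replace the supremum $H$ by an average frame constant: the Hölder frame bound only needs $\sum_i\|B_i\beta\|^2\le CH\|\beta\|^2$, and pairs concentrate where crossings concentrate. If needed, we split the pairs into classes by local cell occupancy $2^k$ and apply Lemma~\ref{ga:paired} to each class with $H=2^k$, using the exponential tail of $N_c$ to bound the cost carried by each class. With $H=O(1)$ on the bulk of the cost, $\sqrt{H\Delta\ell}\approx(nh)^{1/4}\ell^{3/4}$, which is within $\varphi$. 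I expect this class decomposition, which amounts to matching the exponent $5/4$ exactly, to be the main obstacle. Combining the three terms with a union bound yields \eqref{ga:fine-tail}.
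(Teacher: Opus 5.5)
Your split into $(Q-J)+(J-\E J)+(E-\E E)$ follows the paper, and so do your treatments of the boundary term and of the conditional mean. The gap is in the paired sum: the bound you actually obtain is $s\asymp(nh)^{1/4}\ell^{3/2}$, not $C_p\varphi(n,h)$. The repair you sketch is either unjustified or left undone.

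\emph{Where the $\ell^{1/4}$ is lost.} It comes from the shift part of the cost \eqref{ga:cost}. Your estimate $\sum_z|X^h(z)|\le\Theta\sum_cS_c\approx\ell\sqrt{nh}$ is of larger order than the occupation part $C_p(\sqrt{nh\ell}+\ell)$, not smaller as you claim. Your ``first sharpening'' then asserts $\Delta\le C_p(\sqrt{nh\ell}+\ell)$ without any new argument for this term. The class decomposition by cell occupancy is left as an unexecuted ``main obstacle''.

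\emph{The missing idea.} You need a uniform bound on the tail fields: outside probability $Cn^{-p}$,
\[
 \sup_I\bigl(|X^h|+|Y^h|\bigr)\le C_p\sqrt{h\ell/n}.
\]
The paper proves this by a union bound over a grid of mesh $n^{-2}$, since each value is Gaussian with variance at most $\frac94h/n$. It then controls the gaps with $|(X^h)'|+|(Y^h)'|\le3R\sqrt{h/n}$ on $\{\|T\|\le R\}$, where $R^2=8(h+(p+10)\ell)$. Your alternative would also work: Lemma~\ref{pre:sampling}(ii) on each cell, plus a tail bound for the Gaussian quadratic form $\int_{\bar c}((Y^h)^2+(Y^h)'^2)$. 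The energy-bound route does not work. Chebyshev only bounds the measure of $\{|Y^h|>\tau\}$ by about $\|T\|^2/\tau^2$, which is of order $n/C_p^2$ when $h$ is bounded.

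\emph{How it closes the argument.} Take $\tau=C_p\sqrt{h\ell/n}$. The sup bound gives $\{Y\widetilde Y<0\}\cap I\subset\{|Y|\le\tau\}$. Since $\widetilde Y$ has at most $2n$ zeros by Lemma~\ref{ga:common-carrier}(a), it also gives $\sum_z|X^h(z)|\le2n\sup_I|X^h|\le C_p\sqrt{nh\ell}$. The key is to use the deterministic total count of zeros rather than $\Theta$ per cell times $\pi n$ cells. So on a $\theta$-measurable event of probability $1-C_pn^{-p}$,
\[
 \Delta\le O_\tau+C_p\sqrt{nh\ell}\le C_p\sqrt{nh\ell}.
\]

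On that event the crude bound $H\le C_p\ell$ already suffices. With $s=C_p[(nh)^{1/4}\ell^{5/4}+\ell^2]$ one has $s^2/(H\Delta)\ge C_p\ell$ and $s/H\ge C_p\ell$. Hence $cs^2/(H(\Delta+s))\ge(p+1)\ell$. The exponent $5/4$ is exactly what $H\asymp\ell$, $\Delta\asymp\sqrt{nh\ell}$ and a target failure probability $e^{-(p+1)\ell}$ produce. There is no $\ell^{1/4}$ to recover and no need for an averaged frame constant.

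\emph{A minor slip.} In the conditional-mean step, the inequality $\sqrt{h\ell}\le(nh)^{1/4}\ell^{1/4}$ is false when $h\ell>n$. What does hold, using $h\le n$, is $\sqrt{h\ell}\le(nh)^{1/4}\ell^{5/4}$, so your conclusion for that term stands.
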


\begin{proof}
Fix $p>0$, let $n\ge n_0$, and let $C_p$ denote constants depending on $p$. Since $\E D_k=0$ and $\E J=\E Q$, the increment identity \eqref{ga:increment-identity} gives the split \eqref{ga:fine-split}, and $\Prob(|E-\E E|>C_p\ell)\le n^{-p}$ by Lemma~\ref{ga:common-carrier}(c). We bound the other two terms at the scale $\varphi$.

\emph{Tail fields.} Every value of $X^h$ and $Y^h$ has variance at most $\frac94h/n$. A union bound over a grid of mesh $n^{-2}$ in $I$ gives $\max_{\rm grid}(|X^h|+|Y^h|)\le C_p\sqrt{h\ell/n}$ outside probability $n^{-p}$. Let $p_0=p+10$ and $R^2=8(h+p_0\ell)$. Since $\E e^{\|T\|^2/4}=2^{h/2}$, $\Prob(\|T\|>R)\le(n+2)^{-2p_0}$. On $\{\|T\|\le R\}$, $|(X^h)'|+|(Y^h)'|\le3R\sqrt{h/n}$, so the grid error is below $\sqrt{h/n}$. Hence $\sup_I(|X^h|+|Y^h|)\le C_p\sqrt{h\ell/n}$ outside probability $2n^{-p}$.

\emph{The paired sum.} Take $\tau=C_p\sqrt{h\ell/n}$. If $\sup_I|Y^h|\le\tau$, then $\{Y\widetilde Y<0\}\cap I\subset\{|Y|\le\tau\}$, and as $\widetilde Y$ has at most $2n$ zeros, $\Delta\le O_\tau+2n\sup_I|X^h|$. Lemma~\ref{ga:mesh-signs}(b) with $u=p\ell$ and Lemma~\ref{ga:common-carrier}(c) give a $\theta$-measurable event $\mathcal A$ with $\Prob(\mathcal A^c)\le C_pn^{-p}$ on which
\[
 H\le C_p\ell,\qquad\Delta\le C_p(n\tau+\ell)+2nC_p\sqrt{h\ell/n}\le C_p\sqrt{nh\ell}.
\]
On $\mathcal A$, the pairing and the paired tail \eqref{ga:paired-tail} give $\Prob_x(|Q-J|>4+s)\le2\exp(-cs^2/(H(\Delta+s)))$, and $s=C_p[(nh)^{1/4}\ell^{5/4}+\ell^2]$ makes the exponent at least $(p+1)\ell$. Integrating over $\theta\in\mathcal A$,
\[
 \Prob\bigl(|Q-J|>C_p\varphi(n,h)\bigr)\le C_pn^{-p}.
\]

\emph{The conditional mean.} The radial projection $T\mapsto T\min(1,R/\|T\|)$ is $1$-Lipschitz onto the ball of radius $R$, on which $G$ is $C(1+R)$-Lipschitz. So $G_R=G(T\min(1,R/\|T\|))$ is $C(1+R)$-Lipschitz on $\R^h$, and Lemma~\ref{ga:interpolation} gives $\Prob(|G_R-\E G_R|>s)\le2\exp(-cs^2/(1+R)^2)$. Here $s=C_p(\sqrt{h\ell}+\ell)$ makes the exponent at least $p\ell$, since $(1+R)^2\le C_p(h+\ell)$. From $G(0)=0$ and the gradient bound, $|G(T)|\le C(\|T\|+\|T\|^2)$, so $|\E G_R-\E G|\le C(1+h)(n+2)^{-p_0}\le1$. On $\{\|T\|\le R\}$, Lemma~\ref{ga:conditional-mean} gives $\Prob(|J-G|>2+C_pR\sqrt\ell\mid T)\le2(n+2)^{-p}$. As $\E J=\E G$ and $R\sqrt\ell\le C_p(\sqrt{h\ell}+\ell)\le C_p\varphi(n,h)$,
\[
 \Prob\bigl(|J-\E J|>C_p\varphi(n,h)\bigr)\le C_pn^{-p}.
\]
Insert the three bounds in \eqref{ga:fine-split}.
\end{proof}

\subsection{Proof of Theorem~\ref{thm:B}}\label{ga:inputs-sub}

\begin{proof}[Proof of Theorem~\ref{thm:B}]
Write $\sigma_n^2=\Var D_n$.

(i) $\E D_n=0$ by Lemma~\ref{pre:no-circle-roots}, and Proposition~\ref{ga:variance} gives $0<c_G<\infty$ and the variance rate.

(ii) The rate is Proposition~\ref{ga:covariance}(a) with $q_1=2$, and the cusp is Proposition~\ref{ga:covariance}(b). For the limit, if $M/N\to q>1$, then eventually $M\le(q+1)N$ and $M/N-1\ge(q-1)/2\ge N^{-1/1000}$, so \eqref{ga:cov-rate} with $q_1=q+1$ and the continuity of $a$ give $N^{-1}\Cov(D_N,D_M)\to a(q)$.

(iii) For $n\ge1$, $\sigma_n^2>0$, because the polynomials $z^n+2$ and $2z^n+1$ have $\nu_n(1)=0$ and $\nu_n(1)=n$, zeros depend continuously on the coefficients when the leading one is nonzero, and open sets of coefficient vectors have positive probability. For large $n$, $\sigma_n^2\ge c_Gn/2$, and Lemma~\ref{ga:normal} with $d=1$ gives
\[
 d_K\bigl(D_n/\sigma_n,\mathcal N(0,1)\bigr)\le C\bigl(n^{-1/4}\sqrt{\log(n+2)}+n^{-1/60}\bigr)\le Cn^{-1/60}.
\]

(iv) By scaling it suffices to take $\lambda\in[-1,1]^d$. Put $S_n=\sum_i\lambda_iD_{n_i}$, $n_i=\lfloor q_in\rfloor$. By (i) and (ii), $n^{-1}\Var S_n\to\lambda^{\rm T}\Sigma\lambda$. If this limit is zero, $S_n/\sqrt n\to0$ in $L^2$. Otherwise Lemma~\ref{ga:normal} with $N=n_d$ and $s^2\ge cn$ gives $S_n/\sqrt n\to\mathcal N(0,\lambda^{\rm T}\Sigma\lambda)$, and the Cram\'er--Wold device gives the joint limit. If $l/m\to\alpha\in(0,1)$, expanding the variance and using (ii) with $N=l$ and $M=m$ gives
\[
 m^{-1}\Var(D_m-D_l)\to c_G(1+\alpha)-2\alpha a(1/\alpha)\ge c_G(1-\alpha)>0,
\]
by $a\le c_G$. So $\Var(D_m-D_l)\ge c_\alpha m$ for large $m$, and Lemma~\ref{ga:normal} with $d=2$ and $N=m$ gives the Kolmogorov bound $C_\alpha m^{-1/60}$.

(v) The variance bound is Proposition~\ref{ga:uniform-increment} and the tail bound is \eqref{ga:fine-tail}. If $n/(\log n)^B\le h\le n$ and $n$ is large, then $d=h/n\ge n^{-1/1000}$ and $d^{-100}n^{-1/100}/\sqrt d\le(\log n)^{101B}n^{-1/100}\to0$, so the increment variance expansion \eqref{ga:increment-variance} and $\Lambda(d)\asymp\sqrt d$ give $\Var(D_{n+h}-D_n)\asymp n\sqrt d=\sqrt{nh}$. For the coarse normal approximation let $\log^{-6}(n+2)\le d\le1$ and $n$ be large. With $B=7$, $s^2=\Var(D_{n+h}-D_n)\ge cn\sqrt d\ge cn\log^{-3}(n+2)$, and Lemma~\ref{ga:normal} with $d=2$ and $N=n+h\le2n$ gives
\[
 d_K\le C\bigl(n^{-1/4}\log^{7/2}(n+2)+n^{-1/60}\log(n+2)\bigr)\le Cn^{-1/120}.
\]

(vi) We verify the inputs for $Z_n=D_n$ and $c=c_G$. Input~\ref{fw:I1} follows from (i), (iii) and Lemma~\ref{fw:normal-inputs}(i). For~\ref{fw:I2} take $\delta_0=\frac12$. On its range of gaps, (v) gives $\Var(D_{n+h}-D_n)\le C\sqrt{nh}$ and the Kolmogorov bound $Cn^{-1/120}$ for large $n$, and Lemma~\ref{fw:normal-inputs}(ii) applies. Input~\ref{fw:I3} is \eqref{ga:fine-tail}. Finally $\Prob(\xi_0=0)=0$ and, by Lemma~\ref{pre:no-circle-roots}, almost surely no $f_n$ has a zero on the unit circle, so Lemma~\ref{fw:block-proxies} gives~\ref{fw:I4}, with proxies that have exactly the required laws. Theorem~\ref{thm:A} then gives the two limits.
\end{proof}

\subsection{Radial profiles}\label{ga:radial-sub}
For Theorem~\ref{thm:G} we average the argument principle at radius $r$, which writes $\nu_n(r)$ as an angular mean of $\Re(r\partial_rf_n/f_n)$, and compute the mean of this ratio exactly at each angle. In this subsection the coefficients are independent standard real Gaussians, or independent standard circular complex Gaussians $\xi_j=(g_j+ig_j')/\sqrt2$. We use $K_n(r)=\sum_kr^{2k}$, $m_n(r)$ and $p_n(r,\theta)$ from Theorem~\ref{thm:G}. By \eqref{rc:Phi-props} the profile $\Phi_{\rm rad}$ is $\frac12$-Lipschitz.

\begin{proof}[Proof of Theorem~\ref{thm:G}\textup{(i)}]
Both laws have densities, so by Lemma~\ref{pre:no-circle-roots} $f_n$ almost surely has no zero on $\{|z|=r\}$. Put $Z=f_n(re^{i\theta})$ and $\dot Z=r\partial_rZ=\sum_kk\xi_kr^ke^{ik\theta}$. By the argument principle \eqref{pre:balanced},
\begin{equation}\label{ga:radial-winding}
 \nu_n(r)=\frac1{2\pi}\int_0^{2\pi}\Re\frac{\dot Z}Z\,\dd\theta .
\end{equation}
\emph{Circular case.} The pair $(Z,\dot Z)$ is circular complex Gaussian, so $\dot Z=\mu Z+W$ with $W$ centered and independent of $Z$, and
\[
 \E\Re\frac{\dot Z}Z=\mu=\frac{\E\dot Z\bar Z}{\E|Z|^2}=\frac{\sum_kkr^{2k}}{\sum_kr^{2k}}=m_n(r),
\]
and since $\E|Z|^{-1}<\infty$ bounds $\E|\dot Z/Z|$ uniformly in $\theta$, Fubini's theorem gives $\E\nu_n(r)=m_n(r)$.

\emph{Real case.} For $\theta\notin\pi\Z$ the real covariance of $Z$ is nondegenerate, since the rows $(r^k\cos k\theta)_k$ and $(r^k\sin k\theta)_k$ are independent. Lemma~\ref{ga:exchange} in one variable, with the remark after it, gives $\E\Re(\dot Z/Z)=\partial_u\E\log|f_n(e^{u+i\theta})|$ at $u=\log r$. The eigenvalues $\lambda_1\ge\lambda_2>0$ of the covariance of $Z$ satisfy $\lambda_1+\lambda_2=\E|Z|^2=K_n(r)$ and $\lambda_1-\lambda_2=|\E Z^2|=K_n(r)|p_n(r,\theta)|$, so $|Z|$ has the law of $|\sqrt{\lambda_1}G_1+i\sqrt{\lambda_2}G_2|$. In polar coordinates $G_1+iG_2=\varrho e^{it}$, $\sqrt{\lambda_1}\cos t+i\sqrt{\lambda_2}\sin t=\mathcal Ae^{it}+\mathcal Be^{-it}$ with $\mathcal A=\frac12(\sqrt{\lambda_1}+\sqrt{\lambda_2})>|\mathcal B|$, and $\log|1+(\mathcal B/\mathcal A)e^{-2it}|$ has mean zero in $t$. Since $(\sqrt{\lambda_1}+\sqrt{\lambda_2})^2=K_n(r)(1+\sqrt{1-|p_n|^2})$,
\[
 \E\log|Z|=\E\log\varrho-\log2+\tfrac12\log K_n(r)+\tfrac12\log\bigl(1+\sqrt{1-|p_n(r,\theta)|^2}\bigr),
\]
and $\partial_u\frac12\log K_n(e^u)=m_n(e^u)$. So $\E\Re(\dot Z/Z)$ is $m_n(r)$ plus half the integrand of Theorem~\ref{thm:G}(i).

It remains to show $\E|\dot Z/Z|\le C_{n,r}(1+|\log|\sin\theta||)$, which justifies Fubini's theorem in \eqref{ga:radial-winding} and gives absolute convergence. Away from $\pi\Z$ this follows from nondegeneracy, as in the circular case. Near $\theta=0$ write $Z=Z_1+i\sin\theta\,Z_2$ and $\dot Z=\dot Z_1+i\sin\theta\,\dot Z_2$, with $Z_2=\sum_k\xi_kr^k\sin(k\theta)/\sin\theta$ and $\dot Z_2$ extended continuously. At $\theta=0$ the rows of $Z_1,Z_2$ are $(r^k)_k$ and $(kr^k)_k$, which are independent, so near $0$ the covariance of $(Z_1,Z_2)$ is uniformly positive definite and the regression of $(\dot Z_1,\dot Z_2)$ on $(Z_1,Z_2)$ has bounded coefficients. With $s=|\sin\theta|$,
\[
 \E\Bigl|\frac{\dot Z}Z\Bigr|\le C\iint_{\R^2}\frac{1+|a|+|b|}{\sqrt{a^2+s^2b^2}}e^{-c(a^2+b^2)}\dd a\dd b\le C\int_0^{2\pi}\frac{\dd t}{\sqrt{\cos^2t+s^2\sin^2t}}\le C(1+|\log s|).
\]
The point $\theta=\pi$ is the same with $(-r)^k$ in place of $r^k$.
\end{proof}

\begin{proof}[Proof of Theorem~\ref{thm:G}\textup{(ii)}]
We first show that at exponential radii, for every $A$ and all $n\ge1$,
\begin{equation}\label{ga:exp-profile}
 \sup_{|a|\le A}\bigl|\E\nu_n(e^{a/n})-n\Phi_{\rm rad}(a)\bigr|\le C_A .
\end{equation}
Let $|a|\le A$ and $w_k=e^{2ak/n}$. Riemann sums for $e^{2av}$ and $ve^{2av}$, and $n^{-1}\sum_kw_k\ge e^{-2A}$, give
\[
 m_n(e^{a/n})=n\,\frac{n^{-1}\sum_{k=0}^n(k/n)w_k}{n^{-1}\sum_{k=0}^nw_k}=n\Phi_{\rm rad}(a)+O_A(1),
\]
which is the circular case. In the real case put $q=|p_n(e^{a/n},\theta)|^2$ and $t=n\dist(\theta,\pi\Z)$. Since $r\partial_r=n\partial_a$, by part (i) it suffices to show
\begin{equation}\label{ga:real-correction}
 \int_0^{2\pi}\bigl|\partial_a\log(1+\sqrt{1-q})\bigr|\dd\theta\le\frac{C_A}n .
\end{equation}
The weights $w_k$ and $(2k/n)w_k$ have bounded supremum and total variation, so \eqref{rc:abel} gives $|p_n|+|\partial_ap_n|\le C_A/(1+t)$. Near $\theta_0\in\{0,\pi,2\pi\}$ write $\theta=\theta_0\pm t/n$, and let $T,T'$ be independent with $\Prob(T=k/n)=w_k/\sum_jw_j$. Then $q=\E\cos(2t(T-T'))$ and $\partial_aq=2\E[(T+T'-2\E T)(\cos(2t(T-T'))-1)]$, so $|\partial_aq|\le8t^2$. The weights are within the factor $e^{4A}$ of uniform, so $\E(T-T')^2\ge e^{-4A}/6$, and $1-q\ge c_At^2$ for $t\le t_0$.

Choose $L_A$ with $|p_n|\le\frac12$ for $t\ge L_A$. For $t_0\le t\le L_A$, $p_n$ converges uniformly to $\int_0^1e^{2av+2itv}\dd v/\int_0^1e^{2av}\dd v$, whose modulus is less than one, so $1-q\ge c_A$ for large $n$. Since $|\partial_a\log(1+\sqrt{1-q})|\le|\partial_aq|/(2\sqrt{1-q})$ and $|\partial_aq|\le2|p_n||\partial_ap_n|$, the integrand is at most $C_At$, $C_A$ and $C_At^{-2}$ on the three ranges, and $\dd\theta=\dd t/n$ gives \eqref{ga:real-correction} for large $n$.

For linear radii let $a_n(x)=n\log(1+x/n)$. For $n\ge2A$ and $|x|\le A$, $|a_n(x)|\le2A$ and $|a_n(x)-x|\le A^2/n$, so $n|\Phi_{\rm rad}(a_n(x))-\Phi_{\rm rad}(x)|\le A^2/2$.
\end{proof}

\section{Littlewood polynomials}\label{sec:signs}\label{sg:section}

For Gaussian coefficients the two coordinates of the curve
$t\mapsto e^{-int/2}f_n(e^{it})$ are independent, and $D_n$ is a sum of
signs of one field at the zeros of the other. For random signs the same
structure appears after conditioning on the coefficient pairs
$(\epsilon_j,\epsilon_{n-j})$. Write
\[
 e^{-int/2}f_n(e^{it})=U_n(t)+iV_n(t)
\]
with the carrier coordinates of Lemma~\ref{pre:carrier}. An equal pair
contributes only to $U_n$, and an opposite pair only to $V_n$. Conditioning
on the type of every pair and on the signs of the opposite pairs therefore
fixes $V_n$ and its zeros, while $U_n$ remains a sum of independent signs.
By Proposition~\ref{pre:crossing}, $D_n$ is, up to
boundary roots, a signed sum of $\sgn U_n$ over the zeros of $V_n$ in
$(0,\pi)$. We show that these zeros are well spread, that $U_n$ is rarely
small at them, and that the signed sum concentrates, which gives
Theorem~\ref{thm:C}. The same conditioning, with a divisibility argument
for roots of unity, shows that roots on the unit circle are rare.

Throughout this section $(\epsilon_j)_{j\ge0}$ are independent random
signs, $f_n(z)=\sum_{j=0}^n\epsilon_jz^j$ and $\ell_n=\log(n+2)$. Each
almost-sure statement below follows from a summable tail bound at fixed
degree by the first Borel--Cantelli lemma, so at a fixed degree we may
realize the signs in any convenient way. Pair $j$ with $n-j$ for
$0\le j<n/2$. There are $p_n=\lfloor(n+1)/2\rfloor$ pairs, the pair $j$
has frequency $\lambda_j=n/2-j$, and for even $n$ there is a middle index
$n/2$.

\begin{lemma}[Pair activity]\label{sg:activity}
Let $\chi_j$ \textup($0\le j<p_n$\textup) be Bernoulli variables with parameter
$\frac12$, and let $\sigma_j,\tau_j$ \textup($0\le j<p_n$\textup) and $\varsigma$
be random signs, all independent. Put
\[
 \epsilon_j=\chi_j\sigma_j-(1-\chi_j)\tau_j,\qquad
 \epsilon_{n-j}=\chi_j\sigma_j+(1-\chi_j)\tau_j,\qquad
 \epsilon_{n/2}=\varsigma\ \ (n\ \text{even}).
\]
Then $\epsilon_0,\dots,\epsilon_n$ are independent random signs. With
$f_n^*(z)=z^nf_n(1/z)$,
\begin{align*}
 f_n+f_n^*&=2\sum_{j<p_n}\chi_j\sigma_j\,(z^j+z^{n-j})+2\varsigma z^{n/2}\ind_{n\ \rm even},&
 f_n-f_n^*&=2\sum_{j<p_n}(1-\chi_j)\tau_j\,(z^{n-j}-z^j),\\
 U_n(t)&=\varsigma\ind_{n\ \rm even}+2\sum_{j<p_n}\chi_j\sigma_j\cos(\lambda_jt),&
 V_n(t)&=2\sum_{j<p_n}(1-\chi_j)\tau_j\sin(\lambda_jt).
\end{align*}
\end{lemma}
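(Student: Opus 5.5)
The plan has two parts: first the distributional claim, then the algebraic identities, which follow from Lemma~\ref{pre:carrier}.

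\emph{Independence and law.} The blocks $(\chi_j,\sigma_j,\tau_j)$ for different $j$, and $\varsigma$, are mutually independent, and the pair $(\epsilon_j,\epsilon_{n-j})$ is a function of its own block only. So it suffices to check that each pair $(\epsilon_j,\epsilon_{n-j})$ is uniform on $\{\pm1\}^2$. If $\chi_j=1$ the pair is $(\sigma_j,\sigma_j)$, which is $(1,1)$ or $(-1,-1)$, each with probability $\tfrac12$. If $\chi_j=0$ the pair is $(-\tau_j,\tau_j)$, which is $(-1,1)$ or $(1,-1)$, each with probability $\tfrac12$. Since $\chi_j$ is Bernoulli$(\tfrac12)$ and independent of the signs, each of the four outcomes has probability $\tfrac14$, so $\epsilon_j$ and $\epsilon_{n-j}$ are independent random signs. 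The middle coefficient $\epsilon_{n/2}=\varsigma$ for even $n$ is independent of everything else.

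\emph{Identities.} Write $f_n^*(z)=\sum_k\epsilon_{n-k}z^k$ and group terms by pairs. The coefficient of $z^j$ in $f_n+f_n^*$ is $\epsilon_j+\epsilon_{n-j}=2\chi_j\sigma_j$, and the same holds for $z^{n-j}$. The coefficient of $z^{n/2}$ is $2\varsigma$. Similarly, the coefficient of $z^{n-j}$ in $f_n-f_n^*$ is $\epsilon_{n-j}-\epsilon_j=2(1-\chi_j)\tau_j$, and the coefficient of $z^j$ is its negative. For $U_n$ and $V_n$, substitute $a_{n/2+\lambda_j}+a_{n/2-\lambda_j}=\epsilon_{n-j}+\epsilon_j$ and $a_{n/2+\lambda_j}-a_{n/2-\lambda_j}=\epsilon_{n-j}-\epsilon_j$ into the pairing formula \eqref{pre:pairing}. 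This uses $n/2+\lambda_j=n-j$ and $n/2-\lambda_j=j$, and gives the stated expansions directly.

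No step is a real obstacle. The only care needed is the index bookkeeping for odd versus even $n$: the frequency set $\Lambda_n$ corresponds exactly to the indices $0\le j<p_n$, and the middle term appears only when $n$ is even.
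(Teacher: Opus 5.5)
Your proposal is correct and follows the paper's proof essentially line by line. The case split on $\chi_j$ shows each pair is uniform on $\{\pm1\}^2$, and independence follows from the disjoint blocks. The identities then follow from $\epsilon_j+\epsilon_{n-j}=2\chi_j\sigma_j$ and $\epsilon_{n-j}-\epsilon_j=2(1-\chi_j)\tau_j$, together with $f_n^*=\sum_j\epsilon_{n-j}z^j$ and the pairing formula \eqref{pre:pairing}.
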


\begin{proof}
If $\chi_j=1$ then $(\epsilon_j,\epsilon_{n-j})=(\sigma_j,\sigma_j)$, and if
$\chi_j=0$ then $(\epsilon_j,\epsilon_{n-j})=(-\tau_j,\tau_j)$. Each of the four
sign pairs has probability $\frac14$, and different pairs and the middle sign
use disjoint sets of independent variables. Since
$\epsilon_j+\epsilon_{n-j}=2\chi_j\sigma_j$ and
$\epsilon_{n-j}-\epsilon_j=2(1-\chi_j)\tau_j$, the formulas follow from
$f_n^*=\sum_j\epsilon_{n-j}z^j$ and the pairing formula \eqref{pre:pairing}.
\end{proof}

We call the pair $j$ \emph{active} if $\chi_j=1$. Conditionally on $(\chi,\tau)$,
the imaginary coordinate $V_n$ is fixed and $U_n$ is a linear form in the
independent signs $\sigma_j$ of the active pairs and $\varsigma$.
Conditionally on $(\chi,\sigma,\varsigma)$ the roles are exchanged. The
representation is special to the unit circle. For $r\ne1$ the imaginary
coordinate of $e^{-int/2}f_n(re^{it})$ determines every pair
$(\epsilon_j,\epsilon_{n-j})$, because $r^{n-j}\epsilon_{n-j}-r^j\epsilon_j$
takes four distinct values when $r^{n-j}\ne r^j$.

\subsection{Boundary roots}\label{sg:boundary-section}
Let $B_n$ be the number of roots of $f_n$ on the unit circle, with
multiplicity, and write $B_n=B_n^{\rm nc}+B_n^{\rm cyc}$, where
$B_n^{\rm cyc}$ counts the roots that are roots of unity. Roots of unity
are controlled by divisibility. The other roots are rare because a sum of
signs with distinct weights rarely takes a given value.

\begin{lemma}[Distinct complex weights]\label{sg:ss}
There is an absolute constant $C_{\rm SS}$ such that for $m\ge1$, distinct
$w_1,\dots,w_m\in\C$ and independent random signs $\sigma_1,\dots,\sigma_m$,
\[
 \sup_{x\in\C}\Prob\Bigl\{\sum_{j=1}^m\sigma_jw_j=x\Bigr\}\le C_{\rm SS}m^{-3/2}.
\]
\end{lemma}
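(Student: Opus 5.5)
This is the Sárközy--Szemerédi form of the Erd\H{o}s--Moser problem, extended from real to complex weights. The plan is to reduce to distinct real weights by a generic projection and then use the real bound.

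\emph{Step 1: projection.} For a unit vector $e^{i\phi}$ consider the real numbers $\Re(e^{-i\phi}w_j)$. These fail to be distinct only when $e^{i\phi}$ is orthogonal to one of the finitely many nonzero differences $w_j-w_k$. So for all but finitely many $\phi$ they are pairwise distinct. If $\sum_j\sigma_jw_j=x$, then $\sum_j\sigma_j\Re(e^{-i\phi}w_j)=\Re(e^{-i\phi}x)$. Hence the complex atom probability is at most the real atom probability for the distinct real weights $a_j=\Re(e^{-i\phi}w_j)$.

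\emph{Step 2: the real bound.} For distinct reals $a_1<\dots<a_m$ and every $y$, we need
\[
 \Prob\Bigl\{\sum_j\sigma_ja_j=y\Bigr\}\le Cm^{-3/2}.
\]
This is the Sárközy--Szemerédi theorem, sharpened by Stanley and Halász. I would cite Halász's inequality in the following form. If $R_k$ is the number of solutions of $\pm a_{i_1}\pm\dots\pm a_{i_k}=0$ over $2k$-tuples, then the atom is at most $C_km^{-2k-1/2}R_k$. Taking $k=1$ gives $R_1=O(m)$ for distinct weights, since $a_i\pm a_j=0$ forces $i=j$ or $a_i=-a_j$, and each index has at most one opposite partner. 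This yields $Cm^{-3/2}$.

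Alternatively, one can give a self-contained proof via Esseen's inequality on the characteristic function $\prod_j\cos(ta_j)$, followed by rearrangement. The rearrangement step is where the difficulty lies. The cited result already covers arbitrary real distinct weights, with no integrality assumption, since Halász's argument works with the Fourier transform on $\R$ after a rational approximation.

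\emph{Main obstacle.} The only real content is Step 2, so I would invoke it as a known result rather than reprove it. The remaining point to check is that the constant does not depend on $\phi$ or on the weights. This holds because the real bound is uniform over all distinct real weights.
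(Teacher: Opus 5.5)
Your proof is correct, but it reduces to the real line differently from the paper. You use the real-linear projection $w\mapsto\Re(e^{-i\phi}w)$, which keeps the $w_j$ distinct for all but finitely many $\phi$. You then invoke the bound for distinct \emph{real} weights, via Hal\'asz with $k=1$ or Stanley.

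The paper fixes $x$ and takes the $\mathbb Q$-span $V$ of $w_1,\dots,w_m,x$. Since a vector space over an infinite field is not a finite union of proper subspaces, some $\mathbb Q$-linear $\phi:V\to\mathbb Q$ separates the $w_j$. Clearing denominators then lands in $\Z$, so only the integer Erd\H{o}s--Moser bound of S\'ark\"ozy--Szemer\'edi is needed, in the form cited from Nguyen--Vu.

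Both reductions push the event forward under an additive map that stays injective on the weights. The paper's map does ``complex to real'' and ``real to integer'' in a single step. Your projection does only the first and leaves the second to the citation. That is fine, because the real-weight versions you name are standard. But if the version at hand is stated only for integers, you would need exactly the paper's $\mathbb Q$-linear trick to pass from your distinct reals to distinct integers, so the projection saves nothing there.

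Your count $R_1=O(m)$ is right, once $R_k$ counts solutions of $\pm a_{i_1}\pm\cdots\pm a_{i_{2k}}=0$ rather than $\pm a_{i_1}\pm\cdots\pm a_{i_k}=0$. The constant is uniform in both routes.
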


\begin{proof}
For distinct integers this is the Sárközy--Szemerédi theorem
\cite[Theorem~1.2]{NguyenVu2011}. In general fix $x$, and let $V\subset\C$
be the $\mathbb Q$-linear span of $w_1,\dots,w_m,x$. A vector space over an
infinite field is not a finite union of proper subspaces. Applied to the
dual of $V$ and its proper subspaces $\{\phi:\phi(w_j-w_k)=0\}$, $j\ne k$,
this gives a $\mathbb Q$-linear $\phi:V\to\mathbb Q$ with distinct values at
$w_1,\dots,w_m$. Choose an
integer $N\ge1$ with $N\phi(w_j)\in\Z$ for all $j$ and $N\phi(x)\in\Z$. Then
$\sum_j\sigma_jw_j=x$ implies $\sum_j\sigma_jN\phi(w_j)=N\phi(x)$, and the
integer case proves the bound.
\end{proof}

\begin{proposition}[Boundary roots of Littlewood polynomials]\label{sg:boundary}
There is an absolute constant $C$ with the following properties.
\begin{enumerate}[label=\textup{(\alph*)}]
 \item For every $n\ge0$,
 \begin{equation}\label{sg:boundary-moments}
 \E B_n^{\rm nc}\le C(n+1)^{-1/2},\qquad
 \E(B_n^{\rm cyc})^2\le C,\qquad
 \E B_n\le C,\qquad
 \Var B_n\le C\sqrt{n+1}.
 \end{equation}
 \item For every $A>0$ there is $C_A$ such that
 $\Prob\{B_n^{\rm cyc}>C_A\ell_n^5\}\le C_A(n+2)^{-A}$ for every $n\ge0$.
 \item Almost surely, simultaneously for every $\eta>0$,
 \begin{equation}\label{sg:all-degree}
  B_n=O_\eta\bigl(\sqrt n\,\ell_n^{1+\eta}\bigr)\qquad(n\to\infty).
 \end{equation}
\end{enumerate}
\end{proposition}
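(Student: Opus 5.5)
We split $B_n=B_n^{\rm nc}+B_n^{\rm cyc}$ and treat the two parts by different methods. Throughout we use the pair activity representation of Lemma~\ref{sg:activity}.

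\emph{Non-cyclotomic roots.} A root $e^{i\tau}$ of $f_n$ on the circle is a common zero of $U_n$ and $V_n$ by Lemma~\ref{pre:carrier}(c). Condition on $(\chi,\tau)$, which fixes $V_n$. If $V_n\not\equiv0$, then $V_n$ has at most $n$ zeros in $[0,2\pi)$ by Lemma~\ref{pre:carrier}(b), and at each such zero $t_0$ the event $U_n(t_0)=0$ reads $\sum_{j\ {\rm active}}\sigma_j\,2\cos(\lambda_jt_0)=-\varsigma\ind_{n\ \rm even}$.

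If $e^{it_0}$ is not a root of unity, the weights $\cos(\lambda_jt_0)$ are distinct. Indeed, $\cos(\lambda t_0)=\cos(\mu t_0)$ forces $(\lambda\pm\mu)t_0\in2\pi\Z$ with $\lambda\pm\mu$ a nonzero integer, which would make $e^{it_0}$ a root of unity. Lemma~\ref{sg:ss} then bounds the conditional probability by $Cm^{-3/2}$, where $m=\#\{\text{active pairs}\}$. Multiplicity is at most the order of the zero of $V_n$, so $\E[B_n^{\rm nc}\mid\chi,\tau]\le C\,n\,m^{-3/2}$ after summing over zeros.

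Since $m$ is Binomial$(p_n,\frac12)$, it is $\ge n/8$ except with exponentially small probability, on which we use $B_n\le n$. This gives $\E B_n^{\rm nc}\le Cn^{-1/2}$.

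The case $V_n\equiv0$, meaning no pair is opposite, has probability $2^{-p_n}$ and is absorbed the same way. The second moment of $B^{\rm nc}_n$ is handled by the crude bound $(B_n^{\rm nc})^2\le nB_n^{\rm nc}$, which gives $\E(B_n^{\rm nc})^2\le C\sqrt n$.

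\emph{Cyclotomic roots.} A primitive $d$-th root of unity $\zeta$ is a root of $f_n$ iff $\Phi_d\mid f_n$. Reducing exponents modulo $d$, the condition $f_n(\zeta)=0$ is a statement about the residue sums $s_r=\sum_{j\equiv r\ (d)}\epsilon_j$. For prime $d=p$ it says all $s_r$ are equal, and in general it is a linear condition of rank $\varphi(d)$ on the $s_r$. Each $s_r$ is a sum of about $n/d$ signs, and the residue sums are independent. A local limit theorem then gives
\[
\Prob(\Phi_d\mid f_n)\le (C\sqrt{d/n})^{c\varphi(d)}
\]
for $d\le n/2$, say. For large $d$, with few terms per class, we instead use that at least one nontrivial linear constraint on independent signs holds with probability $\le1/2$. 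Combined with a Littlewood–Offord bound, this gives $\Prob\le Cd^{-1/2}\min(1,\cdot)$ small enough.

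Multiplicity $k$ at $\zeta$ requires $\Phi_d^k\mid f_n$. Differentiating, $(z\partial_z)^if_n(\zeta)=0$ for $i<k$, which imposes further independent-looking constraints and yields a decay like $n^{-ck}$.

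Summing $\varphi(d)\,k\,\Prob(\cdot)$ over $d$ and $k$, the dominant contributions come from $d=1,2$, where $\Prob(f_n(\pm1)=0)\asymp n^{-1/2}$. This gives $\E(B_n^{\rm cyc})^2\le C$. Part (b) follows from the same estimates: with probability $1-O(n^{-A})$ only divisors $d\le\ell_n^{C}$ with multiplicities $\le C\ell_n$ can occur, so $B_n^{\rm cyc}\le\sum_{d\le\ell_n^2}\varphi(d)\cdot C\ell_n\le C\ell_n^5$.

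Then $\E B_n\le C$. For the variance, $\Var B_n\le2\E(B_n^{\rm nc})^2+2\E(B_n^{\rm cyc})^2\le C\sqrt{n+1}$.

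\emph{Almost-sure bound (c).} For the cyclotomic part, apply (b) with $A=2$ and Borel–Cantelli. For the non-cyclotomic part, Markov's inequality applied to $\E(B_n^{\rm nc})^2\le C\sqrt n$ only gives $\Prob(B_n^{\rm nc}>\sqrt n\ell_n^{1+\eta})\le C n^{-1/2}\ell_n^{-2-2\eta}$, which is not summable. So we use a block argument instead.

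Pass to dyadic blocks and use a tail bound for the maximum over each block. The bound we aim for is that $\sum_{n\in[2^k,2^{k+1})}\E B_n^{\rm nc}\le C2^{k/2}$. By Markov, $\max_{n\in\text{block}}B_n^{\rm nc}\le\sum_{\text{block}}B_n^{\rm nc}\le 2^{k/2}k^{1+\eta}$ fails with probability $Ck^{-1-\eta}$, which is summable. Since $k\asymp\ell_n$, this gives (c).

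\emph{Main obstacle.} The hardest step is the cyclotomic estimate: bounding $\Prob(\Phi_d^k\mid f_n)$ uniformly in $d$ and $k$, especially for intermediate $d$ and for multiplicities. We would first try to derive this from the independence of the residue-class sums together with a Halász-type anticoncentration bound.
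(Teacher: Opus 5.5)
Your non-cyclotomic argument is correct and is essentially the paper's. You condition on $(\chi,\tau)$, note that the weights $\cos(\lambda_jt_0)$ are distinct away from roots of unity, apply Lemma~\ref{sg:ss}, and bound multiplicities through the at most $n$ zeros of $V_n$. Using a Chernoff bound on the active pairs instead of the paper's two-case split is fine. Your block argument for (c) also works but is unnecessary: the first moment alone gives $\Prob\{B_n^{\rm nc}>\sqrt n\,\ell_n^{1+\eta}\}\le C/(n\ell_n^{1+\eta})$, which is summable.

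\textbf{The gap is in the cyclotomic part.} Everything else rests on it: $\E(B_n^{\rm cyc})^2\le C$, part (b), $\E B_n\le C$, the variance bound, and the cyclotomic half of (c).
\begin{enumerate}
 \item \emph{Large $d$.} The event $\Phi_d\mid f_n$ is possible whenever $\varphi(d)\le n$, so $d$ runs far beyond $n$ (up to $2n^2$, since $\varphi(d)\ge\sqrt{d/2}$). Your local-limit bound $(C\sqrt{d/n})^{c\varphi(d)}$, with unspecified constants, is vacuous once $C\sqrt{d/n}\ge1$. Beyond that you only offer a probability of order $\frac12$ or $Cd^{-1/2}$ per $d$. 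Summed against the weight $\varphi(d)$, this grows polynomially in $n$ instead of staying bounded.
 \item \emph{Multiplicities.} The estimate ``decay like $n^{-ck}$'' from ``independent-looking constraints'' is not proved. To sum $\sum_d\varphi(d)\|M_d\|_2$ (with $M_d$ the multiplicity of $\Phi_d$ in $f_n$) and to get the $\ell_n^5$ tail in (b), you need decay in $k$ and $\varphi(d)$ jointly, uniformly in $d$.
\end{enumerate}

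\textbf{The missing idea} is the elementary bound $\Prob\{\Phi_d^k\mid f_n\}\le2^{-k\varphi(d)}$ for all $d,k\ge1$. If $k\varphi(d)>n$ the event is empty. Otherwise, fix the signs $\epsilon_j$ with $j\ge k\varphi(d)$. Two choices of the remaining $k\varphi(d)$ signs that both make $\Phi_d^k$ divide $f_n$ differ by a polynomial of degree less than $k\varphi(d)$ that is divisible by $\Phi_d^k$, so they coincide.

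In your linear-algebra language: the map $\epsilon\mapsto(f_n^{(i)}(\zeta))_{i<k}$ has $\mathbb Q$-rank $k\varphi(d)$, and its first $k\varphi(d)$ columns are independent. So once the other signs are fixed, at most one value of the first $k\varphi(d)$ signs is admissible.

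This gives $\Prob\{M_d\ge k\}\le2^{-k\varphi(d)}$, hence $\E M_d^2\le6\cdot2^{-\varphi(d)}$. With $\varphi(d)\ge\sqrt{d/2}$ and Minkowski's inequality, $\|B_n^{\rm cyc}\|_2\le\sum_d\varphi(d)\|M_d\|_2<\infty$ uniformly in $n$.

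For (b), take $r_n=\lceil(A+2)\log_2(n+2)\rceil$. The same bound excludes $M_d\ge1$ for $2r_n^2<d\le2n^2$ (there $\varphi(d)>r_n$). It also excludes $M_d\ge r_n$ for $d\le2r_n^2$. Off these events, $B_n^{\rm cyc}\le r_n(2r_n^2)^2\le C_A\ell_n^5$. No local limit theorem or Hal\'asz-type estimate is needed.
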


\begin{proof}
\emph{Noncyclotomic roots.} Let $n\ge1$, since $f_0$ has no roots, and
realize the signs as in Lemma~\ref{sg:activity}. Let $p=p_n\ge n/2$ and let
$N_1=\sum_j\chi_j$ and $N_0=p-N_1$ be the numbers of active and inactive pairs.
If $|\zeta|=1$ and $\zeta$ is a root of $f_n$ of multiplicity $m$, then so is
$\bar\zeta=1/\zeta$, because the coefficients are real, and therefore
$\zeta$ is a root of $f_n^*$ of multiplicity $m$. Thus $(z-\zeta)^m$ divides
both $f_n+f_n^*$ and $f_n-f_n^*$. Let $g$ be one of these two polynomials,
assume $g\neq0$, and let $Z(g)$ be the set of roots of $g$ on the unit circle
that are not roots of unity, with multiplicities $m_\zeta(g)$. Then
\begin{equation}\label{sg:dominate}
 B_n^{\rm nc}\le\sum_{\zeta\in Z(g)}m_\zeta(g)\,\ind\{f_n(\zeta)=0\},
 \qquad \sum_{\zeta\in Z(g)}m_\zeta(g)\le\deg g\le n .
\end{equation}
For $\zeta\in Z(g)$ the numbers $\zeta^j+\zeta^{n-j}$, $0\le j<n/2$, are
distinct, and so are the numbers $\zeta^{n-j}-\zeta^j$. For
$j<k<n/2$ the equality $\zeta^j+\zeta^{n-j}=\zeta^k+\zeta^{n-k}$ reads
$\zeta^j(1-\zeta^{k-j})=\zeta^{n-k}(1-\zeta^{k-j})$ and gives
$\zeta^{n-j-k}=1$ with $n-j-k>0$. The second equality gives
$\zeta^{n-j-k}=-1$ in the same way. Both contradict the fact that $\zeta$ is
not a root of unity.

Suppose first that $N_0\ge1$ and $N_1\ge p/2$. Then $g=f_n-f_n^*\neq0$ is a
function of $(\chi,\tau)$. At $\zeta\in Z(g)$, $f_n(\zeta)=0$ forces
$(f_n+f_n^*)(\zeta)=0$, that is,
\[
 \sum_{\chi_j=1}\sigma_j(\zeta^j+\zeta^{n-j})=-\varsigma\zeta^{n/2}\ind_{n\ \rm even}.
\]
Conditionally on $(\chi,\tau,\varsigma)$ the $N_1$ signs $\sigma_j$ with
$\chi_j=1$ are independent and uniform, and their weights are distinct, so
Lemma~\ref{sg:ss} bounds the conditional probability of this equation by
$C_{\rm SS}(p/2)^{-3/2}$. By \eqref{sg:dominate},
$\E[B_n^{\rm nc}\mid\chi,\tau,\varsigma]\le C_{\rm SS}\,n(p/2)^{-3/2}$ on this
event. If instead $N_1\ge1$ and $N_0>p/2$, use $g=f_n+f_n^*\neq0$, a function
of $(\chi,\sigma,\varsigma)$, and the equation
$\sum_{\chi_j=0}\tau_j(\zeta^{n-j}-\zeta^j)=0$ in the conditionally independent
signs $\tau_j$. The bound is the same. The two cases cover the event
$\{N_0\ge1,\,N_1\ge1\}$, whose complement has probability $2^{1-p}$, and
always $B_n^{\rm nc}\le n$. Therefore
\[
 \E B_n^{\rm nc}\le C_{\rm SS}\,n\Bigl(\frac p2\Bigr)^{-3/2}+n2^{1-p}\le C(n+1)^{-1/2}.
\]

\emph{Cyclotomic roots.} Let $\Phi_q$ be the $q$th cyclotomic polynomial.
It is monic with integer coefficients, irreducible over $\mathbb Q$, of
degree $\varphi(q)$, and its roots are the primitive $q$th roots of unity.
Let $M_q$ be the largest $r$ with $\Phi_q^r\mid f_n$. Since $\Phi_q$ is the
minimal polynomial of each of its roots, every primitive $q$th root of unity
is a root of $f_n$ of multiplicity exactly $M_q$, and
$B_n^{\rm cyc}=\sum_q\varphi(q)M_q$. If $r\varphi(q)\le n$, fix the signs
$\epsilon_j$ with $j\ge r\varphi(q)$. Two choices of the remaining signs with
$\Phi_q^r\mid f_n$ differ by a polynomial of degree less than $r\varphi(q)$
that is divisible by $\Phi_q^r$, so they coincide. If $r\varphi(q)>n$ the
event $M_q\ge r$ is empty. Using
$\sum_{r\ge1}(2r-1)x^r=x(1+x)/(1-x)^2\le6x$ for $0\le x\le\frac12$, we get
\begin{equation}\label{sg:cyc-tail}
 \Prob\{M_q\ge r\}\le2^{-r\varphi(q)},\qquad
 \E M_q^2=\sum_{r\ge1}(2r-1)\Prob\{M_q\ge r\}\le6\cdot2^{-\varphi(q)} .
\end{equation}
Writing $q=\prod p^a$, we have
$\varphi(q)^2/q=\prod p^{a-2}(p-1)^2\ge\frac12$, since each odd prime
contributes a factor at least $1$ and the prime $2$ a factor at least
$\frac12$. Hence $\varphi(q)\ge\sqrt{q/2}$, and Minkowski's inequality and
$\varphi(q)\le q$ give
\[
 \|B_n^{\rm cyc}\|_2\le\sum_q\varphi(q)\|M_q\|_2\le\sqrt6\sum_{q\ge1}q\,2^{-\sqrt{q/2}/2}<\infty .
\]

For the tail let $r_n=\lceil(A+2)\log_2(n+2)\rceil$ and $q_*=2r_n^2$. If
$q>q_*$, then $\varphi(q)\ge\sqrt{q/2}>r_n$. Also $M_q\ge1$ requires
$\varphi(q)\le n$, hence $q\le2n^2$. By \eqref{sg:cyc-tail},
\[
 \Prob\{\exists q>q_*:M_q\ge1\}\le2n^2(n+2)^{-A-2},\qquad
 \Prob\{\exists q\le q_*:M_q\ge r_n\}\le q_*2^{-r_n}.
\]
Off these two events,
$B_n^{\rm cyc}\le r_n\sum_{q\le q_*}\varphi(q)\le r_nq_*^2\le C_A\ell_n^5$,
and their probabilities sum to at most $C_A(n+2)^{-A}$. This proves (b).

\emph{Moments and the almost-sure bound.} Since $B_n^{\rm nc}\le n$,
$\E(B_n^{\rm nc})^2\le n\E B_n^{\rm nc}\le\sqrt{n+1}\,C$. Together with the
cyclotomic second moment and the Cauchy--Schwarz inequality this gives
$\E B_n\le C$ and $\Var B_n\le\E B_n^2\le C\sqrt{n+1}$, which completes
\eqref{sg:boundary-moments}. For (c), Markov's inequality gives
\[
 \sum_{n\ge2}\Prob\bigl\{B_n^{\rm nc}>\sqrt n\,\ell_n^{1+\eta}\bigr\}
 \le\sum_{n\ge2}\frac{C}{n\,\ell_n^{1+\eta}}<\infty ,
\]
and with (b) for $A=2$ this proves \eqref{sg:all-degree} for $\eta=1/k$.
The intersection of these events over $k\ge1$ covers every $\eta>0$.
\end{proof}

\subsection{Small balls for sign sums}\label{sg:smallball-section}
A sum of independent signs whose vector weights are bounded and spread in
all $r$ directions falls in a fixed small ball with probability
$O(n^{-r/2})$, as a Gaussian vector with covariance of order $n$ would.
This anticoncentration enters twice, for the zeros of $V_n$ and for the
values of $U_n$ at them.

\begin{lemma}[Small balls]\label{sg:smallball}
Fix $r\ge1$ and $C_0,K,\kappa>0$. There are $\rho,C>0$, depending only on
these numbers, with the following property. Let $n\ge1$, $m\le C_0n$, and
$a_1,\dots,a_m\in\R^r$ with
\[
 |a_j|\le K,\qquad \sum_{j=1}^m\langle a_j,u\rangle^2\ge\kappa n|u|^2\quad(u\in\R^r).
\]
Then, for independent random signs $\sigma_1,\dots,\sigma_m$,
\[
 \sup_{z\in\R^r}\Prob\Bigl\{\Bigl|\sum_{j=1}^m\sigma_ja_j-z\Bigr|\le\rho\Bigr\}\le Cn^{-r/2}.
\]
In particular, if $r=1$, then for all $u\in\R$ and $s\ge0$,
\begin{equation}\label{sg:scalar}
 \Prob\Bigl\{\Bigl|\sum_j\sigma_ja_j-u\Bigr|\le s\Bigr\}\le C\Bigl(\frac s\rho+2\Bigr)n^{-1/2}.
\end{equation}
\end{lemma}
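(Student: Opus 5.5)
The plan is an Esseen-type Fourier bound. Let $S=\sum_j\sigma_ja_j$ and fix a smooth nonnegative function $\psi$ on $\R^r$ with $\psi\ge1$ on the unit ball and $\hat\psi$ supported in $\{|\omega|\le1\}$, as in Esseen's concentration inequality. Then, for every $z$,
\[
\Prob\{|S-z|\le\rho\}\le\E\psi((S-z)/\rho)\le C_r\rho^r\int_{|\omega|\le1/\rho}\prod_{j=1}^m|\cos\langle a_j,\omega\rangle|\dd\omega .
\]
The whole argument reduces to bounding this integral by $Cn^{-r/2}$ once $\rho$ is small enough. Uniformity in $z$ is automatic, because the bound does not depend on $z$.

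To bound the integral, use $|\cos y|\le\exp(-c\,y^2)$ for $|y|\le\pi/2$; more generally $|\cos y|\le\exp(-c\,\|y/\pi\|_{\R/\Z}^2)$. Choose $\rho$ so small that $K/\rho\le\pi/2$, that is $\rho\ge 2K/\pi$. Then for $|\omega|\le1/\rho$ every $|\langle a_j,\omega\rangle|\le K|\omega|\le\pi/2$, and the product is at most $\exp(-c\sum_j\langle a_j,\omega\rangle^2)\le\exp(-c\kappa n|\omega|^2)$ by the spreading hypothesis. Integrating this Gaussian over $\R^r$ gives $C(\kappa n)^{-r/2}$, and multiplying by $\rho^r$ gives the claim. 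Strictly, then, $\rho$ should be read as "a constant depending on $K$", fixed as $\rho=2K/\pi$. Alternatively, one shrinks the Fourier support to radius $\pi/(2K)$ and gets the statement for the ball of radius $\rho=2K/\pi$ times a constant. Either way $\rho$ and $C$ depend only on $r,K,\kappa$. The hypothesis $m\le C_0n$ is not needed for this route; it would only enter through a cruder count. The main point to check carefully is exactly this compatibility between the radius of the ball and the range of frequencies on which the cosine product has Gaussian decay. There is no arithmetic obstruction, since we never need frequencies beyond $\pi/(2K)$.

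For the scalar statement \eqref{sg:scalar}, cover the interval $[u-s,u+s]$ by at most $s/\rho+2$ intervals of radius $\rho$, i.e.\ by closed balls $[u_i-\rho,u_i+\rho]$ in dimension one. Apply the main bound with $r=1$ to each interval and take a union bound, which gives $C(s/\rho+2)n^{-1/2}$.
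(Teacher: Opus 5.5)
Your proof is correct, and it takes a different route from the paper's.

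\textbf{The paper's route.} The paper uses no Fourier analysis at this point. From $|a_j|\le K$, $m\le C_0n$ and the quadratic-form bound, it shows that for every unit vector $u$ at least $k=\lceil\kappa n/(2K^2)\rceil$ indices satisfy $|\langle a_j,u\rangle|\ge d=\sqrt{\kappa/(2C_0)}$. It then cites the high-dimensional Littlewood--Offord theorem of Tao and Vu, which bounds the probability of a ball of radius $d$ by $C_rk^{-r/2}$, and takes $\rho=d/2$.

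\textbf{Your route.} You use Esseen's inequality, which the paper itself proves as \eqref{cu:esseen} in Section~\ref{sec:cumulant}, and restrict to frequencies $|\omega|\le\pi/(2K)$. There $0\le\cos\langle a_j,\omega\rangle\le e^{-\langle a_j,\omega\rangle^2/2}$, so the product is at most $e^{-\kappa n|\omega|^2/2}$. This gives the bound with $\rho=2K/\pi$ and $C=C_r\rho^r(2\pi/\kappa)^{r/2}$, valid for every $n\ge1$. The same mechanism appears, rescaled, in Lemma~\ref{cu:charfun}(a).

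\textbf{Comparison.} Your argument is self-contained and elementary. It uses only the spreading hypothesis and the bound $|a_j|\le K$, so it shows that the assumption $m\le C_0n$ is unnecessary; in the paper that assumption is used only to count the indices for Tao--Vu. The citation buys brevity, and a theorem whose hypothesis is the combinatorial condition ``many vectors far from every hyperplane'' rather than a quadratic-form bound. For the statement as given, your route is simpler.

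\textbf{A wording slip.} You write ``choose $\rho$ so small that $K/\rho\le\pi/2$'', but that condition is the \emph{lower} bound $\rho\ge2K/\pi$. You noticed this, and it is harmless:
\begin{itemize}
\item The lemma only asks for some $\rho>0$, and $\sup_z\Prob\{|S-z|\le\rho\}$ is nondecreasing in $\rho$.
\item The hypotheses force $\kappa n\le mK^2\le C_0K^2n$, hence $d/2\le K/(2\sqrt2)<2K/\pi$. So your bound also implies the paper's at its smaller radius.
\item Every downstream use works with any fixed positive radius: Lemma~\ref{sg:occupancy}, Lemma~\ref{sg:frame}(c), and Lemma~\ref{sv:coarse-signs}.
\end{itemize}
Your covering argument for \eqref{sg:scalar} is the same as the paper's.
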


\begin{proof}
Put $d=\sqrt{\kappa/(2C_0)}$ and let $u$ be a unit vector. The indices with
$|\langle a_j,u\rangle|<d$ contribute at most $md^2\le\kappa n/2$ to
$\sum_j\langle a_j,u\rangle^2\ge\kappa n$, and every other index contributes
at most $K^2$. So at least $k=\lceil\kappa n/(2K^2)\rceil$ indices satisfy
$|\langle a_j,u\rangle|\ge d$. In other words, every hyperplane through the
origin lies at distance at least $1$ from at least $k$ of the vectors
$a_j/d$. For such vectors Tao and Vu \cite[Theorem~1.4]{TaoVu2012} show that the sum
$\sum_j\sigma_ja_j/d$ lies in a given ball of radius $1$ with probability at
most $C_rk^{-r/2}$, where $C_r$ depends only on $r$. Take $\rho=d/2$. The
closed ball of radius $\rho$ about $z$ lies in the open ball of radius $d$,
so the probability in question is at most
$C_rk^{-r/2}\le C_r(2K^2/\kappa)^{r/2}n^{-r/2}$. For \eqref{sg:scalar}, cover
the interval of radius $s$ about $u$ by at most $s/\rho+2$ intervals of
radius $\rho$.
\end{proof}

\subsection{Zeros of the imaginary coordinate}\label{sg:occupancy-section}
Outside an event of probability $O(n^{-3})$, the imaginary coordinate has
only $O(\log n)$ zeros near any point, so its zeros are well spread. In the
scaled variable $x=nt$ put
\[
 Y_n(x)=n^{-1/2}V_n(x/n)=n^{-1/2}\sum_{j=0}^n\epsilon_j\sin(\omega_jx),
 \qquad\omega_j=\frac{j-n/2}n ,
\]
the sine field $S_\epsilon$ of Lemma~\ref{pre:sampling}, which is the field
$Y_n$ of Section~\ref{sec:gaussian} with sign coefficients. It is entire and
real on $\R$, and $|\omega_j|\le\frac12$ gives
\begin{equation}\label{sg:growth}
 |Y_n(z)|\le(n+1)n^{-1/2}e^{|\Im z|/2}\le2\sqrt n\,e^{|\Im z|/2}.
\end{equation}
We fix $L_0=32\pi$ and $L_1=L_0+1$, and we put
$\bar\ell_n=2\ell_n/\log2=2\log_2(n+2)$, which will bound the number of
zeros of $Y_n$ near any point. The \emph{bulk cells} are the intervals
$[k,k+1]$, $k\in\Z$, with $L_0\le k+\frac12\le\pi n-L_0$. Correlations of
$Y_n$ are governed by the kernel
\[
 R_n(u)=\frac1n\sum_{j=0}^n\cos(\omega_ju)=\frac{\sin((n+1)u/2n)}{n\sin(u/2n)} .
\]
It satisfies $R_n(0)=1+\frac1n$, $R_n(2\pi k)=(-1)^k/n$ for
$0<|k|<n$, and $R_n(2\pi n+u)=(-1)^nR_n(u)$, because
$2\pi n\omega_j=2\pi j-\pi n$. Since $|\sin\theta|\ge\frac2\pi\min(\theta,\pi-\theta)$
on $[0,\pi]$, it is small in the bulk:
\begin{equation}\label{sg:kernel-bulk}
 |R_n(u)|\le\frac\pi{L_0}=\frac1{32}\qquad(L_0\le u\le2\pi n-L_0).
\end{equation}

\begin{lemma}[Occupancy]\label{sg:occupancy}
Let $\mathcal O_n$ be the event that $Y_n\not\equiv0$ and that each closed
disk of radius $1$ about the midpoint of a bulk cell, and each of the
closed disks of radius $L_1$ about $0$ and about $\pi n$, contains at most
$\bar\ell_n$ zeros of $Y_n$, counted with multiplicity. There are absolute
constants $C,n_0$ such that $\Prob(\mathcal O_n^c)\le Cn^{-3}$ for
$n\ge n_0$. On $\mathcal O_n$, the real zeros in each bulk cell have total
order at most $\bar\ell_n$, and the real zeros in
$[0,L_1]\cup[\pi n-L_1,\pi n]$ have total order $N_{\rm end}\le2\bar\ell_n$.
\end{lemma}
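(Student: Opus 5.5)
The plan is to apply the local Jensen bound, Lemma~\ref{pre:local-jensen}, to the entire function $Y_n$ on each of the at most $\pi n+1$ disks in the definition of $\mathcal O_n$, and then take a union bound. The choice of radii is what makes the threshold $\bar\ell_n$ work. A zero count in a disk of radius $\varrho$ about $w$ is bounded by
\[
 \frac{\log\max_{|z-w|=R}|Y_n|-\log|Y_n(w)|}{\log(R/\varrho)},
\]
so with $R=4\varrho$ the denominator is $2\log2$. For a bulk midpoint $w$ take $\varrho=1$, $R=4$. The growth bound \eqref{sg:growth} gives $\log\max|Y_n|\le\log(2\sqrt n)+2\le\frac12\log n+3$. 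The count therefore exceeds $\bar\ell_n=2\log_2(n+2)$ only if $\log|Y_n(w)|<\frac12\log n+3-4\log(n+2)$, that is, only if $|Y_n(w)|\le e^3n^{-7/2}$. I need this small-ball event to have probability $O(n^{-4})$, so that the union over $O(n)$ centers gives $O(n^{-3})$.

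The main step is this small-ball estimate at a single point. First I would rule out $Y_n(w)$ being essentially degenerate. By \eqref{sg:kernel-bulk}, the variance $\frac12\{R_n(0)-R_n(2w)\}$ is at least $\frac12(1-\frac1{32})$, since $2w\in[2L_0,2\pi n-2L_0]$. A single-point anticoncentration bound of order $n^{-1/2}$ from Lemma~\ref{sg:smallball} (for example \eqref{sg:scalar} with $a_j=\sin(\omega_jw)$) is far too weak, though. So I would not use the value at the center. Instead I would use Jensen's formula with center at a well-chosen nearby point. Since $\log|Y_n|$ is subharmonic, the average of $\log|Y_n|$ over a small segment or disk around $w$ bounds $\log|Y_n|$ at some point from below.

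Concretely, take a center $w'$ in $[w-\frac14,w+\frac14]$ maximizing $|Y_n|$, and shrink the radii slightly: a disk of radius $\frac54$ about $w'$ contains the unit disk about $w$, and we use $R=5$. It is then enough that $\max_{|x-w|\le1/4}|Y_n(x)|\ge n^{-3}$ outside probability $O(n^{-4})$. I would obtain this from the energy: $\int_{w-1/4}^{w+1/4}Y_n^2$ is a quadratic form in the signs whose mean is bounded below by the kernel bound \eqref{sg:kernel-bulk}. If that proves awkward, I would use a polynomial-in-$x$ argument instead. Nazarov--Nishry--Sodin style logarithmic integrability (Theorem~\ref{rc:nns}) applied to $t\mapsto V_n(t)$ gives $\E\int|\log|Y_n||^p$ bounds. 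Markov's inequality with $p\asymp\log n$ over the short interval then shows that $\log|Y_n|\ge -C\log n$ somewhere in it, with failure probability $n^{-4}$.

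Matching the constants so that the count is at most exactly $\bar\ell_n$, rather than $C\log n$, is the delicate point. I expect it to require tuning the radii and using $R/\varrho$ large (say $R=16$, since \eqref{sg:growth} only costs $e^{R/2}$, a constant). The end disks of radius $L_1$ about $0$ and $\pi n$ are treated the same way, with radius $4L_1$ and a center chosen where the variance is of order one. Near $0$ we have $Y_n(x)\approx xY_n'(0)$, so I take a center at distance about $L_1$ inside the bulk. The symmetry $R_n(2\pi n+u)=(-1)^nR_n(u)$ reduces $\pi n$ to $0$. Finally, $Y_n\not\equiv0$ follows from any bulk point with $Y_n\ne0$. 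Real zeros in a bulk cell lie in the unit disk about its midpoint, and the end intervals lie in the two $L_1$-disks, which gives $N_{\rm end}\le2\bar\ell_n$.
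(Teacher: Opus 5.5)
There is a genuine gap: the per-disk small-ball estimate, which is the whole content of the lemma, is not proved. The rest of your plan is sound. Local Jensen on each of the $O(n)$ disks, a per-disk failure probability $O(n^{-4})$ and a union bound is the right structure. Your Jensen arithmetic is correct, and you are right that a one-point anticoncentration bound of order $n^{-1/2}$ is useless. The constants are not delicate either.

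Neither of your routes to the small-ball estimate works.

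\emph{Energy route.} A lower bound on $\E\int_{w-1/4}^{w+1/4}Y_n^2$ says nothing about the event $\{\int Y_n^2\le n^{-6}\}$ at probability level $n^{-4}$. The quadratic form $\epsilon^{\mathrm T}A\epsilon$ does not concentrate. On an interval of length $\frac12$ the kernel $\frac12\bigl(R_n(x-y)-R_n(x+y)\bigr)$ is nearly of rank one, so a few directions dominate. Bounding its lower tail is itself a small-ball problem in several dimensions.

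\emph{NNS route.} This fails quantitatively. Theorem~\ref{rc:nns} is stated for $\sum_k\epsilon_kb_ke^{ik\theta}$ with independent signs, and $V_n$ is not of that form, since its coefficients $\epsilon_{n/2+\lambda}-\epsilon_{n/2-\lambda}$ come in pairs. But grant the moment bound anyway. Your interval is an arc of $\mu$-measure about $1/(4\pi n)$. If $|\log|Y_n||\ge K\log n$ on it, Markov gives only $4\pi n\,(Cp)^{6p}(K\log n)^{-p}$. For every choice of $p$ this is $O(n^{-4})$ only when $K\gtrsim(\log n)^5$. That produces zero counts of order $(\log n)^6$, not $\bar\ell_n$.

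The missing idea is to gain the exponent through dimension rather than through scale. Lemma~\ref{sg:smallball} gives probability $Cn^{-r/2}$ that $\sum_j\epsilon_ja_j\in\R^r$ lies in a ball of constant radius $\rho$. Here that means $r$ values of $Y_n$ all below $\rho n^{-1/2}$, provided the $r$ evaluations have Gram matrix $\gtrsim n$. Jensen forces far more smallness than $n^{-1/2}$, so this suffices.

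The paper takes $r=8$ and the points $x+2\pi i$, $1\le i\le 8$. By \eqref{sg:kernel-bulk} and $R_n(2\pi k)=(-1)^k/n$, the Gram matrix $\frac12\bigl(R_n(d_i-d_l)-R_n(2x+d_i+d_l)\bigr)$ is within $\frac1{32}$ of $\frac12I$ entrywise. This holds at bulk midpoints and also at $x\in\{0,\pi n\}$, so the end disks need no shifted center. Every disk lies within $R_*=L_1+16\pi$ of each $x+d_i$. So more than $\bar\ell_n$ zeros forces, via Lemma~\ref{pre:local-jensen} with radii $R_*,2R_*$ and \eqref{sg:growth},
\[
\sqrt n\,|Y_n(x+d_i)|<2e^{R_*}n(n+2)^{-2}\le\rho\qquad\text{for all } i .
\]
That event has probability $Cn^{-4}$, and the union bound over the disks gives $Cn^{-3}$. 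Eight points at a fixed spacing inside your interval would also work once you check that their Gram matrix is bounded below, but then the maximizing center is unnecessary.
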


\begin{proof}
The small-ball bound of Lemma~\ref{sg:smallball} in dimension $r$ costs
$n^{-r/2}$, and the union bound below runs over about $\pi n$ disks, so
eight evaluations of $Y_n$ give the failure probability $n^{-3}$. Put
$d_i=2\pi i$ for $1\le i\le8$ and $R_*=L_1+16\pi$.

\emph{Eight evaluations.} For real $x$ let $u_j(x)\in\R^8$ have coordinates
$\sin(\omega_j(x+d_i))$, $1\le i\le8$. Then
$\sum_j\epsilon_ju_j(x)=(\sqrt nY_n(x+d_i))_{i\le8}$ and $|u_j(x)|\le\sqrt8$.
By the product formula for sines, the Gram matrix is
\[
 G_{il}(x)=\frac1n\sum_{j=0}^n u_j(x)_iu_j(x)_l
 =\frac12\bigl(R_n(d_i-d_l)-R_n(2x+d_i+d_l)\bigr).
\]
If $L_0\le x\le\pi n-L_0$, then $L_0\le2x+d_i+d_l\le2\pi n-L_0$. If $x=0$ or
$x=\pi n$, then $R_n(2x+d_i+d_l)=\pm R_n(2\pi(i+l))=\pm1/n$. In
either case, for $n\ge32$, $|G_{il}(x)-\frac12\delta_{il}|\le\frac1{32}$, so
\[
 \Bigl|u^{\rm T}\bigl(G(x)-\tfrac12I\bigr)u\Bigr|\le\frac1{32}\Bigl(\sum_i|u_i|\Bigr)^2\le\frac14|u|^2,
 \qquad\text{that is,}\qquad
 \sum_j\langle u_j(x),u\rangle^2\ge\frac n4|u|^2 .
\]
Lemma~\ref{sg:smallball} with $r=8$, $C_0=2$, $K=\sqrt8$, $\kappa=\frac14$
gives absolute constants $\rho,C$ with
\begin{equation}\label{sg:eight}
 \Prob\Bigl\{\Bigl|\sum_j\epsilon_ju_j(x)\Bigr|\le\rho\Bigr\}\le Cn^{-4}
 \qquad(x\in\{0,\pi n\}\cup[L_0,\pi n-L_0]).
\end{equation}

\emph{Jensen.} Let $x$ be a bulk midpoint, or $0$, or $\pi n$, and let
$\Delta$ be the corresponding disk of radius $1$ or $L_1$. Then $\Delta$
lies in the disk of radius $R_*$ about each point $x+d_i$, and by
\eqref{sg:growth} $|Y_n|\le M=2\sqrt ne^{R_*}$ on the disk of radius $2R_*$
about $x+d_i$. If $Y_n(x+d_i)\neq0$, the local zero count of
Lemma~\ref{pre:local-jensen}, with radii $R_*$ and $2R_*$, gives
\[
 \#\{\text{zeros in }\Delta\}\le\frac{\log M-\log|Y_n(x+d_i)|}{\log2}.
\]
Hence, if $\Delta$ contains more than $\bar\ell_n$ zeros, or $Y_n\equiv0$, then for
every $i$
\[
 \sqrt n\,|Y_n(x+d_i)|<\sqrt nM2^{-\bar\ell_n}=2e^{R_*}n(n+2)^{-2},
\]
and the vector in \eqref{sg:eight} has norm at most $\rho$ once $n\ge n_0$.
There are at most $\pi n+3$ disks, so $\Prob(\mathcal O_n^c)\le C(\pi n+3)n^{-4}$.
Finally, a bulk cell lies in the disk of radius $1$ about its midpoint, and
$[0,L_1]$ and $[\pi n-L_1,\pi n]$ lie in the endpoint disks.
\end{proof}

\subsection{The real coordinate on activity fibers}\label{sg:frame-section}
Given the activity data, the real coordinate is a linear form in
independent signs. Its normalized values $\Theta_i$ at the zeros of $Y_n$
decide the signs in the crossing sum, and we call them rows. Outside an
event of probability $O(n^{-3})$ the rows are well conditioned. Their variances are of order
one, they satisfy an upper frame bound, and each is rarely small.

Fix $n$ and realize the signs as in Lemma~\ref{sg:activity}. Let $\mathcal A$
be the $\sigma$-field generated by $(\chi,\tau)$. Let $\mathfrak F$ consist of
the active pairs $\{j:\chi_j=1\}$, together with the middle index $n/2$ when $n$
is even, and let $s=(s_f)_{f\in\mathfrak F}$ be the \emph{free signs},
$s_j=\sigma_j$ and $s_{n/2}=\varsigma$. Given $\mathcal A$, the free signs
are uniform on $\{-1,1\}^{\mathfrak F}$ and $Y_n$ is fixed. We write
$\Prob_{\mathcal A}$ and $\E_{\mathcal A}$ for averages over the free signs
with $\mathcal A$ fixed. By Lemma~\ref{sg:activity}, the scaled real
coordinate is a linear form in the free signs,
\[
 n^{-1/2}U_n(x/n)=\sum_{f\in\mathfrak F}a_f(x)s_f,\qquad
 a_j(x)=\frac{2\cos(\lambda_jx/n)}{\sqrt n},\quad a_{n/2}(x)=\frac1{\sqrt n},
\]
with conditional variance
\[
 v(x)=\sum_fa_f(x)^2=\frac1n\Bigl(\ind_{n\ \rm even}+4\sum_j\chi_j\cos^2(\lambda_jx/n)\Bigr).
\]

\begin{lemma}[Rows and frame]\label{sg:frame}
There are absolute constants $C,n_0$ and, for $n\ge n_0$, events
$\mathcal G_n\in\mathcal A$ with $\mathcal G_n\subset\mathcal O_n$ and
$\Prob(\mathcal G_n^c)\le Cn^{-3}$, such that on $\mathcal G_n$:
\begin{enumerate}[label=\textup{(\alph*)}]
 \item $\frac14\le v(x)\le5$ for $L_0\le x\le\pi n-L_0$;
 \item let $x_1,\dots,x_N$ be the zeros of $Y_n$ in $(L_1,\pi n-L_1)$, each
 listed as often as its order, and let
 $w_i=a(x_i)/\sqrt{v(x_i)}\in\R^{\mathfrak F}$. Then $N\le n$, $|w_i|=1$, and
 \begin{equation}\label{sg:frame-bound}
  \sum_{i=1}^N\langle w_i,y\rangle^2\le64\pi\bar\ell_n|y|^2\qquad(y\in\R^{\mathfrak F});
 \end{equation}
 \item the row $\Theta_i=\langle w_i,s\rangle$ has the sign of $U_n(x_i/n)$, and
 \begin{equation}\label{sg:row-ball}
  \Prob_{\mathcal A}\{|\Theta_i-u|\le a\}\le C\bigl(a+n^{-1/2}\bigr)
  \qquad(u\in\R,\ a\ge0,\ i\le N).
 \end{equation}
\end{enumerate}
\end{lemma}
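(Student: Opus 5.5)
We define $\mathcal G_n$ as the intersection of $\mathcal O_n$ with an event controlling the activity variables. Then we check (a), (b), (c) in turn on this event.

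\emph{The activity event and (a).} We have
\[
 v(x)=\frac1n\Bigl(\ind_{n\ \rm even}+2\sum_j\chi_j+2\sum_j\chi_j\cos(2\lambda_jx/n)\Bigr).
\]
The term $n^{-1}\sum_j\chi_j$ lies near $\frac14$ by Hoeffding, with exponentially small failure. So (a) reduces to bounding
\[
 \sup_{x\in[L_0,\pi n-L_0]}\Bigl|n^{-1}\sum_j\chi_j\cos(2\lambda_jx/n)\Bigr|\le\frac1{16}.
\]
We split $\chi_j=\frac12+(\chi_j-\frac12)$. The deterministic part is a Dirichlet-type kernel, bounded by $\pi/(2L_0)=1/64$ in the bulk, as in \eqref{sg:kernel-bulk}. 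The random part is a trigonometric polynomial of degree at most $n$ in $t=x/n$. Hoeffding on a grid of mesh $n^{-3}$, together with a Bernstein derivative bound to interpolate between grid points, makes its supremum $O(\sqrt{\log n/n})$ outside probability $n^{-4}$. This yields $\frac14\le v\le5$; the upper bound is trivial, since $v\le(1+4p_n)/n\le3$. All these conditions depend only on $\chi$, and $\mathcal O_n$ depends only on $Y_n$, which is a function of $(\chi,\tau)$. Hence $\mathcal G_n\in\mathcal A$.

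\emph{Frame bound (b).} Since $V_n\not\equiv0$, Lemma~\ref{pre:carrier}(b) gives $N\le n$, and $|w_i|=1$ holds by definition. For $y\in\R^{\mathfrak F}$,
\[
 \langle a(x),y\rangle=n^{-1/2}\Bigl(y_{n/2}+2\sum_j y_j\cos(\lambda_jx/n)\Bigr).
\]
This is a cosine field $C_b$ with coefficients $b_{n/2\pm\lambda_j}=y_j$, so $|b|^2\le2|y|^2$. On $\mathcal O_n$, each unit cell $[k,k+1)$ meeting $(L_1,\pi n-L_1)$ lies in a bulk cell and contains at most $\bar\ell_n$ of the $x_i$. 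Lemma~\ref{pre:sampling}(iii) then gives
\[
 \sum_i\langle a(x_i),y\rangle^2\le8\pi\bar\ell_n\cdot2|y|^2.
\]
Dividing by $v(x_i)\ge\frac14$ yields the constant $64\pi\bar\ell_n$.

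\emph{Small balls (c).} The sign claim holds because $\Theta_i$ equals $n^{-1/2}U_n(x_i/n)/\sqrt{v(x_i)}$ with a positive normalization. For \eqref{sg:row-ball} we apply \eqref{sg:scalar} with $r=1$ to the weights $w_{i,f}=a_f(x_i)/\sqrt{v(x_i)}$. Multiplying by $\sqrt n$, the weights $\sqrt n\,w_{i,f}$ are bounded by $K=4$ and have squared sum $n$, so $\kappa=1$ and $m\le n$. Rescaling $s=\sqrt n\,a$ then gives
\[
 \Prob_{\mathcal A}\{|\Theta_i-u|\le a\}\le C\bigl(\sqrt n\,a/\rho+2\bigr)n^{-1/2}=C\bigl(a+n^{-1/2}\bigr).
\]

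\emph{Main obstacle.} The only delicate step is the uniform-in-$x$ control of the random cosine sum in (a). A plain union bound over a fine grid, combined with Bernstein's inequality $\|P'\|_\infty\le n\|P\|_\infty$ in the variable $t$, handles it with failure probability $O(n^{-3})$.
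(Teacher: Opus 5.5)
Your proof is correct and follows the paper's argument: Hoeffding with a grid and a derivative bound for (a), the cosine-field identification with Lemma~\ref{pre:sampling}(iii) and $v\ge\frac14$ for (b), and \eqref{sg:scalar} for (c). The differences are cosmetic. The paper bounds the whole deviation $v-\E v$ at the fixed level $\frac1{16}$ on a coarse grid of mesh $\frac1{32}$, with exponentially small failure, whereas you split off the constant term and control the oscillatory part at scale $\sqrt{\log n/n}$ on a much finer grid; and you normalize by $\sqrt{v(x_i)}$ before the small-ball bound, which gives $\kappa=1$, $K=4$ instead of the paper's $\kappa=\frac14$, $K=2$.
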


\begin{proof}
(a) Averaging over the activities, each pair contributes $2\cos^2$ and the
middle index contributes $\cos^20$, so
\[
 \mu_n(x)=\E v(x)=\frac1n\sum_{j=0}^n\cos^2(\omega_jx)=\frac{n+1}{2n}+\frac12R_n(2x),
\]
which is at least $\frac{31}{64}$ on $[L_0,\pi n-L_0]$ by
\eqref{sg:kernel-bulk}. The deviation
$v(x)-\mu_n(x)=\frac2n\sum_j(2\chi_j-1)\cos^2(\lambda_jx/n)$ is a sum of at most
$p_n\le n$ independent symmetric terms bounded by $2/n$, so by Hoeffding's
inequality
\[
 \Prob\Bigl\{|v(x)-\mu_n(x)|\ge\frac1{16}\Bigr\}\le2e^{-n/2048}
\]
at each fixed $x$. The derivative of the deviation is bounded by
$p_n/n\le1$. On a grid of mesh $\frac1{32}$ in $[L_0,\pi n-L_0]$, with at
most $32\pi n+1$ points, all deviations are below $\frac1{16}$ except with
probability $Cne^{-n/2048}$, and then $v\ge\frac{31}{64}-\frac3{32}>\frac14$
throughout. Always $v\le(1+4p_n)/n\le5$. Let $\mathcal G_n$ be the
intersection of this event with $\mathcal O_n$. Both depend only on
$(\chi,\tau)$.

(b) By Lemma~\ref{pre:carrier}(b), $V_n$ has at most $n$ zeros, with
multiplicity, in every half-open interval of length $2\pi$, so $N\le n$. For $y\in\R^{\mathfrak F}$, the function
$\sum_fa_f(x)y_f$ is the cosine field $C_\beta(x)$ of
Lemma~\ref{pre:sampling}, where $\beta_j=\beta_{n-j}=y_j$ for $\chi_j=1$,
$\beta_{n/2}=y_{n/2}$ for even $n$, and $\beta$ vanishes elsewhere, so that
$|\beta|^2\le2|y|^2$. If $x_i\in[k,k+1)$, then
$k+\frac12\in(x_i-\frac12,x_i+\frac12]\subset[L_0,\pi n-L_0]$, so $[k,k+1]$
is a bulk cell and contains at most $\bar\ell_n$ of the points $x_i$ by
Lemma~\ref{sg:occupancy}. The sampling bound \eqref{pre:sampling-bound}
gives $\sum_iC_\beta(x_i)^2\le8\pi\bar\ell_n|\beta|^2\le16\pi\bar\ell_n|y|^2$. Finally
$\langle w_i,y\rangle=C_\beta(x_i)/\sqrt{v(x_i)}$ and $v(x_i)\ge\frac14$ by (a), so
$\sum_i\langle w_i,y\rangle^2\le4\sum_iC_\beta(x_i)^2$.

(c) The sign statement holds because $v(x_i)>0$. Put
$\bar a_f=\sqrt n\,a_f(x_i)$. Then $|\bar a_f|\le2$,
$\sum_f\bar a_f^2=nv(x_i)\ge n/4$ and $|\mathfrak F|\le n+1\le2n$, and
$|\Theta_i-u|\le a$ means
$|\sum_f\bar a_fs_f-u\sqrt{nv(x_i)}|\le a\sqrt{nv(x_i)}\le a\sqrt{5n}$. Apply
\eqref{sg:scalar} with $C_0=2$, $K=2$, $\kappa=\frac14$.
\end{proof}

On $\mathcal G_n$ the zeros $x_i$, their number $N$ and the unit vectors
$w_i$ are fixed given $\mathcal A$. For a zero $x$ of $Y_n$ let
$\iota(x)=\iota_{V_n}(x/n)\in\{-1,0,1\}$ be its crossing index, defined
before Lemma~\ref{pre:half-turns}. Give the weight $o_i=\iota(x_i)$ to one
copy of each distinct $x_i$ and $o_i=0$ to the other copies, and put
\[
 Q=\sum_{i=1}^No_i\sgn\Theta_i .
\]
By Lemma~\ref{sg:frame}(c), $Q$ is the crossing sum of
Proposition~\ref{pre:crossing} restricted to the zeros of $V_n$ in
$(L_1/n,\pi-L_1/n)$. Replacing $D_n$ by $Q$ costs the boundary roots and
the zeros of $V_n$ near $0$ and $\pi$, and the next lemma bounds both.

\begin{lemma}[Crossings and boundary charge]\label{sg:charging}
For $n\ge n_0$, on $\mathcal G_n$ and for every value of the free signs,
\begin{equation}\label{sg:charging-bound}
 |D_n-Q|\le1+B_n+N_{\rm end},\qquad
 B_n\le2\#\{i:\Theta_i=0\}+2N_{\rm end},
\end{equation}
where $N_{\rm end}\le2\bar\ell_n$ is the total order of the zeros of $Y_n$ in
$[0,L_1]\cup[\pi n-L_1,\pi n]$.
\end{lemma}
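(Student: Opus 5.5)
The first bound will come from the bulk crossing inequality \eqref{pre:bulk-crossing} of Proposition~\ref{pre:crossing}, and the second from Lemma~\ref{pre:carrier}(c) together with real symmetry of the roots.

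For the first bound, on $\mathcal G_n\subset\mathcal O_n$ we have $Y_n\not\equiv0$, so $V_n\not\equiv0$. Apply \eqref{pre:bulk-crossing} to $f=f_n$ with $A=(L_1/n,\pi-L_1/n)$. The bulk sum $\sum_{t\in A,\,V_n(t)=0}\sgn U_n(t)\iota_{V_n}(t)$ is exactly $Q$. Indeed, the zeros of $V_n$ in $A$ are the points $x_i/n$, and each distinct zero carries its crossing index once through the weights $o_i$. Moreover $\sgn\Theta_i=\sgn U_n(x_i/n)$ by Lemma~\ref{sg:frame}(c), and this includes the case $U_n=0$, where both sides vanish. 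The omitted zeros in $(0,\pi)\setminus A$ correspond to zeros of $Y_n$ in $(0,L_1]\cup[\pi n-L_1,\pi n)$. Their number, counted without multiplicity, is at most $N_{\rm end}$. This gives $|D_n-Q|\le1+B_n+N_{\rm end}$, and Lemma~\ref{sg:occupancy} gives $N_{\rm end}\le2\bar\ell_n$.

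For the second bound, take a root $e^{i\tau}$ of $f_n$ on the unit circle with multiplicity $m$. By Lemma~\ref{pre:carrier}(c), $\tau$ is a zero of order at least $m$ of both $U_n$ and $V_n$. Since the coefficients are real, roots come in conjugate pairs $e^{\pm i\tau}$, so it suffices to account for $\tau\in[0,\pi]$ and double the count. We then split according to the position of $n\tau$.
\begin{itemize}
\item If $n\tau\in(L_1,\pi n-L_1)$, then $x=n\tau$ appears at least $m$ times in the list $x_1,\dots,x_N$, and each of these copies has $\Theta_i=0$ because $U_n(\tau)=0$.
\item Otherwise $n\tau$ is a zero of $Y_n$ in $[0,L_1]\cup[\pi n-L_1,\pi n]$ of order at least $m$, so it contributes at least $m$ to $N_{\rm end}$.
\end{itemize}
Roots at $\tau=0$ or $\tau=\pi$ are real and are not paired, so doubling only overcounts them. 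Summing over the roots gives $B_n\le2\#\{i:\Theta_i=0\}+2N_{\rm end}$.

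The main point needing care is the bookkeeping of multiplicities between the list $x_i$ and the crossing weights $o_i$. Nothing else is substantive.
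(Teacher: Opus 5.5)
Your proof is correct and follows the paper's argument essentially step for step. You apply the bulk crossing bound \eqref{pre:bulk-crossing} with $A=(L_1/n,\pi-L_1/n)$ for the first inequality. For the second you use Lemma~\ref{pre:carrier}(c), split according to the position of $n\tau$, and use the conjugate-pair count $B_n=m(0)+m(\pi)+2\sum_{0<\tau<\pi}m(\tau)$.
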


\begin{proof}
On $\mathcal G_n$, $V_n\not\equiv0$. The bulk form \eqref{pre:bulk-crossing}
of the crossing bound, applied to the set $(L_1/n,\pi-L_1/n)$, bounds
$|D_n-Q|$ by $1+B_n$ plus the number of zeros of $V_n$ in
$(0,\pi)$ outside this set. Each such zero corresponds to a zero of $Y_n$ in
$(0,L_1]\cup[\pi n-L_1,\pi n)$, so their number is at most $N_{\rm end}$.
This is the first bound.

For the second, let $e^{i\theta}$, $0\le\theta\le\pi$, be a boundary root of
multiplicity $m(\theta)$. By Lemma~\ref{pre:carrier}(c), $\theta$ is a zero of
$U_n$ and of $V_n$ of order at least $m(\theta)$. If
$n\theta\in(L_1,\pi n-L_1)$, then $n\theta$ occurs at least $m(\theta)$ times
among the $x_i$, each time with $\Theta_i=0$. Otherwise $m(\theta)$ is at
most the contribution of $n\theta$ to $N_{\rm end}$. Conjugate roots have
equal multiplicities, so $B_n=m(0)+m(\pi)+2\sum_{0<\theta<\pi}m(\theta)$,
which gives the second bound.
\end{proof}

\subsection{Concentration}\label{sg:rate-section}
The signed sum $Q$ is not a Lipschitz function of the free signs, because
$\sgn$ jumps at $0$. We smooth it at a width $\delta$ and pay for the rows
within $\delta$ of $0$. For $\delta>0$ and $R(u)=\max(u,0)$ put
\[
 \sgn_\delta(u)=\frac{R(u+\delta)-R(u-\delta)}\delta-1,\quad
 k_\delta(u)=\frac{R(u+2\delta)-R(u+\delta)-R(u-\delta)+R(u-2\delta)}\delta .
\]
Then $\sgn_\delta$ is odd, $|\sgn_\delta|\le1$, and $\sgn_\delta(u)=\sgn u$ for
$|u|\ge\delta$. Also $0\le k_\delta\le1$, $k_\delta=1$ on $[-\delta,\delta]$,
$k_\delta=0$ off $(-2\delta,2\delta)$, and $|\sgn u-\sgn_\delta(u)|\le k_\delta(u)$.

\begin{lemma}[Convex concentration]\label{sg:convex}
Let $s$ be uniform on $\{-1,1\}^m$, and let $\Pi:\R^m\to\R$ be convex and
$L$-Lipschitz. Then, for $t\ge0$,
\[
 \Prob\{|\Pi(s)-\E\Pi(s)|\ge t\}\le4e^{4\pi}e^{-t^2/(64L^2)},\qquad
 \Var\Pi(s)\le256e^{4\pi}L^2 .
\]
\end{lemma}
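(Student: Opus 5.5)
This is Talagrand's convex concentration inequality \cite{Talagrand}, specialized to the cube $\{-1,1\}^m$ and stated with explicit constants. The plan is to deduce the tail bound from Talagrand's inequality about a median, replace the median by the mean at a controlled cost, and then integrate the tail to get the variance.

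\emph{Step 1 (median concentration).} Talagrand's inequality for product probability spaces says that for any set $A\subset\{-1,1\}^m$,
\[
 \Prob(A)\,\E e^{d_c(s,A)^2/4}\le1 ,
\]
where $d_c$ is the convex distance. For $A=\{\Pi\le a\}$ with $\Pi$ convex and $L$-Lipschitz, the standard argument applies: $d_c(s,A)$ dominates the Euclidean distance from $s$ to the convex hull of $A$, divided by the cube diameter $2$. Since $\Pi$ is convex, $\Pi\le a$ on that hull, so $\Pi(s)\le a+2L\,d_c(s,A)$. This gives
\[
 \Prob\{\Pi\le a\}\,\Prob\{\Pi\ge a+t\}\le e^{-t^2/(16L^2)} .
\]
Taking $a=M$ a median, and separately $a=M-t$, yields $\Prob\{|\Pi-M|\ge t\}\le4e^{-t^2/(16L^2)}$.

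\emph{Step 2 (mean versus median).} Integrating this tail bounds $|\E\Pi-M|\le\E|\Pi-M|\le 4\int_0^\infty e^{-t^2/(16L^2)}\dd t=8\sqrt\pi\,L$.
\begin{itemize}
 \item For $t\ge16\sqrt\pi L$ we have $|\Pi-\E\Pi|\ge t\Rightarrow|\Pi-M|\ge t/2$. Hence the probability is at most $4e^{-t^2/(64L^2)}$.
 \item For $t<16\sqrt\pi L$ we have $e^{4\pi}e^{-t^2/(64L^2)}>1$, so the stated bound is trivial.
\end{itemize}
This is exactly where the factor $e^{4\pi}$ comes from, since $(16\sqrt\pi)^2/64=4\pi$.

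\emph{Step 3 (variance).} Write $\Var\Pi=\int_0^\infty2t\,\Prob\{|\Pi-\E\Pi|\ge t\}\dd t$ and insert the tail bound:
\[
 \Var\Pi\le4e^{4\pi}\int_0^\infty2t\,e^{-t^2/(64L^2)}\dd t=4e^{4\pi}\cdot64L^2=256e^{4\pi}L^2 .
\]

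\emph{Main obstacle.} The only delicate point is matching the constants in Talagrand's inequality in its cube normalization. If the version cited gives $16L^2$ in the exponent with a different prefactor, I would rerun Steps 2 and 3 with those constants, adjusting the threshold in Step 2 so that the trivial range still produces the $e^{4\pi}$ factor.
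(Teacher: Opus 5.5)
Your proposal is correct and follows the paper's proof. The steps match: median concentration $4e^{-t^2/(16L^2)}$ from Talagrand's inequality, the mean--median shift $|\E\Pi-M|\le8\sqrt\pi L$ producing the factor $e^{4\pi}$ via the threshold $t=16\sqrt\pi L$, and integration of $2t$ against the tail for the variance. The only difference is that you rederive the median bound from the convex-distance inequality, using $\mathrm{dist}(s,\mathrm{conv}A)\le2d_c(s,A)$, instead of citing Talagrand's Theorem~6.6 directly, and your constants check out.
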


\begin{proof}
Talagrand's convex concentration inequality
\cite[Theorem~6.6 and (6.18)]{Talagrand}, for independent coordinates in
$[-1,1]$, gives $\Prob\{|\Pi-M|\ge t\}\le4e^{-t^2/(16L^2)}$ for a median $M$
of $\Pi(s)$. Integrating this tail, $|\E\Pi-M|\le8\sqrt\pi L$. So for
$t\ge16\sqrt\pi L$ the event in question lies in $\{|\Pi-M|\ge t/2\}$, while
for smaller $t$ the right side exceeds one. The variance bound follows by
integrating $2t$ against the tail.
\end{proof}

The core of the fixed-degree bound is a conditional estimate. Given the
activity data, $|D_n|$ is at most a deterministic term plus a sum of
convex Lipschitz functions of the free signs, and Lemma~\ref{sg:convex}
applies to each of them.

\begin{lemma}[Conditional master bound]\label{sg:master}
There are absolute constants $C_1,C_2,c_2,n_0$ such that for $n\ge n_0$, on
$\mathcal G_n$, for every $\delta>0$ and $t\ge0$,
\begin{align*}
 \Prob_{\mathcal A}\Bigl\{|D_n|>C_1\bigl(\bar\ell_n+n\delta+\sqrt n\bigr)+t\Bigr\}
 &\le C_2\exp\Bigl(-\frac{c_2t^2\delta^2}{n\bar\ell_n}\Bigr),\\
 \E_{\mathcal A}D_n^2&\le C_2\Bigl[\bigl(\bar\ell_n+n\delta+\sqrt n\bigr)^2+\frac{n\bar\ell_n}{\delta^2}\Bigr].
\end{align*}
\end{lemma}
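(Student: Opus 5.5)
On $\mathcal G_n$ I would start from Lemma~\ref{sg:charging}:
\[
|D_n|\le1+B_n+N_{\rm end}+|Q|,\qquad B_n\le2\#\{i:\Theta_i=0\}+2N_{\rm end},\qquad N_{\rm end}\le2\bar\ell_n .
\]
Since $o_i\in\{-1,0,1\}$ and $|\sgn u-\sgn_\delta(u)|\le k_\delta(u)$, we have
\[
|Q|\le|Q_\delta|+K_\delta,\qquad Q_\delta=\sum_io_i\sgn_\delta(\Theta_i),\qquad K_\delta=\sum_ik_\delta(\Theta_i).
\]
Also $\#\{i:\Theta_i=0\}\le K_\delta$, because $k_\delta=1$ at $0$. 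Hence
\[
|D_n|\le C\bar\ell_n+3K_\delta+|Q_\delta|.
\]
Everything is now a function of the free signs $s$ with $\mathcal A$ fixed, since $x_i,w_i,o_i$ are $\mathcal A$-measurable on $\mathcal G_n$.

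\emph{Means.} By \eqref{sg:row-ball} with $a=2\delta$, $\E_{\mathcal A}k_\delta(\Theta_i)\le C(\delta+n^{-1/2})$. Summing over $N\le n$ rows gives $\E_{\mathcal A}K_\delta\le C(n\delta+\sqrt n)$. For $Q_\delta$ I would use the symmetry $s\mapsto-s$, which preserves the uniform law and flips every $\Theta_i$. Since $\sgn_\delta$ is odd, $Q_\delta$ is odd in $s$, so $\E_{\mathcal A}Q_\delta=0$.

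\emph{Concentration.} The map $\sgn_\delta$ is not convex, but it is a difference of convex $\delta^{-1}$-Lipschitz functions, namely $\sgn_\delta(u)=\delta^{-1}R(u+\delta)-\delta^{-1}R(u-\delta)-1$. Likewise $k_\delta$ is the difference of the convex functions $\delta^{-1}(R(u+2\delta)+R(u-2\delta))$ and $\delta^{-1}(R(u+\delta)+R(u-\delta))$. So I would split $Q_\delta$ according to the sign of $o_i$, and each $R$-term into its own sum. This writes $Q_\delta$ and $K_\delta$ as bounded signed combinations of a fixed number of functions of the form
\[
\Pi(s)=\delta^{-1}\sum_{i\in S}R(\langle w_i,s\rangle+c),
\]
with $S$ a set of rows and $c\in\{\pm\delta,\pm2\delta\}$. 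Each such $\Pi$ is convex, as a nonnegative combination of convex functions of linear forms. Its Lipschitz constant follows from Cauchy--Schwarz and the frame bound \eqref{sg:frame-bound}:
\[
|\Pi(s)-\Pi(s')|\le\delta^{-1}\sum_i|\langle w_i,s-s'\rangle|\le\delta^{-1}\sqrt N\Bigl(\sum_i\langle w_i,s-s'\rangle^2\Bigr)^{1/2}\le\delta^{-1}\sqrt{64\pi n\bar\ell_n}\,|s-s'|.
\]
Thus $L\le C\sqrt{n\bar\ell_n}/\delta$. Lemma~\ref{sg:convex} then gives tails $Ce^{-ct^2\delta^2/(n\bar\ell_n)}$ and variances at most $Cn\bar\ell_n/\delta^2$ for each piece.

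\emph{Assembly.} A union bound over the finitely many pieces, each deviating by at most $t/C$, combined with the mean bounds yields the tail estimate, with $C_1(\bar\ell_n+n\delta+\sqrt n)$ absorbing the deterministic part and the means. For the second moment, squaring the pointwise bound gives
\[
\E_{\mathcal A}D_n^2\le C\Bigl[\bar\ell_n^2+(\E_{\mathcal A}K_\delta)^2+\sum\Var_{\mathcal A}\Pi\Bigr],
\]
which is the stated bound.

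The main obstacle is the Lipschitz estimate. The naive bound costs a factor $\sqrt N$, and what makes it acceptable is that the frame constant is only $\bar\ell_n$, which is what yields $n\bar\ell_n$ rather than $n^2$. I would double-check this Cauchy--Schwarz step: the $\sqrt N\le\sqrt n$ factor is intrinsic to summing $N$ terms $\delta^{-1}|\langle w_i,\cdot\rangle|$.
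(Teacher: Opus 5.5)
Your proposal is correct and follows the paper's proof essentially step for step. The steps that match are the pointwise bound $|D_n|\le|Q_\delta|+3K_\delta+7\bar\ell_n$ from Lemma~\ref{sg:charging}, $\E_{\mathcal A}Q_\delta=0$ by the symmetry $s\mapsto-s$, and $\E_{\mathcal A}K_\delta\le C(n\delta+\sqrt n)$ from \eqref{sg:row-ball}. So do the split of $Q_\delta$ and $K_\delta$ into eight convex ramp sums with Lipschitz constant $L^2=64\pi n\bar\ell_n/\delta^2$ (Cauchy--Schwarz, $N\le n$ and \eqref{sg:frame-bound}), and the union bound with Lemma~\ref{sg:convex} for both the tail and the second moment.
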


\begin{proof}
We bound $|D_n|$ by the smoothed crossing sum and a count of small rows,
and write both as combinations of convex Lipschitz functions. Put
\[
 Q_\delta=\sum_io_i\sgn_\delta(\Theta_i),\qquad K_\delta=\sum_ik_\delta(\Theta_i).
\]
Since $|Q-Q_\delta|\le K_\delta$, $\#\{i:\Theta_i=0\}\le K_\delta$ and
$N_{\rm end}\le2\bar\ell_n$, the two bounds \eqref{sg:charging-bound} give the
master inequality
\begin{equation}\label{sg:deterministic}
 |D_n|\le|Q_\delta|+3K_\delta+7\bar\ell_n .
\end{equation}
The map $s\mapsto-s$ preserves $\Prob_{\mathcal A}$ and changes the sign of
every $\Theta_i$, so $\E_{\mathcal A}Q_\delta=0$. The row small-ball bound
\eqref{sg:row-ball} gives
\[
 \E_{\mathcal A}K_\delta\le\sum_i\Prob_{\mathcal A}\{|\Theta_i|\le2\delta\}\le C_4(n\delta+\sqrt n).
\]

Both $Q_\delta$ and $K_\delta$ are combinations of convex functions. For
$\omega\in[0,1]^N$ and $\gamma\in\R$ let
\[
 \Pi_{\omega,\gamma}(y)=\frac1\delta\sum_i\omega_iR(\langle w_i,y\rangle+\gamma),
 \qquad y\in\R^{\mathfrak F}.
\]
It is convex, and since $R$ is $1$-Lipschitz, the Cauchy--Schwarz
inequality, $N\le n$ and the frame bound \eqref{sg:frame-bound} show that it
is $L$-Lipschitz with $L^2=64\pi n\bar\ell_n/\delta^2$. With $o^\pm_i=\max(\pm o_i,0)$,
\[
 Q_\delta=\Pi_{o^+,\delta}-\Pi_{o^+,-\delta}-\Pi_{o^-,\delta}+\Pi_{o^-,-\delta}-\sum_io_i,\qquad
 K_\delta=\Pi_{1,2\delta}-\Pi_{1,\delta}-\Pi_{1,-\delta}+\Pi_{1,-2\delta},
\]
evaluated at $s$. Let $\tilde\Pi_1,\dots,\tilde\Pi_8$ be these eight convex
pieces minus their $\E_{\mathcal A}$-means. Since $\E_{\mathcal A}Q_\delta=0$,
\eqref{sg:deterministic} gives
\[
 |D_n|\le7\bar\ell_n+3C_4(n\delta+\sqrt n)+\sum_{k\le4}|\tilde\Pi_k|+3\sum_{4<k\le8}|\tilde\Pi_k| .
\]
If $|\tilde\Pi_k|\le t/16$ for all $k$, the right side is at most
$7\bar\ell_n+3C_4(n\delta+\sqrt n)+t$, and a union bound with
Lemma~\ref{sg:convex} proves the tail. For the second moment use
$(\sum_kc_k|\tilde\Pi_k|)^2\le40\sum_k\tilde\Pi_k^2$, where $c_k\in\{1,3\}$,
and the variance bound of Lemma~\ref{sg:convex}.
\end{proof}

\subsection{Proof of Theorem~\ref{thm:C}}\label{sg:lil-section}

\begin{proof}[Proof of Theorem~\ref{thm:C}]
\emph{The fixed-degree bounds.} Both follow from the master bound
(Lemma~\ref{sg:master}) by a choice of the smoothing width. Let
$n\ge n_0$. For the tail take
\[
 \delta=n^{-1/4}\sqrt{\ell_n},\qquad t=A_1n^{3/4}\sqrt{\ell_n},
\]
where $A_1$ is an absolute constant with $\frac12c_2A_1^2\log2\ge3$. Then
$n\delta=n^{3/4}\sqrt{\ell_n}$ and
$t^2\delta^2/(n\bar\ell_n)=\frac12(\log2)A_1^2\ell_n$, and
$C_1(\bar\ell_n+n\delta+\sqrt n)+t\le An^{3/4}\sqrt{\ell_n}$ for an absolute
$A$. So on $\mathcal G_n$
\[
 \Prob_{\mathcal A}\bigl\{|D_n|>An^{3/4}\sqrt{\ell_n}\bigr\}
 \le C_2\exp\Bigl(-\frac{c_2t^2\delta^2}{n\bar\ell_n}\Bigr)
 =C_2\exp\bigl(-\tfrac12c_2A_1^2(\log2)\,\ell_n\bigr)\le C_2n^{-3}.
\]
Averaging over $\mathcal A$ on $\mathcal G_n$ and adding
$\Prob(\mathcal G_n^c)\le Cn^{-3}$ gives
\begin{equation}\label{sg:fixed-tail}
 \Prob\bigl\{|D_n|>An^{3/4}\sqrt{\ell_n}\bigr\}\le Cn^{-3}.
\end{equation}
For the second moment take $\delta=(\bar\ell_n/n)^{1/4}$, which balances
the smoothing cost $n\delta$ against the concentration scale
$\sqrt{n\bar\ell_n}/\delta$. Then
$(n\delta)^2=n\bar\ell_n/\delta^2=n^{3/2}\bar\ell_n^{1/2}$, and
$\bar\ell_n^2+n\le2n^{3/2}\bar\ell_n^{1/2}$ for large $n$, so
$\E_{\mathcal A}D_n^2\le Cn^{3/2}\sqrt{\ell_n}$ on $\mathcal G_n$. Off
$\mathcal G_n$ the bound $|D_n|\le n/2$ adds $O(n^{-1})$, and the same bound
covers $n<n_0$ after enlarging $C$.

\emph{Almost-sure bounds.} By \eqref{sg:fixed-tail},
\[
 \sum_{n\ge1}\Prob\bigl\{|D_n|>An^{3/4}\sqrt{\ell_n}\bigr\}\le C\sum_{n\ge1}n^{-3}<\infty,
\]
so the first Borel--Cantelli lemma gives $|D_n|\le An^{3/4}\sqrt{\ell_n}$
for all large $n$, almost surely, and $\ell_n\le2\log n$ for $n\ge2$. The statements about $B_n$ are
Proposition~\ref{sg:boundary}(a) and (c), with
$\ell_n^{1+\eta}\le(2\log n)^{1+\eta}$.

\emph{Input~\ref{fw:I4}.} Since $\epsilon_0\ne0$ and
$\Var B_n\le C(n+1)^{1/2}$ by \eqref{sg:boundary-moments},
Lemma~\ref{fw:block-proxies} applies with $\beta=\frac12$ and gives
Input~\ref{fw:I4} for $Z_n=D_n-\E D_n$, with $\theta_0=2$ and coupling
errors $CL_k^{1/4}$ and $Cn_k^{1/4}$.
\end{proof}

\begin{proof}[Proof of Corollary~\ref{thm:E}]
For each $n$ the coefficients of $f_n$ have the law of
$(\epsilon_0,\dots,\epsilon_n)$, so \eqref{sg:fixed-tail} holds. It
concerns one degree at a time, so the Borel--Cantelli argument above
applies whatever the joint law of the $f_n$, and almost surely
$|D_n|\le An^{3/4}\sqrt{\ell_n}$ for all large $n$. Dividing by $n$ gives
$\nu_n(1)/n\to\frac12$.
\end{proof}

\section{The fourth-cumulant variance formula}\label{sec:cumulant}

The count $D_n$ is a signed sum over the crossings of the real axis by
the curve $t\mapsto e^{-int/2}f_n(e^{it})$
(Proposition~\ref{pre:crossing}). To find its variance for a general
coefficient law, we smooth this sum into a function $J$ of the
coefficients and compare with Gaussian coefficients. Lindeberg
replacement changes the variance of $J$ by $\kappa_4$ times a Gaussian
fourth insertion, and this insertion equals $n/12$ up to smaller terms.
It remains to show that $D_n$ and $J$ are close in $L^2$, which we do
cell by cell along the curve. On a cell where the two coordinates of the
curve are never small together, an integration by parts turns the
crossing sum into $J$. Coarse small-ball bounds for sums of coefficients
make the other cells rare, and distant cells are nearly uncorrelated
where the characteristic functions of these sums decay. For laws with a
bounded density the decay holds at every point and also gives the small
balls. For random signs the small balls come from
Section~\ref{sec:signs}, and the decay holds at Diophantine points
(Section~\ref{sec:sign-variance}).

\begin{definition}\label{cu:bd}
A real probability law satisfies \textup{(BD)} if it is symmetric, has a
bounded Lebesgue density, has variance one, and has finite moments of
every order. For such a law $\kappa_4=\E\xi_0^4-3\ge-2$.
\end{definition}

The standard Gaussian law, the uniform law on $[-\sqrt3,\sqrt3]$ and the
Laplace law with density $2^{-1/2}e^{-\sqrt2|x|}$ satisfy (BD).
Throughout, $(G_j)_{j\ge0}$ are iid standard real Gaussians. For a
function $F$ of a coefficient vector, $F(\xi)$ and $F(G)$ denote its
values at $(\xi_0,\ldots,\xi_n)$ and $(G_0,\ldots,G_n)$, and $D_n^G$ is
the count for Gaussian coefficients. Constants $C,c$ and thresholds
$n_0$ depend only on the coefficient law and on the cutoff $\chi$ of
Subsection~\ref{cu:fields-section}.

\subsection{Fields and the current}\label{cu:fields-section}
Let $n\ge1$, and let $\omega_j=(j-n/2)/n$ and the trigonometric fields
$C_b,S_b$ be as in Lemma~\ref{pre:sampling}. For $a\in\R^{n+1}$ and
$x\in\C$ put
\begin{equation}\label{cu:fields}
\begin{gathered}
 Z(a,x)=\sum_{j=0}^na_jv_j(x)
 =\Bigl(\frac{2n}{n+1}\Bigr)^{1/2}\bigl(C_a(x),S_a(x),S_a'(x)\bigr),\\
 v_j(x)=\Bigl(\frac2{n+1}\Bigr)^{1/2}w_j(x),\qquad
 w_j(x)=\bigl(\cos\omega_jx,\ \sin\omega_jx,\ \omega_j\cos\omega_jx\bigr).
\end{gathered}
\end{equation}
These are entire functions of $x$, and $|v_j(x)|\le2n^{-1/2}$ for real
$x$ since $|w_j(x)|^2\le\frac54$. We write
$Z_n(x)=Z(\xi,x)=(X_n,Y_n,Y_n')(x)$. Thus $X_n$ and $Y_n$ are the fields
$C_\xi$ and $S_\xi$ of Section~\ref{sec:gaussian} multiplied by
$(2n/(n+1))^{1/2}\in[1,\sqrt2)$. The factor brings their variances close
to one and changes neither zeros nor signs. In the notation of
Lemma~\ref{pre:carrier}, $X_n$ and $Y_n$ are multiples of $U_{f_n}(x/n)$
and $V_{f_n}(x/n)$, where $x=nt\in[0,\pi n]$ is the scaled half-circle
angle. By Lemma~\ref{pre:sampling}(i) for $C_c$ and for $S_c$, whose
derivative is $C_{\omega c}$,
\begin{equation}\label{cu:energy}
 \int_0^{\pi n}|Z(c,x)|^2\dd x\le\frac{2n}{n+1}\cdot4\pi|c|^2\le8\pi|c|^2
 \qquad(c\in\R^{n+1}).
\end{equation}

\begin{lemma}[Gaussian geometry]\label{cu:geometry}
For $k\in\{0,1,2\}$ let
\[
 \vartheta_k(u)=\frac1{n+1}\sum_{j=0}^n\omega_j^k\,e^{i\omega_ju}.
\]
\begin{enumerate}[label=\textup{(\roman*)}]
 \item $\vartheta_0,\vartheta_2$ are real and even, $\vartheta_1$ is
 purely imaginary and odd, and for real $u$
 \[
  |\vartheta_k(u)|\le\min\Bigl\{1,\frac{2\pi}{\dist(u,2\pi n\Z)}\Bigr\}.
 \]
 \item Let $\Gamma(s,t)=\sum_jv_j(s)v_j(t)^T$, the covariance of $Z(G,s)$
 and $Z(G,t)$. Then
 \[
 \begin{gathered}
  \Gamma_{11}=\vartheta_0(s-t)+\vartheta_0(s+t),\qquad
  \Gamma_{22}=\vartheta_0(s-t)-\vartheta_0(s+t),\\
  \Gamma_{33}=\vartheta_2(s-t)+\vartheta_2(s+t),\qquad
  \Gamma_{12}=\Gamma_{21}=\Gamma_{13}=\Gamma_{31}=0,\\
  \Gamma_{23}=\Im\vartheta_1(s+t)+\Im\vartheta_1(s-t),\qquad
  \Gamma_{32}=\Im\vartheta_1(s+t)-\Im\vartheta_1(s-t).
 \end{gathered}
 \]
 In particular $\Sigma(x)=\Gamma(x,x)$ has $\Sigma_{11}=1+\vartheta_0(2x)$,
 $\Sigma_{22}=1-\vartheta_0(2x)$,
 $\Sigma_{33}=\frac{n+2}{12n}+\vartheta_2(2x)$,
 $\Sigma_{23}=\Im\vartheta_1(2x)$ and $\Sigma_{12}=\Sigma_{13}=0$, and
 for Gaussian coefficients the processes $X_n$ and $Y_n$ are
 independent.
\end{enumerate}
\end{lemma}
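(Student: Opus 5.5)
Both parts are direct computations from the definitions in \eqref{cu:fields}. For (i), the frequencies satisfy $\omega_{n-j}=-\omega_j$, so the reflection $j\mapsto n-j$ shows that $\vartheta_k(-u)=(-1)^k\vartheta_k(u)$. Pairing $j$ with $n-j$ turns $\vartheta_0$ and $\vartheta_2$ into sums of cosines, so they are real and even. The same pairing turns $\vartheta_1$ into $i$ times a sum of terms $\omega_j\sin(\omega_ju)$, so it is purely imaginary and odd.

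The bound $|\vartheta_k|\le1$ is immediate, since $|\omega_j|\le\frac12$ and there are $n+1$ terms. For the decay bound, write $e^{i\omega_ju}=e^{-iu/2}e^{iju/n}$ and apply the summation-by-parts bound \eqref{rc:abel} with $t=u/n$ and weights $b_j=\omega_j^k/(n+1)$. The weights have the following sizes.
\begin{itemize}
\item For $k=0$ they are constant, so the numerator of \eqref{rc:abel} is $|b_n|\le 1/(n+1)$.
\item For $k=1$ they are monotone with $|b_n|\le\frac1{2(n+1)}$ and total variation at most $1/(n+1)$.
\item For $k=2$ they are unimodal, decreasing and then increasing, with total variation at most $\frac1{2(n+1)}$ and $b_n\le \frac1{4(n+1)}$.
\end{itemize}
In each case the numerator is at most $2/(n+1)$. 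Since $d(u/n)=\dist(u,2\pi n\Z)/n$, \eqref{rc:abel} gives
\[
|\vartheta_k(u)|\le \frac{2\pi n}{(n+1)\dist(u,2\pi n\Z)}\le\frac{2\pi}{\dist(u,2\pi n\Z)}.
\]

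For (ii), compute the $3\times3$ matrix $\frac{2}{n+1}\sum_j w_j(s)w_j(t)^T$ entry by entry using the product-to-sum formulas:
\[
\cos a\cos b=\tfrac12[\cos(a-b)+\cos(a+b)],\qquad \sin a\sin b=\tfrac12[\cos(a-b)-\cos(a+b)],
\]
\[
\sin a\cos b=\tfrac12[\sin(a+b)+\sin(a-b)],
\]
with $a=\omega_js$ and $b=\omega_jt$. Each resulting sum is recognised as $\Re\vartheta_k$ or $\Im\vartheta_k$ evaluated at $s\pm t$. For instance,
\[
\Gamma_{23}=\frac{2}{n+1}\sum_j\omega_j\sin(\omega_js)\cos(\omega_jt)=\Im\vartheta_1(s+t)+\Im\vartheta_1(s-t),
\]
and $\Gamma_{32}$ is obtained by exchanging $s$ and $t$, which uses the oddness of $\Im\vartheta_1$.

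The vanishing entries $\Gamma_{12}$ and $\Gamma_{13}$ come from sums $\sum_j\omega_j^k\cos(\omega_js)\sin(\omega_jt)$ with $k\in\{0,1\}$. In each such sum the summand is odd under $j\mapsto n-j$: for $k=0$ because $\sin$ is odd, and for $k=1$ because the factor $\omega_j$ is odd and $\cos(\omega_js)\sin(\omega_jt)$ is odd. Hence these sums vanish, and the same argument gives $\Gamma_{21}=\Gamma_{31}=0$.

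Setting $s=t=x$ gives the diagonal $\Sigma(x)$. The only entry needing care is $\Sigma_{33}$, which uses
\[
\vartheta_2(0)=\frac1{n+1}\sum_j\frac{(j-n/2)^2}{n^2}=\frac{n(n+2)}{12n^2}=\frac{n+2}{12n}.
\]
Finally, $X_n$ depends only on the first coordinate of the fields and $Y_n$ (with $Y_n'$) on the second and third. The cross-covariances $\Gamma_{12},\Gamma_{13},\Gamma_{21},\Gamma_{31}$ between these coordinates vanish for all $s,t$, so for Gaussian coefficients $X_n$ and $Y_n$ are jointly Gaussian and uncorrelated, hence independent.

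No step is a real obstacle. The only care needed is the bookkeeping of weight variations in (i) and keeping the constant at $2\pi$ rather than $\pi$ times a larger constant.
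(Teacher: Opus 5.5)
Your route is the paper's own. You use the reflection $j\mapsto n-j$ for the parities of $\vartheta_k$, and summation by parts, via \eqref{rc:abel}, with the total-variation bookkeeping of the weights $\omega_j^k$ to get $|\vartheta_k(u)|\le 2\pi n/((n+1)\dist(u,2\pi n\Z))$. You then use product-to-sum formulas for the entries of $\Gamma$, including $\vartheta_2(0)=(n+2)/(12n)$.

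One step, however, is wrong as written. You say $\Gamma_{13}$ comes from $\sum_j\omega_j\cos(\omega_js)\sin(\omega_jt)$. You then argue that this summand is odd under $j\mapsto n-j$ because both $\omega_j$ and $\cos(\omega_js)\sin(\omega_jt)$ are odd. A product of two odd factors is even, so that sum does not vanish. In fact it equals $\frac{n+1}{2}\Gamma_{32}(s,t)=\frac{n+1}{2}\{\Im\vartheta_1(s+t)-\Im\vartheta_1(s-t)\}$, which is nonzero in general, since $\Im\vartheta_1(u)\approx u(n+2)/(12n)$ for small $u$.

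The slip is in identifying the entry. The third coordinate of $w_j(t)$ is $\omega_j\cos(\omega_jt)$, not $\omega_j\sin(\omega_jt)$, so
\[
 \Gamma_{13}(s,t)=\frac{2}{n+1}\sum_{j=0}^n\omega_j\cos(\omega_js)\cos(\omega_jt).
\]
Its summand is the odd factor $\omega_j$ times an even product of cosines, hence odd, and the sum vanishes. The entry $\Gamma_{31}$ is the same with $s$ and $t$ exchanged. With this correction you get $\Gamma_{13}=\Gamma_{31}=0$ and $\Sigma_{13}=0$, and the rest of your argument goes through as in the paper, including the independence of $X_n$ and $Y_n$.
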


\begin{proof}
(i) The map $j\mapsto n-j$ sends $\omega_j$ to $-\omega_j$, which gives
the parity claims. Write $e^{i\omega_ju}=e^{-iu/2}e^{iju/n}$ and
$E_m=\sum_{j=0}^me^{iju/n}$, so $|E_m|\le1/|\sin(u/2n)|$ when
$u\notin2\pi n\Z$. Summation by parts with $c_j=\omega_j^k$, for which
$|c_n|\le1$ and $\sum_j|c_{j+1}-c_j|\le1$, gives
$|\sum_jc_je^{iju/n}|\le2/|\sin(u/2n)|$. Since
$|\sin\theta|\ge\frac2\pi\dist(\theta,\pi\Z)$, we get
$|\sin(u/2n)|\ge\dist(u,2\pi n\Z)/(\pi n)$, hence
$|\vartheta_k(u)|\le2\pi n/((n+1)\dist(u,2\pi n\Z))$. The bound by one
is trivial.

(ii) Since each coordinate of $v_j$ carries the factor
$(2/(n+1))^{1/2}$, product-to-sum formulas give, for instance,
\[
 \Gamma_{11}=\frac1{n+1}\sum_j\bigl[\cos\omega_j(s-t)+\cos\omega_j(s+t)\bigr],\qquad
 \Gamma_{12}=\frac1{n+1}\sum_j\bigl[\sin\omega_j(s+t)-\sin\omega_j(s-t)\bigr].
\]
The other entries are analogous, and (i) removes the vanishing real or
imaginary parts. At $s=t=x$ use $\vartheta_0(0)=1$, $\vartheta_1(0)=0$
and $\vartheta_2(0)=(n+1)^{-1}n^{-2}\sum_j(j-n/2)^2=(n+2)/(12n)$. For
Gaussian coefficients the processes $X_n$ and $Y_n$ are jointly Gaussian
and uncorrelated, hence independent.
\end{proof}

Near the ends of the window $[1,\pi n-1]$ the terms $\vartheta_k(2x)$
are not small. The next lemma controls the covariances on the whole
window.

\begin{lemma}[The covariance on the window]\label{cu:window}
There are $\gamma_*\in(0,\frac1{24}]$, $R_0\ge1$ and $n_0$ such that for
$n\ge n_0$ the following hold.
\begin{enumerate}[label=\textup{(\roman*)}]
 \item $\Sigma(x)\ge\gamma_*I$ for $x\in[1,\pi n-1]$.
 \item For $s,t\in[0,\pi n]$ with $|s-t|\ge1$, every entry of
 $\Gamma(s,t)$ is at most $12\pi/(1+|s-t|)$ in absolute value.
 \item For $s,t\in[1,\pi n-1]$ with $|s-t|\ge R_0$ and $0\le\tau'\le1$,
 the matrix
 \[
  S_{\tau'}=\begin{pmatrix}\Sigma(s)&\tau'\Gamma(s,t)\\\tau'\Gamma(s,t)^T&\Sigma(t)\end{pmatrix}
 \]
 satisfies $\frac{\gamma_*}2I\le S_{\tau'}\le5I$.
\end{enumerate}
\end{lemma}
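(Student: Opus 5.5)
The plan is to read everything off the explicit formulas of Lemma~\ref{cu:geometry} and the decay bound $|\vartheta_k(u)|\le\min\{1,2\pi/\dist(u,2\pi n\Z)\}$, treating separately the bulk, where all reflected terms $\vartheta_k(2x)$ are small, and the two end windows, where they are not.

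For (i), $\Sigma(x)$ is block diagonal, with the entry $\Sigma_{11}=1+\vartheta_0(2x)$ and the $2\times2$ block of $(Y_n,Y_n')$. In the bulk $x\in[T,\pi n-T]$ with $T$ a large constant, $|\vartheta_k(2x)|\le\pi/T$. Then $\Sigma$ is within $O(1/T)$ of $\mathrm{diag}(1,1,\frac{n+2}{12n})$, and its smallest eigenvalue is at least $\frac1{24}$ once $T$ and $n_0$ are large. On the end windows $x\in[1,T]$ I use Riemann sums. They give $\vartheta_k(u)\to\int_{-1/2}^{1/2}v^ke^{iuv}\dd v$ uniformly for $|u|\le2T$, with error $O(T/n)$, so $\Sigma(x)$ converges uniformly to the covariance matrix of $(\cos(vx)\cdot,\sin(vx)\cdot,v\cos(vx)\cdot)$ in $L^2(\dd v)$ on $[-\frac12,\frac12]$, scaled by $2$. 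For $x\ge1$ the functions $\sin(vx)$ and $v\cos(vx)$ are linearly independent, as one is odd and nonconstant and the other is odd with a different zero set, and $\cos(vx)$ is nonzero. So the limit Gram matrix is positive definite. It is continuous in $x\in[1,T]$, hence bounded below by a constant, and this gives $\gamma_*$ for large $n$. The window near $\pi n$ reduces to this one because $\vartheta_k(2\pi n-u)=\pm\overline{\vartheta_k(u)}$ up to the factor $(-1)^n$, or equivalently because $\dist(u,2\pi n\Z)$ is symmetric. The main care point is this compactness argument: making sure the limit matrix is nondegenerate uniformly down to $x=1$, and that the $\Sigma_{23}$ cross term does not destroy positivity.

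For (ii), each entry of $\Gamma(s,t)$ is a sum of two terms $\vartheta_k(s\mp t)$. For $s,t\in[0,\pi n]$ we have $\dist(s-t,2\pi n\Z)=|s-t|$. For the other term, $\dist(s+t,2\pi n\Z)\ge\min(s+t,2\pi n-s-t)\ge|s-t|$. Hence each term is at most $\min\{1,2\pi/|s-t|\}\le4\pi/(1+|s-t|)$ when $|s-t|\ge1$, and the entry is at most $8\pi/(1+|s-t|)$, which is below $12\pi/(1+|s-t|)$.

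For (iii), write $S_{\tau'}=\mathrm{diag}(\Sigma(s),\Sigma(t))+\tau'E$, where $E$ is the off-diagonal part. By (ii) its operator norm is at most $3\cdot12\pi/(1+R_0)$. Choosing $R_0$ with $36\pi/(1+R_0)\le\gamma_*/2$ and using (i) gives the lower bound $\gamma_*/2$. For the upper bound, every entry of $\Sigma$ is at most $2$ in absolute value, and the trace is at most $2+\frac14$. Since $\Sigma\ge0$, this gives $\|\Sigma\|\le\frac94$. Adding $\|E\|\le\gamma_*/2$ yields $S_{\tau'}\le5I$.
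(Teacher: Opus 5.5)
Your bulk estimate in (i), and your arguments for (ii) and (iii), are essentially the paper's. The paper takes $T=72\pi$. For the upper bound in (iii) it uses $\Sigma\le\frac52I$ and $\|\Gamma\|\le\frac52$ from Cauchy--Schwarz, which works as well as your trace bound.

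The gap is the right end window $[\pi n-T,\pi n-1]$ in (i). Your identity $\vartheta_k(2\pi n-u)=(-1)^n\overline{\vartheta_k(u)}$ is correct, but it reduces this window to the left one only for even $n$. For odd $n$, combined with $\vartheta_0,\vartheta_2$ real and $\vartheta_1$ imaginary, it gives
\[
 \Sigma(\pi n-x)=P\bigl(2\,\mathrm{diag}(1,1,\tfrac{n+2}{12n})-\Sigma(x)\bigr)P,\qquad P=\mathrm{diag}(1,1,-1).
\]
So the reflected terms $\vartheta_k(2x)$ enter with the opposite sign, and your lower bound for $\Sigma(x)$ says nothing about $\Sigma(\pi n-x)$. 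Equivalently, for odd $n$ one has $w_j(\pi n-x)=\pm(\sin\omega_jx,\cos\omega_jx,\omega_j\sin\omega_jx)$. The Riemann-sum limit is then twice the Gram matrix of $\sin(vx),\cos(vx),v\sin(vx)$ on $[-\frac12,\frac12]$, which is not the matrix you analysed. The symmetry of $\dist(\cdot,2\pi n\Z)$ does not help here, because on the end windows the $\vartheta_k(2x)$ are of order one.

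The fix is what the paper does: run your Riemann-sum and compactness argument a second time for this triple. It is positive definite for $x\ge1$. The function $\sin(vx)$ is odd and not identically zero, so it is orthogonal to the other two. The even functions $\cos(vx)$ and $v\sin(vx)$ are independent: evaluating at $v=0$ kills the coefficient of $\cos(vx)$, and $v\sin(vx)\not\equiv0$.

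A smaller point in the same step concerns your independence argument for $\sin(vx)$ and $v\cos(vx)$. On $[-\frac12,\frac12]$ with $1\le x<\pi$, both vanish only at $v=0$, so ``different zero sets'' does not justify independence there. You can either pass to all of $\R$ by analyticity, where the zero sets do differ, or expand at $v=0$:
\[
 a\sin(vx)+bv\cos(vx)=(ax+b)v-\Bigl(\frac{ax^3}6+\frac{bx^2}2\Bigr)v^3+O(v^5).
\]
This forces $a=b=0$ when $x\ne0$.
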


\begin{proof}
(i) Let $L_0=72\pi$. For $x\in[L_0,\pi n-L_0]$, $\dist(2x,2\pi n\Z)\ge2L_0$,
so by Lemma~\ref{cu:geometry} the entries of
$\Sigma(x)-\operatorname{diag}(1,1,\frac{n+2}{12n})$ are at most
$\pi/L_0=\frac1{72}$ in absolute value and its norm is at most
$\frac1{24}$. The diagonal entries lie in $[\frac1{12},1]$, so
$\Sigma(x)\ge\frac1{24}I$ there. Now let $x\in[1,L_0]$ and let
$q\in\R^3$ be a unit vector. Then
$q^T\Sigma(x)q=\frac2{n+1}\sum_jf_{x,q}(\omega_j)^2$ with
$f_{x,q}(\omega)=q_1\cos\omega x+q_2\sin\omega x+q_3\,\omega\cos\omega x$.
The points $\omega_j$ have spacing $1/n$ in $[-\frac12,\frac12]$, and the
derivative of $f_{x,q}^2$ in $\omega$ is bounded uniformly in $x\le L_0$
and $q$. So $\frac1n\sum_jf_{x,q}(\omega_j)^2=\int_{-1/2}^{1/2}f_{x,q}^2\dd\omega+O(1/n)$
uniformly. The integral is continuous in $(x,q)$ on the compact set
$[1,L_0]\times S^2$ and positive there, because for $x\ne0$ the functions
$\cos\omega x$, $\sin\omega x$ and $\omega\cos\omega x$ are linearly
independent on $[-\frac12,\frac12]$. As $2n/(n+1)\ge1$, this gives
$\Sigma(x)\ge\gamma_{\rm l}I$ on $[1,L_0]$ for some $\gamma_{\rm l}>0$ and $n\ge n_0$.
For $x'=\pi n-x$ we have $\omega_jx'=\pi(j-n/2)-\omega_jx$, so $w_j(x')$
is $\pm(\cos\omega_jx,-\sin\omega_jx,\omega_j\cos\omega_jx)$ if $n$ is
even and $\pm(\sin\omega_jx,\cos\omega_jx,\omega_j\sin\omega_jx)$ if $n$
is odd. The same argument with these three functions gives a bound
$\gamma_{\rm r}>0$ on $[\pi n-L_0,\pi n-1]$. Put
$\gamma_*=\min(\frac1{24},\gamma_{\rm l},\gamma_{\rm r})$.

(ii) For $s,t\in[0,\pi n]$, $\dist(s-t,2\pi n\Z)=|s-t|$ and
$\dist(s+t,2\pi n\Z)=\min(s+t,2\pi n-s-t)\ge|s-t|$. By
Lemma~\ref{cu:geometry}, every entry of $\Gamma(s,t)$ is a sum of at
most two terms $\pm\vartheta_k(s\pm t)$ or $\pm\Im\vartheta_1(s\pm t)$,
each at most $2\pi/|s-t|\le4\pi/(1+|s-t|)$ as $|s-t|\ge1$.

(iii) Let $R_0=\max(1,216\pi/\gamma_*)$. By (ii),
$|a^T\Gamma(s,t)b|\le108\pi|a||b|/(1+|s-t|)$ for $a,b\in\R^3$. For
$q=(a,b)$, (i) gives
\[
 q^TS_{\tau'}q\ge\gamma_*|q|^2-\frac{2\cdot108\pi|a||b|}{R_0}
 \ge\Bigl(\gamma_*-\frac{108\pi}{R_0}\Bigr)|q|^2\ge\frac{\gamma_*}2|q|^2 .
\]
The upper bound holds because $\Sigma(x)\le\sum_j|v_j(x)|^2I\le\frac52I$
and $\|\Gamma(s,t)\|\le\frac52$ by the Cauchy--Schwarz inequality.
\end{proof}

\begin{definition}\label{cu:well-conditioned}
A sum $W=\sum_{j=0}^n\xi_ju_j$ with $u_j\in\R^d$ is \emph{well
conditioned} if $|u_j|\le7n^{-1/2}$ for all $j$ and
$\sum_ju_ju_j^T\ge\frac{\gamma_*}2I$.
\end{definition}

The sums of coefficients that we meet below are well conditioned. For
$x\in[1,\pi n-1]$ the sums $Z_n(x)$, $Y_n(x)$ and $(X_n(x),Y_n(x))$
are, since their rows are coordinates of $v_j(x)$ and their Gram
matrices are principal submatrices of $\Sigma(x)\ge\gamma_*I$. For
$s,t\in[1,\pi n-1]$ with $|s-t|\ge R_0$ so is $(Z_n(s),Z_n(t))$. Its
rows are $u_j=(v_j(s),v_j(t))$, and its Gram matrix is
the matrix $S_1$ of Lemma~\ref{cu:window}(iii), the covariance of
$(Z(G,s),Z(G,t))$.

\begin{lemma}[Field moments]\label{cu:moments}
Let $\xi_0$ be symmetric with finite moments of every order. For
$p<\infty$ and $R\ge1$ there is $C_{p,R}$ such that for every $n\ge1$,
every real $c$ and $F\in\{X_n,X_n',Y_n,Y_n'\}$,
\[
 \E\sup_{|z-c|\le R}|F(z)|^p\le C_{p,R}.
\]
\end{lemma}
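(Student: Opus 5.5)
The plan is to pass from pointwise moments of $F$ and its derivatives on a slightly larger disk to the supremum, by a Sobolev (or Cauchy) argument for entire functions. Each $F\in\{X_n,X_n',Y_n,Y_n'\}$ is a sum $F(z)=\sum_j\xi_jb_j(z)$, where $b_j(z)$ equals $(2n/(n+1))^{1/2}n^{-1/2}$ times one of $\cos\omega_jz$, $\sin\omega_jz$, or these multiplied by $\pm\omega_j$. For $|\Im z|\le R+1$ the weights satisfy $|b_j(z)|\le2n^{-1/2}e^{(R+1)/2}$, so $\sum_j|b_j(z)|^2\le C_R$ uniformly in $n$, $c$ and $z$ in the disk $\{|z-c|\le R+1\}$.

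The first step is the pointwise bound $\E|F(z)|^p\le C_p\bigl(\sum_j|b_j(z)|^2\bigr)^{p/2}\le C_{p,R}$ for every $z$ in the larger disk. Write $\xi_j=\epsilon_j|\xi_j|$ with independent signs, using the symmetry of $\xi_0$. Apply Khintchine's inequality conditionally on the amplitudes to the real and imaginary parts. Then $\E(\sum_j|\xi_j|^2|b_j|^2)^{p/2}\le\E|\xi_0|^p(\sum_j|b_j|^2)^{p/2}$ by Jensen's (or Minkowski's) inequality for $p\ge2$, and this uses only the finite moments. For $p<2$ the bound follows from $p=2$ by Hölder's inequality.

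The second step passes to the supremum by subharmonicity. Since $|F|^p$ is subharmonic for an entire $F$, the mean value inequality on the disk of radius $1$ about each $z$ with $|z-c|\le R$ gives
\[
 \sup_{|z-c|\le R}|F(z)|^p\le\frac1\pi\int_{|w-c|\le R+1}|F(w)|^p\,\dd A(w).
\]
Taking expectations and applying Tonelli's theorem with the first step bounds the right side by $(R+1)^2C_{p,R+1}$, uniformly in $n$ and $c$.

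I do not expect a real obstacle. The only points to check are that the weight bound is uniform in $n$, because $|\omega_j|\le\frac12$ and there are $n+1$ terms of size $O(n^{-1/2})$, and that the Khintchine step applies to complex-valued weights. The latter is handled by splitting into real and imaginary parts, each of which is a real linear form in the signs.
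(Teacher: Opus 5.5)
Your proof is correct and follows the paper's route. You first prove the pointwise bound $\E|F(z)|^p\le C_p(\sum_j|b_j(z)|^2)^{p/2}\E|\xi_0|^p$ by conditional Khintchine and Minkowski, and then pass from the supremum to an average by complex analysis; the only difference is that the paper uses Cauchy's formula on the circle $|\zeta-c|=2R$ followed by H\"older, while your sub-mean-value inequality for the subharmonic $|F|^p$ on unit disks puts the $p$th power inside the average directly.
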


\begin{proof}
By H\"older's inequality we may take $p\ge2$. For complex $c_j$,
\[
 \E\Bigl|\sum_jc_j\xi_j\Bigr|^p\le C_p\Bigl(\sum_j|c_j|^2\Bigr)^{p/2}\E|\xi_0|^p .
\]
Indeed, for real $c_j$ the symmetric $\xi_j$ have the law of
$\epsilon_j|\xi_j|$ with independent signs, so Khintchine's inequality
conditionally on $(|\xi_j|)$ and Minkowski's inequality in $L^{p/2}$
give the bound, and complex $c_j$ are split into real and imaginary
parts. The coefficients
of $F(z)$ are $(2/(n+1))^{1/2}\omega_j^k$ times $\pm\cos(\omega_jz)$ or
$\pm\sin(\omega_jz)$, $k\le2$, and their squares sum to at most
$2e^{|\Im z|}$, so $\E|F(z)|^p\le C_pe^{p|\Im z|/2}$. By Cauchy's
formula on the circle $|\zeta-c|=2R$,
\[
 \sup_{|z-c|\le R}|F(z)|\le\frac1\pi\int_0^{2\pi}|F(c+2Re^{i\theta})|\dd\theta,
\]
and H\"older's inequality gives
$\E\sup_{|z-c|\le R}|F|^p\le2^p\sup_\theta\E|F(c+2Re^{i\theta})|^p\le C_{p,R}$.
\end{proof}

\emph{The current.} Let $\Phi$ and $\varphi$ be the standard normal
distribution function and density. For $\delta>0$ let
$\rho_\delta(v)=\delta^{-1}\varphi(v/\delta)$, and let
$h_\delta(u)=2\Phi(u/\delta)-1$ be the smoothed sign of
Section~\ref{sec:gaussian} at width $\delta$, so that
$h_\delta'=2\rho_\delta$, $|h_\delta|\le1$ and
$h_\delta(u)=\E\sgn(u-\delta G_0)$. Fix an even smooth
$\chi:\R\to[0,1]$ equal to one on $[-1,1]$ and to zero outside $[-2,2]$,
and for $M>0$ put $\ell_M(w)=w\chi(w/M)$. Then $|\ell_M(w)|\le|w|$,
$\ell_M(w)=w$ for $|w|\le M$, $|\ell_M'|\le C_\chi:=1+\sup_s|s\chi'(s)|$,
and $\|\ell_M^{(r)}\|_\infty\le C_rM^{1-r}$ for $r\ge0$. Define
\begin{equation}\label{cu:current}
\begin{gathered}
 \Psi_{\delta,M}(u,v,w)=h_\delta(u)\rho_\delta(v)\ell_M(w),\qquad
 t_M=\E\,\ell_M'(W_*),\\
 J_{n,\delta,M}(a)=\int_1^{\pi n-1}\Psi_{\delta,M}\bigl(Z(a,x)\bigr)\dd x,
\end{gathered}
\end{equation}
where $W_*$ is centered normal with variance $1/12$, and write
$J_{n,\delta,M}=J_{n,\delta,M}(\xi)$. As $h_\delta(X_n)$ smooths
$\sgn X_n$ and $\rho_\delta(Y_n)Y_n'\dd x$ smooths the signed unit masses
at the zeros of $Y_n$, $J_{n,\delta,M}$ smooths the bulk part
$\sum\sgn X_n(x)\,\iota_{Y_n}(x)$, over the zeros of $Y_n$ in
$[1,\pi n-1)$, of the crossing sum of Proposition~\ref{pre:crossing}.

\begin{lemma}[Jets of the current]\label{cu:jets}
Let $n\ge1$, $0<\delta\le1\le M$ and $J=J_{n,\delta,M}$.
\begin{enumerate}[label=\textup{(\roman*)}]
 \item $\|D^r\Psi_{\delta,M}\|_\infty\le C_rM\delta^{-r-1}$ for $r\ge0$,
 and $|\Psi_{\delta,M}(u,v,w)|\le\rho_\delta(v)|w|$.
 \item $\Psi=\Psi_{\delta,M}$ satisfies $\Psi(-z)=\Psi(z)$,
 $\Psi(-u,v,w)=-\Psi(u,v,w)$ and $\Psi(u,-v,-w)=-\Psi(u,v,w)$.
 \item $J$ is smooth on $\R^{n+1}$, with
 \[
 \begin{gathered}
  \partial_{j_1}\cdots\partial_{j_r}J(a)
  =\int_1^{\pi n-1}D^r\Psi_{\delta,M}(Z(a,x))\bigl[v_{j_1}(x),\ldots,v_{j_r}(x)\bigr]\dd x,\\
  \|\partial_{j_1}\cdots\partial_{j_r}J\|_\infty\le C_rM\delta^{-r-1}n^{1-r/2}.
 \end{gathered}
 \]
 \item $J(-a)=J(a)$ and $J(a^{\rm rev})=-J(a)$, where
 $a^{\rm rev}_j=a_{n-j}$. Hence $\E J_{n,\delta,M}(\xi)=0$ for iid
 coefficients with any law, and $\E\,\partial_jJ(G)=0$.
\end{enumerate}
\end{lemma}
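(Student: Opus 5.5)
The four parts are elementary, and I will prove them in order, keeping track only of the powers of $M$, $\delta$ and $n$. For (i), $\Psi_{\delta,M}$ is the product $h_\delta(u)\rho_\delta(v)\ell_M(w)$ of three one-variable factors, so every partial derivative of order $r$ is a product of derivatives of orders $r_1+r_2+r_3=r$. The bounds I will use are
\[
 |h_\delta^{(r_1)}|\le C\delta^{-r_1},\qquad
 |\rho_\delta^{(r_2)}|\le C\delta^{-r_2-1},\qquad
 |\ell_M^{(r_3)}|\le C M^{1-r_3}.
\]
For the last factor, $\ell_M$ itself is not bounded on all of $\R$, but it vanishes for $|w|>2M$, so $|\ell_M|\le 2M$; and $M^{1-r_3}\le M$ because $M\ge1$. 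Since $\delta\le1$, the product is at most $C_rM\delta^{-r-1}$. The pointwise bound $|\Psi|\le\rho_\delta(v)|w|$ follows from $|h_\delta|\le1$ and $|\ell_M(w)|\le|w|$.

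For (ii), I use three parities: $h_\delta$ is odd, $\rho_\delta$ is even, and $\ell_M$ is odd because $\chi$ is even. Flipping $u$ alone changes the sign of $\Psi$. Flipping $v$ and $w$ together changes the sign once, through $\ell_M$. Flipping all three changes it twice, so $\Psi(-z)=\Psi(z)$.

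For (iii), $Z(a,x)=\sum_ja_jv_j(x)$ is linear in $a$. Hence the integrand is smooth in $a$, and $\partial_{j_1}\cdots\partial_{j_r}\Psi(Z(a,x))=D^r\Psi(Z(a,x))[v_{j_1}(x),\dots,v_{j_r}(x)]$. The domain $[1,\pi n-1]$ is compact and, by (i), every derivative of $\Psi$ is bounded, so differentiation under the integral is justified. For the sup bound I use $|v_j(x)|\le2n^{-1/2}$ on the real line. This gives an integrand of size at most $C_rM\delta^{-r-1}(2n^{-1/2})^r$, and integrating over a window of length less than $\pi n$ gives $C_rM\delta^{-r-1}n^{1-r/2}$.

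For (iv), $J(-a)=J(a)$ is immediate from $\Psi(-z)=\Psi(z)$ and $Z(-a,x)=-Z(a,x)$. For reversal I compute the fields of $a^{\rm rev}$. Since $\omega_{n-j}=-\omega_j$, $\cos$ is even and $\sin$ is odd,
\[
 Z(a^{\rm rev},x)=\bigl(C_a(x),\,-S_a(x),\,-S_a'(x)\bigr)\Bigl(\frac{2n}{n+1}\Bigr)^{1/2}.
\]
That is, $a\mapsto a^{\rm rev}$ fixes the first coordinate and negates the other two, so (ii) gives $J(a^{\rm rev})=-J(a)$.

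For the expectations, an iid vector satisfies $\xi^{\rm rev}\overset d=\xi$, so $\E J(\xi)=-\E J(\xi)=0$. The expectation is finite because $J$ is bounded by (iii) with $r=0$. For the gradient, $J(-a)=J(a)$ makes $\nabla J$ odd, and $G\overset d=-G$, so $\E\,\partial_jJ(G)=-\E\,\partial_jJ(G)=0$. The derivative $\partial_jJ$ is bounded by (iii), so this expectation is also finite.

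There is no real obstacle. The only point needing care is the exponent bookkeeping in (i): one must use the compact support of $\ell_M$ for the $r_3=0$ term, and take $\rho_\delta$ to carry the extra factor $\delta^{-1}$.
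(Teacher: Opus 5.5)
Your proof is correct and follows the paper's argument essentially line by line. For (i) you use the product structure of $\Psi_{\delta,M}$ with the scaling bounds on $h_\delta^{(p)}$, $\rho_\delta^{(q)}$ and $\ell_M^{(s)}$; for (ii) the parities of the three factors; for (iii) the linearity of $Z(a,x)$ together with $|v_j|\le2n^{-1/2}$; and for (iv) the reversal identity $Z(a^{\rm rev},x)=(z_1,-z_2,-z_3)$ together with the boundedness of $J$ and $\partial_jJ$. The remark that $\ell_M$ ``is not bounded on all of $\R$'' is only a slip of wording, since, as you note right after, the cutoff gives $|\ell_M|\le2M$.
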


\begin{proof}
(i) A partial derivative of order $r$ of $\Psi_{\delta,M}$ is a product
$h_\delta^{(p)}(u)\rho_\delta^{(q)}(v)\ell_M^{(s)}(w)$ with $p+q+s=r$.
Since $h_\delta^{(p)}(u)=\delta^{-p}h_1^{(p)}(u/\delta)$,
$\rho_\delta^{(q)}(v)=\delta^{-1-q}\varphi^{(q)}(v/\delta)$,
$\|\ell_M\|_\infty\le2M$ and $\|\ell_M^{(s)}\|_\infty\le C_s$ for
$s\ge1$, it is at most $C_rM\delta^{-r-1}$. The second bound uses
$|h_\delta|\le1$ and $|\ell_M(w)|\le|w|$.
(ii) $h_\delta$ and $\ell_M$ are odd and $\rho_\delta$ is even.
(iii) $Z(a,x)$ is linear in $a$ with $\partial_{a_j}Z=v_j(x)$, and the
integrand is smooth with bounded derivatives on a compact interval of
length at most $\pi n$. Use (i) and $|v_j|\le2n^{-1/2}$.
(iv) $Z(-a,x)=-Z(a,x)$, and $\omega_{n-j}=-\omega_j$ gives
$Z(a^{\rm rev},x)=(z_1,-z_2,-z_3)$ when $Z(a,x)=(z_1,z_2,z_3)$. Apply
(ii). The iid law of the coefficient vector is invariant under reversal
and $J$ is bounded, so $\E J(\xi)=-\E J(\xi)$. Finally $\partial_jJ$ is
odd and $G$ is symmetric.
\end{proof}

\subsection{Replacement and the fourth insertion}
Replacing the coefficients by Gaussians one at a time changes the mean
of a smooth function by its fourth derivatives, weighted by $\kappa_4$.

\begin{lemma}[Lindeberg replacement]\label{cu:lindeberg}
Let $\xi_0$ be symmetric with $\E\xi_0^2=1$ and $\E\xi_0^6<\infty$, let
$m\ge1$, and let $F\in C^\infty(\R^m)$ have bounded partial derivatives
of all orders. For $\xi=(\xi_1,\ldots,\xi_m)$ iid and $G$ standard
Gaussian in $\R^m$,
\begin{align*}
 &\Bigl|\E F(\xi)-\E F(G)-\frac{\kappa_4}{24}\sum_j\E\,\partial_j^4F(G)\Bigr|
 \le C\Bigl(\sum_j\|\partial_j^6F\|_\infty+\sum_{i\ne j}\|\partial_i^4\partial_j^4F\|_\infty\Bigr),\\
 &\Bigl|\E\bigl[(\xi_j^2-1)F(\xi)\bigr]-\frac{\kappa_4+2}2\E\,\partial_j^2F(G)\Bigr|
 \le C\Bigl(\|\partial_j^4F\|_\infty+\sum_{i\ne j}\|\partial_i^4\partial_j^2F\|_\infty\Bigr)
 \quad(1\le j\le m),
\end{align*}
with $C$ depending only on $\E\xi_0^6$.
\end{lemma}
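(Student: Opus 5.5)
The plan is the classical Lindeberg swapping argument with a Taylor expansion to sixth order. Put hybrid vectors $W^{(k)}=(G_1,\dots,G_k,\xi_{k+1},\dots,\xi_m)$, so that $\E F(\xi)-\E F(G)=\sum_{k=1}^m(\E F(W^{(k-1)})-\E F(W^{(k)}))$. In the $k$th term only coordinate $k$ changes. Write $W^{(k)}_0$ for the vector with coordinate $k$ set to $0$, and expand $y\mapsto F(W_0^{(k)}+ye_k)$ around $y=0$ to order five with a sixth-order remainder. The variable $y\in\{\xi_k,G_k\}$ is independent of $W_0^{(k)}$. Both laws are symmetric with equal second moments, so the odd orders and the second-order terms cancel. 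What remains is $\frac{\E\xi^4-3}{24}\E\,\partial_k^4F(W_0^{(k)})$ plus a remainder bounded by $C\|\partial_k^6F\|_\infty(\E\xi_0^6+15)$.

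The next step moves the fourth insertion from the hybrid point $W_0^{(k)}$ to the Gaussian point $G$. The function $\partial_k^4F$ does not depend on coordinate $k$ through $W_0^{(k)}$, so one may restore coordinate $k$ as $G_k$ at a cost of $\E G_k^2\|\partial_k^6F\|_\infty/2$; the first-order term vanishes by symmetry. The harder part is swapping the other coordinates $i>k$ from $\xi_i$ to $G_i$ inside $\E\,\partial_k^4F$. This is the step I expect to need the most care, because a naive first-order swap costs $\|\partial_i^2\partial_k^4F\|$ per $i$, which is too large. I would apply the same expansion once more to $H=\partial_k^4F$, one coordinate $i$ at a time. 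Symmetry and matched variances again kill orders one through three, leaving at most $C\|\partial_i^4\partial_k^4F\|_\infty$ per coordinate, with $C$ depending on $\E\xi_0^4\le(\E\xi_0^6)^{2/3}$. Summing over $k$ and $i\ne k$ gives the first inequality. The counting is exactly the double sum $\sum_{i\ne j}\|\partial_i^4\partial_j^4F\|_\infty$ in the statement.

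For the second inequality, I would first isolate coordinate $j$. Let $W=\xi$ with coordinate $j$ zeroed. Expanding in $y=\xi_j$ to fourth order with a fourth-order remainder gives
\[
\E[(\xi_j^2-1)F(\xi)]=\E(\xi_j^2-1)\,\E F(W)+\tfrac12\E[(\xi_j^2-1)\xi_j^2]\,\E\partial_j^2F(W)+R.
\]
Here the odd terms vanish by symmetry, $\E(\xi_j^2-1)=0$, and $|R|\le C\|\partial_j^4F\|_\infty$ using $\E\xi_0^6$. Since $\E\xi_j^4-1=\kappa_4+2$, the main term is $\frac{\kappa_4+2}2\E\,\partial_j^2F(W)$. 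It remains to replace $W$ by $G$. For coordinate $j$ this costs $\|\partial_j^4F\|_\infty$, with the first and second orders vanishing or absorbed at the same size. For each $i\ne j$, the fourth-order swap of the previous paragraph applied to $\partial_j^2F$ costs $C\|\partial_i^4\partial_j^2F\|_\infty$. Summing these costs gives the stated bound. All constants depend only on the moments up to $\E\xi_0^6$.
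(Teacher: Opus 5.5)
Your proposal is correct and is essentially the paper's proof. You use hybrid vectors with coordinate $k$ zeroed and a fifth-order Taylor expansion in that coordinate. The other coordinates are swapped inside $\partial_k^4F$ (respectively $\partial_j^2F$) by third-order Taylor expansions that use the matching moments through order three, and the zeroed coordinate is restored at first order. The only difference is that you restore coordinate $k$ before swapping the others while the paper swaps first, which is immaterial since every cost is a sup-norm bound.
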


\begin{proof}
The laws of $\xi_1$ and $G_1$ have equal moments of orders $0,1,2,3,5$,
and their fourth moments differ by $\kappa_4$. Let $Y^{(k)}$ have
Gaussian coordinates $G_i$ for $i<k$ and coordinates $\xi_i$ for
$i\ge k$, and let $Z^{(k)}$ be $Y^{(k)}$ with coordinate $k$ set to
zero. Taylor's formula in coordinate $k$ to order five, with remainder
at most $\|\partial_k^6F\|_\infty y^6/720$, and independence give
\[
 \E F(Y^{(k)})-\E F(Y^{(k+1)})=\frac{\kappa_4}{24}\E\,\partial_k^4F(Z^{(k)})
 +O\bigl(\|\partial_k^6F\|_\infty\bigr).
\]
Replacing the coordinates $\xi_i$, $i>k$, of $Z^{(k)}$ by $G_i$ one at
a time costs $O(\|\partial_i^4\partial_k^4F\|_\infty)$ for each $i$, by
Taylor's formula to order three, since moments match through order three.
Restoring coordinate $k$ costs $O(\|\partial_k^6F\|_\infty)$ by Taylor's
formula to order one. Summing over $k$ gives the first bound.

For the second, let $Z$ be $\xi$ with coordinate $j$ set to zero.
Taylor's formula in coordinate $j$ to order three gives
\[
 \E\bigl[(\xi_j^2-1)F(\xi)\bigr]=\frac{\kappa_4+2}2\,\E\,\partial_j^2F(Z)
 +O\bigl(\|\partial_j^4F\|_\infty\bigr),
\]
because $\E[(\xi_0^2-1)\xi_0^r]$ vanishes for $r=0,1,3$ and equals
$\E\xi_0^4-1=\kappa_4+2$ for $r=2$. Replacing the other coordinates of
$Z$ by Gaussians costs $O(\sum_{i\ne j}\|\partial_i^4\partial_j^2F\|_\infty)$
as above, and restoring $G_j$ costs $O(\|\partial_j^4F\|_\infty)$.
\end{proof}

\begin{lemma}[Gaussian interpolation]\label{cu:price}
Let $X_0,X_1$ be centered Gaussian vectors in $\R^d$ with covariance
matrices $\Sigma_0,\Sigma_1$, possibly singular, and let $F$ be smooth
with bounded derivatives of all orders. Then
\[
 \bigl|\E F(X_1)-\E F(X_0)\bigr|
 \le\frac12\sum_{a,b=1}^d\bigl|(\Sigma_1-\Sigma_0)_{ab}\bigr|\,
 \sup|\partial_{ab}F| .
\]
\end{lemma}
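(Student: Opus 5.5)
The plan is the standard Gaussian interpolation along the straight-line path between the two covariances. Take $X_0$ and $X_1$ independent, and put $X_s=\sqrt{1-s}\,X_0+\sqrt s\,X_1$ for $s\in[0,1]$. Then $X_s$ is centered Gaussian with covariance $\Sigma_s=(1-s)\Sigma_0+s\Sigma_1$, and $\phi(s)=\E F(X_s)$ satisfies $\phi(1)-\phi(0)=\int_0^1\phi'(s)\dd s$. The claim then reduces to the pointwise bound
\[
 |\phi'(s)|\le\frac12\sum_{a,b}|(\Sigma_1-\Sigma_0)_{ab}|\sup|\partial_{ab}F|
\]
on the open interval $(0,1)$. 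Singular covariances cause no difficulty, since no densities are used.

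To compute $\phi'$, differentiate under the expectation, which the bounded derivatives of $F$ and the integrable factors allow for $s\in(0,1)$. This gives
\[
 \phi'(s)=\E\Bigl\langle\nabla F(X_s),\tfrac{1}{2\sqrt s}X_1-\tfrac{1}{2\sqrt{1-s}}X_0\Bigr\rangle .
\]
Apply Gaussian integration by parts (Stein's identity) to the jointly Gaussian vector $(X_0,X_1)$ in the form $\E[Y_a g(Y)]=\sum_b\E[Y_aY_b]\,\E[\partial_b g(Y)]$. The relevant covariances are $\E[(X_1)_a(X_s)_b]=\sqrt s(\Sigma_1)_{ab}$ and $\E[(X_0)_a(X_s)_b]=\sqrt{1-s}(\Sigma_0)_{ab}$. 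These give
\[
 \phi'(s)=\frac12\sum_{a,b}(\Sigma_1-\Sigma_0)_{ab}\,\E\,\partial_{ab}F(X_s),
\]
and bounding each expectation by $\sup|\partial_{ab}F|$ proves the claim.

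The only point needing care is the endpoint singularity of the weights $1/\sqrt s$ and $1/\sqrt{1-s}$. After integration by parts the derivative is bounded uniformly in $s$, so $\phi$ is $C^1$ on $(0,1)$ and continuous on $[0,1]$ by dominated convergence. The fundamental theorem of calculus on $[\epsilon,1-\epsilon]$, followed by letting $\epsilon\to0$, finishes the argument. If Stein's identity for singular Gaussians is a concern, write $(X_0,X_1)=AG$ with $G$ standard and apply the scalar identity coordinatewise to $G$.
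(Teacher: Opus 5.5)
Your proposal is correct and follows the paper's route: the same interpolation $X_t=\sqrt t\,X_1+\sqrt{1-t}\,X_0$ with independent $X_0,X_1$, leading to the integral formula $\E F(X_1)-\E F(X_0)=\frac12\sum_{a,b}(\Sigma_1-\Sigma_0)_{ab}\int_0^1\E\,\partial_{ab}F(X_t)\dd t$. The only difference is that the paper cites this formula from Talagrand (Lemma~1.3.1 and (1.44), valid for singular covariances), whereas you derive it yourself via Gaussian integration by parts, including the endpoint limit.
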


\begin{proof}
Take $X_0,X_1$ independent and put $X_t=\sqrt t\,X_1+\sqrt{1-t}\,X_0$.
The Gaussian interpolation formula
\cite[Lemma~1.3.1 and (1.44)]{TalagrandMF2011}, which holds for singular
covariances, gives
\[
 \E F(X_1)-\E F(X_0)=\frac12\sum_{a,b=1}^d(\Sigma_1-\Sigma_0)_{ab}
 \int_0^1\E\,\partial_{ab}F(X_t)\dd t ,
\]
and the bound follows.
\end{proof}

For $n\ge1$, $0<\delta\le1\le M$, $J=J_{n,\delta,M}$ and $0\le j\le n$
put
\[
 S_n=S_n(\delta,M)=\frac1{24}\sum_{j=0}^n\E\bigl[\partial_j^4(J^2)(G)\bigr],\qquad
 h_j=\frac{j-n/2}{n+1}.
\]

\begin{proposition}[The fourth insertion]\label{cu:insertion}
For $n\ge1$, $0<\delta\le1\le M$, $0\le k\le4$ and $0\le j\le n$,
\begin{align}
 \Var\,\partial_j^kJ(G)&\le CM^2\delta^{-2k-4}n^{1-k},\label{cu:jet-variance}\\
 \Bigl|\E\,\partial_j^2J(G)-\frac{2t_Mh_j}{1+\delta^2}\Bigr|
 &\le\frac Cn\bigl(1+M\delta^{-5}\log(n+2)\bigr),\label{cu:second-jet}\\
 \Bigl|S_n-\frac{t_M^2\,n}{12(1+\delta^2)^2}\Bigr|
 &\le CM^2\delta^{-10}\log^2(n+2).\label{cu:insertion-bound}
\end{align}
Moreover $|t_M|\le C_\chi$ and $|t_M-1|\le2(C_\chi+1)e^{-6M^2}$.
\end{proposition}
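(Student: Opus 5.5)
The plan is to get the variance bound \eqref{cu:jet-variance} from the Gaussian Poincar\'e inequality, the second-jet mean \eqref{cu:second-jet} from an exact Gaussian computation in the bulk plus the interpolation Lemma~\ref{cu:price}, and then to assemble \eqref{cu:insertion-bound} from these two by the Leibniz rule. The facts about $t_M$ come first and are elementary. Since $|\ell_M'|\le C_\chi$ we have $|t_M|\le C_\chi$. Since $\ell_M'=1$ on $[-M,M]$, we get $|t_M-1|\le(C_\chi+1)\Prob(|W_*|>M)\le2(C_\chi+1)e^{-6M^2}$ by the Chernoff bound for a variance of $1/12$.

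\emph{Step 1: jet variances.} Apply Lemma~\ref{pre:gaussian} to $F=\partial_j^kJ$, so that $\Var F(G)\le\E|\nabla F(G)|^2$. By Lemma~\ref{cu:jets}(iii), for a unit vector $\beta\in\R^{n+1}$,
\[
 \langle\nabla\partial_j^kJ(a),\beta\rangle=\int_1^{\pi n-1}D^{k+1}\Psi_{\delta,M}(Z(a,x))\bigl[v_j(x),\dots,v_j(x),Z(\beta,x)\bigr]\dd x .
\]
Lemma~\ref{cu:jets}(i) and $|v_j|\le2n^{-1/2}$ bound the integrand by $CM\delta^{-k-2}n^{-k/2}|Z(\beta,x)|$. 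The Cauchy--Schwarz inequality over $[0,\pi n]$ and the energy bound \eqref{cu:energy} give $\int|Z(\beta,x)|\dd x\le C\sqrt n$. Hence $|\nabla\partial_j^kJ|^2\le CM^2\delta^{-2k-4}n^{1-k}$ deterministically, which is \eqref{cu:jet-variance}.

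\emph{Step 2: the second jet.} Write $\E\,\partial_j^2J(G)=\int\E\,D^2\Psi(Z(G,x))[v_j,v_j]\dd x$. First replace the law of $Z(G,x)$, with covariance $\Sigma(x)$, by $N(0,\Sigma_0)$ with $\Sigma_0=\operatorname{diag}(1,1,\frac{n+2}{12n})$. Lemma~\ref{cu:price} costs $\sum_{a,b}|(\Sigma-\Sigma_0)_{ab}|\sup|D^4\Psi|\,|v_j|^2\le CM\delta^{-5}n^{-1}\min\{1,2\pi/\dist(2x,2\pi n\Z)\}$ by Lemma~\ref{cu:geometry}, and integrating over $x$ gives $CM\delta^{-5}\log(n+2)/n$. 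Under $\Sigma_0$ the three coordinates are independent. The terms $\Psi_{uu},\Psi_{vv},\Psi_{vw},\Psi_{ww}$ contain one of the odd factors $h_\delta(X)$, $h_\delta''(X)$ or $\rho_\delta'(Y)$, and these have mean zero. Only the mixed term $2\Psi_{uw}v_{j,1}v_{j,3}$ survives, with $\E h_\delta'(X)\,\E\rho_\delta(Y)=\bigl(\pi(1+\delta^2)\bigr)^{-1}$. The factor $\E\ell_M'(W)$ under variance $\frac{n+2}{12n}$ equals $t_M+O(1/n)$. With $2v_{j,1}v_{j,3}=\frac4{n+1}\omega_j\cos^2(\omega_jx)$ and $\int_1^{\pi n-1}\cos^2(\omega_jx)\dd x=\frac{\pi n}2+O(1)$, this gives exactly $2t_Mh_j/(1+\delta^2)$ up to $C/n$, which is \eqref{cu:second-jet}.

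\emph{Step 3: the insertion.} Expand $\partial_j^4(J^2)=2J\partial_j^4J+8\partial_jJ\,\partial_j^3J+6(\partial_j^2J)^2$. By Lemma~\ref{cu:jets}(iv), $\E J(G)=0$ and $\E\,\partial_jJ(G)=0$, so the first two expectations are covariances. The Cauchy--Schwarz inequality and \eqref{cu:jet-variance} bound them by $CM^2\delta^{-8}n^{-1}$, and the same bound holds for $\Var\,\partial_j^2J$. Summed over the $n+1$ indices these contribute $O(M^2\delta^{-8})$. The main term $\frac14\sum_j(\E\,\partial_j^2J)^2$ becomes, by \eqref{cu:second-jet}, $\frac{t_M^2}{(1+\delta^2)^2}\sum_jh_j^2=\frac{t_M^2n}{12(1+\delta^2)^2}+O(1)$. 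The cross and square errors are at most $\sum_j|h_j|\,Cn^{-1}M\delta^{-5}\log(n+2)+\sum_jC^2n^{-2}M^2\delta^{-10}\log^2(n+2)$, which is within \eqref{cu:insertion-bound}.

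The main obstacle is Step~2, and specifically the parity bookkeeping. One has to check which second derivatives of $\Psi$ survive under the independent Gaussian approximation, and to verify that the only non-small covariance deviations are the $\vartheta_k(2x)$ terms near the ends of the window, which integrate to $O(\log n)$ as claimed.
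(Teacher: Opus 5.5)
Your proposal is correct and follows the paper's proof essentially step for step. You get the jet variances from Poincar\'e with the energy bound; your unit-vector test is the paper's duality $|c|^2=\int Z(c,x)\cdot H\,\dd x$ in another guise. You get the second jet from Lemma~\ref{cu:price} plus the independent-Gaussian parity computation, and the insertion from the Leibniz expansion. The differences are cosmetic: the paper compares directly with variances $(1,1,\frac1{12})$ and absorbs the $\frac1{6n}$ discrepancy into the interpolation cost rather than into $\E\ell_M'$, and the $\Psi_{uv}$ term you left off your list vanishes for the same reason, the odd factor $\rho_\delta'(Y)$.
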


\begin{proof}
\emph{Variances.} Let $f=\partial_j^kJ$. By Lemma~\ref{cu:jets}(iii),
for every $a$ and $i$, $\partial_if(a)=\int_Iv_i(x)\cdot H(x)\dd x$ with
$I=[1,\pi n-1]$ and
$H(x)_p=D^{k+1}\Psi_{\delta,M}(Z(a,x))[e_p,v_j(x),\ldots,v_j(x)]$, so
$|H(x)|^2\le CM^2\delta^{-2k-4}n^{-k}$. Put $c_i=\partial_if(a)$. By
Cauchy--Schwarz and \eqref{cu:energy},
\[
 |c|^2=\int_IZ(c,x)\cdot H(x)\dd x
 \le(8\pi)^{1/2}|c|\Bigl(\int_I|H|^2\Bigr)^{1/2},
\]
hence $|\nabla f(a)|^2\le8\pi\int_I|H|^2\le CM^2\delta^{-2k-4}n^{1-k}$
uniformly in $a$. The Poincar\'e inequality, Lemma~\ref{pre:gaussian},
gives \eqref{cu:jet-variance}.

\emph{Second-jet mean.} By Lemma~\ref{cu:jets}(iii) and Fubini,
$\E\,\partial_j^2J(G)=\int_I\E f_x(Z(G,x))\dd x$ with
$f_x(z)=D^2\Psi_{\delta,M}(z)[v,v]$, $v=v_j(x)$. The second derivatives
of $f_x$ are bounded by $\|D^4\Psi_{\delta,M}\|_\infty|v|^2\le CM\delta^{-5}/n$.
Let $Z_*=(U_*,V_*,W_*)$ have independent centered normal coordinates
with variances $1,1,1/12$. Lemmas~\ref{cu:price} and~\ref{cu:geometry}(ii)
give
\[
 \bigl|\E f_x(Z(G,x))-\E f_x(Z_*)\bigr|
 \le\frac{CM\delta^{-5}}n\Bigl(|\vartheta_0(2x)|+|\vartheta_1(2x)|
 +|\vartheta_2(2x)|+\frac1n\Bigr).
\]
Since $\dist(2x,2\pi n\Z)=2\min(x,\pi n-x)$ on $[0,\pi n]$,
Lemma~\ref{cu:geometry}(i) gives
$\int_I|\vartheta_k(2x)|\dd x\le2\int_0^{\pi n/2}\min(1,\pi/x)\dd x
\le C\log(n+2)$. At $Z_*$ the six terms of
$D^2\Psi[v,v]=\sum_{p,q}v_pv_q\partial_{pq}\Psi$ factor into products
of one-dimensional expectations. Every term containing an
undifferentiated or twice differentiated $h_\delta(U_*)$ vanishes since
$h_\delta,h_\delta''$ are odd, and so does every term containing
$\ell_M(W_*)$ or $\ell_M''(W_*)$. The survivor is
\[
 \E f_x(Z_*)=2v_1v_3\,\E h_\delta'(U_*)\,\E\rho_\delta(V_*)\,t_M
 =\frac{2v_1v_3\,t_M}{\pi(1+\delta^2)},
\]
since $\E\rho_\delta(V_*)=(2\pi(1+\delta^2))^{-1/2}$ is the
$N(0,1+\delta^2)$ density at zero and $h_\delta'=2\rho_\delta$. Here
$v_1v_3=2\omega_j\cos^2(\omega_jx)/(n+1)$, and
$\omega_j\int_0^{\pi n}\cos^2(\omega_jx)\dd x=\omega_j\pi n/2$ because
$\sin(2\omega_j\pi n)=\sin(\pi(2j-n))=0$. The two end pieces of
$[0,\pi n]\setminus I$ change this by at most $2|\omega_j|\le1$. With
$2n\omega_j/(n+1)=2h_j$ this proves \eqref{cu:second-jet}.

\emph{Insertion.} Write $g_k=\partial_j^kJ(G)$. By Leibniz,
$\partial_j^4(J^2)=2J\partial_j^4J+8\partial_jJ\,\partial_j^3J+6(\partial_j^2J)^2$.
Lemma~\ref{cu:jets}(iv) gives $\E g_0=\E g_1=0$, so by Cauchy--Schwarz
and \eqref{cu:jet-variance},
\[
 |\E g_0g_4|=|\E g_0(g_4-\E g_4)|,\quad
 |\E g_1g_3|,\quad
 \E g_2^2-(\E g_2)^2\quad\text{are all at most }CM^2\delta^{-8}n^{-1}.
\]
Summing over $j$, $|S_n-\frac14\sum_j(\E g_2)^2|\le CM^2\delta^{-8}$. Write
$\E g_2=2t_Mh_j/(1+\delta^2)+e_j$ with $e_j$ bounded by
\eqref{cu:second-jet}. Then
\begin{gather*}
 \frac14\sum_j(\E g_2)^2=\frac{t_M^2}{(1+\delta^2)^2}\sum_jh_j^2
 +O\Bigl(\sum_j|h_je_j|+\sum_je_j^2\Bigr),\\
 \sum_jh_j^2=\frac{n(n+2)}{12(n+1)}=\frac n{12}+O(1),
\end{gather*}
the second because $\sum_{j=0}^n(j-n/2)^2=n(n+1)(n+2)/12$. Since
$|h_j|\le\frac12$ and $|t_M|\le C_\chi$, the error terms are at most
$C(1+M\delta^{-5}\log(n+2))+CM^2\delta^{-10}\log^2(n+2)/n$, which proves
\eqref{cu:insertion-bound}.

Finally $\ell_M'(w)=\chi(w/M)+(w/M)\chi'(w/M)$ is bounded by $C_\chi$
and equals one for $|w|<M$, and $\Prob(|W_*|\ge M)\le2e^{-6M^2}$.
\end{proof}

\subsection{The unsmoothing proposition}\label{cu:unsmooth-section}
We compare $D_n$ with $J_{n,\delta,M}$ in $L^2$, cell by cell. On a
cell where $X_n$ and $Y_n$ are never small together, the crossing sum and
the current agree up to a small error (Lemma~\ref{cu:cell-identity}), and
the coarse small balls \ref{cu:U1} make the other cells rare. This
bounds the errors on nearby cells. On two distant cells we sharpen the
smoothing to a width $\delta'$ below $n^{-1}$, and the differences of the
two smoothings are nearly uncorrelated. This step needs the
characteristic functions of the coefficient sums at the two cells to
decay. The number $b_n$ counts the pairs of distant cells where they do
not, and we bound these pairs like nearby ones.

\emph{Parameters and cells.} In the rest of this section,
\[
\begin{gathered}
 \delta=n^{-1/25},\qquad\delta'=n^{-103/100},\qquad M=n^{1/200},\\
 k_n=\frac{2\log n}{\log2},\qquad
 \varepsilon_{\delta,M}=\delta+M^{-8}=2n^{-1/25},
\end{gathered}
\]
and $\psi=\Psi_{\delta',M}-\Psi_{\delta,M}$. Let $m=\lceil\pi n-2\rceil$
and $\ell=(\pi n-2)/m\in[\frac12,1]$. The cells are
$I_i=[1+i\ell,1+(i+1)\ell)$, $0\le i<m$, with closures $\bar I_i$ and
midpoints $c_i$. They partition $[1,\pi n-1)$, and each $\bar I_i$ lies in
$[1,\pi n-1]$ and in $(c_i-2,c_i+2)$. For a cell $I=[\alpha,\beta)$ let
\[
 K_I=\sum_{x\in[\alpha,\beta),\ Y_n(x)=0}\sgn X_n(x)\,\iota_{Y_n}(x)
\]
if $Y_n\not\equiv0$, and $K_I=0$ otherwise. For $0<\kappa\le1$ let
$J_I^\kappa=\int_I\Psi_{\kappa,M}(Z_n(x))\dd x$, and put
\[
 \Delta_I=K_I-J_I^\delta,\qquad \Delta_I^{\rm f}=K_I-J_I^{\delta'},\qquad
 \Delta_I^{\rm c}=J_I^{\delta'}-J_I^\delta=\Delta_I-\Delta_I^{\rm f}.
\]
We abbreviate $\Delta_i=\Delta_{I_i}$, and similarly for the other cell
quantities. Since the cells partition $[1,\pi n-1)$,
$J_{n,\delta,M}=\sum_{i<m}J_{I_i}^\delta$, and
$\Delta_I^{\rm c}=\int_I\psi(Z_n(x))\dd x$. Let $m_0=\lceil2R_0\rceil$.
A pair $(i,i')$ of cell indices is \emph{separated} if $|i-i'|>m_0$.
Then all $s\in\bar I_i$ and $t\in\bar I_{i'}$ satisfy
$|s-t|\ge(|i-i'|-1)\ell\ge m_0/2\ge R_0$.

\emph{Decay.} For a random vector $W$ in $\R^d$ write
$\widehat\mu_W(\theta)=\E e^{i\theta\cdot W}$. We say that
$\widehat\mu_W$ \emph{decays} if
\begin{equation}\label{cu:decay}
 |\widehat\mu_W(\theta)|\le n^{-20}\qquad(n^{1/10}\le|\theta|\le n^{26/25}).
\end{equation}
Below $n^{1/10}$ the moments of the law control $\widehat\mu_W$ for every
well-conditioned sum (Lemma~\ref{cu:charfun}), and above $n^{26/25}=n^{1/100}/\delta'$
the smoothing at the width $\delta'$ takes over.

\emph{Hypotheses.} Let $\xi_0$ be a symmetric real law with $\E\xi_0^2=1$
and finite moments of every order. For each $n$ let a set of separated
pairs be given, called \emph{good}, and let $b_n$ be the number of
separated pairs that are not good. The hypotheses are the following, for
$n\ge n_0$ and a constant $C_U$.
\begin{enumerate}[label=\textup{(U\arabic*)},ref=\textup{(U\arabic*)},leftmargin=*]
\item\label{cu:U1} \emph{Coarse small balls.} Every well-conditioned sum $W$
of dimension $d\le12$ satisfies
\[
 \Prob(|W-w|\le r)\le C_U(r+n^{-1/2})^d\qquad(w\in\R^d,\ r>0).
\]
\item\label{cu:U2} \emph{Decay on good pairs.} If $(i,i')$ is good, then
$\widehat\mu_{Z_n(x)}$ decays for $x\in\bar I_i\cup\bar I_{i'}$, and
$\widehat\mu_{(Z_n(s),Z_n(t))}$ decays for $s\in\bar I_i$ and
$t\in\bar I_{i'}$.
\item\label{cu:U3} \emph{Roots on the circle.} $\E B_n^2\le C_Un^{1/2}$.
\end{enumerate}

\begin{proposition}[Unsmoothing]\label{cu:unsmooth}
Let $\xi_0$, the good pairs and $b_n$ be as above, and assume
\ref{cu:U1}--\ref{cu:U3}. There are $C$ and $n_0$, depending only on the
law of $\xi_0$, on $\chi$, on $C_U$ and on the threshold of the hypotheses, such that
for $n\ge n_0$
\[
 \E\bigl(D_n-J_{n,\delta,M}(\xi)\bigr)^2\le C\bigl(n^{199/200}+n^{-1/200}b_n\bigr).
\]
\end{proposition}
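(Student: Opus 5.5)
First reduce $D_n$ to the bulk crossing sum. By Proposition~\ref{pre:crossing} in its bulk form \eqref{pre:bulk-crossing}, with $A=n^{-1}[1,\pi n-1)$,
\[
 \Bigl|D_n-\sum_{i<m}K_{I_i}\Bigr|\le1+B_n+N^{\rm end},
\]
where $N^{\rm end}$ counts the zeros of $Y_n$ in $(0,1)\cup[\pi n-1,\pi n)$. The term $B_n$ is handled by \ref{cu:U3}, since $\E B_n^2\le C_Un^{1/2}$. For $N^{\rm end}$ I would use Lemma~\ref{pre:local-jensen}, with the field moments of Lemma~\ref{cu:moments} for the numerator. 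The denominator needs a lower bound on $|Y_n|$ (or on $Y_n(w)/w$ near $0$) at a point; I would get it from \ref{cu:U1} applied to a well-conditioned sum at a point of $[1,2]$, which gives $\E(N^{\rm end})^2\le C\log^2n$. It then remains to bound
\[
 \E\Bigl(\sum_i\Delta_i\Bigr)^2=\sum_{i,i'}\E\Delta_i\Delta_{i'}
 \le n^{199/200}+n^{-1/200}b_n
\]
up to constants. I split the pairs into the non-separated pairs ($O(m_0n)$ of them), the separated good pairs, and the $b_n$ separated bad pairs.

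\emph{Cell bound.} The key one-cell estimate is $\E\Delta_I^2\le Cn^{-1/200}$ and $\E(\Delta_I^{\rm f})^2\le Cn^{-1/200}$. The paper announces this as Lemma~\ref{cu:cell-identity}; I would prove it as follows. On a cell, $Y_n$ has few zeros, with all moments bounded by Lemmas~\ref{cu:moments} and~\ref{pre:local-jensen}. Suppose $|X_n|>\eta$ wherever $|Y_n|<\eta$ on $\bar I$, and $|Y_n'|\le M$ there. Substituting $w=Y_n(x)$ near each zero, as in Lemma~\ref{ga:current}(a), shows that $J_I^\kappa$ equals $K_I$ up to $O(e^{-\eta^2/2\kappa^2})$ plus endpoint terms. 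The endpoint terms are controlled by the event that $|Y_n|\le\eta$ at an endpoint, which has probability $O(\eta+n^{-1/2})$ by \ref{cu:U1}. The exceptional event, that $|X_n|,|Y_n|\le\eta$ somewhere on the cell, is covered by a net of mesh $\eta$. At each net point I apply \ref{cu:U1} in dimension $2$ to $(X_n,Y_n)$, with the derivative bound from Lemma~\ref{cu:moments}; this gives probability $O(\eta+n^{-1/2}\eta^{-1})$. Similarly, $\Prob(\sup|Y_n'|>M)$ is superpolynomially small in $M$. Choosing $\eta=\delta^{1/2}$, say, and using Cauchy--Schwarz with high moments for the size of $|\Delta_I|$ (at most the number of zeros plus $O(M\cdot\text{zeros})$), gives $\E\Delta_I^2\le C\,n^{-c}$ with room to spare. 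Non-separated pairs then contribute $O(n\cdot n^{-1/200})\le Cn^{199/200}$ by Cauchy--Schwarz, and so do bad pairs, at $O(b_nn^{-1/200})$.

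\emph{Separated good pairs.} I write $\Delta_i=\Delta_i^{\rm f}+\Delta_i^{\rm c}$. The products involving $\Delta^{\rm f}$ are small without any decorrelation, because $\delta'<n^{-1}$ makes the cell bound much stronger: the exceptional probability becomes about $\delta'^{1/2}$, which is smaller than $n^{-1/2}$. Then $\sum_{i,i'}\|\Delta_i^{\rm f}\|_2\|\Delta_{i'}\|_2\le n^2\cdot n^{-1/2-c}\cdot n^{-c}$; if this is too crude, I would sharpen it to $\E(\Delta^{\rm f})^2\le n^{-1-c}$ on good cells using the decay input. The main term is
\[
 \E\Delta_i^{\rm c}\Delta_{i'}^{\rm c}=\iint\E\,\psi(Z_n(s))\psi(Z_n(t))\dd s\dd t .
\]
Here I would replace the law of $(Z_n(s),Z_n(t))$ by the Gaussian $N(0,S_1)$ through an Edgeworth-type Fourier comparison. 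Below $n^{1/10}$ the moments control the characteristic function, via a Lemma~\ref{cu:charfun}-type cumulant expansion. On $[n^{1/10},n^{26/25}]$ the decay hypothesis \ref{cu:U2} applies. Above $n^{26/25}$, $\hat\psi$ has Gaussian decay at scale $1/\delta'$. The same comparison handles the marginals, so the covariance is approximated by its Gaussian value plus $O(n^{-1-c})$ corrections per pair. For the Gaussian value, Lemma~\ref{cu:price} interpolates $S_1$ to the block-diagonal matrix, giving a bound of $|\Gamma(s,t)|\sup|D^2(\psi\otimes\psi)|$, and the first-order term factorizes into $\E\psi$ times a derivative. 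Since $\E_{\rm Gauss}\psi(Z)$ and its gradient are $O(\delta)$ (smoothing at widths $\delta$ versus $\delta'$ changes Gaussian expectations by $O(\delta)$), the covariance is $O(\delta^2/(1+|s-t|)+|\Gamma|^2\cdot\text{poly}(M/\delta))$. Using the non-Gaussian correction from the cumulants, summing $\delta^2\log n\cdot n$ is not obviously below $n^{199/200}$. I expect this bookkeeping, making the covariance sum over good separated pairs $O(n^{199/200})$ and in particular extracting enough decay in $|s-t|$ from the Edgeworth comparison, to be the main obstacle. I would first attempt a second-order interpolation, using that the first-order cross term vanishes by the parity symmetries in Lemma~\ref{cu:jets}(ii). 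That would leave $|\Gamma|^2\lesssim(1+|s-t|)^{-2}$, which is summable over $t$.
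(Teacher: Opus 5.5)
Your outline has the paper's architecture: the bulk crossing sum, cells, near and bad pairs handled by a one-cell bound and Cauchy--Schwarz, and good pairs handled through $\Delta_i=\Delta_i^{\rm f}+\Delta_i^{\rm c}$ and an Edgeworth comparison. However, both steps that carry the good pairs fail as proposed.

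\emph{The $\Delta^{\rm f}$ terms.} You pair $\Delta_i^{\rm f}$ with $\Delta_{i'}$ in $L^2\times L^2$ over $\asymp n^2$ good pairs. Already for Gaussian coefficients, $\Delta_i^{\rm f}$ is of order one with probability of order $\delta'$ (a zero of $Y_n$ in the cell where $|X_n|\lesssim\delta'$). So $\|\Delta_i^{\rm f}\|_2$ cannot be much below $\delta'^{1/2}\approx n^{-1/2}$, and the sum stays far above $n$. Your fallback $\E(\Delta_i^{\rm f})^2\le n^{-1-c}$ does not help. Nor does $\delta'<n^{-1}$ sharpen the cell bound under \ref{cu:U1} alone, since that small-ball bound saturates at $n^{-1/2}$ per point.

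What is missing is an $L^\infty\times L^1$ pairing on one global event. Let $\mathcal L$ be the event that $Y_n$ and $Y_n'$ have at most $k_n$ zeros within distance $2$ of every cell midpoint and of $1,\pi n-1$. Lemma~\ref{cu:zeros} and a union bound give $\Prob(\mathcal L^c)\le Cn^{-5}$, using \ref{cu:U1} in dimension $12$ at twelve well-separated points. A single point in dimension $1$ only gives failure probability $n^{-1/2}$; against the worst case $N^{\rm end}\le Cn$ this also does not yield your $\E(N^{\rm end})^2\le C\log^2n$. Off $\mathcal L$, the crude bound $|D_n-J_{n,\delta,M}|\le n/2+2\pi nM/\delta$ suffices. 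On $\mathcal L$, the occupation bound \eqref{cu:occupation}, which carries no factor $M$, makes $|\Delta_{i'}|$ and $|\Delta_i^{\rm c}|$ at most $C\log n$. Hence $|\E[\Delta_i^{\rm f}\Delta_{i'};\mathcal L]|\le C\log n\,\E[|\Delta_i^{\rm f}|;\mathcal L_i]$, and what you need is the $L^1$ bound $\E[|\Delta_i^{\rm f}|;\mathcal L_i]\le Cn^{-126/125}$ of Lemma~\ref{cu:cells}(b). That bound applies the cell identity at the threshold $y_*=n^{-51/50}$, just above $\delta'$ rather than at $\delta'^{1/2}$. It also needs small balls at scale $n^{-51/50}$, which only the decay hypothesis \ref{cu:U2} supplies, through Esseen's inequality (Lemma~\ref{cu:smallball}).

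\emph{The Gaussian two-point term.} Interpolating with Lemma~\ref{cu:price}, even to second order, bounds it by $|\Gamma(s,t)|^2$ times sup norms of derivatives of $\psi\otimes\psi$. Those norms are powers of $M/\delta'$, not $M/\delta$, because $\psi$ contains the $\delta'$-smoothing. Moreover $\int_0^{\pi n}\!\int_0^{\pi n}(1+|s-t|)^{-2}\dd s\dd t$ is of order $n$, so the coefficient of $|\Gamma|^2$ must be $O(n^{-1/200})$; even an $O(1)$ coefficient fails.

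The paper sets $\gamma(\tau')=\iint\psi(z)\psi(z')\varphi_{S_{\tau'}}(z,z')\dd z\dd z'$. It gets $\gamma(0)=\gamma'(0)=0$ from the parities of $\psi$ and $\Sigma_{12}=\Sigma_{13}=0$. It then puts the four derivatives in $\gamma''$ on the Gaussian density, which is uniformly nondegenerate for separated pairs by Lemma~\ref{cu:window}(iii). This gives $|\iint\psi(z)\psi(z')\,\partial^\beta\varphi_{S_{\tau'}}|\le C\varepsilon_{\delta,M}$. One factor $\psi$, integrated against a kernel with a Gaussian envelope, is $O(\delta+M^{-8})$, because both smoothings tend to $\iint\sgn(u)\,w\,f(u,0,w)\dd u\dd w$; the other factor is $O(1)$. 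Your remark that Gaussian averages of $\psi$ are $O(\delta)$ is the right germ, but it works only after the derivatives are moved onto the density.

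The same factor $\varepsilon_{\delta,M}$ is needed for the quartic Edgeworth term $\frac{\kappa_4}{24}\sum_j\int\psi\otimes\psi\,(u_j\cdot\nabla)^4\varphi_S$. Without it that term is only $O(1/n)$ per pair of points and integrates to order $n$. Finally, $\psi$ has no Fourier transform bounded by a Gaussian, since $h_\kappa$ tends to $\pm1$. The Edgeworth step must therefore be applied to a truncated $\tilde\psi$ built from $\tilde h_{\kappa,L}$ and $\tilde\ell_{\sigma,M}$, with the truncation errors bounded separately.
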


For laws with a bounded density every separated pair is good, so
$b_n=0$ (Corollary~\ref{cu:bd-unsmoothing}). For random signs
$b_n\le Cn^{1+1/400}$ (Lemma~\ref{sv:counting}).

\subsection{Characteristic functions and distant cells}\label{cu:fourier-section}
We turn the decay of characteristic functions into small balls at the
fine width and into the near independence of distant cells. Throughout
the subsection $\xi_0$ is symmetric with $\E\xi_0^2=1$ and finite moments
of every order, and $\kappa_4=\E\xi_0^4-3$.

\begin{lemma}[Characteristic functions of well-conditioned sums]\label{cu:charfun}
There are $u_0\in(0,1]$, $C$ and $n_0$ such that for $n\ge n_0$ every
well-conditioned sum $W=\sum_j\xi_ju_j$, with $\Sigma_W=\sum_ju_ju_j^T$, satisfies
the following.
\begin{enumerate}[label=\textup{(\alph*)}]
 \item $|\widehat\mu_W(\theta)|\le e^{-\gamma_*|\theta|^2/8}$ for
 $|\theta|\le u_0\sqrt n/7$.
 \item With $B(\theta)=\frac{\kappa_4}{24}\sum_j(u_j\cdot\theta)^4$, for
 $|\theta|\le n^{1/10}$,
 \[
  \bigl|\widehat\mu_W(\theta)-e^{-\theta^T\Sigma_W\theta/2}\bigl(1+B(\theta)\bigr)\bigr|
  \le Cn^{-2}(1+|\theta|)^8e^{-\gamma_*|\theta|^2/4}.
 \]
\end{enumerate}
\end{lemma}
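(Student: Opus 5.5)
We prove both parts from the product formula $\widehat\mu_W(\theta)=\prod_{j=0}^n\phi(u_j\cdot\theta)$, where $\phi(s)=\E e^{is\xi_0}$ is real and even because $\xi_0$ is symmetric. Put $s_j=u_j\cdot\theta$. Well-conditioning gives $|s_j|\le7n^{-1/2}|\theta|$ and
\[
 \sum_js_j^2=\theta^T\Sigma_W\theta\ge\tfrac{\gamma_*}2|\theta|^2 .
\]
It also gives the upper bound $\sum_js_j^2\le49|\theta|^2$, since $\sum_j|u_j|^2\le49$. Near the origin the moment expansion of $\phi$ is available. Symmetry kills the odd terms, so
\[
 \phi(s)=1-\tfrac{s^2}2+\tfrac{\E\xi_0^4}{24}s^4-\tfrac{\E\xi_0^6}{720}s^6+O(s^8).
\]
Hence $\log\phi(s)=-\frac{s^2}2+\frac{\kappa_4}{24}s^4+O(s^6)$ for $|s|\le u_0$. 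The constant $u_0\in(0,1]$ is chosen so that $\phi\ge\frac12$ there and the $O$-constants, which depend only on the moments, are uniform.

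For part (a), the point of $|\theta|\le u_0\sqrt n/7$ is that every $|s_j|\le u_0$. I would shrink $u_0$ so that $\phi(s)\le e^{-s^2/4}$ on $[-u_0,u_0]$. This is possible because $1-\phi(s)\ge s^2/2-Cs^4$ and $1-x\le e^{-x}$. Then
\[
 |\widehat\mu_W(\theta)|\le\exp\Bigl(-\tfrac14\sum_js_j^2\Bigr)\le e^{-\gamma_*|\theta|^2/8} ,
\]
which is (a).

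For part (b), take $|\theta|\le n^{1/10}$, so that $|s_j|\le7n^{-2/5}$ is tiny. Summing the logarithmic expansion over $j$ gives
\[
 \log\widehat\mu_W(\theta)=-\tfrac12\theta^T\Sigma_W\theta+B(\theta)+E ,
\]
where the error satisfies
\[
 |E|\le C\sum_js_j^6\le C\max_js_j^4\sum_js_j^2\le Cn^{-2}|\theta|^6 .
\]
The same bound $\sum_js_j^4\le\max_js_j^2\sum_js_j^2$ gives $|B|\le Cn^{-1}|\theta|^4$, and both $|B|$ and $|E|$ are $\le Cn^{-3/5}$, hence small. Write $e^{B+E}=1+B+R$. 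Then
\[
 |R|\le\tfrac12|B|^2e^{|B|}+|E|e^{|B|+|E|}\le C\bigl(n^{-2}|\theta|^8+n^{-2}|\theta|^6\bigr)\le Cn^{-2}(1+|\theta|)^8 .
\]
Multiplying by the Gaussian factor, which is at most $e^{-\gamma_*|\theta|^2/4}$ because $\theta^T\Sigma_W\theta\ge\frac{\gamma_*}2|\theta|^2$, gives (b).

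The only delicate points are bookkeeping. First, the $O$-constants must depend only on the law, so $u_0$, $C$ and $n_0$ do too; $n_0$ ensures $7n^{-2/5}\le u_0$. Second, we need a sixth-order Taylor remainder for $\log\phi$, which uses finite eighth moments. These are available since all moments are finite. The main obstacle is simply keeping the fourth-order Edgeworth term separate from the error rather than absorbing it. This is why we expand to sixth order instead of stopping at the quadratic term.
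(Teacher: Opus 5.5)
Your proposal is correct and follows the paper's proof essentially step for step. It makes the same choice of $u_0$ (with $\phi\ge\frac12$ and $\phi(s)\le e^{-s^2/4}$ on $[-u_0,u_0]$), sums the same expansion $\log\phi(s)=-\frac{s^2}2+\frac{\kappa_4}{24}s^4+O(s^6)$ over $j$, and uses the same estimate $|e^{B+E}-1-B|\le C(|B|^2+|E|)$ with $|B|\le Cn^{-1}|\theta|^4$ and $|E|\le Cn^{-2}|\theta|^6$. There are two harmless slips: $\sum_j|u_j|^2\le49(n+1)/n$ rather than $49$, and finite sixth moments already suffice for the $O(s^6)$ remainder, since $\log\phi$ is even and $C^6$ near $0$.
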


\begin{proof}
Let $f(u)=\E e^{iu\xi_0}$. It is real, even and six times differentiable,
and the cumulants of $\xi_0$ of orders one to five are $0,1,0,\kappa_4,0$.
So there is $u_0\in(0,1]$ such that on $[-u_0,u_0]$ we have $f\ge\frac12$,
$f(u)\le e^{-u^2/4}$, and $E_0(u)=\log f(u)+u^2/2-\kappa_4u^4/24$
satisfies $|E_0(u)|\le Cu^6$.

(a) Here $|\theta\cdot u_j|\le7|\theta|n^{-1/2}\le u_0$, so
$|\widehat\mu_W(\theta)|=\prod_jf(\theta\cdot u_j)\le e^{-\theta^T\Sigma_W\theta/4}\le e^{-\gamma_*|\theta|^2/8}$.

(b) For $n\ge n_0$ all $|u_j\cdot\theta|\le7n^{-2/5}\le u_0$, so
$\widehat\mu_W(\theta)=e^{-\theta^T\Sigma_W\theta/2}e^{B(\theta)+E(\theta)}$
with $E(\theta)=\sum_jE_0(u_j\cdot\theta)$. Since
$\sum_j|u_j|^4\le Cn^{-1}$ and $\sum_j|u_j|^6\le Cn^{-2}$,
\[
 |B(\theta)|\le Cn^{-1}|\theta|^4\le Cn^{-3/5},\qquad
 |E(\theta)|\le Cn^{-2}|\theta|^6\le Cn^{-7/5}.
\]
Hence $|e^{B+E}-1-B|\le|e^B-1-B|+e^{|B|}|e^E-1|\le C(|B|^2+|E|)\le Cn^{-2}(1+|\theta|)^8$,
and $\theta^T\Sigma_W\theta\ge\gamma_*|\theta|^2/2$ gives (b).
\end{proof}

\begin{lemma}[Small balls from decay]\label{cu:smallball}
Let $n\ge n_0$, and let $W$ be a well-conditioned sum of dimension $d\le12$
such that $\widehat\mu_W$ decays. Then
\[
 \Prob(|W-w|\le r)\le Cr^d\qquad(w\in\R^d,\ r\ge n^{-26/25}).
\]
\end{lemma}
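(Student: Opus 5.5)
We bound the small-ball probability by smoothing with a bump whose Fourier transform is compactly supported, and then integrating the characteristic function over three frequency ranges. The ranges are the low range $|\theta|\le n^{1/10}$ handled by Lemma~\ref{cu:charfun}(b), the middle range $n^{1/10}\le|\theta|\le n^{26/25}$ handled by the decay hypothesis \eqref{cu:decay}, and the high range, which the smoothing removes.

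\emph{The majorant.} Fix once and for all a nonnegative Schwartz function $\beta$ on $\R^d$ with $\beta\ge1$ on the unit ball and $\widehat\beta$ supported in the unit ball. One can take $\beta=|\check\eta|^2$ for a suitable bump $\eta$ supported in the ball of radius $\frac12$, rescaled so that $\beta\ge1$ near the origin. For $r\ge n^{-26/25}$ put $\beta_r(y)=\beta((y-w)/r)$. Then $\ind\{|y-w|\le r\}\le\beta_r(y)$. Fourier inversion gives
\[
 \Prob(|W-w|\le r)\le\E\beta_r(W)=\frac{r^d}{(2\pi)^d}\int_{|\theta|\le1/r}\widehat\beta(r\theta)e^{-i\theta\cdot w}\widehat\mu_W(\theta)\dd\theta ,
\]
with $|\widehat\beta|\le C$. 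Since $1/r\le n^{26/25}$, only frequencies $|\theta|\le n^{26/25}$ enter. It therefore suffices to show $\int_{|\theta|\le n^{26/25}}|\widehat\mu_W(\theta)|\dd\theta\le C$.

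\emph{The low and middle ranges.} On $|\theta|\le n^{1/10}$, Lemma~\ref{cu:charfun}(b) bounds $|\widehat\mu_W|$ by $e^{-\theta^T\Sigma_W\theta/2}(1+|B(\theta)|)+Cn^{-2}(1+|\theta|)^8e^{-\gamma_*|\theta|^2/4}$. Well-conditioning gives $\Sigma_W\ge\frac{\gamma_*}2I$ and $|B(\theta)|\le Cn^{-1}|\theta|^4$, so this part of the integral is at most $C$. Part (a) of the lemma would also suffice here. On $n^{1/10}\le|\theta|\le n^{26/25}$, decay gives $|\widehat\mu_W|\le n^{-20}$. The volume of this shell is at most $Cn^{26d/25}\le Cn^{13}$ since $d\le12$, so its contribution is at most $Cn^{-7}$. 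Together, $\E\beta_r(W)\le Cr^d$.

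\emph{Expected difficulty.} The only point needing care is the construction of $\beta$, so that a compactly supported Fourier transform coexists with a pointwise majorant of the indicator. The squared modulus of an inverse transform handles this. The threshold $r\ge n^{-26/25}$ is exactly what keeps the support of $\widehat\beta_r$ inside the range where decay is assumed. Everything else is routine bookkeeping, and the constant depends only on $\gamma_*$, $d\le12$, and the law of $\xi_0$ through Lemma~\ref{cu:charfun}.
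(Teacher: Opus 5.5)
Your proof is correct and is essentially the paper's argument. The paper also majorizes the ball indicator by the squared Fourier transform of a compactly supported function (the indicator of the ball of radius $\frac12$, where you use a smooth bump $\eta$), which gives $\Prob(|W-w|\le r)\le C_dr^d\int_{|\theta|\le1/r}|\widehat\mu_W(\theta)|\dd\theta$, and it then splits the frequencies at $n^{1/10}$ exactly as you do, using Lemma~\ref{cu:charfun}(a) on the low range, which you note also suffices.
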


\begin{proof}
By Esseen's inequality, for every random vector $W$ in $\R^d$,
$w\in\R^d$ and $r>0$,
\begin{equation}\label{cu:esseen}
 \Prob(|W-w|\le r)\le C_dr^d\int_{|\theta|\le1/r}|\widehat\mu_W(\theta)|\dd\theta .
\end{equation}
To prove it, let $b$ be the indicator of the ball of radius $\frac12$
about the origin, $V_d=\int b$, and
$H(x)=\int(b*b)(\theta)e^{i\theta\cdot x}\dd\theta=(\int b(y)e^{iy\cdot x}\dd y)^2$.
It is real, nonnegative and continuous with $H(0)=V_d^2$, so
$H\ge V_d^2/2$ on a ball of radius $c_d\in(0,1]$. The function $b*b$ is
supported in the unit ball and bounded by $V_d$. For $r'>0$, Fubini's
theorem gives
\[
 \E H\Bigl(\frac{W-w}{r'}\Bigr)=\int(b*b)(\theta)\,\E e^{i\theta\cdot(W-w)/r'}\dd\theta
 \le V_dr'^d\int_{|\theta|\le1/r'}|\widehat\mu_W(\theta)|\dd\theta .
\]
With $r'=r/c_d$, $\Prob(|W-w|\le r)\le(2/V_d^2)\E H((W-w)/r')$, and
$\{|\theta|\le c_d/r\}\subset\{|\theta|\le1/r\}$.

By \eqref{cu:esseen} it suffices to bound $\int_{|\theta|\le1/r}|\widehat\mu_W|$
by a constant. On $|\theta|\le n^{1/10}$, Lemma~\ref{cu:charfun}(a)
bounds the integrand by $e^{-\gamma_*|\theta|^2/8}$. On
$n^{1/10}\le|\theta|\le1/r\le n^{26/25}$, the decay \eqref{cu:decay}
bounds it by $n^{-20}$, and this range has volume at most
$Cn^{26d/25}\le Cn^{13}$. Both parts contribute at most $C$.
\end{proof}

For the rest of this subsection let $s,t\in[1,\pi n-1]$ with
$|s-t|\ge R_0$, let $u_j=(v_j(s),v_j(t))$ be the rows of
$W=(Z_n(s),Z_n(t))$, let $S=\sum_ju_ju_j^T$ be the matrix $S_1$ of
Lemma~\ref{cu:window}(iii), and let $\varphi_S$ be the centered normal
density on $\R^6$ with covariance $S$. The expansion of
Lemma~\ref{cu:charfun}(b) is the Fourier transform of
\[
 p_S=\varphi_S+\frac{\kappa_4}{24}\sum_{j=0}^n(u_j\cdot\nabla)^4\varphi_S ,
\]
and decay extends it to all frequencies that a smooth test function
sees.

\begin{lemma}[Edgeworth expansion against test functions]\label{cu:edgeworth}
Let $n\ge n_0$, and suppose that $\widehat\mu_W$ decays. Let $A\ge0$, let
$\Theta$ be measurable on $\R^6$ with
$|\Theta(\theta)|\le Ae^{-\delta'^2|\theta|^2/2}$, and let
$F(\zeta)=(2\pi)^{-6}\int\Theta(\theta)e^{i\theta\cdot\zeta}\dd\theta$. Then
\[
 \Bigl|\E F(W)-\int F\,p_S\Bigr|\le CAn^{-2}.
\]
\end{lemma}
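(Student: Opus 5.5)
The plan is to pass to the Fourier side and split the frequency range into three pieces. First, Fubini applies: $F$ is the inverse transform of an integrable $\Theta$, since $|\Theta(\theta)|\le Ae^{-\delta'^2|\theta|^2/2}$. This gives
\[
 \E F(W)=(2\pi)^{-6}\int\Theta(\theta)\widehat\mu_W(\theta)\dd\theta .
\]
The density $p_S$ is a Schwartz function whose Fourier transform is $e^{-\theta^TS\theta/2}(1+B(\theta))$, with $B$ as in Lemma~\ref{cu:charfun}(b). Hence, by Parseval,
\[
 \int F\,p_S=(2\pi)^{-6}\int\Theta(\theta)\,e^{-\theta^TS\theta/2}\bigl(1+B(\theta)\bigr)\dd\theta .
\]
The difference is therefore at most $(2\pi)^{-6}A\int e^{-\delta'^2|\theta|^2/2}\bigl|\widehat\mu_W(\theta)-\widehat p_S(\theta)\bigr|\dd\theta$. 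It remains to bound this integral by $Cn^{-2}$, splitting into the low range $|\theta|\le n^{1/10}$, the middle range $n^{1/10}\le|\theta|\le n^{26/25}$, and the high range $|\theta|\ge n^{26/25}$.

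\emph{Low range.} Here $W$ is well conditioned, as noted after Definition~\ref{cu:well-conditioned}, because $|s-t|\ge R_0$ and its Gram matrix is $S=S_1\ge\frac{\gamma_*}2I$. Lemma~\ref{cu:charfun}(b) then bounds the integrand by $Cn^{-2}(1+|\theta|)^8e^{-\gamma_*|\theta|^2/4}$. This is integrable on $\R^6$ and contributes $Cn^{-2}$.

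\emph{Middle range.} Here I use decay \eqref{cu:decay} for the term $\widehat\mu_W$: it gives $|\widehat\mu_W|\le n^{-20}$ on a region of volume at most $Cn^{156/25}$, so the contribution is far below $n^{-2}$. For the term $\widehat p_S$, note that $S\ge\frac{\gamma_*}2I$ and $|B(\theta)|\le C|\theta|^4/n$, since $\sum_j|u_j|^4\le Cn^{-1}$. So $|\widehat p_S|\le C(1+|\theta|^4)e^{-\gamma_*|\theta|^2/4}$, which is at most $e^{-cn^{1/5}}$ on this range.

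\emph{High range.} Both transforms are bounded: $|\widehat\mu_W|\le1$, and $\widehat p_S$ is superexponentially small there. The Gaussian factor $e^{-\delta'^2|\theta|^2/2}$ takes over. With $\delta'|\theta|\ge n^{26/25-103/100}=n^{1/100}$, polar coordinates give
\[
 \int_{|\theta|\ge n^{26/25}}e^{-\delta'^2|\theta|^2/2}\dd\theta\le C\delta'^{-6}\int_{\rho\ge n^{1/100}}\rho^5e^{-\rho^2/2}\dd\rho\le Cn^{6.18}e^{-n^{1/50}/4},
\]
which is $o(n^{-2})$.

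\emph{Main obstacle.} The only delicate point is the bookkeeping at the borders between ranges. The interval $[n^{1/10},n^{26/25}]$ must be exactly the range covered by \eqref{cu:decay}, and the expansion of Lemma~\ref{cu:charfun}(b) must be valid up to $n^{1/10}$; both hold by the choices of $\delta'$ and the decay window. A secondary point is to justify the identification of $\widehat p_S$, which is differentiation under the Fourier transform of the Gaussian. Since $(u_j\cdot\nabla)^4$ corresponds to multiplication by $(u_j\cdot\theta)^4$, with the sign $i^4=1$, this is routine.
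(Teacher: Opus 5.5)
Your proposal is correct and follows the paper's proof essentially step for step. Both arguments use Fubini to pass to $\int\Theta\,(\widehat\mu_W-\widehat p_S)$ and identify $\widehat p_S$ by integrating by parts four times. Both then split at $n^{1/10}$ and $n^{26/25}$, using Lemma~\ref{cu:charfun}(b) below, decay plus Gaussian smallness of $\widehat p_S$ in between, and the factor $e^{-\delta'^2|\theta|^2/2}$ with $\delta'n^{26/25}=n^{1/100}$ above. The only cosmetic difference is that the paper bounds $|\widehat p_S|\le Ce^{-\theta^TS\theta/4}$ via $\sum_j(u_j\cdot\theta)^4\le(\theta^TS\theta)^2$ rather than via $\sum_j|u_j|^4\le Cn^{-1}$.
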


\begin{proof}
The function $p_S$ is integrable, so Fubini's theorem gives
$\E F(W)=(2\pi)^{-6}\int\Theta(\theta)\widehat\mu_W(\theta)\dd\theta$ and
$\int Fp_S=(2\pi)^{-6}\int\Theta(\theta)\tilde p(\theta)\dd\theta$, where,
integrating by parts four times,
\[
 \tilde p(\theta)=\int p_S(z)e^{i\theta\cdot z}\dd z
 =e^{-\theta^TS\theta/2}\Bigl(1+\frac{\kappa_4}{24}\sum_j(u_j\cdot\theta)^4\Bigr).
\]
So the difference is at most
$(2\pi)^{-6}\int|\Theta||\widehat\mu_W-\tilde p|$. Put
$y=\theta^TS\theta=\sum_j(u_j\cdot\theta)^2$. Since
$\sum_j(u_j\cdot\theta)^4\le y^2$,
\begin{equation}\label{cu:ptilde}
 |\tilde p(\theta)|\le e^{-y/2}\Bigl(1+\frac{|\kappa_4|y^2}{24}\Bigr)\le Ce^{-y/4},
 \qquad y\ge\frac{\gamma_*}2|\theta|^2 .
\end{equation}
Put $T=n^{26/25}$. On $|\theta|\le n^{1/10}$, Lemma~\ref{cu:charfun}(b)
gives $|\widehat\mu_W-\tilde p|\le Cn^{-2}(1+|\theta|)^8e^{-\gamma_*|\theta|^2/4}$,
and the integral over this range is at most $CAn^{-2}$. On
$n^{1/10}<|\theta|\le T$, the decay gives $|\widehat\mu_W|\le n^{-20}$,
and $|\tilde p|\le Ce^{-\gamma_*n^{1/5}/8}$ by \eqref{cu:ptilde}. This
range has volume at most $Cn^{156/25}$, so the integral is at most
$CAn^{156/25}(n^{-20}+e^{-\gamma_*n^{1/5}/8})\le CAn^{-2}$. On
$|\theta|>T$, $|\widehat\mu_W-\tilde p|\le C$ by \eqref{cu:ptilde}, and
since $\delta'T=n^{1/100}$,
\[
 \int_{|\theta|>T}|\Theta|\dd\theta
 \le Ae^{-\delta'^2T^2/4}\int e^{-\delta'^2|\theta|^2/4}\dd\theta
 =(4\pi)^3A\delta'^{-6}e^{-n^{1/50}/4}\le An^{-2}.
 \qedhere
\]
\end{proof}

Lemma~\ref{cu:edgeworth} and a truncation of $\psi\otimes\psi$ will
reduce $\E\,\psi(Z_n(s))\psi(Z_n(t))$ to the Gaussian integral
$\int\psi\otimes\psi\,p_S$. This integral is small because $\psi$ is odd
in its first coordinate while the Gaussian coordinates at $s$ and $t$
are nearly independent.

\begin{lemma}[Two-point Gaussian integrals]\label{cu:two-point}
Let $n\ge n_0$, and let $s,t$, $S$, $u_j$ and $\varphi_S$ be as above.
\begin{enumerate}[label=\textup{(\alph*)}]
 \item $\bigl|\iint\psi(z)\psi(z')\varphi_S(z,z')\dd z\dd z'\bigr|\le C\varepsilon_{\delta,M}(1+|s-t|)^{-2}$.
 \item $\bigl|\iint\psi(z)\psi(z')(u_j\cdot\nabla)^4\varphi_S(z,z')\dd z\dd z'\bigr|\le C\varepsilon_{\delta,M}|u_j|^4$ for every $j$.
 \item $|\partial^\beta\varphi_S(\zeta)|\le C_\beta e^{-|\zeta|^2/20}$ for every multi-index $\beta$ and $\zeta\in\R^6$.
 \item $\int_{\R^3}|\psi(z)|(1+|z|)^{-10}\dd z\le C$.
\end{enumerate}
\end{lemma}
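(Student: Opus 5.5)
I would prove the four parts in the order (c), (d), (a), (b), since (c) and (d) are the analytic inputs the two integral estimates need.

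\emph{Part (c).} By Lemma~\ref{cu:window}(iii), $\frac{\gamma_*}2I\le S\le5I$. Write $\varphi_S(\zeta)=(2\pi)^{-3}(\det S)^{-1/2}e^{-\zeta^TS^{-1}\zeta/2}$. Each derivative $\partial^\beta\varphi_S$ is a polynomial in $S^{-1}\zeta$ and the entries of $S^{-1}$, with bounded coefficients, times $\varphi_S$. Since $\zeta^TS^{-1}\zeta\ge|\zeta|^2/5$, the polynomial factor is absorbed by lowering $1/10$ to $1/20$ in the exponent, which gives (c).

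\emph{Part (d).} Use $|\Psi_{\kappa,M}(u,v,w)|\le\rho_\kappa(v)|w|$ from Lemma~\ref{cu:jets}(i). This gives
\[
 \int|\psi|(1+|z|)^{-10}\dd z\le\sum_{\kappa\in\{\delta,\delta'\}}\int\rho_\kappa(v)|w|(1+|z|)^{-10}\dd z.
\]
Integrate in $v$ first: $\int\rho_\kappa(v)g(v)\dd v\le\sup g$ for the bounded function $g(v)=\int|w|(1+|z|)^{-10}\dd u\dd w$. The result is uniform in $\kappa$, which gives (d).

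\emph{Part (a).} Write $S$ as the block-diagonal matrix $S_0=\mathrm{diag}(\Sigma(s),\Sigma(t))$ plus the cross block $\Gamma=\Gamma(s,t)$. Interpolate along $S_{\tau'}$, $\tau'\in[0,1]$, from Lemma~\ref{cu:window}(iii); every $S_{\tau'}$ is uniformly well conditioned there. The heat-equation identity gives
\[
 \partial_{\tau'}\varphi_{S_{\tau'}}=\sum_{a,b}\Gamma_{ab}\,\partial_{z_a}\partial_{z'_b}\varphi_{S_{\tau'}}.
\]
I would Taylor expand to second order in $\tau'$. The zeroth-order term is the product $\int\psi\varphi_{\Sigma(s)}\cdot\int\psi\varphi_{\Sigma(t)}$. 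It vanishes because $\Sigma_{12}=\Sigma_{13}=0$: the $u$-coordinate is independent of $(v,w)$ and centered, and $\psi$ is odd in $u$. The first-order term is a product of two integrals $\int\psi\,\partial_a\varphi_\Sigma$. For these to be nonzero, $a$ must be the $u$-coordinate, by the same parity argument. In that case I bound each factor by $C\varepsilon_{\delta,M}^{1/2}$, or one factor by $C\varepsilon$ and the other by $C$.

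The key one-point estimate is
\[
 \Bigl|\int\psi\,\partial^\beta\varphi_{\Sigma}\Bigr|\le C_\beta\varepsilon_{\delta,M}.
\]
To prove it, integrate by parts in $u$ and $v$, moving the derivatives onto the Gaussian. This reduces the claim to comparing $h_\kappa(u)\rho_\kappa(v)$ at the two widths $\kappa=\delta,\delta'$ against Gaussian test functions. Both $h_\kappa\to\sgn$ and $\rho_\kappa\to\delta_0$ converge at rate $O(\kappa)$ against smooth rapidly decaying test functions, via Gaussian convolution. The truncation $\ell_M$ appears identically in both terms and costs nothing extra. This gives $O(\delta)$, and hence $\varepsilon_{\delta,M}$.

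The second-order remainder carries $|\Gamma|^2\le C(1+|s-t|)^{-2}$ by Lemma~\ref{cu:window}(ii). Its integrand is $\psi\otimes\psi$ against $\partial^4\varphi_{S_{\tau'}}$, which is not a product. I handle this by writing $\psi=\Psi_{\delta',M}-\Psi_{\delta,M}$ and bounding it with (d) and (c). To get the factor $\varepsilon$ as well, I use the one-point estimate in the $z$-variable with $z'$ frozen. Concretely, by (c), $(z,z')\mapsto\partial^4\varphi_{S_{\tau'}}$ is a smooth test function in $z$ whose seminorms are bounded by $e^{-|z'|^2/20}$. Integrating $|\psi(z')|$ against that weight then yields $C\varepsilon(1+|s-t|)^{-2}$.

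\emph{Part (b).} Expand $(u_j\cdot\nabla)^4\varphi_S$ as $|u_j|^4$ times a bounded combination of fourth derivatives of $\varphi_S$. For each derivative term, apply the frozen-variable one-point argument from (a), which shows the integral against $\psi\otimes\psi$ is at most $C\varepsilon$.

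\emph{Main obstacle.} The hardest step is the uniform rate $O(\kappa)$ for $\int h_\kappa(u)\rho_\kappa(v)\ell_M(w)\,g(u,v,w)$, where $g$ is a smooth Gaussian-tail function with possibly nonzero correlation between $v$ and $w$ (through $\Sigma_{23}$). I would first try to write $\rho_\kappa*g$ and $h_\kappa$ as Gaussian convolutions and expand to first order in $\kappa$.
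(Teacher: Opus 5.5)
\paragraph{Gap: the first-order term in (a).} Your parts (b), (c) and (d) match the paper's proof, and so does your treatment of the second-order remainder in (a). The ingredients there are the envelope bound from Lemma~\ref{cu:window}(iii), the bound $|\psi|\le(\rho_\delta+\rho_{\delta'})(v)|w|$, and the frozen-variable estimate $\bigl|\iint\psi\otimes\psi\,\partial^\beta\varphi_{S_{\tau'}}\bigr|\le C\varepsilon_{\delta,M}$ for $|\beta|=4$. The gap is the first-order term of your Taylor expansion in (a). With $\gamma(\tau')=\iint\psi\otimes\psi\,\varphi_{S_{\tau'}}$ it is
\[
 \gamma'(0)=\sum_{a,b}\Gamma_{ab}\int\psi\,\partial_a\varphi_{\Sigma(s)}\int\psi\,\partial_b\varphi_{\Sigma(t)},
\]
and it carries only one factor of $\Gamma$.

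Bounding one factor by $C\varepsilon_{\delta,M}$ and the other by $C$ gives $C\varepsilon_{\delta,M}\|\Gamma\|\le C\varepsilon_{\delta,M}(1+|s-t|)^{-1}$. That is one power of $(1+|s-t|)$ short of (a). Nothing in your argument produces the $\varepsilon^{1/2}$ you mention. Bounding both factors by $C\varepsilon_{\delta,M}$ gives $C\varepsilon_{\delta,M}^2(1+|s-t|)^{-1}$. This still fails once $|s-t|\gg\varepsilon_{\delta,M}^{-1}\approx n^{1/25}$, and such separations occur because $s,t$ range over $[1,\pi n-1]$. The weaker decay would in fact still suffice downstream in Proposition~\ref{cu:unsmooth}, at the cost of a factor $\log n$, but it does not prove the lemma as stated.

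\paragraph{The fix.} This term vanishes identically. By Lemma~\ref{cu:jets}(ii), $\psi(-z)=\psi(z)$, while $\varphi_\Sigma$ is even, so $\partial_a\varphi_\Sigma$ is odd and $\int\psi\,\partial_a\varphi_\Sigma=0$ for every $a$. This includes the $u$-coordinate, which your parity argument in $u$ alone leaves open. Equivalently, for $a=u$ use $\Psi(u,-v,-w)=-\Psi(u,v,w)$ together with the invariance of $\varphi_\Sigma$ under $(v,w)\mapsto(-v,-w)$. With $\gamma(0)=\gamma'(0)=0$, Taylor's formula gives $|\gamma(1)|\le\frac12\sup_{\tau'}|\gamma''(\tau')|\le C\varepsilon_{\delta,M}\|\Gamma\|^2$, which is (a).

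\paragraph{Your ``main obstacle''.} It is easier than you expect. Take $f$ with $|f|\le Ke^{-c|z|^2}$ and $|f(u,v,w)-f(u,0,w)|\le K|v|e^{-c(u^2+w^2)}$. Replacing the $\rho_\kappa(v)\dd v$ average by the value at $v=0$ costs $O(K\kappa)$, and so does then replacing $h_\kappa$ by $\sgn$. The correlation between $v$ and $w$ plays no role.
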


\begin{proof}
\emph{Kernel bounds.} Let $f$ be measurable on $\R^3$ with
$|f(z)|\le Ke^{-c|z|^2}$ and
$|f(u,v,w)-f(u,0,w)|\le K|v|e^{-c(u^2+w^2)}$. For $\kappa\in(0,1]$ and
$L\ge1$ we compare $\int h_\kappa(u)\rho_\kappa(v)\ell_L(w)f$ with
$\iint\sgn(u)\,w\,f(u,0,w)\dd u\dd w$ in three steps. Replacing
$\ell_L(w)$ by $w$ costs at most $CKL^{-8}$, since
$|\ell_L(w)-w|\le|w|^9L^{-8}$. Replacing the average over
$\rho_\kappa(v)\dd v$ by the value at $v=0$ costs at most
$CK\int|v|\rho_\kappa(v)\dd v\le CK\kappa$. Replacing $h_\kappa$ by
$\sgn$ costs at most $CK\int|h_\kappa-\sgn|=CK\kappa$. Applying this to
both terms of $\psi$, with $\delta'\le\delta$,
\begin{equation}\label{cu:test}
 \Bigl|\int_{\R^3}\psi f\Bigr|\le C_cK\varepsilon_{\delta,M},\qquad
 \int_{\R^3}|\psi(z)|\bigl(e^{-c|z|^2}+(1+|z|)^{-10}\bigr)\dd z\le C_c,
\end{equation}
the second because $|\psi(u,v,w)|\le(\rho_{\delta'}(v)+\rho_\delta(v))|w|$.
The second bound gives (d).

\emph{Gaussian envelopes.} If $\lambda_0I\le S'\le\lambda_1I$ in dimension
$r$, then $\partial^\beta\varphi_{S'}=P_\beta\varphi_{S'}$ for a polynomial
$P_\beta$ of degree $|\beta|$ whose coefficients are bounded in terms of
$\|S'^{-1}\|\le\lambda_0^{-1}$, and
$\varphi_{S'}(\zeta)\le(2\pi\lambda_0)^{-r/2}e^{-|\zeta|^2/2\lambda_1}$.
Hence $|\partial^\beta\varphi_{S'}(\zeta)|\le C_\beta e^{-|\zeta|^2/4\lambda_1}$,
uniformly over such $S'$. By Lemma~\ref{cu:window}(iii) the matrices
$S_{\tau'}$, $0\le\tau'\le1$, satisfy $\frac{\gamma_*}2I\le S_{\tau'}\le5I$,
which gives (c). Combining these envelopes with the first bound of
\eqref{cu:test} in the first block and the second bound in the other
block, for $|\beta|=4$ and $0\le\tau'\le1$,
\begin{equation}\label{cu:two-block}
 \Bigl|\iint\psi(z)\psi(z')\,\partial^\beta\varphi_{S_{\tau'}}(z,z')\dd z\dd z'\Bigr|\le C\varepsilon_{\delta,M}.
\end{equation}

\emph{Gaussian term.} Write $\Gamma=\Gamma(s,t)$. In Fourier variables
$(\theta,\theta')$,
$\partial_{\tau'}e^{-(\theta,\theta')S_{\tau'}(\theta,\theta')^T/2}
=-\theta^T\Gamma\theta'e^{-(\theta,\theta')S_{\tau'}(\theta,\theta')^T/2}$, and
$-\theta_p\theta'_q$ corresponds to $\partial_{z_p}\partial_{z'_q}$.
Differentiating the inverse Fourier integral, which is justified by the
uniform Gaussian decay,
\[
 \partial_{\tau'}\varphi_{S_{\tau'}}=\sum_{p,q}\Gamma_{pq}\partial_{z_p}\partial_{z'_q}\varphi_{S_{\tau'}},\qquad
 \partial_{\tau'}^2\varphi_{S_{\tau'}}
 =\sum_{p,q,p',q'}\Gamma_{pq}\Gamma_{p'q'}\partial_{z_p}\partial_{z_{p'}}\partial_{z'_q}\partial_{z'_{q'}}\varphi_{S_{\tau'}}.
\]
Let $\gamma(\tau')=\iint\psi(z)\psi(z')\varphi_{S_{\tau'}}(z,z')$. At
$\tau'=0$ the density factors. Since $\Sigma_{12}=\Sigma_{13}=0$
(Lemma~\ref{cu:geometry}(ii)), $\varphi_{\Sigma(s)}$ is even in its first
coordinate, while $\psi$ is odd in it by Lemma~\ref{cu:jets}(ii), so
$\gamma(0)=0$. Since $\psi$ is even and $\partial_p\varphi_{\Sigma}$ is
odd, $\gamma'(0)=0$. By \eqref{cu:two-block},
$|\gamma''(\tau')|\le C\varepsilon_{\delta,M}\|\Gamma\|^2$, and
$\|\Gamma\|\le C(1+|s-t|)^{-1}$ by Lemma~\ref{cu:window}(ii). Taylor's
formula gives (a).

\emph{Quartic term.} Expanding $(u_j\cdot\nabla)^4$ into fourth partial
derivatives, \eqref{cu:two-block} with $\tau'=1$ gives (b).
\end{proof}

To apply Lemma~\ref{cu:edgeworth} to $\psi\otimes\psi$, we replace $\psi$
by a function whose Fourier transform has Gaussian decay. Let
$\sigma=\delta'$ and $L=n^{1/5}$, and for $\kappa>0$ put
\[
 \tilde h_{\kappa,L}(u)=\E\bigl[\sgn(u-\kappa G_0)\ind_{|u-\kappa G_0|\le L}\bigr],\qquad
 \tilde\ell_{\sigma,M}(w)=\E\,\ell_M(w-\sigma G_0).
\]
Thus $\tilde h_{\kappa,L}=g_L*\rho_\kappa$ with $g_L=\sgn\cdot\ind_{[-L,L]}$,
and $\tilde\ell_{\sigma,M}=\ell_M*\rho_\sigma$. Also
$|\tilde h_{\kappa,L}|\le1$ and $|\tilde\ell_{\sigma,M}|\le2M$. Since
$h_\kappa(u)=\E\sgn(u-\kappa G_0)$,
\begin{gather}
 |h_\kappa(u)-\tilde h_{\kappa,L}(u)|\le\ind_{|u|\ge L/2}+\Prob\bigl(\kappa|G_0|>L/2\bigr),\label{cu:trunc}\\
 |\ell_M(w)-\tilde\ell_{\sigma,M}(w)|\le C\sigma^2/M,\qquad
 |\ell_M(w)-\tilde\ell_{\sigma,M}(w)|\le48\sigma^4/M^3\quad(|w|\le M/2).\label{cu:wcut}
\end{gather}
Indeed, $\sgn y$ and $\sgn y\,\ind_{|y|\le L}$ differ by at most one, and
only for $|y|>L$, which for $|u|<L/2$ and $y=u-\kappa G_0$ requires
$\kappa|G_0|>L/2$. Since $\E G_0=0$, Taylor's formula to second order
gives $|\tilde\ell_{\sigma,M}(w)-\ell_M(w)|\le\frac12\|\ell_M''\|_\infty\sigma^2\le C\sigma^2/M$.
If $|w|\le M/2$, then $\ell_M(w)=w$ and
$\tilde\ell_{\sigma,M}(w)-w=\E[\ell_M(y)-y]$ with $y=w-\sigma G_0$. The
integrand vanishes unless $|y|>M$, which forces $2\sigma|G_0|>M$, and
then $|\ell_M(y)-y|\le|y|<2\sigma|G_0|\le16\sigma^4G_0^4/M^3$. As
$\E G_0^4=3$, this proves \eqref{cu:wcut}. Put
\[
 \tilde\psi(u,v,w)=\bigl(\tilde h_{\delta',L}(u)\rho_{\delta'}(v)-\tilde h_{\delta,L}(u)\rho_\delta(v)\bigr)\tilde\ell_{\sigma,M}(w),
 \qquad F(z,z')=\tilde\psi(z)\tilde\psi(z').
\]
Then there is an integrable $\Theta$ on $\R^6$ with
\begin{equation}\label{cu:testfn}
 |\Theta(\theta)|\le256L^2M^4e^{-\delta'^2|\theta|^2/2},\qquad
 F(\zeta)=(2\pi)^{-6}\int\Theta(\theta)e^{i\theta\cdot\zeta}\dd\theta\quad(\zeta\in\R^6).
\end{equation}
Indeed, with $\hat g(\theta)=\int g(x)e^{-i\theta\cdot x}\dd x$, we have
$|\hat g_L|\le\|g_L\|_{L^1}=2L$,
$|\hat\ell_M|\le\int_{|w|\le2M}|w|\dd w=4M^2$ and
$\hat\rho_\kappa(\theta)=e^{-\kappa^2\theta^2/2}$. So the continuous
functions $\tilde h_{\kappa,L}$, $\rho_\kappa$ and $\tilde\ell_{\sigma,M}$
are the inverse transforms of functions bounded by
$2Le^{-\delta'^2\theta^2/2}$, $e^{-\delta'^2\theta^2/2}$ and
$4M^2e^{-\delta'^2\theta^2/2}$, because $\kappa,\sigma\ge\delta'$. The
function $F$ is a sum of four products of six such functions, one in each
coordinate, so it is the inverse transform of a sum $\Theta$ of four
products of transforms, each at most
$(2L\cdot4M^2)^2e^{-\delta'^2|\theta|^2/2}$.

\begin{lemma}[Distant cells]\label{cu:far}
Let $n\ge n_0$, let $s,t$ and $W$ be as above, and suppose that
$\widehat\mu_W$ decays. Then
\[
 \bigl|\E\,\psi(Z_n(s))\psi(Z_n(t))\bigr|
 \le C\varepsilon_{\delta,M}\bigl((1+|s-t|)^{-2}+n^{-1}\bigr)+Cn^{-3/2}.
\]
\end{lemma}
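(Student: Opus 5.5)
The plan is to replace $\psi\otimes\psi$ by the Fourier-regular test function $F=\tilde\psi\otimes\tilde\psi$ of \eqref{cu:testfn}, and then to apply the Edgeworth expansion, Lemma~\ref{cu:edgeworth}, followed by the two-point Gaussian bounds of Lemma~\ref{cu:two-point}. Concretely, I would write
\[
 \E\,\psi(Z_n(s))\psi(Z_n(t))=\E F(W)+\E\bigl[(\psi\otimes\psi-F)(W)\bigr],
\]
and $\E F(W)=\int F\,p_S+O(L^2M^4n^{-2})$ by Lemma~\ref{cu:edgeworth} with $A=256L^2M^4$. Since $L^2M^4=n^{2/5+1/50}$, this error is $O(n^{-3/2})$. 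Next I would write $\int Fp_S=\int(\psi\otimes\psi)p_S+\int(F-\psi\otimes\psi)p_S$. The main term $\int(\psi\otimes\psi)p_S$ splits into the Gaussian part, bounded by $C\varepsilon_{\delta,M}(1+|s-t|)^{-2}$ by Lemma~\ref{cu:two-point}(a), and the quartic part $\frac{\kappa_4}{24}\sum_j$ of terms each bounded by $C\varepsilon_{\delta,M}|u_j|^4$ by Lemma~\ref{cu:two-point}(b). Since $\sum_j|u_j|^4\le Cn^{-1}$ for a well-conditioned sum, the quartic part gives the $C\varepsilon_{\delta,M}n^{-1}$ term.

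It remains to bound the two truncation errors $\E|(\psi\otimes\psi-F)(W)|$ and $\int|\psi\otimes\psi-F|\,|p_S|$ by $Cn^{-3/2}$. I would write $\psi\otimes\psi-F=(\psi-\tilde\psi)\otimes\psi+\tilde\psi\otimes(\psi-\tilde\psi)$. Each piece of $\psi-\tilde\psi$ comes from \eqref{cu:trunc} or \eqref{cu:wcut}, that is, from the $u$-truncation at $L/2$, from the Gaussian tail $\Prob(\kappa|G_0|>L/2)$, or from the $w$-smoothing of size $\sigma^2/M$ (or $\sigma^4/M^3$ for $|w|\le M/2$), always multiplied by $\rho_\kappa(v)$ with $\kappa\in\{\delta,\delta'\}$.

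For the Gaussian integral, the envelope of Lemma~\ref{cu:two-point}(c) makes $|p_S|\le Ce^{-|\zeta|^2/20}$. Against this envelope, the region $|u|\ge L/2=n^{1/5}/2$ is superpolynomially small, and the $w$-error gives $\sigma^2/M\le n^{-2}$. Integrating $\rho_\kappa$ in $v$ is harmless, using Lemma~\ref{cu:two-point}(d) for the other factor.

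The expectation under the true law of $W$ is the main obstacle, because $\rho_{\delta'}(v)$ has height $\delta'^{-1}=n^{103/100}$ and $W$ has no density. I would control $\E[\rho_{\kappa}(Y_n(s))\,|\cdots|\,\rho_\kappa(Y_n(t))\cdots]$ by the decay-based small-ball bound, Lemma~\ref{cu:smallball}, at radius $r\ge n^{-26/25}$; this bound is available since $\widehat\mu_W$ decays, and its marginal subsums are well conditioned. I would decompose $\rho_{\delta'}$ dyadically in $|v|$ at scales $r_k=2^k\delta'$, where $\rho_{\delta'}\le C\delta'^{-1}e^{-4^k/2}$. The small-ball probability for the pair $(Y_n(s),Y_n(t))$ is then at most $Cr_k^2$. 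This gives $\E\rho_{\delta'}(Y_n(s))\rho_{\delta'}(Y_n(t))\le C$, and similarly $E\,\rho\otimes\rho$ times polynomial weights stays $O(1)$. The polynomial weights are $|w|$ and the indicator $\ind_{|u|\ge L/2}$; I would handle them by H\"older together with the moment bounds of Lemma~\ref{cu:moments} and Markov for $|X_n|\ge L/2$. One caveat is that the small ball must be applied jointly with the moment weights, so I would use Cauchy--Schwarz at the cost of a square root, leaving factors $\sqrt{\Prob(|X_n(s)|\ge n^{1/5}/2)}\le C_pn^{-p}$ and $(\sigma^2/M)\cdot O(1)$. Both are $O(n^{-3/2})$.

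The delicate point is that the dyadic small-ball bound needs radii at least $n^{-26/25}$, while $\delta'=n^{-103/100}$ is larger, so the condition holds. Summing these bounds proves the lemma.
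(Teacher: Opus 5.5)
Your proposal is correct. Its skeleton matches the paper's: the same four-term split into $\E(\psi\otimes\psi-F)(W)$, $\E F(W)-\int Fp_S$, $\int(F-\psi\otimes\psi)p_S$ and $\int\psi\otimes\psi\,p_S$. It also uses the same Edgeworth error $CL^2M^4n^{-2}=Cn^{-79/50}$, Lemma~\ref{cu:two-point}(a),(b) for the main term, and the Gaussian envelope for the third term.

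You genuinely differ only in the truncation term under the true law of $W$. The paper uses no small balls there. It bounds $r=\rho_{\delta'}+\rho_\delta$ crudely by $1/\delta'$, paying $\delta'^{-2}=n^{103/50}$. It absorbs this loss with the $L^{-1000}$ and $M^{-1000}$ tails from Lemma~\ref{cu:moments}, and with the second, quartic estimate $|\ell_M-\tilde\ell_{\sigma,M}|\le48\sigma^4/M^3$ on $|w|\le M/2$ in \eqref{cu:wcut}. The uniform bound $C\sigma^2/M$ would only give $O(1)$ against $\delta'^{-2}$, which is why that quartic estimate is there.

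You instead use the decay hypothesis a second time, through Lemma~\ref{cu:smallball} on the well-conditioned marginal $(Y_n(s),Y_n(t))$ and a dyadic decomposition. This gives $\E\,\rho_{\delta'}(Y_n(s))\rho_{\delta'}(Y_n(t))=O(1)$, so the uniform $\sigma^2/M$ bound suffices. For the mixed products with $\rho_\delta$, it is simplest to bound $\rho_\delta\le\delta^{-1}$ and use the one-dimensional small ball, losing only $n^{1/25}$. The paper's route is shorter and uses decay only inside Lemma~\ref{cu:edgeworth}. Yours is more robust to how $\ell_M$ is smoothed, but needs the extra small-ball argument.

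One point needs care. In the $w$-smoothing piece, bound the other factor by $|\psi|\le2Mr$ (or use H\"older with exponent close to one after truncating $|w'|$), and apply the small-ball bound to $\E[rr']$ directly; this gives $O(\sigma^2 n^{1/25})$. If you keep the weight $|w'|$ and use Cauchy--Schwarz as your sketch suggests, you get $C\sigma^2M^{-1}\|rr'\|_2\|w'\|_2$. The best available bound is $\|rr'\|_2\le C\delta'^{-1}$, so this yields only $O(\delta'/M)=O(n^{-207/200})$, which is not $O(n^{-3/2})$. Cauchy--Schwarz is harmless only for the $u$-truncation. There $\Prob(|X_n(s)|\ge L/2)$ is superpolynomially small, and even the crude $\|rr'\|_2\le\delta'^{-2}$ suffices.
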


\begin{proof}
Write $z=(u,v,w)$ and $z'=(u',v',w')$. We split
\[
 \E\,\psi\otimes\psi(W)=\E(\psi\otimes\psi-F)(W)+\Bigl(\E F(W)-\int Fp_S\Bigr)
 +\int(F-\psi\otimes\psi)\,p_S+\int\psi\otimes\psi\,p_S .
\]

\emph{Gaussian main term.} The last term equals
$\int\psi\otimes\psi\,\varphi_S+\frac{\kappa_4}{24}\sum_j\int\psi\otimes\psi\,(u_j\cdot\nabla)^4\varphi_S$,
and Lemma~\ref{cu:two-point}(a),(b) bounds its two parts by
$C\varepsilon_{\delta,M}(1+|s-t|)^{-2}$ and
$C\varepsilon_{\delta,M}\sum_j|u_j|^4\le C\varepsilon_{\delta,M}/n$.

\emph{Edgeworth term.} By Lemma~\ref{cu:edgeworth} and \eqref{cu:testfn},
$|\E F(W)-\int Fp_S|\le CL^2M^4n^{-2}=Cn^{-79/50}$.

\emph{Truncation terms.} Let $r=\rho_{\delta'}+\rho_\delta\le1/\delta'$,
$a_L(u)=\ind_{|u|\ge L/2}+\Prob(\delta|G_0|>L/2)$ and
$d(w)=|\ell_M(w)-\tilde\ell_{\sigma,M}(w)|$. Since $\delta'\le\delta$,
\eqref{cu:trunc} gives $|h_\eta-\tilde h_{\eta,L}|\le a_L$ for
$\eta\in\{\delta',\delta\}$, and therefore
\[
 |\psi-\tilde\psi|(u,v,w)\le r(v)\bigl(2Ma_L(u)+d(w)\bigr),\qquad
 |\psi|,|\tilde\psi|\le2Mr(v).
\]
Hence
\[
 |\psi\otimes\psi-F|(z,z')\le2M\,r(v)r(v')\bigl(2Ma_L(u)+d(w)+2Ma_L(u')+d(w')\bigr).
\]
In the first term we use the moments of the fields. By
Lemma~\ref{cu:moments} with $p=1000$ and Markov's inequality,
$\Prob(|X_n(s)|\ge L/2)\le CL^{-1000}$ and
$\Prob(|Y_n'(s)|>M/2)\le CM^{-1000}=Cn^{-5}$, and likewise at $t$. By
\eqref{cu:wcut}, $d(w)\le48\sigma^4/M^3+4M\ind_{|w|>M/2}$. With
$r\le1/\delta'$ and $\Prob(\delta|G_0|>L/2)\le\sqrt2e^{-L^2/(16\delta^2)}$,
the first term is at most
\[
 \frac{4M}{\delta'^2}\Bigl(2M\bigl(CL^{-1000}+\sqrt2e^{-L^2/(16\delta^2)}\bigr)
 +\frac{48\sigma^4}{M^3}+4CMn^{-5}\Bigr)\le Cn^{-2},
\]
since $\sigma=\delta'$ and $M^2\delta'^{-2}=n^{207/100}$. In the third
term we use the decay of $p_S$. By Lemma~\ref{cu:two-point}(c) and
$\sum_j|u_j|^4\le C/n$, we have
$|p_S(\zeta)|\le Ce^{-|\zeta|^2/20}\le C(1+|\zeta|)^{-28}$ on $\R^6$, and
\[
 (1+|\zeta|)^{-28}\le(1+u^2)^{-5}(1+w^2)^{-1}(1+u'^2)^{-5}(1+w'^2)^{-1}.
\]
We use $d\le C\sigma^2/M$, $\int r=2$,
$\int_{|u|\ge L/2}(1+u^2)^{-5}\dd u\le\pi(1+L^2/4)^{-4}$,
$\int(1+u^2)^{-5}\dd u\le\pi$ and $\int(1+w^2)^{-1}\dd w=\pi$. This bounds
the third term by $C(M^2L^{-8}+M^2e^{-L^2/(16\delta^2)}+\sigma^2)\le Cn^{-3/2}$,
since $M^2L^{-8}=n^{-159/100}$.
\end{proof}

\subsection{Proof of the unsmoothing proposition}\label{cu:proof-section}
We prove Proposition~\ref{cu:unsmooth} cell by cell. The coarse small
balls keep the local zero counts logarithmic, and this controls every
cell. In this subsection $\xi_0$, the good pairs and $b_n$ are as in
Proposition~\ref{cu:unsmooth}, and \ref{cu:U1}--\ref{cu:U3} hold. We
write $\E[V;A]=\E[V\ind_A]$. For a real function $F$ on $\R$ and
$c\in\R$, let $N_F(c)\in[0,\infty]$ be the number of distinct zeros of
$F$ in $[c-2,c+2]$.

\begin{lemma}[Local zero counts]\label{cu:zeros}
For $n\ge n_0$, $c\in[1,\pi n-1]$ and $F\in\{Y_n,Y_n'\}$,
$\Prob\bigl(N_F(c)>k_n\bigr)\le Cn^{-6}$.
\end{lemma}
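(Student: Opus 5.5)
The plan is to follow the Jensen argument of Lemma~\ref{sg:occupancy}, with the sign small-ball bound of Lemma~\ref{sg:smallball} replaced by the coarse small balls \ref{cu:U1}, and with the moment bound of Lemma~\ref{cu:moments} in place of the deterministic growth bound \eqref{sg:growth}. Fix $c\in[1,\pi n-1]$ and $F\in\{Y_n,Y_n'\}$. The count $N_F(c)$ of distinct zeros in $[c-2,c+2]$ is at most the number of complex zeros, with multiplicity, in the disk $\{|z-c|\le2\}$. If $F\equiv0$ then $N_F(c)=\infty$, so this event must also be shown to have small probability.

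\emph{Jensen step.} Choose a radius $R_*$, say $R_*=8$, and evaluation points $y_1,\dots,y_d\in[1,\pi n-1]$ with $|y_i-c|\le1$. Then the disk $\{|z-c|\le2\}$ lies in $\{|z-y_i|\le R_*\}$ for every $i$. Lemma~\ref{pre:local-jensen} with radii $R_*$ and $2R_*$ gives
\[
 N_F(c)\log2\le\log S-\log|F(y_i)|,\qquad S=\sup_{|z-c|\le3R_*}|F(z)|,
\]
whenever $F(y_i)\ne0$. So $N_F(c)>k_n$, or $F\equiv0$, forces $S\ge n$ or $|F(y_i)|<n\cdot2^{-k_n}=n^{-1}$ for every $i$. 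Lemma~\ref{cu:moments} with $p=6$ and Markov's inequality give $\Prob(S\ge n)\le Cn^{-6}$.

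\emph{Small-ball step.} It remains to bound $\Prob(|F(y_i)|\le n^{-1}\ \forall i)$, and here one evaluation is not enough. Hypothesis \ref{cu:U1} in dimension $d$ with $r=n^{-1}$ gives only $C(n^{-1/2})^d$. Taking $d=12$ yields $Cn^{-6}$, exactly the target, since $d\le12$ is allowed. So I take twelve points $y_i=c+(i-6)\eta$ at a fixed spacing $\eta\ge R_0$; at the window ends the points are shifted to one side to stay in $[1,\pi n-1]$, which is why one should use $R_*$ generously. I then need the vector $W=(F(y_1),\dots,F(y_{12}))$ to be a well-conditioned sum in the sense of Definition~\ref{cu:well-conditioned}. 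Its rows are coordinates (the second or third) of $v_j(y_i)$, so $|u_j|\le7n^{-1/2}$ holds up to the fixed factor $\sqrt{12}$. The constant $7$ may need adjusting, or one takes fewer points with the same $n^{-6}$ arithmetic; note that a $d$-dimensional sum is still well conditioned in its coordinates.

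\emph{Main obstacle.} The hard part is the Gram lower bound $\sum_ju_ju_j^T\ge\frac{\gamma_*}2I$. The diagonal entries are $\Sigma_{22}(y_i)$ or $\Sigma_{33}(y_i)$, which are at least $\gamma_*$ by Lemma~\ref{cu:window}(i). The off-diagonal entries are entries of $\Gamma(y_i,y_k)$, bounded by $12\pi/(1+|y_i-y_k|)$ by Lemma~\ref{cu:window}(ii). With spacing $\eta$ the off-diagonal row sums are at most $2\sum_{m\ge1}12\pi/(1+m\eta)\cdot$(bounded), which diverges only logarithmically in the number of points. Since the number of points is fixed at twelve, choosing $\eta$ large (a constant multiple of $R_0/\gamma_*$) makes the row sums below $\gamma_*/2$, and Gershgorin gives the bound. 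The disk radius $R_*$ must then grow like $12\eta$; this is harmless because Lemma~\ref{cu:moments} allows any fixed $R$.

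Combining the two steps, $\Prob(N_F(c)>k_n)\le Cn^{-6}+C_U(2n^{-1/2})^{12}\le Cn^{-6}$.
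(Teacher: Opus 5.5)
Your proof is correct and follows the paper's own argument: twelve points of $[1,\pi n-1]$ at a large fixed spacing on one side of $c$, the Gram lower bound from Lemma~\ref{cu:window}(i),(ii), the local Jensen bound with radii $R_1<2R_1$ about each point, Lemma~\ref{cu:moments} with $p=6$ for the supremum, and \ref{cu:U1} in dimension $12$ for the small values. Your hedge about the constant $7$ is unnecessary, since every coordinate of $u_j$ is a coordinate of some $v_j(y_i)$ and hence at most $2n^{-1/2}$, and $2\sqrt{12}<7$; your alternative of taking fewer points would fail, because \ref{cu:U1} would then give only $n^{-d/2}$.
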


\begin{proof}
Let $r=12$ and $L_1=\max(1,24\pi r/\gamma_*)$. Put $x_l=c\pm lL_1$ for
$0\le l<r$, with the sign $+$ if $c\le\pi n/2$. For $n\ge n_0$ all $x_l$
lie in $[1,\pi n-1]$, at mutual distances at least $L_1$. Let
$W=(F(x_0),\ldots,F(x_{r-1}))=\sum_j\xi_ju_j$. Each coordinate of
$u_j$ is a coordinate of some $v_j(x_l)$, so
$|u_j|\le2\sqrt r\,n^{-1/2}\le7n^{-1/2}$. The matrix $\sum_ju_ju_j^T$
has the entries $\Gamma_{22}(x_l,x_{l'})$ for $F=Y_n$, and
$\Gamma_{33}(x_l,x_{l'})$ for $F=Y_n'$. By Lemma~\ref{cu:window}(i),(ii) its diagonal
entries are at least $\gamma_*$ and its other entries are at most
$12\pi/L_1$ in absolute value. So for $q\in\R^r$ its quadratic form is
at least
\[
 \gamma_*|q|^2-\frac{12\pi}{L_1}\Bigl(\sum_l|q_l|\Bigr)^2
 \ge\Bigl(\gamma_*-\frac{12\pi r}{L_1}\Bigr)|q|^2\ge\frac{\gamma_*}2|q|^2,
\]
so $W$ is well conditioned, and \ref{cu:U1} applies to it in dimension $12$.

Let $R_1=rL_1+2$ and $M_*=\sup_{|z-c|\le3R_1}|F(z)|$. For each $l$,
$[c-2,c+2]$ lies in the disk $|z-x_l|\le R_1$, and the disk
$|z-x_l|\le2R_1$ lies in $|z-c|\le3R_1$. If $F(x_l)\ne0$, the local
Jensen bound (Lemma~\ref{pre:local-jensen}) with radii $R_1<2R_1$ gives
\[
 N_F(c)\le\frac{\log M_*-\log|F(x_l)|}{\log2}.
\]
So $N_F(c)\le2\log n/\log2=k_n$ if $M_*<n$ and $|F(x_l)|>1/n$ for
some $l$, and therefore
\[
 \{N_F(c)>k_n\}\subset\{M_*\ge n\}\cup\{|W|\le\sqrt{12}/n\}.
\]
By Lemma~\ref{cu:moments} with $p=6$ and Markov's inequality, the first
event has probability at most $Cn^{-6}$. By \ref{cu:U1}, the second has
probability at most $C(\sqrt{12}/n+n^{-1/2})^{12}\le Cn^{-6}$.
\end{proof}

Let $\mathcal C=\{c_0,\ldots,c_{m-1},1,\pi n-1\}$, and let $\mathcal L$ be
the event that $N_{Y_n}(c)\le k_n$ and $N_{Y_n'}(c)\le k_n$ for
every $c\in\mathcal C$. Let $\mathcal L_i\supset\mathcal L$ be the same
event for $c=c_i$ alone. By Lemma~\ref{cu:zeros},
\begin{equation}\label{cu:L}
 \Prob(\mathcal L^c)\le2(m+2)\,Cn^{-6}\le Cn^{-5}.
\end{equation}
On $\mathcal L_i$, and hence on $\mathcal L$, $Y_n\not\equiv0$.

The current on an interval is controlled by the zeros of $Y_n'$. Let
$\kappa>0$, and let $I$ be a compact interval on which $Y_n'$ has finitely
many zeros. Then $Y_n$ is strictly monotone between consecutive zeros of
$Y_n'$, and the change of variables $v=Y_n(x)$ on each such piece, with
the bound $|\Psi_{\kappa,M}(u,v,w)|\le\rho_\kappa(v)|w|$ of
Lemma~\ref{cu:jets}(i), gives the occupation bound
\begin{equation}\label{cu:occupation}
 \Bigl|\int_I\Psi_{\kappa,M}(Z_n(x))\dd x\Bigr|
 \le\int_I\rho_\kappa(Y_n)|Y_n'|\dd x\le1+\#\{x\in I:Y_n'(x)=0\}.
\end{equation}
Always $|\Psi_{\kappa,M}|\le\rho_\kappa(v)|\ell_M(w)|\le2M/\kappa$, since
$\rho_\kappa\le1/\kappa$ and $|\ell_M|\le2M$, and cells have length at
most one. On $\mathcal L_i$, $Y_n$ and $Y_n'$ have at most $k_n$
zeros in $\bar I_i\subset(c_i-2,c_i+2)$, so $|K_{I_i}|\le k_n$ and
$|J_{I_i}^\kappa|\le k_n+1$ by \eqref{cu:occupation}. Hence
\begin{equation}\label{cu:crude}
 |\Delta_i^{\rm c}|\le\frac{4M}{\delta'},\qquad
 |\Delta_i|,\ |\Delta_i^{\rm f}|\le2k_n+1\ \text{ on }\mathcal L_i .
\end{equation}

On a cell where $X_n$ and $Y_n$ are never small together, an integration
by parts compares the crossing sum with the current. Zeros of any
multiplicity are allowed.

\begin{lemma}[Cell identity]\label{cu:cell-identity}
Let $I=[\alpha,\beta]$ with $\beta-\alpha\le1$, let $X,Y$ be real $C^1$
functions on $I$, let $Y$ have a finite set $\mathcal Z$ of zeros in $I$,
and let $\kappa,y_*,b,M>0$. Suppose that
\begin{enumerate}[label=\textup{(\roman*)}]
 \item $|X'|\le b$ and $|Y'|\le M$ on $I$,
 \item $|Y(\alpha)|>y_*$ and $|Y(\beta)|>y_*$,
 \item every $x\in I$ has $|X(x)|>y_*$ or $|Y(x)|>y_*$.
\end{enumerate}
Then, with $K=\sum_{t\in\mathcal Z}\sgn X(t)\,\iota_Y(t)$,
\[
 \Bigl|K-\int_I\Psi_{\kappa,M}(X,Y,Y')\dd x\Bigr|
 \le2e^{-y_*^2/(2\kappa^2)}\Bigl(\#\mathcal Z+1+\frac b\kappa\Bigr).
\]
\end{lemma}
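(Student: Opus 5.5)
The plan is to reduce the current to an exact derivative and integrate by parts. Two facts make this possible: $\ell_M(w)=w$ for $|w|\le M$, and $\rho_\kappa=\frac12h_\kappa'$. By (i), $|Y'|\le M$ on $I$, so on $I$
\[
 \Psi_{\kappa,M}(X,Y,Y')=h_\kappa(X)\rho_\kappa(Y)Y'=\tfrac12\,h_\kappa(X)\,\bigl(h_\kappa(Y)\bigr)' .
\]
The crossing sum $K$ is the same expression with $h_\kappa$ replaced by $\sgn$, read as a Stieltjes integral $\frac12\int\sgn X\,\dd(\sgn Y)$, which is legitimate because $X\ne0$ at every zero of $Y$. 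The whole proof is therefore a comparison of $h_\kappa$ with $\sgn$, and this is only needed where its argument exceeds $y_*$ in modulus. There we use
\[
 |h_\kappa(v)-\sgn v|=2\overline\Phi(|v|/\kappa)\le2e^{-y_*^2/(2\kappa^2)}=:\eta,
 \qquad
 \sup_{|u|\ge y_*}|h_\kappa'(u)|\le\frac{2}{\kappa\sqrt{2\pi}}\,e^{-y_*^2/(2\kappa^2)}\le\frac{\eta}{\kappa},
\]
both by \eqref{fw:mills}.

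\emph{Partition.} List the distinct zeros $z_1<\dots<z_p$ of $Y$, with $p\le\#\mathcal Z$. For each gap $(z_k,z_{k+1})$, pick a point $\tau$ with $|Y(\tau)|>y_*$ if one exists. Add $\tau_0=\alpha$ and $\tau_{\rm last}=\beta$, which qualify by (ii). This gives at most $\#\mathcal Z+1$ pieces $[\tau,\tau']$, and each piece has $|Y|>y_*$ at both endpoints. A piece may contain several zeros, but only when every gap between them has $|Y|\le y_*$ throughout. In that case (iii) gives $|X|>y_*$ on the whole closed range spanned by those zeros, so by continuity $\sgn X$ is constant there. Call this constant $s$.

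\emph{Estimate on one piece.} Write $h_\kappa(Y)=\sgn Y+r$ on the part of the piece where $|Y|>y_*$, with $|r|\le\eta$. Let $U$ be the set where $|X|>y_*$; we use that $\sgn X=s$ on the part of $U$ touching the zero cluster, and $|h_\kappa(X)-s|\le\eta$ there. Two integrations by parts then handle the piece.
\begin{itemize}
 \item On the zero cluster, in the region where $|X|>y_*$, replace $h_\kappa(X)$ by $s$. We do this by integrating $\frac12(h_\kappa(X)-s)(h_\kappa(Y))'$ by parts. The boundary term is $O(\eta)$, and the interior term is $\frac12\int (h_\kappa(Y)-c)\,h_\kappa'(X)X'$ with a constant $c$, which is at most $\frac{\eta}{\kappa}\,b\cdot|{\rm piece}|$.
 \item On the remaining parts, where $|Y|>y_*$ and $\sgn Y$ is constant, $(h_\kappa(Y))'=r'$. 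Integrating $\frac12h_\kappa(X)r'$ by parts gives boundary terms of size $\le\eta$ and an interior term $\frac12\int r\,h_\kappa'(X)X'$. This is bounded by $\frac12\eta\cdot\frac{2}{\kappa\sqrt{2\pi}}\,b\,|{\rm piece}|$, using only the crude bound $|h_\kappa'|\le2/(\kappa\sqrt{2\pi})$.
\end{itemize}
The main term that survives is $\frac s2\bigl(h_\kappa(Y(\tau'))-h_\kappa(Y(\tau))\bigr)$. Up to $\eta$, this equals $\frac s2(\sgn Y(\tau')-\sgn Y(\tau))$, which is $\sum\sgn X\,\iota_Y$ over the zeros in the piece, because the crossing indices telescope to half the total sign change. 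Zeros of even order contribute $0$ on both sides.

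\emph{Summation and the main obstacle.} Summing the per-piece errors gives $C\eta$ per piece and $\frac{\eta b}{\kappa}$ times the total length $\le1$. With at most $\#\mathcal Z+1$ pieces, the total error is at most $2e^{-y_*^2/(2\kappa^2)}(\#\mathcal Z+1+b/\kappa)$, after tracking constants so that the per-piece boundary errors cost $\le\eta/2$ each. I expect the main obstacle to be the bookkeeping inside a piece. The region where $|X|\le y_*$ and the region where $|Y|\le y_*$ can interleave many times, and we must avoid paying once per interleaving. The fix is to do all the integrations by parts on a whole piece at once, with the piecewise-constant reference function $c(x)\in\{\pm1\}$ for $h_\kappa(Y)$ and the constant $s$ for $h_\kappa(X)$. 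The boundary contributions then arise only at the piece endpoints, and every interior error is an integral of $|h_\kappa(\cdot)-{\rm sign}|\cdot|h_\kappa'|\cdot|X'|$. We control these integrals by $\eta\,b/\kappa$ uniformly, using (iii) to ensure that at every point one of the two factors carries the small weight $\eta$.
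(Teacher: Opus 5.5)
Your route can be made to work, but it is a detour the paper avoids, and your second bullet is wrong as written. It assumes $|Y|>y_*$ on the part of a piece outside the component $V$ of $\{|X|>y_*\}$ that contains the zero cluster. That fails in general. Past an endpoint of $V$, $X$ can climb above $y_*$ again while $Y$, still nonvanishing, dips below $y_*$. There $r=h_\kappa(Y)-\sgn Y$ is of order one, and your bound $\frac12\eta\cdot\frac{2}{\kappa\sqrt{2\pi}}\,b\,|{\rm piece}|$ breaks. The repair is the pointwise dichotomy in your last paragraph: where $|Y|\le y_*$, (iii) gives $|X|>y_*$, so $|h_\kappa'(X)|\le\eta/\kappa$ while $|r|\le1$. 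With that, the bullet holds.

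The constants also need a genuine step, not just ``tracking''. Bounded one at a time, the boundary terms overshoot the factor $2$: there are three at each endpoint $v$ of $V$ and one at each end of the piece, each of size up to about $\eta/2$. They have to be recombined. Take $c=0$ in the first bullet. Then at $v$ the first-bullet term, the main term and the second-bullet term sum exactly to $\pm\frac12h_\kappa(X(v))\sgn Y(v)$. The terms $\frac12h_\kappa(X)r$ at the shared points $\tau$ telescope across pieces.

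The paper's proof shows that the partition is unnecessary and that the interleaving you worry about never arises. It never replaces $h_\kappa(X)$ by a sign inside an integral. Take $u=h_\kappa(X)$ as a single $C^1$ test function for both quantities. First, (iii) at the zeros of $Y$ gives $|K-\sum_{t\in\mathcal Z}u(t)\iota_Y(t)|\le e^{-y_*^2/(2\kappa^2)}\#\mathcal Z$. Summation by parts writes this sum as $\frac12[u\sgn Y]_\alpha^\beta-\frac12\int_Iu'\sgn Y$. The current is $\frac12[u\,h_\kappa(Y)]_\alpha^\beta-\frac12\int_Iu'h_\kappa(Y)$. The difference is a boundary term controlled by (ii), plus $\frac12\int_Iu'(h_\kappa(Y)-\sgn Y)$. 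Its integrand is small at every point by (iii), exactly as in your last paragraph. With $2\overline\Phi(x)\le e^{-x^2/2}$ this gives constant $1$ instead of $2$. Replacing $h_\kappa(X)$ by $s$ on $V$ does avoid the per-zero cost, but you pay a comparable per-piece cost at $\partial V$, so the extra structure gains nothing.
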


\begin{proof}
Put $e_*=e^{-y_*^2/(2\kappa^2)}$ and $u=h_\kappa(X)$, so that $u'=2\rho_\kappa(X)X'$.
For $y\ne0$, $|h_\kappa(y)-\sgn y|=2(1-\Phi(|y|/\kappa))\le e^{-y^2/(2\kappa^2)}$,
and $|h_\kappa(y)-\sgn y|\le1$ for all $y$.

\emph{The crossing sum.} The function $\sgn Y$ is constant between
consecutive points of $\mathcal Z$, none of which is an endpoint by (ii),
and jumps by $2\iota_Y(t)$ at $t\in\mathcal Z$. Summation by parts against
the $C^1$ function $u$ gives
\[
 \sum_{t\in\mathcal Z}u(t)\,\iota_Y(t)=\tfrac12\bigl[u\sgn Y\bigr]_\alpha^\beta
 -\tfrac12\int_Iu'\sgn Y\dd x .
\]
At each $t\in\mathcal Z$, $|X(t)|>y_*$ by (iii), so
$|K-\sum_tu(t)\iota_Y(t)|\le e_*\#\mathcal Z$.

\emph{The current.} By (i), $\ell_M(Y')=Y'$ on $I$, and
$\rho_\kappa(Y)Y'=\frac12\frac{\dd}{\dd x}h_\kappa(Y)$. Integration by parts gives
\[
 \int_I\Psi_{\kappa,M}(X,Y,Y')\dd x=\tfrac12\bigl[u\,h_\kappa(Y)\bigr]_\alpha^\beta
 -\tfrac12\int_Iu'h_\kappa(Y)\dd x .
\]
The difference of the two displays is
$\frac12[u(h_\kappa(Y)-\sgn Y)]_\alpha^\beta-\frac12\int_Iu'(h_\kappa(Y)-\sgn Y)$.
By (ii) the boundary term is at most $e_*$. Where $|X|>y_*$,
$|u'|\le2b\kappa^{-1}\varphi(y_*/\kappa)\le b\kappa^{-1}e_*$. Where $|X|\le y_*$,
$|Y|>y_*$ by (iii), so $|h_\kappa(Y)-\sgn Y|\le e_*$, while
$|u'|\le2b\rho_\kappa(0)\le b/\kappa$. Since $|I|\le1$, the integral term is
at most $b\kappa^{-1}e_*$. Altogether
$|K-\int_I\Psi_{\kappa,M}|\le e_*(\#\mathcal Z+1+b/\kappa)$.
\end{proof}

The cell identity applies on the following event. For a cell $I$ with
closure $[\alpha,\beta]$ and $y_*,b>0$, let $q=\lceil b/y_*\rceil$, let
$x_k=\alpha+k(\beta-\alpha)/q$ for $0\le k\le q$, and let
$\mathcal G_I(y_*,b)$ be the event that
\begin{enumerate}[label=\textup{(G\arabic*)}]
 \item $|X_n'|\le b$, $|Y_n'|\le b$ and $|Y_n'|\le M$ on $[\alpha,\beta]$,
 \item $|Y_n(\alpha)|>y_*$ and $|Y_n(\beta)|>y_*$,
 \item $|X_n(x_k)|>2y_*$ or $|Y_n(x_k)|>2y_*$ for $0\le k\le q$.
\end{enumerate}
On (G1) and (G3), every $x\in[\alpha,\beta]$ has $|X_n(x)|>y_*$ or
$|Y_n(x)|>y_*$. Indeed, the nearest grid point $x_k$ has $|x_k-x|\le y_*/(2b)$,
so $|X_n(x)|\le y_*$ and $|Y_n(x)|\le y_*$ would give $|X_n(x_k)|\le\frac32y_*$
and $|Y_n(x_k)|\le\frac32y_*$. The suprema of $|X_n'|$ and $|Y_n'|$ on
$[\alpha,\beta]$ are at most their suprema on the disk $|z-\alpha|\le1$, so
Lemma~\ref{cu:moments} with $p=204$ and Markov's inequality give
\begin{equation}\label{cu:cell-event}
 \Prob\bigl(\mathcal G_I(y_*,b)^c\bigr)\le C\bigl(b^{-204}+M^{-204}\bigr)
 +\sum_{x\in\{\alpha,\beta\}}\Prob\bigl(|Y_n(x)|\le y_*\bigr)
 +\sum_{k=0}^q\Prob\bigl(|(X_n,Y_n)(x_k)|\le2\sqrt2\,y_*\bigr).
\end{equation}
On $\mathcal L_i\cap\mathcal G_{I_i}(y_*,b)$, Lemma~\ref{cu:cell-identity}
applies to $X_n,Y_n$ on $\bar I_i$ with $\#\mathcal Z\le k_n$, and its
sum $K$ is $K_{I_i}$ by (G2). On $\mathcal L_i\setminus\mathcal G_{I_i}(y_*,b)$,
\eqref{cu:crude} applies. Hence, for $\kappa\in\{\delta,\delta'\}$,
\begin{equation}\label{cu:local-error}
 \E\bigl[|K_{I_i}-J_{I_i}^\kappa|;\mathcal L_i\bigr]
 \le2e^{-y_*^2/(2\kappa^2)}\Bigl(k_n+1+\frac b\kappa\Bigr)
 +(2k_n+1)\,\Prob\bigl(\mathcal G_{I_i}(y_*,b)^c\bigr).
\end{equation}

\begin{lemma}[Coarse and fine cells]\label{cu:cells}
Let $n\ge n_0$.
\begin{enumerate}[label=\textup{(\alph*)}]
 \item For every cell $I_i$, $\E[\Delta_i^2;\mathcal L_i]\le Cn^{-1/200}$.
 \item For every cell $I_i$ of a good pair,
 $\E[|\Delta_i^{\rm f}|;\mathcal L_i]\le Cn^{-126/125}$.
\end{enumerate}
\end{lemma}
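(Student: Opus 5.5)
Both parts follow from the local error bound \eqref{cu:local-error}, with $y_*$ and $b$ chosen differently in each case. The estimate of $\Prob(\mathcal G_{I_i}(y_*,b)^c)$ comes from \eqref{cu:cell-event}. The point-evaluation probabilities there involve the well-conditioned sums $Y_n(x)$ and $(X_n,Y_n)(x)$ at $x\in[1,\pi n-1]$. In (a) these are bounded by the coarse small balls \ref{cu:U1}. In (b) they are bounded by the decay small balls of Lemma~\ref{cu:smallball}, which apply through \ref{cu:U2}.

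For (a), take $\kappa=\delta=n^{-1/25}$. Choose $y_*=\delta\sqrt{2\cdot 3\log n}$, so that $e^{-y_*^2/(2\delta^2)}=n^{-3}$, and take $b=n^{1/400}$, which is below $M=n^{1/200}$. The first term of \eqref{cu:local-error} is then $O(n^{-3}(k_n+b/\delta))$, which is negligible. In \eqref{cu:cell-event}:
\begin{itemize}
\item the moment terms are $C(b^{-204}+M^{-204})\le Cn^{-1/2}$;
\item by \ref{cu:U1} in dimension one, the endpoint terms are $C(y_*+n^{-1/2})$;
\item by \ref{cu:U1} in dimension two, the $q+1\le b/y_*+2$ grid terms give $C(b/y_*+2)(y_*+n^{-1/2})^2\le C(by_*+y_*^2+n^{-1/2})$, after using $n^{-1/2}\le y_*$.
\end{itemize}
With these choices $by_*\lesssim n^{1/400-1/25}\sqrt{\log n}$, so the failure probability is $O(n^{-0.037})$. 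Note that \eqref{cu:local-error} bounds a first moment. The second moment follows from $\Delta_i^2\le(2k_n+1)|\Delta_i|$ on $\mathcal L_i$ by \eqref{cu:crude}. This gives $\E[\Delta_i^2;\mathcal L_i]\le C k_n^2 n^{-0.037}\le Cn^{-1/200}$, with room to spare.

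For (b), take $\kappa=\delta'=n^{-103/100}$, $y_*=\delta'\sqrt{6\log n}$, and $b=n^{1/400}$ again. Here $y_*\ge n^{-26/25}$, so Lemma~\ref{cu:smallball} applies at every point of $\bar I_i$ by \ref{cu:U2}. It gives endpoint probabilities $Cy_*$ and grid probabilities $Cy_*^2$. The grid sum is therefore $C(b/y_*+2)y_*^2\le C(by_*+y_*^2)\approx n^{1/400-103/100}\sqrt{\log n}$, and the endpoint terms are $O(n^{-103/100}\sqrt{\log n})$. The moment terms $C(b^{-204}+M^{-204})=Cn^{-51/100}$ are too large here. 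To fix this I would raise $p$ in Lemma~\ref{cu:moments}, say to $p=4000$, so that $b^{-p}\le n^{-10}$; the same Markov argument goes through. The exponential term is $n^{-3}(k_n+b/\delta')=O(n^{-1.9})$.

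Multiplying by $2k_n+1$, the total is $O(n^{-103/100+1/400}\log^{3/2}n)$. Since $-103/100+1/400=-1.0275<-1.008=-126/125$, this is at most $Cn^{-126/125}$.

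The main obstacle is the grid count in (b). It scales like $b/y_*$ because the grid spacing must be $y_*/b$. This is why the two-dimensional small-ball bound at scale $y_*\sim\delta'$ (decay down to radius $n^{-26/25}$) is essential, and why $b$ must stay polynomially small.
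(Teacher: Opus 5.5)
Your proof is correct and follows the paper's argument: the local error bound \eqref{cu:local-error}, the cell-event estimate \eqref{cu:cell-event} with \ref{cu:U1} in (a) and Lemma~\ref{cu:smallball} through \ref{cu:U2} in (b), and then $\Delta_i^2\le(2k_n+1)|\Delta_i|$ on $\mathcal L_i$. The only difference is in the parameters. The paper takes $y_*=n^{-1/50}$, respectively $n^{-51/50}$, with $b=n^{1/100}$, so $b^{-204}$ is already negligible and $p=204$ suffices. Your choice of $y_*=\kappa\sqrt{6\log n}$ and $b=n^{1/400}$ instead forces you to raise $p$ in Lemma~\ref{cu:moments} for part (b), which is legitimate because all moments are finite.
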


\begin{proof}
(a) Take $y_*=n^{-1/50}$ and $b=n^{1/100}$, so that $q+1\le3n^{3/100}$,
$n^{-1/2}\le y_*/2$, $y_*^2/\delta^2=n^{1/25}$ and $b/\delta=n^{1/20}$. By
\eqref{cu:cell-event} and \ref{cu:U1},
\[
 \Prob(\mathcal G_{I_i}^c)\le C\bigl(n^{-51/25}+n^{-51/50}\bigr)+3Cy_*
 +3n^{3/100}\cdot C(4y_*)^2\le Cn^{-1/100}.
\]
By \eqref{cu:local-error},
$\E[|\Delta_i|;\mathcal L_i]\le2e^{-n^{1/25}/2}(k_n+1+n^{1/20})+C(2k_n+1)n^{-1/100}$.
On $\mathcal L_i$, $|\Delta_i|\le2k_n+1$ by \eqref{cu:crude}, so
$\E[\Delta_i^2;\mathcal L_i]\le(2k_n+1)\E[|\Delta_i|;\mathcal L_i]\le Cn^{-1/200}$.

(b) Take $y_*=n^{-51/50}$ and $b=n^{1/100}$, so that $q+1\le2n^{103/100}$,
$y_*^2/\delta'^2=n^{1/50}$ and $b/\delta'=n^{26/25}$. For $x\in\bar I_i$
the sums $Y_n(x)$ and $(X_n(x),Y_n(x))$ are well conditioned, and their
characteristic functions are restrictions of $\widehat\mu_{Z_n(x)}$,
which decays by \ref{cu:U2}. Since $y_*$ and $2\sqrt2y_*$ are at least
$n^{-26/25}$, Lemma~\ref{cu:smallball} bounds the probabilities in
\eqref{cu:cell-event}, and
\[
 \Prob(\mathcal G_{I_i}^c)\le C\bigl(n^{-51/25}+n^{-51/50}\bigr)+2Cy_*
 +2n^{103/100}\cdot8Cy_*^2\le Cn^{-101/100}.
\]
By \eqref{cu:local-error},
$\E[|\Delta_i^{\rm f}|;\mathcal L_i]\le2e^{-n^{1/50}/2}(k_n+1+n^{26/25})
+C(2k_n+1)n^{-101/100}\le Cn^{-126/125}$.
\end{proof}

\begin{proof}[Proof of Proposition~\textup{\ref{cu:unsmooth}}]
Let $n\ge n_0$ and $J=J_{n,\delta,M}(\xi)$.

\emph{Crossings.} On $\mathcal L$, $Y_n\not\equiv0$, and by the
crossing bound \eqref{pre:bulk-crossing} with $A=(0,\pi)$, rescaled to
$x=nt$, $D_n$ differs by at most $1+B_n$ from the sum of
$\sgn X_n(x)\,\iota_{Y_n}(x)$ over the zeros $x\in(0,\pi n)$ of $Y_n$.
This sum is $\sum_{i<m}K_{I_i}$ plus at most $2k_n$ terms of
absolute value at most one, from the zeros in $(0,1)$ and in
$[\pi n-1,\pi n)$, which lie in $[-1,3]$ and in $[\pi n-3,\pi n+1]$.
Hence, on $\mathcal L$,
\[
 \Bigl|D_n-\sum_{i<m}K_{I_i}\Bigr|\le1+B_n+2k_n .
\]

\emph{The split along $\mathcal L$.} Since $J=\sum_iJ_{I_i}^\delta$, on
$\mathcal L$ we have $D_n-J=(D_n-\sum_iK_{I_i})+\sum_i\Delta_i$. Always
$|D_n|\le n/2$, because $0\le\nu_n(1)\le n$, and $|J|\le2\pi nM/\delta$,
because $|\Psi_{\delta,M}|\le2M/\delta$. Hence
\[
 (D_n-J)^2\le\Bigl(\frac n2+\frac{2\pi nM}\delta\Bigr)^2\ind_{\mathcal L^c}
 +2(1+B_n+2k_n)^2+2\Bigl(\sum_i\Delta_i\Bigr)^2\ind_{\mathcal L}.
\]
By \eqref{cu:L}, the first term has expectation at most
$(n/2+2\pi nM/\delta)^2Cn^{-5}\le C$. By \ref{cu:U3}, the second has
expectation at most $6(1+\E B_n^2+4k_n^2)\le C\sqrt n$. It remains
to show
\[
 \sum_{i,i'}\E[\Delta_i\Delta_{i'};\mathcal L]\le C\bigl(n^{199/200}+n^{-1/200}b_n\bigr).
\]

\emph{Near pairs and pairs that are not good.} There are at most
$(2m_0+1)m\le Cn$ pairs with $|i-i'|\le m_0$, and $b_n$ separated pairs
that are not good. For each such pair,
$\mathcal L\subset\mathcal L_i\cap\mathcal L_{i'}$ and
$|\Delta_i\Delta_{i'}|\le\frac12(\Delta_i^2+\Delta_{i'}^2)$, so
Lemma~\ref{cu:cells}(a) gives
\[
 \bigl|\E[\Delta_i\Delta_{i'};\mathcal L]\bigr|
 \le\tfrac12\bigl(\E[\Delta_i^2;\mathcal L_i]+\E[\Delta_{i'}^2;\mathcal L_{i'}]\bigr)
 \le Cn^{-1/200}.
\]
Their total is at most $C(n^{199/200}+n^{-1/200}b_n)$.

\emph{Good pairs.} Let $(i,i')$ be a good pair. Pointwise,
\[
 \Delta_i\Delta_{i'}-\Delta^{\rm c}_i\Delta^{\rm c}_{i'}
 =\Delta^{\rm f}_i\Delta_{i'}+\Delta^{\rm c}_i\Delta^{\rm f}_{i'} .
\]
On $\mathcal L$, \eqref{cu:crude} gives $|\Delta_{i'}|\le2k_n+1$ and
$|\Delta^{\rm c}_i|\le|\Delta_i|+|\Delta^{\rm f}_i|\le2(2k_n+1)$.
Lemma~\ref{cu:cells}(b) applies to both cells and gives
\[
 \bigl|\E[\Delta_i\Delta_{i'};\mathcal L]-\E[\Delta^{\rm c}_i\Delta^{\rm c}_{i'};\mathcal L]\bigr|
 \le3(2k_n+1)\,Cn^{-126/125}.
\]
Over the at most $m^2\le10n^2$ pairs these errors sum to at most
$Cn^{124/125}\log n$. By \eqref{cu:crude} and \eqref{cu:L},
$|\E[\Delta^{\rm c}_i\Delta^{\rm c}_{i'};\mathcal L^c]|\le16M^2\delta'^{-2}Cn^{-5}\le Cn^{-2}$,
and these sum to at most $C$. By Fubini's theorem,
$\E\Delta^{\rm c}_i\Delta^{\rm c}_{i'}=\int_{I_i}\int_{I_{i'}}\E\,\psi(Z_n(s))\psi(Z_n(t))\dd s\dd t$.
For $s\in\bar I_i$ and $t\in\bar I_{i'}$, \ref{cu:U2} and
Lemma~\ref{cu:far} bound the integrand, and the sum over all good pairs
is at most
\[
 C\varepsilon_{\delta,M}\int_0^{\pi n}\!\!\int_\R\frac{\dd s\dd t}{(1+|s-t|)^2}
 +(\pi n)^2\Bigl(\frac{C\varepsilon_{\delta,M}}n+Cn^{-3/2}\Bigr)
 \le C\bigl(n\varepsilon_{\delta,M}+n^{1/2}\bigr)\le Cn^{24/25}.
\]

\emph{Conclusion.} Altogether
$\E(D_n-J)^2\le C(\sqrt n+n^{199/200}+n^{-1/200}b_n+n^{124/125}\log n+n^{24/25})
\le C(n^{199/200}+n^{-1/200}b_n)$.
\end{proof}

\subsection{Laws with a bounded density}\label{cu:bd-section}
A bounded density keeps the characteristic function of $\xi_0$ away from
modulus one outside every neighborhood of the origin, and then every
well-conditioned sum decays.

\begin{lemma}[Decay for bounded densities]\label{cu:bd-decay}
Let $\xi_0$ satisfy \textup{(BD)}, and let $u_0$ be as in
Lemma~\ref{cu:charfun}. There are $c>0$ and $n_0$ such that for
$n\ge n_0$ every well-conditioned sum $W$ satisfies
\[
 |\widehat\mu_W(\theta)|\le e^{-cn}\qquad(|\theta|\ge u_0\sqrt n/7).
\]
In particular $\widehat\mu_W$ decays.
\end{lemma}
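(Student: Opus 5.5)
The plan is to bound $|\widehat\mu_W(\theta)|=\prod_j|f(\theta\cdot u_j)|$, where $f(u)=\E e^{iu\xi_0}$, which is real and even because $\xi_0$ is symmetric. Two facts about $f$ come from the bounded density $p$. First, for every $\eta>0$ there is $\beta_\eta<1$ with $|f(u)|\le\beta_\eta$ for $|u|\ge\eta$. Indeed $|f(u)|<1$ for $u\ne0$, since a law with a density is not a lattice law. By Riemann--Lebesgue, $f(u)\to0$ as $|u|\to\infty$. Continuity then gives a uniform bound on $\{|u|\ge\eta\}$. Second, we have the local bound $|f(u)|\le e^{-u^2/4}$ on $[-u_0,u_0]$ from the proof of Lemma~\ref{cu:charfun}. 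Combining the two, for all $u$,
\[
 |f(u)|\le\exp\bigl(-c_1\min(u^2,1)\bigr),
\]
with $c_1=\min(1/4\cdot\min(1,u_0^{-2})u_0^2,\,-\log\beta_{u_0})>0$ after adjusting constants.

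It follows that $\log|\widehat\mu_W(\theta)|\le-c_1\sum_j\min((\theta\cdot u_j)^2,1)$, so it suffices to show that $\sum_j\min((\theta\cdot u_j)^2,1)\ge cn$ whenever $|\theta|\ge u_0\sqrt n/7$. Write $\theta=\lambda e$ with $|e|=1$ and $\lambda\ge u_0\sqrt n/7$. Well-conditioning gives $\sum_j(e\cdot u_j)^2\ge\gamma_*/2$ and $|e\cdot u_j|\le7n^{-1/2}$. Call $j$ heavy if $|e\cdot u_j|\ge\tau n^{-1/2}$, with $\tau$ small. The light indices contribute at most $(n+1)\tau^2/n\le2\tau^2\le\gamma_*/4$ once $\tau^2=\gamma_*/8$. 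The heavy indices therefore carry at least $\gamma_*/4$ of the sum, and each carries at most $49/n$, so there are at least $\gamma_*n/196$ heavy indices.

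For each heavy $j$ we have $|\theta\cdot u_j|\ge\lambda\tau n^{-1/2}\ge u_0\tau/7$, a fixed positive constant. Hence $\min((\theta\cdot u_j)^2,1)\ge c_2:=\min((u_0\tau/7)^2,1)$, and the sum is at least $c_2\gamma_*n/196$. This gives $|\widehat\mu_W(\theta)|\le e^{-cn}$ with $c=c_1c_2\gamma_*/196$.

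The decay property \eqref{cu:decay} follows, since $n^{1/10}\ge u_0\sqrt n/7$ fails in general. The required range $n^{1/10}\le|\theta|\le n^{26/25}$ splits into two parts:
\begin{itemize}
 \item for $n^{1/10}\le|\theta|\le u_0\sqrt n/7$, Lemma~\ref{cu:charfun}(a) gives $|\widehat\mu_W(\theta)|\le e^{-\gamma_*n^{1/5}/8}\le n^{-20}$;
 \item for $|\theta|\ge u_0\sqrt n/7$, the bound just proved gives $e^{-cn}\le n^{-20}$.
\end{itemize}
Both hold for $n\ge n_0$.

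The only genuine input is the uniform bound $\sup_{|u|\ge\eta}|f(u)|<1$. It uses only the existence of a density, not its boundedness; boundedness will matter elsewhere for the small balls. Everything after it is the pigeonhole count of heavy indices.
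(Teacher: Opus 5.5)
Your proof is correct and follows the paper's argument. You use the same three ingredients: strict subunitarity of $|f|$ away from the origin (non-lattice plus Riemann--Lebesgue), the same count of at least $\gamma_*n/196$ indices with $|e\cdot u_j|\ge(\gamma_*/8)^{1/2}n^{-1/2}$, and the same two-range split for the decay. The only difference is that the paper takes $q=\sup_{|u|\ge b}|f(u)|<1$ directly at $b=(\gamma_*/8)^{1/2}u_0/7$, instead of packaging this as $|f(u)|\le e^{-c_1\min(u^2,1)}$. Your remark that only the existence of a density is used here is also accurate.
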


\begin{proof}
Let $f(u)=\E e^{iu\xi_0}$. For $u\ne0$ we have $|f(u)|<1$, since
$|f(u)|=1$ would confine $\xi_0$ to a translate of $(2\pi/u)\Z$, a null
set. Also $f$ is continuous and tends to $0$ at infinity by the
Riemann--Lebesgue lemma. Hence $q=\sup_{|u|\ge b}|f(u)|<1$ for
$b=c_2u_0/7$, where $c_2=(\gamma_*/8)^{1/2}$.

Let $e$ be a unit vector, and let $A_e$ be the set of indices $j$ with
$|u_j\cdot e|\ge c_2n^{-1/2}$. The other rows contribute at most
$(n+1)c_2^2/n\le\gamma_*/4$ to $\sum_j(u_j\cdot e)^2\ge\gamma_*/2$, and
each row contributes at most $49/n$, so $|A_e|\ge\gamma_*n/196$. Let
$\theta=|\theta|e$ with $|\theta|\ge u_0\sqrt n/7$. For $j\in A_e$,
$|\theta\cdot u_j|\ge c_2u_0/7=b$, so $|f(\theta\cdot u_j)|\le q$, and
\[
 |\widehat\mu_W(\theta)|=\prod_j|f(\theta\cdot u_j)|\le q^{|A_e|}\le q^{\gamma_*n/196}.
\]
This is the bound with $c=(\gamma_*/196)\log(1/q)$. For
$n^{1/10}\le|\theta|\le u_0\sqrt n/7$, Lemma~\ref{cu:charfun}(a) gives
$|\widehat\mu_W(\theta)|\le e^{-\gamma_*n^{1/5}/8}$. Both bounds are at
most $n^{-20}$ for $n\ge n_0$.
\end{proof}

\begin{corollary}[Unsmoothing for (BD) laws]\label{cu:bd-unsmoothing}
Let $\xi_0$ satisfy \textup{(BD)}, and let every separated pair be good.
Then \ref{cu:U1}--\ref{cu:U3} hold, and for $n\ge n_0$
\[
 \E\bigl(D_n-J_{n,\delta,M}(\xi)\bigr)^2\le Cn^{199/200}.
\]
\end{corollary}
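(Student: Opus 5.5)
The plan is to verify \ref{cu:U1}--\ref{cu:U3} one at a time and then quote Proposition~\ref{cu:unsmooth} with $b_n=0$. That gives $\E(D_n-J_{n,\delta,M}(\xi))^2\le Cn^{199/200}$ directly, since the term $n^{-1/200}b_n$ vanishes when every separated pair is good.

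\ref{cu:U3} is immediate. A (BD) law has a density, so Lemma~\ref{pre:no-circle-roots} gives $B_n=0$ almost surely for every $n$, and $\E B_n^2=0\le C_Un^{1/2}$.

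\ref{cu:U2} follows from Lemma~\ref{cu:bd-decay}. We already noted after Definition~\ref{cu:well-conditioned} that $Z_n(x)$ is well conditioned for $x\in[1,\pi n-1]$. The same remark covers $(Z_n(s),Z_n(t))$ for $|s-t|\ge R_0$. Every point of a closed cell lies in $[1,\pi n-1]$. For a separated pair, all $s\in\bar I_i$ and $t\in\bar I_{i'}$ satisfy $|s-t|\ge R_0$. So both characteristic functions decay, and declaring every separated pair good is consistent with \ref{cu:U2}.

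\ref{cu:U1} is the main step, and I would derive it from Esseen's inequality \eqref{cu:esseen}. Let $W$ be well conditioned of dimension $d\le12$. It suffices to bound
\[
 r^d\int_{|\theta|\le1/r}|\widehat\mu_W(\theta)|\dd\theta\le C(r+n^{-1/2})^d .
\]
We split the integral at $|\theta|=u_0\sqrt n/7$.
\begin{itemize}
 \item On the inner range, Lemma~\ref{cu:charfun}(a) gives the integrable Gaussian bound $e^{-\gamma_*|\theta|^2/8}$, which contributes at most $Cr^d$.
 \item On the outer range $u_0\sqrt n/7\le|\theta|\le1/r$, Lemma~\ref{cu:bd-decay} gives $e^{-cn}$. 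The volume of this range is at most $Cr^{-d}$, so it contributes $Ce^{-cn}$. This is only useful when $r$ is not too small.
\end{itemize}
So for $r\ge n^{-1/2}$ the bound is $C(r^d+e^{-cn})\le Cr^d$. For $r<n^{-1/2}$, we bound the ball of radius $r$ by the ball of radius $n^{-1/2}$. This gives $C(r+n^{-1/2})^d$ in all cases.

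The one point to check is that the constant in Lemma~\ref{cu:bd-decay} is uniform over all well-conditioned sums, including those of dimension up to $12$. It is: its proof uses only the row bounds $|u_j|\le7n^{-1/2}$ and $\sum_ju_ju_j^T\ge\frac{\gamma_*}2I$, and never the dimension.
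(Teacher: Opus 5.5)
Your proposal is correct and follows the paper's proof. You get \ref{cu:U3} from Lemma~\ref{pre:no-circle-roots}, \ref{cu:U2} from Lemma~\ref{cu:bd-decay}, and \ref{cu:U1} from Esseen-type small-ball bounds, and then apply Proposition~\ref{cu:unsmooth} with $b_n=0$. The only difference is cosmetic: the paper cites Lemma~\ref{cu:smallball} to get $Cr^d$ for $r\ge n^{-26/25}$ and uses monotonicity below that, whereas you rerun Esseen's inequality \eqref{cu:esseen} directly with the exponential bound $e^{-cn}$ and stop at $r=n^{-1/2}$, which is all \ref{cu:U1} requires.
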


\begin{proof}
A (BD) law is symmetric with variance one and finite moments of every
order. The sums $Z_n(x)$, and $(Z_n(s),Z_n(t))$ for $s,t$ in the cells
of a separated pair, are well conditioned, so Lemma~\ref{cu:bd-decay}
gives \ref{cu:U2}. Let $W$ be a well-conditioned sum of dimension $d\le12$. By
Lemmas~\ref{cu:bd-decay} and~\ref{cu:smallball},
$\Prob(|W-w|\le r)\le Cr^d$ for $r\ge n^{-26/25}$, and for smaller $r$
the case $r=n^{-26/25}$ gives $\Prob(|W-w|\le r)\le Cn^{-26d/25}\le Cn^{-d/2}$.
This is \ref{cu:U1}. By Lemma~\ref{pre:no-circle-roots}, $B_n=0$ almost
surely, which gives \ref{cu:U3}. Proposition~\ref{cu:unsmooth} applies
with $b_n=0$.
\end{proof}

\subsection{The variance}\label{cu:variance-section}
The next proposition turns an unsmoothing bound into the variance. It
compares the current with its Gaussian counterpart by Lindeberg
replacement, and uses the Gaussian count of Theorem~\ref{thm:B}(i) as
the reference.

\begin{proposition}[Variance transfer]\label{cu:transfer}
Let $\xi_0$ be symmetric with $\E\xi_0^2=1$ and $\E\xi_0^6<\infty$, let
$\kappa_4=\E\xi_0^4-3$, and let $J=J_{n,\delta,M}$ with $\delta$ and $M$
as in Subsection~\ref{cu:unsmooth-section}. There are $C$ and $n_0$
such that for $n\ge n_0$ the following hold.
\begin{enumerate}[label=\textup{(\alph*)}]
 \item $\bigl|\Var J(\xi)-(c_G+\kappa_4/12)n\bigr|\le Cn^{399/400}$.
 \item With $e_n=\|D_n-J(\xi)\|_2$,
 $\bigl|\Var D_n-(c_G+\kappa_4/12)n\bigr|\le C\bigl(n^{399/400}+\sqrt n\,e_n+e_n^2\bigr)$.
\end{enumerate}
\end{proposition}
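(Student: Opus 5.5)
For part (a) I would write $\Var J(\xi)=\E J(\xi)^2$, since $\E J(\xi)=0$ by Lemma~\ref{cu:jets}(iv), and compare it with the Gaussian value $\E J(G)^2$ by Lindeberg replacement. The first bound of Lemma~\ref{cu:lindeberg}, applied to $F=J^2$ with $m=n+1$, gives
\[
 \E J(\xi)^2=\E J(G)^2+\kappa_4 S_n+O\Bigl(\sum_j\|\partial_j^6(J^2)\|_\infty+\sum_{i\ne j}\|\partial_i^4\partial_j^4(J^2)\|_\infty\Bigr).
\]
The two error sums come from Leibniz's rule and the jet bounds of Lemma~\ref{cu:jets}(iii), $\|\partial^{(r)}J\|_\infty\le C_rM\delta^{-r-1}n^{1-r/2}$. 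A sixth derivative of $J^2$ is $\sum\partial^{(a)}J\,\partial^{(6-a)}J$, and each term is at most $CM^2\delta^{-8}n^{2-3}=CM^2\delta^{-8}n^{-1}$, so the diagonal sum is $O(M^2\delta^{-8})$. An eighth mixed derivative gives terms of size $M^2\delta^{-10}n^{2-4}$, and summing over $n^2$ pairs gives $O(M^2\delta^{-10})$. With $\delta=n^{-1/25}$ and $M=n^{1/200}$ this is $n^{1/100+2/5}$, which is far below $n^{399/400}$. For the insertion, \eqref{cu:insertion-bound} together with $|t_M-1|\le Ce^{-6M^2}$ gives
\[
 S_n=\frac n{12}+O\bigl(n\delta^2+M^2\delta^{-10}\log^2 n\bigr),
\]
which is $n/12+O(n^{23/25})$.

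It remains to show that $\E J(G)^2=c_Gn+O(n^{399/400})$. Here I would apply Corollary~\ref{cu:bd-unsmoothing} to the Gaussian law, which satisfies (BD), to get $\|D_n^G-J(G)\|_2^2\le Cn^{199/200}$. Since $\Var D_n^G=c_Gn+O(n^{99/100})$ by Theorem~\ref{thm:B}(i), the triangle inequality in $L^2$ gives
\[
 \bigl|\|J(G)\|_2-\|D_n^G\|_2\bigr|\le Cn^{199/400},
\]
so $|\E J(G)^2-\Var D_n^G|\le C\sqrt n\,n^{199/400}+Cn^{199/200}=Cn^{399/400}$. This is exactly the target exponent. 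I expect this to be the main point: the Gaussian reference is reached only through the unsmoothing bound, and that bound is what fixes the rate $n^{399/400}$. Combining the pieces proves (a).

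For part (b), $\E D_n=\frac12\E B_n$ may be nonzero, so I would avoid using centering. Write $\Var D_n=\|D_n-\E D_n\|_2^2$ and $\Var J=\|J\|_2^2$. By the triangle inequality,
\[
 \bigl|\|D_n-\E D_n\|_2-\|J\|_2\bigr|\le\|D_n-J\|_2+|\E D_n|.
\]
Because $\E J=0$, we have $|\E D_n|=|\E(D_n-J)|\le e_n$, so the right side is at most $2e_n$. Then
\[
 |\Var D_n-\Var J|\le2e_n\bigl(2\|J\|_2+2e_n\bigr)\le C\bigl(\sqrt n\,e_n+e_n^2\bigr),
\]
using $\|J\|_2\le C\sqrt n$ from (a). Combining this with (a) gives (b).
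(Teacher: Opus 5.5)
Your proposal is correct and follows the paper's proof essentially step by step: Lindeberg replacement applied to $J^2$ with the jet bounds, the fourth-insertion asymptotics for $S_n$, and the Gaussian reference through Corollary~\ref{cu:bd-unsmoothing} and Theorem~\ref{thm:B}(i), which is indeed what fixes the exponent $399/400$. In (b), the paper gets $\|(D_n-\E D_n)-J(\xi)\|_2\le e_n$ directly, since centering is an orthogonal projection in $L^2$ and $J(\xi)$ is centered. Your triangle-inequality route with $|\E D_n|\le e_n$ reaches the same bound up to a factor of two.
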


\begin{proof}
\emph{Smooth comparison.} By the jet bounds of Lemma~\ref{cu:jets}(iii)
and Leibniz' rule,
\[
 \|\partial_j^6(J^2)\|_\infty\le CM^2\delta^{-8}n^{-1},\qquad
 \|\partial_i^4\partial_j^4(J^2)\|_\infty\le CM^2\delta^{-10}n^{-2}.
\]
Both $J(\xi)$ and $J(G)$ have mean zero by Lemma~\ref{cu:jets}(iv), so
Lindeberg replacement (Lemma~\ref{cu:lindeberg}) applied to $F=J^2$ gives
$\Var J(\xi)-\Var J(G)=\kappa_4S_n+O(M^2\delta^{-10})$. By the fourth
insertion (Proposition~\ref{cu:insertion}),
$|t_M^2(1+\delta^2)^{-2}-1|\le C(\delta^2+e^{-6M^2})$ and
\begin{equation}\label{cu:smooth-comparison}
 \Bigl|\Var J(\xi)-\Var J(G)-\frac{\kappa_4n}{12}\Bigr|
 \le C\bigl(M^2\delta^{-10}\log^2(n+2)+n\delta^2+ne^{-6M^2}\bigr)\le Cn^{23/25},
\end{equation}
since $M^2\delta^{-10}=n^{41/100}$ and $n\delta^2=n^{23/25}$.

\emph{Gaussian reference.} Gaussian coefficients satisfy (BD), so
Corollary~\ref{cu:bd-unsmoothing} gives $\|D_n^G-J(G)\|_2\le Cn^{199/400}$.
By Theorem~\ref{thm:B}(i), $\E D_n^G=0$ and $\Var D_n^G=c_Gn+O(n^{99/100})$,
so $\|D_n^G\|_2\le C\sqrt n$. Hence
\begin{align*}
 |\Var D_n^G-\Var J(G)|&=\bigl|\|D_n^G\|_2^2-\|J(G)\|_2^2\bigr|\\
 &\le\|D_n^G-J(G)\|_2\bigl(2\|D_n^G\|_2+\|D_n^G-J(G)\|_2\bigr)\le Cn^{399/400}.
\end{align*}
With \eqref{cu:smooth-comparison} and $\Var D_n^G=c_Gn+O(n^{99/100})$,
this proves (a).

\emph{Unsmoothing.} By (a) and $\E J(\xi)=0$, $\|J(\xi)\|_2\le C\sqrt n$.
Centering is an orthogonal projection in $L^2$ and $J(\xi)$ is
centered, so $\bar D=D_n-\E D_n$ satisfies $\|\bar D-J(\xi)\|_2\le e_n$, and
\[
 |\Var D_n-\Var J(\xi)|=\bigl|\|\bar D\|_2^2-\|J(\xi)\|_2^2\bigr|
 \le e_n\bigl(2\|J(\xi)\|_2+e_n\bigr)\le C\bigl(\sqrt n\,e_n+e_n^2\bigr),
\]
and (a) gives (b).
\end{proof}

\begin{proof}[Proof of the variance formula in Theorem~\textup{\ref{thm:D}}]
By Lemma~\ref{pre:no-circle-roots}, $\E D_n=0$ for every $n$. By
Corollary~\ref{cu:bd-unsmoothing}, $e_n\le Cn^{199/400}$, and
Proposition~\ref{cu:transfer}(b) gives
$\Var D_n=(c_G+\kappa_4/12)n+O(n^{399/400})$.
\end{proof}

\emph{The examples.} The uniform law on $[-\sqrt3,\sqrt3]$ has
$\E\xi_0^2=1$ and $\E\xi_0^4=9/5$, so $\kappa_4=-6/5$. The Laplace law
with density $2^{-1/2}e^{-\sqrt2|x|}$ has $\E\xi_0^{2k}=(2k)!\,2^{-k}$, so
$\E\xi_0^2=1$, $\E\xi_0^4=6$ and $\kappa_4=3$.

\emph{The quadratic statistic.} The statistic of Theorem~\ref{thm:D} is
\[
 H_n=\sum_{j=0}^nh_j(\xi_j^2-1),\qquad h_j=\frac{j-n/2}{n+1}.
\]
If the $\xi_j$ are iid with $\E\xi_0=0$, $\E\xi_0^2=1$ and
$\E\xi_0^4=3+\kappa_4<\infty$, the variables $\xi_j^2-1$ are centered and
uncorrelated with variance $\kappa_4+2$, so $\E H_n=0$ and
\begin{equation}\label{cu:quadratic}
 \Var H_n=(\kappa_4+2)\sum_{j=0}^nh_j^2=(\kappa_4+2)\frac{n(n+2)}{12(n+1)} .
\end{equation}

\begin{proof}[Proof of the assertions about $H_n$ in Theorem~\textup{\ref{thm:D}}]
Let $J=J_{n,\delta,M}$. Both $J(\xi)$ and $H_n$ have mean zero, so
\[
 \Cov(J(\xi),H_n)=\sum_jh_j\E\bigl[(\xi_j^2-1)J(\xi)\bigr].
\]
By the second bound of Lemma~\ref{cu:lindeberg} and
Lemma~\ref{cu:jets}(iii),
\[
 \E\bigl[(\xi_j^2-1)J(\xi)\bigr]=\frac{\kappa_4+2}2\E\,\partial_j^2J(G)
 +O\bigl(M\delta^{-5}n^{-1}+nM\delta^{-7}n^{-2}\bigr).
\]
Insert \eqref{cu:second-jet}, multiply by $h_j$, use
$|h_j|\le\frac12$, and sum over $j$ with \eqref{cu:quadratic}. This
gives
\begin{equation}\label{cu:mixed}
 \Bigl|\Cov\bigl(J(\xi),H_n\bigr)
 -\frac{(\kappa_4+2)\,t_M}{1+\delta^2}\cdot\frac{n(n+2)}{12(n+1)}\Bigr|
 \le CM\delta^{-7}\log(n+2)\le Cn^{57/200}\log n .
\end{equation}
By \eqref{cu:quadratic}, $\|H_n\|_2^2\le(\kappa_4+2)n/6$, so
Corollary~\ref{cu:bd-unsmoothing} gives
$|\Cov(D_n,H_n)-\Cov(J(\xi),H_n)|\le\|D_n-J(\xi)\|_2\|H_n\|_2\le Cn^{399/400}$.
With $t_M\to1$ and $\delta\to0$, \eqref{cu:mixed} gives
$\Cov(D_n,H_n)/n\to(\kappa_4+2)/12$, and by \eqref{cu:quadratic}
$\Var H_n/n\to(\kappa_4+2)/12$. Hence $\Cov(D_n-H_n,H_n)/n\to0$ and
\[
 \frac{\Var(D_n-H_n)}n=\frac{\Var D_n-2\Cov(D_n,H_n)+\Var H_n}n
 \to c_G+\frac{\kappa_4}{12}-\frac{\kappa_4+2}{12}=c_G-\frac16 .
 \qedhere
\]
\end{proof}
\section{The variance for random signs}\label{sec:sign-variance}

We prove Theorem~\ref{thm:S} through Propositions~\ref{cu:unsmooth}
and~\ref{cu:transfer}. A sum of signs $W=\sum_j\epsilon_ju_j$ has the
characteristic function $\prod_j\cos(\theta\cdot u_j)$, which has modulus
one wherever every $\theta\cdot u_j$ lies in $\pi\Z$. For the sums
$Z_n(x)$ the vectors $u_j$ sample the curve
$\omega\mapsto(\cos\omega x,\sin\omega x,\omega\cos\omega x)$ at equally
spaced frequencies. We show that enough of the numbers $\theta\cdot u_j$
stay away from $\pi\Z$, at all frequencies up to $n^{26/25}$, to make
this product at most $n^{-20}$ when $x/2\pi n$ is far from fractions with
small denominators. The same holds for pairs of points. The
other pairs of cells are few, and they enter
Proposition~\ref{cu:unsmooth} through $b_n$.

Throughout, $(\epsilon_j)_{j\ge0}$ are independent random signs, and we
use the fields, the current and the cells of Section~\ref{sec:cumulant}.
For real $s,t$ put $w_j(s,t)=(w_j(s),w_j(t))\in\R^6$. For real $y$, $\nint y$ is
the distance from $y$ to the nearest integer, and $\gamma_x=x/(2\pi n)$.
The fixed exponents are $\tau=1/4000$ and $\tau_*=\tau/100$. Constants
$C,c$ and thresholds $n_0$ depend only on $\tau$, $\tau_*$ and $\chi$.

\subsection{Diophantine conditions}\label{sv:conditions-section}

\begin{definition}\label{sv:conditions}
A point $x$ satisfies \textup{(A1)} if $\nint{l\gamma_x}\ge n^{-1+8\tau}$ for
all integers $1\le l\le n^\tau$. A pair $(s,t)$ satisfies \textup{(A2)} if
$\nint{l_1\gamma_s+l_2\gamma_t}\ge n^{-1+8\tau}$ for all integers $l_1,l_2$
with $|l_1|,|l_2|\le n^\tau$ and $(l_1,l_2)\ne(0,0)$. A cell $I_i$ is
\emph{good} if every point of $\bar I_i$ satisfies \textup{(A1)}, and a pair
$(i,i')$ of cells is \emph{good} if every $(s,t)\in\bar I_i\times\bar I_{i'}$
satisfies \textup{(A2)}. Cells and pairs that are not good are \emph{bad}.
\end{definition}

Taking $l_2=0$ or $l_1=0$ shows that \textup{(A2)} at $(s,t)$ implies
\textup{(A1)} at $s$ and at $t$, so both cells of a good pair are good. In
Proposition~\ref{cu:unsmooth} the good pairs are the separated pairs that
are good in this sense, so $b_n$ is at most the number of bad pairs.

\begin{lemma}[Counting]\label{sv:counting}
For $n\ge n_0$, at most $Cn^{9\tau}$ cells are bad, and for every good
cell $I_i$ at most $Cn^{10\tau}$ indices $i'$ make $(i,i')$ a bad pair.
Hence at most $Cn^{1+10\tau}$ pairs of cells are bad.
\end{lemma}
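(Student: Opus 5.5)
The argument is a measure-and-Lipschitz count. For each integer $l$ with $1\le l\le n^\tau$, the bad set of $\gamma$ for that $l$ is $\{\gamma:\nint{l\gamma}<n^{-1+8\tau}\}$, and we then convert from $\gamma$ back to $x=2\pi n\gamma$ and to cells.

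\emph{Bad cells.} Since $x\in[1,\pi n-1]$, the point $\gamma_x$ ranges over an interval of length at most $\frac12$. For fixed $l$, the set of $\gamma\in[0,\frac12]$ with $\nint{l\gamma}<\eta$, where $\eta=n^{-1+8\tau}$, is a union of at most $l/2+1$ intervals of length $2\eta/l$. These are centred at the fractions $k/l$. In the variable $x$, each such interval has length $4\pi n\eta/l=4\pi n^{8\tau}/l$, so it meets at most $C(1+n^{8\tau}/l)$ cells, because cells have length at least $\frac12$. A cell is bad exactly when some point of its closure lies in one of these sets. Summing over the $\le l/2+1$ intervals and over $l\le n^\tau$ gives
\[
\sum_{l\le n^\tau}(l+1)\,C\bigl(1+n^{8\tau}/l\bigr)\le C\bigl(n^{2\tau}+n^{9\tau}\bigr),
\]
which is $Cn^{9\tau}$ bad cells.

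\emph{Bad partners of a good cell.} Fix a good cell $I_i$. If $(i,i')$ is bad, there are $(s,t)\in\bar I_i\times\bar I_{i'}$ and $(l_1,l_2)\ne(0,0)$, both of size at most $n^\tau$, with $\nint{l_1\gamma_s+l_2\gamma_t}<\eta$.

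If $l_2=0$, this contradicts (A1) at $s$, so $l_2\ne0$. Fix $l_1$ and $l_2$. As $s$ varies over $\bar I_i$, which has length at most $1$, the quantity $l_1\gamma_s$ varies within an interval of length $|l_1|/(2\pi n)\le n^{\tau-1}$. So $l_2\gamma_t$ must lie within $\eta+n^{\tau-1}\le2\eta$ of the set $\mathbb Z-l_1\gamma_{c_i}$. For fixed $t$-shift, this is a union of at most $|l_2|/2+1$ intervals in $\gamma_t$, each of length $\le4\eta/|l_2|$. In $x$ each interval has length $\le8\pi n^{8\tau}/|l_2|$ and meets at most $C(1+n^{8\tau}/|l_2|)$ cells. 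Summing first over the $\le|l_2|+1$ intervals, then over $|l_1|\le n^\tau$ (a factor $Cn^\tau$) and $|l_2|\le n^\tau$, gives
\[
Cn^{\tau}\sum_{|l_2|\le n^\tau}\bigl(|l_2|+n^{8\tau}\bigr)\le Cn^{10\tau}.
\]

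\emph{Total.} A bad pair either has a bad cell among its two cells, or consists of two good cells. The first kind numbers at most $2m\cdot Cn^{9\tau}\le Cn^{1+9\tau}$. The second kind numbers at most $m\cdot Cn^{10\tau}$, since $m\le\pi n$ by the partner count. Together this is at most $Cn^{1+10\tau}$.

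\emph{Where care is needed.} The only delicate point is the bookkeeping of $\gamma$-intervals versus cells. Each $\gamma$-interval in the bad set must be enlarged by the cell width and by the variation $n^{\tau-1}$ of $l_1\gamma_s$ over a cell. Both are dominated by $\eta=n^{-1+8\tau}$ because $\tau<8\tau$. Once that is checked, the bounds above follow with absolute constants depending only on $\tau$.
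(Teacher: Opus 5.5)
Your proof is correct and follows essentially the same route as the paper. For each $l$ (resp.\ each $(l_1,l_2)$ with $l_2\ne0$, after replacing $s$ by the midpoint $c_i$ at a cost $n^{\tau-1}\le n^{-1+8\tau}$), you count the $x$-intervals of length $O(n^{8\tau}/|l|)$ around the bad fractions and the cells they meet, then split the bad pairs into those containing a bad cell and those with both cells good. The only blemishes are cosmetic and do not affect the bound: the interval count switches between $|l_2|/2+1$ and $|l_2|+1$, and the phrase ``since $m\le\pi n$ by the partner count'' runs two separate facts together.
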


\begin{proof}
An interval of length $r$ meets the closures of at most $r/\ell+2\le2r+2$
cells. If $x\in[0,\pi n]$ fails \textup{(A1)}, there are $1\le l\le n^\tau$ and an
integer $a$ with $|l\gamma_x-a|<n^{-1+8\tau}$, that is,
$|x-2\pi na/l|<2\pi n^{8\tau}/l$. Since $0\le l\gamma_x\le l/2$, at most
$l/2+2$ integers $a$ occur. For fixed $l$ the resulting intervals, of length
$4\pi n^{8\tau}/l$, meet at most $(l/2+2)(8\pi n^{8\tau}/l+2)\le25\pi n^{8\tau}$
cells, because $l\le n^{8\tau}$. Summing over $l\le n^\tau$ gives at most
$25\pi n^{9\tau}$ bad cells.

Let $I_i$ be good, and let $(s,t)\in\bar I_i\times\bar I_{i'}$
fail \textup{(A2)} with $(l_1,l_2)$. If $l_2=0$ then $s$ fails \textup{(A1)},
which is impossible, so $1\le|l_2|\le n^\tau$. Since $|s-c_i|\le\frac12$, we
have $|l_1\gamma_s-l_1\gamma_{c_i}|\le n^{\tau-1}\le n^{-1+8\tau}$, so some
integer $a$ satisfies $|l_1\gamma_{c_i}+l_2\gamma_t-a|<2n^{-1+8\tau}$. Thus $t$
lies within $4\pi n^{8\tau}/|l_2|$ of $2\pi n(a-l_1\gamma_{c_i})/l_2$, and as
$|l_2\gamma_t|\le|l_2|/2$, at most $|l_2|+3$ integers $a$ occur. These
intervals meet at most $(|l_2|+3)(16\pi n^{8\tau}/|l_2|+2)\le70\pi n^{8\tau}$
cells. Summing over the at most $9n^{2\tau}$ pairs $(l_1,l_2)$ proves the
second claim. The number of bad pairs is at most
$2m\cdot25\pi n^{9\tau}+m\cdot630\pi n^{10\tau}$.
\end{proof}

\subsection{Decay of characteristic sums}\label{sv:decay-section}

\begin{proposition}[Characteristic sums]\label{sv:decay}
There is $n_0$ such that for $n\ge n_0$ the following hold.
\begin{enumerate}[label=\textup{(\alph*)}]
 \item If $(s,t)$ satisfies \textup{(A2)}, then
 $\sum_{j=0}^n\nint{\langle\Xi,w_j(s,t)\rangle}^2\ge\lfloor n^{\tau_*}\rfloor$
 for all $\Xi\in\R^6$ with $n^{-1/2+\tau}\le|\Xi|\le n^{27/50}$.
 \item If $x$ satisfies \textup{(A1)}, then
 $\sum_{j=0}^n\nint{\langle\Xi,w_j(x)\rangle}^2\ge\lfloor n^{\tau_*}\rfloor$
 for all $\Xi\in\R^3$ with the same bounds.
\end{enumerate}
\end{proposition}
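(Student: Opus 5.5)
The plan is to reduce both parts to a statement about a single real sequence indexed by $j$ and to exploit its structure as a sampled quasi-polynomial. Since $\omega_jx=2\pi\gamma_x(j-n/2)$, for $\Xi=(a,b,c)\in\R^3$ we have
\[
 u_j:=\langle\Xi,w_j(x)\rangle=\Re\bigl((P+Q\,j)\,e^{2\pi i\gamma_xj}\bigr)
\]
with complex $P,Q$ depending on $\Xi$, $x$ and $n$. Here $|Q|\asymp|c|/n$, and the amplitude $|P+Qj|$ is comparable to $|\Xi|$ on most of $0\le j\le n$, because $\cos,\sin,\omega\cos$ are uniformly independent on $[-\frac12,\frac12]$ (the compactness argument of Lemma~\ref{cu:window}(i)). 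For (a) the sequence is a sum of two such terms with frequencies $\gamma_s,\gamma_t$. I will argue by contradiction: suppose $\sum_j\nint{u_j}^2<\lfloor n^{\tau_*}\rfloor$. Then at most $n^{3\tau_*}$ indices have $\nint{u_j}\ge n^{-\tau_*}$. So, after discarding few indices, $u_j=m_j+\varepsilon_j$ with $m_j\in\Z$ and $|\varepsilon_j|<n^{-\tau_*}$ on all but $n^{3\tau_*}$ of the indices.

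\emph{Step 1 (reduction to a slow progression).} By Dirichlet's theorem choose $1\le l\le n^\tau$ with $\nint{l\gamma_x}\le n^{-\tau}$; by (A1), $\beta:=2\pi\nint{l\gamma_x}\ge2\pi n^{-1+8\tau}$. Along each progression $j=j_0+lk$, $0\le k\le n/l$, the sequence becomes $f(k)=\Re((p+qk)e^{i\beta k})$, an oscillation whose phase advances by $\beta$ per step. The progression has $\ge n^{1-\tau}$ terms, so it contains $\gtrsim n^{7\tau}$ full periods. There are $l$ progressions, and all but $n^{3\tau_*}$ of them carry no exceptional index; I will work on one clean progression. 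For (a) I use simultaneous Dirichlet approximation of $(\gamma_s,\gamma_t)$ with a common $l\le n^\tau$ (costing $n^{-\tau/2}$ accuracy). Then (A2), applied to combinations $(l_1,l_2)$ with $|l_i|\le n^\tau$, keeps both reduced frequencies and their difference $\ge n^{-1+8\tau}$. It also keeps the two oscillations separated, so that they cannot cancel over a period.

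\emph{Step 2 (amplitude cases).} If the amplitude of $f$ is below $\frac14$, then near-integrality forces $m_k=0$, hence $|f(k)|<n^{-\tau_*}$ for all $k$. This contradicts the energy $\sum_kf(k)^2\gtrsim|\Xi|^2n/l\ge n^{2\tau-1}\cdot n^{1-\tau}=n^\tau$, which follows from averaging over $\gtrsim n^{7\tau}$ periods; for (a) the energy also requires separation of the two frequencies. If the amplitude is at least $\frac14$ but the per-step increment $|f(k+1)-f(k)|\lesssim|\Xi|\beta$ is below $\frac12$, then $f$ must pass continuously across half-integers within each period. Near such a crossing some sampled value lies at distance $\ge\min(\frac14,\text{increment}/2)$ from $\Z$. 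The increments are bounded below there except at turning points of the oscillation, and there are $\gtrsim n^{7\tau}$ crossings. This again gives far more than $n^{\tau_*}$ bad indices.

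\emph{Step 3 (the main obstacle: large $|\Xi|$).} When $|\Xi|\beta\gtrsim1$ (possible since $|\Xi|$ may reach $n^{27/50}$), consecutive terms jump by order one or more, and continuity arguments fail. Here I would refine the Dirichlet step at a second scale. I would choose $l$ with denominator bound $Q\approx(|\Xi|)^{1/2}$-type tuned so that $|\Xi|\nint{l\gamma_x}\le\frac18$, which is compatible with the lower bound $n^{-1+8\tau}$ only because $|\Xi|\le n^{27/50}<n^{1-8\tau}$. Since then $l$ may exceed $n^\tau$, (A1) no longer gives the lower bound directly. In that case I would instead apply a finite-difference annihilator. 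The operator $(E^l-e^{i\beta})^2(E^l-e^{-i\beta})^2$, with real coefficients bounded by $16$, kills $f$ exactly. Applied to $u=m+\varepsilon$, it makes the integer combination of the $m$'s equal to a non-integer-coefficient expression small in $\varepsilon$. Handling the non-integrality of $2\cos\beta$ is exactly what I expect to be delicate, and I would try first to replace $\beta$ by a nearby rational rotation using (A1) for the small denominators $l\le n^\tau$ and then iterate. The two-point case (a) has the same structure with an order-$8$ annihilator and is, I expect, the hardest bookkeeping. Part (b) follows either directly by the same argument or as the degenerate case of (a).
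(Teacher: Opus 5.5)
Your Steps 1--2 only cover the regime where the increment $|\Xi|\beta$ of the reduced oscillation is small. Step~3, which you yourself call the main obstacle, is not an argument, and that regime is a large part of the range: $\beta$ can be as small as $2\pi n^{-1+8\tau}$ while $|\Xi|$ reaches $n^{27/50}$. The obstruction you name is real.
\begin{itemize}
\item The exact annihilator $(E^l-e^{i\beta})^2(E^l-e^{-i\beta})^2$ has non-integer coefficients. The small quantity $Tm=-T\varepsilon$ is therefore not forced to vanish.
\item Tuning $l$ so that $|\Xi|\,\nint{l\gamma_x}\le\frac18$ may push $l$ far beyond $n^\tau$. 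There \textup{(A1)} gives no lower bound, and the reduced frequency can even vanish.
\end{itemize}

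The missing idea is to use an integer-coefficient operator that only approximately annihilates.
\begin{enumerate}
\item Put $A=\lfloor n^{\tau_*}\rfloor$. Fewer than $A^3$ indices have $\nint{g_j}\ge1/A$, so pigeonhole gives a run of about $n/A^3$ consecutive indices on which $\nint{g_j}<1/A$.
\item By Dirichlet, pick $p\le A^2$ with $\nint{p\gamma_{x_\nu}}<1/A$, so that $|1-z_\nu|<2\pi/A$.
\item Along $j_0+ip$, take the plain $k$-th difference with $k=1/\tau_*$. It kills the smooth part up to $O_k(Rn^{2\tau_*-1})$, which is where $|\Xi|\le n^{27/50}\ll n$ is used. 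It changes the rounding errors by at most $2^k/A$.
\item Since $\Delta^k$ has integer coefficients, $\Delta^k\lfloor g\rceil=0$ exactly.
\item A bounded integer sequence with vanishing $k$-th differences on a range longer than $k+2^k(B+1)$ is constant (Lemma~\ref{sv:constancy}).
\item Only now is the exact annihilator used, on the constant $\mu$. Lemma~\ref{sv:rotations} supplies a rotation $\zeta_\nu=z_\nu^q$ bounded away from $1$; this is where \textup{(A1)}/\textup{(A2)} enter, for $p,2p\le n^\tau$. Non-integrality is harmless at this point: it only yields $A|\mu|\prod_\nu|1-\zeta_\nu|^4\le16^d$, hence $\mu=0$.
\item The Gram bound (Lemma~\ref{sv:gram}) then contradicts $|g|<1/A$ along the progression.
\end{enumerate}
No case split on amplitude or increment size is needed.

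Even in the small-increment regime, several steps need repair.
\begin{itemize}
\item There are at most $n^{3\tau_*}$ exceptional indices spread over $l$ progressions, and $l$ may be $1$. So a progression free of exceptional indices need not exist.
\item You must instead pass to an exceptional-free sub-run whose span in $\omega$ is only about $n^{-3\tau_*}$. On it you cannot use that $|P+Qj|\asymp|\Xi|$ on most of $[0,n]$. You need the variance argument of Lemma~\ref{sv:gram}, which loses only a factor $(\lambda h)^2$.
\item Near a crossing of $m+\frac12$ with increment $\Delta<\frac12$, the samples are at distance at least $\frac12-\Delta$ from $\Z$, not $\min(\frac14,\Delta/2)$.
\item For amplitudes in $[\frac14,\frac12)$ no half-integer is crossed at all. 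The peaks then give $\nint f\ge\frac14$ directly.
\item The crossing picture for the two-frequency sum in case (a) is asserted rather than justified.
\end{itemize}
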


In both cases
\begin{equation}\label{sv:signal}
 \langle\Xi,w_j\rangle=g(\omega_j),\qquad
 g(\omega)=\Re\sum_{\nu=1}^d(a_\nu+b_\nu\omega)e^{i\omega x_\nu},
\end{equation}
with $d=2$, $(x_1,x_2)=(s,t)$, $a_1=\Xi_1-i\Xi_2$, $b_1=\Xi_3$,
$a_2=\Xi_4-i\Xi_5$, $b_2=\Xi_6$ in case (a), and $d=1$, $x_1=x$,
$a_1=\Xi_1-i\Xi_2$, $b_1=\Xi_3$ in case (b). In both cases
$\sum_\nu(|a_\nu|^2+|b_\nu|^2)=|\Xi|^2$.

\begin{lemma}[Two rotations]\label{sv:rotations}
Let $\eta>0$, let $Q\ge8/\eta$ be an integer, and let $y_1,y_2$ be real with
$\nint{y_\nu}\ge\eta$ and $\nint{2y_\nu}\ge\eta$ for $\nu=1,2$. Then some
$1\le q\le Q$ has $\nint{qy_1}\ge\frac1{16}$ and $\nint{qy_2}\ge\frac1{16}$.
\end{lemma}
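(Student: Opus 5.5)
The plan is a pigeonhole/Dirichlet-type argument on the torus $(\R/\Z)^2$, exploiting that doubling a point that is far from $0$ and from $\frac12$ keeps it away from $0$. Consider the orbit $q\mapsto(qy_1,qy_2)\bmod 1$ for $1\le q\le Q$. Write $A_\nu=\{q\le Q:\nint{qy_\nu}<\frac1{16}\}$. The claim is that $A_1\cup A_2\ne\{1,\dots,Q\}$, and I will prove it by bounding the density of each $A_\nu$ by less than $\frac12$, or more robustly by a direct covering argument.

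\emph{Single-rotation density.} Fix $y=y_\nu$ with $\nint y\ge\eta$ and $\nint{2y}\ge\eta$. I will show that among any two consecutive integers $q,q+1$ at most one lies in $A_\nu$. If both did, then $\nint{y}=\nint{(q+1)y-qy}<\frac18$. Combined with $\nint{2y}\ge\eta$ this alone does not contradict, so instead I use triples: if $q,q+1,q+2\in A_\nu$ then $\nint y<\frac18$ and $\nint{2y}<\frac18$, still not a contradiction. The better route is to use the hypothesis that $\nint y\ge\eta$. Let $\beta=\nint y\in[\eta,\frac12]$. If $\beta\ge\frac18$, consecutive integers cannot both lie in $A_\nu$, so $|A_\nu|\le\lceil Q/2\rceil$. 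If $\beta<\frac18$, the orbit $qy$ moves in steps of size $\beta$ (mod sign), so it spends runs of length at most $\lceil\frac18/\beta\rceil+1$ inside the arc $(-\frac1{16},\frac1{16})$ and then at least $\lfloor\frac78/\beta\rfloor-1$ steps outside. Over $Q\ge 8/\eta\ge 8/\beta$ steps this gives a proportion of $A_\nu$ at most about $\frac18+O(\beta)$, hence at most $\frac14$ once $Q\ge 8/\eta$ and the edge effects are counted. The case $\beta\ge\frac18$ but $\nint{2y}$ small, that is $y$ near $\frac12$, is where the hypothesis $\nint{2y}\ge\eta$ enters. Apply the same run-length argument to $y'=y-\frac12$, which has $\nint{y'}=\frac12\nint{2y}\ge\eta/2$ in the relevant sense. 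Along even $q$ we have $qy\equiv qy'$, and along odd $q$ we have $qy\equiv qy'+\frac12$. The odd terms are then near $\frac12$, not near $0$, whenever $\nint{qy'}$ is small, so $A_\nu$ is concentrated on even $q$ with density at most $\frac12\cdot(\text{small})$.

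\emph{Combining.} If each $|A_\nu|<Q/2$, then $|A_1\cup A_2|<Q$ and some $q$ lies in neither set, as required. The main obstacle is the intermediate regime $\frac18\le\beta\le\frac12$ with $y$ not close to $\frac12$, for example $y\approx\frac13$, where consecutive steps already exit the arc. Here I would bound directly: $q\in A_\nu$ forces $q\pm1\notin A_\nu$ when $\beta\ge\frac18$, giving only $|A_\nu|\le\lceil Q/2\rceil$. That is not strict, so I would refine by also excluding $q+2$ via $\nint{2y}\ge\eta$, together with a case split $\nint{2y}\ge\frac18$ versus $<\frac18$, the latter reducing to the near-$\frac12$ case. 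With the constants $\frac1{16}$ and $Q\ge8/\eta$ I expect this case analysis to give $|A_\nu|\le Q/3+2$, and $Q\ge 8/\eta\ge16$ then makes $2(Q/3+2)<Q$.
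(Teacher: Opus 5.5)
Your route differs from the paper's and can be completed, but as written the near-$\frac12$ case is wrong and the final constants are only guessed. The paper needs no case analysis. From $\cos^4\pi x=\frac38+\frac12\cos2\pi x+\frac18\cos4\pi x$ and $\bigl|\sum_{q\le Q}e^{2\pi iqy}\bigr|\le1/(2\nint y)$ it gets $\sum_{q\le Q}\cos^4(\pi qy_\nu)\le\frac{3Q}8+\frac5{16\eta}$. Since $\cos^4(\pi qy_\nu)\ge0.92$ whenever $\nint{qy_\nu}<\frac1{16}$, each exceptional set $A_\nu$ has at most $0.41Q+0.34/\eta<\frac Q2$ elements. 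The hypotheses on $y_\nu$ and $2y_\nu$ enter simply as the two frequencies of that trigonometric polynomial. Your orbit counting is more elementary and, done correctly, gives smaller densities (about $0.3$), at the cost of three cases and edge bookkeeping.

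The three cases should read as follows.

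\emph{Case $\beta=\nint y<\frac18$.} Count the arcs $(k-\frac1{16},k+\frac1{16})$ met by the progression, and the terms in each arc. This gives $|A_\nu|\le(Q\beta+2)(\frac1{8\beta}+1)\le\frac{9Q}{32}+2$. That exceeds $\frac Q4$, contrary to your claim, but it is $<\frac Q2$ because $\eta\le\beta<\frac18$ forces $Q>64$.

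\emph{Case $\nint y\ge\frac18$ and $\nint{2y}\ge\frac18$.} Your argument excluding $q+1$ and $q+2$ gives $|A_\nu|\le\lceil Q/3\rceil<\frac Q2$, since $\eta\le\frac13$ gives $Q\ge24$.

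\emph{Case $\nint y\ge\frac18>\nint{2y}$.} Here $y=\frac12+y'$ with $\nint{y'}<\frac1{16}$, and it is not true that $A_\nu$ concentrates on even $q$. An odd $q$ lies in $A_\nu$ exactly when $qy'$ is within $\frac1{16}$ of $\frac12$. Since $Q\nint{y'}\ge Q\eta/2\ge4$, the orbit $qy'$ winds around the circle several times, so odd $q$ contribute as often as even ones. Count instead both parity classes $q=2m$ and $q=2m+1$. Each is a progression in $m$ with step $\pm\gamma$, where $\gamma=\nint{2y}\in[\eta,\frac18)$. This gives $|A_\nu|\le\frac Q8+Q\gamma+\frac1{2\gamma}+4\le\frac{5Q}{16}+4<\frac Q2$, again because $Q>64$.

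So $|A_1|+|A_2|<Q$ in every combination of cases, which is all you need. The uniform bound $Q/3+2$ you anticipate is not what comes out, and it is not needed.
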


\begin{proof}
For real $y$ with $\nint y>0$,
$|\sum_{q=1}^Qe^{2\pi iqy}|\le2/|1-e^{2\pi iy}|\le1/(2\nint y)$, since
$|1-e^{2\pi iy}|=2|\sin\pi y|\ge4\nint y$. By the identity
$\cos^4\pi x=\frac38+\frac12\cos2\pi x+\frac18\cos4\pi x$,
\[
 \sum_{q=1}^Q\cos^4(\pi qy_\nu)\le\frac{3Q}8+\frac12\cdot\frac1{2\eta}+\frac18\cdot\frac1{2\eta}
 =\frac{3Q}8+\frac5{16\eta}.
\]
If $\nint{qy_\nu}<\frac1{16}$, then $\cos^4(\pi qy_\nu)\ge\cos^4(\pi/16)\ge0.92$.
So at most $(\frac{3Q}8+\frac5{16\eta})/0.92\le0.41Q+0.34/\eta$ integers
$q\le Q$ have $\nint{qy_\nu}<\frac1{16}$. For $\nu=1,2$ together these are at
most $0.82Q+0.68/\eta<Q$ integers, because $0.68/\eta\le0.085Q$.
\end{proof}

\begin{lemma}[Annihilators]\label{sv:annihilator}
Let $d\ge1$, let $\zeta_1,\ldots,\zeta_d\in\C$ have modulus one, let
$a_\nu,b_\nu\in\C$, and let
$f(r)=\Re\sum_{\nu=1}^d(a_\nu+b_\nu r)\zeta_\nu^r$ for integers $r\ge0$. If
$\mu\in\R$ and $|f(r)-\mu|\le1$ for $0\le r\le4d$, then
\[
 |\mu|\prod_{\nu=1}^d|1-\zeta_\nu|^4\le16^d .
\]
\end{lemma}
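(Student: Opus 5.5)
The plan is to apply a finite-difference operator that annihilates every term of $f$ and to read off $\mu$ from what it does to the constant. Let $E$ be the shift $(Eu)(r)=u(r+1)$ on sequences indexed by $r\ge0$. Write $\Re z=\frac12(z+\bar z)$, so that
\[
 f(r)=\tfrac12\sum_{\nu=1}^d\bigl[(a_\nu+b_\nu r)\zeta_\nu^r+(\bar a_\nu+\bar b_\nu r)\bar\zeta_\nu^r\bigr].
\]
Each summand is a sequence $(\alpha+\beta r)\lambda^r$ with $\lambda\in\{\zeta_\nu,\bar\zeta_\nu\}$, and such a sequence is killed by $(E-\lambda)^2$. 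Put
\[
 P(E)=\prod_{\nu=1}^d(E-\zeta_\nu)^2(E-\bar\zeta_\nu)^2,
\]
a polynomial of degree $4d$. Then $P(E)f\equiv0$, whatever coincidences occur among the $\zeta_\nu$ and $\bar\zeta_\nu$, since the product of the individual annihilators still annihilates the sum. On the other hand $P(E)$ applied to the constant sequence $\mu$ gives $P(1)\mu$, and
\[
 |P(1)|=\prod_\nu|1-\zeta_\nu|^2|1-\bar\zeta_\nu|^2=\prod_\nu|1-\zeta_\nu|^4 .
\]

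Evaluate at $r=0$. Put $e(r)=\mu-f(r)$ for $0\le r\le4d$, so $|e(r)|\le1$ by hypothesis. Since $(P(E)f)(0)=0$ and $(P(E)\mu)(0)=P(1)\mu$, we get $P(1)\mu=(P(E)e)(0)=\sum_{k=0}^{4d}p_ke(k)$, where $p_k$ are the coefficients of $P$. This involves only the values $e(0),\dots,e(4d)$, which is exactly the range in the hypothesis. Therefore
\[
 |\mu|\prod_\nu|1-\zeta_\nu|^4\le\sum_k|p_k| .
\]
The sum $\sum_k|p_k|$ is at most the product of the coefficient $\ell^1$-norms of the factors. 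Each factor $(E-\lambda)^2$ with $|\lambda|=1$ has coefficient norm $(1+|\lambda|)^2=4$, and there are $2d$ such factors, so $\sum_k|p_k|\le4^{2d}=16^d$. This is the claimed bound.

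The steps are routine. The only point needing care is to use the squared factors $(E-\zeta)^2$, which are needed to absorb the linear coefficients $b_\nu r$. This makes the operator order exactly $4d$, matching the range $0\le r\le4d$ in the hypothesis.
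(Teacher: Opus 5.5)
Your proof is correct and is essentially the paper's argument. The paper uses the same annihilating operator $T=\prod_\nu T_{\zeta_\nu}^2T_{\bar\zeta_\nu}^2$ (your $P(E)$) and evaluates $T(\mu-f)$ at $0$; it bounds the result by noting that each of the $4d$ first-order factors at most doubles the largest modulus, which is the same estimate as your bound $\sum_k|p_k|\le4^{2d}$ via the coefficient $\ell^1$ norm.
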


\begin{proof}
For $z\in\C$ and a complex sequence $y$ put $T_zy(r)=y(r+1)-zy(r)$. These
operators commute. If $y(r)=(a+br)z'^r$ with $a,b\in\C$, then $T_zy$ has the
same form with the same $z'$, and $T_{z'}y(r)=bz'^{r+1}$, so $T_{z'}^2y=0$.
Let $T=\prod_\nu T_{\zeta_\nu}^2T_{\bar\zeta_\nu}^2$, a product of $4d$
operators. Since $2\Re w=w+\bar w$, the sequence $f$ is a sum of $2d$
sequences of this form with $z'\in\{\zeta_\nu,\bar\zeta_\nu\}$, so $Tf=0$. A
constant sequence $\mu$ has $T_z\mu=(1-z)\mu$, so
$T\mu=\mu\prod_\nu(1-\zeta_\nu)^2(1-\bar\zeta_\nu)^2=\mu\prod_\nu|1-\zeta_\nu|^4$.
For $|z|=1$, $T_z$ takes a sequence on $\{0,\ldots,l\}$ to a sequence on
$\{0,\ldots,l-1\}$ and at most doubles its largest modulus. Applying the
$4d$ factors of $T$ to $\mu-f$, whose modulus is at most one on
$\{0,\ldots,4d\}$, gives $|T\mu|=|T(\mu-f)(0)|\le2^{4d}=16^d$.
\end{proof}

\begin{lemma}[Constancy]\label{sv:constancy}
Let $k,B,\lambda\ge0$ be integers with $k+2^k(B+1)\le\lambda$, and let
$f_0,\ldots,f_\lambda$ be integers with $|f_i|\le B$ and
$\sum_{r=0}^k(-1)^{k-r}\binom krf_{i+r}=0$ for $0\le i\le\lambda-k$. Then $f$ is
constant.
\end{lemma}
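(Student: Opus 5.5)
The plan is to read the hypothesis as a vanishing $k$-th finite difference, $(\Delta^kf)_i=0$ for $0\le i\le\lambda-k$, where $(\Delta f)_i=f_{i+1}-f_i$. The standard consequence is that $f$ agrees on $\{0,\ldots,\lambda\}$ with a polynomial $P$ of degree less than $k$ (for $k=0$ the hypothesis says $f\equiv0$, which is constant). I will argue by induction on $k$, the case $k=0$ being trivial and $k=1$ being immediate ($f_{i+1}=f_i$ for all $i$).

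For the inductive step, put $g_i=f_{i+1}-f_i$ for $0\le i\le\lambda-1$. The $g_i$ are integers with $|g_i|\le2B$, and $\Delta^{k-1}g=\Delta^kf=0$ on the range $0\le i\le(\lambda-1)-(k-1)$. To apply the induction hypothesis to $g$ with parameters $(k-1,2B,\lambda-1)$, I need $(k-1)+2^{k-1}(2B+1)\le\lambda-1$, that is, $k+2^kB+2^{k-1}\le\lambda$. This follows from $k+2^k(B+1)\le\lambda$. The induction then gives that $g$ is constant, say $g\equiv c\in\Z$.

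Then $f_i=f_0+ci$ for all $i$, so $|c|\lambda=|f_\lambda-f_0|\le2B$. But $\lambda\ge k+2^k(B+1)\ge 2B+2>2B$ once $k\ge1$, since $2^k\ge2$. Hence $c=0$ and $f$ is constant.

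The only point requiring care is the bookkeeping: the bound on the $f_i$ doubles at each step while $\lambda$ drops by one, and the hypothesis $k+2^k(B+1)\le\lambda$ is exactly what absorbs both. Integrality is used only to conclude $c=0$ from $|c|<1$. The argument is otherwise routine.
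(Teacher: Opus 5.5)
Your proof is correct and follows the paper's argument: induction on $k$ through the first differences $f_{i+1}-f_i$, which are integers bounded by $2B$ whose $(k-1)$-th differences vanish, with the same parameter check $(k-1)+2^{k-1}(2B+1)\le\lambda-1$. As in the paper, you finish by using $|f_\lambda-f_0|\le 2B<\lambda$ and integrality to force the constant difference to be zero.
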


\begin{proof}
We induct on $k$. For $k=0$ the hypothesis says that every $f_i$ vanishes.
Let $k\ge1$ and $f'_i=f_{i+1}-f_i$ for $0\le i\le\lambda-1$. Then
$|f'_i|\le2B$, the $(k-1)$-th differences of $f'$ are the $k$-th differences
of $f$ and vanish for $0\le i\le\lambda-k$, and
$(k-1)+2^{k-1}(2B+1)\le k+2^k(B+1)-1\le\lambda-1$. By induction $f'_i=f'_0$
for all $i$, so $f_\lambda-f_0=\lambda f'_0$. Since
$|f_\lambda-f_0|\le2B<\lambda$, we get $f'_0=0$.
\end{proof}

\begin{lemma}[Gram sums along progressions]\label{sv:gram}
Let $g$ be as in \eqref{sv:signal} with $d\in\{1,2\}$, real $x_\nu$ and
complex $a_\nu,b_\nu$. Let $\omega'_i=\omega'_0+ih$ for $0\le i\le\lambda$,
with $h\ge0$ and $\omega'_0,\omega'_\lambda\in[-\frac12,\frac12]$. Suppose
that $\eta>0$, $\nint{h(x_\nu+x_{\nu'})/2\pi}\ge\eta$ for all $\nu,\nu'$, and
$\nint{h(x_\nu-x_{\nu'})/2\pi}\ge\eta$ for $\nu\ne\nu'$. Then
\[
 \sum_{i=0}^\lambda g(\omega'_i)^2\ge\Bigl(\frac{(\lambda+1)(\lambda h)^2}{36}-\frac{3d}\eta\Bigr)
 \sum_{\nu=1}^d\bigl(|a_\nu|^2+|b_\nu|^2\bigr).
\]
\end{lemma}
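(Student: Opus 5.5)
We expand $g(\omega'_i)^2$ into a diagonal part and oscillatory cross terms. Writing $g=\frac12\sum_\nu(p_\nu e^{i\omega x_\nu}+\bar p_\nu e^{-i\omega x_\nu})$ with $p_\nu(\omega)=a_\nu+b_\nu\omega$, we get
\[
 g^2=\tfrac12\sum_\nu|p_\nu|^2+\tfrac12\Re\sum_{\nu\ne\nu'}p_\nu\bar p_{\nu'}e^{i\omega(x_\nu-x_{\nu'})}+\tfrac12\Re\sum_{\nu,\nu'}p_\nu p_{\nu'}e^{i\omega(x_\nu+x_{\nu'})}.
\]
Summed over the progression $\omega'_i=\omega'_0+ih$, each oscillatory term is $\Re\sum_iP(i)e^{iy i}$, where $P$ is a polynomial of degree at most two in $i$ and the phase $y=h(x_\nu\pm x_{\nu'})$ satisfies $\nint{y/2\pi}\ge\eta$ by hypothesis. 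The plan is to bound these terms by summation by parts, as in \eqref{rc:abel}, and to bound the diagonal part below by an elementary quadratic estimate.

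\emph{Diagonal part.} We have $\sum_i|a_\nu+b_\nu\omega'_i|^2$, with $\omega'_i$ equally spaced over an interval of length $\lambda h\le1$ inside $[-\frac12,\frac12]$. For a fixed complex linear function, minimizing over $a_\nu$ given $b_\nu$ leaves $|b_\nu|^2\sum_i(\omega'_i-\bar\omega)^2=|b_\nu|^2(\lambda+1)\lambda(\lambda+2)h^2/12$. Combining with the trivial part, we aim for
\[
 \sum_i|p_\nu(\omega'_i)|^2\ge c(\lambda+1)(\lambda h)^2(|a_\nu|^2+|b_\nu|^2).
\]
This uses $\lambda h\le1$ and $|\omega'_i|\le\frac12$. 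Concretely, use the $2\times2$ Gram matrix of $(1,\omega'_i)$: its smallest eigenvalue is at least $\det/\mathrm{tr}\ge(\lambda+1)\,\mathrm{var}(\omega')/(1+\frac14)$, and $\mathrm{var}\ge(\lambda h)^2/12$. After the factor $\frac12$ this should give the constant $\frac1{36}$; the arithmetic must be checked, but it is routine.

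\emph{Cross part.} For $P(i)=\alpha+\beta i+\gamma i^2$, write $p_\nu(\omega'_i)=(a_\nu+b_\nu\omega'_0)+b_\nu hi$. Abel summation with partial sums bounded by $1/(2\nint{y/2\pi})\le1/(2\eta)$ gives
\[
 \Bigl|\sum_iP(i)e^{iyi}\Bigr|\le\frac1{2\eta}\Bigl(|P(\lambda)|+\sum_i|P(i+1)-P(i)|\Bigr).
\]
Since $|p_\nu(\omega)|\le|a_\nu|+|b_\nu|/2$ on $[-\frac12,\frac12]$ and the sequence is a product of two affine functions of $i$ with values in this range, the total variation is at most a small multiple of $(|a_\nu|+|b_\nu|)(|a_{\nu'}|+|b_{\nu'}|)$. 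The cleanest route is the product rule for variation: the variation of $p_\nu p_{\nu'}$ is at most $\sup|p_\nu|\cdot\mathrm{Var}(p_{\nu'})+\sup|p_{\nu'}|\cdot\mathrm{Var}(p_\nu)$, with $\mathrm{Var}(p_\nu)\le|b_\nu|\lambda h\le|b_\nu|$. Summing the at most $d^2+d(d-1)$ cross terms with the factor $\frac12$, and using $(|a|+|b|)^2\le2(|a|^2+|b|^2)$ together with Cauchy--Schwarz across $\nu,\nu'$, should give a total of at most $\frac{3d}{\eta}\sum_\nu(|a_\nu|^2+|b_\nu|^2)$.

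The main obstacle is bookkeeping the constants in the cross part so that they land at $3d/\eta$. Here we would track $\sup|p_\nu|\le|a_\nu|+|b_\nu|/2$ and the variation bound carefully, and if necessary tighten via $|p_\nu(\omega)|^2\le\frac54(|a_\nu|^2+|b_\nu|^2)$.
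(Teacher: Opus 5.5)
Your proposal is correct and follows the paper's proof. Both split $g^2=\frac12|\mathcal S|^2+\frac12\Re\mathcal S^2$, bound the diagonal part below through the variance of the progression, and bound each oscillatory term by summation by parts with partial sums at most $1/(2\eta)$; each such term is at most $3s_\nu s_{\nu'}/(2\eta)$ with $s_\nu=|a_\nu|+|b_\nu|$, and these sum to at most $\frac{3}{2\eta}(\sum_\nu s_\nu)^2\le\frac{3d}{\eta}\sum_\nu(|a_\nu|^2+|b_\nu|^2)$. The arithmetic you left to check works out, and your Gram-matrix bound $\det/\mathrm{tr}\ge\frac45(\lambda+1)\,\mathrm{var}(\omega')$ even gives the constant $\frac1{30}$ in place of the $\frac1{36}$ the paper gets by completing the square around the mean $\bar\omega$.
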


\begin{proof}
Put $q_\nu(\omega)=a_\nu+b_\nu\omega$ and
$\mathcal S(\omega)=\sum_\nu q_\nu(\omega)e^{i\omega x_\nu}$, so that
$g=\Re\mathcal S$ and $g^2=\frac12|\mathcal S|^2+\frac12\Re\mathcal S^2$, with
\[
 |\mathcal S|^2=\sum_\nu|q_\nu|^2+\sum_{\nu\ne\nu'}q_\nu\bar q_{\nu'}e^{i\omega(x_\nu-x_{\nu'})},\qquad
 \mathcal S^2=\sum_{\nu,\nu'}q_\nu q_{\nu'}e^{i\omega(x_\nu+x_{\nu'})}.
\]

\emph{Main term.} Let $\bar\omega$ and $V$ be the mean and the variance of
$\omega'_0,\ldots,\omega'_\lambda$, so that
$V=h^2\lambda(\lambda+2)/12\ge(\lambda h)^2/12$ and $V\le\frac14$. Put
$u_\nu=a_\nu+b_\nu\bar\omega$. Then
$\sum_i|q_\nu(\omega'_i)|^2=(\lambda+1)(|u_\nu|^2+V|b_\nu|^2)$, and
$|a_\nu|^2\le2|u_\nu|^2+\frac12|b_\nu|^2$ because $|\bar\omega|\le\frac12$.
Hence
$\frac{2V}3(|a_\nu|^2+|b_\nu|^2)\le\frac{4V}3|u_\nu|^2+V|b_\nu|^2\le|u_\nu|^2+V|b_\nu|^2$,
and
\[
 \frac12\sum_\nu\sum_{i=0}^\lambda|q_\nu(\omega'_i)|^2
 \ge\frac{(\lambda+1)V}3\sum_\nu\bigl(|a_\nu|^2+|b_\nu|^2\bigr)
 \ge\frac{(\lambda+1)(\lambda h)^2}{36}\sum_\nu\bigl(|a_\nu|^2+|b_\nu|^2\bigr).
\]

\emph{Oscillatory terms.} Each remaining term is a unimodular constant times
$\sum_iP(\omega'_i)z^i$, with $P\in\{q_\nu\bar q_{\nu'},q_\nu q_{\nu'}\}$ and
$z=e^{ihy}$, $y\in\{x_\nu-x_{\nu'},x_\nu+x_{\nu'}\}$. By hypothesis
$|1-z|=2|\sin(hy/2)|\ge4\nint{hy/2\pi}\ge4\eta$, so the partial sums of
$z^i$ are bounded by $1/(2\eta)$, and summation by parts gives
\[
 \Bigl|\sum_iP(\omega'_i)z^i\Bigr|\le\frac1{2\eta}\Bigl(|P(\omega'_\lambda)|
 +\sum_{i<\lambda}|P(\omega'_{i+1})-P(\omega'_i)|\Bigr).
\]
With $s_\nu=|a_\nu|+|b_\nu|$ we have $|q_\nu|\le s_\nu$ and
$|P'|\le2s_\nu s_{\nu'}$ on $[-\frac12,\frac12]$, so this is at most
$3s_\nu s_{\nu'}/(2\eta)$. The oscillatory terms of
$\frac12|\mathcal S|^2+\frac12\Re\mathcal S^2$ therefore sum to at most
$\frac{3}{2\eta}\bigl(\sum_\nu s_\nu\bigr)^2\le\frac{3d}{2\eta}\sum_\nu s_\nu^2
\le\frac{3d}\eta\sum_\nu(|a_\nu|^2+|b_\nu|^2)$.
\end{proof}

Steps 1--4 of the next proof follow the inverse argument of Do, Nguyen and
Nguyen \cite{DoNguyenNguyen2022} for random trigonometric polynomials.

\begin{proof}[Proof of Proposition~\textup{\ref{sv:decay}}]
Let $g$ be as in \eqref{sv:signal} and $g_j=g(\omega_j)$. Put $u=n^{\tau_*}$,
$A=\lfloor u\rfloor$, $k=400000$, $R=n^{27/50}$, $B=\lceil4R+1\rceil$,
$\eta=n^{-1+8\tau}$,
\[
 H=\Bigl\lfloor\frac{n+1}{A^3+1}\Bigr\rfloor,\qquad P=\Bigl\lfloor\frac H{A^2}\Bigr\rfloor-1 .
\]
For $n\ge n_0$ we have $A\ge u/2$,
\[
 H-A^2\ge\frac{n^{1-3\tau_*}}4,\qquad P+1\ge\frac{n^{1-5\tau_*}}4 .
\]
Suppose that $\sum_{j=0}^n\nint{g_j}^2<A$. We derive a contradiction for
$n\ge n_0$. In case (b) read $x_2=x_1$.

\emph{Step 1: a long run.} Fewer than $A^3$ indices have $\nint{g_j}\ge1/A$.
The integers $0,\ldots,(A^3+1)H-1$ lie in $\{0,\ldots,n\}$ and split into
$A^3+1$ runs of $H$ consecutive integers, so one run
$\{j_0,\ldots,j_0+H-1\}$ has $\nint{g_j}<1/A$ throughout. Let
$\lfloor g_j\rceil$ be the integer nearest to $g_j$.

\emph{Step 2: Dirichlet.} Among the $A^2+1$ points
$(q\gamma_{x_1},q\gamma_{x_2})$ modulo one, $0\le q\le A^2$, two lie in the
same square of side $1/A$. Their difference gives $1\le p\le A^2$ with
$\nint{p\gamma_{x_\nu}}<1/A$ for $\nu=1,2$.

\emph{Step 3: differences.} Let $\lambda=\lfloor(H-1)/p\rfloor$. The
progression $j_0+ip$, $0\le i\le\lambda$, lies in the run, $\lambda\ge P$, and
$\lambda p\ge H-A^2$. With $z_\nu=e^{ipx_\nu/n}$ we have
$e^{i\omega_{j+rp}x_\nu}=e^{i\omega_jx_\nu}z_\nu^r$ and
$\omega_{j+rp}=\omega_j+rp/n$, so
\[
 g_{j+rp}=\Re\sum_\nu\Bigl[(a_\nu+b_\nu\omega_j)z_\nu^r+b_\nu\frac pn\,rz_\nu^r\Bigr]e^{i\omega_jx_\nu}.
\]
Let $\Delta^k$ denote the $k$-th difference in $r$ at $r=0$. Since
$\Delta^kz^r=(z-1)^k$ and $\Delta^k(rz^r)=kz(z-1)^{k-1}$, and since
$|1-z_\nu|=2|\sin\pi p\gamma_{x_\nu}|<2\pi/A$, $|a_\nu|+|b_\nu|\le2R$ and
$p\le A^2$,
\[
 |\Delta^kg_{j+rp}|\le2dR\Bigl(\frac{2\pi}A\Bigr)^k+2dRk\,2^k\frac{A^2}n .
\]
Also $|\Delta^k(g_{j+rp}-\lfloor g_{j+rp}\rceil)|\le2^k/A$ when the indices
lie in the run. As $A\ge u/2$ and $k\tau_*=1$, the right sides are at most
$(4\pi)^k4Rn^{-1}$, $4Rk2^kn^{2\tau_*-1}$ and $2^{k+1}n^{-\tau_*}$, so
$|\Delta^k\lfloor g_{j+rp}\rceil|<1$ for $n\ge n_0$. Hence the $k$-th
differences of $\mu_i=\lfloor g_{j_0+ip}\rceil$ vanish for
$0\le i\le\lambda-k$.

\emph{Step 4: constancy.} By \eqref{sv:signal},
$|g_j|\le\sum_\nu(|a_\nu|+|b_\nu|)\le2dR$, so $|\mu_i|\le4R+1\le B$. Since
$k+2^k(B+1)\le n^{1-5\tau_*}/4-1\le P\le\lambda$ for $n\ge n_0$,
Lemma~\ref{sv:constancy} shows that $\mu_i=\mu$ is constant for
$0\le i\le\lambda$.

\emph{Step 5: $\mu=0$.} Let $Q=\lfloor P/8\rfloor$. For $n\ge n_0$,
$Q\ge n^{1-5\tau_*}/40\ge8n^{1-8\tau}=8/\eta$. Since $p$ and $2p$ are at most
$2A^2\le n^\tau$, \textup{(A2)}, respectively \textup{(A1)}, gives
$\nint{p\gamma_{x_\nu}}\ge\eta$ and $\nint{2p\gamma_{x_\nu}}\ge\eta$.
Lemma~\ref{sv:rotations} gives $1\le q\le Q$ with
$\nint{qp\gamma_{x_\nu}}\ge\frac1{16}$ for $\nu=1,2$. Put
$\zeta_\nu=e^{iqpx_\nu/n}$ and $f(r)=Ag_{j_0+rqp}$ for $0\le r\le8$. These
indices lie in the progression because $8q\le P\le\lambda$, so
$|f(r)-A\mu|<1$. As in Step~3,
$f(r)=\Re\sum_\nu(a'_\nu+b'_\nu r)\zeta_\nu^r$ for some complex
$a'_\nu,b'_\nu$. Since $4d\le8$, Lemma~\ref{sv:annihilator} with $A\mu$ in
place of $\mu$ gives $A|\mu|\prod_\nu|1-\zeta_\nu|^4\le16^d$. Here
$|1-\zeta_\nu|=2|\sin\pi qp\gamma_{x_\nu}|\ge4\nint{qp\gamma_{x_\nu}}\ge\frac14$,
so $|\mu|\le4096^d/A<1$, and $\mu=0$.

\emph{Step 6: contradiction.} Now $|g_{j_0+ip}|<1/A\le\frac12$, so
$\nint{g_{j_0+ip}}=|g_{j_0+ip}|$ for $0\le i\le\lambda$. Apply
Lemma~\ref{sv:gram} to $\omega'_i=\omega_{j_0+ip}$, with $h=p/n$. Its phase
conditions are $\nint{p(\gamma_{x_\nu}\pm\gamma_{x_{\nu'}})}\ge\eta$, given by
\textup{(A2)}, respectively \textup{(A1)}, since $2p\le n^\tau$. As
$\lambda\ge P$ and $\lambda p\ge H-A^2$,
\[
 A>\sum_{i=0}^\lambda g_{j_0+ip}^2\ge|\Xi|^2\,\mathcal B,\qquad
 \mathcal B=\frac{(P+1)\bigl((H-A^2)/n\bigr)^2}{36}-\frac{3d}\eta
 \ge\frac{n^{1-11\tau_*}}{2304}-6n^{1-8\tau}.
\]
Since $\mathcal B>0$ and $|\Xi|^2\ge n^{-1+2\tau}$, the right side is at least
$n^{2\tau-11\tau_*}/2304-6n^{-6\tau}$. As $2\tau-11\tau_*=189\tau_*$, this
exceeds $u\ge A$ for $n\ge n_0$.
\end{proof}

\subsection{Proof of Theorem~\ref{thm:S}}\label{sv:proof-section}
The coarse small balls follow from Lemma~\ref{sg:smallball} by a
covering argument.

\begin{lemma}[Coarse small balls]\label{sv:coarse-signs}
For $d\le12$ there is $C$ such that for $n\ge1$ every well-conditioned sum
$W=\sum_j\epsilon_ju_j$ of dimension $d$ satisfies
\[
 \Prob(|W-w|\le r)\le C(r+n^{-1/2})^d\qquad(w\in\R^d,\ r>0).
\]
\end{lemma}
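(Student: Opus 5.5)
The plan is to deduce the bound from Lemma~\ref{sg:smallball}, which gives $\sup_z\Prob(|W-z|\le\rho)\le Cn^{-d/2}$ for a fixed radius $\rho$, and then cover a ball of radius $r$ by small balls.

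\emph{Step 1: check the hypotheses of Lemma~\ref{sg:smallball}.} Apply it with $r=d$ (the dimension), $m=n+1\le2n$ so $C_0=2$, and rescaled rows $a_j=\sqrt n\,u_j$. Well conditioning (Definition~\ref{cu:well-conditioned}) gives $|a_j|\le7$, so $K=7$. It also gives
\[
 \sum_j\langle a_j,v\rangle^2=n\,v^T\Bigl(\sum_ju_ju_j^T\Bigr)v\ge\frac{\gamma_*}2n|v|^2,
\]
so $\kappa=\gamma_*/2$. All these constants are absolute once $d\le12$ is fixed. The lemma then yields $\rho,C$ with
\[
 \sup_{z}\Prob\Bigl(\Bigl|\sum_j\epsilon_ja_j-z\Bigr|\le\rho\Bigr)\le Cn^{-d/2}.
\]
Rescaling back, the same bound holds for $\Prob(|W-w|\le\rho n^{-1/2})$, uniformly in $w$.

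\emph{Step 2: covering.} A ball of radius $r$ in $\R^d$ is covered by at most $C_d(1+r\sqrt n/\rho)^d$ balls of radius $\rho n^{-1/2}$, by a standard volume or grid argument. A union bound gives
\[
 \Prob(|W-w|\le r)\le C\,n^{-d/2}\bigl(1+r\sqrt n\bigr)^d=C\,(n^{-1/2}+r)^d .
\]

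\emph{Step 3: small degrees.} For small $n$ the bound is trivial after enlarging $C$, since $n^{-1/2}\ge n_0^{-1/2}$ makes the right side at least one. The argument is short, and the only step needing care is Step~1, namely that the uniform lower frame condition of Lemma~\ref{sg:smallball} follows exactly from the Gram lower bound in the definition of well conditioning. There is no real obstacle beyond this.
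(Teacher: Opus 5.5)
Your proposal is correct and follows the paper's proof essentially verbatim. Like the paper, you rescale to $a_j=\sqrt n\,u_j$, apply Lemma~\ref{sg:smallball} with $C_0=2$, $K=7$, $\kappa=\gamma_*/2$, and cover the ball of radius $r$ by $O_d\bigl((1+r\sqrt n)^d\bigr)$ pieces of radius $\rho n^{-1/2}$; the paper uses a cubic grid where you use balls, and your Step~3 is harmless but unnecessary, since Lemma~\ref{sg:smallball} already applies for every $n\ge1$ with $m=n+1\le2n$.
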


\begin{proof}
The $n+1\le2n$ rows $a_j=\sqrt n\,u_j$ satisfy $|a_j|\le7$ and
$\sum_j\langle a_j,u\rangle^2\ge\frac{\gamma_*}2n|u|^2$. The small-ball
bound for sign sums (Lemma~\ref{sg:smallball}) with $C_0=2$, $K=7$ and
$\kappa=\gamma_*/2$ gives $\rho,C>0$ with
$\Prob(|\sqrt nW-z|\le\rho)\le Cn^{-d/2}$ for every $z\in\R^d$. A ball of
radius $\sqrt n\,r$ meets at most $(2\sqrt d\sqrt n\,r/\rho+2)^d$ cubes of
a grid of side $\rho/\sqrt d$, each in a ball of radius $\rho$, so
\[
 \Prob(|W-w|\le r)\le\Bigl(\frac{2\sqrt d\sqrt n\,r}\rho+2\Bigr)^dCn^{-d/2}
 =C\Bigl(\frac{2\sqrt d\,r}\rho+2n^{-1/2}\Bigr)^d\le C'(r+n^{-1/2})^d.
 \qedhere
\]
\end{proof}

At Diophantine points Proposition~\ref{sv:decay} gives the decay
required by \ref{cu:U2}.

\begin{lemma}[Decay at Diophantine points]\label{sv:charfun}
Let $n\ge n_0$. If $x$ satisfies \textup{(A1)}, then
$\widehat\mu_{Z_n(x)}$ decays. If $(s,t)$ satisfies \textup{(A2)}, then
$\widehat\mu_{(Z_n(s),Z_n(t))}$ decays.
\end{lemma}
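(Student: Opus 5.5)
Decay means $|\widehat\mu_W(\theta)|\le n^{-20}$ for $n^{1/10}\le|\theta|\le n^{26/25}$. For random signs the characteristic function is a product of cosines, so we work directly with
\[
 |\widehat\mu_W(\theta)|=\prod_j|\cos(\theta\cdot u_j)|,
\]
and we reduce its size to the lower bound on $\sum_j\nint{\cdot}^2$ supplied by Proposition~\ref{sv:decay}.

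\emph{Step 1 (the elementary inequality).} For every real $y$,
\[
 |\cos(\pi y)|\le\exp\bigl(-c\nint{y}^2\bigr),
\]
since $|\cos\pi y|=|\cos\pi\nint y|\le1-c\nint y^2$. Write $\theta\cdot u_j=\pi\langle\Xi,w_j\rangle$ with $\Xi=\pi^{-1}\theta\cdot(2/(n+1))^{1/2}$. This is legitimate because $v_j(x)=(2/(n+1))^{1/2}w_j(x)$ and the rows of $(Z_n(s),Z_n(t))$ are $(2/(n+1))^{1/2}w_j(s,t)$. Then
\[
 |\widehat\mu_W(\theta)|\le\exp\Bigl(-c\sum_j\nint{\langle\Xi,w_j\rangle}^2\Bigr).
\]

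\emph{Step 2 (matching the frequency ranges).} Next we check that $n^{1/10}\le|\theta|\le n^{26/25}$ implies $n^{-1/2+\tau}\le|\Xi|\le n^{27/50}$. We have $|\Xi|\asymp|\theta|n^{-1/2}$, so $|\Xi|$ lies between roughly $n^{-2/5}$ and $n^{27/50}\cdot n^{-0}/\sqrt2$. More precisely, $|\theta|n^{-1/2}\le n^{26/25-1/2}=n^{27/50}$, and the constant factors $\sqrt2/\pi<1$ keep $|\Xi|$ below $n^{27/50}$. At the lower end, $n^{-2/5}\ge n^{-1/2+\tau}$ for large $n$, since $\tau=1/4000$.

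\emph{Step 3 (conclusion).} Proposition~\ref{sv:decay}(b) applies at points satisfying (A1), and Proposition~\ref{sv:decay}(a) applies at pairs satisfying (A2). Either way the sum in Step 1 is at least $\lfloor n^{\tau_*}\rfloor$, which gives
\[
 |\widehat\mu_W(\theta)|\le\exp\bigl(-c\lfloor n^{\tau_*}\rfloor\bigr)\le n^{-20}
\]
for $n\ge n_0$.

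The only care needed is bookkeeping of the normalization constants in Step 2, in particular making sure the upper endpoint $n^{27/50}$ is not exceeded. This is ensured because the factor multiplying $|\theta|n^{-1/2}$ is $\sqrt2/\pi\cdot\sqrt{n/(n+1)}<1$. All the real difficulty sits in Proposition~\ref{sv:decay}, which is already available.
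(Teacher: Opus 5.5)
Your proposal is correct and follows the paper's proof essentially step for step. The paper also writes $\widehat\mu_W$ as a product of cosines, rescales by $\Xi=(2/(n+1))^{1/2}\theta/\pi$, checks that $|\Xi|$ lies in $[n^{-1/2+\tau},n^{27/50}]$, uses a quadratic bound $|\cos\pi y|\le e^{-c\nint{y}^2}$ (there with $c=2$, via $\cos y\le e^{-y^2/2}$ on $|y|\le\pi/2$), and concludes with Proposition~\ref{sv:decay}.
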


\begin{proof}
The rows of $W=(Z_n(s),Z_n(t))$ are $u_j=(2/(n+1))^{1/2}w_j$ with
$w_j=w_j(s,t)$, and those of $W=Z_n(x)$ are the same with $w_j=w_j(x)$.
Put $\Xi=(2/(n+1))^{1/2}\theta/\pi$, so that
$\theta\cdot u_j=\pi\langle\Xi,w_j\rangle$. Since
$n^{-1/2}\le(2/(n+1))^{1/2}\le\sqrt2\,n^{-1/2}$, for
$n^{1/10}\le|\theta|\le n^{26/25}$ we have $|\Xi|\ge n^{-1/2}|\theta|/\pi\ge n^{-1/2+\tau}$
and $|\Xi|\le\sqrt2n^{27/50}/\pi\le n^{27/50}$. For $|y|\le\pi/2$,
$\cos y\le e^{-y^2/2}$, since $\log\cos y+y^2/2$ vanishes at $0$ and has
derivative $y-\tan y\le0$ on $[0,\pi/2)$. Hence
$|\cos\pi y|=\cos(\pi\nint y)\le e^{-\pi^2\nint y^2/2}\le e^{-2\nint y^2}$,
and Proposition~\ref{sv:decay} gives
\[
 |\widehat\mu_W(\theta)|=\prod_j|\cos(\pi\langle\Xi,w_j\rangle)|
 \le\exp\bigl(-2\lfloor n^{\tau_*}\rfloor\bigr)\le n^{-20}
\]
for $n\ge n_0$.
\end{proof}

\begin{proof}[Proof of Theorem~\textup{\ref{thm:S}}]
Signs have no atom at zero, so \eqref{pre:centering} gives
$\E D_n=\frac12\E B_n$, which is bounded by
Proposition~\ref{sg:boundary}.

We apply Proposition~\ref{cu:unsmooth} with the good pairs of
Definition~\ref{sv:conditions} that are separated. The sign law is
symmetric with variance one and bounded. Lemma~\ref{sv:coarse-signs}
gives \ref{cu:U1}. Both cells of a good pair are good, so every point of
their closures satisfies \textup{(A1)}, and every pair of points from the
two closures satisfies \textup{(A2)}. Lemma~\ref{sv:charfun} therefore
gives \ref{cu:U2}. By \eqref{sg:boundary-moments},
$\E B_n^2=\Var B_n+(\E B_n)^2\le C\sqrt n$, which is \ref{cu:U3}. By
Lemma~\ref{sv:counting}, $b_n\le Cn^{1+10\tau}=Cn^{1+1/400}$. Hence
\[
 \E\bigl(D_n-J_{n,\delta,M}(\epsilon)\bigr)^2\le C\bigl(n^{199/200}+n^{-1/200}n^{1+1/400}\bigr)\le Cn^{399/400}.
\]
The sign law has $\kappa_4=\E\epsilon_0^4-3=-2$, and
Proposition~\ref{cu:transfer}(b) with $e_n\le Cn^{399/800}$ gives
\[
 \Bigl|\Var D_n-\Bigl(c_G-\frac16\Bigr)n\Bigr|\le C\bigl(n^{399/400}+n^{1/2+399/800}\bigr)\le Cn^{799/800}.
 \qedhere
\]
\end{proof}

\appendix

\section{Numerical comparisons}\label{sec:numerics}\label{ap:section}

We test two predictions against computation, the variance constant
$c_G+\kappa_4/12$ across coefficient laws and the radial profile
$\Phi_{\rm rad}$ for random signs. All intervals are pointwise $95\%$ bootstrap intervals
that resample whole coefficient sequences. The data of these computations,
with the seeds of every sequence, are in the repository~\cite{FormalCompanion2026}.

\subsection{Certified counts}\label{ap:certificates}
The closed-disk counts of Subsection~\ref{ap:variance-section} are
certified for the exact coefficient arrays, integers for the atomic laws
and stored binary64 values for the others, by the following
certificate.

\begin{proposition}[Chord and Taylor certificate]\label{ap:certificate}
Let $F=QP$, where $Q$ is a product of cyclotomic polynomials, $b=\deg Q$,
and $P(z)=\sum_{k=0}^da_kz^k$ has real coefficients. Fix $c\in\Z$ and put
$g(t)=e^{-ict}P(e^{it})$. Partition $[0,2\pi]$ into arcs $[u,v]$ of
length $h$, and suppose that on each arc, with $A=g(u)$ and $B=g(v)$,
there are numbers $L,E$ with
\[
 L\le\dist(0,[A,B]),\qquad E\ge\tfrac18h^2\sup_{[u,v]}|g''|,\qquad L>E,
 \qquad\Re(B\bar A)>0 .
\]
If an interval enclosure of $(2\pi)^{-1}\sum\arg(B/A)$, summed over the arcs
with principal arguments, contains exactly one integer $w$, then $F$ has
exactly $w+c+b$ zeros in the closed unit disk, counted with multiplicity,
and $\min_{[0,2\pi]}|g|\ge\min(L-E)>0$.
\end{proposition}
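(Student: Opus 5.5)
The plan is to show that the certificate controls $g$ on each arc well enough to pin down the continuous argument, and then convert winding into a zero count by the argument principle. On an arc $[u,v]$ let $\ell(t)$ be the chord interpolating $g(u)=A$ and $g(v)=B$ linearly in $t$. The standard linear interpolation error bound, applied to real and imaginary parts together (or via the integral form of the remainder, so that no coordinatewise loss occurs), gives $|g(t)-\ell(t)|\le\frac18h^2\sup_{[u,v]}|g''|\le E$. Since $\ell(t)\in[A,B]$, we get $|g(t)|\ge L-E>0$ on the arc. This proves the final claim $\min|g|\ge\min(L-E)$. In particular $P$ has no zeros on the unit circle.

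Next I would pin down the argument increment on each arc. The points $g(t)$ stay within distance $E<L$ of the segment $[A,B]$, and that segment lies at distance at least $L$ from $0$. So the homotopy $s\mapsto(1-s)g(t)+s\ell(t)$ avoids $0$, and the argument increment of $g$ over $[u,v]$ equals that of $\ell$. The segment $[A,B]$ avoids the origin, so the increment of $\ell$ has modulus less than $\pi$. The condition $\Re(B\bar A)>0$ strengthens this to modulus less than $\pi/2$, so this increment is exactly the principal value $\arg(B/A)$ with no branch ambiguity. Summing over arcs, $(2\pi)^{-1}\Delta_{[0,2\pi]}\arg g=(2\pi)^{-1}\sum\arg(B/A)$, which is an integer. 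Since the enclosure contains exactly one integer $w$, that integer equals the winding number.

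Finally I convert the winding into a count. We have $\Delta\arg P(e^{it})=\Delta\arg g+2\pi c$, so $P$ winds $w+c$ times. By the argument principle, $P$ has exactly $w+c$ zeros in the open disk, and since it has none on the circle, also in the closed disk. The factor $Q$ is a product of cyclotomic polynomials, so all $b$ of its zeros lie on the unit circle and hence in the closed disk. Counting with multiplicity, $F=QP$ therefore has $w+c+b$ zeros in the closed unit disk. A zero of $P$ at the origin is handled automatically, because the argument principle counts it.

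The main obstacle is the second step, justifying that the summed principal arguments give the true winding. The homotopy to the chord settles this once the interpolation bound is valid for complex-valued $g$ with the constant $\tfrac18$. I would verify that bound via the Peano kernel, since a naive mean-value argument does not apply directly to vector-valued functions. The remaining issue is rigor in interval arithmetic: the hypotheses are stated as inequalities that the enclosures certify, so nothing further is needed.
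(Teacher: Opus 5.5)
Your proposal is correct and follows the paper's proof essentially step for step: the Green-kernel (Peano) bound $\int_u^v G(t,s)\,ds=(t-u)(v-t)/2\le h^2/8$ for complex-valued $g$, the homotopy from the chord to $g$ that avoids $0$, the identification of each arc's increment with the principal argument via $\Re(B\bar A)>0$, and then the argument principle together with the $b$ cyclotomic zeros on the circle. One point to state explicitly is why the summed increment is an integer: since $c\in\mathbb Z$, $g(2\pi)=g(0)$, so $g$ is a closed curve. This is exactly how the paper phrases it.
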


\begin{proof}
On an arc let $\lambda$ be the linear interpolant of $g$. Since $g-\lambda$
vanishes at $u$ and $v$, $g(t)-\lambda(t)=-\int_u^vG(t,s)g''(s)\dd s$ with the
Dirichlet Green kernel $G\ge0$ of $-\dd^2/\dd t^2$ on $[u,v]$, and
$\int_u^vG(t,s)\dd s=(t-u)(v-t)/2\le h^2/8$. Hence $|g-\lambda|\le E<L\le|\lambda|$
on the arc, the homotopy from $\lambda$ to $g$ avoids $0$, and the argument
increment of $g$ over the arc is that of the chord, the principal argument of
$B/A$ because $\Re(B\bar A)>0$. Since $c\in\Z$, $g$ is a closed curve with
winding number $w$, and $|g|\ge L-E$ on each arc. Thus $P$ has
no zero on the circle and, by the argument principle, $w+c$ zeros in the open
disk. The $b$ zeros of $Q$ lie on the circle.
\end{proof}

The bounds $E$ come from Taylor jets. For $p\ge2$ let $J_r=g^{(r)}(u)/r!$ and
$C_{p+1}=\sum_k|a_k||k-c|^{p+1}/(p+1)!$. Since $|g^{(p+1)}|\le(p+1)!\,C_{p+1}$,
Taylor's theorem for $g$ and for $g''$ gives
\[
 \Bigl|g(u+s)-\sum_{r=0}^pJ_rs^r\Bigr|\le C_{p+1}s^{p+1},\qquad
 \sup_{[u,u+h]}|g''|\le\sum_{r=2}^pr(r-1)|J_r|h^{r-2}+p(p+1)C_{p+1}h^{p-1}.
\]
Appending a term $a_mz^m$ adds $a_m(i(m-c))^re^{i(m-c)u}/r!$ to $J_r$ and
$|a_m||m-c|^{p+1}/(p+1)!$ to $C_{p+1}$, so along a nested sequence the
jets are updated degree by degree. All quantities are enclosed in ball
arithmetic. A ball inverse discrete Fourier transform on a grid larger
than the degree encloses the initial jets and each new harmonic, and the
arcs are refined until the strict inequalities of
Proposition~\ref{ap:certificate} hold. Cyclotomic factors and a zero of
order $k$ at the origin are divided out exactly, the latter adding $k$ to
the count.

\subsection{Variance across coefficient laws}\label{ap:variance-section}
Table~\ref{ap:variance} compares the sample variance of $D_n$ with
$c_G+\kappa_4/12$, where $c_G\simeq0.305898$, for six symmetric laws of
variance one. The atomic laws are random signs, the four-point law
uniform on $\{\pm1,\pm2\}$ after normalization, and the three-point law
with masses $\frac16$, $\frac23$ and $\frac16$ at $-\sqrt3$, $0$ and
$\sqrt3$. The uniform law on $[-\sqrt3,\sqrt3]$, the standard Gaussian law
and the Laplace law have densities. Theorems~\ref{thm:D} and~\ref{thm:S}
give the limit $c_G+\kappa_4/12$ for signs and for the three laws with
densities. There are $16384$ sequences per law at $n=10^5$ and $2048$ at
$n=10^6$.

\begin{table}[ht]
\centering\small
\caption{Sample variances of the certified closed-disk counts.}
\label{ap:variance}
\setlength{\tabcolsep}{3.5pt}
\begin{tabular}{@{}lrrrlrl@{}}
\toprule
&&&\multicolumn{2}{c}{$n=10^5$}&\multicolumn{2}{c}{$n=10^6$}\\
\cmidrule(lr){4-5}\cmidrule(l){6-7}
Law&$\kappa_4$&$c_G+\frac{\kappa_4}{12}$&$\widehat{\Var}D_n/n$&$95\%$ interval&$\widehat{\Var}D_n/n$&$95\%$ interval\\
\midrule
Signs&$-2$&$0.139231$&$0.139568$&$[0.136579,0.142700]$&$0.140699$&$[0.131832,0.149901]$\\
Four point&$-1.64$&$0.169231$&$0.168808$&$[0.165274,0.172531]$&$0.170751$&$[0.160621,0.181000]$\\
Uniform&$-1.2$&$0.205898$&$0.206954$&$[0.202336,0.211628]$&$0.202672$&$[0.190398,0.215183]$\\
Three point&$0$&$0.305898$&$0.300426$&$[0.293881,0.306978]$&$0.323200$&$[0.304773,0.341834]$\\
Gaussian&$0$&$0.305898$&$0.302210$&$[0.295605,0.308884]$&$0.291928$&$[0.274027,0.310311]$\\
Laplace&$3$&$0.555898$&$0.549719$&$[0.538038,0.561997]$&$0.556819$&$[0.524236,0.590949]$\\
\bottomrule
\end{tabular}
\end{table}

Every interval contains $c_G+\kappa_4/12$. A weighted linear fit of
$\widehat{\Var}D_n/n$ against $\kappa_4$ over the six laws, with free
intercept, has slope $0.081573$, interval $[0.079617,0.083531]$, at
$n=10^5$, and slope $0.083108$, interval $[0.077610,0.088500]$, at
$n=10^6$. Both intervals contain $1/12$.

\subsection{The radial profile}\label{ap:radial}
For $16384$ sequences of random signs we count the zeros of $f_N$ in the
closed disks of radius $1+x/N$ at the $321$ points
$x=-16,-15.9,\dots,16$, at the degrees $N=10^3,\,3\cdot10^3,\,10^4,\,
3\cdot10^4$ and $10^5$, and for the first $2048$ sequences also at
$N=3\cdot10^5$ and $10^6$. Each count is the winding number of
$g(t)=f_N(re^{it})$, computed from $g$ and $g''$ on a grid of at least
$8(N+1)$ angles. A step of the grid is accepted when it passes the test of
Proposition~\ref{ap:certificate} with $\sup|g''|$ replaced by four times
the largest of $|g''|$ at its ends and a quarter of the root mean square
of $g''$, and every other step is subdivided until its parts pass. The
counts agree with the zeros computed directly for $92$ of the sequences,
at all $142524$ radii compared.

The largest differences of the averaged profile $\nu_N(1+x/N)/N$ from
$\Phi_{\rm rad}(x)$ over the $321$ points are $9.5\cdot10^{-4}$,
$3.1\cdot10^{-4}$, $1.2\cdot10^{-4}$, $3.5\cdot10^{-5}$ and
$2.3\cdot10^{-5}$ at the five lower degrees, and $3.3\cdot10^{-5}$ and $1.3\cdot10^{-5}$ at
$N=3\cdot10^5$ and $10^6$. They decay like $1/N$ until they reach the
sampling error of the averages. Figure~\ref{fig:radial-law}(c) shows the
density of zeros at $N=10^6$ from these counts. For the same sequences and
degrees, Figure~\ref{rc:fig-secants}(b) shows $J_N(1)$, the mean of
$\log|f_N|$ over at least $32(N+1)$ equally spaced angles, shifted off $0$
and $\pi$.
\section{Formal statements}\label{sec:formal}

The results listed below are proved in Lean~4~\cite{LeanFour2021}
(version 4.34.0) with Mathlib~\cite{Mathlib2020} (commit
\texttt{5ed2965}), in release~2.0.0 of~\cite{FormalCompanion2026}, whose
Lean sources have the Software Heritage identifier
\begin{center}
  \path{swh:1:dir:d9bcf23f7688fc0dacde344ee59003a0f48924e1}
\end{center}
The radial results are in the library \texttt{Erdos522} and the fluctuation results
in the library \texttt{Fluctuations}. The formalization also uses the
Lean libraries SLT~\cite{SLTLean} and
ProbabilityApproximation~\cite{ProbabilityApproximationLean}. For every
declaration named here, \texttt{\#print axioms} reports only
\texttt{propext}, \texttt{Classical.choice} and \texttt{Quot.sound}, and
\texttt{leanchecker} replays every module of both libraries in the Lean
kernel.

Problem~\#522 is the theorem \texttt{Erdos522.erdos\_522} of Google
DeepMind's Formal Conjectures project~\cite{FormalConjectures}. Our declaration
\texttt{FormalConjectures.Erdos522Mirror.erdos\_522} proves that
statement, copied from that project, and our Lean~4 proof has been
accepted there as a solution to it. Its statement is the one written in
that project, so the differences listed below do not apply to it. The
release also contains the single file \texttt{Erdos522Web.lean}, whose
only import is Mathlib and which contains this theorem together with
every declaration its proof uses, so that the proof can be checked in a
web browser at \href{https://live.lean-lang.org}{live.lean-lang.org}. The
declaration \texttt{erdos\_522} in the table is
\texttt{Erdos522.erdos\_522} of our library \texttt{Erdos522}, a
separate theorem that states Corollary~\ref{thm:E} for the partial sums.

The formal statements differ from the notation of the paper in four
points.
\begin{itemize}
 \item The profile $\Phi_{\rm rad}$ is \texttt{kacRadialProfile}, the
 derivative of $\frac12\log((e^{2x}-1)/(2x))$, and its closed form
 $\frac12(1+\coth x-1/x)$ is a separate lemma.
 \item In \texttt{Erdos522}, a real coefficient law enters through its
 image in $\C$.
 \item In Input~\ref{fw:I4} the bound on $\mathcal W_2$ is stated through
 couplings of integrable quadratic cost.
 \item A statement for all large $n$ is a statement
 \texttt{$\forall^{\mathrm f}$ n in atTop}.
\end{itemize}
The radial theorems are stated on the product space of each law and, in
the \texttt{iid\_} versions, on an arbitrary probability space carrying
independent coefficients of the given law. The fluctuation theorems are
stated on an arbitrary probability space carrying such coefficients. In
the table the namespace prefixes \texttt{Erdos522.} and
\texttt{Fluctuations.} are omitted, and braces list the variants of a
name.

Table~\ref{fs:main} pairs each result with formal statements that agree
with the text, up to the points above.

{\small\renewcommand{\_}{\textunderscore\allowbreak}\setlength{\extrarowheight}{2pt}
\begin{longtable}{@{}>{\raggedright\arraybackslash}p{0.24\textwidth}>{\raggedright\arraybackslash}p{0.72\textwidth}@{}}
\caption{Formal statements of the results of this paper.}\label{fs:main}\\
\toprule
Result & Lean declarations\\
\midrule
\endfirsthead
\toprule
Result & Lean declarations\\
\midrule
\endhead
\bottomrule
\endfoot
Theorem~\ref{thm:R}
 & \texttt{\{ae, iid\}\_\{rademacher, steinhaus, gaussian,
 circularGaussian, bounded\_symmetric\}\_radial\_rate\_one\_div\_thirty\_four\_log\_seven\_halves}\\
Corollary~\ref{thm:E}
 & \texttt{erdos\_522\_any\_coupling},
 \texttt{sign\_discrepancy\_any\_coupling}\\
Corollary~\ref{thm:E}, partial sums
 & \texttt{erdos\_522}, \texttt{erdos\_522\_of\_independent\_coins}\\
Corollary~\ref{rm:whole-line}
 & \texttt{\{ae, iid\}\_\{rademacher, steinhaus, gaussian,
 circularGaussian, bounded\_symmetric\}\_radial\_profile\_uniform}\\
Theorem~\ref{thm:A}
 & \texttt{mainA}\\
Theorem~\ref{thm:B}, $0<c_G<\infty$
 & \texttt{gaussian\_constant\_pos},
 \texttt{gaussian\_variance\_integrand\_integrable}\\
Theorem~\ref{thm:B}(i)
 & \texttt{gaussian\_mean\_zero}, \texttt{gaussian\_variance\_rate}\\
Theorem~\ref{thm:B}(ii)
 & \texttt{gaussian\_covariance\_\{limit, rate, cusp\}}\\
Theorem~\ref{thm:B}(iii)
 & \texttt{gaussian\_discrepancy\_variance\_pos},
 \texttt{gaussian\_quantitative\_clt}\\
Theorem~\ref{thm:B}(iv)
 & \texttt{gaussian\_joint\_clt},
 \texttt{gaussian\_fixed\_ratio\_increment\_clt}\\
Theorem~\ref{thm:B}(v)
 & \texttt{gaussian\_uniform\_increment\_variance},
 \texttt{gaussian\_fine\_increment\_tail},
 \texttt{gaussian\_increment\_variance\_comparable},
 \texttt{gaussian\_coarse\_increment\_clt}\\
Theorem~\ref{thm:B}(vi)
 & \texttt{gaussian\_input\_\{one, two, three, four\}},
 \texttt{gaussian\_two\_sided\_lil}\\
Theorem~\ref{thm:G}(i)
 & \texttt{\{, iid\_\}\{gaussian, circularGaussian\}\_mean\_radial\_count}\\
Theorem~\ref{thm:G}(ii)
 & \texttt{\{, iid\_\}\{gaussian, circularGaussian\}\_mean\_radial\_profile}\\
Theorem~\ref{thm:C}(i)
 & \texttt{signRate}, \texttt{sign\_tail\_uniform}, \texttt{mainC}\\
Theorem~\ref{thm:C}(ii)
 & \texttt{mainC}\\
Theorem~\ref{thm:C}(iii)
 & \texttt{signInputI4}\\
Remark~\ref{thm:Cp}
 & \texttt{mainCp}\\
Theorem~\ref{thm:D}, variance
 & \texttt{mainD}\\
Theorem~\ref{thm:D}, examples
 & \texttt{\{uniform, laplace\}FourthCumulant}\\
Theorem~\ref{thm:D}, $H_n$
 & \texttt{densityResidualLimits}\\
Theorem~\ref{thm:S}
 & \texttt{signVariance}\\
Theorem~\ref{rc:nns}
 & \texttt{LogMoments.uniform\_logarithmic\_moments}\\
\end{longtable}
}

\pdfbookmark[1]{Acknowledgments}{acknowledgments}
\subsection*{Acknowledgments}
The author thanks Manolis Kellis for the opportunity to pursue this work at
MIT CSAIL, and Pascal Passigan and Brecht Verbeken for helpful discussions.
The author also thanks
the Lean and Mathlib communities for the tools on which the formalization is
built, the maintainers of Google DeepMind's Formal Conjectures project for
reviewing the formal proof of Erd\H{o}s Problem~\#522, and The Infrastructure
Group (TIG) at MIT CSAIL for computing resources. The author acknowledges the
MIT Office of Research Computing and Data for providing high performance
computing resources that have contributed to the research results reported
within this paper. The Lean formalization and the checking and refinement of
several derivations were aided by internal models developed in the Kellis Lab
at MIT CSAIL, including language models, latent representation models, and
graph neural networks acting on the dependency structure of the proof.
The author originated the mathematical approach and established the results
presented here.

\needspace{8\baselineskip}
\bibliography{references}
\end{document}